\pdfoutput=1
\documentclass[11pt,a4paper]{amsart}
\usepackage[margin=1in]{geometry}

\usepackage[T1]{fontenc}
\usepackage{textcase}
\usepackage[dvipsnames]{xcolor}
\usepackage{microtype}
\usepackage{fnpct}

\usepackage{amsmath}
\usepackage{amssymb}
\usepackage{eucal}
\usepackage{mathrsfs}
\usepackage{xfrac}
\usepackage{tikz-cd}
\renewcommand{\injlim}{\varinjlim}
\renewcommand{\projlim}{\varprojlim}

\usepackage{enumitem}
\usepackage{booktabs}
\usepackage[pdfusetitle,colorlinks]{hyperref}
\hypersetup{bookmarksdepth=2,pdfencoding=unicode,allcolors=MidnightBlue}
\usepackage{zref-clever}
\zcsetup{abbrev=false,cap=true,nameinlink=false,sort=false}
\newcommand{\cref}[1]{\zcref{#1}}
\newcommand{\Cref}[1]{\zcref[S]{#1}}
\zcsetup{pairsep={ and~},lastsep={, and~}}
\zcRefTypeSetup{equation}{Name-sg=,Name-pl=,refbounds={(,,,)}}
\zcRefTypeSetup{item}{Name-sg=,Name-pl=,refbounds={(,,,)}}
\newlist{conenum}{enumerate}{1}
\setlist[conenum,1]{label=(\roman*),ref=\roman*}
\zcRefTypeSetup{conenumi}{Name-sg=,Name-pl=,refbounds={(,,,)}}

\NewDocumentCommand{\newzctheorem}{momo}{\IfValueTF{#4}
  {\newtheorem{#1}{#3}[#4]}
  {\IfValueTF{#2}
    {\AddToHook{env/#1/begin}{\zcsetup{countertype={#2=#1}}}\newtheorem{#1}[#2]{#3}}
    {\newtheorem{#1}{#3}}}}
\numberwithin{equation}{section}
\theoremstyle{plain}
\newtheorem{Theorem}{Theorem}

\zcRefTypeSetup{Theorem}{Name-sg=Theorem,Name-pl=Theorems}
\newzctheorem{theorem}[equation]{Theorem}
\newzctheorem{proposition}[equation]{Proposition}
\newzctheorem{lemma}[equation]{Lemma}
\newzctheorem{corollary}[equation]{Corollary}
\theoremstyle{definition}
\newzctheorem{definition}[equation]{Definition}
\newzctheorem{assumption}[equation]{Assumption}
\newzctheorem{example}[equation]{Example}
\newzctheorem{question}[equation]{Question}
\theoremstyle{remark}
\newzctheorem{remark}[equation]{Remark}
\newzctheorem{slogan}[equation]{Slogan}

\let\oldAA\AA\let\AA\relax
\let\oldL\L\let\L\relax
\let\oldSS\SS\let\SS\relax

\newcommand{\NN}{\mathbf{N}}
\newcommand{\FF}{\mathbf{F}}
\newcommand{\ZZ}{\mathbf{Z}}
\newcommand{\QQ}{\mathbf{Q}}
\newcommand{\RR}{\mathbf{R}}
\newcommand{\SS}{\mathbf{S}}
\newcommand{\CC}{\mathbf{C}}

\newcommand{\AA}{\mathbb{A}}
\newcommand{\BB}{\mathbb{B}}
\newcommand{\DD}{\mathbb{D}}
\newcommand{\GG}{\mathbb{G}}

\newcommand{\PP}{\mathbb{P}}

\newcommand{\E}{\mathrm{E}}

\newcommand{\KAS}{\textnormal{KAS}}

\newcommand{\lgt}{\textnormal{lgt}}
\newcommand{\alg}{\textnormal{alg}}

\newcommand{\Nis}{\textnormal{Nis}}
\newcommand{\et}{\textnormal{ét}}
\newcommand{\fin}{\textnormal{fin}}
\newcommand{\mot}{\textnormal{mot}}
\newcommand{\hyp}{\textnormal{hyp}}

\newcommand{\st}{\textnormal{st}}
\newcommand{\fet}{\textnormal{fét}}
\newcommand{\iso}{\textnormal{iso}}
\newcommand{\tfp}{\textnormal{tfp}}
\newcommand{\all}{\textnormal{all}}

\newcommand{\L}{\operatorname{L}}
\newcommand{\R}{\operatorname{R}}

\newcommand{\CAlg}{\operatorname{CAlg}}
\newcommand{\D}[1][]{\mathop{\kern0pt#1\mathrm{D}}\nolimits}
\newcommand{\End}{\operatorname{End}}
\newcommand{\Fun}{\operatorname{Fun}}

\newcommand{\Gest}{\operatorname{Gest}}

\newcommand{\Hom}{\operatorname{Hom}}
\newcommand{\Ind}{\operatorname{Ind}}

\newcommand{\Map}{\operatorname{Map}}
\newcommand{\Mod}{\operatorname{Mod}}
\newcommand{\Mot}[1][]{\mathop{\kern0pt#1\mathrm{Mot}}\nolimits}
\newcommand{\Nm}{\operatorname{Nm}}
\newcommand{\PShv}[1][]{\mathop{\kern0pt#1\mathrm{PShv}}\nolimits}
\newcommand{\Pic}{\operatorname{Pic}}
\newcommand{\Pro}{\operatorname{Pro}}
\newcommand{\SH}[1][]{\mathop{\kern0pt#1\mathrm{SH}}\nolimits}
\newcommand{\Shv}[1][]{\mathop{\kern0pt#1\mathrm{Shv}}\nolimits}
\newcommand{\Span}[1][]{\mathop{\kern0pt#1\mathrm{Span}}\nolimits}
\newcommand{\Sparc}{\operatorname{Sparc}}

\newcommand{\Spec}{\operatorname{Spec}}
\newcommand{\Sp}{\operatorname{M}}
\newcommand{\Tot}{\operatorname{Tot}}
\newcommand{\awn}{\operatorname{awn}}

\newcommand{\dR}{\operatorname{dR}}

\newcommand{\fib}{\operatorname{fib}}
\newcommand{\id}{\operatorname{id}}

\newcommand{\op}{\operatorname{op}}
\newcommand{\pr}{\operatorname{pr}}

\newcommand{\ucpl}{\operatorname{ucpl}}

\newcommand{\lsm}{\textnormal{lax-\(\otimes\)}}

\newcommand{\cat}[1]{\mathcal{#1}}
\newcommand{\Cat}[1]{\mathsf{#1}}
\newcommand{\Cls}[1]{\mathscr{#1}}
\newcommand{\idl}[1]{\mathfrak{#1}}

\newcommand{\X}{\mathord{-}}
\newcommand{\unit}{\mathbf{1}}

\title{Berkovich \texorpdfstring{\(2\)}{2}-motives and normed ring stacks}
\author{Ko Aoki}
\address{Max Planck Institute for Mathematics,
  Vivatsgasse 7, 53111 Bonn, Germany
}
\email{aoki@mpim-bonn.mpg.de}
\date{\today}

\begin{document}

\begin{abstract}
  The de~Rham stack construction of Simpson shows that
  D-modules are quasicoherent sheaves on a modified geometry.
  Drinfeld furthermore
  introduced the ring stack perspective (aka transmutation),
  which asserts that a coefficient theory
  is determined by a ring stack.
  Scholze proposed relating this idea
  to motivic realizations
  using \((\infty,2)\)-categorical language.

  In this work,
  we formulate and prove a precise version of this principle:
  The presentable category of kernels of motivic homotopy theory
  is the linearly symmetric monoidal \((\infty,2)\)-category
  that is freely generated
  by a homologically trivial smooth sutured ring stack.
  We also prove the étale version of this statement,
  reducing étale descent to the Kummer and Artin–Schreier conditions.
  Lastly,
  we prove an analytic version
  connecting Scholze’s Berkovich motives and ring stacks with an absolute value.
  This is useful to construct
  motivic realization functors in analytic geometry,
  such as the Habiro and Hyodo–Kato realizations.
\end{abstract}

\maketitle
\setcounter{tocdepth}{1}
\tableofcontents

\section{Introduction}\label{s:intro}

In this paper,
an \(n\)-category
means an \((\infty,n)\)-category.

This paper is devoted
to the study of \(2\)-motives.
Most importantly,
we prove universal properties.
We explain the ideas motivating this concept in \cref{ss:id},
state the main results in \cref{ss:intro},
and discuss applications in \cref{ss:app}.

\subsection{Ideas}\label{ss:id}

In arithmetic geometry,
varieties are studied using various cohomology theories.
For a variety~\(X\)
over a field~\(k\),
one might study
its Betti cohomology,
de Rham cohomology,
étale cohomology,
crystalline cohomology,
etc.
These theories are interconnected by comparison isomorphisms,
yet they are constructed in entirely different ways.
To govern this complexity, Grothendieck envisioned the concept of \emph{motives},
positing the existence of a universal, “absolute” cohomology theory.
This would take the form of an abelian category of motives~\(\Mot(k)\)\footnote{
  In this paper, the notation \(\Mot\)
  (or \(\Mot[2]\))
  does not have a global definition
  and simply denotes some version of (\(2\)-)motives.
},
such that any variety \(X\) has a corresponding motive~\([X]\).
Each known cohomology theory \(H^{*}\) would then arise
as a \emph{realization functor} from \(\Mot(k)\) to the category of vector spaces.
This concept sought to capture the cohomological essence of a variety,
an object that would explain all its different cohomological “manifestations.”
While Grothendieck’s original vision of \(\Mot(k)\) has remained conjectural,
it was later substantially realized by Voevodsky~\cite{Voevodsky00},
whose construction of the triangulated category of motives \(\operatorname{DM}(k)\)
led to his celebrated proof of the Milnor conjecture.
More relevant
to the present work
is the stable motivic homotopy theory~\(\SH(k)\) 
of Morel–Voevodsky~\cite{MorelVoevodsky99}.

In parallel, Grothendieck, with key contributions from Verdier,
considered six operations.
This is the essential framework
for working with these cohomology theories.
It describes the fundamental structural properties
such as functoriality, Poincaré duality, and the Künneth formula,
that a well-behaved “coefficient theory” (like the category of étale sheaves or D-modules)
must have.
Nowadays,
these six operations
can be packaged as a functor
from spans (aka correspondences).
This perspective was famously explained in
a volume by
Gaitsgory–Rozenblyum~\cite{GaitsgoryRozenblyum17},
where this idea of using spans was attributed to Lurie.

This brings us to the modern context.
If Grothendieck’s motives classify
cohomology theories,
it is natural to consider classifying coefficient theories instead.
Drew–Gallauer~\cite{DrewGallauer22} showed 
that \(\SH\) is a universal six-functor formalism.
Scholze~\cite{ScholzeM}
instead considered this via categorification:
There should be a \(2\)-category
of \(2\)-motives that is a universal recipient
of six-functor formalisms.
In this framework, the \(2\)-motive~\([X]\)
captures the sheaf-theoretic essence of~\(X\).
The category of \(2\)-motives
is thus a \(2\)-category in which varieties determine objects
and operations on sheaves determine morphisms.
This unified picture, however, raises a technical
problem regarding the precise mathematical nature of this \(2\)-category.
It should look like the category of linear categories.
Stefanich~\cite{StefanichP} was the first to provide a foundation
for such a theory.
Our previous work~\cite{n-pr} showed how to achieve this without reference to a universe.
The present work builds directly on that foundation.
We see a precise formulation
of this idea in \cref{main_1}.

Even though we have a good packaging of coefficient theories,
the challenge
of systematically constructing these coefficient theories still remains.
A breakthrough came from the study of de~Rham cohomology.
The natural coefficient theory for de~Rham cohomology is the theory of D-modules
due to Sato.
In a crucial insight,
Simpson~\cite{Simpson96} realized that D-modules on a variety~\(X\)
over~\(\QQ\)
could be reinterpreted as certain quasicoherent sheaves on a new geometric object
called the \emph{de~Rham stack}~\(X^{\dR}\).
This was the first indication that
various coefficient categories
are simply quasicoherent sheaves on some modified geometry associated to~\(X\).
This idea was further refined by Drinfeld~\cite{Drinfeld22}.
What he emphasized was the ring stack perspective,
which is now called \emph{transmutation},
a term coined by Bhatt (cf.~\cite[Remark~2.3.8]{FGauges}).
The idea is that an entire coefficient theory can be generated
from a single piece of data;
e.g., for D-modules, \((\AA^{1})^{\dR}\),
viewed as a ring stack.
By specifying what \(\AA^{1}\) maps to,
one can “transmute” the base geometry of varieties into a new geometry,
and the quasicoherent sheaves on that new geometry give you the coefficient theory.

Given this,
Scholze~\cite{ScholzeM}
further proposed
that \(\Mot[2](\ZZ)\)
should classify ring stacks
satisfying certain conditions.
In this paper,
we prove a precise version of this as~\cref{main_2}.
In terms of
the gestalt theory of
Scholze–Stefanich~\cite{Gestalten},
we can say that
it provides a moduli description
of the gestalt associated to stable motivic homotopy theory.

Furthermore,
we prove certain variants of this theorem.
Notably,
Scholze~\cite{ScholzeB}
introduced
the category of \emph{Berkovich motives}
with applications to local Langlands in mind.
It is a certain analytic version of motivic homotopy theory.
He constructed six operations for them
and proved several desirable properties.
One drawback of this theory
is that it is difficult to construct realization functors,
since the definition already involves arc~descent.
In \cref{main_4},
we characterize
this using ring stacks with an absolute value.
This facilitates the construction of realization functors
by first establishing \(2\)-categorical realizations.

\subsection{Results}\label{ss:intro}

Let \(\Cat{QProj}\)
denote the category of quasiprojective static schemes
over~\(\ZZ\)
(see \cref{xddj7g} for this choice).
We take \(\SH\) to be the Morel–Voevodsky stable motivic homotopy theory
(see \cref{ss:sh}).
This admits six operations due to Ayoub~\cite{Ayoub07a,Ayoub07b}.
Applying the machinery of Liu–Zheng~\cite{LiuZheng},
we obtain
a lax symmetric monoidal functor
\({\SH}\colon\Span(\Cat{QProj})\to\Cat{Pr}_{\st}\).

\begin{definition}\label{xagyaj}
  Let \(S\) be a divisorial noetherian static scheme\footnote{This assumption is included solely to
    ensure a unique notion of quasiprojectivity.
  }.
  We write \(\SH[2](S)\)
  for the presentably symmetric monoidal \(2\)-category of kernels
  (see \cref{ss:ker})
  of
  \({\SH}\colon\Span(\Cat{QProj}_{S})\to\Cat{Pr}_{\st}\).
  Informally,
  it is freely generated
  by the image of a symmetric monoidal functor
  \([\X]\colon\Cat{QProj}_{S}^{\op}\to\SH[2](S)\),
  with the image being described as follows:
  \begin{itemize}
    \item
      The mapping category from~\([X]\) to~\([Y]\)
      is \(\SH(X\times Y)\).
    \item
      The identity \(\id_{[X]}\) is \(d_{!}\unit\in\SH(X\times X)\)
      (which is equivalent to~\(d_{*}\unit\) in this case),
      where \(d\colon X\to X\times X\) is the diagonal.
    \item
      For \(M\colon[X]\to[Y]\)
      and \(N\colon[Y]\to[Z]\),
      its composite is
      \((\pr_{X,Z})_{!}(\pr_{X,Y}^{*}M\otimes\pr_{Y,Z}^{*}N)\),
      where \(\pr\) denotes the corresponding projection
      from \(X\times Y\times Z\).
  \end{itemize}
\end{definition}

First,
we characterize
this via
its universal property:

\begin{Theorem}\label{main_1}
  The functor
  \([\X]\colon(\Cat{QProj}_{S})^{\op}\to\SH[2](S)\)
  is universal among
  symmetric monoidal functors
  to stable presentably symmetric monoidal \(2\)-categories
  satisfying the following axioms:
  \begin{conenum}
    \item\label{i:a_sm}
      Consider a smooth morphism \(f\colon Y\to X\).
      Then \(f^{*}\colon[X]\to[Y]\) admits 
      a left adjoint~\(f_{\natural}\).
      Moreover,
      the Beck–Chevalley transformation
      \(f'_{\natural}q^{*}\to p^{*}f_{\natural}\)
      is an equivalence
      for any pullback square
      \begin{equation}
        \label{e:iljxp}
        \begin{tikzcd}
          Y'\ar[r,"q"]\ar[d,"f'"']&
          Y\ar[d,"f"]\\
          X'\ar[r,"p"]&
          X\rlap.
        \end{tikzcd}
      \end{equation}
    \item\label{i:a_exc}
      For a closed subvariety \(i\colon Z\hookrightarrow X\)
      with complement~\(U\),
      the diagram
      \([U]\gets[X]\to[Z]\) is a recollement
      (see \cref{xpyo8y}).
      In particular,
      \(i^{*}\colon[X]\to[Z]\) admits a right adjoint~\(i_{*}\).
    \item\label{i:a_hi}
      For the projection \(f\colon\AA^{1}_{X}\to X\),
      the counit \(f_{\natural}f^{*}\to{\id}\)
      is an equivalence.
    \item\label{i:a_tate}
      For the projection \(f\colon\AA^{1}_{X}\to X\)
      and its zero section \(s\colon X\to\AA^{1}_{X}\),
      the morphism
      \(f_{\natural}s_{*}\)
      is an autoequivalence of~\([X]\).
  \end{conenum}
\end{Theorem}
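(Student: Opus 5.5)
We reduce the \(2\)-categorical statement to the known \(1\)-categorical universal property of \(\SH\), namely that of Drew–Gallauer~\cite{DrewGallauer22} (equivalently, the Ayoub/Voevodsky characterization of \(\SH\) as the universal coefficient system). The key input is the universal property of the category of kernels from \cref{ss:ker}. For a stable presentably symmetric monoidal \(2\)-category \(\cat{C}\), symmetric monoidal functors \(\SH[2](S)\to\cat{C}\) correspond to symmetric monoidal functors \(F\colon\Cat{QProj}_{S}^{\op}\to\cat{C}\) together with a lax symmetric monoidal natural transformation \(\SH\Rightarrow\Map_{\cat{C}}(F(\X)\otimes F(\X)^{\vee}\ldots)\). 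More precisely, such a \(F\) determines a lax symmetric monoidal functor
\[
  \Span(\Cat{QProj}_{S})\to\Cat{Pr}_{\st},\qquad X\mapsto\Map_{\cat{C}}(\unit,F(X)),
\]
provided \(F\) takes values in dualizable objects and \(F\) extends to spans. Under this dictionary, functors out of \(\SH[2](S)\) correspond to maps of six-functor formalisms \(\SH\to\Map_{\cat{C}}(\unit,F(\X))\).

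\textbf{Step 1: axioms to span extension.} Given \(F\) satisfying \cref{i:a_sm,i:a_exc,i:a_hi,i:a_tate}, we show that \(F\) extends to spans, with \(f_{!}\) defined via compactification. This uses the following ingredients:
\begin{itemize}
  \item \cref{i:a_sm} supplies \(f_{\natural}\) for smooth \(f\).
  \item \cref{i:a_exc} supplies \(i_{*}\) for closed immersions, satisfying base change and a projection formula; these follow from the recollement.
  \item Projective morphisms factor as a closed immersion into \(\PP^{n}_{X}\) followed by the projection.
  \item \cref{i:a_hi,i:a_tate} let us run Ayoub's argument that \(p_{*}\) for \(p\colon\PP^{n}_{X}\to X\) exists, and that \(p_{\natural}\simeq p_{*}\otimes(\text{Tate twist})\) (ambidexterity up to the invertible Thom twist).
\end{itemize}
Since every object in \(\cat{C}\) is then modeled by its endomorphism category, the cleanest way to carry this out is Yoneda. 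We apply \(\Map_{\cat{C}}(F(T),F(\X))\) for each \(T\), obtaining for each \(T\) a functor \(\Cat{QProj}_{S}^{\op}\to\Cat{Pr}_{\st}\), \(X\mapsto\Map(F(T),F(X))\). This functor satisfies the Ayoub axioms: smooth left adjoints with base change, localization, homotopy invariance, and stability. By Ayoub and Liu–Zheng it therefore extends uniquely to spans. Naturality in \(T\) assembles these extensions into the required data.

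\textbf{Step 2: identify the universal one.} By Drew–Gallauer, \(\SH\) is initial among such coefficient systems, so the map \(\SH(X\times Y)\to\Map_{\cat{C}}(F(X),F(Y))\) is produced canonically. We then check that it is compatible with the composition described in \cref{xagyaj}. This compatibility is exactly the statement that the transformation respects \(!\)-pushforward and \(\otimes\), which holds because Drew–Gallauer's map is a morphism of six-functor formalisms. Conversely, \([\X]\) itself satisfies the axioms: the smooth-base-change, localization, \(\AA^{1}\)-invariance and \(\PP^{1}\)-stability properties of \(\SH\), applied to kernels, show that the axioms hold. Here one uses that \(f_{\natural}\) on \([X]\to[Y]\) is given by the kernel \((\Gamma_{f})_{!}\unit\) twisted appropriately, and that recollements of kernels follow from the localization sequence in \(\SH(Z\times T)\), \(\SH(X\times T)\), \(\SH(U\times T)\).

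\textbf{Main obstacle.} The hard part is Step 1: producing, from \(2\)-categorical axioms alone, a coherent span functor. This requires the coherent compatibility of \(f_{!}=p_{\natural}\circ(\text{twist})^{-1}\circ i_{*}\) across all compactifications, including the Tate-twist identification from \cref{i:a_tate}. I would handle this by the Yoneda reduction above: it converts the question into the \(1\)-categorical statement, where the Liu–Zheng and Drew–Gallauer machinery already provides the coherence. The remaining work is to verify that this reduction is functorial in \(T\), and hence glues into a functor out of the \(2\)-category of kernels.
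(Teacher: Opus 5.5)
Your outline has the same skeleton as the paper's proof: extend \([\X]\) to spans inside \(\cat{C}\), obtain \(\alpha\colon\SH\to\Hom_{\cat{C}}(\unit,[\X])\) from \cref{drga}, and extend \(\alpha\) to spans. However, the step you set aside as ``remaining work'' is the actual nonformal content, and your proposed justification for it does not work as stated.

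Ayoub's theorem, applied to \(X\mapsto\Hom_{\cat{C}}(T,[X])\), gives for each \(T\) separately a right adjoint \(p_{*}^{T}\) of postcomposition with \(p^{*}\), for \(p\) projective. For \(p^{*}\) to have a right adjoint in \(\cat{C}\), these must commute with precomposition \(\phi^{*}\) along every \(1\)-morphism \(\phi\colon T'\to T\). That is, the Beck--Chevalley maps \(\phi^{*}p_{*}^{T}\to p_{*}^{T'}\phi^{*}\) must be equivalences.

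This is a genuine condition, not bookkeeping. In \(\Cat{Pr}\), postcomposition with any colimit-preserving functor has pointwise right adjoints by the adjoint functor theorem, yet the functor need not have a right adjoint in \(\Cat{Pr}\). Likewise, running Liu--Zheng objectwise in \(T\) does not produce a functor into \(\cat{C}\). If you instead apply the extension theorem to \(X\mapsto\Hom_{\cat{C}}(\X,[X])\), valued in functors on \(\cat{C}^{\op}\), the adjointability you must verify there is exactly this commutation.

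The missing idea is \cref{aplus}: a transformation of Ayoub-type systems that commutes with \(f_{\natural}\) for smooth \(f\) automatically commutes with \(p_{*}\) for projective \(p\). Since \(\phi^{*}\) commutes with postcomposition by the \(1\)-morphisms \(f_{\natural}\) and \(i_{*}\) that already exist in \(\cat{C}\), this yields \cref{aminus}. Its proof is not formal:
\begin{itemize}
\item Closed immersions are handled by the recollement.
\item For a smooth projective \(f\colon Y\to X\), such as \(\PP^{n}_{X}\to X\), one uses Ayoub's equivalence \(f_{\natural}\simeq f_{\natural}p_{*}d_{*}\to f_{*}q_{\natural}d_{*}\) with \(q_{\natural}d_{*}\) invertible. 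Here \(d\) is the diagonal and \(p,q\) are the projections of \(Y\times_{X}Y\).
\item The commutation of the hexagon of Beck--Chevalley and double Beck--Chevalley maps (\cref{hexagon}) then compares \(\phi^{*}f_{*}\) with \(f_{*}\phi^{*}\).
\end{itemize}
With this in hand, the cleanest route is to apply \cref{cll} directly to \(\Cat{QProj}_{S}^{\op}\to\cat{C}\), with \(J\) the open immersions and \(P\) the projective morphisms. This avoids any compactification or Tate-twist coherence bookkeeping.

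Your Step~2 asserts the same lemma without proof. \Cref{drga} only provides \(\alpha\) on \(\Cat{QProj}_{S}^{\op}\), compatible with \(f_{\natural}\) for smooth \(f\). To extend \(\alpha\) to \(\Span(\Cat{QProj}_{S})\) via \cref{cll}, you must also show right adjointability for projective pushforwards; left adjointability for open immersions is immediate. Your sentence ``Drew--Gallauer's map is a morphism of six-functor formalisms'' is exactly this claim, and in the paper it is again \cref{aplus}.

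Once that lemma is supplied, the rest of your plan goes through. Uniqueness of the span extension and of \(\alpha\) follows from the subcategory-inclusion statement in \cref{cll} together with the initiality in \cref{drga}. The dualizability proviso in your first paragraph is superfluous.
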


\begin{remark}\label{xddj7g}
  Our choice of \(\Cat{QProj}_{S}\)
  in \cref{main_1}
  is not important;
  we can consider,
  e.g.,
  the category of static schemes separated of finite type over~\(S\)
  to obtain the same result.
  One advantage of this \(2\)-categorical packaging
  is that
  we do not need to specify
  which category of geometries
  to work with.
\end{remark}

We now specialize to the case \(S=\Spec\ZZ\).
The following is the characterization via
the language of ring stacks:

\begin{Theorem}\label{main_2}
  The stable presentably symmetric monoidal \(2\)-category
  \(\SH[2](\ZZ)\)
  is freely generated
by a homologically trivial smooth sutured ring stack.
\end{Theorem}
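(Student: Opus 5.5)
The plan is to deduce \cref{main_2} from \cref{main_1} with \(S=\Spec\ZZ\), by identifying the data of a symmetric monoidal functor out of \(\Cat{QProj}^{\op}\) satisfying axioms \cref{i:a_sm,i:a_exc,i:a_hi,i:a_tate} with the data of a ring stack satisfying suitable conditions. The first step is a \(2\)-categorical Yoneda-type statement. In a stable presentably symmetric monoidal \(2\)-category \(\cat{C}\), a symmetric monoidal functor \(\Cat{Aff}^{\op}_{\ZZ}\)-ish (or \(\Cat{QProj}^{\op}\)) \(\to\cat{C}\) that preserves the relevant finite limits, turned into colimits in the opposite category, is determined by the image of \(\AA^{1}\) together with its commutative ring structure. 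Concretely, every quasiprojective scheme over \(\ZZ\) is built from affine spaces by finite limits (equalizers of polynomial maps, i.e.\ zero loci) and Zariski gluing. So I would first show that restricting along \(\AA^{1}\) identifies such functors with commutative ring objects \(R\) in \(\cat{C}^{\op}\) (a ``ring stack'' in \(\cat{C}\)). The key input is that the excision axiom \cref{i:a_exc}, being a recollement, together with the base change in \cref{i:a_sm}, forces the functor to send the closed immersion \(\{0\}\hookrightarrow\AA^{1}\) pullback squares and Zariski covers to the corresponding (co)limits. This is where the adjective \emph{sutured} should enter: I expect it to encode exactly the recollement condition for the zero section \(0\to R\), i.e.\ that \(R\) comes with an open complement \(R^{\times}\) (units) and a closed point \(0\) forming a recollement. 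From this, excision for all closed immersions follows by expressing them as zero loci of sections.

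The next step is to translate the remaining axioms. Smoothness of the ring stack should mean that \(R\to\mathrm{pt}\) (i.e.\ \(f^{*}\colon[\ZZ]\to[\AA^{1}]\)) admits a left adjoint satisfying base change. I would show this propagates to all smooth morphisms via the local structure of smooth morphisms as étale over affine space. Étale maps are handled Zariski locally as standard étale maps, i.e.\ open subsets of zero loci \(\{g=0\}\) with invertible derivative, and the left adjoint is built from excision data. Homological triviality should mean \(f_{\natural}f^{*}\simeq\id\) for \(f\colon R\to\mathrm{pt}\) together with invertibility of the Tate object \(f_{\natural}s_{*}\unit\). I would reduce these to the base case \(X=\Spec\ZZ\) via base change and the symmetric monoidal structure, since \([\AA^{1}_{X}]\simeq[\AA^{1}]\otimes[X]\), which gives \cref{i:a_hi,i:a_tate}.

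The main obstacle is the first step, in particular the smooth-morphism axiom for arbitrary smooth maps, not just \(\AA^{n}\to\mathrm{pt}\). Constructing \(f_{\natural}\) for an étale map from ring-stack data alone requires a form of the ``étale maps are locally open in zero loci'' argument inside an abstract \(2\)-category, and one must check that the resulting adjoints are coherent and satisfy Beck--Chevalley. I would try first to prove that the class of morphisms satisfying \cref{i:a_sm} is closed under composition, base change, and Zariski-local gluing (using the recollements), and contains open immersions (via \cref{i:a_exc}) and \(\AA^{1}\)-projections. A Jacobian-criterion factorization then gives all smooth maps.
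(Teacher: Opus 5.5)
\textbf{The gap: the étale step.} Your skeleton matches the paper's: reduce to \cref{main_1}, use suturedness for excision, glue Zariski-locally, and reduce smooth maps to étale maps over affine space. The problem is the step you flag as the main obstacle, which is where the real content of the theorem lies; what you propose does not resolve it.

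\emph{The closure argument cannot reach étale maps.} Open immersions and $\AA^{1}$-projections, closed under composition, base change and Zariski-local gluing, only generate maps that are Zariski-locally open in some $\AA^{n}_{X}$. A standard étale map such as $\Spec\ZZ[\sfrac12][T]/(T^{2}+1)\to\Spec\ZZ[\sfrac12]$ is not of this form. So the Jacobian factorization just pushes the whole difficulty into the étale factor.

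\emph{Excision data cannot build the left adjoint either.} The map $\Spec A[T][(P')^{-1}]/P\to\Spec A$ is a closed immersion into an open of $\AA^{1}_{A}$ followed by the projection. Recollements supply $j_{!}$ and $i_{*}$, but nothing playing the role of a left adjoint of $i^{*}$ or of the composite. What is needed is a purity statement, and it is not formal.

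\emph{How the paper proves it} (\cref{xaivv8,xic097}):
\begin{enumerate}
\item Since $R$ is sutured, stable and weakly suave, $[R]$ is self-dual and every map of finite-type rings is exceptional (\cref{xujpr0,x9v27f}). This gives a six-functor formalism with $f^{!}$ inside the universal target.
\item For a map whose diagonal is an open immersion, étaleness reduces to invertibility of a canonical map $f^{!}\unit\to\unit$ (\cref{xgygp8}).
\item Smoothness of $R$ gives $g^{!}\unit\simeq\unit(n-m)[2n-2m]$ for maps between opens of affine spaces, compatibly with base change (\cref{xm90y7}). So the map in question is an endomorphism of $\unit$ that can be tested after base change and restriction along a section.
\item With the section at $T=0$ (so $a_{1}$ is a unit), deform $P$ through $Q=a_{0}+a_{1}T+S(a_{2}T^{2}+\dotsb+T^{n})$. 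Then $\AA^{1}$-invariance reduces to $\deg P=1$, where $g$ is an isomorphism.
\end{enumerate}
Once weak suaveness is known as an $[A]$-linear adjunction, your concerns about coherence and Beck--Chevalley disappear. Linear adjoints base change automatically, given the relative Künneth formula (\cref{x15q1p}).

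\textbf{Two smaller points.} First, a ring stack is a functor on $\Cat{Pol}$, so a priori it extends only to animated rings, where fiber products such as $\ZZ\otimes_{\ZZ[T]}\ZZ$ are derived. To compare with static schemes you must show the extension factors through $\pi_{0}$. The paper deduces this from the zero section being a monomorphism (\cref{xc95y9,x2mgqh}), which is another place suturedness is used. Second, ``smooth'' in the statement means suave with $f^{!}\unit$ invertible, and ``homologically trivial'' is only \cref{i:a_hi}. Identifying these hypotheses with weak suaveness plus invertibility of $f_{\natural}s_{*}$ is \cref{st_sm}. That identification needs an argument; it is not a matter of choosing definitions.
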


\begin{remark}\label{xawvvy}
  By \cref{st_sm},
  we can equivalently state \cref{main_2}
  as saying that
  \(\SH[2](\ZZ)\)
  is freely generated
  by a stable homologically trivial weakly suave ring stack.
  Now,
  “stable,”
  “sutured,”
  “homologically trivial,”
  and “weakly suave”
  correspond to \cref{i:a_tate,i:a_exc,i:a_hi,i:a_sm}
  in \cref{main_1},
  respectively.
  One notable point is that
  all these conditions are only about~\(\AA^{1}\).
  In particular,
  in \cref{main_2},
  we do not mention
  general schemes at all.
\end{remark}

We then consider its étale variant,
where \(\SH[2]_{\et}\)
denotes the presentable \(2\)-category of kernels
for \(\SH_{\et}\),
the étale sheafified version of \(\SH\) (see \cref{ss:shet}):

\begin{Theorem}\label{main_3}
  The stable presentably symmetric monoidal \(2\)-category
  \(\SH[2]_{\et}(\ZZ)\)
  is freely generated
  by a homologically trivial smooth sutured ring stack
  that is Kummer and Artin–Schreier
  in the following sense:
  \begin{description}
    \item[Kummer]
      The morphism
      \(R[\sfrac1l]^{\times}\to R[\sfrac1l]^{\times}\)
      given by \(x\mapsto x^{l}\)
      is a cover
      for any prime~\(l\).
    \item[Artin–Schreier]
      The morphism
      \(R/p\to R/p\)
      given by \(x\mapsto x^{p}-x\)
      is a cover
      for any prime~\(p\).
  \end{description}
\end{Theorem}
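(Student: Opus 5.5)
The plan is to deduce \cref{main_3} from \cref{main_2} by localization. By \cref{main_2}, \(\SH[2](\ZZ)\) corepresents homologically trivial smooth sutured ring stacks in stable presentably symmetric monoidal \(2\)-categories. The universal ring stack is \([\AA^{1}]\), viewed as a ring object in \(\SH[2](\ZZ)^{\op}\) via the ring scheme structure on \(\AA^{1}\). Imposing the Kummer and Artin–Schreier conditions therefore corepresents a localization of \(\SH[2](\ZZ)\): a \(2\)-functor out of \(\SH[2](\ZZ)\) factors through it exactly when it sends the morphisms \(x\mapsto x^{l}\) on \(\GG_{m}[\sfrac1l]\) and \(x\mapsto x^{p}-x\) on \(\AA^{1}_{\FF_{p}}\) to covers. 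It thus suffices to identify this localization with the canonical functor \(\SH[2](\ZZ)\to\SH[2]_{\et}(\ZZ)\), which is induced by étale sheafification \(\SH\to\SH_{\et}\) on kernels.

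The first step is to make precise what "cover" means for a morphism of objects in a presentable \(2\)-category. I would use descent: \(f\colon[Y]\to[X]\) is a cover if the Čech nerve yields an equivalence from \([X]\) to the totalization, or equivalently if \(f^{*}\) is monadic or of universal descent in the appropriate sense. With this notion, I would check that the localized \(2\)-category is the \(2\)-category of kernels of the six-functor formalism obtained from \(\SH\) by forcing descent along the Kummer and Artin–Schreier maps and all their base changes in \(\Cat{QProj}\). Universality of covers follows because the relevant pullbacks are computed in \(\Cat{QProj}\).

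The second step is to show that descent along these specific maps and their base changes is equivalent to étale descent for \(\SH\). Étale covers of a scheme refine Nisnevich-locally to finite étale Galois covers. Nisnevich descent already holds in \(\SH\), so I would reduce, using the recollement axiom \cref{i:a_exc} and stratification, to the residue-characteristic-separated cases over \(\ZZ[\sfrac1l]\) and over \(\FF_{p}\). Galois covers are built by dévissage from cyclic degree-\(l\) extensions with \(l\) prime; this uses solvability of Sylow subgroups after a prime-to-\(l\) transfer argument, which is available since \(\SH\) is stable. In characteristic \(\neq l\), after adjoining \(\mu_{l}\) (itself a Kummer-type step), cyclic degree-\(l\) covers are pulled back from the Kummer map \(\GG_{m}\to\GG_{m}\). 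In characteristic \(p=l\), they are Artin–Schreier covers pulled back from \(\AA^{1}\to\AA^{1}\), \(x\mapsto x^{p}-x\). Conversely, these two maps are étale on the indicated loci, so étale descent implies descent along them.

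I expect the main obstacle to be the reduction from general étale descent to these two generating families, especially handling hypercompleteness, since \(\SH_{\et}\) may be defined with hyperdescent, and the transfer argument for non-\(l\)-groups. For the transfer argument, I would first try rationalizing and completing at each prime, using the arithmetic fracture square in the stable setting. Rationally, étale and Nisnevich descent agree for \(\SH\) up to the \(\QQ\)-part of the Morel–Voevodsky splitting, and the plus part already satisfies étale descent. At a prime \(l\), the composite of restriction and transfer is multiplication by an index prime to \(l\), hence invertible, which reduces to \(l\)-groups and thus to iterated cyclic covers. Finally, I would check that the resulting characterization of kernels matches \cref{main_1} with the extra descent axiom, which yields \cref{main_3}.
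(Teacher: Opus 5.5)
Your overall architecture matches the paper's. The universal Kummer--Artin--Schreier target is a localization of \(\SH[2](\ZZ)\) (via \cref{main_2} and \cref{d_okay}). Étale covers are covers over \(\SH[2]_{\et}(\ZZ)\). The real content is that the two axioms force finite Galois covers to become covers, via Sylow subgroups, cyclic \(l\)-power steps, and stratification by characteristic. Pulling \(C_{p}\)-torsors over affine \(\FF_{p}\)-schemes back from \(x\mapsto x^{p}-x\) is a legitimate shortcut compared with the paper's use of Bachmann's theorem in \cref{as_fp}.

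However, the step carrying the prime-to-\(l\) part fails as stated. In \(\SH\), restriction followed by transfer along a finite étale map of degree~\(d\) is not multiplication by~\(d\). It is a motivic Euler characteristic, a class in \(\mathrm{GW}\); for the framed transfer of a monogenic \(A\to A[T]/P\) it is \(d_{\epsilon}=\sum_{i<d}\langle-1\rangle^{i}\) (\cref{tab}). Such classes need not become invertible after localizing at any prime. For instance, the summand of \(\SH(\RR)_{(3)}\) on which \(\langle-1\rangle\) acts by \(-1\) is nonzero (it is the real-étale part) but vanishes after base change to~\(\CC\). Hence \(\SH(\RR)_{(3)}\to\SH(\CC)_{(3)}\) does not satisfy descent, although \([\CC:\RR]=2\) is prime to~\(3\). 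This is the same summand your rational argument leaves untreated: the minus part of \(\SH_{\QQ}\) does not satisfy étale descent, and nothing in your plan kills it.

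The missing idea is to use the Kummer axiom \emph{before} any transfer, to pass to a cover on which \(\langle-1\rangle\) is harmless. Base changing \(x\mapsto x^{l}\) along \(T\mapsto-1\) adjoins roots of unity, and with Zariski descent the paper shows that \(\ZZ\to\ZZ[\sfrac12][\zeta_{4}]\times\ZZ[\sfrac13][\zeta_{6}]\) is a cover (\cref{xl1f4o,xurrdp}). Over this base, Druzhinin's relations give \(\langle-1\rangle=1\), respectively \(2\langle-1\rangle=2\) (\cref{x6mvu2}), so \(d\cdot d_{\epsilon}\) is \(d^{2}\) up to a unit (\cref{xx6bhm}). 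Only then does the transfer yield descent for monogenic covers after inverting~\(d\) (\cref{x5ybjo}). Together with Nisnevich-local monogenicity and \cref{cdh}, this handles \(A\to B^{P}\) in \cref{xx45iz}.

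Your sketch also blurs \(1\)- and \(2\)-categorical descent in three places:
\begin{enumerate}
\item A transfer only splits \(\unit\to f_{*}\unit\) in \(\Hom(\unit,[A])\). Turning that into descent in the \(2\)-category requires that finite étale maps are prim, together with \cref{xpbfpq}.
\item Checking descent separately over \(A[\sfrac1p]\) and \(A/p\) is legitimate only because \(f_{*}\unit\) is dualizable, so that descent becomes faithfulness of a coalgebra, a colimit condition (as in \cref{xfrmql}).
\item Nisnevich descent for \(\SH\) must likewise be upgraded to the \(2\)-categorical statement \cref{cdh}.
\end{enumerate}

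Finally, you need not identify the localization directly with a category of kernels. The paper builds comparison functors in both directions from \cref{x155oy,xx45iz} and \cref{cll}. Hypercompleteness plays no role, because \cref{xq7xf4} imposes only Čech descent.
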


In the statement above,
the notion of \emph{cover}
is important,
and we study it extensively
in \cref{s:des}
under the name of \emph{descent}.

\begin{remark}\label{xidgyh}
  \Cref{main_3}
  shows that there is an idempotent algebra~\(E\)
  in \(\SH[2](\ZZ)\)
  such that
  \(\Mod_{E}(\SH[2](\ZZ))\) is equivalent to \(\SH[2]_{\et}(\ZZ)\).
\end{remark}

We then move on to the analytic situation.
Scholze~\cite{ScholzeB} introduced
Berkovich motives with integral coefficients \(\D_{\mot}(\X;\ZZ)\)
and constructed six operations for them.
We consider
Berkovich motives with spherical coefficients
\(\D_{\mot}(\X;\SS)\);
see \cref{ss:d_mot}.
Here
we consider
uniform Banach rings topologically of finite presentation
as the class of “varieties”
to obtain the presentable \(2\)-category
of kernels,
for which we write \(\D[2]_{\mot}(\ZZ;\SS)\).
We then characterize this as follows:

\begin{Theorem}\label{main_4}
  The stable presentably symmetric monoidal \(2\)-category
  \(\D[2]_{\mot}(\ZZ;\SS)\)
  is freely generated by
  a Kummer–Artin–Schreier homologically trivial smooth ring stack
  with an absolute value
  whose open unit disk is homologically trivial.
\end{Theorem}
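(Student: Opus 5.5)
The plan is to reduce \cref{main_4} to an analytic analogue of \cref{main_1}, and then translate the axioms on the generating object into conditions on a single ring stack, in the same way \cref{main_2} is deduced from \cref{main_1}.

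\textbf{Step 1: analytic version of \cref{main_1}.} Write \(\Cat{Ban}\) for the category of uniform Banach rings topologically of finite presentation over~\(\ZZ\). I would first show that
\[
[\X]\colon\Cat{Ban}^{\op}\to\D[2]_{\mot}(\ZZ;\SS)
\]
is universal among symmetric monoidal functors to stable presentably symmetric monoidal \(2\)-categories satisfying the following axioms:
\begin{enumerate}
\item smooth base change, i.e.\ the analogue of \cref{i:a_sm} for smooth maps of Banach rings;
\item homotopy invariance for the closed unit disk, replacing \cref{i:a_hi} for~\(\AA^{1}\);
\item homotopy invariance for the open unit disk;
\item stability (Tate twist invertible), as in \cref{i:a_tate};
\item descent for the topology Scholze uses, namely arc-type covers of Berkovich spaces.
\end{enumerate}
The proof would follow the kernel formalism of \cref{ss:ker}. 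A \(2\)-functor out of the category of kernels is the same as a six-functor formalism, so universality reduces to a universality statement for \(\D_{\mot}(\X;\SS)\) among sheaf theories on \(\Cat{Ban}\) with these properties. That statement holds because \(\D_{\mot}\) is by definition the localization of spectral presheaves at exactly these relations; one also has to pass to the spherical version, but that is formal. Excision/recollement should not need to be imposed separately: in Berkovich geometry it follows from descent together with the other axioms. This is why the theorem does not require the ring stack to be ``sutured.''

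\textbf{Step 2: from axioms to a ring stack.} Next I would use the ring-stack dictionary: symmetric monoidal functors \(\Cat{Ban}^{\op}\to\cat{C}\) should correspond to ring stacks equipped with an absolute value. The absolute value encodes maps to the disks \(\{|x|\le r\}\), since \(\Cat{Ban}\) is generated under finite limits and the relevant gluings by the closed disks \(\ZZ\langle T/r\rangle\). Under this dictionary the axioms translate as follows:
\begin{itemize}
\item smoothness and homotopy triviality of the closed disk become ``homologically trivial smooth,'' checked only on the generator, exactly as in \cref{xawvvy};
\item the open disk condition is listed explicitly in the statement;
\item stability comes from the Tate object.
\end{itemize}

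\textbf{Step 3: descent.} This is the main obstacle. I would use the results of \cref{s:des} to reduce arc descent to two pieces: étale descent, which is reduced to the Kummer and Artin–Schreier conditions as in \cref{main_3}, and the genuinely analytic part. For the analytic part, the idea is to combine covers by disks of varying radius with the open-disk homotopy invariance, which together should make Berkovich points behave like henselian valued fields. Arc descent would then follow from étale descent plus cdh/v-type descent that holds automatically in the analytic setting. I would first try to adapt Scholze's proof in \cite{ScholzeB} that \(\D_{\mot}\) satisfies arc descent, checking which ingredients use only these axioms. The risk is that it relies on more than the generator data; if so, I would try to prove descent along \(\mathcal{M}(A)\)-point covers by induction on dimension via Noether normalization for affinoids.
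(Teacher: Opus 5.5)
\textbf{Verdict: the proposal has a genuine gap.} You never give a mechanism by which the ring-stack axioms force arc descent, or the existence of classes for non-tfp Banach rings, in the universal target. That is where the content of the theorem lies.

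Your plan to ``adapt Scholze's proof of arc descent'' cannot work: for \(\D_{\mot}\), arc descent is built into the definition (finitary arc sheaves on strictly totally disconnecteds), so there is nothing to adapt. ``Étale descent plus v-type descent that holds automatically'' is also not an argument. The Kummer–Artin–Schreier conditions only give descent along finite Galois covers (\cref{xx45iz}). Reaching algebraically closed residue fields needs an infinite pro-finite-étale tower, and nothing about that is automatic. Induction via Noether normalization does not address the pro-étale step either.

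The paper proceeds as follows:
\begin{itemize}
  \item It defines \([A]\) for every seminormed ring by pushing \([A]_{\alg}\) along \(\Shv([0,\infty]^{\pi_{0}A})\otimes\unit\to\Shv(\Sp(A))\otimes\unit\). Here \([A]_{\alg}\) is available via \cref{main_3}, since an absolute value forces suturedness.
  \item It shows these stacks are proper (\cref{x2a9p8}), using that \(\PP^{1}\) is already proper over \(\SH[2](\ZZ)\) together with the extended norm \(\widetilde{N}\colon\PP^{1}\to[0,\infty]\).
  \item Prim descent (\cref{xpbfpq}) then reduces descent to descendability of \(f_{*}\unit\).
  \item For arc covers of light strictly totally disconnecteds, it writes \(B\) as a filtered colimit of split \(A\)-algebras (Scholze) and applies \cref{md_seq} (\cref{xrp8oy}).
  \item Hardest, for \(\ZZ\{T_{1},\dotsc,T_{n}\}_{r}\) it builds an explicit descent cover by a light strictly totally disconnected (\cref{x0606p}). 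It adjoins a pseudouniformizer via \cref{xlcq6d}, which is where open-disk triviality enters through \cref{x85jpj}. It passes to a Cantor-type cover whose descendability index is bounded by the finite covering dimension. It handles the pro-finite-étale step through the pushout with \(\Shv^{\wedge}_{\fet}\); this needs Bachmann–Burklund–Xu and uniform cohomological-dimension bounds on local rings (Poineau, Gabber, Huber–Hansen), so that Clausen's pointwise descendability criterion applies.
\end{itemize}

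Step 1 is also unsupported. \(\D_{\mot}(X)\) is not a localization of presheaves on tfp uniform Banach rings at your relations. It is built from finitary arc sheaves on strictly totally disconnecteds, which are in general not tfp. So a target satisfying axioms only on \(\Cat{URing}_{\tfp}\) is not even evaluated on the objects that define \(\D_{\mot}\), and no analogue of \cref{drga} is available. The paper avoids any such statement. It builds the inverse functor directly:
\begin{itemize}
  \item it right Kan extends from \(\Cat{STDis}_{\aleph_{1}}\), which is legitimate by \cref{xrp8oy};
  \item it shows that \(\DD^{1}\)-invariant finitary maps become weakly suave (\cref{xammc1});
  \item it checks proper base change for \(\PP^{1}_{X}\to X\) via \cref{aplus} on analytified quasiprojective schemes, which rests on \cref{main_2}.
\end{itemize}

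Several smaller points in Step 2 are off as well:
\begin{itemize}
  \item Closed-disk invariance is not the ``homologically trivial smooth'' hypothesis, which concerns \(\AA^{1}\). It is derived in \cref{x85jpj} from triviality of \(\AA^{1}\) and \(\BB^{1}\), using the recollement \(\BB^{1}\to\PP^{1}\gets\DD^{1}\).
  \item ``Sutured'' is omitted because an absolute value pulls back the recollement \(\{0\}\to[0,\infty)\gets(0,\infty)\) (\cref{xljgmw}), not because excision follows from descent.
  \item Factoring through uniform completion (\cref{xh9vw0}) is itself a descendability argument, not part of a formal dictionary.
\end{itemize}
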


\begin{remark}\label{xb4idi}
  Note that \(\D[2]_{\mot}(\ZZ;\SS)\)
  admits a class for any seminormed ring.
  In this situation,
  these classes are proper,
  as shown in \cref{x2a9p8}.
  This means that,
  unlike Scholze’s Berkovich motives,
  proper base change holds
  without any finiteness assumption.
\end{remark}

\subsection{Applications}\label{ss:app}

An immediate application is
the construction of motivic realization functors.
To do this,
we first consider the \(2\)-categorical realization
using our universality results.
Then, by taking \(\End(\unit)\),
we obtain the desired realization \(1\)-functor.
The following is an example of this:

\begin{theorem}\label{x847gj}
  The analytic Habiro stack of Scholze~\cite{ScholzeH}
  and
  the Hyodo–Kato stack
  of Anschütz–Bosco–Le~Bras–Rodríguez~Camargo–Scholze~\cite{ABLBRCS}
  determine realization functors.
\end{theorem}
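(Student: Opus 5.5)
The plan is to derive the statement from the analytic universality result, \cref{main_4}, and then pass to endomorphisms of the unit. Recall how realizations are produced in this paper. A stable presentably symmetric monoidal \(2\)-category \(\cat{C}\), together with an object of the required kind in \(\cat{C}\), gives a symmetric monoidal functor \(\D[2]_{\mot}(\ZZ;\SS)\to\cat{C}\). Applying \(\End(\unit)\) to this functor, and more generally applying \(\Hom([X],[Y])\), gives a \(1\)-categorical realization. This lands in \(\D_{\mot}(X;\SS)\to\End_{\cat{C}}(\dots)\) and is compatible with the six operations.

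For both stacks we take \(\cat{C}\) to be a \(2\)-category of quasicoherent sheaves, that is, the \(2\)-category of kernels of \(\D_{\mathrm{qc}}\) on analytic stacks over the relevant base. For the analytic Habiro stack of Scholze~\cite{ScholzeH} this is qc sheaves on analytic stacks over the Habiro base. For the Hyodo–Kato stack of~\cite{ABLBRCS} it is the analogous \(2\)-category over its base. Inside \(\cat{C}\) we take the ring stack \(R\) given by the stack in question. We then take the absolute value on \(R\) that the construction carries: in both cases these stacks arise as transmutations of an analytic ring stack over a seminormed base, and the norm is inherited from there.

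Next we verify the hypotheses of \cref{main_4} one by one.

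\emph{Smoothness}, in the weakly suave sense of \cref{xawvvy}, means that \(f^{*}\) for \(R\to *\) has a left adjoint satisfying base change. We would obtain this from the known suaveness of \(\AA^{1}\)-type objects in the analytic qc formalism. Concretely, \(R\) is a quotient of a smooth analytic affine line by a suave groupoid, and suaveness passes along such quotients.

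\emph{Homological triviality} of \(R\) and of its open unit disk should follow from the main cohomology computations in the source papers. There, the cohomology of the affine line, respectively the disk, with coefficients in the stack is shown to be trivial; this is the analytic version of \(\AA^{1}\)-invariance of de~Rham or prismatic cohomology. The disk statement reduces to the computation of overconvergent de~Rham cohomology of the open disk.

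The \emph{Kummer} and \emph{Artin–Schreier} conditions require the \(l\)-th power map on units (after inverting \(l\)) and \(x\mapsto x^{p}-x\) on \(R/p\) to be covers in the sense of \cref{s:des}. We would prove these by reducing to the underlying analytic ring. There, these maps are finite étale after the appropriate localization, and finite étale maps of the base yield covers for \(!\)-descent of qc sheaves. For Artin–Schreier one must additionally check that the map stays a cover after the quotient defining \(R\). For instance, for Hyodo–Kato the relevant quotient is by a pd-type groupoid, and the cover property can be checked after pulling back along the atlas.

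I expect the main obstacle to be these descent conditions together with the open-disk homological triviality. Both are genuinely new inputs relative to the source papers, which compute cohomology but may not phrase it as descent. The strategy is to pass everything to an atlas of \(R\) via the descent results of \cref{s:des}, so that each condition becomes a statement about explicit analytic rings. Once the hypotheses are verified, \cref{main_4} gives the \(2\)-functor, and taking \(\End(\unit)\) gives the realization functors.
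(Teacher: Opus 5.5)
Your overall route is the one the paper indicates: build a ring stack with an absolute value in a \(2\)-category of kernels of quasicoherent sheaves, invoke \cref{main_4}, then take \(\End(\unit)\). The paper gives no more detail than that. However, your list of hypotheses to verify is incomplete.

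\textbf{Stability is missing.} In \cref{main_4}, ``smooth'' means suave with \(f^{!}\unit\) invertible (\cref{x4vjro}). The absolute value makes \(R\) sutured (\cref{xljgmw}), so \cref{st_sm} turns smoothness into weakly suave \emph{and stable}, i.e., \(f_{\natural}s_{*}\) invertible. \Cref{xawvvy} lists both conditions, and stability is the analogue of \cref{i:a_tate} in \cref{main_1}. Your smoothness paragraph only produces \(f_{\natural}\) with base change, so stability is never verified. This is not a formality. It is the \(\PP^{1}\)-inversion, and it is what makes the ring stack factor through \(\SH[2](\ZZ)\) via \cref{main_2}, which the analytic arguments (e.g.\ \cref{x85jpj}) use. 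You need a Poincaré-duality input from the source papers: \(f^{!}\unit\) is invertible for \(f\colon R\to{*}\). Relatedly, homological triviality is the compactly supported statement \(f_{!}f^{!}\unit\simeq\unit\), i.e., that the counit \(f_{\natural}f^{*}\to\id\) is an equivalence. The cohomology computations you cite yield this only via such a duality.

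\textbf{Two further steps need more than you give.}
\begin{enumerate}
\item The axioms \(N^{-1}(\{0\})=\{0\}\) and \(N^{-1}((0,\infty))=R^{\times}\) are not simply inherited from an analytic line. By \cref{xpbyl2}, \(N^{-1}(\{0\})\) is the colimit of the closed disks of radius \(r>0\). On an analytic affine line this colimit is the overconvergent germ at~\(0\), not the point. The axiom can hold on~\(R\) only because passing to~\(R\) kills that germ, as for a de~Rham-type quotient. So it must be checked on~\(R\) itself.
\item ``Cover'' in the Kummer and Artin–Schreier conditions means \(2\)-categorical descent (\cref{x0wijg}). This is stronger than descent for \(\D_{\mathrm{qc}}\) itself, so ``\(!\)-descent of qc sheaves'' is not enough. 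You should argue through \cref{xpbfpq,x670j7}, i.e., via descendability of \(f_{*}\unit\) for the finite étale maps. Also, the atlas you pull back along must itself be a cover in this sense before \cref{xhrdzm} applies.
\end{enumerate}
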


Another application
is about the nature of these \(2\)-categories.
For example, we see the following:

\begin{theorem}\label{xa0wo6}
  Let \(\Mot[2](\ZZ)\)
  be any of \(\SH[2](\ZZ)\), \(\SH[2]_{\et}(\ZZ)\),
  or \(\D[2]_{\mot}(\ZZ;\SS)\).
  Then it is a \(1\)-truncated object in \(\CAlg(2\Cat{Pr})^{\op}\),
  i.e.,
  the tautological morphism
  \begin{equation*}
    \Mot[2](\ZZ)^{\otimes S^{2}}\to\Mot[2](\ZZ)
  \end{equation*}
  is an equivalence.
\end{theorem}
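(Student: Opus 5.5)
Since each \(\Mot[2](\ZZ)\) is characterized by a universal property as the free stable presentably symmetric monoidal \(2\)-category on a ring stack satisfying certain conditions (\cref{main_2,main_3,main_4}), I plan to deduce \(1\)-truncatedness directly from that universal property. In \(\CAlg(2\Cat{Pr})\), the tensor product is the coproduct. So for an object \(\cat{M}\) and a finite space \(K\), the tensor \(\cat{M}^{\otimes K}\) corepresents \(\Map(\cat{M},\X)^{K}\), i.e.\ \(\Fun(K,\Map(\cat{M},\X))\). The tautological map \(\cat{M}^{\otimes S^{2}}\to\cat{M}\) induced by \(S^{2}\to\ast\) is an equivalence if and only if, for every target \(\cat{C}\), the constant map \(\Map(\cat{M},\cat{C})\to\Map(\cat{M},\cat{C})^{S^{2}}\) is an equivalence. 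Equivalently, every component of \(\Map(\cat{M},\cat{C})\) has vanishing \(\pi_{2}\) (and higher) homotopy, i.e.\ the mapping spaces are \(1\)-truncated. This is exactly the stated notion of \(1\)-truncated object in the opposite category. Because \(\cat{M}\) is freely generated, \(\Map(\cat{M},\cat{C})\) is identified with the space of ring stacks in \(\cat{C}\) satisfying the relevant conditions. The task therefore reduces to showing that this moduli space is \(1\)-truncated.

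Concretely, a ring stack in \(\cat{C}\) should mean a commutative ring object (in the appropriate sense used in the paper, with its cocartesian/cartesian structure coming from the symmetric monoidal \(2\)-category) in the underlying \(1\)-category \(\cat{C}^{\le1}\) of the \(2\)-category \(\cat{C}\). The conditions (homologically trivial, smooth, sutured, Kummer, Artin–Schreier, having an absolute value with homologically trivial open disk) should all be \emph{properties} of the underlying object and its structure maps, e.g.\ existence of adjoints, invertibility of certain transformations, or covers. They therefore cut out a union of components. Any extra data, such as the absolute value in \cref{main_4}, I would check is also encoded by maps in the \(1\)-category \(\cat{C}^{\le1}\) (e.g.\ an order/valuation structure given by subobjects), so that it contributes at most \(1\)-truncated fibers. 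Then the space of ring stacks is the core of a \(1\)-category of ring objects, or a subspace thereof. For a \(2\)-category \(\cat{C}\), the maximal subgroupoid of its underlying \((\infty,1)\)-category is an \(\infty\)-groupoid whose mapping spaces are the cores of the hom-categories. That groupoid is not \(1\)-truncated in general, so something more is needed.

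The key point, and I expect it to be the main obstacle, is to control the higher homotopy of this space. Here \(\pi_{2}\) at a ring stack \(R\) is \(\pi_{1}\) of the automorphism space of \(R\) at the identity. This amounts to invertible \(2\)-morphisms \(\id_{R}\Rightarrow\id_{R}\) compatible with the ring structure, and then higher ones. My plan is to show that compatibility with the unit and multiplication forces such self-homotopies to be trivial. The argument would go by viewing \(R\) as \(\AA^{1}\)-like: an endo-\(2\)-morphism of \(\id\) compatible with the multiplication \(R\otimes R\to R\) and the unit \(\unit\to R\) is determined by its restriction along the unit, where it becomes an automorphism of \(\id_{\unit}\) in the unit component. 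Then I would use that the relevant objects (ultimately images of \([\AA^{n}]\)) are generated under the structure maps from the unit in a way forcing the data to vanish, likely exploiting homological triviality to collapse \(\AA^{1}\) to the point. If this direct approach is too delicate, the fallback is to verify truncatedness on the concrete model \(\SH[2]\) via its kernel description. There \(\cat{M}^{\otimes S^{2}}\) can be computed as kernels of the tensor square over a cotensor, and I would reduce the question to the diagonal \(\cat{M}\otimes\cat{M}\to\cat{M}\) being a localization (idempotent-algebra style, as in \cref{xidgyh}). Idempotence of the multiplication immediately gives that all higher iterated diagonals, in particular \(S^{2}\), are equivalences.
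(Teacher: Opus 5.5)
\paragraph{Verdict: genuine gap.} Your reduction agrees with the paper's. Since \(\Map(\Mot[2](\ZZ)^{\otimes S^{2}},\cat{C})\simeq\Map(S^{2},\Map(\Mot[2](\ZZ),\cat{C}))\), it suffices that the space of ring stacks with the relevant properties is \(1\)-truncated. Neither the properties nor the absolute value of \cref{main_4} (a map to the \(0\)-truncated locale \([0,\infty)\) subject to conditions) affect this. However, the step that carries the actual content is missing.

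You correctly note that the core of a category of ring objects need not be \(1\)-truncated. You then only sketch a hope that compatibility with unit and multiplication, plus homological triviality, kills self-homotopies of \(\id_{R}\). This does not work as stated, for three reasons:
\begin{itemize}
  \item You give no reason why such a self-homotopy should be determined by its restriction along \(1\colon{*}\to R\).
  \item That restriction is in any case a loop in \(\Map({*},R)\), which vanishes only if \(R\) itself has no nontrivial loops.
  \item Homological triviality (invertibility of the counit \(f_{\natural}f^{*}\to\id\)) gives no control over \(R\to\Map(S^{1},R)\).
\end{itemize}

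The fallback fails too. If \(\Mot[2](\ZZ)\otimes\Mot[2](\ZZ)\to\Mot[2](\ZZ)\) were an equivalence, \(\Mot[2](\ZZ)\) would be \((-1)\)-truncated, i.e.\ any two morphisms out of it would agree. This is false. By \cref{main_1} there is a \(2\)-categorical Betti realization \(\SH[2](\ZZ)\to\Cat{Pr}_{\st}\). Followed by the unit map, on \(\End(\unit)=\SH(\ZZ)\) it sends \(\Sigma^{\infty}\PP^{1}\) to \(\Sigma^{2}\unit\not\simeq\Sigma^{\infty}\PP^{1}\), so it differs from the identity of \(\SH[2](\ZZ)\). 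Also, \cref{xidgyh} is about an idempotent algebra \emph{in} \(\SH[2](\ZZ)\), not about \(\SH[2](\ZZ)\) being idempotent.

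\paragraph{The missing idea.} Suturedness makes \(R\) a \(0\)-truncated stack (\cref{xc95y9}).
\begin{itemize}
  \item The zero section \(0\colon{*}\to R\) is a base change of the inclusion of the closed point of the Sierpi\'nski locale, hence a closed immersion.
  \item The shearing automorphism \((x,y)\mapsto(x,x-y)\) of \(R\times R\) identifies the diagonal with \(\id\times0\colon R\times{*}\to R\times R\).
  \item Hence the diagonal is a monomorphism, i.e.\ \(R\to\Map(S^{1},R)\) is an equivalence, so \(R\) and all \(R^{n}\) are \(0\)-truncated.
  \item Mapping spaces between ring objects whose underlying stacks are \(0\)-truncated are limits of the discrete spaces \(\Map(R_{1}^{m},R_{2}^{n})\), hence discrete.
  \item Therefore the moduli space of such ring stacks is \(1\)-truncated.
\end{itemize}
Only the additive structure (for shearing) and suturedness are used, not the multiplication or homological triviality. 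In \cref{main_4}, suturedness is automatic from the absolute value (\cref{xljgmw}).
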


\begin{proof}
  For an object \(\cat{C}\in\CAlg(2\Cat{Pr})\),
  we have
  \begin{equation*}
    \Map_{\CAlg(2\Cat{Pr})}\bigl(\Mot[2](\ZZ)^{\otimes S^{2}},\cat{C}\bigr)
    \simeq
    \Map\bigl(S^{2},\Map_{\CAlg(2\Cat{Pr})}(\Mot[2](\ZZ),\cat{C})\bigr).
  \end{equation*}
  Therefore,
  it suffices to show that
  \(\Map_{\CAlg(2\Cat{Pr})}(\Mot[2](\ZZ),\cat{C})\)
  is \(1\)-truncated.
  To show this, it suffices to show that
  every object is \(0\)-truncated,
  i.e., that
  the diagonal morphism
  \begin{equation*}
    R\to\Map(S^{1},R)
  \end{equation*}
  is an equivalence
  for a ring stack~\(R\)
  satisfying the conditions of \cref{main_2}.
  This follows from \cref{xc95y9}.
\end{proof}

\begin{remark}\label{x152wm}
\Cref{xa0wo6} for \(\Mot[2](\ZZ)=\SH[2](\ZZ)\)
  can also be deduced
  directly from \cref{main_1}.
\end{remark}

This approach
enables us to show certain properties
without examining the \(2\)-category directly.
The proof of the following will appear in~\cite{n-rig-1},
where we will introduce the notion of \(n\)-rigidity:

\begin{theorem}[\cite{n-rig-1}]\label{xrxwab}
  The stable presentably symmetric monoidal
  \(2\)-categories
  \(\D[2]_{\mot}(\ZZ;\SS[\sfrac12])\)
  and
  \(\D[2]_{\mot}(O;\SS)\)
  for any totally imaginary number ring~\(O\)
  are \(2\)-rigid.
\end{theorem}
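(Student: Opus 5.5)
The plan is to obtain $2$-rigidity from the universal properties, mainly \cref{main_4} together with the $1$-truncatedness of \cref{xa0wo6}. The idea is to never inspect the $2$-category of kernels directly.

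The first step is to pin down $2$-rigidity. I take it to mean, in analogy with ordinary rigidity, that the multiplication $\mu\colon\Mot[2]\otimes\Mot[2]\to\Mot[2]$ admits a left (or right) adjoint that is linear over $\Mot[2]\otimes\Mot[2]$ in $2\Cat{Pr}$. Equivalently, the tensor-product map from endomorphisms should identify dualizability data. Because \cref{xa0wo6} makes $\mu$ an equivalence on the relevant $S^{2}$-power, $\Mot[2](\ZZ)$ behaves like an idempotent object relative to $1$-truncated mapping spaces. So the first reduction is to show that the diagonal $[\Spec\ZZ]\to[\Spec\ZZ]\otimes[\Spec\ZZ]$ is adjointable in a linear way. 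Under the kernel description, this becomes the statement that for the generating ring stack $R$ (with absolute value) the diagonal $R\to R\times R$, or the map to a point, is ``proper'' and ``smooth'' in the $2$-categorical sense. In other words, both $d_{!}\simeq d_{*}$ and $f_{!}\simeq f_{\natural}$ up to invertible twists.

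The second step checks these conditions in the free model using the descent theory of \cref{s:des}. Properness of classes of seminormed rings, which holds unconditionally by \cref{x2a9p8}, handles the right-adjoint side. The remaining issue is whether the unit object $[\mathcal{M}(\ZZ)]$ itself is proper and smooth over the base: the Berkovich spectrum of $\ZZ$ with its archimedean and $p$-adic points has to be $2$-dualizable. Here the hypotheses enter. Inverting $2$ removes the obstruction at the real place, where complex conjugation acts and $\SS$-coefficients see the $C_{2}$-Tate construction. Over a totally imaginary $O$, every archimedean place is complex, so the real-place contribution disappears. I would use Kummer descent for $l=2$ to show that, after either modification, the relevant Tate twist and Euler-class terms become invertible, which makes the trace/cotrace maps exhibit duality.

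The main obstacle, I expect, is the archimedean/real analysis. One has to show that the conjugation-fixed locus contributes a non-invertible kernel precisely because of $2$-torsion phenomena, and that this vanishes exactly under the stated hypotheses. My first attempt would be to compute $\End(\unit)$ locally at each archimedean point via the ring stack $\CC$ (respectively $\RR$) with its absolute value, and check invertibility of the norm map $\Nm$ for the $C_{2}$-action after $\SS[\sfrac12]$. The nonarchimedean places should be handled uniformly by homological triviality of the open unit disk.
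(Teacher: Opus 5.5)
The paper does not prove this statement. It is deferred to the cited forthcoming work, where $2$-rigidity is first defined, and the only hint is that it follows from the universal properties ``without examining the $2$-category directly.'' So your plan has to stand on its own, and it has a genuine gap: the reduction happens at the wrong level. Whatever the precise definition, $2$-rigidity is a property of $\Mot[2]$ as an object of $\CAlg(2\Cat{Pr}_{\st})$; the paper phrases it as almost properness of the associated gestalt. It should therefore concern suitably linear \emph{right} adjoints to structure maps such as $\Cat{Pr}_{\st}\to\Mot[2]$ and $\Mot[2]\otimes\Mot[2]\to\Mot[2]$. Presumably it also concerns $\Mot[2]^{\otimes S^{1}}\to\Mot[2]$, since \cref{xa0wo6} only gives $1$-truncatedness.

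You instead pass to two other objects:
\begin{itemize}
\item $[\Spec\ZZ]$, which is the unit of $\Mot[2]$, so its diagonal is trivially an equivalence;
\item the ring stack $R$, which is a stack \emph{over} $\Mot[2]$, i.e.\ an object of $\CAlg(\Mot[2])^{\op}$.
\end{itemize}
The facts you then aim for are $d$ a closed immersion (so $d_{!}\simeq d_{*}$), $f^{!}\unit$ invertible, and properness of classes of seminormed rings. These are \cref{xc95y9,st_sm,x2a9p8}, and they hold in every target of \cref{main_4}, in particular in $\D[2]_{\mot}(\ZZ;\SS)$ itself. A reduction to these hypothesis-free facts would yield $2$-rigidity with $\SS$-coefficients over $\ZZ$. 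That is not claimed, and the real places should obstruct it (see below), so the reduction must lose the actual content.

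Similarly, \cref{xa0wo6} says that $\Mot[2]^{\otimes S^{2}}\to\Mot[2]$ is an equivalence. It does not make $\Mot[2]$ idempotent, which would mean $\Mot[2]\otimes\Mot[2]\simeq\Mot[2]$. Truncatedness by itself produces no adjoints; at most it limits how many iterated diagonals have to be examined.

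Your heuristic about the hypotheses points in the right direction, but the mechanism is wrong and incomplete. At a point with residue field $\RR$, motives with $\SS$-coefficients are governed by $\Fun(BC_{2},\Cat{Sp})$. Its unit is not compact because $\SS^{tC_{2}}\neq0$. This is the phenomenon of \cref{xg2rlr}: descent holds (\cref{xapqpz}) but descendability fails.

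Kummer descent for $l=2$ (\cref{xl1f4o}) cannot fix this. It holds in every Kummer--Artin--Schreier target independently of coefficients, and it is exactly the descent that produces the $C_{2}$-homotopy fixed points at real places. What is needed is a \emph{quantitative} input: descendability, or uniformly bounded (arc-)cohomological dimension, which is the kind of input consumed by \cref{x5sz2s,clausen}.
\begin{itemize}
\item After inverting $2$, this comes from the transfer: the composite $\unit\to f_{*}f^{*}\unit\to\unit$ for $\RR\to\CC$ is multiplication by $2$.
\item Over a totally imaginary $O$, it comes from bounds such as \cref{xwba2d,x90oh5}.
\end{itemize}

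Moreover, the obstruction is not confined to archimedean places. Nonarchimedean points can have formally real residue fields. For example, $\QQ((T))$ with $\lvert T\rvert=r<1$ is a complete residue field of $\ZZ\{T\}_{r}$ and has infinite $2$-cohomological dimension. This is why the bound of \cref{xwba2d} is formulated for totally imaginary $O$. So homological triviality of the open unit disk does not dispose of the nonarchimedean places; they need uniform cohomological-dimension bounds as well.

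Finally, your last step (``Tate twist and Euler-class terms become invertible, which makes the trace/cotrace maps exhibit duality'') specifies no duality datum. It also does not say which object is claimed to be dualizable over what, so there is nothing there yet to verify.
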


In the context of the geometry
of gestalts by Scholze–Stefanich (cf.~\cite{Gestalten}),
which treats presentable categorical spectra (see~\cite[Remark~2.10]{n-pr})
as a building block of geometry,
this means that
the gestalt of Berkovich motives
is almost proper.

\subsection*{Outline}\label{ss:ol}

We begin in \cref{s:idem}
by reviewing some basic tools in higher category theory,
which are essential for the subsequent developments.
In \cref{s:dgs}, we use these tools to prove \cref{main_1}.  
Next, we introduce some general machinery.
\Cref{s:stk} covers basic notions of stacks,
indispensable for stating our main theorems.
\Cref{s:des} develops the theory of descent,
a fundamental tool for understanding presentably symmetric monoidal \(n\)-categories.
We illustrate these concepts with examples in \cref{s:d_ex}.  
With this preparation, we turn to the proofs of the main theorems.
In \cref{s:main_alg,s:main_et}, we prove \cref{main_2,main_3},
respectively.
Finally, we consider the analytic setting.
\Cref{s:ber} studies Berkovich geometry,
which provides the framework for the results of \cref{s:main_ber},
where we prove \cref{main_4}.

\subsection*{Acknowledgments}\label{ss:ack}

I thank Peter Scholze
for helpful discussions
from the very early stages of this project.
I thank
Adam Dauser,
Hadrian Heine,
Juan~Esteban Rodríguez~Camargo,
and
Germán Stefanich for useful conversations while writing this paper.
I thank Peter Scholze for valuable comments on a draft of this paper.
I thank the Max Planck Institute for Mathematics.

\section{Some \texorpdfstring{\(2\)}{2}-category theory}\label{s:idem}

We recall some \(2\)-category theory
needed in this paper.
In \cref{ss:corr},
we review  spans.
In \cref{ss:ker},
we review \(2\)-categories of kernels.
In \cref{ss:bc_hex},
we commute a hexagon
consisting of Beck–Chevalley transformations.
In \cref{ss:exc},
we do some \(2\)-categorical homological algebra.

We assume familiarity
with the basics of presentable \(n\)-category theory;
see~\cite{StefanichP} or~\cite{n-pr}.
What is crucial in this paper is that,
as demonstrated in~\cite[Section~3]{n-pr},
presentable \(2\)-category theory
behaves well.

\subsection{Spans}\label{ss:corr}

For a \(1\)-category~\(\cat{G}\) with finite limits,
we obtain
the \(n\)-category of iterated spans \(\Span[n](\cat{G})\) of Haugseng~\cite{Haugseng18}
(see also~\cite[Section~1.5]{Haugseng18} for prior works).
In this paper,
the variants
\(\Span_{E}(\cat{G})\)
and
\(\Span[2]_{E;P,J}(\cat{G})\) play important roles,
where \(\cat{G}\) is a \(1\)-category
and
\(J\), \(P\), and~\(E\) are wide subcategories of~\(\cat{G}\)
stable under base change
satisfying \(J\subset E\supset P\).
Concretely,
\(\Span[2]_{E;P,J}(\cat{G})\) is described as follows;
see~\cite[Construction~4.12]{CnossenLenzLinskens} for the precise construction:
\begin{itemize}
  \item
    Objects are those of~\(\cat{G}\).
    \item
    A \(1\)-morphism from~\(X\) to~\(X'\)
    is a span \(X\gets Y\to X'\) in~\(\cat{G}\)
    such that \(Y\to X'\)
    is in~\(E\).
    \item
    A \(2\)-morphism from~\(X\gets Y\to X'\)
    to \(X\gets Y'\to X'\)
    is a diagram
    \begin{equation}
      \label{e:g9up8}
      \begin{tikzcd}
        {}&
        Y\ar[rd]\ar[ld]&
        {}\\
        X&
        Z\ar[r]\ar[l]\ar[u]\ar[d]&
        X'\\
        {}&
        Y'\ar[ur]\ar[ul]&
        {}
      \end{tikzcd}
    \end{equation}
    in~\(\cat{G}\)
    such that \(Z\to Y\) is in~\(P\) and \(Z\to Y'\) is in~\(J\).
\end{itemize}
By setting~\(J\) and~\(P\) to be the class of all equivalences,
we obtain~\(\Span_{E}(\cat{G})=\Span[2]_{E;\all,\all}(\cat{G})\),
which is a \(1\)-category.
Note the inclusions
\(\cat{G}^{\op}\to\Span_{E}(\cat{G})\to\Span[2]_{E;P,J}(\cat{G})\).
In~\cite{CnossenLenzLinskens},
the universality of
this category was obtained under suitable assumptions.
Here we recall from~\cite[Theorem~B]{CnossenLenzLinskens}
the version
that considers the cartesian symmetric monoidal structure on~\(\cat{G}\):

\begin{theorem}[Cnossen–Lenz–Linskens]\label{cll}
  Let \(\cat{G}\) be a \(1\)-category with finite products.
  Consider its subcategories~\(J\), \(P\), and~\(E\)
  satisfying the following:
  \begin{itemize}
    \item
      Both~\(J\) and~\(P\) are closed under base change
      and satisfy cancellation.
    \item
      A morphism is in~\(E\)
      if and only if it can be written as \(pj\)
      for \(j\in J\) and \(p\in P\).
    \item
      Every morphism in \(J\cap P\)
      is \(n\)-truncated for some~\(n\) (depending on the morphism).
  \end{itemize}
  Then, for any symmetric monoidal \(2\)-category~\(\cat{C}\),
  the restriction functor
  \begin{equation*}
    \Fun^{\lsm}(\Span[2]_{E;P,J}(\cat{G}),\cat{C})
    \to
    \Fun^{\lsm}(\cat{G}^{\op},\cat{C})
  \end{equation*}
  is a subcategory inclusion.
  \begin{itemize}
    \item
      Objects are spanned by \((J,P)\)-biadjointable functors~\(D\)
      satisfying the projection formula:
      The morphism \(j^{*}=D(j)\) for any \(j\in J\)
      admits a left adjoint~\(j_{\natural}\)
      satisfying the projection formula and the Beck–Chevalley condition.
      Similarly, \(p^{*}=D(p)\) for any \(p\in P\)
      admits a right adjoint~\(p_{*}\) satisfying the dual conditions.
      Moreover,
      these adjoints satisfy the double Beck–Chevalley condition;
      i.e., for any pullback square
      \begin{equation*}
        \begin{tikzcd}
          Y'\ar[r,"k"]\ar[d,"p'"']&
          Y\ar[d,"p"]\\
          X'\ar[r,"j"]&
          X
        \end{tikzcd}
      \end{equation*}
      such that \(j\in J\) and \(p\in P\),
      the induced morphism \(j_{\natural}p'_{*}\to p_{*}k_{\natural}\)
      is an equivalence.
    \item
      Morphisms \(\alpha\colon D\to D'\)
      are natural transformations
      compatible with the formations of~\(j_{\natural}\) and~\(p_{*}\)
      above:
      For \(j\colon Y\to X\) in~\(J\),
      the morphism \(D'(j)^{\L}\alpha_{X}\to\alpha_{Y}D(j)^{\L}\)
      is an equivalence
      and for \(p\colon Y\to X\) in~\(P\),
      the morphism \(\alpha_{X}D(p)^{\R}\to D'(p)^{\R}\alpha_{Y}\)
      is an equivalence.
  \end{itemize}
  Here, \(\Fun^{\lsm}\)
  denotes the category of lax symmetric monoidal functors.
\end{theorem}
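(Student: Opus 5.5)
Since this is exactly \cite[Theorem~B]{CnossenLenzLinskens}, in the paper we would only need to check that our hypotheses match theirs. Still, here is the argument I would follow to prove it.

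The first step is to remove the symmetric monoidal structure. Since the monoidal structure on \(\cat{G}\) is cartesian, the induced structure on \(\cat{G}^{\op}\) is cocartesian. The symmetric monoidal structure on \(\Span[2]_{E;P,J}(\cat{G})\) is induced from products in~\(\cat{G}\), which are computed pointwise on spans. A lax symmetric monoidal functor out of either category can therefore be encoded as a functor out of an \(\infty\)-operadic unstraightening. Concretely, this means a functor from a category of spans in the category \(\cat{G}_{\amalg}\)-type of finite families of objects of~\(\cat{G}\), with \(J\), \(P\), and~\(E\) extended componentwise. Under this translation, the projection formula is precisely the Beck–Chevalley condition for the squares coming from the operadic structure maps, namely the product squares \(Y\times Z\to X\times Z\). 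The problem thus reduces to a non-monoidal universal property of iterated spans in a \(1\)-category with finite limits.

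The second step is to prove that non-monoidal statement in two adjunction-adjoining stages.

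\emph{Stage one.} I would use the known universality of \(\Span[2]_{J;\all,J}(\cat{G})\): by Macpherson and Stefanich, this \(2\)-category is obtained from \(\cat{G}^{\op}\) by freely adjoining left adjoints \(j_{\natural}\) to~\(j^{*}\) for \(j\in J\), subject to Beck–Chevalley along all base changes. Here the unit and counit are given by the diagonal and the identity spans.

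\emph{Stage two.} In the resulting \(2\)-category, I would then adjoin right adjoints \(p_{*}\) to~\(p^{*}\) for \(p\in P\). The double Beck–Chevalley condition \(j_{\natural}p'_{*}\simeq p_{*}k_{\natural}\) is exactly the relation needed for composites of the form ``left adjoint then right adjoint'' to be closed under composition. Because \(E=P\circ J\), every span with right leg in~\(E\) is reached, and \(2\)-morphisms of the shape~\eqref{e:g9up8} correspond to mates formed from the \(J\)-counits and the \(P\)-units.

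The third step is to identify the morphisms. A natural transformation \(\alpha\colon D\to D'\) of functors out of \(\cat{G}^{\op}\) extends, necessarily uniquely, to one out of \(\Span[2]_{E;P,J}(\cat{G})\) exactly when its lax naturality squares for \(j_{\natural}\) and~\(p_{*}\) are invertible. This is the standard characterization of (strong) transformations between adjointable functors through mates. It yields the stated description of the subcategory, and in particular shows that restriction is a subcategory inclusion.

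The main obstacle is the interaction on \(J\cap P\). For such a morphism both \(j_{\natural}\) and~\(j_{*}\) exist, and the two extension procedures must produce coherently compatible data, for example the comparison between the diagonal span and the identity. I would handle this by induction on the truncation level, as in Liu–Zheng and Mann. For an \(n\)-truncated morphism \(f\in J\cap P\), its diagonal is \((n-1)\)-truncated and lies in \(J\cap P\) by cancellation, so the required coherences for~\(f\) reduce to those for its diagonal. The induction terminates because \((-2)\)-truncated morphisms are equivalences. Making this induction fully coherent, and not merely correct at the level of homotopy categories, is the delicate part; this is where I would rely on the parametrized-category machinery of \cite{CnossenLenzLinskens}.
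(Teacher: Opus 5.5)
Your first sentence matches the paper exactly: the paper gives no argument of its own and simply quotes \cite[Theorem~B]{CnossenLenzLinskens}, specialized to the cartesian symmetric monoidal structure on~\(\cat{G}\), so the outline you add afterwards is extra material the paper does not provide. One correction to that outline: in the statement's indexing, where the middle slot is~\(P\), the stage-one category that freely adjoins only the left adjoints~\(j_{\natural}\) is \(\Span[2]_{J;\iso,J}(\cat{G})\), as the paper itself uses in the proof of \cref{main_4}; your \(\Span[2]_{J;\all,J}(\cat{G})\) is a different category and would violate \(E=P\circ J\) unless \(J\) contains every morphism.
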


\begin{remark}\label{xw0us7}
  We use \cref{cll}
  throughout this paper,
  even where weaker results might suffice.
  We refer the reader to~\cite[Section~1]{CnossenLenzLinskens}
  for historical accounts
  of other previous approaches.
  We can often compare this with
  a previous construction,
  such as in Liu–Zheng’s work~\cite{LiuZheng},
  by using
  the uniqueness theorem of Dauser–Kuijper~\cite{DauserKuijper}.
\end{remark}

\subsection{Kernels}\label{ss:ker}

Consider a category with finite limits~\(\cat{G}\)
and a lax symmetric monoidal functor
\(D\colon\Span(\cat{G})\to\Cat{Cat}\).
The symmetric monoidal \(2\)-category we associate to~\(D\)
was used in~\cite[Section~2.3]{FarguesScholze}
based on\footnote{See~\cite[Section~3]{HansenScholze23}
  and~\cite[Remark~1.3.11]{HeyerMann}
  about the relation.
} Lu–Zheng’s work~\cite{LuZheng22}.
It has the following features:
\begin{itemize}
  \item
    Objects are those of~\(\cat{G}\).
    We write \([X]\) for the object corresponding to~\(X\in\cat{G}\).
  \item
    The mapping category from~\([X]\) to~\([Y]\)
    is \(D(X\times Y)\).
  \item
    The identity is \(d_{!}\unit\),
    where \(d\colon X\to X\times X\) is the diagonal.
  \item
    For \(F\colon[X]\to[Y]\)
    and \(G\colon[Y]\to[Z]\),
    its composite is
    \((\pr_{X,Z})_{!}(\pr_{X,Y}^{*}F\otimes\pr_{Y,Z}^{*}G)\),
    where \(\pr\) denotes the corresponding projection
    from \(X\times Y\times Z\).
\end{itemize}
The following precise formulation
was learned from Dauser:

\begin{definition}\label{ker_s}
  Let \(\cat{G}\) be a category with finite limits
  and \(D\colon\Span(\cat{G})\to\Cat{Cat}\)
  a lax symmetric monoidal functor.
  Then,
  since \(\Span(\cat{G})\) is closed,
  it is enriched over itself.
  Base changing along~\(D\),
  we obtain a \(\Cat{Cat}\)-enrichment on~\(\Span(\cat{G})\).
  We call this the \emph{\(2\)-category of kernels}.
\end{definition}

In the presentable situation,
we can make another definition
of a similar category.
The following definition was learned from Dauser:

\begin{definition}\label{ker_p}
  Let \(\cat{G}\) be a category with finite limits
  and \(D\colon\Span(\cat{G})\to\Cat{Pr}\)
  a lax symmetric monoidal functor,
  which we regard
  as a commutative algebra object
  of \(\Fun(\Span(\cat{G}),\Cat{Pr})\)
  with the Day convolution symmetric monoidal structure.
  We consider
  \(\Mod_{D}(\Fun(\Span(\cat{G}),\Cat{Pr}))\),
  which we refer to as
  the \emph{presentable \(2\)-category of kernels}.
\end{definition}

An advantage of this definition is that
it avoids the use of enriched category theory.
Nevertheless,
these two constructions can be compared by the following:

\begin{theorem}\label{xchyfq}
  Let \(\cat{S}\) be a small closed symmetric monoidal category
  and \(\cat{V}\) an object of \(\CAlg(\Cat{Pr})\).
  Let \(D\colon\cat{S}\to\cat{V}\) be a lax symmetric monoidal functor.
  Then there is a canonical equivalence
  \begin{equation*}
    \PShv^{\cat{V}}(\widetilde{\cat{S}}^{\op})
    \simeq
    \Mod_{D}(\Fun(\cat{S},\cat{V})),
  \end{equation*}
  where \(\widetilde{\cat{S}}\) is the \(\cat{V}\)-enriched category
  obtained by base changing the self-enrichment of~\(\cat{S}\)
  along~\(D\).
\end{theorem}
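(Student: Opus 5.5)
The plan is to compare both sides as categories of \(\cat{V}\)-valued "modules," using the enriched Yoneda lemma on the left and the universal property of Day convolution on the right.

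First, recall the known description of \(\cat{V}\)-enriched presheaves on a \(\cat{V}\)-enriched category \(\cat{E}\). Namely, \(\PShv^{\cat{V}}(\cat{E}^{\op})\), which is \(\cat{V}\)-enriched functors \(\cat{E}\to\cat{V}\), is the free \(\cat{V}\)-tensored presentable category on \(\cat{E}\). Equivalently, it is the category of left modules over the "categorical algebra" in \(\Fun(\cat{E}_{0}\times\cat{E}_{0},\cat{V})\) determined by the enrichment. Apply this to \(\cat{E}=\widetilde{\cat{S}}\), whose hom-objects are \(D(\underline{\Hom}(s,t))\). An enriched functor \(F\colon\widetilde{\cat{S}}\to\cat{V}\) then amounts to objects \(F(s)\in\cat{V}\) together with action maps
\begin{equation*}
  D(\underline{\Hom}(s,t))\otimes F(s)\to F(t),
\end{equation*}
subject to associativity and unit coherences.

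Second, unwind \(\Mod_{D}(\Fun(\cat{S},\cat{V}))\). Day convolution says that a \(D\)-module structure on \(M\colon\cat{S}\to\cat{V}\) is given by maps \(D(s)\otimes M(s')\to M(s\otimes s')\), natural in \(s\) and \(s'\). The key step is to use closedness of \(\cat{S}\) to translate between the two structures.

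In one direction, suppose we are given an enriched \(F\). Its underlying functor is obtained by restricting along the lax monoidal map \(\Hom_{\cat{S}}(s,t)\to\Map_{\cat{V}}(\unit,D\underline{\Hom}(s,t))\), which uses \(\unit_{\cat{S}}\to\underline{\Hom}(s,t)\). The module structure is then the composite
\begin{equation*}
  D(s)\otimes F(s')\to D(\underline{\Hom}(s',s\otimes s'))\otimes F(s')\to F(s\otimes s'),
\end{equation*}
where the first map comes from the coevaluation \(s\to\underline{\Hom}(s',s\otimes s')\).

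Conversely, suppose we are given a module \(M\). The enriched action is
\begin{equation*}
  D(\underline{\Hom}(s,t))\otimes M(s)\to M(\underline{\Hom}(s,t)\otimes s)\to M(t),
\end{equation*}
where the second map comes from evaluation.

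To make these translations coherent in the \(\infty\)-setting, I would not match coherences by hand. Instead I would exhibit both sides as modules over monads on \(\Fun(\cat{S}_{0}^{\simeq},\cat{V})\) or on \(\Fun(\cat{S},\cat{V})\), and check via Barr–Beck–Lurie that the monads agree. Both forgetful functors to \(\Fun(\cat{S},\cat{V})\) are conservative and preserve colimits. The left adjoint for modules is \(M\mapsto D\otimes_{\mathrm{Day}}M\). The left adjoint for enriched functors is the enriched left Kan extension, whose value at \(t\) is \(\int^{s}D(\underline{\Hom}(s,t))\otimes M(s)\). One then shows the Day convolution formula equals this one. We have
\begin{equation*}
  (D\otimes_{\mathrm{Day}}M)(t)=\int^{s,s'}\Map(s\otimes s',t)\otimes D(s)\otimes M(s').
\end{equation*}
Using the adjunction \(\Map(s\otimes s',t)\simeq\Map(s,\underline{\Hom}(s',t))\) and the co-Yoneda lemma in the variable \(s\), this collapses to \(\int^{s'}D(\underline{\Hom}(s',t))\otimes M(s')\), as required.

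It remains to identify the monad multiplications, not just the underlying functors. The cleanest route is to show that both monads come from the same algebra object: the endomorphism algebra of the tautological functor, or equivalently the image of the self-enrichment algebra of \(\cat{S}\) under \(D\). One can then invoke the known equivalence, for closed \(\cat{S}\), between Day-convolution modules over the unit-induced algebra and enriched presheaves on the self-enrichment. Here the case \(D=\id\) with \(\cat{V}=\PShv(\cat{S})\) serves as a universal model, and the general case follows by base change along \(D\). Functoriality of the enriched presheaf construction under lax monoidal change of base is what makes this reduction work.

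The main obstacle is exactly this coherent identification of the monad structures. Matching the underlying endofunctors is just the coend computation above, but making the multiplications agree requires care with the \(\infty\)-categorical enriched machinery. I would first try the universal-case reduction (\(D=\id\), then base change), and fall back on a direct comparison of monads only if that reduction does not go through.
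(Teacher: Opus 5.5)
Your skeleton matches the paper's proof. The enriched side is monadic over \(\Fun(\cat{S},\cat{V})\) via restriction \(f^{*}\) along the identity-on-objects \(\cat{V}\)-functor \(f\) from the freely \(\cat{V}\)-enriched \(\cat{S}\) to \(\widetilde{\cat{S}}\). Your coend computation, showing that both \(f^{*}f_{!}\) and \(D\otimes_{\mathrm{Day}}(\X)\) send \(\Map_{\cat{S}}(s,\X)\otimes\unit\) to \(D(\underline{\Hom}(s,\X))\), is exactly the check on representables that the paper calls straightforward.

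The gap is the step you flag yourself. An equivalence of underlying endofunctors does not give an equivalence of monads, and you never produce a morphism of monads. Your proposed remedy does not work as stated:
\begin{itemize}
\item The Yoneda embedding \(y\colon\cat{S}\to\PShv(\cat{S})\) is universal only among \emph{strong} symmetric monoidal functors. A merely lax \(D\) factors through it only via a colimit-preserving but \emph{lax} symmetric monoidal functor \(\PShv(\cat{S})\to\cat{V}\).
\item Neither side of the theorem is obtained from the \(D=y\) case by base change along such a functor. Change of enrichment for presentable enriched presheaf categories is \(\cat{V}\otimes_{\cat{V}'}(\X)\), which requires a symmetric monoidal left adjoint. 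Likewise, \(\Mod_{D}(\Fun(\cat{S},\cat{V}))\) is not a tensoring of \(\Mod_{y}(\Fun(\cat{S},\PShv(\cat{S})))\) along a lax functor.
\item The "known equivalence" you would invoke in the universal case is itself an instance of the theorem (the case \(D=y\)), so it would need its own proof.
\end{itemize}

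The missing idea is to get the monad morphism from a comparison functor, so that multiplications never have to be matched by hand. Here is how the paper does it.
\begin{itemize}
\item Use the unique map of commutative algebras from the Day unit \(\unit_{\cat{V}}\otimes\Map_{\cat{S}}(\unit,\X)\) to \(D\). This produces \(f\) as a symmetric monoidal \(\cat{V}\)-functor.
\item Then \(f_{!}\) is symmetric monoidal for the Day convolutions on both enriched presheaf categories, so \(f^{*}\) is lax symmetric monoidal.
\item \(f^{*}\unit\) is the restriction of the enriched representable at \(\unit_{\cat{S}}\), namely \(D(\underline{\Hom}(\unit,\X))\simeq D\).
\item Hence \(f^{*}\) lifts to a functor into \(\Mod_{D}(\Fun(\cat{S},\cat{V}))\) compatible with the forgetful functors. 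This yields a canonical morphism of monads \(D\otimes_{\mathrm{Day}}(\X)\to f^{*}f_{!}\).
\end{itemize}
Both monads are \(\cat{V}\)-linear and preserve colimits, so this morphism is an equivalence once it is one on representables. That is your coend computation; you only need to check that the canonical map, not just some abstract identification, realizes it. Your explicit evaluation and coevaluation translations between the two structures then become unnecessary.
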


\begin{corollary}\label{xfa81g}
  In the situation of \cref{ker_p},
  we choose a regular cardinal~\(\kappa\)
  so that \(D\) factors through \(\Cat{Pr}^{\kappa}\).
  By the procedure in \cref{ker_s},
  we obtain \(\Cat{Pr}^{\kappa}\)-enriched category~\(\cat{K}\).
  Then \(\PShv^{\Cat{Pr}^{\kappa}}(\cat{K})\)
  is equivalent to \(\Mod_{D}(\Fun(\Span(\cat{G}),\Cat{Pr}^{\kappa}))\).
\end{corollary}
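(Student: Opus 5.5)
The statement to prove is \cref{xfa81g}. I would deduce it directly from \cref{xchyfq}, applied with \(\cat{S}=\Span(\cat{G})\) and \(\cat{V}=\Cat{Pr}^{\kappa}\), the presentable category of \(\kappa\)-compactly generated categories with \(\kappa\)-compact-preserving colimit-preserving functors.

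Three hypotheses of \cref{xchyfq} need checking. First, \(\Span(\cat{G})\) must be small and closed symmetric monoidal. Smallness is implicit in the setup of \cref{ker_p}. The symmetric monoidal structure comes from the cartesian product on~\(\cat{G}\), and every object is self-dual: the unit and counit are the spans \(*\gets X\to X\times X\) and \(X\times X\gets X\to *\) along the diagonal. Hence \(\Span(\cat{G})\) is closed, with internal hom \(\underline{\Hom}(X,Y)=X\times Y\); this is the self-enrichment already used in \cref{ker_s}. Second, \(\Cat{Pr}^{\kappa}\) must lie in \(\CAlg(\Cat{Pr})\). It does: it is presentable via \(\Ind_{\kappa}\) from small \(\kappa\)-cocomplete categories, and its tensor product, being the restriction of the Lurie tensor product, commutes with colimits in each variable. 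Third, \(D\) must be a lax symmetric monoidal functor to \(\Cat{Pr}^{\kappa}\). The choice of~\(\kappa\) gives that \(D\) lands objectwise and morphismwise in \(\Cat{Pr}^{\kappa}\). I would also choose \(\kappa\) large enough that the lax structure maps \(D(X)\otimes D(Y)\to D(X\times Y)\) and \(\SS\)-unit maps preserve \(\kappa\)-compact objects. Because the wide subcategory \(\Cat{Pr}^{\kappa}\subset\Cat{Pr}\) is closed under \(\otimes\), the lax symmetric monoidal structure then restricts.

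With these checks in place, \cref{xchyfq} gives
\begin{equation*}
  \PShv^{\Cat{Pr}^{\kappa}}(\widetilde{\cat{S}}^{\op})\simeq\Mod_{D}\bigl(\Fun(\Span(\cat{G}),\Cat{Pr}^{\kappa})\bigr).
\end{equation*}
It remains to identify \(\widetilde{\cat{S}}^{\op}\) with the \(\cat{K}\) of the statement. By construction, \(\widetilde{\cat{S}}\) is the base change of the self-enrichment along~\(D\), which is exactly the procedure of \cref{ker_s} applied to \(D\) viewed as valued in \(\Cat{Pr}^{\kappa}\). The opposite is absorbed by the canonical duality of \(\Span(\cat{G})\). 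Every object is self-dual, and the equivalence \(\Span(\cat{G})\simeq\Span(\cat{G})^{\op}\) that swaps the legs of a span matches \(\underline{\Hom}(X,Y)=X\times Y\) with \(\underline{\Hom}(Y,X)=Y\times X\) via the swap. Since \(D\) is symmetric monoidal-compatible with this swap, this gives an equivalence \(\widetilde{\cat{S}}^{\op}\simeq\widetilde{\cat{S}}=\cat{K}\) of \(\Cat{Pr}^{\kappa}\)-enriched categories.

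The main obstacle is making this last identification coherent, that is, checking that the enriched opposite of a base-changed self-enrichment agrees with the base change of the opposite self-enrichment. I would handle it by proving the statement for the self-enrichment of \(\Span(\cat{G})\) itself, where it follows from the duality involution of spans, and then using functoriality of base change of enrichment along lax symmetric monoidal functors. A secondary check is the enlargement of~\(\kappa\) so that the lax structure, not just the underlying functor, lands in \(\Cat{Pr}^{\kappa}\). This is harmless because only a small amount of data is involved and every colimit-preserving functor between presentable categories preserves \(\kappa\)-compacts for \(\kappa\) sufficiently large.
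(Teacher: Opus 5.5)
Your proposal is correct and follows the paper's route: the paper gives no separate argument and treats the corollary as the specialization of \cref{xchyfq} to \(\cat{S}=\Span(\cat{G})\) and \(\cat{V}=\Cat{Pr}^{\kappa}\). Your explicit identification \(\widetilde{\cat{S}}^{\op}\simeq\widetilde{\cat{S}}=\cat{K}\) via span self-duality fills in a step the paper leaves implicit (one small wording slip: \(\Cat{Pr}^{\kappa}\) is a non-full subcategory of \(\Cat{Pr}\) closed under \(\otimes\), not a wide one).
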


Heine helped with the following proof:

\begin{proof}[Proof of \cref{xchyfq}]
  We write \(U\colon\Cat{Ani}\to\cat{V}\) for the unit functor.
  Since \(U(\Map_{\cat{S}}(\unit,\X))\)
  is the unit in \(\CAlg(\Fun(\cat{S},\cat{V}))\),
  we obtain a morphism \(U(\Map_{\cat{S}}(\unit,\X))\to D\) there.
  By base changing the self-enrichment of~\(\cat{S}\) along this morphism,
  we obtain
  a \(\cat{V}\)-functor
  \(\cat{S}\to\widetilde{\cat{S}}\),
  where the source is enriched trivially.
  We write \(f\) for its opposite.
  This induces an adjunction \(f_{!}\dashv f^{*}\)
  in \(\Mod_{\cat{V}}(\Cat{Pr})\).
  Note that
  \(f^{*}\colon\PShv^{\cat{V}}(\widetilde{\cat{S}}^{\op})
  \to\PShv^{\cat{V}}(\cat{S}^{\op})=\Fun(\cat{S},\cat{V})\) is monadic.
  It suffices to show that this \(\cat{V}\)-linear monad~\(f^{*}f_{!}\)
  is identified with~\(D\).
  First,
  by definition,
  \(f^{*}\unit\) is identified with~\(D\),
  from which we obtain a morphism of monads \(D\to f^{*}f_{!}\).
By linearity,
  it suffices to verify this equivalence on representables,
  which is straightforward.
\end{proof}

\subsection{On double Beck–Chevalley transformations}\label{ss:bc_hex}

We prove the following
about cubes in a \(2\)-category:

\begin{proposition}\label{hexagon}
  Let \(\cat{G}=[1]\times[1]\) be a square category
  labeled as in \cref{e:iljxp}
  and \(\cat{C}\) be a \(2\)-category.
  We have a functor \(D\colon\cat{G}^{\op}\to\cat{C}\)
  such that the square
  \begin{equation*}
    \begin{tikzcd}
      D(X)\ar[r,"D(p)"]\ar[d,"D(f)"']&
      D(X')\ar[d,"D(f')"]\\
      D(Y)\ar[r,"D(q)"]&
      D(Y')
    \end{tikzcd}
  \end{equation*}
  is vertically left adjointable
  and horizontally right adjointable.
  We have another diagram \(D'\) satisfying the same condition
  and a natural transformation \(\alpha\colon D\to D'\).
  Then the diagram
  \begin{equation*}
    \begin{tikzcd}[column sep=small]
      {}&
      \alpha_{X}D(f)^{\L}D(q)^{\R}\ar[r]&
      \alpha_{X}D(p)^{\R}D(f')^{\L}\ar[dr]&
      {}\\
      D'(f)^{\L}\alpha_{Y}D(q)^{\R}\ar[ur]\ar[dr]&
      {}&
      {}&
      D'(p)^{\R}\alpha_{X'}D(f')^{\L}\\
      {}&
      D'(f)^{\L}D'(q)^{\R}\alpha_{Y'}\ar[r]&
      D'(p)^{\R}D'(f')^{\L}\alpha_{Y'}\ar[ur]&
      {}
    \end{tikzcd}
  \end{equation*}
  consisting of
  four Beck–Chevalley
  and two double Beck–Chevalley transformations
  commutes.
\end{proposition}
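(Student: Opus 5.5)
The plan is to reduce the hexagon to a formal statement about mates, and to make every arrow explicit as a pasting of units, counits, and the structure $2$-cells of~$D$, $D'$, and~$\alpha$. Write $f_{\natural}=D(f)^{\L}$, $f'_{\natural}=D(f')^{\L}$, $p_{*}=D(p)^{\R}$, $q_{*}=D(q)^{\R}$, and similarly with primes for~$D'$. Let $\alpha$ be specified by $1$-morphisms $\alpha_{X},\dots,\alpha_{Y'}$ together with invertible $2$-cells $\alpha_{Y}D(f)\simeq D'(f)\alpha_{X}$, and so on, coherent with the square.

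The two arrows out of $D'(f)^{\L}\alpha_{Y}D(q)^{\R}$ are mates of these $2$-cells. The upper-left arrow is the mate of $\alpha_{Y}D(f)\simeq D'(f)\alpha_{X}$ under $f_{\natural}\dashv D(f)$ and $f'^{\,D'}_{\natural}\dashv D'(f)$, whiskered by~$q_{*}$. The lower-left arrow is the mate of $\alpha_{Y'}D(q)\simeq D'(q)\alpha_{Y}$ under the right adjoints, whiskered by~$D'(f)^{\L}$. The right two arrows are defined analogously. The horizontal arrows are the double Beck–Chevalley maps. Each of these is itself defined as the composite $f_{\natural}q_{*}\to f_{\natural}q_{*}f'^{*}f'_{\natural}\simeq f_{\natural}f^{*}p_{*}f'_{\natural}\to p_{*}f'_{\natural}$ of a unit, the square's commutativity $2$-cell, and a counit.

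The key step is to rewrite both composites around the hexagon as a single pasting diagram in~$\cat{C}$. I will use the universal property of mates, i.e.\ the doctrinal adjunction / mate calculus, which is valid in any $(\infty,2)$-category since mates are computed in the homotopy $2$-category. After this rewriting, both composites become \[D'(f)^{\L}\alpha_{Y}q_{*}\to p'_{*}\alpha_{X'}f'_{\natural},\] obtained by inserting the unit of $D'(f')^{\L}\dashv D'(f')$ (respectively the unit of $f'_{\natural}\dashv f'^{*}$), the counits of the $p$-adjunctions, and pasting the cube whose faces are the two squares for $D$ and~$D'$ and the four naturality squares of~$\alpha$. The triangle identities let me slide units and counits through the $\alpha$-cells. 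Up to these triangle identities, the upper path uses the faces in the order ($f$-face, $D$-square, $p$-face), and the lower path uses them in the order ($q$-face, $D'$-square, $f'$-face). These two pastings agree because $\alpha$ is a natural transformation of functors $\cat{G}^{\op}\to\cat{C}$ out of the square~$\cat{G}$: the cube formed by $D$, $D'$, and~$\alpha$ on $[1]\times[1]\times[1]$ commutes coherently. Its two composite faces from $\alpha_{Y'}D(q)D(f)$ to $D'(f')D'(p)\alpha_{X}$ are homotopic, which is exactly the coherence datum of~$\alpha$ on the diagonal morphism of~$\cat{G}$.

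Everything here happens in the homotopy $2$-category $h_{2}\cat{C}$, since the claim is only that a diagram of $2$-morphisms commutes, i.e.\ an equality of $2$-cells. So I can work in an ordinary $(2,1)$-truncation and use string diagrams. The main obstacle is bookkeeping. Expanding each of the six arrows into units and counits produces long composites, and I must check that after cancelling zigzags (one for $f\dashv$, one for $p\dashv$, for each of $D$ and~$D'$) both composites reduce to the same normal form. To organize this, I first prove the two "half" statements: the mate of a pasting of squares is the pasting of mates, applied once horizontally and once vertically. The hexagon then becomes the interchange of these two mate operations applied to the cube, and that interchange holds because taking left mates in one direction commutes with taking right mates in the other.
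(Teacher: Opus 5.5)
Your framework (passing to the homotopy $2$-category, expressing every arrow through mates, and concluding from the coherence cube of~$\alpha$) matches the paper's. The reduction to the homotopy $2$-category is legitimate, since commutativity of the hexagon is a property. But the key step fails as stated.

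Write $f^{*}=D(f)$, $f_{\natural}=D(f)^{\mathrm{L}}$, $p_{*}=D(p)^{\mathrm{R}}$, and so on, with bars for~$D'$. Consider the double Beck--Chevalley map $f_{\natural}q_{*}\to f_{\natural}q_{*}f'^{*}f'_{\natural}\simeq f_{\natural}f^{*}p_{*}f'_{\natural}\to p_{*}f'_{\natural}$. Its middle equivalence is not the square's commutativity $2$-cell $\sigma\colon q^{*}f^{*}\simeq f'^{*}p^{*}$; it does not even have that shape. It is the \emph{inverse} of the Beck--Chevalley map $f^{*}p_{*}\to q_{*}f'^{*}$, which exists only because the square is horizontally right adjointable. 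No $2$-cell $f_{\natural}q_{*}\Rightarrow p_{*}f'_{\natural}$ can be built from units, counits, and structure cells without such an inversion.

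Hence the two composites around the hexagon are not pastings of units, counits, and faces of the cube. Cancelling zigzags cannot bring them to a common normal form, because the inverse of a composite $2$-cell is opaque to string-diagram manipulation. Tellingly, your argument never uses the adjointability hypotheses, without which the horizontal arrows are not even defined. Your closing slogan has the same defect: the left mate $f'_{\natural}q^{*}\Rightarrow p^{*}f_{\natural}$ of~$\sigma$ can be right-mated along $p^{*}\dashv p_{*}$ and $q^{*}\dashv q_{*}$ only after it is inverted. A smaller point: the upper path involves no $D'(f')$ at all. After cancelling the evident zigzags (for $f$ in the upper path, for $\bar{f}'$ in the lower one), both paths contain the unit of $f'_{\natural}\dashv f'^{*}$ and the counit of $\bar{f}_{\natural}\dashv\bar{f}^{*}$.

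The missing step is what the paper's proof supplies. After that cancellation, both composites are left mates, along $f'_{\natural}\dashv f'^{*}$ and $\bar{f}_{\natural}\dashv\bar{f}^{*}$, of maps $\alpha_{Y}q_{*}f'^{*}\Rightarrow\bar{f}^{*}\bar{p}_{*}\alpha_{X'}$. These maps are $\gamma_{p}\circ\alpha_{f}\circ\theta$ and $\theta'\circ\alpha_{f'}\circ\gamma_{q}$, where:
\begin{itemize}
\item $\alpha_{f}$ and $\alpha_{f'}$ are naturality cells of~$\alpha$;
\item $\gamma_{p}\colon\alpha_{X}p_{*}\to\bar{p}_{*}\alpha_{X'}$ and $\gamma_{q}$ are its Beck--Chevalley maps for the right adjoints;
\item $\theta$ and $\theta'$ are the inverted Beck--Chevalley equivalences of $D$ and~$D'$ (whiskering suppressed).
\end{itemize}
By the mate bijection, it suffices to compare these two maps. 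Precompose with the invertible map $\alpha_{Y}f^{*}p_{*}\to\alpha_{Y}q_{*}f'^{*}$ and postcompose with the invertible map $\bar{f}^{*}\bar{p}_{*}\alpha_{X'}\to\bar{q}_{*}\bar{f}'^{*}\alpha_{X'}$; this eliminates the inverses. Both sides then become genuine right mates, along $p^{*}\dashv p_{*}$ and $\bar{q}^{*}\dashv\bar{q}_{*}$, of the two pastings $\bar{q}^{*}\alpha_{Y}f^{*}\Rightarrow\bar{f}'^{*}\alpha_{X'}p^{*}$ around the naturality cube. These agree by the coherence of~$\alpha$. With this step inserted, your plan becomes essentially the paper's proof.
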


We break the symmetry in the following proof;
e.g., it is unclear whether our chosen witness coincides
with the one obtained by breaking the symmetry
in the other way:

\begin{proof}
By pasting Beck–Chevalley transformations,
  we reduce the diagram to
  \begin{equation*}
    \begin{tikzcd}
      D'(f)^{\L}\alpha_{Y}D(q)^{\R}\ar[r]\ar[d]&
      \alpha_{X}D(p)^{\R}D(f')^{\L}\ar[d]\\
      D'(f)^{\L}D'(q)^{\R}\alpha_{Y'}\ar[r]&
      D'(p)^{\R}\alpha_{X'}D(f')^{\L}\rlap,
    \end{tikzcd}
  \end{equation*}
  which consists of four Beck–Chevalley transformations.
We then unpack the rows;
  we are reduced to commuting the right square of
  \begin{equation*}
    \begin{tikzcd}
      \alpha_{Y}D(q)^{\R}\ar[r]\ar[d]&
      \alpha_{Y}D(q)^{\R}D(f')D(f')^{\L}\ar[r,"\simeq"]\ar[d]&
      D'(f)\alpha_{X}D(p)^{\R}D(f')^{\L}\ar[d]\\
      D'(q)^{\R}\alpha_{Y'}\ar[r]&
      D'(q)^{\R}\alpha_{Y'}D(f')D(f')^{\L}\ar[r,"\simeq"]&
      D'(f)D'(p)^{\R}\alpha_{X'}D(f')^{\L}\rlap.
    \end{tikzcd}
  \end{equation*}
  By removing \(D(f')^{\L}\), now we have to commute
  \begin{equation*}
    \begin{tikzcd}
      \alpha_{Y}D(q)^{\R}D(f')\ar[d]&
      \alpha_{Y}D(f)D(p)^{\R}\ar[r,"\simeq"]\ar[l,"\simeq"']&
      D'(f)\alpha_{X}D(p)^{\R}\ar[d]\\
      D'(q)^{\R}\alpha_{Y'}D(f')\ar[r,"\simeq"]&
      D'(q)^{\R}D'(f')\alpha_{X'}&
      D'(f)D'(p)^{\R}\alpha_{X'}\ar[l,"\simeq"']\rlap.
    \end{tikzcd}
  \end{equation*}
By pasting Beck–Chevalley transformations again,
  this simplifies to
  \begin{equation*}
    \begin{tikzcd}
      \alpha_{Y}D(f)D(p)^{\R}\ar[r,"\simeq"]\ar[d]&
      D'(f)\alpha_{X}D(p)^{\R}\ar[d]\\
      D'(q)^{\R}\alpha_{Y}D(f')\ar[r,"\simeq"]&
      D'(q)^{\R}D'(f')\alpha_{X'}\rlap,
    \end{tikzcd}
  \end{equation*}
  which commutes.
\end{proof}

\subsection{Recollements and excision squares}\label{ss:exc}

Here we perform some \(2\)-categorical homological algebra.

\begin{definition}\label{xpyo8y}
  We call
  a diagram
  \begin{equation*}
    \begin{tikzcd}
      C_{U}&
      C_{X}\ar[l,"j^{*}"']\ar[r,"i^{*}"]&
      C_{Z}
    \end{tikzcd}
  \end{equation*}
  in a stable presentable \(2\)-category
  a \emph{recollement}
  if the following conditions are satisfied:
  \begin{conenum}
    \item\label{i:u0xy6}
      The morphism~\(j^{*}\) admits a left adjoint~\(j_{!}\),
      and
      the counit \(j_{!}j^{*}\to{\id}\) is an equivalence.
    \item\label{i:6y2xc}
      The morphism~\(i^{*}\) admits a right adjoint~\(i_{*}\),
      and the unit \({\id}\to i_{*}i^{*}\) is an equivalence.
    \item\label{i:m2bvr}
      The composite
      \(j^{*}i_{*}\) is zero\footnote{Note that this is a condition, not a datum.
      } (or equivalently, \(i^{*}j_{!}\) is zero)
      and the induced square
      \begin{equation*}
        \begin{tikzcd}
          j_{!}j^{*}\ar[r]\ar[d]&
          {\id}\ar[d]\\
          0\ar[r]&
          i_{*}i^{*}
        \end{tikzcd}
      \end{equation*}
      in \(\End(C_{X})\) is a pushout.
  \end{conenum}
\end{definition}

We perform some \(2\)-categorical homological algebra.

\begin{proposition}\label{xwog77}
In a stable presentable \(2\)-category,
  we consider a commutative diagram
  \begin{equation*}
    \begin{tikzcd}
      C_{U}\ar[d,"g^{*}"']&
      C_{X}\ar[l,"j^{*}"']\ar[r,"i^{*}"]\ar[d,"f^{*}"]&
      C_{Z}\ar[d,equal]\\
      C_{V}&
      C_{Y}\ar[l,"l^{*}"']\ar[r,"k^{*}"]&
      C_{Z}
    \end{tikzcd}
  \end{equation*}
  satisfying the following conditions:
  \begin{conenum}
    \item\label{i:rec_x}
      The top row is a recollement.
    \item\label{i:rec_y}
      The bottom row is a recollement.
    \item\label{i:ladj_i}
      The right square is vertically left adjointable;
      i.e., \(f^{*}\) admits a left adjoint~\(f_{\natural}\),
      and the Beck–Chevalley transformation
      \(k^{*}\to i^{*}f_{\natural}\) is an equivalence.
  \end{conenum}
  Then the left square is cartesian.
\end{proposition}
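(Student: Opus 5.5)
The plan is to reduce to a statement about stable $1$-categories and then use the standard description of a recollement as a gluing. Being cartesian in the presentable $2$-category can be tested after applying $\Hom(T,\X)$ for every object~$T$. This operation preserves the hypotheses:
\begin{itemize}
  \item recollements go to recollements of stable categories, since adjunctions, units, counits, zero composites and pushouts in $\End$ are carried over by postcomposition;
  \item vertical left adjointability of the right square is preserved as well.
\end{itemize}
So I may assume $C_U,C_X,C_Z,C_V,C_Y$ are stable categories. The claim is then that the comparison functor
\[
  \Phi\colon C_X\to C_U\times_{C_V}C_Y,\qquad x\mapsto(j^{*}x,\,f^{*}x,\,g^{*}j^{*}x\simeq l^{*}f^{*}x)
\]
is an equivalence.

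First I record the identities forced by the hypotheses.
\begin{itemize}
  \item Passing to right adjoints in $k^{*}\simeq i^{*}f_{\natural}$ from \cref{i:ladj_i} gives $f^{*}i_{*}\simeq k_{*}$. Hence $f^{*}$ carries the kernel $i_{*}C_Z$ of $j^{*}$ equivalently onto the kernel $k_{*}C_Z$ of $l^{*}$.
  \item We have $i^{*}f_{\natural}l_{!}\simeq k^{*}l_{!}\simeq0$, so $f_{\natural}l_{!}$ lands in $j_{!}C_U$.
  \item We have $i^{*}f_{\natural}k_{*}\simeq k^{*}k_{*}\simeq\id$.
\end{itemize}
By \cref{i:m2bvr}, every $x\in C_X$ sits in a fiber sequence $j_{!}j^{*}x\to x\to i_{*}i^{*}x$. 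So $C_X$ is equivalent to the category of triples $(u,z,\eta)$ with $u\in C_U$, $z\in C_Z$ and $\eta\colon i_{*}z\to\Sigma j_{!}u$, and likewise for $C_Y$ with $l_{!}$ and $k_{*}$. Under these descriptions, the fiber product $C_U\times_{C_V}C_Y$ becomes triples $(u,z,\eta')$ with $\eta'\colon k_{*}z\to\Sigma l_{!}g^{*}u$. Here one uses the equivalence $l^{*}y\simeq g^{*}u$ and the fact that both rows share the same $C_Z$ with $f^{*}$ inducing the identity on it.

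Therefore $\Phi$ is an equivalence as soon as the map
\[
  \Map_{C_X}(i_{*}z,j_{!}u)\to\Map_{C_Y}(f^{*}i_{*}z,f^{*}j_{!}u)\to\Map_{C_Y}(k_{*}z,l_{!}g^{*}u)
\]
is an equivalence for all $z$ and $u$. The second arrow uses $f^{*}i_{*}\simeq k_{*}$ and $f^{*}j_{!}u\to l_{!}l^{*}f^{*}j_{!}u\simeq l_{!}g^{*}u$. Full faithfulness and essential surjectivity of $\Phi$ are then formal consequences, via the fiber sequences on both sides and the five lemma on mapping spectra.

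The main obstacle is exactly this comparison of gluing data, because no left adjointability of the \emph{left} square is assumed. I would attack it through the adjunction $f_{\natural}\dashv f^{*}$:
\[
  \Map_{C_Y}(k_{*}z,l_{!}v)\ \text{versus}\ \Map_{C_X}(f_{\natural}k_{*}z,\,\X),
\]
together with the two identities above, namely $i^{*}f_{\natural}k_{*}\simeq\id$ and the fact that $f_{\natural}l_{!}$ lands in $j_{!}C_U$. Applying the recollement sequence to $f_{\natural}k_{*}z$ gives a fiber sequence
\[
  j_{!}j^{*}f_{\natural}k_{*}z\to f_{\natural}k_{*}z\to i_{*}z .
\]
Since $\Map(j_{!}\X,i_{*}\X)=0$, this should let me identify the relevant mapping spectra on both sides. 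If this direct route stalls, the fallback is to rewrite the gluing data in terms of the equivalence of kernels $f^{*}\colon i_{*}C_Z\simeq k_{*}C_Z$ and to check that the boundary maps $i_{*}i^{*}x\to\Sigma j_{!}j^{*}x$ are carried by $f^{*}$ to the corresponding boundary maps for $f^{*}x$. This follows from the naturality of the pushout square in \cref{i:m2bvr} under~$f^{*}$.
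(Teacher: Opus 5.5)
Your reduction is sound. Once you pass to gluing data, $\Phi$ is an equivalence if and only if
\[
  f^{*}\colon\Map_{C_X}(i_{*}z,j_{!}u)\to\Map_{C_Y}(k_{*}z,l_{!}g^{*}u)
\]
is an equivalence for all $z$ and $u$. But that comparison is the entire content of the proposition, and you do not prove it.

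Via $f_\natural\dashv f^*$ you get $\Map_{C_Y}(k_*z,f^*j_!u)\simeq\Map_{C_X}(f_\natural k_*z,j_!u)$. The fiber sequence $j_!j^*f_\natural k_*z\to f_\natural k_*z\to i_*z$ then shows that this differs from $\Map_{C_X}(i_*z,j_!u)$ by $\Map_{C_U}(j^*f_\natural k_*z,u)$. The orthogonality $\Map(j_!\X,i_*\X)\simeq0$ concerns maps in the other direction and says nothing about this term. What you need is $j^*f_\natural k_*\simeq0$, i.e.\ $f_\natural k_*\simeq i_*$, or equivalently that the left square is vertically left adjointable. This is the one identity missing from the list you record. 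Your fallback only shows that $\Phi$ transports boundary maps along $f^*$; that justifies the reduction, not the key equivalence.

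Moreover, this vanishing does not follow from (i)--(iii), and without it the statement is false. Here is a counterexample.
\begin{itemize}
  \item Take $C_U=C_V=C_Z=\Cat{Sp}$ and $C_X=C_Y=\Fun(\Delta^{1},\Cat{Sp})$.
  \item Let both rows be the standard recollement given by evaluation at $1$ and at $0$, so $j_!u=(0\to u)$ and $i_*z=(z\to0)$.
  \item Set $g^*=\id$ and $f^*=i_*i^*\oplus j_!j^*$, i.e.\ $(x_0\xrightarrow{\phi}x_1)\mapsto(x_0\xrightarrow{0}x_1)$.
\end{itemize}
Both squares commute, and $f_\natural(y_0\xrightarrow{\psi}y_1)\simeq(y_0\xrightarrow{(\id,0)}y_0\oplus\operatorname{cof}\psi)$. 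The Beck--Chevalley map $k^*\to i^*f_\natural$ is the identity of $y_0$, so (i)--(iii) hold. Since $g^*=\id$, the left square is cartesian if and only if $f^*$ is an equivalence. That fails: $(\mathbf{S}\xrightarrow{\id}\mathbf{S})$ is not in the essential image of $f^*$. Correspondingly, $j^*f_\natural k_*z\simeq z\oplus\Sigma z$, and $f^*$ is the zero map on $\Map(i_*z,j_!u)\simeq\Map(z,\Omega u)$.

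The paper's proof rests on the same point. Its assertion that $g_\natural=j^*f_\natural l_!$ makes the left square vertically left adjointable is exactly $j^*f_\natural k_*\simeq0$, and it is used when checking the pushout on cofibers of rows. This does hold in applications such as Zariski squares, where base change along the open immersion is also available.

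If you add that hypothesis, your route closes at once. The counit $f_\natural f^*i_*\to i_*$ is then an equivalence: after $i^*$ by the Beck--Chevalley condition, and after $j^*$ by the vanishing. The map $f^*$ on $\Map_{C_X}(i_*z,\X)$ is precomposition with this counit, so it is an equivalence.
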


\begin{proof}
First,
  \(g_{\natural}=j^{*}f_{\natural}l_{!}\)
  determines a left adjoint of~\(g^{*}\).
  With this definition,
  the left square is also vertically left adjointable.

  We apply \(\Hom(D,\X)\)
  for an arbitrary object~\(D\)
  to reduce this to the case of \(\Cat{Pr}_{\st}\).
  We write \(\Hom(D,C_{\X})\) as~\(\cat{C}_{\X}\).
  We write \(\Map_{\X}\)
  for \(\Map_{\cat{C}_{\X}}\).

  We first prove that
  the induced functor from~\(\cat{C}_{X}\)
  to the pullback is fully faithful.
  This is equivalent to the square
  \begin{equation*}
    \begin{tikzcd}
      \Map_{U}(j^{*}C,j^{*}D)\ar[d]&
      \Map_{X}(C,D)\ar[l]\ar[d]\\
      \Map_{V}(l^{*}f^{*}C,l^{*}f^{*}D)&
      \Map_{Y}(f^{*}C,f^{*}D)\ar[l]
    \end{tikzcd}
  \end{equation*}
  being a pullback
  for objects~\(C\) and~\(D\) of~\(\cat{C}_{X}\).
  This is equivalent to the assertion that the square
  \begin{equation*}
    \begin{tikzcd}
      j_{!}j^{*}C\ar[r]&
      C\\
      f_{\natural}l_{!}l^{*}f^{*}C\ar[r]\ar[u]&
      f_{\natural}f^{*}C\ar[u]
    \end{tikzcd}
  \end{equation*}
  is a pushout.
  This can be checked by looking at the cofibers of rows.

  Then we prove that the induced functor is essentially surjective.
  We consider
  \(C_{U}\in\cat{C}_{U}\)
  and \(C_{Y}\in\cat{C}_{Y}\)
  with an equivalence \(g^{*}(C_{U})\simeq l^{*}(C_{Y})\).
We obtain a map
  \begin{equation*}
    i_{*}k^{*}(C_{Y})[-1]
    \overset{\text{\cref{i:ladj_i}}}{\simeq}
    i_{*}i^{*}f_{\natural}(C_{Y})[-1]
    \xrightarrow{\text{\cref{i:rec_x}}}
    j_{!}j^{*}f_{\natural}(C_{Y})
    \simeq
    j_{!}g_{\natural}l^{*}(C_{Y})
    \simeq
    j_{!}g_{\natural}g^{*}(C_{U})
    \to
    j_{!}(C_{U})
  \end{equation*}
  and its cofiber is the desired object.
\end{proof}

We note the following degenerate case:

\begin{corollary}\label{xrsyv9}
  For a recollement in \cref{xpyo8y},
  \begin{equation*}
    \begin{tikzcd}
      0\ar[d]&
      C_{Z}\ar[l]\ar[d,"i_{*}"]\\
      C_{U}&
      C_{X}\ar[l,"j^{*}"']
    \end{tikzcd}
  \end{equation*}
  is a pullback square.
\end{corollary}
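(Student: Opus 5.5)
The plan is to deduce this from \cref{xwog77} by taking the degenerate diagram in which the bottom row is the trivial recollement \(0\gets C_{Z}\xrightarrow{\id}C_{Z}\). Concretely, I will consider
\begin{equation*}
  \begin{tikzcd}
    C_{U}\ar[d,"\id"']&
    C_{X}\ar[l,"j^{*}"']\ar[r,"i^{*}"]\ar[d,"i^{*}"]&
    C_{Z}\ar[d,equal]\\
    C_{U}&
    C_{Z}\ar[l]\ar[r,equal]&
    C_{Z}
  \end{tikzcd}
\end{equation*}
This diagram is not quite right, since it does not commute: the bottom-left arrow would have to be zero but \(j^{*}\) is not zero. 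So instead I will apply the argument directly, imitating the proof of \cref{xwog77} rather than invoking it verbatim.

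First I reduce to \(\Cat{Pr}_{\st}\) by applying \(\Hom(D,\X)\) for every object~\(D\); the limits in the \(2\)-category are detected this way. The recollement conditions are preserved, because adjunctions, the vanishing of composites, and pushouts in \(\End\) are all carried to the corresponding notions for the functor categories \(\cat{C}_{\X}=\Hom(D,C_{\X})\). The pullback of \(0\to\cat{C}_{U}\gets\cat{C}_{X}\) is the full subcategory \(\ker(j^{*})\subset\cat{C}_{X}\). The square commutes because \(j^{*}i_{*}\simeq0\) by \cref{i:m2bvr}, and since \(0\) is a zero object the witness is a property, so it suffices to show that \(i_{*}\colon\cat{C}_{Z}\to\ker(j^{*})\) is an equivalence.

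For full faithfulness, \(i_{*}\) is fully faithful because the counit \(i^{*}i_{*}\to\id\) is an equivalence. Here I should be careful about which unit or counit \cref{i:6y2xc} specifies. As written it says the unit \(\id\to i_{*}i^{*}\) is an equivalence, which seems to be a typo for the counit \(i^{*}i_{*}\to\id\); I will read it in the standard way. If needed, I can instead derive full faithfulness from the pushout square of \cref{i:m2bvr} applied to \(i_{*}C\): we have \(j_{!}j^{*}i_{*}C\simeq0\), so \(i_{*}C\simeq i_{*}i^{*}i_{*}C\).

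For essential surjectivity, take \(C\in\ker(j^{*})\). The pushout square of \cref{i:m2bvr}, evaluated at \(C\), gives \(C\simeq i_{*}i^{*}C\) because \(j_{!}j^{*}C\simeq0\). Hence \(C\) lies in the essential image of \(i_{*}\).

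The main obstacle is the formal reduction step, namely confirming that pullbacks in a stable presentable \(2\)-category are detected after applying \(\Hom(D,\X)\) for all~\(D\). This is the same reduction used in \cref{xwog77}, so I will cite it in the same way.
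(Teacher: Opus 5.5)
Your direct argument is correct. After the reduction via \(\Hom(D,\X)\), the fiber product is the full subcategory \(\ker(j^{*})\subset\cat{C}_{X}\). The counit of \(i^{*}\dashv i_{*}\) gives full faithfulness of \(i_{*}\); here you rightly read \cref{i:6y2xc} as a typo for the counit. The cofiber sequence \(j_{!}j^{*}C\to C\to i_{*}i^{*}C\) from \cref{i:m2bvr} gives essential surjectivity.

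The paper gives no separate argument: it treats the statement as a literal special case of \cref{xwog77}. The degenerate instance you were looking for does exist; your middle vertical arrow just went the wrong way. Take the top row to be the trivial recollement \(0\gets C_{Z}=C_{Z}\), the bottom row the given one, and the middle vertical map \(i_{*}\) (not \(i^{*}\)):
\begin{equation*}
  \begin{tikzcd}
    0\ar[d]&
    C_{Z}\ar[l]\ar[r,equal]\ar[d,"i_{*}"]&
    C_{Z}\ar[d,equal]\\
    C_{U}&
    C_{X}\ar[l,"j^{*}"']\ar[r,"i^{*}"]&
    C_{Z}
  \end{tikzcd}
\end{equation*}
The left square commutes since \(j^{*}i_{*}\simeq0\), and the right one since \(i^{*}i_{*}\simeq\id\). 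Condition \cref{i:ladj_i} holds with \(f_{\natural}=i^{*}\). The Beck--Chevalley map \(i^{*}\to i^{*}i_{*}i^{*}\simeq i^{*}\) is \(i^{*}\) applied to the unit of \(i^{*}\dashv i_{*}\), which is invertible by a triangle identity because the counit is. The conclusion of \cref{xwog77} is then verbatim the corollary.

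The paper's route is a one-liner given \cref{xwog77}, whose proof, specialized to this case, is essentially your argument. Your route is self-contained and shows that only \cref{i:6y2xc,i:m2bvr} are used. It also means you never check that the trivial top row is a recollement. That check requires reading \cref{i:u0xy6} as invertibility of the unit \(\id\to j^{*}j_{!}\), since the counit as literally written would be \(0\to\id_{C_{Z}}\).

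One caution about your fallback: deriving full faithfulness from \cref{i:m2bvr} alone does not work. It only shows that the unit is invertible at objects \(i_{*}C\), equivalently that \(i_{*}\) applied to the counit is invertible. Without conservativity of \(i_{*}\), this says nothing about the counit itself. For example, take \(C_{U}=C_{X}\) with \(j^{*}=\id\), \(i^{*}=i_{*}=0\), and \(C_{Z}\neq0\). This satisfies \cref{i:u0xy6,i:m2bvr} but not the corollary. So \cref{i:6y2xc} is genuinely needed, exactly as in your main argument.
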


\begin{remark}\label{xqrpo7}
  \Cref{xrsyv9} means that
  a recollement is determined by
  \(j^{*}\colon C_{X}\to C_{U}\)
  (or \(i^{*}\colon C_{X}\to C_{Z}\),
  since
  we can obtain~\(C_{U}\) is the kernel
  of~\(i^{*}\)
  by passing to the left adjoint).
\end{remark}

\begin{definition}\label{x64hfp}
  Given \cref{xqrpo7},
  we call the left square in \cref{xwog77}
  an \emph{excision square}.
\end{definition}

\begin{remark}\label{xibj1p}
In \cref{xpyo8y,x64hfp},
  we have considered the noncommutative situation.
  We can define the commutative version
  when the diagrams lie in \(\CAlg(\cat{C})\),
  simply by considering the same notion in \(\Mod_{C_{X}}(\cat{C})\).
\end{remark}

\section{Algebraic \texorpdfstring{\(2\)}{2}-motives}\label{s:dgs}

In \cref{ss:sh},
we review \(\SH\) and necessary facts.
In \cref{ss:cc_dg}, we prove \cref{main_1}.
In \cref{ss:mot_pb}, we prove the (relative) Künneth formula.

\subsection{A review of algebraic motivic spectra}\label{ss:sh}

The following was introduced in~\cite{MorelVoevodsky99}:

\begin{definition}[Morel–Voevodsky]\label{x00w04}
  Let \(X\) be a quasicompact quasiseparated static scheme.
  We write \(\Cat{Sm}_{X}\) for
  the category of finitely presented smooth \(X\)-schemes.
  The \emph{category of motivic spectra} over~\(X\)
  is defined as
  \begin{equation}
    \label{e:u0u27}
    \SH(X)
    =
    \Shv_{\Nis}(\Cat{Sm}_{X};\Cat{Sp})_{\AA^{1}}
    [(\Sigma^{\infty}\PP^{1})^{\otimes-1}],
  \end{equation}
  where
  \(\Shv_{\Nis}(\Cat{Sm}_{X};\Cat{Sp})_{\AA^{1}}\)
  denotes the full subcategory spanned by \(\AA^{1}\)-invariant Nisnevich sheaves.
  See~\cite[Section~2.1]{Robalo15}
  for the inversion procedure.
\end{definition}

\begin{remark}\label{xhx6bu}
  More precisely,
  what was considered in~\cite{MorelVoevodsky99} was
  hypercomplete sheaves unlike \cref{x00w04}.
  Note that \(\Shv_{\Nis}(\Cat{Sm}_{X})\)
  is hypercomplete when \(X\) is of finite type over~\(\ZZ\)
  (cf.~\cite[Theorem~3.18]{ClausenMathew21}).
  We prefer to work with this version,
  since
  \(\SH\colon\Cat{Ring}\to\CAlg(\Cat{Pr})\)
  preserves filtered colimits
  with this definition.
\end{remark}

Note the obvious functoriality;
\(\SH\) itself is a contravariant functor
to \(\CAlg(\Cat{Pr}_{\st})\).
Hence, we have four operations.
Smooth base change is straightforward from the definition.
Proper base change for~\(\SH\)
was established in Ayoub’s thesis~\cite{Ayoub07a,Ayoub07b}\footnote{
  Voevodsky first stated this without proof.
  Röndigs also independently worked on it, which remains unpublished.
}.
More precisely,
he demonstrated
in~\cite[Scholie~1.4.2]{Ayoub07a}
that a specific set of axioms
implies proper base change.
He also considered the symmetric monoidal version
in~\cite[Section~2.3]{Ayoub07a}.
We state both versions here:

\begin{theorem}[Ayoub]\label{ayoub}
  Let \(S\) be a divisorial noetherian static scheme.
  We write \(\Cat{QProj}_{S}\)
  for the category of static schemes quasiprojective over~\(S\).
Let \(D\colon(\Cat{QProj}_{S})^{\op}\to\Cat{Pr}_{\st}\)\footnote{More generally,
    he worked in the setting of triangulated categories.
  } be a functor.
  We assume that it satisfies the following:
  \begin{conenum}
    \item\label{i:y_nsm}
      Smooth morphisms satisfy base change;
      i.e., when \(f\colon Y\to X\) is smooth,
      then \(f^{*}\colon D(X)\to D(Y)\) admits
      a left adjoint~\(f_{\natural}\)
      such that
      the Beck–Chevalley transformation
      \(f'_{\natural}q^{*}\to p^{*}f_{\natural}\)
      is an equivalence for any pullback square~\cref{e:iljxp}.
    \item\label{i:y_exc}
      For any closed immersion \(i\colon Z\hookrightarrow X\)
      with its complement \(j\colon U\hookrightarrow X\),
      the diagram \(D(U)\gets D(X)\to D(Z)\) is a recollement\footnote{Morel–Voevodsky~\cite[Theorem~2.21]{MorelVoevodsky99}
        proved that \(\SH\) satisfies this property.
      };
      see \cref{xhypzi} below.
    \item\label{i:y_hi}
      For the tautological map \(f\colon\AA^{1}_{X}\to X\),
      the functor~\(f^{*}\colon D(X)\to D(\AA^{1}_{X})\) is fully faithful.
    \item\label{i:y_tate}
      In the situation of~\cref{i:y_hi},
      the functor \(f_{\natural}s_{*}\colon D(X)\to D(X)\)
      is an equivalence,
      where \(s\colon X\to\AA^{1}_{X}\) is the zero section.
  \end{conenum}
  Then projective morphisms satisfy base change;
  i.e., when \(p\colon X'\to X\) is projective,
  \(p^{*}\colon D(X)\to D(X')\)
  admits a right adjoint~\(p_{*}\) in~\(\Cat{Pr}_{\st}\)
  such that
  the Beck–Chevalley transformation
  \(f'_{\natural}q^{*}\to p^{*}f_{\natural}\)
  is an equivalence for any pullback square~\cref{e:iljxp}.

  Moreover,
  we consider a lax symmetric monoidal functor
  \(D\colon(\Cat{QProj}_{S})^{\op}\to\Cat{Pr}_{\st}\)
  satisfying the above conditions,
  where~\cref{i:y_nsm} is replaced with
  the following:
  \begin{conenum}[resume]
    \item\label{i:y_msm}
      Smooth morphisms satisfy base change
      and the projection formula;
      i.e., \cref{i:y_nsm} holds
      and moreover \(f_{\natural}\)
      is \(D(X)\)-linear.
  \end{conenum}
  In this case,
  the projection formula holds for projective morphisms;
  i.e., \(p_{*}\) is also \(D(X)\)-linear
  for any projective morphism~\(p\colon Y\to X\).
\end{theorem}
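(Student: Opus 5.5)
This is Ayoub's theorem, so the plan is to follow the strategy of \cite[Scholie~1.4.2]{Ayoub07a}, written in the \(\Cat{Pr}_{\st}\) language used here. Every projective morphism factors as a closed immersion \(Z\hookrightarrow\PP^{n}_{X}\) followed by the projection \(\PP^{n}_{X}\to X\). Base change and the projection formula are stable under composition, and they can be checked Zariski locally on the base, using the recollement \cref{i:y_exc} for open covers. It therefore suffices to treat closed immersions and the projections \(\PP^{n}_{X}\to X\) separately. For these, right adjoints~\(p_{*}\) in~\(\Cat{Pr}_{\st}\) come for free once we know that \(p^{*}\) preserves colimits and that \(p_{*}\) does too. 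For closed immersions this is part of \cref{i:y_exc}. For \(\PP^{n}\) we will get \(p_{*}\simeq p_{\natural}\circ(\text{twist})\) below, which is colimit-preserving.

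For a closed immersion \(i\colon Z\hookrightarrow X\), the recollement says that \(i_{*}\) is fully faithful with essential image \(\ker j^{*}\), and \(\cref{xrsyv9}\) identifies \(D(Z)\) with this kernel. Take a pullback \(p\colon X'\to X\). We must show \(p^{*}i_{*}\to i'_{*}q^{*}\) is an equivalence. After applying \(i'^{*}\) and \(j'^{*}\), the joint conservativity coming from the recollement reduces the check to two points. First, \(p^{*}\) commutes with \(j^{*}\), so \(j'^{*}p^{*}i_{*}=q_{U}^{*}j^{*}i_{*}=0\). Second, \(i'^{*}p^{*}i_{*}\simeq q^{*}i^{*}i_{*}\simeq q^{*}\). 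The projection formula \(i_{*}(A\otimes i^{*}B)\simeq i_{*}A\otimes B\) is proved the same way: test after \(j^{*}\), where both sides vanish since \(j^{*}\) is monoidal, and after \(i^{*}\), where both sides give \(A\otimes i^{*}B\).

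The main case is \(p\colon\PP^{n}_{X}\to X\), and I would follow Ayoub's argument via purity. First, use \cref{i:y_exc}, \cref{i:y_hi} and smooth base change to establish homotopy purity for the zero section \(s\) of a trivial vector bundle: \(f_{\natural}s_{*}\) only depends on the bundle, and by \cref{i:y_tate} it is invertible. This produces the Thom equivalence \(\mathrm{Th}\). Next, using the Zariski cover of \(\PP^{n}\) by affine spaces and excision, show by induction on \(n\) that the transformation \(p_{\natural}\to p_{*}\circ\mathrm{Th}(T_{p})\) is an equivalence. This transformation is built from the diagonal, which is a closed immersion whose normal bundle is the tangent bundle. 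The core computation is for \(n=1\): write \(\PP^{1}=\AA^{1}\cup\AA^{1}\), and combine \cref{i:y_hi} with the cofiber sequence for \(\{\infty\}\hookrightarrow\PP^{1}\) to identify both sides with twists. Once \(p_{*}\simeq p_{\natural}\circ\mathrm{Th}(-T_{p})\) is known, base change for \(p_{*}\) follows from base change for \(p_{\natural}\) in \cref{i:y_nsm}, since \(\mathrm{Th}\) is also compatible with base change. The projection formula follows from \cref{i:y_msm} and the linearity of \(\mathrm{Th}\).

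The main obstacle I expect is constructing the comparison map \(p_{\natural}\to p_{*}\mathrm{Th}(T_{p})\) coherently and proving it is an equivalence. Ayoub's ad hoc approach, via stability of \(\PP^{1}\) and the use of \cref{i:y_tate} to invert Tate twists, is where the real work lies. Here I would use purity for the diagonal of \(\PP^{n}_{X}\) together with the closed-immersion case already proved. Everything else is formal recollement bookkeeping.
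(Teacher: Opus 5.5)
\textbf{Verdict: the skeleton is right, but there is a genuine gap at the one step that carries the theorem.} Your reduction matches what the paper extracts from Ayoub (proof of \cref{aplus}). Closed immersions are handled by the recollement exactly as you describe. For \(p\colon\PP^{n}_{X}\to X\) one compares \(p_{\natural}\simeq p_{\natural}p_{1,*}d_{*}\to p_{*}p_{2,\natural}d_{*}\), where \(p_{1},p_{2}\) are the projections of \(\PP^{n}_{X}\times_{X}\PP^{n}_{X}\), \(d\) is the diagonal, and \(p_{2,\natural}d_{*}\) is invertible.

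The gap is the step you defer: showing that this comparison is an equivalence. The mechanism you sketch cannot prove it.
\begin{itemize}
\item The comparison is a transformation of functors on all of \(D(\PP^{n}_{X})\). Excision (or Mayer–Vietoris for the affine cover) reduces you to objects \(j_{\natural}M'\), with \(j\colon\AA^{n}_{X}\hookrightarrow\PP^{n}_{X}\), \(g=pj\), and \(M'\in D(\AA^{n}_{X})\) arbitrary.
\item On such objects the claim reads \(p_{*}j_{\natural}M'\simeq g_{\natural}\mathrm{Th}(-T_{g})M'\). This identifies the compactly supported pushforward along~\(g\).
\item \cref{i:y_hi} cannot give this objectwise. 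It only says \(g_{\natural}g^{*}\simeq\id\). But \(D(\AA^{n}_{X})\) is not generated by \(g^{*}D(X)\): since \(g^{*}\) is fully faithful and colimit-preserving, that would force \(D(X)\simeq D(\AA^{n}_{X})\), which already fails for~\(\SH\).
\item So identifying both sides with twists on~\(\unit\), or on objects pulled back from~\(X\), does not reach general objects. The latter would also need the projection formula for~\(p_{*}\), which is not yet available.
\end{itemize}

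\textbf{What is missing} is a device that reduces invertibility to finitely many scalar checks. One way to supply it:
\begin{itemize}
\item Your comparison is adjoint to a counit \(p^{*}p_{\natural}(L\otimes\X)\to\id\) built from the diagonal, with \(L=\mathrm{Th}(-T_{p})\unit\).
\item To show this counit exhibits \(p_{\natural}(L\otimes\X)\) as right adjoint to~\(p^{*}\), you need a unit \(\unit_{X}\to p_{\natural}L\). This fundamental class is not formal; it has to be produced from the explicit geometry of~\(\PP^{n}\).
\item You then check that both triangle composites are merely invertible. That suffices, by the twisting argument in the proof of \cref{xujpr0}.
\item Using linearity and base change, these composites are identified with endomorphisms of \(\unit_{\PP^{n}_{X}}\). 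For the second composite this goes through the diagonal kernel \(d_{*}L\), whose endomorphisms are those of~\(L\).
\item Such scalars can be tested on the affine charts and reduced to \(\End(\unit_{X})\) by \cref{i:y_hi}. This is where the Zariski cover genuinely enters.
\end{itemize}
Making this kernel bookkeeping rigorous without an existing exceptional functoriality is why, as \cref{xflfxz} notes, Ayoub first constructs shriek functoriality on the triangulated level and only then treats~\(\PP^{n}\).

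Two smaller points:
\begin{itemize}
\item Invertibility of \(p_{2,\natural}d_{*}\) concerns the nontrivial bundle~\(T_{p}\). Your trivial-bundle purity must therefore be globalized, e.g., via translations in affine charts and the Zariski descent coming from \cref{xwog77}.
\item Base change for~\(p_{*}\) needs the Beck–Chevalley map itself to agree with the one transported from~\(p_{\natural}\) (cf.~\cref{hexagon}). An abstract isomorphism \(q^{*}p_{*}\simeq p'_{*}q'^{*}\) is not enough.
\end{itemize}
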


\begin{remark}\label{xhypzi}
For \cref{ayoub},
  Ayoub originally formulated \cref{i:y_exc}
  as \(D(\emptyset)=0\),
  the full faithfulness of~\(i_{*}\),
  and the conservativity
  of \((j^{*},i^{*})\colon D(X)\to D(U)\oplus D(Z)\).
  The equivalence follows from
  applying the smooth base change to \(U\to X\) along~\(i\).
\end{remark}

\begin{remark}\label{xflfxz}
  To prove \cref{ayoub},
  Ayoub first constructed shriek functoriality
  on the level of triangulated categories.
  Then he used it to prove
  the proper base change for~\(\PP^{n}\).
  The point is that we cannot use
  the usual recognition principle based on compactifications
  to obtain the shriek functoriality,
  since properness is what we wish to show.
\end{remark}

\begin{remark}\label{x86fao}
  At least the first part of \cref{ayoub} is valid
  when \(\Cat{Pr}_{\st}\)
  is replaced with a general stable presentable \(2\)-category.
  We prove this in \cref{aminus} below.
\end{remark}

Drew–Gallauer~\cite{DrewGallauer22} identified
\(\SH\) as the universal six-functor formalism:

\begin{theorem}[Drew–Gallauer]\label{drga}
  We use \(S\) and \(\Cat{QProj}_{S}\)
  as in \cref{ayoub} (but see \cref{xlsbil} below).
We consider the following \emph{nonfull} subcategory
  of \(\CAlg(\Fun((\Cat{QProj}_{S})^{\op},\Cat{Pr}_{\st}))\):
  \begin{itemize}
    \item
      Objects are
      lax symmetric monoidal functors \(D\colon(\Cat{QProj}_{S})^{\op}\to\Cat{Pr}_{\st}\)
      satisfying~\cref{i:y_exc,i:y_hi,i:y_tate,i:y_msm}
      of \cref{ayoub}.
    \item
      Morphisms are those natural transformations \(\alpha\colon D\to D'\)
      such that for any smooth morphism \(f\colon Y\to X\),
      the canonical morphism
      \(f_{\natural}\alpha_{Y}\to \alpha_{X}f_{\natural}\) is an equivalence.
  \end{itemize}
  Then \(\SH\) is initial in this category.
\end{theorem}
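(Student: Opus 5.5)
The plan is to show, for each object \(D\) of the category in question, that the space of morphisms \(\SH\to D\) is contractible. I would first record that \(\SH\) is itself an object: smooth base change and the projection formula for \(f_{\natural}\) hold by construction, \cref{i:y_exc} is Morel–Voevodsky, and \cref{i:y_hi,i:y_tate} hold by \(\AA^{1}\)-localization and \(\PP^{1}\)-stabilization. The argument then has two stages: a \emph{free} stage, followed by a \emph{localization} stage.

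\textbf{Free stage.} Consider the lax symmetric monoidal functor \(X\mapsto\PShv(\Cat{Sm}_{X};\Cat{Sp})\), with pullback given by base change of smooth schemes. I would show it is initial among lax symmetric monoidal functors \((\Cat{QProj}_{S})^{\op}\to\Cat{Pr}_{\st}\) satisfying \cref{i:y_msm}, with morphisms commuting with \(f_{\natural}\). Given \(D\), the comparison is forced on representables: a smooth \(f\colon Y\to X\) must go to \(f_{\natural}\unit\in D(X)\).
\begin{itemize}
  \item Smooth base change makes this assignment compatible with pullback along any \(X'\to X\).
  \item The projection formula makes it symmetric monoidal, since \(f_{\natural}\unit\otimes g_{\natural}\unit\simeq(f\times_{X}g)_{\natural}\unit\).
\end{itemize}
Left Kan extension then gives colimit-preserving functors \(\PShv(\Cat{Sm}_{X};\Cat{Sp})\to D(X)\). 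They are forced to be unique, because the source is generated under colimits by representables and morphisms must preserve \(f_{\natural}\unit\).

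To obtain this coherently rather than objectwise, I would phrase it via the cocartesian fibration over \(\Cat{QProj}_{S}\) classifying \(X\mapsto\Cat{Sm}_{X}\). The free object is then a relative presheaf construction in \(\CAlg(\Fun((\Cat{QProj}_{S})^{\op},\Cat{Pr}_{\st}))\). The compatibility of \(f_{\natural}\) for smooth \(f\) with the Kan extension is exactly the left adjointability being preserved; this uses \cref{hexagon}-type Beck–Chevalley bookkeeping.

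\textbf{Localization stage.} \(\SH\) is obtained from the free object by three steps, each a localization in \(\CAlg\) of the functor category compatible with \(f_{\natural}\) and \(f^{*}\). It therefore suffices to check that the comparison to any \(D\) inverts the relevant maps.
\begin{itemize}
  \item \textbf{Nisnevich descent.} For an elementary distinguished square with open \(U\subset X\) and étale \(V\to X\) that is an isomorphism over \(Z=X\setminus U\), I apply \cref{xwog77} in \(\Cat{Pr}_{\st}\). Axiom \cref{i:y_exc} for \(X\) and for \(V\), together with smooth base change along \(i\), gives an excision square. From it I deduce that \(j_{\natural}\unit\oplus_{(j_{V})_{\natural}\unit}v_{\natural}\unit\to\unit\) is an equivalence: its cofiber vanishes after \(j^{*}\) and after \(i^{*}\), and these are jointly conservative. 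Since \(\Cat{Sm}_{X}\) has finite Krull dimension, descent for these squares suffices, and hypercompleteness plays no role with the nonhypercomplete definition.
  \item \textbf{\(\AA^{1}\)-invariance.} This is \cref{i:y_hi}: the counit \(f_{\natural}f^{*}\unit\to\unit\) is an equivalence.
  \item \textbf{Inverting \(\Sigma^{\infty}\PP^{1}\).} In the free stage \(\Sigma^{\infty}\PP^{1}\) corresponds to \(\operatorname{cofib}(\GG_{m}\to\AA^{1})\), pushed forward to \(X\). By the recollement for \(s\) and its complement, this cofiber is identified with \(f_{\natural}s_{*}s^{*}\unit=f_{\natural}s_{*}\unit\), up to the \(\AA^{1}\)-equivalence \(\AA^{1}\simeq X\). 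This object is \(\otimes\)-invertible by \cref{i:y_tate}, because an autoequivalence given by a \(D(X)\)-linear functor is tensoring with an invertible object.
\end{itemize}
Since \(\PP^{1}\) is symmetric (the cyclic permutation acts trivially on \((\PP^{1})^{\otimes3}\)), inverting it in \(\CAlg(\Cat{Pr})\) has the expected universal property, per Robalo.

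\textbf{Main obstacle.} I expect the hardest step to be coherence rather than objectwise checks. One must ensure that the localizations performed pointwise in \(X\) assemble into a localization in the nonfull subcategory whose morphisms commute with \(f_{\natural}\), so that the mapping spaces out of \(\SH\) are the corresponding subspaces of those out of the free object. My first approach would be to show that each localization is generated by maps of the form \(f_{\natural}(\text{map})\) stable under base change. Each local subcategory is then preserved by all \(f^{*}\) and \(f_{\natural}\), so the localized functor still satisfies \cref{i:y_msm}, and the universal property passes through pointwise.
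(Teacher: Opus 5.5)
The paper does not reprove \cref{drga}; it cites Drew--Gallauer. Your outline (free smooth-pullback formalism on presheaves, then Nisnevich, $\mathbb{A}^{1}$ and $\mathbb{P}^{1}$ localizations, with Nisnevich excision extracted from the localization axiom via joint conservativity of $(j^{*},i^{*})$) is essentially their argument.

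The one thing the paper adds, \cref{xlsbil}, is exactly where your free stage breaks as stated. An object $D$ is only defined on $\Cat{QProj}_{S}$. So for a smooth $f\colon Y\to X$ in $\Cat{Sm}_{X}$ (as in \cref{x00w04}) with $Y$ not quasiprojective, $f_{\natural}\unit\in D(X)$ makes no sense. Such $Y$ exist, e.g.\ a smooth proper non-projective threefold over a field. Compatibility with $f_{\natural}$ then imposes nothing on the image of $h_{Y}$, so your existence and uniqueness arguments do not cover these representables.

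In fact $X\mapsto\PShv(\Cat{Sm}_{X};\Cat{Sp})$ is not initial. Initiality would force its identity to factor through the left Kan extension from quasiprojective smooth schemes, which is also a morphism in the category. But $h_{Y}$ does not lie in the localizing subcategory generated by quasiprojective representables.

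The fix is to build the free stage on the full subcategory $\Cat{Sm}^{\mathrm{qp}}_{X}\subset\Cat{Sm}_{X}$ of schemes quasiprojective over $X$; these are exactly the smooth $X$-schemes lying in $\Cat{QProj}_{S}$. Your three localizations then produce $\Shv_{\Nis}(\Cat{Sm}^{\mathrm{qp}}_{X};\Cat{Sp})_{\mathbb{A}^{1}}[(\Sigma^{\infty}\mathbb{P}^{1})^{\otimes-1}]$. You then need one extra step identifying this with $\SH(X)$ by the basis argument. Every finitely presented smooth $X$-scheme is Zariski-locally quasiprojective over $X$, so restriction gives an equivalence of Nisnevich sheaf categories compatible with $\mathbb{A}^{1}$-localization and $\mathbb{P}^{1}$-stabilization. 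With this modification your outline goes through.

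Two smaller corrections:
\begin{itemize}
\item You cannot assume $\Cat{Sm}_{X}$ has finite Krull dimension for an arbitrary noetherian $S$, but you do not need it. For the nonhypercomplete Nisnevich sheaves of \cref{x00w04}, excision for elementary distinguished squares characterizes sheaves on qcqs schemes with no dimension hypothesis.
\item In the Tate step, $D(X)$-linearity of $f_{\natural}s_{*}$ needs the projection formula for the closed immersion $s$. You should derive it from the recollement \cref{i:y_exc} together with the projection formula for the open complement from \cref{i:y_msm}.
\end{itemize}
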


\begin{remark}\label{xlsbil}
  More precisely,
  Drew–Gallauer~\cite{DrewGallauer22}
  considered the category of schemes of finite type over
  a finite-dimensional noetherian base,
  unlike \cref{drga}.
  In our situation,
  their proof a~priori produces
  for~\(X\)
  the category
  in~\cref{e:u0u27}
  with \(\Cat{Sm}_{X}\)
  being replaced
  by its full subcategory spanned by those
  quasiprojective over~\(X\),
  but it coincides with~\(\SH(X)\)
  by the standard basis argument.
  Note that the same remark applies
  (and hence the same result holds)
  when we replace \(\Cat{QProj}_{S}\)
  with the full subcategory spanned by affines.
\end{remark}

\subsection{Algebraic \texorpdfstring{\(2\)}{2}-motives}\label{ss:cc_dg}

In the proof,
we need the following enhancement of \cref{ayoub}:

\begin{proposition}\label{aplus}
  In the situation of \cref{ayoub},
  suppose that
  we have a natural transformation \(\alpha\colon D\to D'\)
  between functors satisfying the conditions.
  If \(\alpha\) is compatible with smooth base change
  (cf.~\cref{drga}),
  it is also compatible with projective base change.
\end{proposition}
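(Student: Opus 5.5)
The cleanest route is to deduce the proposition from the $2$-categorical version of \cref{ayoub} announced in \cref{x86fao}, applied in a $2$-category of arrows. Let $\cat{A}=\Fun([1],\Cat{Pr}_{\st})$, viewed as a $2$-category:
\begin{itemize}
  \item objects are functors $D(X)\to D'(X)$;
  \item $1$-morphisms are commutative squares;
  \item $2$-morphisms are compatible pairs of natural transformations.
\end{itemize}
This is again a stable presentable $2$-category. The transformation $\alpha$ gives a functor $\widetilde{D}\colon(\Cat{QProj}_{S})^{\op}\to\cat{A}$, $X\mapsto(\alpha_X\colon D(X)\to D'(X))$.

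The key elementary fact is about adjoints in~$\cat{A}$. A $1$-morphism of~$\cat{A}$, that is, a square with horizontal maps $g,g'$, has a left (resp.\ right) adjoint in~$\cat{A}$ exactly when:
\begin{itemize}
  \item $g$ and $g'$ admit left (resp.\ right) adjoints in $\Cat{Pr}_{\st}$, and
  \item the associated Beck–Chevalley transformation is an equivalence.
\end{itemize}
In that case the adjoint is the square formed by the componentwise adjoints, with units and counits taken componentwise. I would spell this out by a short mate argument.

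I then verify the hypotheses of the $2$-categorical \cref{ayoub} for~$\widetilde{D}$.
\begin{itemize}
  \item \emph{Smooth base change.} The existence of $f_\natural$ in~$\cat{A}$ is precisely the hypothesis that $f_\natural\alpha_Y\to\alpha_X f_\natural$ is an equivalence. Base change for these adjoints then holds componentwise, because equivalences in~$\cat{A}$ are detected componentwise.
  \item \emph{Recollement.} For $j$ the open complement of a closed immersion~$i$, $j^*$ has left adjoint $j_!=j_\natural$ in~$\cat{A}$ since $j$ is smooth. For~$i^*$, the Beck–Chevalley map $\alpha_X i_*\to i_*\alpha_Z$ is an equivalence: the cofiber sequences $j_!j^*\to\id\to i_*i^*$ for $D$ and $D'$ are compared by~$\alpha$, which commutes with $j^*$, $j_!$ and~$i^*$. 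Hence $\alpha_X i_*i^*\simeq i_*i^*\alpha_X\simeq i_*\alpha_Z i^*$, and since $i^*i_*\simeq\id$, precomposing with~$i_*$ gives the claim. Here I must check that the identification produced this way is the Beck–Chevalley map; this is a routine mate computation in the spirit of \cref{hexagon}. With that, the remaining recollement conditions hold componentwise.
  \item \emph{$\AA^1$-invariance and Tate stability.} Full faithfulness of $f^*$ is detected componentwise. The $1$-morphism $f_\natural s_*$ of~$\cat{A}$ is the componentwise $f_\natural s_*$, and a $1$-morphism of~$\cat{A}$ whose components are equivalences is an equivalence.
\end{itemize}
The $2$-categorical \cref{ayoub} then gives, for projective~$p$, a right adjoint $p_*$ of $\widetilde{D}(p)$ in~$\cat{A}$. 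By the adjoint characterization this is exactly the statement that $\alpha_X p_*\to p_*\alpha_Y$ is an equivalence.

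The main obstacle is that this relies on \cref{aminus}, which the paper proves only later; the reduction is meaningful only once Ayoub's argument has been redone in a general stable presentable $2$-category. If a direct proof is needed instead, I would factor $p$ as a closed immersion into $\PP^n_X$ followed by the projection. Closed immersions are handled by the argument above. For $\pi\colon\PP^n_X\to X$, I would use Ayoub's purity equivalence $\pi_*\simeq\pi_\natural\circ\mathrm{Th}(-T_\pi)$. Here the Thom equivalence $\mathrm{Th}(V)=q_\natural s_*$, with $q$ the projection of the bundle and $s$ its zero section, is built from smooth $\natural$-functors and closed pushforwards, both already compatible with~$\alpha$. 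The hard part would then be checking that the purity map, being assembled from units and counits, is natural in~$D$; I would do this by repeated use of \cref{hexagon}.
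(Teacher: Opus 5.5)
\textbf{The main route is circular.} In the paper, \cref{aminus} is deduced from \cref{ayoub} by Yoneda, and the one nonformal input in that deduction is \cref{aplus} itself. The right adjoints \(p_{*}\) on the categories \(\Hom_{\cat{C}}(E,[X])\) assemble into a right adjoint of \([p]\) in~\(\cat{C}\) only if precomposition along each \(E\to E'\) commutes with \(p_{*}\). That precomposition is a natural transformation between two functors satisfying Ayoub's axioms, so this is exactly the present statement, and a Yoneda argument cannot avoid it.

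Your own characterization of adjoints in \(\cat{A}=\Fun([1],\Cat{Pr}_{\st})\) makes this explicit. Given \cref{ayoub} for \(D\) and~\(D'\), the \(2\)-categorical \cref{ayoub} for~\(\widetilde{D}\) is equivalent to the claim for~\(\alpha\). So the arrow-category reduction reformulates the proposition rather than proving it. It would only become a proof if you reran Ayoub's whole argument inside an arbitrary stable presentable \(2\)-category, which you do not do. Your treatment of closed immersions (commuting \(\alpha\) with \(i_{*}\) via the recollement) is fine, and it is how the paper handles that case.

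\textbf{The fallback has the right strategy, and it is the paper's, but the decisive step is only announced.} As you state it, the purity comparison \(\pi_{*}\simeq\pi_{\natural}\mathrm{Th}(-T_{\pi})\), with \(\mathrm{Th}\) built from the zero section of the tangent bundle, is not a map to which \cref{hexagon} applies. Matching it to one would need a further identification (deformation to the normal cone), which would itself have to be checked compatible with~\(\alpha\).

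The paper avoids this by using the diagonal. Take \(f\colon Y\to X\) smooth projective, let \(p,q\colon Y\times_{X}Y\to Y\) be the projections and \(d\) the diagonal. Ayoub shows two things: the composite \(f_{\natural}\simeq f_{\natural}p_{*}d_{*}\to f_{*}q_{\natural}d_{*}\), whose second arrow is a double Beck--Chevalley transformation, is an equivalence; and \(q_{\natural}d_{*}\) is invertible. The paper then argues as follows.
\begin{enumerate}
\item \Cref{hexagon} compares the two ways of moving \(\alpha\) across \(f_{\natural}p_{*}\to f_{*}q_{\natural}\).
\item One arrow in that hexagon, \(\alpha_{Y}p_{*}\to p_{*}\alpha_{Y\times_{X}Y}\), is not known to be invertible, since that is the very claim for the base change~\(p\). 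After precomposing with~\(d_{*}\) it becomes invertible, because \(p_{*}d_{*}\simeq\id\) and \(\alpha\) commutes with the closed pushforward~\(d_{*}\).
\item Using also that \(\alpha\) commutes with \(f_{\natural}\) and~\(q_{\natural}\), two-out-of-three shows that \(\alpha_{X}f_{*}q_{\natural}d_{*}\to f_{*}\alpha_{Y}q_{\natural}d_{*}\) is an equivalence.
\item Cancelling the invertible \(q_{\natural}d_{*}\) gives the claim for~\(f\).
\end{enumerate}
Your sketch needs to supply two things: writing purity as a double Beck--Chevalley map via the diagonal, and using \(p_{*}d_{*}\simeq\id\) to neutralize the unknown arrow in the hexagon.
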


\begin{proof}
  We split this into the cases
  of closed immersions
  and \(\PP^{n}_{X}\to X\).
  The former case follows from \cref{i:a_exc}
  and smooth base change for open immersions.
  Therefore,
  it suffices to consider the latter case.

  We now consider a general smooth projective morphism
  \(f\colon Y\to X\).
  We form the diagram
  \begin{equation*}
    \begin{tikzcd}
      Y\ar[r,"d"]&
      Y\times_{X}Y\ar[r,"p"]\ar[d,"q"']&
      Y\ar[d,"f"]\\
      {}&
      Y\ar[r,"f"]&
      X\rlap.
    \end{tikzcd}
  \end{equation*}
  Then for~\(D\) (and similarly for~\(D'\)),
  we consider the natural transformation
  \begin{equation*}
    f_{\natural}
    \simeq f_{\natural}p_{*}d_{*}
    \to f_{*}q_{\natural}d_{*},
  \end{equation*}
  where the second morphism
  is induced by the double Beck–Chevalley transformation.
  Ayoub proved that
  this natural transformation is an equivalence
  and \(q_{\natural}d_{*}\) is invertible.
  By the commutative diagram
\begin{equation*}
    \begin{tikzcd}
      {}&
      {}&
      f_{\natural}p_{*}d_{*}\alpha_{Y}\ar[r,"\simeq"]&
      f_{*}q_{\natural}d_{*}\alpha_{Y}\\
      f_{\natural}\alpha_{Y}\ar[r,"\simeq"]\ar[d,"\simeq"']\ar[urr,"\simeq",bend left=15]&
      f_{\natural}\alpha_{Y}p_{*}d_{*}\ar[r]\ar[d,"\simeq"]&
      f_{\natural}p_{*}\alpha_{Y\times_{X}Y}d_{*}\ar[r]\ar[u,"\simeq"]&
      f_{*}q_{\natural}\alpha_{Y\times_{X}Y}d_{*}\ar[d,"\simeq"]\ar[u,"\simeq"']\\
      \alpha_{X}f_{\natural}\ar[r,"\simeq"]&
      \alpha_{X}f_{\natural}p_{*}d_{*}\ar[r,"\simeq"]&
      \alpha_{X}f_{*}q_{\natural}d_{*}\ar[r]&
      f_{*}\alpha_{Y}q_{\natural}d_{*}\rlap,
    \end{tikzcd}
  \end{equation*}
  where the hexagon commutes by \cref{hexagon},
  the bottom right horizontal morphism
  \(\alpha_{X}f_{*}q_{\natural}d_{*}\to f_{*}\alpha_{Y}q_{\natural}d_{*}\)
  is also an equivalence.
  Since \(q_{\natural}d_{*}\) is invertible,
  the Beck–Chevalley transformation
  \(f_{\natural}\alpha_{Y}\to \alpha_{X}f_{\natural}\)
  under consideration is an equivalence.
\end{proof}

\begin{corollary}\label{aminus}
  The first part of \cref{ayoub} remains valid
  when \(\Cat{Pr}_{\st}\)
  is replaced with
  any stable presentable \(2\)-category~\(\cat{C}\).
\end{corollary}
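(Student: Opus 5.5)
The plan is to reduce to the case \(\cat{C}=\Cat{Pr}_{\st}\), which is Ayoub's theorem (\cref{ayoub}), by applying \(\Hom(E,\X)\) for arbitrary objects \(E\) of~\(\cat{C}\). This is the same reduction used in the proof of \cref{xwog77}, and \cref{aplus} is what makes it compatible with the comparison maps.

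Let \(D\colon(\Cat{QProj}_{S})^{\op}\to\cat{C}\) satisfy the axioms. For each \(E\in\cat{C}\), the composite \(D_{E}=\Hom(E,D(\X))\colon(\Cat{QProj}_{S})^{\op}\to\Cat{Pr}_{\st}\) again satisfies \cref{i:y_nsm,i:y_exc,i:y_hi,i:y_tate}. The reason is that these are statements about adjunctions, equivalences of \(2\)-morphisms, and pushouts in \(\End\). All of these are preserved by the \(2\)-functor \(\Hom(E,\X)\): it preserves adjunctions, and it preserves colimits in the stable hom-categories because \(\cat{C}\) is stable presentable. In particular a recollement in~\(\cat{C}\) maps to a recollement in \(\Cat{Pr}_{\st}\). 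Ayoub then gives, for a projective \(p\colon X'\to X\), a right adjoint \((p^{*}_{E})^{\R}\) of \(\Hom(E,p^{*})\) satisfying base change.

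The key step is to show that these right adjoints come from a single right adjoint of \(p^{*}\) in~\(\cat{C}\). I would proceed as follows:
\begin{itemize}
  \item \emph{Reduction.} As in the proof of \cref{aplus}, split \(p\) into a closed immersion followed by \(\PP^{n}_{X}\to X\). For a closed immersion the right adjoint \(i_{*}\) exists in~\(\cat{C}\) by \cref{i:a_exc}. Base change for it follows from \cref{xwog77}-type arguments and smooth base change along the open complement.
  \item \emph{The projective bundle.} For \(f\colon\PP^{n}_{X}\to X\), define \(f_{*}\) in~\(\cat{C}\) directly as \(f_{\natural}\circ(q_{\natural}d_{*})^{-1}\). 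This uses Ayoub's formula \(f_{\natural}\simeq f_{*}q_{\natural}d_{*}\), which is written entirely in terms of \(\natural\)-pushforwards and closed-immersion pushforwards, and these exist in~\(\cat{C}\). Invertibility of \(q_{\natural}d_{*}\) in~\(\cat{C}\) can be checked after applying \(\Hom(E,\X)\) for all~\(E\), since it holds there by Ayoub.
  \item \emph{Adjunction.} The unit and counit exhibiting \(f^{*}\dashv f_{*}\) are checked to be equivalences of triangle identities after applying \(\Hom(E,\X)\) for all~\(E\). The candidate unit and counit are obtained from the Beck–Chevalley and diagonal data, which are natural in~\(E\).
\end{itemize}

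The main obstacle is naturality in~\(E\). The family \((p^{*}_{E})^{\R}\) must assemble into a morphism of~\(\cat{C}\). Equivalently, for a morphism \(E\to E'\), the right adjoints from Ayoub must commute with precomposition. Precomposition by \(E\to E'\) is a natural transformation \(D_{E'}\to D_{E}\) that is trivially compatible with \(\natural\)-pushforwards, since these are given by postcomposition in~\(\cat{C}\). \Cref{aplus} then shows it is compatible with projective pushforwards. Hence the right adjoints are representable and assemble into a right adjoint in~\(\cat{C}\), which is the right adjointability criterion in a presentable \(2\)-category.

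Finally, the Beck–Chevalley equivalences for projective base change can be detected after applying \(\Hom(E,\X)\) for all~\(E\), where they hold by Ayoub.
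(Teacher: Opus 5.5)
Your proposal is correct and is essentially the paper's proof, which is simply ``\cref{ayoub} plus Yoneda, with adjointability verified via \cref{aplus}.'' Concretely, you apply \cref{ayoub} to each \(\Hom(E,\X)\circ D\), then use \cref{aplus} on the precomposition transformations \(D_{E'}\to D_{E}\), which are trivially compatible with \(\natural\)-pushforwards, so that the pointwise right adjoints assemble into a right adjoint in~\(\cat{C}\). Your intermediate bullets constructing \(f_{*}=f_{\natural}(q_{\natural}d_{*})^{-1}\) directly in~\(\cat{C}\) are redundant. The ``Adjunction'' bullet on its own would not suffice, since it does not produce a unit and counit natural in~\(E\); your final paragraph via \cref{aplus} already closes that point.
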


\begin{proof}
  We use \cref{ayoub} and Yoneda.
  We can verify adjointability
  using \cref{aplus}.
\end{proof}

We now prove \cref{main_1}:

\begin{proof}[Proof of \cref{main_1}]
In this proof,
  we write \(\Mot[2](S)\)
  for the universal target.
  We wish to prove that
  the morphism
  \(F\colon\Mot[2](S)\to\SH[2](S)\)
  given by the universal property
  is an equivalence.

  We first construct a morphism \(G\colon\SH[2](S)\to\Mot[2](S)\)
  under \((\Cat{QProj}_{S})^{\op}\).
  Recall from \cref{ker_p}
  that
  \(\SH[2](S)\) is defined
  to be \(\Mod_{\SH}(\Fun(\Span(\Cat{QProj}_{S},\Cat{Pr}_{\st})))\).
  Hence,
  it is equivalent
  to constructing a symmetric monoidal functor
  \(\Span(\Cat{QProj}_{S})\to\Mot[2](S)\)
  extending \([\X]\colon\Cat{QProj}_{S}^{\op}\to\Mot[2](S)\)
  and a morphism
  \(\alpha\colon\SH\to\Hom_{\Mot[2](S)}(\unit,[\X])\)
  in \(\CAlg(\Fun(\Span(\Cat{QProj}_{S}),\Cat{Pr}_{\st}))\).

  We first construct
  the desired extension of~\([\X]\)
  by restricting
  a functor from \(\Span[2]_{P,J}(\Cat{QProj}_{S})\),
  where \(J\) and~\(P\) consist of open immersions
  and projective morphisms, respectively.
  We check the \((J,P)\)-biadjointability condition in \cref{cll}.
  Base change for~\(J\) follows from \cref{i:a_sm}.
  For~\(P\),
  base change follows from \cref{aminus}.
  The double Beck–Chevalley condition
  can be verified by considering the complementary closed immersion
  and using \cref{i:a_exc}.
  Since \((\Cat{QProj}_{S})^{\op}\to\Mot[2](S)\)
  is symmetric monoidal, the projection formula
  automatically
  follows from base change.

  We now construct~\(\alpha\).
  We write \(D=\Hom_{\Mot[2]}(\unit,[\X])\).
  By applying \cref{drga},
  we obtain a natural transformation
  after restricting to \((\Cat{QProj}_{S})^{\op}\).
  We wish to invoke \cref{cll} again
  and restrict to \(\Span(\Cat{QProj}_{S})\)
  to construct~\(\alpha\).
  This requires verifying the condition in \cref{cll};
  namely,
  we need to show that
  \begin{align*}
    \begin{tikzcd}[ampersand replacement=\&]
      \SH(X)\ar[r,"j^{*}"]\ar[d,"\alpha_{X}"']\& 
      \SH(U)\ar[d,"\alpha_{U}"]\\
      D(X)\ar[r,"j^{*}"]\&
      D(U)\rlap,
    \end{tikzcd}
    &&
    \begin{tikzcd}[ampersand replacement=\&]
      \SH(X)\ar[r,"p^{*}"]\ar[d,"\alpha_{X}"']\& 
      \SH(Y)\ar[d,"\alpha_{Y}"]\\
      D(X)\ar[r,"p^{*}"]\&
      D(Y)
    \end{tikzcd}
  \end{align*}
  for any open immersion \(j\colon U\to X\)
  and projective morphism \(p\colon Y\to X\)
  are left and right adjointable, respectively.
  The former follows,
  since it is compatible with smooth base change
  by construction.
  The latter follows from \cref{aplus}.

  It remains to show that they are mutually inverse.
  By construction,
  \(GF\) is homotopic to~\(\id\)
  after composing with \([\X]\colon\Cat{QProj}^{\op}\to\Mot[2](S)\),
  and hence \(GF\) is homotopic to~\(\id\)
  by universality.
Hence,
  it suffices to prove that \(FG\)
  is homotopic to~\(\id\).
  By construction,
  it is homotopic to~\(\id\) under \((\Cat{QProj}_{S})^{\op}\),
  and by \cref{cll},
  under \(\Span(\Cat{QProj}_{S})\) as well.
  We hence obtain the corresponding
  autoequivalence
  of~\(D\) in \(\CAlg(\Fun(\Span(\Cat{QProj}_{S}),\Cat{Pr}_{\st}))\),
  which is homotopic to~\(\id\) by \cref{drga}.
\end{proof}

\subsection{\texorpdfstring{\(2\)}{2}-motives of pullbacks}\label{ss:mot_pb}

Here,
we present
an axiomatic method to obtain the relative Künneth formula:

\begin{proposition}\label{x15q1p}
Let \(\cat{G}\) be a category with finite limits,
  \(\cat{C}\) a presentably symmetric monoidal \(2\)-category,
  and \([\X]\colon\cat{G}^{\op}\to\cat{C}\) a symmetric monoidal functor.
  Suppose that \(I\) is a wide subcategory
  of~\(\cat{G}\) stable under base change
  and for \(i\in I\),
  the morphism \(i^{*}=[i]\)
  admits a fully faithful right adjoint~\(i_{*}\)
  satisfying proper base change.
  Then for a pullback \(Y'=X'\times_{X}Y\) in~\(\cat{G}\)
  such that the diagonal of~\(X\) is in~\(I\),
  the canonical morphism
  \([X']\otimes_{[X]}[Y]\to[Y']\) in~\(\cat{C}\)
  is an equivalence.
\end{proposition}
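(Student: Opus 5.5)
The key observation is that the relative tensor product \([X']\otimes_{[X]}[Y]\) is the geometric realization of the bar construction
\[
  [X']\otimes[X]^{\otimes n}\otimes[Y]\simeq[X'\times X^{n}\times Y],
\]
using that \([\X]\) is symmetric monoidal and sends products in~\(\cat{G}\) to tensor products. So the plan is to recognize this simplicial object as \([\X]\) applied to a cosimplicial object of~\(\cat{G}\), and to compute its colimit in~\(\cat{C}\) via adjoints.

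Concretely, write \(B_{n}=X'\times X^{n}\times Y\). This is the Čech-type cosimplicial object whose totalization in~\(\cat{G}\) is \(X'\times_{X}Y=Y'\). Each \(B_{n}\) maps to \(Y'\) via the diagonal-type maps, and the augmentation \(Y'\to B_{0}=X'\times Y\) is a base change of the diagonal \(X\to X\times X\). More generally, all the relevant structure maps from \(Y'\) are built from base changes of diagonals of~\(X\). In the bar construction, the face maps \([B_{n}]\to[B_{n-1}]\) are the morphisms \(\delta^{*}\) for the codegeneracy-type maps \(\delta\colon B_{n-1}\to B_{n}\) that insert diagonals. These lie in~\(I\), since \(I\) is stable under base change and contains the diagonal of~\(X\).

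So we have an augmented simplicial diagram \([B_{\bullet}]\to[Y']\) in~\(\cat{C}\), whose morphisms are \(i^{*}\) for \(i\in I\). I would show it is a colimit diagram by the standard adjoint/Beck–Chevalley criterion: pass to right adjoints \(i_{*}\). Under the stated hypotheses, these give a coaugmented cosimplicial diagram, and proper base change makes the Beck–Chevalley squares commute. By the Lurie-type result that colimits of left adjoints in a presentable \(2\)-category can be computed as limits of right adjoints (the \(2\)-categorical analog of \(\Cat{Pr}^{\L}\simeq(\Cat{Pr}^{\R})^{\op}\), valid in~\(\cat{C}\) by testing with \(\Hom(D,\X)\) as in \cref{xwog77}), it then suffices to exhibit \([Y']\) as the limit along the \(i_{*}\)'s.

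For that I would use the extra degeneracy. The map \(Y'\to X'\times Y=B_{0}\) is in~\(I\), so its pullback \(e^{*}\) has a fully faithful right adjoint~\(e_{*}\), that is, \(e^{*}e_{*}\simeq\id\). Combined with base change, this should make the diagram split after applying \(e^{*}\) (or \(\Hom(D,\X)\)), which gives the colimit. The fully faithful right adjoints and base change precisely supply the Beck–Chevalley equivalences needed for the splitting identities.

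The main obstacle is the coherence of this last step. I need to produce an honest split augmented simplicial object, or verify a Barr–Beck-type monadicity statement, rather than only homotopy-level identities. My first attempt would be to reduce to \(\Cat{Pr}\) via \(\Hom(D,\X)\). There I would identify \(\Hom(D,[Y'])\) with modules over the idempotent-type monad \(e_{*}e^{*}\) on \(\Hom(D,[B_{0}])\), and match this with the two-sided bar description of the relative tensor product, using base change to identify both monads.
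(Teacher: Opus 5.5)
Your setup agrees with the paper: \([X']\otimes_{[X]}[Y]\) is the geometric realization of \([B_{\bullet}]=[X'\times X^{\bullet}\times Y]\), and the face maps are \(\delta^{*}\) for base changes \(\delta\) of the diagonal of~\(X\), hence lie in~\(I\). But the step that carries the content is left open, and the route you sketch for it rests on a false premise.

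The simplicial object \([B_{\bullet}]\) also has degeneracies. These are pullbacks along the projections \(X'\times X^{n+1}\times Y\to X'\times X^{n}\times Y\), which are base changes of \(X\to{*}\), not of the diagonal. So they are not in~\(I\) and need not have right adjoints; in the application \(I\) consists of closed immersions. Passing to right adjoints therefore gives only a coaugmented \emph{semi}cosimplicial diagram. Neither a Barr--Beck--Lurie/Beck--Chevalley descent argument (which needs codegeneracies) nor a splitting (whose extra degeneracies must also be compatible with these pullbacks) is available in the form you propose.

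Two further points. First, testing with \(\Hom(D,\X)\) detects limits, not colimits. So the exchange of a colimit along left adjoints for a limit along right adjoints in~\(\cat{C}\) is not justified by the argument of \cref{xwog77}. Second, \(\Hom(D,\X)\) does not commute with the relative tensor product, so there is no bar description on that side against which to match monads.

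The paper sidesteps all of this by applying the contravariant functor \(\Hom_{\cat{C}}(\X,E)\) to the simplicial object itself. This turns the geometric realization into the totalization of an honest cosimplicial category \(\Hom([B_{\bullet}],E)\). Its codegeneracies are precomposition with the pullbacks along projections, so no adjoints are needed. Its cofaces are precomposition with~\(\delta^{*}\), which has three properties:
\begin{itemize}
  \item it is fully faithful, since \(\delta^{*}\delta_{*}\simeq\id\);
  \item it has an adjoint, given by precomposition with~\(\delta_{*}\);
  \item it satisfies the Beck--Chevalley condition, because the relevant squares of \(B_{\bullet}\) are pullbacks in~\(\cat{G}\) and \(i_{*}\) satisfies proper base change.
\end{itemize}
\Cref{xadp26} (via Lurie's HA~4.7.5.2) then identifies the totalization with the full subcategory of \(\Hom([X'\times Y],E)\) spanned by those \(G\) with \(G\delta_{0}^{*}\simeq G\delta_{1}^{*}\).

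Comparing this with \(\Hom([Y'],E)\) takes one more base change. \(Y'\) is the pullback of \(\delta_{0},\delta_{1}\colon X'\times Y\to X'\times X\times Y\), with both projections equal to \(e\colon Y'\to X'\times Y\). Hence \(\delta_{0}^{*}\delta_{1,*}\simeq e_{*}e^{*}\), and so \(G\simeq G\delta_{1}^{*}\delta_{1,*}\simeq G\delta_{0}^{*}\delta_{1,*}\simeq Ge_{*}e^{*}\). Since precomposition with~\(e^{*}\) is fully faithful, this finishes the argument. No coherent splitting or monad comparison is needed.

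Your covariant route can be repaired the same way. Restrict to the semisimplicial diagram and justify the colimit/limit exchange in~\(\cat{C}\) separately. Then the semicosimplicial limit along postcomposition with the fully faithful \(\delta_{*}\) is a full subcategory of \(\Hom(D,[X'\times Y])\), and the same base change identifies it with the image of~\(e_{*}\).
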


\begin{lemma}\label{xadp26}
  Let \(\cat{C}^{\bullet}\) be a cosimplicial object in~\(\Cat{Cat}\)
  such that all cofaces are fully faithful.
  Suppose that
  \begin{equation*}
    \begin{tikzcd}
      \cat{C}^{m}\ar[r,"d^{0}"]\ar[d]&
      \cat{C}^{m+1}\ar[d]\\
      \cat{C}^{n}\ar[r,"d^{0}"]&
      \cat{C}^{n+1}
    \end{tikzcd}
  \end{equation*}
  is right adjointable
  for any \([m]\to[n]\) in~\(\Cat{\Delta}\).
  Then its totalization~\(\cat{C}^{-1}\)
  is identified with the full subcategory
  of~\(\cat{C}^{0}\) spanned by
  those objects~\(C\)
  such that
  \(d^{0}(C)\) is equivalent to \(d^{1}(C)\).
\end{lemma}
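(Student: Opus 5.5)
The plan is to use the Beck–Chevalley descent theorem in its dual form (Lurie, \emph{Higher Algebra}, Corollary~4.7.5.3, with left and right exchanged). Since every \(d^{0}\colon\cat{C}^{m}\to\cat{C}^{m+1}\) has a right adjoint \((d^{0})^{\R}\), and the stated squares are right adjointable, the projection \(\cat{C}^{-1}\to\cat{C}^{0}\) is comonadic. Its comonad is
\begin{equation*}
  G=(d^{0})^{\R}d^{1}\colon\cat{C}^{0}\to\cat{C}^{0}.
\end{equation*}
The counit \(G\to\id\) is obtained by applying \(s^{0}\). Concretely, it uses \(s^{0}d^{1}\simeq\id\) together with the identification \(s^{0}\simeq(d^{0})^{\R}\) on the essential image of \(d^{0}\). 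So \(\cat{C}^{-1}\) is the category of \(G\)-coalgebras in~\(\cat{C}^{0}\), and it remains to identify this category.

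First, I will show that \(G\) is an idempotent comonad, i.e.\ that the comultiplication \(G\to GG\) is an equivalence. Right adjointability of the square for the coface \(d^{1}\colon[0]\to[1]\) gives \(d^{1}(d^{0})^{\R}\simeq(d^{0})^{\R}d^{2}\) as functors \(\cat{C}^{1}\to\cat{C}^{1}\). Hence
\begin{equation*}
  GG=(d^{0})^{\R}d^{1}(d^{0})^{\R}d^{1}\simeq(d^{0})^{\R}(d^{0})^{\R}d^{2}d^{1}.
\end{equation*}
Using the cosimplicial identities and \((d^{0})^{\R}d^{0}\simeq\id\), which holds because \(d^{0}\) is fully faithful, this should collapse to \((d^{0})^{\R}d^{1}=G\).

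I expect this to be the main obstacle. One must check that the equivalence produced this way is really the comultiplication, or equivalently that both maps \(G\to GG\) obtained by whiskering the counit are equivalences. I would handle this by tracking the counit through the Beck–Chevalley transformation, where the hexagon-type compatibilities of \cref{hexagon} should help.

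Given idempotence, the category of \(G\)-coalgebras is the full subcategory of~\(\cat{C}^{0}\) on those~\(C\) for which the counit \(GC\to C\) is an equivalence. It remains to match this condition with \(d^{0}C\simeq d^{1}C\).

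If \(GC\simeq C\) via the counit, then \(d^{1}C\) lies in the essential image of~\(d^{0}\), which is coreflective with coreflector \(d^{0}(d^{0})^{\R}\). So \(d^{1}C\simeq d^{0}C'\) with \(C'=GC\simeq C\).

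Conversely, suppose \(d^{1}C\simeq d^{0}C'\). Applying \(s^{0}\) gives \(C\simeq s^{0}d^{1}C\simeq s^{0}d^{0}C'\simeq C'\), so \(d^{1}C\) lies in the image of \(d^{0}\). Then the coreflection \(d^{0}GC\to d^{1}C\) is an equivalence. Applying \(s^{0}\) turns this map into the counit \(GC\to C\), so the counit is an equivalence. This identifies \(\cat{C}^{-1}\) with the asserted full subcategory.
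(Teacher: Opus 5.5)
There is a genuine gap at the step you yourself flag as the main obstacle: you never show that \(G=(d^{0})^{\R}d^{1}\) is an idempotent comonad, and everything after that depends on it.

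Your manipulation gives at best some equivalence \(GG\simeq G\). It does go through, but it also needs the Beck--Chevalley equivalence \((d^{0})^{\R}d^{1}\simeq d^{0}(d^{0})^{\R}\) of endofunctors of \(\cat{C}^{1}\) coming from \(d^{0}\colon[0]\to[1]\), not only cosimplicial identities. An abstract equivalence of this kind says nothing about the comultiplication. For an infinite set \(X\), the comonad \(X\times(\X)\) on sets, with comultiplication from the diagonal, satisfies \(GG\cong G\) without being idempotent. The proposed bookkeeping with hexagon-type compatibilities is not carried out.

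Two further points are only asserted:
\begin{itemize}
\item that the comonad's counit is \(s^{0}\) applied to the coreflection \(d^{0}(d^{0})^{\R}d^{1}C\to d^{1}C\), which your last paragraph uses;
\item the split-colimit hypothesis of the comonadic Beck--Chevalley criterion. It does hold, since each \(d^{0}\) is fully faithful with a right adjoint, but it needs to be said.
\end{itemize}

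The missing idea is that the projection \(F\colon\cat{C}^{-1}\to\cat{C}^{0}\) is fully faithful, and this is elementary. Mapping anima in the totalization are totalizations of the cosimplicial anima \(\Map_{\cat{C}^{n}}(X_{n},Y_{n})\). All of their cofaces are equivalences because the cofaces of \(\cat{C}^{\bullet}\) are fully faithful, so computing cosemisimplicially gives \(\Map_{\cat{C}^{0}}(X_{0},Y_{0})\).

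This makes \(FF^{\R}\) idempotent for free, since its comultiplication is \(F\eta F^{\R}\) with \(\eta\) invertible. It also removes the need for comonadicity and for any description of the counit. That is the paper's proof. Right adjointability of the square with sides \(F\), \(F\), \(d^{0}\), \(d^{1}\) (the dual of Lurie's Theorem~4.7.5.2 in Higher Algebra) gives \(FF^{\R}\simeq(d^{0})^{\R}d^{1}\). So if \(d^{0}C\simeq d^{1}C\), then \(C\simeq(d^{0})^{\R}d^{0}C\simeq(d^{0})^{\R}d^{1}C\simeq FF^{\R}C\) lies in the essential image of \(F\). Only abstract equivalences are used, and the other inclusion is immediate from \(d^{0}F\simeq d^{1}F\).
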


\begin{proof}
We write \(F\colon\cat{C}^{-1}\to\cat{C}^{0}\)
  for the tautological functor.
  By computing~\(\cat{C}^{-1}\) cosemisimplicially,
  we see that \(F\) is fully faithful.
  Moreover,
  it is clear that \(C\in\cat{C}^{0}\)
  is in the image of~\(F\),
  then \(d^{0}(C)\simeq d^{1}(C)\) holds.

  We prove the converse.
  Suppose that \(C\in\cat{C}^{0}\)
  satisfies \(d^{0}(C)\simeq d^{1}(C)\).
  We apply~\cite[Theorem~4.7.5.2]{LurieHA}
  to see that
  \begin{equation*}
    \begin{tikzcd}
      \cat{C}^{-1}\ar[r,"F"]\ar[d,"F"']&
      \cat{C}^{0}\ar[d,"d^{1}"]\\
      \cat{C}^{0}\ar[r,"d^{0}"]&
      \cat{C}^{1}
    \end{tikzcd}
  \end{equation*}
  is right adjointable.
  By \(C\simeq d^{0,\R}d^{0}(C)
  \simeq d^{0,\R}d^{1}(C)\simeq FF^{\R}(C)\),
  we see that~\(C\) is in the image of~\(F\).
\end{proof}

\begin{proof}[Proof of \cref{x15q1p}]
  We realize \([X']\otimes_{[X]}[Y]\)
  as the geometric realization
  of \([X'\times X^{\times\bullet}\times Y]\)
  in~\(\cat{C}\).
  Then by considering \(\Hom_{\cat{C}}(\X,C)\)
  for an arbitrary object~\(C\),
  this is reduced to \cref{xadp26}.
\end{proof}

\begin{remark}\label{xzjpy8}
  Another useful technique we can use is
  \(X'\times_{X}Y=X\times_{X\times X}(X'\times Y)\);
  cf.~the proof of~\cref{x925qh}.
\end{remark}

\begin{corollary}\label{x6lx9q}
  In the situation of \cref{main_1},
  for a pullback \(Y'=X'\times_{X}Y\) in \(\Cat{QProj}_{S}\),
  the canonical morphism
  \([X']\otimes_{[X]}[Y]\to[Y']\) is an equivalence
  in~\(\SH[2](S)\).
\end{corollary}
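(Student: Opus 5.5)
The plan is to deduce this directly from \cref{x15q1p}, applied to \(\cat{G}=\Cat{QProj}_{S}\), \(\cat{C}=\SH[2](S)\), and the symmetric monoidal functor \([\X]\colon(\Cat{QProj}_{S})^{\op}\to\SH[2](S)\). Take \(I\) to be the class of closed immersions. This class is a wide subcategory: it contains the identities and is closed under composition. It is also stable under base change in \(\Cat{QProj}_{S}\). The remaining hypothesis concerns the diagonal. Every object of \(\Cat{QProj}_{S}\) is separated over \(S\), and the products in \(\Cat{QProj}_{S}\) are fiber products over~\(S\). So the diagonal \(X\to X\times_{S}X\) is a closed immersion for every~\(X\), and this holds for all pullbacks in the category, not just some of them.

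It remains to check the conditions on \(i\in I\). First, \(i^{*}\) should admit a fully faithful right adjoint~\(i_{*}\). Second, that adjoint should satisfy proper base change in \(\SH[2](S)\): for every pullback of \(i\colon Z\hookrightarrow X\) along \(p\colon X'\to X\), the Beck–Chevalley map \(p^{*}i_{*}\to i'_{*}q^{*}\) should be an equivalence. By \cref{main_1}, the functor \([\X]\) satisfies axiom \cref{i:a_exc}. So for a closed immersion~\(i\) with open complement~\(U\), the diagram \([U]\gets[X]\to[Z]\) is a recollement. By \cref{xpyo8y}~\cref{i:6y2xc}, this gives a right adjoint~\(i_{*}\) whose unit \(\id\to i_{*}i^{*}\) is an equivalence. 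Here I read the convention on \(i^{*}=[i]\colon[X]\to[Z]\) as matching \cref{x15q1p}, so that this unit condition is exactly the required full faithfulness.

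For base change, I would invoke the argument already used in the proof of \cref{main_1}: proper (projective) base change for closed immersions follows from \cref{aminus}, or more directly from \cref{i:a_exc} together with smooth base change for the complementary open immersion. Concretely, \(i_{*}i^{*}\) is the cofiber of \(j_{!}j^{*}\to\id\). The pullback along~\(p\) of the recollement on~\(X\) is the recollement on~\(X'\), because open immersions satisfy base change by \cref{i:a_sm}. Hence \(p^{*}\) carries the cofiber sequence for~\(X\) to the one for~\(X'\). Comparing the two gives \(p^{*}i_{*}i^{*}\simeq i'_{*}i'^{*}p^{*}\). Since \(i^{*}\) is essentially surjective in the appropriate \(2\)-categorical sense (its unit is invertible), this yields the Beck–Chevalley equivalence.

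The main obstacle is bookkeeping rather than new mathematics. One point is to match the variance conventions (which of \(i^{*}\), \(i_{*}\) is fully faithful) between \cref{xpyo8y} and \cref{x15q1p}. The other is to make sure the base change holds for arbitrary pullbacks, not only projective ones, which is fine since closed immersions are projective and \cref{aminus} applies. With these hypotheses verified, \cref{x15q1p} gives that \([X']\otimes_{[X]}[Y]\to[Y']\) is an equivalence.
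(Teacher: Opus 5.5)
Your proposal is correct and is exactly the intended argument: the paper gives no separate proof and means this as a direct application of \cref{x15q1p} with \(I\) the closed immersions, using that the diagonal of a quasiprojective \(S\)-scheme is a closed immersion, and that \cref{i:a_exc} together with smooth base change for the complementary open immersion (equivalently \cref{aminus}) gives a fully faithful \(i_{*}\) satisfying proper base change. One terminological fix: the map that must be invertible is the counit \(i^{*}i_{*}\to\id\), not the unit \(\id\to i_{*}i^{*}\); this is what makes \(i_{*}\) fully faithful, and it is also what lets you cancel \(i^{*}\) on the right in your base-change step (the literal wording of \cref{xpyo8y} swaps unit and counit, as its pushout condition shows).
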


\begin{corollary}\label{xfviz8}
  In the situation of \cref{main_1},
  for \(X\in\Cat{QProj}_{S}\),
  the morphism
  \(\SH[2](S)\to\SH[2](X)\)
  induces an equivalence
  \(\Mod_{[X]}(\SH[2](S))\simeq\SH[2](X)\).
\end{corollary}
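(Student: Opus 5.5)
The plan is to compute both sides as modules and compare them using \cref{x6lx9q}. First I need the source and target precisely. \(\SH[2](X)\) is the presentable \(2\)-category of kernels for \(\SH\) on \(\Cat{QProj}_{X}\), and \(\Cat{QProj}_{X}\) is identified with \((\Cat{QProj}_{S})_{/X}\). The morphism \(\SH[2](S)\to\SH[2](X)\) is induced by base change \(Y\mapsto Y\times_{S}X\). This base change functor is symmetric monoidal, and the axioms of \cref{main_1} pull back along it. So the universal property of \(\SH[2](S)\) from \cref{main_1} gives the functor. It sends \([X]\) to \([X\times_{S}X]\); composing with the diagonal \(X\to X\times_{S}X\) makes \(\SH[2](X)\) into an algebra under \([X]\), giving an induced functor
\[
  \Phi\colon\Mod_{[X]}(\SH[2](S))\to\SH[2](X).
\]

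To build an inverse, I will show that \(\Mod_{[X]}(\SH[2](S))\) satisfies the universal property of \(\SH[2](X)\) from \cref{main_1}, applied with base \(X\). Define \(\Cat{QProj}_{X}^{\op}\to\Mod_{[X]}(\SH[2](S))\) by \(Y\mapsto[Y]\), where \([Y]\) is a \([X]\)-module via \(Y\to X\). This is symmetric monoidal precisely because \([Y]\otimes_{[X]}[Y']\simeq[Y\times_{X}Y']\), which is \cref{x6lx9q}. The axioms \cref{i:a_sm,i:a_exc,i:a_hi,i:a_tate} hold in \(\Mod_{[X]}(\SH[2](S))\) because they hold in \(\SH[2](S)\) for the underlying morphisms. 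The only thing to check is that the adjoints \(f_{\natural}\), \(i_{*}\) and the recollement data are \([X]\)-linear. For this I would use the relative version from \cref{xibj1p}: adjunctions among \([X]\)-algebra maps with base change along \(\X\times_{S}X\) are automatically module maps. Alternatively, I would observe that these adjoints come from the span functor \(\Span[2]_{P,J}\) constructed in the proof of \cref{main_1}, which is symmetric monoidal, so linearity follows from the projection formula. The universal property then yields \(\Psi\colon\SH[2](X)\to\Mod_{[X]}(\SH[2](S))\) under \(\Cat{QProj}_{X}^{\op}\).

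It remains to show that \(\Phi\) and \(\Psi\) are mutually inverse. The composite \(\Phi\Psi\) is the identity by uniqueness in \cref{main_1}, since it is compatible with the generators \([Y]\) over \(X\). The composite \(\Psi\Phi\) is an endomorphism of \([X]\)-algebras in \(\CAlg(2\Cat{Pr})\) under \(\SH[2](S)\). By the free-module universal property (maps out of \(\Mod_{[X]}\) correspond to algebra maps out of \([X]\)), it is determined by its value on \([X]\) together with the restriction to \(\SH[2](S)\). On \([X]\) it is the identity, and on \(\SH[2](S)\) it restricts to the map sending \(Y\mapsto[Y\times_{S}X]\). That map agrees with the free-module functor by \cref{x6lx9q} again, since \([Y]\otimes[X]\simeq[Y\times_{S}X]\).

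I expect the main obstacle to be the second step, specifically linearity of the adjoints and the recollement conditions over \([X]\). That requires checking that unit and counit maps in \(\Mod_{[X]}\) coincide with those computed in \(\SH[2](S)\), rather than merely that the adjoints exist.
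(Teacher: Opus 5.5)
Your argument is correct, but it takes a different route from the paper. The paper proves the corollary in one line. It feeds the relative K\"unneth formula (\cref{x15q1p}, in the form \cref{x6lx9q}) into a general result of Stefanich on presentable categories of kernels, \cite[Proposition~4.8]{StefanichN}. Given K\"unneth, that result identifies modules over $[X]$ in the kernel $2$-category with the kernel $2$-category over $X$. That route does not use the universal property of \cref{main_1} at all, and it applies to any sheaf theory satisfying K\"unneth.

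You instead apply \cref{main_1} over both $S$ and $X$. This is legitimate, since $X$ is again divisorial noetherian and $\Cat{QProj}_{X}\simeq(\Cat{QProj}_{S})_{/X}$. The gain is a self-contained argument. The cost is that you must verify the axioms inside $\Mod_{[X]}(\SH[2](S))$ and use \cref{x6lx9q} twice:
\begin{itemize}
  \item over $S$, for monoidality of $Y\mapsto[Y]$;
  \item over $X$, to identify $\Phi\Psi$ on generators via $[Y\times_{S}X]\otimes_{[X\times_{S}X]}[X]\simeq[Y]$. You leave this second use implicit. Note also that $[Y]\otimes[X]\simeq[Y\times_{S}X]$ is plain monoidality, not \cref{x6lx9q}.
\end{itemize}

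For the linearity step, neither of your citations does the job. \cref{xibj1p} is only a definition. The span functor from the proof of \cref{main_1} has left adjoints only for open immersions, not for general smooth $f$. The actual reason is as follows. The $[X]$-action on $[Y]$ is pullback along the graph $Y\to X\times_{S}Y$. For $f\colon Y\to Y'$ over $X$, the square formed by the two graphs, $f$, and $\id_{X}\times f$ is cartesian. Hence the oplax linearity constraint of $f_{\natural}$ is a smooth Beck--Chevalley map. Likewise, the constraint for $i_{*}$ is a base change map for a closed immersion. It is invertible by the recollement together with smooth base change for the open complement (cf.~\cref{xhypzi}).

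Once linearity is in hand, the remaining recollement conditions and axioms can be tested after forgetting to $\SH[2](S)$. This works because the forgetful functor on hom-categories is exact and conservative.
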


\begin{proof}
  This follows from
  \cref{x15q1p}
  and~\cite[Proposition~4.8]{StefanichN}.
\end{proof}

\begin{corollary}\label{x0z85x}
  Let \(X\) be a divisorial noetherian static scheme
  and \(X'\to X\gets Y\) be quasiprojective morphisms.
  Then the canonical morphism
  \begin{equation*}
    \SH[2](X')\otimes_{\SH[2](X)}\SH[2](Y)
    \to
    \SH[2](X'\times_{X}Y)
  \end{equation*}
  is an equivalence.
\end{corollary}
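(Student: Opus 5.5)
The plan is to reduce the statement to \cref{x6lx9q} and \cref{xfviz8}, which are available over any divisorial noetherian base. Apply \cref{xfviz8} with base~\(X\) to each of \(X'\), \(Y\), and \(Y'=X'\times_{X}Y\); these are quasiprojective over~\(X\), since quasiprojective morphisms are stable under base change and composition. This gives identifications
\begin{equation*}
  \SH[2](X')\simeq\Mod_{[X']}(\SH[2](X)),\quad
  \SH[2](Y)\simeq\Mod_{[Y]}(\SH[2](X)),\quad
  \SH[2](Y')\simeq\Mod_{[Y']}(\SH[2](X)).
\end{equation*}
We need \(X'\), \(Y\), and~\(Y'\) to be divisorial noetherian for \(\SH[2]\) of them to make sense. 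This holds because each is quasiprojective over the divisorial noetherian scheme~\(X\).

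Next, for commutative algebras \(A,B\) in a presentably symmetric monoidal \(2\)-category~\(\cat{C}\), I would use the standard equivalence
\begin{equation*}
  \Mod_{A}(\cat{C})\otimes_{\cat{C}}\Mod_{B}(\cat{C})\simeq\Mod_{A\otimes B}(\cat{C}).
\end{equation*}
Here the tensor product is taken in \(\Mod_{\cat{C}}(2\Cat{Pr})\). This is the \(2\)-categorical analogue of Lurie's result on module categories. It should follow from the good behaviour of presentable \(2\)-categories in~\cite[Section~3]{n-pr}, and it is the same input used via~\cite{StefanichN} in \cref{xfviz8}. Taking \(\cat{C}=\SH[2](X)\), the left side of the claim becomes \(\Mod_{[X']\otimes_{[X]}[Y]}(\SH[2](X))\). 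Inside \(\SH[2](X)\) the unit is~\([X]\), so the relative tensor product here is simply the tensor product in \(\SH[2](X)\).

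Then \cref{x6lx9q}, applied with \(S=X\), identifies \([X']\otimes_{[X]}[Y]\) with~\([Y']\). The left side is therefore \(\Mod_{[Y']}(\SH[2](X))\simeq\SH[2](Y')\).

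The remaining check is that the resulting equivalence is the canonical morphism in the statement. I would verify this by noting that all functors involved are the base-change (free module) functors under \(\SH[2](X)\), and that both sides receive compatible maps from \(\SH[2](X')\) and \(\SH[2](Y)\). Since a colinear functor out of a relative tensor product is determined by its restrictions, the two maps agree.

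I expect the main obstacle to be the module-category tensor identity in presentable \(2\)-categories. If it is not directly quotable, I would prove it by writing \(\Mod_{A}(\cat{C})\otimes_{\cat{C}}\Mod_{B}(\cat{C})\) as modules over the monad \(A\otimes B\otimes\X\) on~\(\cat{C}\), using the fact that the tensor product preserves colimits in each variable.
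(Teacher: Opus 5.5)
Your proposal is correct and matches the argument the paper intends: the corollary is stated without proof right after \cref{xfviz8}, and follows from \cref{xfviz8} applied over the base \(X\), the identity \(\Mod_{A}(\cat{C})\otimes_{\cat{C}}\Mod_{B}(\cat{C})\simeq\Mod_{A\otimes B}(\cat{C})\), and the identification \([X']\otimes_{[X]}[Y]\simeq[X'\times_{X}Y]\). One small remark: over the base \(X\), where \([X]\) is the unit, this last identification is simply the symmetric monoidality of \([\X]\colon\Cat{QProj}_{X}^{\op}\to\SH[2](X)\), so you do not need \cref{x6lx9q} there; the genuine Künneth input is already contained in \cref{xfviz8}.
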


\begin{remark}\label{x61y49}
  Following~\cite[Section~3]{StefanichN},
  we can also obtain
  the presentably symmetric monoidal \(n\)-category
  \(\SH[n](\ZZ)\)
  of \(n\)-motives
  for \(n\geq3\).
  By the result above,
  it is the iterated module \(n\)-category
  over \(\SH[2](\ZZ)\).
  In other words,
  stable motivic homotopy theory is \(2\)-affine.
\end{remark}

\section{Stacks over a \texorpdfstring{\(2\)}{2}-category}\label{s:stk}

In this section,
we fix a presentably symmetric monoidal \(2\)-category~\(\cat{C}\)
and study the geometry of the following:

\begin{definition}\label{xz4a0a}
  We call \(\CAlg(\cat{C})^{\op}\)
  the category of \emph{stacks} over~\(\cat{C}\).
  For a stack~\(X\), we write~\([X]\) for the corresponding object
  in \(\CAlg(\cat{C})\).
\end{definition}

\begin{remark}\label{x8fuv1}
  In the language of Scholze–Stefanich (cf.~\cite{Gestalten}),
  up to size restrictions,
  \cref{xz4a0a}
  yields
  the category of \(1\)-affine gestalts over \(\Gest(\cat{C})\).
\end{remark}

The following examples are particularly important:

\begin{example}\label{xm6f2s}
  Let \(X\) be a locale.
  Then \(\Shv(X)\) is a \(0\)-truncated stack
  over \(\Cat{Pr}\);
  this follows from~\cite[Corollary~1.10]{ttg-sm}
  (it is more straightforward in this particular case).
  When no confusion can arise,
  we write \(X\) for this.
\end{example}

\begin{example}\label{xu4vl1}
  Let \(X\) be an accessible presheaf
  on~\(\Cat{CAlg}^{\op}\),
  the opposite category of \(\E_{\infty}\)-rings.
  Then \(\D(X)=\projlim_{\Spec(A)\to X}\D(A)\)
  is a stack over \(\Cat{Pr}_{\st}\).
\end{example}

In \cref{ss:stk_suave},
we introduce some classes of morphisms of stacks.
In \cref{ss:six},
we explain a certain way to construct six operations.
In \cref{ss:stk_ring},
we explain the notion of ring stacks
used in \cref{main_2,main_3,main_4}.
In \cref{ss:stk_abs},
we explain the notion of absolute values
on ring stacks
used in \cref{main_4}.

\subsection{Suave and prim maps}\label{ss:stk_suave}

See~\cite[Remark~4.4.2]{HeyerMann}
for the origin
of these terminologies.

\begin{definition}\label{x5gcg1}
  We consider a morphism of stacks
  \(Y\to X\).
  We call it \emph{weakly suave}
  when \([X]\to[Y]\) admits an \([X]\)-linear left adjoint.
  We call it \emph{suave}
  if furthermore \([Y]\) is dualizable over~\([X]\).
  We call it \emph{prim}
  when \([X]\to[Y]\) admits an \([X]\)-linear right adjoint.
\end{definition}

\begin{remark}\label{xisa2l}
  In \cref{x5gcg1},
  by \cref{xahxu2} below,
  the additional condition for being suave
  is equivalent to the assumption
  that the morphism \(\Mod_{[X]}(\cat{C})\to\Mod_{[Y]}(\cat{C})\)
  also admits a \(\Mod_{[X]}(\cat{C})\)-linear left adjoint.
\end{remark}

\begin{lemma}\label{xahxu2}
  Consider \(A\in\CAlg(\cat{C})\).
  The following are equivalent:
  \begin{conenum}
    \item\label{i:ac_dbl}
      The underlying object of~\(A\) is dualizable.
    \item\label{i:ac_left}
      The functor
      \(\cat{C}\to\Mod_{A}(\cat{C})\)
      admits a \(\cat{C}\)-linear left adjoint.
    \item\label{i:ac_right}
      The forgetful functor
      \(\Mod_{A}(\cat{C})\to\cat{C}\)
      admits a \(\cat{C}\)-linear right adjoint.
  \end{conenum}
\end{lemma}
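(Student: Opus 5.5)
The plan is to prove \cref{i:ac_dbl}\(\Leftrightarrow\)\cref{i:ac_left} and \cref{i:ac_dbl}\(\Leftrightarrow\)\cref{i:ac_right} separately. We use two basic facts throughout. First, a \(\cat{C}\)-linear endomorphism of~\(\cat{C}\) is of the form \(B\otimes\X\) for \(B\) its value on~\(\unit\). Second, an object \(A\in\cat{C}\) is dualizable if and only if \(A\otimes\X\colon\cat{C}\to\cat{C}\) admits a \(\cat{C}\)-linear left (equivalently, right) adjoint; the adjoint is then \(A^{\vee}\otimes\X\), and the unit and counit are the coevaluation and evaluation. Write \(F\colon\cat{C}\to\Mod_{A}(\cat{C})\) for the free functor and \(U\) for the forgetful functor. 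We have \(F\dashv U\), both are \(\cat{C}\)-linear, and \(UF\simeq A\otimes\X\).

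For the implications \cref{i:ac_left}\(\Rightarrow\)\cref{i:ac_dbl} and \cref{i:ac_right}\(\Rightarrow\)\cref{i:ac_dbl}, compose adjunctions. If \(L\dashv F\) with \(L\) \(\cat{C}\)-linear, then \(LF\) is a \(\cat{C}\)-linear endofunctor of~\(\cat{C}\), hence \(B\otimes\X\) with \(B=LF(\unit)=L(A)\). Moreover \(LF\dashv UF=A\otimes\X\), so \(A\otimes\X\) has a linear left adjoint and \(A\) is dualizable. Dually, if \(U\dashv R\) with \(R\) linear, then \(A\otimes\X=UF\dashv UR\). The right-hand side is linear, so it equals \(A'\otimes\X\) with \(A'=UR(\unit)\), and again \(A\) is dualizable.

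For the converses, assume \(A\) is dualizable with dual~\(A^{\vee}\). The commutative algebra structure of~\(A\) induces an \(A\)-module structure on~\(A^{\vee}\) by dualizing the multiplication. For \cref{i:ac_right}, define \(R(Y)=A^{\vee}\otimes Y\) with this module structure; this is evidently \(\cat{C}\)-linear. Construct the counit \(UR(Y)=A^{\vee}\otimes Y\to Y\) using the unit \(\unit\to A\) and evaluation. Check the adjunction \(\Hom_{\cat{C}}(UM,Y)\simeq\Hom_{A}(M,A^{\vee}\otimes Y)\) first on free modules \(M=A\otimes X\), where it reduces to the duality \(\Hom(A\otimes X,Y)\simeq\Hom(X,A^{\vee}\otimes Y)\). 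Then extend to all~\(M\) using the bar resolution, which presents every module as a geometric realization of free ones. This extension works because both sides send colimits in~\(M\) to limits. For \cref{i:ac_left}, define \(L(M)=A^{\vee}\otimes_{A}M\), the relative tensor product computed by the bar construction in~\(\cat{C}\). This is \(\cat{C}\)-linear, and \(L(A\otimes X)\simeq A^{\vee}\otimes X\). The adjunction \(L\dashv F\) again reduces on free modules to duality, and extends by the same resolution argument.

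The main obstacle I expect is making the module structures and adjunction data coherent in the \(2\)-categorical (rather than \(1\)-categorical) setting. Concretely, one must verify that the maps defined on free modules assemble into genuine units and counits satisfying the triangle identities, with \(\cat{C}\)-linearity structures included. To handle this cleanly I would instead use the identification of \(\cat{C}\)-linear functors out of \(\Mod_{A}(\cat{C})\) with \(A\)-modules in the target; in particular, \(\cat{C}\)-linear functors \(\Mod_{A}(\cat{C})\to\cat{C}\) correspond to \(A\)-modules in~\(\cat{C}\). Under this identification, a left adjoint to~\(F\) corresponds to an \(A\)-module~\(B\) with \(B\otimes\X\dashv A\otimes\X\) compatibly, which forces \(B\simeq A^{\vee}\). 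The coherence is then inherited from the duality data of~\(A\).
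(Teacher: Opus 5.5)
Your proof is correct in substance, but it is organized differently from the paper's.

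The paper takes (i)\(\Leftrightarrow\)(iii) as the standard fact and obtains (ii)\(\Leftrightarrow\)(iii) purely formally by \(\cat{C}\)-linear duality. \(\Mod_{A}(\cat{C})\) is self-dual over~\(\cat{C}\), and under this identification the dual of the forgetful functor is the free functor. Dualization exchanges left and right adjoints, so \(U\dashv R\) yields \(R^{\vee}\dashv F\) and \(L\dashv F\) yields \(U\dashv L^{\vee}\), with all coherence automatic.

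You instead route everything through (i). You observe that \(L\dashv F\dashv U\) (resp.\ \(F\dashv U\dashv R\)) composes to a \(\cat{C}\)-linear adjunction \(LF\dashv A\otimes\X\) (resp.\ \(A\otimes\X\dashv UR\)) among \(\cat{C}\)-linear endofunctors of~\(\cat{C}\), i.e.\ in \(\cat{C}\) itself, hence to a duality datum for~\(A\). This is a clean proof of (ii)\(\Rightarrow\)(i) and (iii)\(\Rightarrow\)(i) that uses no self-duality at all. The price is that you must then build both adjoints by hand from~\(A^{\vee}\), whereas the paper needs only the one standard construction.

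Your hand construction of~\(L\) does go through, but you should specify the comparison map. Take the counit \(\epsilon_{X}\colon LF(X)\simeq A^{\vee}\otimes X\to X\) induced by the dual of \(\unit\to A\), and send \(g\colon M\to F(X)\) to \(\epsilon_{X}\circ L(g)\). This map is natural in~\(M\) and restricts to the duality equivalence on free modules. Both sides take colimits in~\(M\) to limits, so the bar resolution finishes the argument exactly as for~\(R\).

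Your closing paragraph is not needed, and as written it only shows that any left adjoint must be \(A^{\vee}\otimes_{A}\X\), not that one exists. However, the identification you invoke there (\(\cat{C}\)-linear functors \(\Mod_{A}(\cat{C})\to\cat{C}\) are \(A\)-modules) is precisely the self-duality the paper uses. Combined with your (i)\(\Rightarrow\)(iii), it would give (ii) at once by dualizing \(U\dashv R\).
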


\begin{proof}
  The equivalence~\(\text{\cref{i:ac_dbl}}\Leftrightarrow\text{\cref{i:ac_right}}\)
  is well known.
  The equivalence~\(\text{\cref{i:ac_left}}\Leftrightarrow\text{\cref{i:ac_right}}\)
  follows from taking the duals of both sides as \(\cat{C}\)-modules.
\end{proof}

\begin{example}\label{x2m9yp}
  Let \(\cat{G}\) be a category with finite limits.
  Suppose that \(\cat{C}\) is the presentable \(2\)-category of kernels
  associated with a lax symmetric monoidal functor
  \(D\colon\Span(\cat{G})\to\Cat{Pr}_{\st}\).
  Then,
  to check that \(Y\to X\)
  in \(\cat{G}\) determines a suave (or prim) morphism
  over~\(\cat{C}\),
  it suffices to check that
  \([X]\to[Y]\)
  has an \([X]\)-linear left (or right) adjoint.
\end{example}

\begin{example}\label{et_sua}
  Any étale geometric morphism of toposes
  \(\cat{Y}\to\cat{X}\),
  i.e., a morphism of the form \(\cat{X}_{/X}\to\cat{X}\)
  for some object \(X\in\cat{X}\),
  determines a weakly suave morphism of stacks over~\(\Cat{Pr}\);
  see~\cite[Remark~6.3.5.12]{LurieHTT}.
\end{example}

\begin{example}\label{xf141x}
  Not every proper morphism \(\cat{Y}\to\cat{X}\)
  of toposes
  (see~\cite[Definition~7.3.1.4]{LurieHTT})
  determines a prim morphism
  of stacks over \(\Cat{Pr}\).
  However,
  \(\Shv(\cat{X};\Cat{Sp})\to\Shv(\cat{Y};\Cat{Sp})\)
  is a prim morphism of stacks over \(\Cat{Pr}_{\st}\);
  see, e.g.,~\cite[Lemma~6.7]{verdier-asc}.
\end{example}

\begin{example}\label{x9fu9c}
  Let \(Y\to X\) be a morphism
  of quasicompact quasiseparated schemes.
  Then the corresponding morphism of stacks
  over \(\Cat{Pr}_{\st}\) is prim.
\end{example}

\begin{definition}\label{x4q61y}
  An \emph{open immersion}
  is a suave monomorphism.
  A \emph{closed immersion}
  is a prim monomorphism.
\end{definition}

\begin{lemma}\label{xuw75d}
  Let \(f\colon Y\to X\) be a morphism of stacks.
  It is an open (or closed) immersion
  if and only if \([X]\to[Y]\) admits
  a fully faithful \([X]\)-linear left (or right) adjoint.
\end{lemma}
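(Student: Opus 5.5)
The plan is to reduce both directions to a standard fact: in a \(2\)-category, a morphism is a monomorphism exactly when a one-sided adjoint to it is fully faithful. Write \(f^{*}\colon[X]\to[Y]\) for the morphism in \(\CAlg(\cat{C})\). Being a monomorphism of stacks means that \(f^{*}\) is an epimorphism in \(\CAlg(\cat{C})\), i.e., that the codiagonal multiplication \(\mu\colon[Y]\otimes_{[X]}[Y]\to[Y]\) is an equivalence. Equivalently, \([Y]\) is an idempotent \([X]\)-algebra. I will treat the open case; the closed case is formally dual, obtained by swapping left and right adjoints and using prim instead of weakly suave. The one asymmetry is that suave also asks for dualizability. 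In the closed case dualizability is not part of the definition, so nothing extra is needed there. In the open case I will obtain it from \cref{xahxu2}, or avoid it by arguing inside \(\Mod_{[X]}(\cat{C})\) as in \cref{xisa2l}.

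\emph{Forward direction (monomorphism with adjoint \(\Rightarrow\) fully faithful).} Let \(f\) be suave and a monomorphism, and let \(f_{\natural}\) be the \([X]\)-linear left adjoint. I work in \(\Mod_{[X]}(\cat{C})\), where \(f^{*}\) is the unit map \([X]\to[Y]\) of an idempotent algebra. I must show the unit \(\id\to f^{*}f_{\natural}\) is an equivalence. Because \(f_{\natural}\) is \([X]\)-linear, \(f^{*}f_{\natural}\) is identified with \([Y]\otimes_{[X]}(\X)\) applied to \(f_{\natural}\), using the base change/projection formula for the square \([Y]\otimes_{[X]}[Y]\). Idempotence, \([Y]\otimes_{[X]}[Y]\simeq[Y]\), then shows that \(f^{*}f_{\natural}\) composed with the structure maps reduces to the identity. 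Concretely, I will identify the counit of the base-changed adjunction \([Y]\rightleftarrows[Y]\otimes_{[X]}[Y]\) with \(\mu\), and this counit is invertible. This identification is the step I expect to be the main obstacle: \(f_{\natural}\) and \(f^{*}\) must be matched correctly with the multiplication and the unit, and the linearity coherences must be tracked.

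\emph{Converse (fully faithful adjoint \(\Rightarrow\) monomorphism).} Suppose \(f_{\natural}\) is \([X]\)-linear and fully faithful, so \(f_{\natural}f^{*}\)-type arguments give an idempotent comonad on \([X]\). Then \(\mu\colon[Y]\otimes_{[X]}[Y]\to[Y]\) is an equivalence by a projection-formula computation. Tensoring the adjunction \(f_{\natural}\dashv f^{*}\) over \([X]\) with \([Y]\) gives an adjunction between \([Y]\) and \([Y]\otimes_{[X]}[Y]\). Its left adjoint is still fully faithful, since full faithfulness is the invertibility of a unit and this is preserved by the \(2\)-functor \([Y]\otimes_{[X]}\X\). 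The right adjoint \(\mu\) has a section, namely \(\id\otimes f^{*}\) composed with \(\mu\). So \(\mu\) is a retract of a fully faithful adjunction, and I will check that it is an equivalence. Dualizability of \([Y]\) over \([X]\) follows because \(f_{\natural}\) supplies the \(\cat{C}\)-linear left adjoint required in \cref{xahxu2}\cref{i:ac_left}, after base changing to \(\Mod_{[X]}(\cat{C})\). The closed case repeats the argument with \(f_{*}\) and the unit \(\id\to f_{*}f^{*}\) replaced appropriately.
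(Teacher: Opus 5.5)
Your overall strategy is the paper's: base change \(f_{\natural}\dashv f^{*}\) along \(f^{*}\), and use that \(f\) is a monomorphism exactly when \(\mu\colon[Y]\otimes_{[X]}[Y]\to[Y]\) is an equivalence.

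\textbf{The genuine gap is dualizability in the ``if'' direction of the open case.} Condition~\cref{i:ac_left} of \cref{xahxu2}, applied over \(\Mod_{[X]}(\cat{C})\), asks for a left adjoint of the morphism \(\Mod_{[X]}(\cat{C})\to\Mod_{[Y]}(\cat{C})\) in \(\Mod_{\Mod_{[X]}(\cat{C})}(2\Cat{Pr})\). That is one categorical level above \(f_{\natural}\). The \(1\)-morphism \(f_{\natural}\colon[Y]\to[X]\) does not supply it, and \cref{xisa2l} only restates this condition; it does not remove it. Your argument never uses full faithfulness, so it would prove that every weakly suave morphism is suave, erasing a distinction the paper makes on purpose.

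What works is the paper's self-duality argument. Once \(\mu\) is invertible, take
\begin{itemize}
  \item coevaluation \([X]\xrightarrow{f^{*}}[Y]\simeq[Y]\otimes_{[X]}[Y]\), and
  \item evaluation \([Y]\otimes_{[X]}[Y]\simeq[Y]\xrightarrow{f_{\natural}}[X]\).
\end{itemize}
After the idempotence identifications, the zigzag composites are \(({\id}\otimes f_{\natural})\circ(f^{*}\otimes{\id})\) and its mirror image. These are equivalences because \({\id}\otimes f_{\natural}\) is left adjoint to the equivalence \({\id}\otimes f^{*}\), hence equal to its inverse \(\mu\). The twisting argument recalled in the proof of \cref{xujpr0} then gives honest duality data.

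\textbf{The core step of your converse is deferred and mislabelled.} You write ``I will check'' at the point where the work is needed. Also, the right adjoint of the base-changed adjunction is \(q={\id}\otimes f^{*}\colon[Y]\to[Y]\otimes_{[X]}[Y]\), not \(\mu\). Moreover, \(\mu q\simeq{\id}\) holds for every algebra map, so that ``section'' carries no information by itself.

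The step closes in one line once you combine it with full faithfulness. The base-changed unit gives \(qq^{\L}\simeq{\id}\). Hence \(q^{\L}\simeq(\mu q)q^{\L}\simeq\mu(qq^{\L})\simeq\mu\), so \(q\) is an equivalence with inverse \(\mu\). No projection-formula computation is needed. This is slightly more economical than the paper's route, which uses left adjointability of the base-change square to get \(q^{\L}h\simeq f^{*}f_{\natural}\simeq{\id}\) for the other coprojection~\(h\).

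\textbf{The forward direction has a type confusion.} ``Identify the counit with \(\mu\)'' treats a \(2\)-cell as a \(1\)-morphism. The clean formulation is the paper's: the unit \({\id}\to f^{*}f_{\natural}\) is the Beck--Chevalley map of the square whose corner is \([Y]\otimes_{[X]}[Y]\simeq[Y]\). It is invertible because that square is left adjointable, by \([X]\)-linearity of \(f_{\natural}\).
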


\begin{proof}
  We only treat the open immersion case;
  the closed immersion case is easier.

  We first prove the “only if” direction.
  Since \(f\) is weakly suave,
  it admits a linear left adjoint~\(f_{\natural}\).
  The left adjointability
  of the square
  \begin{equation*}
    \begin{tikzcd}
      {[X]}\ar[r]\ar[d]&
      {[Y]}\ar[d]\\
      {[Y]}\ar[r]&
      {[Y]}
    \end{tikzcd}
  \end{equation*}
  implies that the unit \(\id\to f^{*}f_{\natural}\)
  is an equivalence.
  
We then prove the “if” direction.
  In this situation,
  \([Y]\to[Y]\otimes_{[X]}[Y]\)
  admits a fully faithful left adjoint,
  and since
  \begin{equation*}
    \begin{tikzcd}
      {[X]}\ar[r]\ar[d]&
      {[Y]}\ar[d]\\
      {[Y]}\ar[r]&
      {[Y]\otimes_{[X]}[Y]}
    \end{tikzcd}
  \end{equation*}
  is left adjointable,
  it must be an equivalence.
  Therefore, it is a monomorphism.
  By definition, it is weakly suave.
  We see that \([Y]\) is self-dual over~\([X]\)
  in this situation,
  so it is suave.
\end{proof}

\begin{definition}\label{xuzdhm}
  We consider a static morphism of stacks.
  We call a static map of stacks \emph{unramified}
  if its diagonal is an open immersion.
  We call it \emph{étale}
  if it is suave and unramified.
  We call it \emph{separated}
  if its diagonal is a closed immersion.
  We call it \emph{proper}
  if it is prim and separated.
\end{definition}

\subsection{Six operations for open stacks}\label{ss:six}

We need to construct this in a situation
where we do not have access to compactification.
We use \cref{cll} to construct this.

\begin{definition}\label{xm14yl}
  We say a morphism of stacks \(Y\to X\)
  is \emph{exceptional} if
  it is \(n\)-truncated for some~\(n\)
  and
  the object~\([Y]\)
  is dualizable over \([Y^{\times_{X}S^{n-1}}]
  =[Y]^{\otimes_{[X]}S^{n-1}}\)
  for any \(n\geq0\).
\end{definition}

\begin{proposition}\label{x5xzh9}
  Exceptional morphisms are closed under base change and composition
  and satisfy cancellation.
\end{proposition}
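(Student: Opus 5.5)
The plan is to reduce everything to an inductive reformulation of \cref{xm14yl} in terms of diagonals. Call a morphism of stacks \(g\colon Y\to Z\) \emph{dualizable} if \([Y]\) is dualizable as a \([Z]\)-module.

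\textbf{Reformulation.} Since \(S^{k}\) is the suspension of \(S^{k-1}\), we have
\begin{equation*}
  Y^{\times_{X}S^{k}}\simeq Y\times_{Y^{\times_{X}S^{k-1}}}Y.
\end{equation*}
It follows that the iterated fiber powers of the diagonal \(\Delta_{f}\colon Y\to Y\times_{X}Y\) are the iterated fiber powers of \(f\), shifted by one:
\begin{equation*}
  Y^{\times_{Y\times_{X}Y}S^{k-1}}\simeq Y^{\times_{X}S^{k}}.
\end{equation*}
Also, \(\Delta_{f}\) is \((n-1)\)-truncated when \(f\) is \(n\)-truncated. Hence \(f\) is exceptional if and only if \(f\) is truncated, \(f\) is dualizable (the case \(S^{-1}=\emptyset\)), and \(\Delta_{f}\) is exceptional. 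When \(f\) is \((-2)\)-truncated, i.e., an equivalence, it is trivially exceptional. All three claims will therefore be proved by induction on the truncation level~\(n\), with the step reducing to statements about dualizable morphisms and about diagonals.

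\textbf{Base change.} Pullbacks of stacks are relative tensor products in \(\CAlg(\cat{C})\), so \([Y\times_{Z}Z']\simeq[Y]\otimes_{[Z]}[Z']\). Extension of scalars preserves dualizable modules, so dualizable morphisms are stable under base change. Diagonals commute with base change, and truncatedness is preserved, so induction on~\(n\) gives stability of exceptional morphisms under base change.

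\textbf{Composition.} I first show that dualizable morphisms compose. By \cref{xahxu2}, \(g\colon Z\to X\) is dualizable if and only if the forgetful functor \(\Mod_{[Z]}(\cat{C})\to\Mod_{[X]}(\cat{C})\) has an \(\Mod_{[X]}(\cat{C})\)-linear right adjoint; this is applied over the base \(\Mod_{[X]}(\cat{C})\). Now let \(f\colon Y\to Z\) and \(g\colon Z\to X\) be dualizable. The composite forgetful functor \(\Mod_{[Y]}\to\Mod_{[Z]}\to\Mod_{[X]}\) has as right adjoint the composite of the two right adjoints. The first is \(\Mod_{[Z]}\)-linear, hence \(\Mod_{[X]}\)-linear after restriction, and the second is \(\Mod_{[X]}\)-linear. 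So \(gf\) is dualizable.

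For diagonals, \(\Delta_{gf}\) factors as
\begin{equation*}
  Y\xrightarrow{\Delta_{f}}Y\times_{Z}Y\to Y\times_{X}Y,
\end{equation*}
where the second map is a base change of \(\Delta_{g}\colon Z\to Z\times_{X}Z\). Both maps have lower truncation level, so by induction and the base change step, \(\Delta_{gf}\) is exceptional, and hence so is \(gf\).

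\textbf{Cancellation.} Suppose \(g\) and \(gf\) are exceptional. Factor \(f\) through the graph:
\begin{equation*}
  Y\to Y\times_{X}Z\to Z.
\end{equation*}
The second map is a base change of \(gf\colon Y\to X\) along \(g\). The first is a base change of \(\Delta_{g}\). Both are exceptional by the previous steps, so \(f\) is exceptional by composition.

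\textbf{Expected obstacle.} I expect the main difficulty to be the composition of dualizable morphisms. One has to check carefully that the linearity provided by \cref{xahxu2} over \(\Mod_{[Z]}(\cat{C})\) restricts correctly to linearity over \(\Mod_{[X]}(\cat{C})\). A further point is that the induction should be organized on the pair of truncation levels, and for cancellation one must confirm that the base-changed maps remain truncated so that the inductive hypothesis applies.
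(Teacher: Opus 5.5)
Your proof is correct, but it is organized differently from the paper's. You first derive a recursive criterion: \(f\) is exceptional if and only if it is truncated, dualizable, and has exceptional diagonal. This comes from \(Y^{\times_{Y\times_{X}Y}S^{k-1}}\simeq Y^{\times_{X}S^{k}}\). You then prove composition by induction on the truncation level, using \(\Delta_{gf}=(\text{base change of }\Delta_{g})\circ\Delta_{f}\), and cancellation by the graph factorization.

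The paper uses neither the diagonal reformulation nor an induction for composition. For \(Z\to Y\to X\) it treats each \(n\) separately:
\begin{itemize}
\item It factors \(Z\to Z^{\times_{Y}S^{n-1}}\to Z^{\times_{X}S^{n-1}}\).
\item It observes that the second map is a base change of \(Y\to Y^{\times_{X}S^{n-1}}\), because \(Z^{\times_{Y}K}\simeq Z^{\times_{X}K}\times_{Y^{\times_{X}K}}Y\).
\item It applies \cref{xda67d}, which is exactly your ``dualizable morphisms compose'' step, proved from \cref{xahxu2} as you do.
\end{itemize}
So the step you flagged as the main obstacle is a one-line lemma there.

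For cancellation the paper again works sphere by sphere. It factors \(Z\to Z^{\times_{Y}S^{n-1}}\) through \(Z^{\times_{Y\times_{X}Z}S^{n-1}}\) and uses two pullback squares. One square lies over \(Y\to Y^{\times_{X}S^{n}}\), the other over \(Z\to Z^{\times_{X}S^{n-1}}\). This is your graph factorization unrolled at each level, and your shifting identity appears as the jump from \(S^{n-1}\) to \(S^{n}\).

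What each route buys:
\begin{itemize}
\item The paper's route is shorter and needs no bookkeeping of truncation levels.
\item Yours isolates the recursive criterion, which is the form in which exceptionality is actually checked later (e.g., \cref{xujpr0}, via the diagonal being a closed immersion). It also makes cancellation a formal consequence of composition and base change.
\end{itemize}

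One point you use silently: the identification \(Y^{\times_{Y\times_{X}Y}S^{k-1}}\simeq Y^{\times_{X}S^{k}}\) must be compatible with the constant maps from \(Y\), so that the two algebra structures on \([Y]\) agree. This holds, since both are the diagonal maps, but it is what makes your reformulation legitimate.
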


\begin{lemma}\label{xda67d}
  For morphisms \(A\to B\to C\)
  in \(\CAlg(\cat{C})\),
  when \(B\) is dualizable over~\(A\) and \(C\) is dualizable over~\(B\),
  then \(C\) is dualizable over~\(A\).
\end{lemma}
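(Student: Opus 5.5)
The plan is to reduce to the case where \(A\) is the unit of a presentably symmetric monoidal \(2\)-category, and then compose duality data. Replacing \(\cat{C}\) by \(\Mod_{A}(\cat{C})\), which is again presentably symmetric monoidal with unit~\(A\), we may assume \(A=\unit\). Then \(B\) is a commutative algebra whose underlying object is dualizable in~\(\cat{C}\). \(C\) is a commutative \(B\)-algebra whose underlying object is dualizable in \(\Mod_{B}(\cat{C})\). We must show that \(C\) is dualizable in~\(\cat{C}\).

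The cleanest route uses \cref{xahxu2}, criterion~\cref{i:ac_right}. It asks that the forgetful functor \(\Mod_{C}(\cat{C})\to\cat{C}\) admit a \(\cat{C}\)-linear right adjoint. This forgetful functor factors as
\begin{equation*}
  \Mod_{C}(\cat{C})\simeq\Mod_{C}(\Mod_{B}(\cat{C}))\to\Mod_{B}(\cat{C})\to\cat{C}.
\end{equation*}
Applying \cref{xahxu2} over the base \(\Mod_{B}(\cat{C})\) to the dualizable algebra~\(C\), the first functor has a \(\Mod_{B}(\cat{C})\)-linear right adjoint. Applying \cref{xahxu2} over~\(\cat{C}\) to~\(B\), the second functor has a \(\cat{C}\)-linear right adjoint. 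A composite of right adjoints is a right adjoint of the composite. Hence it remains only to check that the composite right adjoint is \(\cat{C}\)-linear.

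For this linearity step, note that a \(\Mod_{B}(\cat{C})\)-linear functor is in particular \(\cat{C}\)-linear, by restricting scalars along \(\cat{C}\to\Mod_{B}(\cat{C})\). Composites of \(\cat{C}\)-linear functors are \(\cat{C}\)-linear. Then \cref{xahxu2} gives that the underlying object of~\(C\) is dualizable in~\(\cat{C}\). Here the identification \(\Mod_{C}(\Mod_{B}(\cat{C}))\simeq\Mod_{C}(\cat{C})\) is the standard one for a commutative algebra map \(B\to C\).

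The step I expect to need care is the precise meaning of "linear" for a right adjoint in the \(2\)-categorical setting. We need the right adjoint to be linear in the strong sense, with the lax structure maps invertible. Restricting scalars preserves this strong linearity. So I would double-check that the restriction-of-scalars \(2\)-functor \(\Mod_{\Mod_{B}(\cat{C})}\to\Mod_{\cat{C}}\) carries adjunctions to adjunctions, which holds since it is a \(2\)-functor.

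Alternatively, one can argue directly. Take the duals \(B^{\vee}\) in~\(\cat{C}\) and \(C^{\vee_{B}}\) in \(\Mod_{B}(\cat{C})\), and exhibit \(C^{\vee_{B}}\otimes_{B}B^{\vee}\) as the dual of~\(C\), using the composite evaluation and coevaluation. The adjoint-functor argument avoids verifying the triangle identities by hand.
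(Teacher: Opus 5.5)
Your proposal is correct and takes the same approach as the paper, whose entire proof is the remark that the statement follows from \cref{xahxu2}. Your factorization \(\Mod_{C}(\cat{C})\simeq\Mod_{C}(\Mod_{B}(\cat{C}))\to\Mod_{B}(\cat{C})\to\cat{C}\) is exactly the intended unpacking: compose the two linear right adjoints, and use restriction of scalars to see that the first one is \(\cat{C}\)-linear.
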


\begin{proof}
  This follows from \cref{xahxu2}.
\end{proof}

\begin{proof}[Proof of \cref{x5xzh9}]
  Base change is straightforward.
We consider morphisms of stacks \(Z\to Y\to X\).
  Composition follows from
  \cref{xda67d} and
  the diagram
  \begin{equation*}
    \begin{tikzcd}
      Z\ar[r]&
      Z^{\times_{Y}S^{n-1}}\ar[r]\ar[d]&
      Z^{\times_{X}S^{n-1}}\ar[d]\\
      {}&
      Y\ar[r]&
      Y^{\times_{X}S^{n-1}}\rlap,
    \end{tikzcd}
  \end{equation*}
  where the square is a pullback.
  The cancellation property
  follows from
  \cref{xda67d} and
  the diagram
  \begin{equation*}
    \begin{tikzcd}
      Z\ar[r]\ar[d]&
      Z^{\times_{Y\times_{X}Z}S^{n-1}}\ar[r,equal]\ar[d]&
      Z^{\times_{Y\times_{X}Z}S^{n-1}}\ar[r]\ar[d]&
      Z^{\times_{Y}S^{n-1}}\ar[d]\\
      Y\ar[r]&
      Y^{\times_{X}S^{n}}&
      Z\ar[r]&
      Z^{\times_{X}S^{n-1}}\rlap,
    \end{tikzcd}
  \end{equation*}
  where both squares are pullbacks.
\end{proof}

Consequently,
the relevant span category is well defined.
We prove the following:

\begin{theorem}\label{xc8fbc}
  We write~\(\cat{G}\)
  for the category of stacks
  and~\(E\) for the class of exceptional morphisms.
  Then we have a canonical symmetric monoidal functor
  \(\Span_{E}(\cat{G})\to\cat{C}\)
  extending
  \(\cat{G}^{\op}\to\cat{C}\).
\end{theorem}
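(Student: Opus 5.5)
We will apply \cref{cll} with \(\cat{G}\) the category of stacks, which has finite products. The plan is to choose \(P=E\), the class of exceptional morphisms, and \(J\) the class of equivalences. Then \(\Span[2]_{E;P,J}(\cat{G})\) restricts along \(\Span_{E}(\cat{G})\to\Span[2]_{E;P,J}(\cat{G})\) to the desired functor.

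The hypotheses on the classes are as follows. By \cref{x5xzh9}, \(E\) is closed under base change and satisfies cancellation. Every morphism of \(E\) factors as \(p\circ\id\). Morphisms in \(J\cap P\) are equivalences, hence truncated. It then suffices to show that the tautological symmetric monoidal functor \(\cat{G}^{\op}\to\cat{C}\), \(X\mapsto[X]\), is \((J,P)\)-biadjointable with the projection formula. For \(J\) this is vacuous. For \(p\colon Y\to X\) in \(E\), we must show three things. First, \(p^{*}\colon[X]\to[Y]\) admits a right adjoint \(p_{*}\) in \(\cat{C}\). Second, \(p_{*}\) is \([X]\)-linear, which gives the projection formula. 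Third, \(p_{*}\) satisfies base change along arbitrary pullbacks \(X'\to X\). The double Beck–Chevalley condition is then trivial.

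The heart of the argument is producing \(p_{*}\), and I would argue by induction on the truncation level \(n\) of \(p\). The key input is that \([Y]\) is dualizable over \([Y^{\times_{X}S^{n-1}}]\).

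For \(n=-2\), \(p\) is an equivalence and there is nothing to show.

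For the base of the genuine induction, take \(n=-1\), so \(p\) is a monomorphism. Then \([Y]\otimes_{[X]}[Y]\simeq[Y]\), so the multiplication \(\mu\) of \([Y]\) over \([X]\) is an equivalence. Dualizability of \([Y]\) over \([X]=[Y^{\times_{X}S^{-1}}]\) gives, by \cref{xahxu2}, an \([X]\)-linear right adjoint to the forgetful functor \(\Mod_{[Y]}\to\Mod_{[X]}\). Evaluating at the unit gives \(p_{*}\).

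For the inductive step, use the diagonal \(\Delta\colon Y\to Y\times_{X}Y\), which is \((n-1)\)-truncated and exceptional. Note that \((Y\times_X Y)\)-iterated self-products of \(Y\) over \(Y\times_{X}Y\) recover \(Y^{\times_{X}S^{k}}\)-type loop objects. By induction, \(\Delta\) admits a linear right adjoint \(\Delta_{*}\) satisfying base change. The counit-type datum \(\Delta_{*}\unit\) then exhibits \([Y]\) as self-dual over \([X]\). The evaluation is \([Y]\otimes_{[X]}[Y]\xrightarrow{\Delta^{*}}[Y]\xrightarrow{p_{\natural}?}\) — better: the coevaluation is built from \(\Delta_{*}\), in the style of Lu–Zheng and Heyer–Mann, where the diagonal's adjoint yields self-duality. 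Then \cref{xahxu2} again produces the \([X]\)-linear right adjoint \(p_{*}\). Base change holds because dualizability data and the forgetful/right-adjoint pair are compatible with \(\X\otimes_{[X]}[X']\), since \([Y']=[Y]\otimes_{[X]}[X']\) holds by definition of pullback in \(\CAlg(\cat{C})^{\op}\). Linear adjoints of module categories base change formally.

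The main obstacle is the induction step. Precisely, the issue is whether the condition in \cref{xm14yl} at level \(n\), combined with the inductive hypothesis on \(\Delta\), suffices to make \(p^{*}\) admit a linear right adjoint rather than a left one. I would first try to show directly that dualizability of \([Y]\) over \([Y^{\times_{X}S^{n-1}}]\) for all \(n\) iterates, via \cref{xda67d} applied along the tower \(Y\to Y^{\times_XS^{n-1}}\to\dots\to Y^{\times_{X}S^{0}}\to X\), to dualizability of \([Y]\) over \([X]\). Combined with \cref{xahxu2}(iii) and the identification of the module-level right adjoint with an adjoint in \(\cat{C}\), this gives \(p_{*}\). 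The truncatedness guarantees the tower terminates at \(Y^{\times_XS^{n-1}}\simeq Y\).

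Finally, symmetric monoidality of the resulting functor from \(\Span_{E}(\cat{G})\) follows from \cref{cll} producing a lax symmetric monoidal functor whose structure maps are the Künneth maps. These are equivalences because \([X\times X']=[X]\otimes[X']\) holds tautologically for stacks.
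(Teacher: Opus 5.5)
Your proof has a genuine gap. Taking \(J\) to be the equivalences and \(P=E\), \cref{cll} needs every exceptional morphism to be prim: \(p^{*}\colon[X]\to[Y]\) would need an \([X]\)-linear right adjoint in \(\cat{C}\) itself, satisfying base change. This is false, and already your base case fails.

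An open immersion \(j\colon U\to X\) is exceptional. It is a monomorphism, and \([U]\) is dualizable over \([X]\). But \(j^{*}\) need not have a linear right adjoint. Take \(j\colon\mathbb{G}_{m}\to\mathbb{A}^{1}\) over \(\SH[2](\ZZ)\), i.e., \(R^{\times}\to R\), and let \(s\) be the zero section. Since \([\mathbb{G}_{m}]\otimes_{[\mathbb{A}^{1}]}[{*}]\simeq0\), a linear right adjoint would force \(s^{*}j_{*}\unit\simeq0\) in \(\SH(\ZZ)\). In fact \(s^{*}j_{*}\unit\simeq\unit\oplus\unit(-1)[-1]\).

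The error is a confusion of categorical levels. \Cref{xahxu2} turns dualizability of \([Y]\) over \([X]\) into a right adjoint of the forgetful functor \(\Mod_{[Y]}(\cat{C})\to\Mod_{[X]}(\cat{C})\). That adjoint is a \(1\)-morphism in \(\Mod_{\Mod_{[X]}(\cat{C})}(2\Cat{Pr})\), and its value at the unit is the object \([Y]^{\vee}\). It is not a right adjoint of the \(1\)-morphism \([X]\to[Y]\) in \(\cat{C}\). Your induction step and your fallback via \cref{xda67d} do not repair this. Dualizability of \([Y]\) over \([X]\) is already the case \(n=0\) of \cref{xm14yl}, and it never yields \(p_{*}\).

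The missing idea is to apply \cref{cll} one categorical level up. Use \(X\mapsto\Mod_{[X]}(\cat{C})\) in \(\Mod_{\cat{C}}(2\Cat{Pr})\), with \(J=P=E\).
\begin{itemize}
\item At that level every base change functor has a linear right adjoint satisfying base change, namely the forgetful functor.
\item For exceptional \(Y\to X\), dualizability of \([Y]\) over \([X]\) supplies a left adjoint \([Y]^{\vee}\otimes_{[Y]}\X\).
\item The Beck--Chevalley and double Beck--Chevalley conditions for \(J\) reduce to those for \(P\) by passing to right adjoints.
\end{itemize}
Taking \(J=P=E\) is exactly where the truncatedness and the dualizability over every \([Y^{\times_{X}S^{n-1}}]\) in \cref{xm14yl} are used. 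They make \(E\) closed under cancellation (\cref{x5xzh9}) and make \(J\cap P\) truncated, as \cref{cll} requires.

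One then obtains \(\Span[2]_{E;E,E}(\cat{G})\to\Mod_{\cat{C}}(2\Cat{Pr})\) and recovers the functor to \(\cat{C}\) by passing to \(\End(\unit)\). The resulting \(f_{!}\) can be a left adjoint of \(f^{*}\) in \(\cat{C}\) (open immersions), a right adjoint (closed immersions), or neither (\(\mathbb{A}^{1}\to{*}\), where \(f_{!}\) is \(f_{\natural}\) twisted by \((-1)[-2]\)). So no argument that constructs \(p_{*}\) inside \(\cat{C}\) for all exceptional \(p\) can succeed.
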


\begin{proof}
  We consider \(
    \cat{G}^{\op}\to\Mod_{\cat{C}}(2\Cat{Pr})
  \) given by \(X\mapsto\Mod_{[X]}(\cat{C})\).
  We claim that
  this satisfies \((E,E)\)-biadjointability condition of \cref{cll}.
  If this is the case,
  we obtain a canonical extension
  \(
  \Span[2]_{E;E,E}(\cat{G})\to\Mod_{\cat{C}}(2\Cat{Pr})
  \) by \cref{cll},
  and by looking at \(\End(\unit)\),
  we obtain the desired functor.

  We need to see that the biadjointability condition
  is satisfied;
  the projection formula is automatic by symmetric monoidality.
  The Beck–Chevalley condition for \(P=E\)
  is clear.
  For~\(J=E\),
  when \(Y\to X\) is exceptional,
  we first see that \(\Mod_{[X]}(\cat{C})\to\Mod_{[Y]}(\cat{C})\)
  admits a left adjoint
  given by \([Y]^{\vee}\otimes_{[Y]}\X\),
  where \([Y]^{\vee}\) denotes the dual of~\([Y]\) over~\([X]\).
  By considering the right adjoints of all morphisms
  appearing in the Beck–Chevalley transformation,
  the Beck–Chevalley condition
  is reduced to the Beck–Chevalley condition for~\(P\).
To check the double Beck–Chevalley condition,
  again we can consider the right adjoints.
\end{proof}

\begin{example}\label{xxxpuy}
  Consider \(\cat{C}=\Cat{Pr}_{\st}\).
  Let \(\cat{G}\) be the category
  of locally compact Hausdorff spaces
  of countable weight
  that are disjoint unions of open subsets of~\(\RR^{n}\)
  for varying~\(n\geq0\).
  Once we verify that the morphisms
  \(\RR\to{*}\) and \({*}\xrightarrow{0}\RR\) are exceptional,
  \cref{xc8fbc} yields six operations,
  where \(E\) consists of all morphisms.
  This agrees with the classical one.
\end{example}

\subsection{Ring stacks}\label{ss:stk_ring}

We define these notions
using Lawvere theories.

\begin{definition}\label{xwvxhj}
  We write \(\Cat{Lat}^{+}\), \(\Cat{Lat}\), and \(\Cat{Pol}\)
  for the full subcategories of abelian monoids, abelian groups, and commutative rings,
  respectively, spanned by~\(\NN^{n}\), \(\ZZ^{n}\),
  and \(\ZZ[T_{1},\dotsc,T_{n}]\) for \(n\geq 0\).
\end{definition}

\begin{definition}\label{xwh85x}
  A \emph{monoid stack}, \emph{group stack}, and \emph{ring stack}
  are functors \(\Cat{Lat}^{+}\to\CAlg(\cat{C})\), \(\Cat{Lat}\to\CAlg(\cat{C})\),
  and \(\Cat{Pol}\to\CAlg(\cat{C})\),
  respectively, preserving finite coproducts.
\end{definition}

\begin{remark}\label{x2vzh2}
  In \cref{xwh85x},
  it is more precise
  to say that they are strict or animated,
  since they are not the free ones
  in the categorical sense.
  In this paper,
  we restrict our attention to such situations.
\end{remark}

\begin{example}\label{x0qjpa}
  The left adjoint
  of the “multiplicative” forgetful functor
  from commutative rings to abelian monoids
  is restricted to \(\Cat{Lat}^{+}\to\Cat{Pol}\).
  By composing this with a ring stack,
  we obtain the monoid stack of the underlying multiplicative monoid.
  Similarly,
  we have the underlying additive group stack
  by composing with \(\Cat{Lat}\to\Cat{Pol}\).

  Moreover,
  we also obtain the multiplicative group stack
  of invertible elements
  by considering
  \(\Cat{Lat}\to\PShv_{\Sigma}(\Cat{Pol})\)
  given by \(\ZZ^{n}\mapsto\ZZ[\ZZ^{n}]\).
\end{example}

\begin{example}\label{abs_t}
  When \(X\) is a monoid locale,
  then \(\Shv(X)\) naturally becomes
  a monoid stack in \(2\Cat{Pr}\).

  Two examples are particularly important.
  First, the Sierpiński space~\(\{s,\eta\}\),
  where~\(s\) is special and~\(\eta\) is generic,
  with multiplication given by \(s\cdot s=s\), \(s\cdot\eta=s\),
  and \(\eta\cdot\eta=\eta=1\)
  is a monoid locale.
  Second,
  \([0,\infty)\) with the usual multiplication
  is also a monoid locale.
  Note that \(\{s,\eta\}\) is 
  a quotient of \([0,\infty)\)
  as a monoid locale
  by \(0\mapsto s\) and \(r\mapsto\eta\) for \(r>0\).
\end{example}

\begin{definition}\label{x924i9}
  We call a ring stack \emph{sutured} if
  the diagram
  \(R^{\times}\to R\xleftarrow{0}{*}\)
  of stacks
  induces a recollement in~\(\Mod_{[R]}(\cat{C})\)
  (cf.~\cref{xibj1p}).
\end{definition}

\begin{lemma}\label{xc95y9}
  The diagonal of a sutured ring stack is a closed immersion.
  In particular, any sutured ring stack is static.
\end{lemma}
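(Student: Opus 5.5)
The plan is to exhibit the diagonal \(\Delta\colon R\to R\times R\) as a base change of the zero section \(0\colon{*}\to R\), and then to show that the zero section is a closed immersion.

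For the first step, let \(\sigma\colon R\times R\to R\) be the subtraction map \((x,y)\mapsto x-y\). It comes from the additive group stack of \cref{x0qjpa}, that is, from the map \(\ZZ[T]\to\ZZ[T_{1},T_{2}]\) with \(T\mapsto T_{1}-T_{2}\). The square
\begin{equation*}
  \begin{tikzcd}
    R\ar[r]\ar[d,"\Delta"']&
    {*}\ar[d,"0"]\\
    R\times R\ar[r,"\sigma"]&
    R
  \end{tikzcd}
\end{equation*}
is a pullback of stacks. To verify this, I will use the shearing automorphism \((x,y)\mapsto(x-y,y)\) of \(R\times R\), which comes from an automorphism of \(\ZZ[T_{1},T_{2}]\). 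It identifies \(\Delta\) with \(0\times\id_{R}\colon R\to R\times R\), and \(0\times\id_{R}\) is visibly the base change of \(0\) along the first projection. Everything here is already present at the level of \(\Cat{Pol}\), so the only thing to check is that pushouts in \(\CAlg(\cat{C})\) correspond to pullbacks of stacks. That holds because the ring stack preserves finite coproducts.

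Since closed immersions, meaning prim monomorphisms, are stable under base change, it then suffices to show that \(0\colon{*}\to R\) is a closed immersion. By \cref{xuw75d}, this amounts to showing that \(s^{*}\colon[R]\to\unit\) admits a fully faithful \([R]\)-linear right adjoint. Now use that \(R\) is sutured: in \(\Mod_{[R]}(\cat{C})\), the diagram \([R^{\times}]\gets[R]\to\unit\) is a recollement. Condition \cref{i:6y2xc} of \cref{xpyo8y} then gives a right adjoint \(i_{*}\) of \(i^{*}=s^{*}\) in \(\Mod_{[R]}(\cat{C})\); it is therefore automatically \([R]\)-linear, and it comes with a unit \(\id\to i_{*}i^{*}\).

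The main obstacle is fully faithfulness, i.e., showing that the counit \(i^{*}i_{*}\to\id\) is an equivalence. As written, condition \cref{i:6y2xc} asserts that the unit \({\id}\to i_{*}i^{*}\) is an equivalence. Read literally, that would force \(i^{*}\) to be fully faithful, which is the wrong way around, so I will interpret it as the standard recollement condition that \(i_{*}\) is fully faithful. Failing that, I would derive fully faithfulness from the pushout square in condition \cref{i:m2bvr}. Apply \(i^{*}\) on the left to that square; since \(i^{*}j_{!}\simeq0\), this yields \(i^{*}\simeq i^{*}i_{*}i^{*}\). Because \(i^{*}\) is essentially surjective onto \(\unit\) (it has the section coming from \(\unit\to[R]\)), we conclude \(i^{*}i_{*}\simeq\id\).

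Given fully faithfulness, \(0\) is a closed immersion, and so \(\Delta\) is a closed immersion. In particular \(\Delta\) is a monomorphism, so \(R\to R\times R\) is \((-1)\)-truncated and \(R\) is \(0\)-truncated, i.e., static. This is precisely the statement that \(R\to\Map(S^{1},R)\) is an equivalence, which is the form used in \cref{xa0wo6}.
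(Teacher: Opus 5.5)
Your proof is correct and follows the paper's. In both, shearing identifies the diagonal with a base change of the zero section \(0\colon{*}\to R\), and suturedness makes that zero section a closed immersion; the only difference is that the paper gets the last step by exhibiting \(0\) as a base change of the closed point of the Sierpiński space (cf.~\cref{xljgmw}), whereas you read it off the recollement directly via \cref{xuw75d}. You are also right that condition~\cref{i:6y2xc} of \cref{xpyo8y} as printed has unit and counit swapped (as does~\cref{i:u0xy6}), and your fallback derivation of \(i^{*}i_{*}\simeq\id\) from the pushout in~\cref{i:m2bvr}, using the section \(f^{*}\) of \(s^{*}\), is valid.
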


\begin{proof}
By considering the shearing map \((x,y)\mapsto(x,x-y)\),
  the diagonal \(R\to R\times R\)
  is isomorphic to
  \({\id}\times0\colon R\times{*}\to R\times R\).
  Since \(0\colon{*}\to R\) is a base change
  of the inclusion of the closed point into the Sierpiński space,
  the desired result follows.
\end{proof}

\begin{definition}\label{xvbdgl}
  We call a sutured weakly suave ring stack \emph{stable}
  if \(f_{\natural}s_{*}\) is invertible,
  where \(s\colon{*}\to R\) is the zero section.
\end{definition}

\begin{lemma}\label{xujpr0}
  A stable sutured weakly suave ring stack
  is suave and exceptional.
\end{lemma}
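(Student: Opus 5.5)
The plan is to reduce both assertions to facts about the diagonal \(d\colon R\to R\times R\), using \cref{xc95y9}, \cref{xahxu2}, and the stability hypothesis. Write \(f\colon R\to{*}\) and \(s\colon{*}\to R\) for the zero section.

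\emph{Exceptionality, assuming suaveness.} By \cref{xc95y9}, \(d\) is a closed immersion, hence a monomorphism. So \(R\) is \(0\)-truncated, and \(R^{\times S^{n-1}}\simeq R\) for \(n\geq2\). We check dualizability of \([R]\) over \([R^{\times S^{n-1}}]\) case by case.
\begin{itemize}
\item For \(n\geq2\) the base is \([R]\) itself, so the condition is trivial.
\item For \(n=1\) we need \([R]\) dualizable over \([R\times R]\) via \(d\). Since \(d\) is prim, \([R\times R]\to[R]\) has an \([R\times R]\)-linear right adjoint. Applying \cref{xahxu2} (the implication from the right-adjoint condition to dualizability) inside \(\Mod_{[R\times R]}(\cat{C})\) gives the claim.
\item For \(n=0\) the condition is that \([R]\) is dualizable in \(\cat{C}\), i.e., that \(R\) is suave.
\end{itemize}
So everything reduces to suaveness.

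\emph{Suaveness.} I would show \([R]\) is self-dual in \(\cat{C}\), imitating Ayoub's argument for smooth projective morphisms. Weak suaveness gives an \([*]\)-linear left adjoint \(f_{\natural}\) of \(f^{*}\colon\unit\to[R]\). Primness of \(d\) gives an \([R\times R]\)-linear right adjoint \(d_{*}\) of \(d^{*}\colon[R\times R]\to[R]\). Set
\[
\mathrm{ev}=f_{\natural}d^{*}\colon[R]\otimes[R]\to\unit,\qquad
\mathrm{coev}'=d_{*}f^{*}\colon\unit\to[R]\otimes[R].
\]
Now compute the zigzag \((\mathrm{ev}\otimes\id)(\id\otimes\mathrm{coev}')\). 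I expect it to reduce to \(q_{\natural}\delta_{*}\), where \(\delta\) is the diagonal and \(q\) is a projection \(R\times R\to R\). This uses two base change statements:
\begin{itemize}
\item base change for the weakly suave \(f\) along arbitrary maps, which holds because \(f_{\natural}\) is linear, so base change along \([R]\otimes\X\) is automatic;
\item base change for the prim \(d\), by the same linearity argument.
\end{itemize}
Next apply the shearing automorphism \((x,y)\mapsto(x,x-y)\) of \(R\times R\), as in \cref{xc95y9}. It carries \(\delta\) to \(\id\times s\) and \(q\) to \(\mathrm{pr}_{1}\). Hence \(q_{\natural}\delta_{*}\simeq\id_{[R]}\otimes f_{\natural}s_{*}\), which is invertible by stability (\cref{xvbdgl}). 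The other zigzag is handled symmetrically: the swap symmetry of \(R\times R\) interchanges the two factors and fixes \(d\). Replacing \(\mathrm{coev}'\) by \(\mathrm{coev}'\) precomposed with \((f_{\natural}s_{*})^{-1}\) then yields a genuine duality datum. Along the way we also need that \((f_{\natural}s_{*})^{-1}\) commutes appropriately with the module structures. This holds because everything involved is \([R]\)-linear or \(\unit\)-linear.

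The main obstacle is the bookkeeping in the zigzag computation: identifying the composite with \(q_{\natural}\delta_{*}\) coherently, and checking that the same invertible twist fixes both triangle identities rather than producing two different twists. If this is awkward, the fallback is to first establish that \([R]\) is dualizable with some dual, using the left adjoint \(f_{\natural}\). Concretely, one would show that the functor \(\cat{C}\to\Mod_{[R]}(\cat{C})\) acquires a linear left adjoint \([R]^{\vee}\otimes_{[R]}\X\) once a candidate dual is exhibited. One would then conclude via \cref{xahxu2}, with the invertibility of \(f_{\natural}s_{*}\) as the only nonformal input.
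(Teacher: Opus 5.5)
Your suaveness argument is the paper's. You use the same \(\eta=d_{*}f^{*}\) and \(\epsilon=f_{\natural}d^{*}\) and reduce a zigzag to a projection-pushforward of \(d_{*}\) by proper base change. Your \(q_{\natural}\delta_{*}\) is the paper's \(\pr_{1,\natural}d_{*}d^{*}\pr_{2}^{*}\), since \(d^{*}\pr_{2}^{*}\simeq\id\). You then shear to \(\id\otimes f_{\natural}s_{*}\), invoke stability, and twist. The bookkeeping you worry about is unnecessary. The paper uses the general fact that two merely invertible zigzags already yield a duality datum after twisting \(\eta\). In any case, your symmetry remark shows both zigzags are the action of the same invertible \(f_{\natural}s_{*}\in\End(\unit)\). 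Your fallback plan is not needed.

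The gap is the case \(n=1\) of exceptionality. \Cref{xahxu2} does not turn primness into dualizability. Its condition~(iii) asks for a right adjoint of the forgetful functor \(\Mod_{A}(\cat{C})\to\cat{C}\), one categorical level above the unit \(\unit\to A\). Primness of \(d\) only gives an \([R\times R]\)-linear right adjoint of the unit \([R\times R]\to[R]\), and that alone does not give dualizability. Over \(\Cat{Pr}_{\st}\), for instance, primness of an algebra over the point only says its unit is compact.

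The missing input is that \(d\) is a monomorphism, so \([R]\otimes_{[R\times R]}[R]\simeq[R]\). Transported along this equivalence, \(d^{*}\) and \(d_{*}\) serve as coevaluation and evaluation. By linear base change along the monomorphism, the zigzags are equivalences: \(\id_{[R]}\otimes_{[R\times R]}d_{*}\) is the inverse of the equivalence \(\id_{[R]}\otimes_{[R\times R]}d^{*}\). This is exactly what the paper's last sentence, self-duality of \([R]\) over \([R]\otimes[R]\) via \cref{xc95y9}, refers to. Your reduction of the cases \(n\geq2\) to trivial ones via \(0\)-truncatedness is correct and makes explicit what the paper leaves implicit.
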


\begin{proof}
  To prove this,
  we need an elementary fact about duality.
  Consider
  morphisms
  \begin{align*}
    \eta&\colon\unit\to C\otimes D,&
    \epsilon&\colon D\otimes C\to\unit
  \end{align*}
  in a monoidal \((1,1)\)-category.
  By definition,
  \(C\) and~\(D\) are dual to each other
  when the composites
  \begin{align*}
    C&\xrightarrow{\eta\otimes\id_{C}}C\otimes D\otimes C
    \xrightarrow{\id_{C}\otimes\epsilon}C,&
    D&\xrightarrow{\id_{D}\otimes\eta}D\otimes C\otimes D
    \xrightarrow{\epsilon\otimes\id_{D}}D
  \end{align*}
  are the identities.
  However,
  even when the composites are just isomorphisms,
  instead of the identities,
  we can still conclude that~\(C\) and~\(D\) are dual to each other
  by twisting~\(\eta\)
  to obtain a duality datum;
  see, e.g.,~\cite[Lemma~5.11]{SixFunctors}.

  We show that \([R]\) is self-dual
  using the observation above.
  We write \(f\colon R\to{*}\) for the tautological map
  and \(d\colon R\to R\times R\) for the diagonal,
  which is a closed immersion by \cref{xc95y9}.
  We claim that
  \begin{align*}
    \eta&\colon[*]\xrightarrow{f^{*}}[R]\xrightarrow{d_{*}}[R\times R],&
    \epsilon&\colon[R\times R]\xrightarrow{d^{*}}[R]\xrightarrow{f_{\natural}}[*]
  \end{align*}
  fit into the situation above.
  To see this,
  we consider the diagram
  \begin{equation*}
    \begin{tikzcd}
      R\ar[r,"d"]\ar[d,"d"']&
      R\times R\ar[r,"\pr_{2}"]\ar[d,"{\id}\times d"]&
      R\\
      R\times R\ar[r,"d\times{\id}"]\ar[d,"\pr_{1}"']&
      R\times R\times R&
      {}\\
      R\rlap,&
      {}&
      {}
    \end{tikzcd}
  \end{equation*}
  where the square is a pullback.
  By proper base change,
  we see that the first composite is equivalent to
  \begin{equation*}
    [R]
    \xrightarrow{\pr_{2}^{*}}[R\times R]
    \xrightarrow{d^{*}}[R]
    \xrightarrow{d_{*}}[R\times R]
    \xrightarrow{\pr_{1,\natural}}[R].
  \end{equation*}
  By shearing as in the proof of \cref{xc95y9},
  we see that the stability implies that
  this is an equivalence
  (but still not equivalent to the identity).
  The other composite is treated similarly.

  Since \([R]\) is also self-dual over \([R]\otimes[R]\)
  by \cref{xc95y9},
  it is exceptional.
\end{proof}

\begin{definition}\label{x4vjro}
  We call a sutured ring stack
  \emph{smooth}
  if it is suave
  (and then it is automatically exceptional by \cref{xc95y9},
  hence the six operations make sense)
  and \(f^{!}\unit\) is invertible
  in \(\Hom_{\cat{C}}(\unit,[R])\).
\end{definition}

\begin{proposition}\label{st_sm}
  For a sutured ring stack,
  it is smooth
  if and only if it is weakly suave and stable.
\end{proposition}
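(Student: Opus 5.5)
The plan is to prove the two directions separately. Throughout I will use the six operations from \cref{xc8fbc}, together with the fact from \cref{xc95y9} that the diagonal \(d\colon R\to R\times R\) and the zero section \(s\colon{*}\to R\) are closed immersions, so that \(d_{!}=d_{*}\) and \(s_{!}=s_{*}\). I write \(f\colon R\to{*}\) for the structure map and \(\omega=f^{!}\unit\in\Hom_{\cat{C}}(\unit,[R])\).

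\emph{Smooth implies weakly suave and stable.} Suave implies weakly suave by definition, so only stability needs an argument. For a suave exceptional morphism, the formalism should give \(f^{!}\simeq\omega\otimes f^{*}\). This follows from the usual argument: \([R]\) is dualizable over \([*]\), so \(f^{!}\) is \([*]\)-linear and determined by \(f^{!}\unit\). Since \(\omega\) is invertible, \(f^{*}\simeq\omega^{-1}\otimes f^{!}\), and passing to left adjoints gives \(f_{\natural}\simeq f_{!}(\omega\otimes\X)\). Then
\begin{equation*}
  f_{\natural}s_{*}\simeq f_{!}(\omega\otimes s_{!}\X)\simeq f_{!}s_{!}(s^{*}\omega\otimes\X)\simeq s^{*}\omega\otimes\X,
\end{equation*}
using the projection formula for \(s\) and \(fs=\id\). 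The last functor is invertible because \(s^{*}\omega\) is invertible.

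\emph{Weakly suave and stable implies smooth.} By \cref{xujpr0}, \(R\) is suave and exceptional, so it remains to show that \(\omega\) is invertible. I will use the identity \(d^{!}\pr_{1}^{!}\simeq(\pr_{1}d)^{!}=\id\) and compute \(d^{!}\pr_{1}^{!}\unit\) in a second way.
\begin{itemize}
  \item Since \(\pr_{1}\) is a base change of \(f\) along \(f\) and is suave, base change for the dualizing object gives \(\pr_{1}^{!}\unit\simeq\pr_{2}^{*}\omega\).
  \item By the shearing isomorphism of \cref{xc95y9}, \(d\) becomes \(\id\times s\).
  \item \(d^{!}\) of an object pulled back from the second factor can be computed by base change for the closed immersion \(s\). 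This gives
  \begin{equation*}
    \unit\simeq d^{!}\pr_{1}^{!}\unit\simeq\omega\otimes f^{*}(s^{!}\unit_{R}).
  \end{equation*}
\end{itemize}
This exhibits an inverse of \(\omega\) in the symmetric monoidal category \(\Hom_{\cat{C}}(\unit,[R])\), so \(\omega\) is invertible.

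\emph{Main obstacle.} The difficult step is the last computation: justifying that \(d^{!}\) (equivalently \(s^{!}\)) commutes with tensoring by an object pulled back along a suave projection. The right adjoint of a closed pushforward need not be linear in general. My first attempt will write \(\pr_{2}^{*}\omega\otimes\X\) as \(\pr_{2}^{!}(\X)\) twisted by the dualizing object of \(\pr_{2}\). This reduces the identity to the composition \(d^{!}\pr_{2}^{!}=\id\) and to base change of \(!\)-pullbacks along suave maps, both of which \cref{xc8fbc} provides through \cref{cll}. If that fails, I will instead make explicit the twist in the self-duality of \cref{xujpr0}. That twist is the invertible endomorphism \(\pr_{1,\natural}d_{*}d^{*}\pr_{2}^{*}\), which after shearing is \(f^{*}(f_{\natural}s_{*})\)-linear. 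I will then identify \(\omega\) with the corresponding invertible twist through the characterization of \(f_{!}\) as the dual of \(f^{*}\).
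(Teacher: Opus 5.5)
Your proof is correct in substance but takes a different route from the paper's.

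\textbf{The paper's route.} The paper proves both directions at once. It first notes that \(R\) is suave, hence exceptional, in either situation. It then uses Mann's extension of the six operations to the shriekable map \(t\colon{*}\to B\GG_{a}\). The pullback square \(\GG_{a}\simeq{*}\times_{B\GG_{a}}{*}\) gives \(f^{!}\unit\simeq f^{*}t^{!}\unit\), so smoothness is equivalent to invertibility of \(L=t^{!}\unit\). Finally \(L\simeq s^{*}f^{!}\unit\simeq f_{!}(f^{!}\unit\otimes s_{*}\unit)\simeq f_{\natural}s_{*}\unit\) identifies this with stability.

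\textbf{Your route.} Your first direction is the same computation, carried out directly with \(\omega\) rather than with its descent \(L\). Your second direction replaces descent along \({*}\to B\GG_{a}\) by the identity \(d^{!}\pr_{1}^{!}\simeq\id\) together with shearing, which gives the explicit inverse \(f^{*}(s^{!}\unit_{R})\) of \(\omega\).

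\textbf{Comparison.} Your route stays among exceptional stacks, so it does not need the extension to \(B\GG_{a}\). It also shows slightly more: stability enters only through \cref{xujpr0}, so any suave sutured ring stack already has invertible \(f^{!}\unit\). The paper's route is shorter, and it makes the translation invariance of \(f^{!}\unit\) (that it descends to the point) conceptual.

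\textbf{Two bookkeeping points.}
\begin{enumerate}
  \item The shearing \((x,y)\mapsto(x,x-y)\) of \cref{xc95y9} turns \(d\) into \(\id\times s\), but it does not fix \(\pr_{2}^{*}\omega\). That object becomes the pullback of \(\omega\) along subtraction, not something pulled back from the second factor. You have two consistent options:
  \begin{itemize}
    \item use \(\tau(x,y)=(x-y,y)\), which fixes \(\pr_{2}\) and turns \(d\) into \(s\times\id\); or
    \item keep the paper's shearing and start from \(\pr_{2}^{!}\unit\simeq\pr_{1}^{*}\omega\) instead.
  \end{itemize}
  \item Your ``main obstacle'' disappears; no general linearity of \(s^{!}\) is needed. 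You only need \((s\times\id)^{!}\) of the specific object \(\pr_{2}^{*}\omega\), which is itself \(\pr_{1}^{!}\unit\) by suave base change. Since \(\pr_{1}\circ(s\times\id)=s\circ f\), functoriality of \((\X)^{!}\) gives
  \[
  (s\times\id)^{!}\pr_{2}^{*}\omega\simeq(s\times\id)^{!}\pr_{1}^{!}\unit\simeq f^{!}s^{!}\unit_{R}\simeq\omega\otimes f^{*}s^{!}\unit_{R},
  \]
  where the last step uses \(f^{!}\simeq\omega\otimes f^{*}\) for suave \(f\). This is your ``first attempt'', but with \(\pr_{1}\) in place of \(\pr_{2}\).
\end{enumerate}
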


\begin{proof}
  We write~\(\cat{G}\) for the category
  of exceptional stacks
  and consider
  \begin{equation*}
    D=\Hom_{\cat{C}}(\unit,[\X])\colon\Span(\cat{G})\to\Cat{Pr}
  \end{equation*}
  obtained from \cref{xc8fbc}.
  Note that in both situations,
  \(R=\GG_{a}\) is exceptional
  by \cref{xc95y9,xujpr0}.
  By the extension result
  of Mann~\cite[Section~A.5]{Mann}
  (see also~\cite[Section~3.4]{HeyerMann}),
  we extend this to the situation where
  the tautological morphism \(t\colon{*}\to B\GG_{a}\) is shriekable.
  We consider the diagram
  \begin{equation*}
    \begin{tikzcd}
      {*}\ar[r,"s"]&
      \GG_{a}\ar[r,"f"]\ar[d,"f"']&
      {*}\ar[d,"t"]\\
      {}&
      {*}\ar[r,"t"]&
      B\GG_{a}\rlap.
    \end{tikzcd}
  \end{equation*}
  By \(f^{!}\unit\simeq f^{!}t^{*}\unit\simeq f^{*}t^{!}\unit\),
  we see
  that \(R\) is smooth
  if and only if \(L=t^{!}\unit\) is invertible.
  Since \(f_{!}s_{*}\simeq f_{!}s_{!}\simeq{\id}\),
  we have
  \begin{equation*}
    L\simeq
    s^{*}f^{!}\unit
    \simeq
    f_{!}s_{*}s^{*}f^{!}\unit
    \simeq
    f_{!}(f^{!}\unit\otimes s_{*}\unit)
    \simeq
    f_{\natural}s_{*}\unit.
  \end{equation*}
  Therefore,
  stability is equivalent to the invertibility of~\(L\),
  and hence the desired result follows.
\end{proof}

\subsection{Absolute values}\label{ss:stk_abs}

\begin{definition}\label{x2m7c9}
  An \emph{absolute semivalue} on a ring stack~\(R\)
  over~\(\cat{C}\) is a multiplicative
  map \(N\colon R\to[0,\infty)\),
  i.e., a \(2\)-cell
  \begin{equation*}
    \begin{tikzcd}
      \Cat{Lat}^{+}\ar[rd,"{\Shv([0,\infty))}",""'{name=a}]\ar[d]&
      {}\\
      \Cat{Pol}\ar[r,""{name=b},"R"']&
      \CAlg(\cat{C})\rlap,
      \ar[from=a,to=b,Rightarrow]
    \end{tikzcd}
  \end{equation*}
  where we use \cref{x0qjpa,abs_t},
  satisfying the following conditions:
  \begin{itemize}
    \item
      We have \(N(0)\leq0\);
      i.e., the morphism
      \(0\colon{*}\to R\)
      factors through
      \(\{0\}\to[0,\infty)\),
      which is a monomorphism.
    \item
      It satisfies the triangle inequality
      \(N(x+y)\leq N(x)+N(y)\);
      i.e.,
      we have
      \(N(N^{-1}([0,r])+N^{-1}([0,s]))\subset[0,r+s]\)
      for \(r\),~\(s\in[0,\infty)\).
  \end{itemize}
  We call it an \emph{absolute value}
  if it moreover satisfies the following:
  \begin{itemize}
    \item
      We have \(N^{-1}(\{0\})=\{0\}\).
    \item
      We have \(N^{-1}((0,\infty))=R^{\times}\).
  \end{itemize}
\end{definition}

\begin{remark}\label{xljgmw}
  A ring stack with an absolute value
  is automatically sutured.
  In fact,
  a sutured ring stack can be formulated similarly
  to \cref{x2m7c9}
  using the Sierpiński space~\(S\)
  in place of \([0,\infty)\)
  (see \cref{abs_t}).
\end{remark}

\begin{remark}\label{xraf07}
  In \cref{x2m7c9},
  certain completeness or overconvergence
  is already inherent
  when we consider a morphism \(R\to[0,\infty)\).
  This is because \([0,\infty)\)
  automatically has the overconvergence property,
  and it is hence inherited by~\(R\);
  e.g., for any~\(r\),
  the disk of radius~\(r\)
  (as a substack of~\(R\))
  is the intersection
  of the disks of radius~\(r'\) for \(r'>r\).
\end{remark}

We resume the study of this notion in \cref{s:main_ber},
where we prove \cref{main_4}.
We conclude this section by describing what it means to have
a multiplicative map \(R\to[0,\infty)\)
concretely:

\begin{lemma}\label{xpbyl2}
  Let \(X\) be a stack.
  The morphism
  \(X\to[0,\infty)\)
  is a specification of idempotent algebras
  \(\dotsb\to D_{r}\to\dotsb\to D_{0}\) for 
  \(r\in[0,\infty)\)
  and \(\unit=E_{0}\to\dotsb\to E_{r}\to\dotsb\)
  for \(r\in[0,\infty)\)
  in \(\Hom_{\cat{C}}(\unit,[X])\)
  satisfying the following:
  \begin{conenum}
    \item\label{i:int_d}
      We have \(\injlim_{r'>r}D_{r'}=D_{r}\).
    \item\label{i:int_e}
      We have \(\injlim_{r'<r}E_{r'}=E_{r}\).
    \item\label{i:int_de1}
      and \(D_{r}\vee E_{r}=\unit\) for~\(r\geq0\).
    \item\label{i:int_de0}
      We have \(D_{r}\otimes E_{s}=0\) for \(0\leq r<s\).
  \end{conenum}
\end{lemma}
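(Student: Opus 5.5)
The plan is to reduce the statement to a statement about frames, in three steps: adjunction, a universal property of sheaves on a locale, and a presentation of the frame of \([0,\infty)\).

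\textbf{Step 1 (adjunction).} Here \([0,\infty)\) is viewed as a stack over \(\cat{C}\) by base change along the unit \(\Cat{Pr}\to\cat{C}\). A morphism \(X\to[0,\infty)\) is then a morphism \(\Shv([0,\infty))\otimes\unit_{\cat{C}}\to[X]\) in \(\CAlg(\cat{C})\). By adjunction this is the same as a morphism in \(\CAlg(\Cat{Pr})\)
\begin{equation*}
  \Shv([0,\infty))\to\Hom_{\cat{C}}(\unit,[X])=:\cat{V}.
\end{equation*}

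\textbf{Step 2 (sheaves on a locale).} For a locale \(L\) and a presentably symmetric monoidal \(\cat{V}\), I would show that symmetric monoidal colimit-preserving functors \(\Shv(L)\to\cat{V}\) form a space equivalent to the set of frame maps from the frame of opens of \(L\) to the poset \(\mathrm{Idem}(\cat{V})\) of idempotent algebras. This is essentially the content behind \cref{xm6f2s}, via \cite{ttg-sm}.
\begin{itemize}
  \item Opens are subterminal objects of \(\Shv(L)\), hence idempotent algebras for the cartesian structure, and a symmetric monoidal functor preserves these.
  \item Finite meets go to tensor products.
  \item Arbitrary joins go to the join \(\bigvee\) computed in \(\mathrm{Idem}(\cat{V})\). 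Covering sieves become colimits, and these are computed among idempotent algebras.
  \item Conversely, \(\Shv(L)\) is generated under colimits by opens, subject exactly to the covering relations, so a frame map extends uniquely.
\end{itemize}
In particular the space of such functors is discrete, because \(\mathrm{Idem}(\cat{V})\) is a poset.

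\textbf{Step 3 (presentation of the frame).} The frame of \([0,\infty)\) is generated by the subbasic opens \((r,\infty)\) and \([0,s)\). I would give it the following presentation:
\begin{itemize}
  \item monotonicity and continuity in the parameters (\((r,\infty)=\bigcup_{r'>r}(r',\infty)\), and similarly for \([0,s)\));
  \item disjointness \([0,s)\cap(r,\infty)=\emptyset\) for \(s\le r\);
  \item covering \([0,s)\cup(r,\infty)=[0,\infty)\) for \(r<s\).
\end{itemize}
This presentation follows from the standard Joyal–Tierney/Johnstone presentation of the reals as a formal space.

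I would then translate these generators into the data \(D_r,E_r\) of the statement. A map of idempotent algebras \(A\to B\) means \(B\) is the smaller "open". So:
\begin{itemize}
  \item \(E_r\) is the image of the open \((r,\infty)\) (up to the paper's direction conventions). Then \(E_0=\unit\) corresponds to the normalization at \(0\), and \cref{i:int_e} is the continuity relation.
  \item \(D_r\) is obtained as \(\injlim_{r'>r}\) of the images of \([0,r')\), i.e.\ the closed disk as an intersection of open disks. This matches \cref{xraf07}, and then \cref{i:int_d} holds by construction.
  \item The disjointness and covering relations become \cref{i:int_de0,i:int_de1}, after an \(\epsilon\)-shift that is absorbed by continuity.
\end{itemize}
Conversely, given data satisfying \cref{i:int_d,i:int_e,i:int_de1,i:int_de0}, I recover the images of the open disks as \(\injlim\)/\(\bigvee\) of the \(D_{r'}\), and check the relations of the presentation, again using continuity to pass between strict and non-strict inequalities.

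\textbf{Main obstacle.} I expect Step 2 to be the main obstacle: the claim that joins are preserved correctly, namely that a colimit-preserving symmetric monoidal functor sends unions of opens to joins in \(\mathrm{Idem}(\cat{V})\), and that \(\Shv(L)\) is freely generated by its frame in this sense. I would first try to deduce this from \cite[Corollary~1.10]{ttg-sm}. Failing that, I would argue directly by writing \(\Shv(L)\) as the localization of presheaves on opens at covering sieves, and checking that Čech nerves of covers by subterminals are sent to colimit diagrams of idempotent algebras, whose colimit is their join. The bookkeeping of strict versus non-strict inequalities in Step 3 is routine by comparison.
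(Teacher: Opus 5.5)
Your overall route is the paper's: via \cite{ttg-sm} the lemma reduces to showing that the data present the frame of \([0,\infty)\). Your Step~3 presentation (subbasic opens \((r,\infty)\) and \([0,s)\), continuity, disjointness for \(s\le r\), covering for \(r<s\)) is exactly what the paper means by taking the product of the two one-sided topologies and cutting out the diagonal with (iii) and (iv). The gap is in the dictionary between these generators and \(D_{r}\), \(E_{r}\). Beyond \cite{ttg-sm}, that dictionary is the only real content of the lemma, and as written it is wrong, not merely up to direction conventions.

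The source of the error is the claim in Step~2 that opens are idempotent algebras. A subterminal \(U\) of \(\Shv(L)\) is an idempotent \emph{coalgebra} for the cartesian structure: we have \(U\times U\simeq U\to{*}\), and there is no unit \({*}\to U\). In the stable target, its image \(C_{U}\to\unit\) is an idempotent coalgebra. Its cofiber is the idempotent algebra of the \emph{closed} complement, and a map \(A\to B\) of idempotent algebras means the closed part shrinks, i.e.\ the open grows. The correct dictionary is therefore:
\begin{itemize}
  \item \(D_{r}\) is the algebra complementary to the generator \((r,\infty)\), i.e.\ the closed disk \([0,r]\) (cf.\ \(D_{r}=i_{r,*}\unit\) in the proof of \cref{main_4});
  \item \(E_{r}\) is the algebra complementary to \([0,r)\), i.e.\ the closed set \([r,\infty)\) (cf.\ \(E_{r}=k_{r,*}\unit\)).
\end{itemize}
With this dictionary the conditions match the presentation directly:
\begin{itemize}
  \item (i) and (ii) are literally the two continuity relations;
  \item \(E_{0}=\unit\) is the empty-join case \([0,0)=\emptyset\) of (ii);
  \item (iii) is the disjointness \((r,\infty)\cap[0,r)=\emptyset\);
  \item (iv) is the covering \((r,\infty)\cup[0,s)=[0,\infty)\) for \(r<s\).
\end{itemize}
No \(\epsilon\)-shift occurs, and \(D_{r}\) is not built as a limit of open disks.

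Your assignment \(E_{r}\leftrightarrow(r,\infty)\) breaks down concretely. The normalization \(E_{0}=\unit\) would force \((0,\infty)\) to be everything. Condition (ii) would assert \(\bigcap_{r'<r}(r',\infty)=(r,\infty)\), whereas the left side is \([r,\infty)\). Similarly, the images of the open disks \([0,r')\) are (complementary to) the \(E_{r'}\). So neither your construction of \(D_{r}\) from them nor your converse recovery of open disks from the \(D_{r'}\) matches the data.
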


\begin{proof}
By~\cite{ttg-sm},
  we have to check
  that the universal frame
  described by these conditions
  coincides with \([0,\infty)\).

  First,~\cref{i:int_d} describes
  the topology on \([0,\infty)\)
  such that \((r,\infty)\) is open for \(r\geq0\).
  Then~\cref{i:int_e} describes
  the topology on \([0,\infty)\)
  such that \([0,r)\) is open for \(r\geq0\).
  This means that
  the universal locale
  is a sublocale of their product.
  The conditions~\cref{i:int_de1,i:int_de0}
  restrict this to
  the diagonal part of this space,
  which is precisely the usual topology on \([0,\infty)\).
\end{proof}

\begin{lemma}\label{xs6zl2}
  Let \(M\) be a monoid stack.
  A multiplicative map
  \(M\to[0,\infty)\)
  is a datum described in \cref{xpbyl2}
  satisfying
  \begin{align*}
    \{1\}&\subset D_{1},&
    D_{r}\cdot D_{s}&\subset D_{rs},&
    \{1\}&\subset E_{1},&
    E_{r}\cdot E_{s}&\subset E_{rs}
  \end{align*}
  for \(r\) and~\(s\in[0,\infty)\),
  where we identify idempotent algebras with the corresponding
  closed substacks;
  e.g.,
  \(D_{r}\cdot D_{s}\subset D_{rs}\) means that
  the action of \([M]\) on
  \(\Mod_{D_{r}}[M]\otimes\Mod_{D_{s}}[M]\)
  coming from the binary multiplication on~\(M\)
  factors through \(\Mod_{D_{rs}}[M]\).
\end{lemma}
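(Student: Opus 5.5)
The plan is to reduce to \cref{xpbyl2} and then translate multiplicativity into the four containment conditions. A multiplicative map \(M\to[0,\infty)\) is, by \cref{abs_t} and \cref{xwh85x}, a natural transformation of finite-coproduct-preserving functors \(\Cat{Lat}^{+}\to\CAlg(\cat{C})\) from \(\NN^{n}\mapsto\Shv([0,\infty)^{n})\) to \(M\). Since \(\Cat{Lat}^{+}\) is the Lawvere theory of commutative monoids, giving such a transformation amounts to a single morphism of stacks \(N\colon M\to[0,\infty)\). This morphism must be compatible with the unit \({*}\to M\) and the multiplication \(M\times M\to M\), up to coherent homotopy. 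Because the target \([0,\infty)\) is a \(0\)-truncated stack (a locale, by \cref{xm6f2s}), I expect the higher coherences to be automatic. Compatibility then becomes the \emph{condition} that two squares commute. The first is \({*}\to M\to[0,\infty)\) versus \({*}\xrightarrow{1}[0,\infty)\). The second is \(M\times M\to M\to[0,\infty)\) versus \(M\times M\to[0,\infty)^{2}\xrightarrow{\cdot}[0,\infty)\). These are conditions because mapping spaces into a \(0\)-truncated object are discrete.

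Next, I describe maps into \([0,\infty)\) via \cref{xpbyl2}. By the universal frame description used there (via~\cite{ttg-sm}), two maps \(X\to[0,\infty)\) agree if and only if the pulled-back idempotent algebras \(D_{r},E_{r}\) agree. Moreover, for a morphism \(g\colon X'\to X\), the data pulled back along \(g\) are \(g^{*}D_{r}\) and \(g^{*}E_{r}\). It is also convenient to allow an inequality formulation. Here \(D_{r}\) corresponds to the closed sublocale \([0,r]\), the complement of \((r,\infty)\), and \(E_{r}\) to the closed sublocale \([r,\infty)\), the complement of \([0,r)\); the exact labelling is fixed by \cref{i:int_d,i:int_e}. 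For each \(r\), we have a map into the Sierpiński space.

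The unit square then says that the point \(1\) pulls back to lie in both \(\{x\le1\}\) and \(\{x\ge1\}\), i.e.\ \(\{1\}\subset D_{1}\) and \(\{1\}\subset E_{1}\). Conversely, these containments force the composite \({*}\to[0,\infty)\) to be the point \(1\), since \([0,1]\cap[1,\infty)=\{1\}\) as sublocales.

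For the multiplication square, I compare the two families of idempotent algebras on \(M\times M\). The pullback of \([0,t]\) along the product map \([0,\infty)^{2}\to[0,\infty)\) is the closed sublocale \(\{(x,y):xy\le t\}\). This equals the intersection over pairs \(rs\ge t\)... I plan to instead argue through the containment direction. Multiplicativity implies \(\mu(D_{r}\times D_{s})\subset D_{rs}\), because \([0,r]\cdot[0,s]\subset[0,rs]\); the same holds for \(E\). Conversely, assume the containments. They give \(\mu^{*}D_{t}\supset\) the closed substack corresponding to \(\{xy\le t\}\), using the covering of the open complement by products of opens in \([0,\infty)^{2}\) together with \cref{i:int_d} to handle the approximation. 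Similarly, the \(E\)-containments give the reverse inequality. Since \(\{xy\le t\}\cap\{xy\ge t\}\)-type data determine a map to the locale, the two maps \(M\times M\to[0,\infty)\) coincide.

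The main obstacle is this last step: showing that the \emph{containments} alone, one-sided for \(D\) and for \(E\), pin down equality of the two maps \(M\times M\to[0,\infty)\). I would handle it by viewing each side as a pair of maps into the upper and lower semicontinuous versions of \([0,\infty)\), as in the proof of \cref{xpbyl2}, where each containment family amounts to an inequality of semicontinuous maps. Two inequalities in opposite directions then give equality on the diagonal sublocale.
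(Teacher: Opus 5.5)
Your proposal is correct and follows the paper's proof: \(0\)-truncatedness of \([0,\infty)\) turns multiplicativity into commutativity of the unit and multiplication squares, the unit square is handled via \(\{1\}=D_{1}\otimes E_{1}\) in the universal case, and the multiplication square is handled by comparing pulled-back idempotent algebras using \(m^{*}(D_{t})=\bigvee_{rs\leq t}D_{r}\boxtimes D_{s}\) and its \(E\)-analogue over \(rs\geq t\); this union-of-rectangles description, rather than a covering of the open complement by products of opens, is what makes your \(D\)-step precise. Your observation that the \(D\)-containments (submultiplicativity) and \(E\)-containments (supermultiplicativity) give opposite inequalities that combine to equality spells out a step the paper leaves implicit.
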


\begin{proof}
By the \(0\)-truncatedness of \([0,\infty)\),
  multiplicativity is a condition,
  which is precisely the commutativity
  (which is a condition) of the diagrams
  \begin{align*}
    \begin{tikzcd}[ampersand replacement=\&]
      {*}\ar[r]\ar[d,"1"']\&
      {*}\ar[d,"1"]\\
      M\ar[r]\&
      {[0,\infty)}\rlap,
    \end{tikzcd}
    &&
    \begin{tikzcd}[ampersand replacement=\&]
      M^{2}\ar[r]\ar[d,"m"']\&
      {[0,\infty)^{2}}\ar[d,"m"]\\
      M\ar[r]\&
      {[0,\infty)}\rlap,
    \end{tikzcd}
  \end{align*}
  where \(m\) is the multiplication map.
  Since \(1\colon{*}\to[0,\infty)\)
  corresponds to \(D_{1}\otimes E_{1}\)
  in the universal case,
  we see that the commutativity of the first square
  is equivalent to \(\{1\}\subset D_{1}\) and \(\{1\}\subset E_{1}\).
  We consider the second square.
Similarly,
  we can match the rest
  since
  \begin{align*}
    m^{*}(D_{t})&=\bigvee_{rs\leq t}D_{r}\boxtimes D_{s},&
    m^{*}(E_{t})&=\bigvee_{rs\leq t}E_{r}\boxtimes E_{s}
  \end{align*}
  holds in the universal case.
\end{proof}

\begin{lemma}\label{xyvgm4}
  Let \(R\) be a ring stack.
  Then an absolute semivalue on~\(R\)
  is a datum as in \cref{xs6zl2}
  satisfying
  \(\{0\}\subset D_{0}\) and \(D_{r}\cdot D_{s}\subset D_{r+s}\).
  It is an absolute value
  if furthermore \(\{0\}=R\setminus R^{\times}=D_{0}\)
  is satisfied.
\end{lemma}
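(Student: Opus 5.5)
The plan is to reduce to \cref{xs6zl2}, which already identifies the multiplicative structure of an absolute semivalue. By \cref{x2m7c9}, an absolute semivalue is a multiplicative map \(N\colon R\to[0,\infty)\) on the underlying multiplicative monoid stack (via \cref{x0qjpa}), subject to two further conditions. So it suffices to translate these two conditions, and then the two absolute-value conditions, into statements about the closed substacks \(D_{r}=N^{-1}([0,r])\) and the open substacks given by \(E_{r}\). Throughout we use the identification of idempotent algebras with closed substacks, as in \cref{xs6zl2}. We also use the \(0\)-truncatedness of \([0,\infty)\) (\cref{xm6f2s}), so that every condition involved is a property, not extra data.

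For \(N(0)\leq0\): the point \(\{0\}\to[0,\infty)\) is the closed sublocale \([0,0]\), which corresponds to \(D_{0}\) in the universal case of \cref{xpbyl2}. Hence \(0\colon{*}\to R\) factors through it if and only if the pullback of \(D_{0}\) along \(0\) is \(\unit\), i.e. \(\{0\}\subset D_{0}\).

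For the triangle inequality: the condition \(N(N^{-1}([0,r])+N^{-1}([0,s]))\subset[0,r+s]\) says literally that \(D_{r}+D_{s}\subset D_{r+s}\). Here the sum is the image under addition \(R\times R\to R\), and the containment is read in the sense of \cref{xs6zl2} via the additive action rather than the multiplicative one. I would read the \(D_{r}\cdot D_{s}\subset D_{r+s}\) in the statement as this additive containment. If the definition is instead stated only pointwise, I would reduce it as in the proof of \cref{xs6zl2}, using \(a^{*}D_{t}=\bigvee_{r+s\leq t}D_{r}\boxtimes D_{s}\) for the addition map \(a\) of \([0,\infty)\). This identity describes the closed sets \(\{(x,y):x+y\leq t\}\) as a union of boxes. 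Since \(D_{t}=\injlim_{t'>t}D_{t'}\) by \cref{i:int_d}, it suffices to check this after enlarging slightly, where compactness makes the union finite. This is the main point needing care: one must make sure that the frame-theoretic description of the closed set via joins of closed boxes matches exactly, and I expect the limit over \(t'>t\) to handle the boundary.

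For an absolute value: \(N^{-1}(\{0\})=\{0\}\) reads \(D_{0}=\{0\}\). For the other condition, \((0,\infty)\) is the open complement of \(\{0\}=[0,0]\), so its preimage is the open complement of \(D_{0}\), and \(N^{-1}((0,\infty))=R^{\times}\) says that \(R\setminus R^{\times}=D_{0}\) as complementary closed substacks. The two conditions together give \(\{0\}=R\setminus R^{\times}=D_{0}\), which completes the translation.
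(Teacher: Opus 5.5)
Your proposal is correct and is the same direct unwinding of \cref{x2m7c9} through \cref{xpbyl2,xs6zl2} that the paper dismisses as straightforward. You are right to read the stated $D_{r}\cdot D_{s}\subset D_{r+s}$ as the additive containment $D_{r}+D_{s}\subset D_{r+s}$: the multiplicative containment $D_{r}\cdot D_{s}\subset D_{rs}$ is already part of \cref{xs6zl2}, and your fallback for a pointwise formulation is unnecessary because \cref{x2m7c9} states the triangle inequality directly as this containment.
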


\begin{proof}
  This is straightforward.
\end{proof}

\section{Descent}\label{s:des}

We develop an axiomatic theory of descent.
In this paper,
it serves not only as a prerequisite for the statements of \cref{main_3},
but also as a powerful ingredient in the proof of \cref{main_2}.

After introducing the notion in \cref{ss:des},
we establish its basic properties in \cref{ss:d_basic}.
In \cref{ss:d_pr}, we show that, in the prim case,
the condition admits a simpler formulation,
which connects to the notion of descendability,
which we discuss in \cref{ss:m_des}.
In \cref{ss:d_sp} we return to the theory of stacks to provide some applications.
We see many examples
of descent in \cref{s:d_ex}.

\subsection{Descent}\label{ss:des}

We first introduce the following condition:

\begin{definition}\label{xdsxpl}
  In a presentably symmetric monoidal \(n\)-category,
  we say that
  an \(\E_{0}\)-coalgebra,
  i.e., a morphism \(C\to\unit\),
  is \emph{faithful}
  if the augmented semisimplicial object \(C^{\otimes\bullet+1}\)
  is a colimit diagram.
\end{definition}

\begin{lemma}\label{xpx5dm}
In the situation of \cref{xdsxpl},
  when \(C\) is an \(\E_{1}\)-coalgebra,
  the geometric realization of~\(C^{\otimes\bullet+1}\)
  is an idempotent coalgebra.
\end{lemma}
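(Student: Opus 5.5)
Write \(\epsilon\colon C\to\unit\) for the counit and \(\Delta\colon C\to C\otimes C\) for the comultiplication, and set \(E=\lvert C^{\otimes\bullet+1}\rvert\) with its augmentation \(e\colon E\to\unit\). An idempotent coalgebra is a coalgebra whose counit becomes an equivalence after tensoring with the object. So it suffices to show that \(E\otimes e\colon E\otimes E\to E\) is an equivalence; the coalgebra structure then comes for free. The plan is to compute \(E\otimes E\) as a geometric realization and use an extra degeneracy.

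First, since the tensor product commutes with colimits in each variable (we are in the presentably symmetric monoidal setting), \(E\otimes E\) is the geometric realization of the bisemisimplicial object \(C^{\otimes\bullet+1}\otimes C^{\otimes\star+1}\). Here \(E\otimes e\) is induced by the augmentation in the second variable. Fix a level \(C^{\otimes m+1}\) of the first variable and consider the augmented semisimplicial object \(C^{\otimes m+1}\otimes C^{\otimes\star+1}\to C^{\otimes m+1}\).

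Second, I will show this augmented object is a colimit diagram, using that \(C^{\otimes m+1}\) is built from \(C\). The \(\E_{1}\)-coalgebra structure lets \(C\otimes C^{\otimes\star+1}\to C\) be upgraded to a split augmented simplicial object: the extra degeneracies are the maps \(\Delta\otimes\id\colon C\otimes C^{\otimes k+1}\to C\otimes C\otimes C^{\otimes k+1}\), which use the comultiplication to duplicate the left factor. Counitality of \(\Delta\) and coassociativity of the \(\E_{1}\)-structure give the required simplicial identities coherently. To get the splitting compatible with the semisimplicial faces, I extend \(C^{\otimes\star+1}\) to the full cobar (Čech-type) simplicial object coming from the \(\E_{1}\)-structure, i.e., the standard resolution associated with tensoring by~\(C\), with degeneracies built from \(\Delta\). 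Semisimplicial and simplicial geometric realizations agree because \(\Cat{\Delta}_{s}^{\op}\to\Cat{\Delta}^{\op}\) is cofinal. Since split augmented simplicial objects are colimit diagrams, \(C\otimes\lvert C^{\otimes\star+1}\rvert\simeq C\) via \(\id\otimes e\), and tensoring with \(C^{\otimes m}\) gives the same for every level.

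Third, realizing in the first variable, \(E\otimes E\to E\) is the realization of levelwise equivalences, hence an equivalence. Symmetrically \(e\otimes E\) is an equivalence. Then \(E\) carries the coalgebra structure with comultiplication inverse to \(e\otimes E\) and counit \(e\), i.e., it is an idempotent coalgebra, in the same way as one recognizes idempotent algebras.

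The main obstacle is the second step: producing the extra degeneracy coherently in an \(n\)-category when \(C\) is only \(\E_{1}\) rather than \(\E_{\infty}\). This requires checking that the two-sided bar/cobar construction for the \(\E_{1}\)-coalgebra \(C\), acting on the left factor, is the functor underlying \(C\otimes C^{\otimes\star+1}\) with its face maps given by the counit. I would first try to cite the standard fact (as in \cite[Section~4.7]{LurieHA}, dualized) that for a comonad/coalgebra, the cofree resolution of a comodule such as \(C\) itself is split.
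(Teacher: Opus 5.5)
Your proposal is correct and follows the paper's argument. The paper's entire proof is the observation that \(C^{\otimes\bullet+2}\) (the d\'ecalage \(C\otimes C^{\otimes\bullet+1}\to C\), with extra degeneracy \(\Delta\otimes\id\)) is a split augmented simplicial object, which is your second step; your first and third steps, computing \(E\otimes E\) levelwise and realizing, spell out the deduction of \(E\otimes E\simeq E\) that the paper leaves implicit. The coherence issue you flag is purely formal: an \(\E_{1}\)-coalgebra gives an augmented simplicial object \(C^{\otimes\bullet+1}\), and the d\'ecalage of any augmented simplicial object is split.
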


\begin{proof}
  This comes from the fact
  that \(C^{\otimes\bullet+2}\)
  is a split augmented simplicial object.
\end{proof}

\begin{proposition}\label{xq4y2d}
  Let \(A\) be a commutative algebra object
  in a presentably symmetric monoidal \(n\)-category~\(\cat{C}\).
  We consider the following conditions:
  \begin{conenum}
    \item\label{i:hicone}
      The morphism \(\lvert\Mod_{A^{\otimes\bullet+1}}(\cat{C})\rvert\to\cat{C}\)
      (where transitions are forgetful functors)
      admits a fully faithful right adjoint in \(\Mod_{\cat{C}}(n\Cat{Pr})\).
    \item\label{i:hico}
      The morphism in \cref{i:hicone} is an equivalence,
      i.e., \(\Mod_{A}(\cat{C})\to\cat{C}\)
      is faithful in \(\Mod_{\cat{C}}(n\Cat{Pr})\)
      in the sense of~\cref{xdsxpl}.
  \end{conenum}
  Then they are equivalent.
  If \(A\) underlies a dualizable object in~\(\cat{C}\),
  they are furthermore equivalent to the following:
  \begin{conenum}[resume]
    \item\label{i:co}
      The dual \(A^{\vee}\) is faithful in the sense of~\cref{xdsxpl}.
  \end{conenum}
\end{proposition}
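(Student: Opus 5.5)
The plan is to work in the presentably symmetric monoidal \((n+1)\)-category \(\Mod_{\cat{C}}(n\Cat{Pr})\), where \(\Mod_{A}(\cat{C})\) is a commutative algebra object. The forgetful functor \(\Mod_{A}(\cat{C})\to\cat{C}\) is \(\cat{C}\)-linear, and I regard it as an \(\E_{0}\)-coalgebra there: a morphism \(\Mod_{A}(\cat{C})\to\cat{C}\) to the unit, left adjoint to the base-change morphism. Since \(A\) is commutative, it is even an \(\E_\infty\)-coalgebra via the forgetful maps, and its tensor powers are \(\Mod_{A}(\cat{C})^{\otimes_{\cat{C}}(k+1)}\simeq\Mod_{A^{\otimes k+1}}(\cat{C})\). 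Hence the semisimplicial diagram in \cref{i:hicone} is exactly the Čech-type object of \cref{xdsxpl}, and \cref{i:hico} is faithfulness in the sense of \cref{xdsxpl}.

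For \(\text{\cref{i:hico}}\Rightarrow\text{\cref{i:hicone}}\) there is nothing to do. For the converse, I would use \cref{xpx5dm} in \(\Mod_{\cat{C}}(n\Cat{Pr})\). The realization \(\cat{E}=\lvert\Mod_{A^{\otimes\bullet+1}}(\cat{C})\rvert\) is an idempotent coalgebra \(\cat{E}\to\cat{C}\). Suppose its structure map has a fully faithful \(\cat{C}\)-linear right adjoint \(\cat{C}\to\cat{E}\). For an idempotent coalgebra \(e\colon\cat{E}\to\unit\), the tensoring functor \(\cat{E}\otimes\X\) is a colocalization onto \(\cat{E}\)-comodules, and \(e\) corresponds to a counit. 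If \(e\) admits a linear right adjoint \(r\) with \(er\simeq\id\), then \(r\) exhibits \(\unit\) as a retract of \(\cat{E}\) compatibly with the coalgebra structure. Because \(\cat{E}\) is idempotent, \(\unit\) is then an \(\cat{E}\)-comodule, that is, \(\cat{E}\otimes\unit\to\unit\) is an equivalence. Concretely, the composite \(\cat{E}\simeq\cat{E}\otimes\cat{E}\to\cat{E}\otimes\unit\) identifies \(e\) with a retract of an equivalence. The main obstacle is making this retract argument rigorous in the \((n+1)\)-categorical setting, where \(e\circ r\simeq\id\) must be compared with the idempotence equivalence \(\cat{E}\otimes e\). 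I would first try writing \(e\) as \(e\otimes\id_{\cat{C}}\) and precomposing with \(\id_{\cat{E}}\otimes r\). Linearity gives \(e\simeq(e\otimes\id)\circ(\id\otimes r)\circ\ldots\), which exhibits \(e\) as a composite of equivalences once \(\id_{\cat{E}}\otimes e\) is known to be an equivalence.

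For \cref{i:co}, assume \(A\) is dualizable in \(\cat{C}\). By \cref{xahxu2}, the forgetful functor \(\Mod_{A}(\cat{C})\to\cat{C}\) has a \(\cat{C}\)-linear right adjoint, and \(\Mod_{A}(\cat{C})\) is dualizable over \(\cat{C}\) as well. I would then identify the whole semisimplicial diagram, after passing to right adjoints, with the diagram of free-module functors. Evaluating on the unit, or applying \(\Hom_{\cat{C}}(\unit,\X)\)-type reasoning via the dual, should turn it into the cosemisimplicial object \(A^{\otimes\bullet+1}\). Dualizing this gives the semisimplicial object \((A^{\vee})^{\otimes\bullet+1}\) augmented to \(\unit\). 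Since passing to adjoints converts colimits in \(\Mod_{\cat{C}}(n\Cat{Pr})\) into limits of right adjoints, \cref{i:hico} is equivalent to the limit statement. By \(\cat{C}\)-linearity, this limit statement is determined by its value on the unit, which is \(\unit\simeq\projlim A^{\otimes\bullet+1}\). Dually, this says \(\lvert(A^{\vee})^{\otimes\bullet+1}\rvert\simeq\unit\), which is \cref{i:co}. The delicate point is justifying that the colimit condition in \(\Mod_{\cat{C}}(n\Cat{Pr})\) is detected on the unit. For that I would reuse the equivalence \(\text{\cref{i:hicone}}\Leftrightarrow\text{\cref{i:hico}}\): it reduces the question to full faithfulness of a single right adjoint \(\cat{C}\to\cat{E}\), whose unit is the comparison map \(\unit\to\projlim A^{\otimes\bullet+1}\).
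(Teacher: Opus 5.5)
Your argument for (i)$\Rightarrow$(ii) is correct and differs from the paper's. The paper tensors the unit ${\id}\to re$ with the idempotent coalgebra $\cat{E}$, notes that the result is the unit of an adjunction whose left adjoint is an equivalence, and uses idempotence again to identify it with the original unit. You use only that $r$ is a $\cat{C}$-linear section of $e$. Naturality of $e\otimes\id_{(\X)}$, applied to $\unit\xrightarrow{r}\cat{E}\xrightarrow{e}\unit$, exhibits $e$ as a retract (in the arrow category) of the equivalence $e\otimes\id_{\cat{E}}$. This is already rigorous in the underlying $1$-category of $\Mod_{\cat{C}}(n\Cat{Pr})$. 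So the obstacle you flag is not one, and no compatibility of $r$ with the coalgebra structure is needed.

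The genuine gap is in (ii)$\Leftrightarrow$(iii): you misidentify the adjoint. The free-module functor $C\mapsto A\otimes C$ is the \emph{left} adjoint of the forgetful functor. For dualizable $A$, the $\cat{C}$-linear \emph{right} adjoint is coinduction $C\mapsto A^{\vee}\otimes C$. Write $\cat{E}$ as the limit along these right adjoints; only face maps have them, so one works semisimplicially. Then the right adjoint of $e$ sends $C$ to the family $((A^{\vee})^{\otimes k+1}\otimes C)_{k}$, and $e$ sends a compatible family to the geometric realization of its underlying objects. Hence the counit at $\unit$ is $\lvert(A^{\vee})^{\otimes\bullet+1}\rvert\to\unit$, and by $\cat{C}$-linearity (i) is literally (iii). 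No dualization step occurs; this is the paper's proof.

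What you compute instead, $\unit\to\Tot(A^{\otimes\bullet+1})$, is the unit-level condition for a different functor, the base-change comparison $\cat{C}\to\Tot\Mod_{A^{\otimes\bullet+1}}(\cat{C})$. This condition is strictly weaker than (ii). Take $\cat{C}=\D(\ZZ_{p})$ and $A=\FF_{p}$, which is perfect, hence dualizable. Then $\Tot(A^{\otimes\bullet+1})$ is the $p$-completion of $\ZZ_{p}$, namely $\ZZ_{p}$. Yet $\QQ_{p}\otimes_{\ZZ_{p}}\FF_{p}=0$, so descent fails by \cref{x6ruic}; correspondingly, $\lvert(A^{\vee})^{\otimes\bullet+1}\rvert\to\unit$ has cofiber $\QQ_{p}\neq0$.

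So two of your steps fail: the claimed equivalence of (ii) with $\unit\simeq\projlim A^{\otimes\bullet+1}$, and the ``dually'' step. Duality turns $\lvert(A^{\vee})^{\otimes\bullet+1}\rvert$ into $\Tot(A^{\otimes\bullet+1})$, but not conversely, because the realization need not be dualizable. Here the internal mapping object from $\QQ_{p}$ to $\ZZ_{p}$ vanishes. Your closing idea, reducing via (i) to full faithfulness of a single right adjoint, is the right one. But full faithfulness of a right adjoint is a counit condition, and that counit must be computed with coinduction as above.
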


\begin{proof}
  Note that \(\text{\cref{i:hico}}\Rightarrow\text{\cref{i:hicone}}\)
  is trivial. We consider the converse.
By \cref{xpx5dm},
  the colimit, which we write~\(\cat{D}\)
  is a coidempotent algebra in \(\Mod_{\cat{C}}(n\Cat{Pr})\).
  We claim that \(F\colon\cat{D}\to\cat{C}\) is
  an equivalence under~\cref{i:hicone}.
  Let \(G\) be the right adjoint,
  which is fully faithful by assumption.
  It suffices to show that
  \({\id}\to GF\) is an equivalence.
  After tensoring with~\(\cat{D}\) over~\(\cat{C}\),
  this is an equivalence, since \(\cat{D}\) is coidempotent.
  However,
  this morphism is equivalent to the original one,
  since \(\cat{D}\) is coidempotent.

  We now assume that \(A\) is dualizable as an object of~\(\cat{C}\).
  The condition~\cref{i:hico}
  asks the morphism
  \(\lvert\Mod_{A^{\otimes\bullet+1}}(\cat{C})^{\vee}\rvert\to\cat{C}\)
  is an equivalence.
  By the self-duality of \(\Mod_{B}(\cat{C})\)
  for any commutative algebra object~\(B\),
  this is equivalent to asking
  \(L\colon\lvert\Mod_{A^{\otimes\bullet+1}}(\cat{C})\rvert\to\unit\)
  is an equivalence,
  where the transitions are forgetful functors.
  Now, we forget degeneracies to consider the semisimplicial diagram instead.
  Since \(A\) is dualizable,
  every transition admits a right adjoint
  in \(\Mod_{\cat{C}}(n\Cat{Pr})\);
  here, forgetting degeneracies is crucial.
  This means that
  the source of~\(L\) can be computed as a limit
  and
  \(L\) admits a right adjoint~\(R\) in \(\Mod_{\cat{C}}(n\Cat{Pr})\),
  which can be described as \(C\mapsto((A^{\vee})^{\otimes\bullet+1}\otimes C)\).
  With this limit description,
  \(L\) can be expressed as
  \((M_{\bullet})\mapsto\lvert M_{\bullet}\rvert\).
  We now consider the condition
  that~\(\epsilon\colon LR\to{\id}\) is an equivalence,
  which is exactly~\cref{i:hicone}.
  This means that
  \(\lvert(A^{\vee})^{\otimes\bullet+1}\otimes C\rvert\to C\)
  is an equivalence
  for any \(C\in\cat{C}\),
  and this condition is equivalent to~\cref{i:co}.
\end{proof}

\begin{definition}\label{x8uduj}
  Let \(\cat{C}\) be a presentably symmetric monoidal \(n\)-category
  and \(A\) a commutative algebra object.
  We say that \(A\) satisfies \emph{descent}
  if~\cref{i:hico} of \cref{xq4y2d} holds.
  We say that a morphism \(A\to B\)
  satisfies \emph{descent}
  if \(B\) satisfies descent as a commutative algebra object
  of \(\Mod_{A}(\cat{C})\).
\end{definition}

We record the following
from \cref{xq4y2d}
and its proof:

\begin{corollary}\label{x6ruic}
  In the situation of \cref{x8uduj},
  the following are equivalent:
  \begin{conenum}
    \item
      The commutative algebra object~\(A\) satisfies descent.
    \item
      The commutative algebra object~\(\Mod_{A}(\cat{C})\)
      in \(\Mod_{\cat{C}}(n\Cat{Pr})\)
      satisfies descent.
    \item
      The functor
      \(\Mod_{\cat{C}}(n\Cat{Pr})\to\Tot(\Mod_{\Mod_{A^{\otimes\bullet+1}}(\cat{C})}(n\Cat{Pr}))\)
      is fully faithful.
    \item
      The functor
      \(\Mod_{\cat{C}}(n\Cat{Pr})\to\Tot(\Mod_{\Mod_{A^{\otimes\bullet+1}}(\cat{C})}(n\Cat{Pr}))\)
      is an equivalence.
    \item
      For any \(\cat{C}\)-module~\(\cat{M}\),
      the morphism
      \(
      \cat{M}\to\Tot(\Mod_{A^{\otimes\bullet+1}}(\cat{M}))
      \)
      is an equivalence.
  \end{conenum}
  Note that all totalizations above exist
  for any~\(A\).
\end{corollary}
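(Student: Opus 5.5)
The plan is to derive everything from \cref{xq4y2d}, using two facts. First, for a commutative algebra~\(B\), the module category \(\Mod_{B}(\cat{C})\) is self-dual in \(\Mod_{\cat{C}}(n\Cat{Pr})\). Second, under this self-duality the forgetful functor \(\Mod_{B}(\cat{C})\to\cat{C}\) corresponds to the free functor \(\cat{C}\to\Mod_{B}(\cat{C})\). From these, for any \(\cat{C}\)-module~\(\cat{M}\), we get
\begin{equation*}
  \Fun_{\cat{C}}\bigl(\Mod_{A^{\otimes k}}(\cat{C}),\cat{M}\bigr)\simeq\Mod_{A^{\otimes k}}(\cat{C})^{\vee}\otimes_{\cat{C}}\cat{M}\simeq\Mod_{A^{\otimes k}}(\cat{M}).
\end{equation*}
Moreover, precomposition with the forgetful transition maps becomes the base change (free) functors.

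\textbf{(i)\(\Leftrightarrow\)(v).} By definition, (i) says that the augmented diagram \(\Mod_{A^{\otimes\bullet+1}}(\cat{C})\to\cat{C}\), with forgetful transitions, is a colimit in \(\Mod_{\cat{C}}(n\Cat{Pr})\). Applying \(\Fun_{\cat{C}}(\X,\cat{M})\) turns colimits into limits. By the identification above, it yields exactly the coaugmented cosimplicial diagram \(\cat{M}\to\Mod_{A^{\otimes\bullet+1}}(\cat{M})\) with free transitions. So (i) implies (v). Conversely, if (v) holds for every~\(\cat{M}\), the Yoneda lemma in \(\Mod_{\cat{C}}(n\Cat{Pr})\) shows that \(\cat{C}\) is the colimit. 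This also explains the final remark: the totalizations are limits in a presentable \((n+1)\)-category, so they always exist.

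\textbf{(iii)\(\Leftrightarrow\)(iv)\(\Leftrightarrow\)(v).} (v) is a pointwise form of (iv): the totalization \(\Tot(\Mod_{\Mod_{A^{\otimes\bullet+1}}(\cat{C})}(n\Cat{Pr}))\) consists of compatible families of modules. The comparison functor has a right adjoint, namely the totalization of the underlying \(\cat{C}\)-modules. Full faithfulness of the left adjoint is exactly the unit condition \(\cat{M}\simeq\Tot(\Mod_{A^{\otimes\bullet+1}}(\cat{M}))\), so (v)\(\Leftrightarrow\)(iii). To upgrade (iii) to (iv), we also need the counit to be an equivalence, i.e., conservativity of the right adjoint. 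For this I would repeat the coidempotency argument from the proof of \cref{xq4y2d}. By \cref{xpx5dm}, one level up, the relevant colimit is coidempotent, so full faithfulness already forces an equivalence.

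\textbf{(i)\(\Leftrightarrow\)(ii).} Statement (ii) is descent for the commutative algebra \(\Mod_{A}(\cat{C})\) in the presentably symmetric monoidal \((n+1)\)-category \(\Mod_{\cat{C}}(n\Cat{Pr})\). This algebra is dualizable (it is self-dual), so criterion (iii) of \cref{xq4y2d} applies. It says (ii) holds iff the dual \(\E_{0}\)-coalgebra \(\Mod_{A}(\cat{C})\to\cat{C}\), which is the dual of the free functor and hence the forgetful functor, is faithful. By \cref{xdsxpl}, that faithfulness is literally (i).

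\textbf{Main obstacle.} The delicate point is bookkeeping of the dualities. One must check that the self-duality identifications are compatible with the full (semi)simplicial structure and with the tensor powers \(\Mod_{A}(\cat{C})^{\otimes_{\cat{C}}k}\simeq\Mod_{A^{\otimes k}}(\cat{C})\), so that the faces really match forgetful and free functors. As in the proof of \cref{xq4y2d}, I would forget degeneracies and work semisimplicially, since this does not change the colimits or limits involved.
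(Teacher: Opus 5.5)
Your overall strategy is the paper's: the corollary is read off from \cref{xq4y2d}, once at level \(n\) and once one level up, together with self-duality of module categories. Your arguments for (i)\(\Leftrightarrow\)(ii), (i)\(\Leftrightarrow\)(v), and (iii)\(\Leftrightarrow\)(v) are fine. The step that does not work as written is (iii)\(\Rightarrow\)(iv).

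Write \(\cat{E}=\Mod_{\cat{C}}(n\Cat{Pr})\) and \(B=\Mod_{A}(\cat{C})\). Let \(\cat{D}'=\lvert\Mod_{B^{\otimes\bullet+1}}(\cat{E})\rvert\) be the colimit along forgetful functors in \(\Mod_{\cat{E}}((n+1)\Cat{Pr})\), and let \(F'\colon\cat{D}'\to\cat{E}\) be the canonical map. The coidempotency argument in \cref{xq4y2d} shows that \(F'\) is an equivalence provided \(F'\) has a fully faithful \emph{right} adjoint. You instead set up the comparison functor \(\Phi\) as a left adjoint, whose right adjoint \(G\) is totalization of underlying modules along base change. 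But \(G\) is not \(F'\): the latter realizes along forgetful functors, and the two differ in general. So coidempotency of \(\cat{D}'\) cannot be applied to \(\Phi\dashv G\), and it says nothing about the target of \(\Phi\) until that target is identified with \(\cat{D}'\).

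The missing step is as follows. Compute \(\cat{D}'\) as the limit along the right adjoints of the forgetful transitions. By self-duality these coinduction functors are the base-change functors, so \(\cat{D}'\simeq\Tot(\Mod_{B^{\otimes\bullet+1}}(\cat{E}))\). The right adjoint of \(F'\) is then levelwise \(\cat{N}\mapsto\Fun_{\cat{E}}(\Mod_{A^{\otimes k+1}}(\cat{C}),\cat{N})\simeq\Mod_{A^{\otimes k+1}}(\cat{N})\), i.e., \(F'\dashv\Phi\). Now (iii) is literally condition (i) of \cref{xq4y2d} for the pair \((\cat{E},B)\), and its implication (i)\(\Rightarrow\)(ii) gives (iv). This is the last paragraph of the proof of \cref{xq4y2d} run one level up.

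A shorter fix avoids coidempotency altogether: (ii)\(\Rightarrow\)(iv) is your own (i)\(\Rightarrow\)(v) argument run one level up with \(\cat{M}=\cat{E}\). Descent for \(B\) says \(\cat{D}'\to\cat{E}\) is an equivalence. Apply \(\Fun_{\cat{E}}(\X,\cat{E})\) and use the self-duality over \(\cat{E}\) of \(\Mod_{B^{\otimes k}}(\cat{E})\simeq\Mod_{\Mod_{A^{\otimes k}}(\cat{C})}(n\Cat{Pr})\). This gives \(\cat{E}\simeq\Tot(\Mod_{\Mod_{A^{\otimes\bullet+1}}(\cat{C})}(n\Cat{Pr}))\) with free transitions, which is (iv). Then (iv)\(\Rightarrow\)(iii) is trivial, and the cycle of implications closes.
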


We conclude this section with the following criterion
of descent
for suave maps;
cf.~\cite[Proposition~6.19]{SixFunctors}.
We treat prim descent in \cref{ss:d_pr} below.

\begin{proposition}\label{d_sua}
  Let \(\cat{C}\) be a presentably symmetric monoidal \((n+1)\)-category
  such that \(\End_{\cat{C}}(\unit)\) is presentable.
  Let \(A\) be a commutative algebra object
  in~\(\cat{C}\) such that
  \(f^{*}\colon\unit\to A\) admits a left adjoint~\(f_{\natural}\)
  and \(A\) is dualizable.
  Then \(A^{\vee}\) is faithful if and only if
  \((f^{*})^{\vee}(f_{\natural})^{\vee}\)
  is faithful in \(\End_{\cat{C}}(\unit)\).
\end{proposition}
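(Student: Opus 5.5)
We reduce faithfulness of the coalgebra \(A^{\vee}\to\unit\) in \(\cat{C}\) to a statement in the endomorphism category. The counit of \(A^{\vee}\) is dual to the unit \(f^{*}\colon\unit\to A\). The underlying object of \(A^{\vee}\otimes A^{\vee}\) is dual to \(A\otimes A\), and so on. Faithfulness of \(A^{\vee}\) asks that the augmented semisimplicial diagram \((A^{\vee})^{\otimes\bullet+1}\to\unit\) be a colimit diagram. Since each \(A^{\otimes k}\) is dualizable, passing to duals identifies this with asking that \(\unit\to A^{\otimes\bullet+1}\) (cofaces given by unit maps) be a limit diagram.

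The key idea is to test this against \(\unit\) via the functor \(\Hom_{\cat{C}}(\unit,\X)\), using the adjunction \(f_{\natural}\dashv f^{*}\). First I would show that each coface \(A^{\otimes k}\to A^{\otimes k+1}\) is a base change of \(f^{*}\). It therefore admits a left adjoint, obtained by tensoring \(f_{\natural}\) with identities: the left adjoint to \(f^{*}\otimes\id\) is \(f_{\natural}\otimes\id\), since tensoring an adjunction with an object preserves adjunctions in a monoidal \(2\)-category. Next I would translate the colimit condition on \((A^{\vee})^{\otimes\bullet+1}\) into a condition on the endomorphism \(T=(f^{*})^{\vee}(f_{\natural})^{\vee}\) of \(\unit\). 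Dualizing \(f_{\natural}\colon A\to\unit\) gives \((f_{\natural})^{\vee}\colon\unit\to A^{\vee}\). Dualizing \(f^{*}\) gives the counit \(A^{\vee}\to\unit\). So \(T\) is the composite \(\unit\to A^{\vee}\to\unit\), an object of \(\End_{\cat{C}}(\unit)\).

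The main step is an identification: the augmented semisimplicial object \((A^{\vee})^{\otimes\bullet+1}\), after being mapped out of \(\unit\) (or, equivalently, after applying the monadic/comonadic yoga), has the bar construction of \(T\) in \(\End_{\cat{C}}(\unit)\) as its image. Concretely, \((A^{\vee})^{\otimes k+1}\) composed with \((f_{\natural})^{\vee\otimes k+1}\) gives \(T^{\otimes k+1}\), compatibly with faces. Here \(T\) is an \(\E_{0}\)-coalgebra in the presentably monoidal category \(\End_{\cat{C}}(\unit)\), which is where the presentability assumption enters. I would then use the unit/counit of \(f_{\natural}\dashv f^{*}\), dualized, to show that the relevant colimit in \(\cat{C}\) is detected after postcomposing with \((f_{\natural})^{\vee}\) and precomposing with \((f^{*})^{\vee}\). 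This is essentially the argument of \cite[Proposition~6.19]{SixFunctors}: the image of \(\End(\unit)\) generates enough of the relevant mapping categories, because \(A^{\vee}\) is a retract-like object built from \(T\) via the adjunction.

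For the converse direction, the object \(\lvert(A^{\vee})^{\otimes\bullet+1}\rvert\) is an idempotent coalgebra by \cref{xpx5dm}, so faithfulness amounts to its counit to \(\unit\) being an equivalence. Since idempotent coalgebras are determined by their restriction, I would check this equivalence after composition with \(T\)-colimits in \(\End(\unit)\).

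I expect the main obstacle to be exactly this detection step, that is, showing that the colimit diagram condition in \(\cat{C}\) is equivalent to the one in \(\End(\unit)\). I would try to handle it through the idempotent coalgebra \(I=\lvert(A^{\vee})^{\otimes\bullet+1}\rvert\). Faithfulness of \(A^{\vee}\) means \(I\simeq\unit\), and \(I\) is again an object of \(\End(\unit)\)-type data. Because \(I\) is idempotent, \(I\to\unit\) is an equivalence iff it becomes one after tensoring with \(\unit\) and evaluating. By linearity, the geometric realization \(\lvert T^{\otimes\bullet+1}\rvert\) in \(\End(\unit)\) computes \(I\) restricted along \(\Hom(\unit,\unit)\). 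This would give the equivalence.
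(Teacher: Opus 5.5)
There is a genuine gap, at exactly the step you flag as the main obstacle. You assemble the right objects: the comonad $T$ on $\unit$ and the idempotent coalgebra $I=\lvert(A^{\vee})^{\otimes\bullet+1}\rvert$ from \cref{xpx5dm}. But the identification of the colimit $I$ in $\cat{C}$ with $\lvert T^{\otimes\bullet+1}\rvert$ in $\End_{\cat{C}}(\unit)$ is only asserted, and the reasons you give do not support it. ``By linearity'' cannot do it, for two reasons. First, $\Hom_{\cat{C}}(\unit,\X)$ does not commute with geometric realizations in general. Second, even when it does, $\Hom_{\cat{C}}(\unit,I)$ is an $n$-category, not the object $\lvert T^{\otimes\bullet+1}\rvert$ of $\End_{\cat{C}}(\unit)$. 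Likewise, the appeal to $A^{\vee}$ being ``retract-like'' and to $\End_{\cat{C}}(\unit)$ ``generating enough'' is not an argument: $\End_{\cat{C}}(\unit)$ does not detect colimits in $\cat{C}$. You do note that the cofaces of $A^{\otimes\bullet+1}$ have left adjoints, but you never use this where it matters.

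The missing step is the argument of \cref{xq4y2d} run one categorical level down, which is what the paper does.
\begin{itemize}
\item Dualizing $f_{\natural}\dashv f^{*}$ gives $(f^{*})^{\vee}\dashv(f_{\natural})^{\vee}$, with counit $T\to\id_{\unit}$. Hence every face map of the \emph{semisimplicial} object $(A^{\vee})^{\otimes\bullet+1}$ is a left adjoint, whose right adjoint inserts a factor $(f_{\natural})^{\vee}$. The degeneracies are not left adjoints, which is why they must be forgotten here.
\item Consequently the colimit $I$ in $\cat{C}$ is also the limit along these right adjoints. The augmentation $L\colon I\to\unit$ then acquires a right adjoint $R$ with components $((f_{\natural})^{\vee})^{\otimes k+1}$.
\item Writing $\id_{I}$ as the colimit of $\iota_{k}\iota_{k}^{\R}$ over the insertions $\iota_{k}$ yields $LR\simeq\lvert T^{\otimes\bullet+1}\rvert$, and the counit $LR\to\id_{\unit}$ is the augmentation.
\end{itemize}
The ``only if'' direction is then immediate: if $L$ is an equivalence, so is the counit. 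For ``if'', faithfulness of $T$ makes the counit an equivalence. Separately, $I$ is an idempotent coalgebra by \cref{xpx5dm}; for this use the cocommutative coalgebra structure on $A^{\vee}$ and the agreement of semisimplicial and simplicial realizations. So $L\otimes\id_{I}$ is an equivalence, and transporting along $I\otimes I\simeq I$ shows the unit $\id_{I}\to RL$ is one too; thus $L$ is an equivalence. Your final paragraph has the right shape for this last step, but it only becomes a proof once $R$ and the formula for $LR$ are established.
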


\begin{proof}
By arguing as in the proof of \(\text{\cref{i:hicone}}\Rightarrow\text{\cref{i:co}}\)
  in \cref{xq4y2d},
  we obtain the “only if” direction.

  We prove the “if” direction.
  By the dualizability of~\(A\),
  we obtain a cocommutative coalgebra~\(A^{\vee}\).
  Therefore,
  the faithfulness condition
  is about the augmented simplicial object
  \((A^{\vee})^{\otimes\bullet+1}\)
  being a colimit diagram.
  By arguing as in the proof of
  \(\text{\cref{i:co}}\Rightarrow\text{\cref{i:hico}}\)
  in \cref{xq4y2d},
  we obtain the desired result.
\end{proof}

Since the condition of descent
involves higher categories,
it is in general not true that
we can impose it universally.
Still, note the following,
which is the case we need in \cref{main_3}:

\begin{lemma}\label{d_okay}
  Let \(\cat{C}\) be a presentably symmetric monoidal \(n\)-category
  and \(A\to B\) a morphism of commutative algebra objects.
  Then there exists a universal morphism \(\cat{C}\to\cat{D}\)
  that makes this morphism satisfy descent.
\end{lemma}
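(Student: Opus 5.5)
We will construct \(\cat{D}\) as an idempotent commutative algebra in \(\cat{C}\) (more precisely, we will obtain \(\Mod_{\cat{D}}\) as a localization of \(\CAlg(n\Cat{Pr})_{\cat{C}/}\)). We first reduce to \(A=\unit\) by base change: a morphism \(\cat{C}\to\cat{E}\) makes \(A\to B\) satisfy descent if and only if \(B\otimes_{A}\X\) satisfies descent as an algebra in \(\Mod_{A}(\cat{E})\). Descent in this sense is still the condition of \cref{x8uduj}, now over the presentably symmetric monoidal \(n\)-category \(\Mod_{A}(\cat{C})\), and \(\Mod_{A}\) commutes with base change.

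By \cref{x6ruic}, the target condition is that the augmented cosimplicial diagram
\(\cat{E}\to\Mod_{B^{\otimes_{A}\bullet+1}}(\cat{E})\) is a limit diagram in \(\Mod_{\cat{E}}(n\Cat{Pr})\). Equivalently, the colimit \(\cat{P}_{\cat{E}}=\lvert\Mod_{B^{\otimes\bullet+1}}(\cat{E})\rvert\to\cat{E}\) (forgetful transitions) is an equivalence. By \cref{xpx5dm}, applied in \(\Mod_{\cat{C}}(n\Cat{Pr})\) to the \(\E_{\infty}\)-coalgebra \(\Mod_{B}(\cat{C})\to\cat{C}\) (note that \(\Mod_{B}(\cat{C})\otimes_{\cat{C}}\Mod_{B}(\cat{C})\simeq\Mod_{B\otimes B}(\cat{C})\)), the colimit \(\cat{P}=\cat{P}_{\cat{C}}\) is an idempotent coalgebra in \(\Mod_{\cat{C}}(n\Cat{Pr})\). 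Moreover, its formation commutes with base change along any \(\cat{C}\to\cat{E}\), since \(\X\otimes_{\cat{C}}\cat{E}\) preserves colimits and \(\Mod_{B}(\cat{C})\otimes_{\cat{C}}\cat{E}\simeq\Mod_{B}(\cat{E})\). Hence \(\cat{P}_{\cat{E}}\simeq\cat{P}\otimes_{\cat{C}}\cat{E}\).

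We then use the standard fact that idempotent coalgebras \(\cat{P}\to\cat{C}\) correspond to idempotent algebras \(\cat{C}\to\cat{D}\). In a presentably symmetric monoidal category, here \(\Mod_{\cat{C}}(n\Cat{Pr})\), this correspondence is realized by the cofiber or "open–closed complement." The honest way to phrase it, without stability, is as follows. Let \(\cat{D}\) be the localization of \(\Mod_{\cat{C}}(n\Cat{Pr})\) at the single morphism \(\cat{P}\to\cat{C}\) together with its tensor products with all objects. Since \(\cat{P}\to\cat{C}\) is an idempotent coalgebra, the local objects are exactly the \(\cat{M}\) with \(\cat{P}\otimes_{\cat{C}}\cat{M}\to\cat{M}\) an equivalence. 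Tensoring is symmetric monoidal, so local objects form an ideal closed under colimits; hence they are \(\Mod_{\cat{E}_{0}}\)-like. More concretely, we take \(\cat{D}\) to be the commutative algebra \(\cat{C}\to\cat{D}\) obtained as the terminal \(\cat{C}\)-algebra on which \(\cat{P}\) acts invertibly. Its existence follows from presentability of \(\CAlg(\Mod_{\cat{C}}(n\Cat{Pr}))\) via the adjoint functor theorem: the full subcategory of algebras \(\cat{E}\) with \(\cat{P}\otimes_{\cat{C}}\cat{E}\simeq\cat{E}\) is closed under limits and sufficiently filtered colimits, because \(\cat{P}\otimes\X\) preserves colimits and the condition is a condition. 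Being accessible and closed under limits, it is reflective, and the reflection of \(\cat{C}\) is the universal \(\cat{D}\).

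The main obstacle is the size and accessibility issue: \(n\Cat{Pr}\) is not itself presentable in a naive sense. Following~\cite{n-pr}, however, \(\Mod_{\cat{C}}(n\Cat{Pr})\) is presentable in the appropriate large sense, and we only need the localization at one morphism generated under tensoring. We would therefore first try to write \(\cat{D}\) explicitly as the localization inverting the single map \(\cat{P}\to\cat{C}\) in the \(\cat{C}\)-linear sense, i.e., the Bousfield-type localization in \(\Mod_{\cat{C}}(n\Cat{Pr})\) generated by \(\cat{P}\otimes\X\to\X\). Its compatibility with the symmetric monoidal structure is automatic because the generating class is a tensor ideal. Finally, the universal property follows: for \(\cat{E}\in\CAlg(\Mod_{\cat{C}}(n\Cat{Pr}))\), the map \(\cat{C}\to\cat{E}\) factors (uniquely, the space of factorizations being contractible) through \(\cat{D}\) if and only if \(\cat{P}\otimes_{\cat{C}}\cat{E}\to\cat{E}\) is an equivalence, which is exactly descent for \(\unit\to B\) in \(\cat{E}\).
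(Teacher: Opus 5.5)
There is a genuine gap in the step that produces \(\cat{D}\). Your reduction to \(A=\unit\) and the formula \(\cat{P}_{\cat{E}}\simeq\cat{P}\otimes_{\cat{C}}\cat{E}\) are fine, but the construction of the universal algebra fails as stated.

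The descent condition \(\cat{P}\otimes_{\cat{C}}\cat{E}\simeq\cat{E}\) says that \(\cat{E}\) lies in the image of the coreflector \(\cat{P}\otimes_{\cat{C}}\X\). That image is a tensor ideal closed under colimits. Nothing makes it closed under limits: tensoring with \(\cat{P}\) preserves colimits, not limits. A colimit-closed ideal is also in general not of the form \(\Mod_{\cat{D}}\). Separately, the objects local for inverting all \(\cat{P}\otimes_{\cat{C}}\cat{M}\to\cat{M}\) are those with \(\cat{N}\simeq\underline{\Hom}_{\cat{C}}(\cat{P},\cat{N})\), not those with \(\cat{P}\otimes_{\cat{C}}\cat{N}\simeq\cat{N}\).

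Your argument uses nothing beyond \(\cat{P}\) being an idempotent coalgebra, and in that generality it is false. Take \(\Shv([0,1];\Cat{Sp})\) with \(j\colon(0,1]\hookrightarrow[0,1]\).
\begin{itemize}
\item An algebra \(E\) satisfies \(j_{!}\unit\otimes E\simeq E\) if and only if its stalk at \(0\) vanishes.
\item The algebras \(\unit_{[\epsilon,1]}\) qualify. Their limit \(j_{*}\unit\), which is what your localization produces from the unit, does not.
\item There is no initial such algebra. Any such \(D\) has its unit die in the stalk, so it vanishes on some \([0,\delta)\). It therefore admits no algebra map to \(\unit_{[\delta/2,1]}\).
\end{itemize}
In stable terms, \(Q=\operatorname{cofib}(P\to\unit)\) turns \(P\otimes M\simeq M\) into the open condition \(Q\otimes M\simeq 0\), not into \(M\in\Mod_{Q}\). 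This is why the paper says, just before the lemma, that descent cannot in general be imposed universally (cf.~\cref{x0jjry}).

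The missing idea is dualizability of \(B\) over \(A\). The paper's proof uses this implicitly, since it works with \(B^{\vee}_{A}\), and it holds in the application to \cref{main_3}, where the Kummer and Artin--Schreier maps are finite \'etale. By \cref{xq4y2d}, descent for \(A\to B\) in \(\cat{E}\) is then equivalent to faithfulness of the image of \(B^{\vee}_{A}\) in \(\Mod_{A}(\cat{E})\). That is, the image of the fixed augmented semisimplicial diagram \((B^{\vee}_{A})^{\otimes_{A}\bullet+1}\) must be a colimit diagram.

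This condition lives one categorical level down, in \(\cat{C}\) rather than in \(\Mod_{\cat{C}}(n\Cat{Pr})\). Since \(\cat{C}\to\cat{E}\) preserves colimits, tensor products and duals, the condition amounts to inverting the single morphism \(\lvert(B^{\vee}_{A})^{\otimes_{A}\bullet+1}\rvert\to A\) together with its tensor multiples. The universal \(\cat{D}\) is the corresponding symmetric monoidal localization of \(\cat{C}\) itself. This condition is also visibly stable under limits in \(\CAlg(n\Cat{Pr})_{\cat{C}/}\), because colimits in a limit of presentable categories are computed componentwise.
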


\begin{proof}
  This directly follows from \cref{xq4y2d}:
  Imposing that \(B^{\vee}_{A}\) is faithful
  in~\(\Mod_{A}(\cat{C})\)
  is equivalent to imposing that
  \((B^{\vee}_{A})^{\otimes\bullet+1}\)
  is a colimit diagram in~\(\cat{C}\).
  Hence,
  we can localize~\(\cat{C}\) along this condition.
\end{proof}

\begin{remark}\label{x0jjry}
  In general,
  if we wish to impose descent,
  one must sacrifice affineness:
  When we work in eventually affine presentable categorical spectra
  (see~\cite[Remark~2.9]{n-pr}),
  we can always impose descent
  by losing affineness by~\(1\).
\end{remark}

\subsection{Basic properties}\label{ss:d_basic}

We then record some basic permanence properties:

\begin{lemma}\label{xcpiim}
  Let \(\cat{C}\to\cat{D}\) be a morphism
  of presentably symmetric monoidal \(n\)-categories.
  Then it maps a commutative algebra object satisfying descent
  to another such object.
\end{lemma}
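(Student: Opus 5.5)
The plan is to use the characterization of descent from \cref{x6ruic}, namely the last equivalent condition: a commutative algebra object \(A\) satisfies descent if and only if, for every \(\cat{C}\)-module \(\cat{M}\), the morphism \(\cat{M}\to\Tot(\Mod_{A^{\otimes\bullet+1}}(\cat{M}))\) is an equivalence. More economically, I would use the equivalent formulation \cref{i:hico} of \cref{xq4y2d}. That condition says the augmented semisimplicial object \(\Mod_{A^{\otimes\bullet+1}}(\cat{C})\to\cat{C}\) is a colimit diagram in \(\Mod_{\cat{C}}(n\Cat{Pr})\).

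Write \(F\colon\cat{C}\to\cat{D}\) for the given symmetric monoidal morphism. The base change functor \(\cat{D}\otimes_{\cat{C}}\X\colon\Mod_{\cat{C}}(n\Cat{Pr})\to\Mod_{\cat{D}}(n\Cat{Pr})\) is a left adjoint, so it preserves colimits. Applying it to the colimit diagram above gives a colimit diagram in \(\Mod_{\cat{D}}(n\Cat{Pr})\), which is the first key step.

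The second step is to identify the terms. I use the standard equivalence
\begin{equation*}
  \cat{D}\otimes_{\cat{C}}\Mod_{B}(\cat{C})\simeq\Mod_{F(B)}(\cat{D})
\end{equation*}
for a commutative algebra \(B\) in \(\cat{C}\), together with \(F(A^{\otimes k})\simeq F(A)^{\otimes k}\). These identify the base-changed diagram with \(\Mod_{F(A)^{\otimes\bullet+1}}(\cat{D})\to\cat{D}\), where the transition maps are again the forgetful functors. The equivalence holds because \(\Mod_{B}(\cat{C})\) is computed as a relative tensor product / bar construction, which base change preserves; in the presentable \(n\)-categorical setting this is \cite[Proposition~4.8]{StefanichN}-type input, or it follows from \(\Mod_{B}(\cat{C})\simeq\Mod_{\Mod_{B}(\cat{C})}\)-linearity.

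The main obstacle is checking that the identification is compatible with the semisimplicial structure, in particular that the face maps correspond to forgetful functors. To handle this, I would view the whole diagram as the image of the cosimplicial commutative algebra \(A^{\otimes\bullet+1}\) under the functor \(\Mod_{(\X)}\), which is natural in symmetric monoidal base change. The identification is then natural, and the result is exactly \cref{i:hico} for \(F(A)\) in \(\cat{D}\).
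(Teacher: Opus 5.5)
Your proposal is correct and is essentially the paper's argument, which says only that the lemma follows directly from the definition. Descent of \(A\) means that the \(\E_{0}\)-coalgebra \(\Mod_{A}(\cat{C})\to\cat{C}\) is faithful in \(\Mod_{\cat{C}}(n\Cat{Pr})\), and the symmetric monoidal, colimit-preserving base change \(\cat{D}\otimes_{\cat{C}}\X\) carries this coalgebra to \(\Mod_{F(A)}(\cat{D})\to\cat{D}\) and its tensor powers to the corresponding tensor powers. Your worry about the face maps is therefore settled by symmetric monoidality alone: the base-changed forgetful functor is right adjoint to the free functor along \(F(A)\), hence is the forgetful functor, so you do not need a separate naturality argument.
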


\begin{proof}
  This directly follows from the definition.
\end{proof}

\begin{lemma}\label{xhrdzm}
  Let \(\cat{C}\) be a presentably symmetric monoidal \(n\)-category
  and \(A\) and~\(B\) commutative algebra objects.
  \begin{enumerate}
    \item\label{i:d_bc}
      When \(A\) satisfies descent,
      so does \(B\to A\otimes B\).
    \item\label{i:d_cbc}
      When \(B\) and \(B\to A\otimes B\) satisfy descent,
      so does~\(A\).
  \end{enumerate}
\end{lemma}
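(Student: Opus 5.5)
The most convenient formulation of descent here is the module-theoretic one in \cref{x6ruic}. For any \(\cat{C}\)-module~\(\cat{M}\), \(A\) satisfies descent exactly when
\begin{equation*}
  \cat{M}\to\Tot(\Mod_{A^{\otimes\bullet+1}}(\cat{M}))
\end{equation*}
is an equivalence. Equivalently, it suffices to check this for the universal module, i.e., faithfulness of \(\Mod_{A}(\cat{C})\) in \(\Mod_{\cat{C}}(n\Cat{Pr})\). Both parts then reduce to manipulating cosimplicial diagrams of module categories.

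For~\cref{i:d_bc}, I would use \cref{xcpiim} applied to the symmetric monoidal base change functor \(\cat{C}\to\Mod_{B}(\cat{C})\), \(X\mapsto B\otimes X\). It sends \(A\) to \(A\otimes B\), viewed as a commutative algebra in \(\Mod_{B}(\cat{C})\). Descent for that algebra is, by \cref{x8uduj}, precisely descent for the morphism \(B\to A\otimes B\). One point needs checking: \cref{xcpiim} asks for a morphism of presentably symmetric monoidal \(n\)-categories, and \(\Mod_{B}(\cat{C})\) is one. Alternatively, I can argue directly: for a \(\Mod_{B}(\cat{C})\)-module \(\cat{M}\), regard it as a \(\cat{C}\)-module. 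Then \(\Mod_{(A\otimes B)^{\otimes_{B}\bullet+1}}(\cat{M})\simeq\Mod_{A^{\otimes\bullet+1}}(\cat{M})\), since \((A\otimes B)^{\otimes_{B}k}\simeq A^{\otimes k}\otimes B\) and \(B\) already acts on~\(\cat{M}\). The claim then follows from the last characterization in \cref{x6ruic}.

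For~\cref{i:d_cbc}, let \(\cat{M}\) be a \(\cat{C}\)-module; we want \(\cat{M}\simeq\Tot_{m}\Mod_{A^{\otimes m+1}}(\cat{M})\). Descent for~\(B\) gives, naturally in the module,
\begin{equation*}
  \Mod_{A^{\otimes m+1}}(\cat{M})\simeq\Tot_{k}\Mod_{B^{\otimes k+1}\otimes A^{\otimes m+1}}(\cat{M}),
\end{equation*}
and likewise for~\(\cat{M}\) itself. This expresses both sides as totalizations over~\(k\). Commuting the two totalizations, it suffices to show, for each fixed~\(k\), that
\begin{equation*}
  \Mod_{B^{\otimes k+1}}(\cat{M})\to\Tot_{m}\Mod_{A^{\otimes m+1}\otimes B^{\otimes k+1}}(\cat{M})
\end{equation*}
is an equivalence. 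Set \(\cat{N}=\Mod_{B^{\otimes k+1}}(\cat{M})\), which is a \(\Mod_{B}(\cat{C})\)-module through any one factor of~\(B\). Then this map is exactly the descent map for \(B\to A\otimes B\) evaluated on~\(\cat{N}\), since \((A\otimes B)^{\otimes_{B}m+1}\simeq A^{\otimes m+1}\otimes B\). It is therefore an equivalence by hypothesis, again using \cref{x6ruic}.

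The main obstacle is bookkeeping rather than content. I need to identify \(\Mod_{A^{\otimes m+1}}(\Mod_{B^{\otimes k+1}}(\cat{M}))\) with \(\Mod_{A^{\otimes m+1}\otimes B^{\otimes k+1}}(\cat{M})\) coherently as a bicosimplicial object, which is the standard identification \(\Mod_{A}(\Mod_{B}(\cat{M}))\simeq\Mod_{A\otimes B}(\cat{M})\) made functorial in the algebras. I also need to justify interchanging the two totalizations, which is fine because limits commute with limits; \cref{x6ruic} notes that these totalizations exist.
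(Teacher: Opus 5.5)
Your proposal is correct and follows the paper's proof. Part~(i) is exactly \cref{xcpiim} applied to the base change \(\cat{C}\to\Mod_{B}(\cat{C})\). For part~(ii), the paper likewise uses descent for~\(B\) to reduce faithfulness of \(\Mod_{A}(\cat{C})\) to that of \(\Mod_{A\otimes B}(\cat{C})\) over \(\Mod_{B}(\cat{C})\), which is the hypothesis; your double totalization via the module-theoretic criterion of \cref{x6ruic} just spells out that ``it suffices'' step.
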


\begin{proof}
  First,
  \cref{i:d_bc} follows from \cref{xcpiim}.

  We consider~\cref{i:d_cbc}.
  We wish to show that \(\Mod_{A}(\cat{C})\) is faithful
  in \(\Mod_{\cat{C}}(n\Cat{Pr})\).
  Since \(B\) satisfies descent,
  it suffices to show that
  \(\Mod_{A\otimes B}(\cat{C})\)
  is faithful in \(\Mod_{\Mod_{B}(\cat{C})}(n\Cat{Pr})\),
  which follows from the assumption
  that \(B\to A\otimes B\) satisfies descent.
\end{proof}

\begin{lemma}\label{x4xcqq}
  Let \(\cat{C}\) be a presentably symmetric monoidal \(n\)-category
  and \(A\to B\) a morphism of commutative algebra objects.
  \begin{enumerate}
    \item\label{i:d_comp}
      If \(A\) and \(A\to B\) satisfy descent,
      then so does~\(B\).
    \item\label{i:d_canc}
      When \(B\) satisfies descent,
      so does~\(A\).
  \end{enumerate}
\end{lemma}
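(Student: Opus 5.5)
Both parts will be proved by passing from \(\cat{C}\) to the \(\Mod_{\cat{C}}(n\Cat{Pr})\)-formulation of \cref{x6ruic}. There descent for \(A\) means that the augmented cosimplicial diagram \(\cat{M}\to\Mod_{A^{\otimes\bullet+1}}(\cat{M})\) is a limit diagram for every \(\cat{C}\)-module~\(\cat{M}\). Equivalently, \(\Mod_{A}(\cat{C})\) is faithful as an \(\E_{0}\)-coalgebra in \(\Mod_{\cat{C}}(n\Cat{Pr})\).

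For \cref{i:d_comp}, the plan is to combine the two descent statements one after the other. Fix a \(\cat{C}\)-module~\(\cat{M}\). Descent for~\(A\) gives \(\cat{M}\simeq\Tot(\Mod_{A^{\otimes\bullet+1}}(\cat{M}))\). For each \(m\), \(\Mod_{A^{\otimes m+1}}(\cat{M})\) is an \(\Mod_{A}(\cat{C})\)-module via the first factor. Since \(A\to B\) satisfies descent, \cref{x6ruic} applied in \(\Mod_{A}(\cat{C})\) gives
\[
\Mod_{A^{\otimes m+1}}(\cat{M})\simeq\Tot_{k}\bigl(\Mod_{B^{\otimes_{A}k+1}\otimes A^{\otimes m}}(\cat{M})\bigr).
\]
This yields a bicosimplicial object whose total limit is~\(\cat{M}\). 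We then compare it with \(\Tot(\Mod_{B^{\otimes\bullet+1}}(\cat{M}))\). The cleanest route is the coalgebra formulation. Write \(C_{A}=\Mod_{A}(\cat{C})\) and \(C_{B}=\Mod_{B}(\cat{C})\), viewed as \(\E_{0}\)-coalgebras via the forgetful functors to~\(\cat{C}\). We already know that \(C_{A}\) is faithful in \(\Mod_{\cat{C}}(n\Cat{Pr})\) and that \(C_{B}\) is faithful over~\(C_{A}\). We want to show that \(C_{B}\) is faithful over~\(\cat{C}\). Faithfulness of \(C_{B}\) means that \(\lvert C_{B}^{\otimes\bullet+1}\rvert\to\unit\) is an equivalence. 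Because \(C_{A}\) is faithful and tensoring preserves colimits, it suffices to check this after applying \(C_{A}^{\otimes k+1}\otimes\X\) for every~\(k\), compatibly in~\(k\). After this tensoring, each \(C_{A}^{\otimes k+1}\otimes C_{B}^{\otimes m}\) can be rewritten using the \(C_{A}\)-action, via \(C_{A}\otimes C_{B}\simeq C_{B}\otimes_{C_{A}}(C_{A}\otimes C_{A})\). The statement then becomes a base change of the faithfulness of \(C_{B}\) over~\(C_{A}\), which is preserved by base change just as in \cref{xhrdzm}~\cref{i:d_bc}. The main obstacle is organizing this base-change identification coherently in the simplicial variable. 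If that becomes messy, I would instead use an extra-degeneracy argument: after tensoring with~\(C_{A}\), the augmented object \(C_{A}\otimes C_{B}^{\otimes\bullet+1}\) is identified with the \(C_{A}\)-relative Čech object of \(C_{A}\otimes C_{B}\) over~\(C_{A}\), and that object is a colimit diagram by hypothesis.

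For \cref{i:d_canc}, I will use \cref{xq4y2d}, under which it suffices to produce a fully faithful right adjoint to \(\lvert\Mod_{A^{\otimes\bullet+1}}(\cat{C})\rvert\to\cat{C}\). A cleaner alternative is to apply \cref{xhrdzm}~\cref{i:d_cbc} with the roles chosen appropriately: since \(B\) satisfies descent, it suffices to show that \(B\to A\otimes B\) satisfies descent. This morphism has a retraction \(A\otimes B\to B\otimes B\to B\), using the algebra maps \(A\to B\) and multiplication. A map of commutative algebras \(B\to A'\) admitting a retraction \(A'\to B\) satisfies descent, because the Čech object of \(\Mod_{A'}\) over \(\Mod_{B}\) is split by the section, so its colimit is \(\Mod_{B}(\cat{C})\) (a split augmented simplicial object is a colimit diagram, as in \cref{xpx5dm}). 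Applying this with \(A'=A\otimes B\) finishes \cref{i:d_canc}.
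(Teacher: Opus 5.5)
Your part \cref{i:d_canc} is correct and follows the paper's argument. You reduce via \cref{xhrdzm}~\cref{i:d_cbc} to \(B\to A\otimes B\), which has the retraction \(a\otimes b\mapsto f(a)b\). Your split Čech nerve argument then does directly what the paper gets from \cref{x059gc}.

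Part \cref{i:d_comp} has a gap in its last step. Your reduction is fine. Since \(\Mod_{A}(\cat{C})\) is faithful, it suffices to check faithfulness of \(C_{B}\) after applying \(C_{A}\otimes\X\). Moreover, \(C_{A}\otimes C_{B}^{\otimes\bullet+1}\) is indeed the \(C_{A}\)-relative Čech object of \(C_{A}\otimes C_{B}\). So you are reduced to showing that \(A\to A\otimes B\), \(a\mapsto a\otimes 1\), satisfies descent; this is exactly the paper's reduction via \cref{xhrdzm}~\cref{i:d_cbc}.

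But this statement is neither the hypothesis nor a base change of it:
\begin{itemize}
\item The hypothesis concerns the \(C_{A}\)-algebra \(C_{B}\) with structure map \(f\). Its Čech object is \(\Mod_{B^{\otimes_{A}\bullet+1}}(\cat{C})\), not \(\Mod_{A\otimes B^{\otimes\bullet+1}}(\cat{C})\), so ``colimit diagram by hypothesis'' is false.
\item Your identification \(C_{A}\otimes C_{B}\simeq C_{B}\otimes_{C_{A}}(C_{A}\otimes C_{A})\) exhibits \(C_{A}\otimes C_{B}\) as the base change of \(C_{B}\) along one of the inclusions \(C_{A}\to C_{A}\otimes C_{A}\). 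Base change therefore gives faithfulness only over \(C_{A}\otimes C_{A}\), i.e., descent for \(\id\otimes f\colon A\otimes A\to A\otimes B\).
\item Getting from there down to \(A\) would require that the composite \(A\to A\otimes A\to A\otimes B\) of two descent maps satisfies descent. That is part \cref{i:d_comp} itself (inside \(\Mod_{A}(\cat{C})\)), so the argument is circular.
\end{itemize}

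The missing idea is cancellation. In \(\Mod_{A}(\cat{C})\), the multiplication \(A\otimes B\to B\), \(a\otimes b\mapsto f(a)b\), is a morphism of \(A\)-algebras, and \(B\) satisfies descent over \(A\) by hypothesis. Part \cref{i:d_canc} applied in \(\Mod_{A}(\cat{C})\) then shows that \(A\to A\otimes B\) satisfies descent. Finally, \cref{xhrdzm}~\cref{i:d_cbc}, with the roles of \(A\) and \(B\) interchanged, gives descent for \(B\). So you should prove \cref{i:d_canc} first and deduce \cref{i:d_comp} from it, as the paper does.

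Your initial bicosimplicial sketch does not repair this either. The first-factor \(A\)-module structure on \(\Mod_{A^{\otimes m+1}}(\cat{M})\) is not preserved by the coface inserting the unit in that factor, so these objects do not assemble into a bicosimplicial diagram as stated. The comparison with \(\Tot(\Mod_{B^{\otimes\bullet+1}}(\cat{M}))\) is also never supplied.
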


\begin{proof}
  We first prove \cref{i:d_canc}.
  By~\cref{i:d_cbc} of \cref{xhrdzm},
  it suffices to prove that
  \(B\to B\otimes A\) satisfies descent.
  This morphism admits a retraction
  and hence satisfies descent by \cref{x059gc} below.

  We then prove \cref{i:d_comp}.
  By~\cref{i:d_cbc} of \cref{xhrdzm},
  we need to show that \(A\to A\otimes B\) satisfies descent.
  Since the composite \(A\to A\otimes B\to B\)
  satisfies descent,
  the desired result follows from \cref{i:d_canc}.
\end{proof}

For the following,
we recall the notion of \emph{\(n\)-retract}
in an \(\infty\)-category,
which was introduced as \emph{\(n\)-condensation}
by Gaiotto–Johnson-Freyd~\cite{GJF}
(see also~\cite{ReutterZetto}):
Consider a pair of morphisms \(s\colon C\to D\)
and \(r\colon D\to C\).
We say that \(s\) is a \emph{\(0\)-section}
and \(r\) is a \emph{\(0\)-retraction}
if they are equivalences.
For \(n\geq0\),
we say that \(s\) is an \emph{\((n+1)\)-section}
and \(r\) is an \emph{\((n+1)\)-retraction}
if there is an \(n\)-section \({\id}\to rs\)
and an \(n\)-retraction \(rs\to{\id}\).

\begin{lemma}\label{x059gc}
  Let \(\cat{C}\) be a presentably symmetric monoidal \(n\)-category
  and \(A\) a commutative algebra object.
  When the underlying morphism of \(\unit\to A\)
  in~\(\cat{C}\)
  admits an \(n\)-retraction,
  it satisfies descent.
\end{lemma}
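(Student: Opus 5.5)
Write \(f^{*}\colon\unit\to A\) for the unit morphism in~\(\cat{C}\). The plan is to verify condition~\cref{i:hico} of \cref{xq4y2d}. Equivalently, by \cref{x6ruic}, we show that for every \(\cat{C}\)-module~\(\cat{M}\) in \(\Mod_{\cat{C}}(n\Cat{Pr})\) the canonical morphism \(\cat{M}\to\Tot(\Mod_{A^{\otimes\bullet+1}}(\cat{M}))\) is an equivalence. I would argue by induction on~\(n\), exploiting the fact that the property of being an equivalence (or a limit diagram) in an \(n\)-category reduces, one categorical level down, to a statement in the mapping \((n-1)\)-categories. The \(n\)-retraction hypothesis is exactly what supplies the corresponding retraction data at each level.

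For the base case \(n=0\), a \(0\)-retraction means that \(\unit\to A\) is an equivalence. Then the cosimplicial object \(A^{\otimes\bullet+1}\) is constant on~\(\unit\), and descent is trivial.

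For the inductive step, let \(r\colon A\to\unit\) be an \(n\)-retraction of \(s=f^{*}\). By definition we are given an \((n-1)\)-section \({\id}\to rs\) and an \((n-1)\)-retraction \(rs\to{\id}\) in \(\End_{\cat{C}}(\unit)\). The key step is to show that the augmented cosimplicial object \(\cat{M}\to\Mod_{A^{\otimes\bullet+1}}(\cat{M})\) is a limit diagram. This is a limit in an \(n\)-category, so it suffices to check it after applying \(\Hom(\cat{N},\X)\) for test objects. At that point the question becomes one about the augmented cosimplicial diagram \(\unit\to A^{\otimes\bullet+1}\) in the endomorphism \((n-1)\)-category, viewed in the presentably symmetric monoidal \((n-1)\)-category \(\End(\unit)\) or its module categories.

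I would then apply the inductive hypothesis to the commutative algebra object \(rs\), or rather to \(\unit\to A\) with \(r\) regarded as an \((n-1)\)-retraction of the unit in the mapping category, to obtain that the relevant cosimplicial diagram is a limit. In the classical case \(n=1\), a retraction \(A\to\unit\) gives an extra codegeneracy, and hence a split augmented cosimplicial object. In general, \(r\) provides an ``extra codegeneracy up to an \((n-1)\)-retract'' \(A^{\otimes k+1}\to A^{\otimes k}\). Limits are preserved under retracts, which is the Gaiotto–Johnson-Freyd observation that condensations are absolute, i.e.\ preserved by every functor. So a diagram that is split up to \((n-1)\)-condensation is an absolute limit, and this conclusion survives base change to any~\(\cat{M}\).

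The main obstacle is making the phrase ``split up to condensation implies absolute (co)limit'' precise in the \(n\)-categorical setting. Concretely, the \((n-1)\)-retraction data of \(rs\) must be coherently transported along all cofaces of the cosimplicial object. My first attempt would avoid most of these coherences as follows. By \cref{xq4y2d}, it suffices to show that the colimit \(\cat{D}=\lvert\Mod_{A^{\otimes\bullet+1}}(\cat{C})\rvert\to\cat{C}\) has a fully faithful right adjoint. Since \(\cat{D}\) is coidempotent by \cref{xpx5dm}, the comparison \(\cat{D}\to\cat{C}\) is an equivalence once \(\cat{C}\) is a retract, in the condensation sense, of an object over which it becomes an equivalence. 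The \(n\)-retraction exhibits \(\cat{C}\) as an \(n\)-condensate of \(\Mod_{A}(\cat{C})\), after which one uses the fact that \(\Mod_{A}(\cat{C})\otimes_{\cat{C}}\cat{D}\simeq\Mod_{A}(\cat{C})\). Idempotence of \(\cat{D}\), together with preservation of condensates by the functor \(\X\otimes_{\cat{C}}\cat{D}\), then forces \(\cat{C}\otimes_{\cat{C}}\cat{D}\simeq\cat{C}\), which is the desired equivalence.
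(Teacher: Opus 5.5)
Your final paragraph is essentially the paper's proof. Both pass to \(\Mod_{\cat{C}}(n\Cat{Pr})\), where the hypothesis says exactly that \(\cat{C}\to\Mod_{A}(\cat{C})\) has an \((n+1)\)-retraction (so \(\cat{C}\) is an \((n+1)\)-condensate there, not an \(n\)-condensate); both note that everything splits after tensoring with \(\Mod_{A}(\cat{C})\), and both conclude by absoluteness of condensations. The only difference is where absoluteness is applied: the paper uses it on the whole augmented diagram \(\Mod_{A}(\cat{C})^{\otimes\bullet+1}\), viewed as a retract of the split diagram \(\Mod_{A}(\cat{C})\otimes\Mod_{A}(\cat{C})^{\otimes\bullet+1}\), while you use it on the counit \(\cat{D}\to\cat{C}\), which amounts to the same thing. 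The preceding induction that descends to \(\End_{\cat{C}}(\unit)\) should be dropped: as sketched it fails, since \(rs\) carries no algebra structure and the cosimplicial object lives in \(\cat{C}\), not in \(\End_{\cat{C}}(\unit)\). The argument goes up a categorical level, not down.
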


\begin{proof}
  Note that
  the morphism \(\cat{C}\to\Mod_{A}(\cat{C})\)
  in \(\Mod_{\cat{C}}(n\Cat{Pr})\)
  admits an \((n+1)\)-retraction.
  Hence,
  by replacing \(A\) with \(\Mod_{A}\cat{C}\),
  we assume that \(A\) is rigid.
  Now, we need to check
  that the augmented simplicial diagram \(A^{\otimes\bullet+1}\)
  is a colimit diagram.
  This is an \(n\)-retract
  of \(A\otimes A^{\otimes\bullet+1}\),
  which is a colimit diagram.
  Therefore,
  the desired claim follows.
\end{proof}

\subsection{Prim descent}\label{ss:d_pr}

By definition,
to check descent,
we need to go higher.
Here,
we consider how to go lower.

\begin{lemma}\label{xaf8qg}
  Let \(\cat{C}\) be a presentably symmetric monoidal \((n+1)\)-category
  such that \(\End_{\cat{C}}(\unit)\) is presentable.
  Consider a commutative algebra object~\(A\) in \(\End_{\cat{C}}(\unit)\).
  Then the following are equivalent:
  \begin{conenum}
    \item\label{i:dm_low}
      The commutative algebra~\(A\)
      satisfies descent in \(\End_{\cat{C}}(\unit)\).
    \item\label{i:dm_high}
      The commutative algebra \(\Mod_{A}(\unit)\)
      satisfies descent in~\(\cat{C}\).
  \end{conenum}
\end{lemma}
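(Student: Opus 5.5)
We compare the two conditions through \cref{x6ruic} and the fully faithful functor \(\Mod_{(\X)}\colon\CAlg(\End_{\cat{C}}(\unit))\to\CAlg(\cat{C})\) given by \(B\mapsto\Mod_{B}(\unit)=\Mod_{B}(\End_{\cat{C}}(\unit))\). Here we regard \(\End_{\cat{C}}(\unit)\)-modules as objects of~\(\cat{C}\) via the presentable structure. We write \(\cat{E}=\End_{\cat{C}}(\unit)\).

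The key observation is that this functor is symmetric monoidal on the relevant objects:
\begin{equation*}
  \Mod_{A}(\unit)^{\otimes k}\simeq\Mod_{A^{\otimes k}}(\unit).
\end{equation*}
This is the standard Lurie tensor product formula \(\Mod_{A}(\cat{E})\otimes_{\cat{E}}\Mod_{B}(\cat{E})\simeq\Mod_{A\otimes B}(\cat{E})\), transported into~\(\cat{C}\) along \(\Mod_{\cat{E}}(n\Cat{Pr})\to\cat{C}\), which is the symmetric monoidal left adjoint sending \(\cat{E}\) to~\(\unit\).

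Next we reformulate both conditions in matching terms.

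For \cref{i:dm_high}, \(\Mod_{A}(\unit)\) is dualizable in~\(\cat{C}\), since it is the image of the self-dual \(\cat{E}\)-module \(\Mod_{A}(\cat{E})\). By \cref{xq4y2d}\cref{i:co}, descent is therefore equivalent to the augmented semisimplicial object \((\Mod_{A}(\unit)^{\vee})^{\otimes\bullet+1}\to\unit\) being a colimit diagram in~\(\cat{C}\).

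Under the identification above, this object is \(\Mod_{A^{\otimes\bullet+1}}(\unit)\), with transition maps given by the duals of the base-change functors, i.e.\ the forgetful functors. Colimits in~\(\cat{C}\) of such diagrams are computed in \(\Mod_{\cat{E}}(n\Cat{Pr})\) and then mapped along a colimit-preserving functor. So it suffices to know that the colimit is preserved and reflected there.

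This reduces \cref{i:dm_high} to the statement that \(\lvert\Mod_{A^{\otimes\bullet+1}}(\cat{E})\rvert\to\cat{E}\), with forgetful transitions, is an equivalence. By \cref{xq4y2d} applied inside~\(\cat{E}\) (that is, \cref{i:hico}), this is exactly descent of~\(A\) in~\(\cat{E}\), which is \cref{i:dm_low}.

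The main obstacle is the reflection step: a colimit diagram in~\(\cat{C}\) need not come from one in \(\Mod_{\cat{E}}(n\Cat{Pr})\), because the comparison functor to~\(\cat{C}\) is not conservative.

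For \(\text{\cref{i:dm_low}}\Rightarrow\text{\cref{i:dm_high}}\) this is harmless: the functor preserves colimits, and the statement also follows from \cref{xcpiim}.

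For the converse, I would apply \(\Hom_{\cat{C}}(\unit,\X)\). This functor carries \(\Mod_{B}(\unit)\) to \(\Mod_{B}(\cat{E})\). Using that all transitions are left adjoints with right adjoints (the relevant objects are dualizable), the colimit is computed as a limit along right adjoints, and \(\Hom_{\cat{C}}(\unit,\X)\) preserves limits. It therefore carries the colimit diagram in~\(\cat{C}\) to the limit \(\Tot\Mod_{A^{\otimes\bullet+1}}(\cat{E})\simeq\cat{E}\). This is the descent condition in the form of \cref{x6ruic}, applied to \(\cat{M}=\cat{E}\), and I expect this to give \cref{i:dm_low}.
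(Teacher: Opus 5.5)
Your reformulation of both conditions and your proof of \cref{i:dm_low}\(\Rightarrow\)\cref{i:dm_high} are fine. The converse has a genuine gap, and it sits exactly at the ``main obstacle'' you flagged.

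You claim the forgetful transitions \(\Mod_{A^{\otimes k+1}}(\unit)\to\Mod_{A^{\otimes k}}(\unit)\) have right adjoints in \(\cat{C}\), but this is unjustified. Dualizability of \(\Mod_{A}(\unit)\) in \(\cat{C}\) only yields right adjoints one categorical level up: for the transitions of \(\Mod_{\Mod_{A}(\unit)^{\otimes\bullet+1}}(\cat{C})\) in \(\Mod_{\cat{C}}((n+1)\Cat{Pr})\), as in the proof of \cref{xq4y2d}. The would-be right adjoint of a forgetful functor is coinduction \(\underline{\Hom}_{A^{\otimes k}}(A^{\otimes k+1},\X)\). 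This is in general neither colimit-preserving nor \(\cat{E}\)-linear unless \(A\) is dualizable in \(\cat{E}\), and the lemma does not assume that.

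What does hold is that the contravariant functors \(\Hom_{\cat{C}}(\X,Y)\) turn your colimit into a limit. The adjunction and the self-duality of \(\Mod_{B}(\cat{E})\) give \(\Hom_{\cat{C}}(\Mod_{B}(\unit),Y)\simeq\Mod_{B}(\Hom_{\cat{C}}(\unit,Y))\). From this you obtain \(\cat{M}\simeq\Tot\Mod_{A^{\otimes\bullet+1}}(\cat{M})\) along base change, but only for \(\cat{E}\)-modules of the form \(\cat{M}=\Hom_{\cat{C}}(\unit,Y)\). For \(Y=\unit\) this is the single case \(\cat{M}=\cat{E}\), which is in general weaker than descent, since \cref{x6ruic} requires every \(\cat{E}\)-module. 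So ``I expect this to give \cref{i:dm_low}'' is precisely the step that remains unproved.

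The missing ingredient is what the paper's proof consists of. With \(\cat{E}=\End_{\cat{C}}(\unit)\), the comparison functor \(\iota\colon\Mod_{\cat{E}}(n\Cat{Pr})\to\cat{C}\) is fully faithful, so your assertion that it is not conservative is wrong. Since \(\iota\) also preserves colimits, it sends the comparison map \(\lvert\Mod_{A^{\otimes\bullet+1}}(\cat{E})\rvert\to\cat{E}\) to the comparison map for your diagram in \(\cat{C}\). That map is an equivalence by \cref{i:dm_high} (via \cref{xq4y2d}), and a fully faithful functor reflects equivalences.

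With this fact the lemma takes two lines: \cref{i:dm_low} is the colimit condition in \(\Mod_{\cat{E}}(n\Cat{Pr})\), and \cref{i:dm_high} is the same condition in \(\cat{C}\). Your \(\Hom_{\cat{C}}(\X,Y)\) route would also close, because full faithfulness gives \(\cat{M}\simeq\Hom_{\cat{C}}(\unit,\iota\cat{M})\) for every \(\cat{E}\)-module \(\cat{M}\).
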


\begin{proof}
  Consider the full faithful embedding
  \(\Mod_{\End_{\cat{C}}(\unit)}(n\Cat{Pr})
  \to\cat{C}\).
  Then \cref{i:dm_low} is equivalent to
  the condition that
  the augmented simplicial diagram
  \(\Mod_{A^{\bullet+1}}(\cat{C})\)
  is a colimit diagram in the source.
  Then \cref{i:dm_high}
  is equivalent to the same condition
  in the target.
\end{proof}

\begin{theorem}\label{xpbfpq}
  Let \(\cat{C}\) be a presentably symmetric monoidal \((n+1)\)-category
  such that \(\End_{\cat{C}}(\unit)\) is presentable.
  Consider a commutative algebra object~\(A\)
  such that the unit \(u\colon\unit\to A\)
  admits a right adjoint~\(u^{\R}\).
  Then the following are equivalent:
  \begin{conenum}
    \item\label{i:d_ori}
      The commutative algebra~\(A\) in~\(\cat{C}\)
      satisfies descent.
    \item\label{i:d_low}
      The commutative algebra~\(u^{\R}u\)
      in \(\End_{\cat{C}}(\unit)\)
      satisfies descent.
  \end{conenum}
\end{theorem}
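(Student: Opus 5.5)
The plan is to compare the commutative algebra \(A\) in~\(\cat{C}\) with the commutative algebra \(B=\Mod_{T}(\unit)\), where \(T=u^{\R}u\).
By \cref{xaf8qg}, condition~\cref{i:d_low} is equivalent to \(B\) satisfying descent in~\(\cat{C}\).
So it suffices to produce a morphism \(B\to A\) in \(\CAlg(\cat{C})\) factoring~\(u\) and to show that descent for~\(A\) and for~\(B\) are equivalent.

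First, I construct the comparison.
Since \(u\) is symmetric monoidal with a right adjoint, \(u^{\R}\) is lax symmetric monoidal, so \(T\) is a commutative algebra in \(\End_{\cat{C}}(\unit)\).
The object \(A\) receives a \(T\)-module structure on~\(u\): the map \(u^{\R}u\to u^{\R}u\) induced by the counit gives \(u\) the structure of a right \(T\)-module.
Using that \(\cat{C}\) is presentable, so that \(\unit\otimes_{T}\X\)-type tensors exist, this yields a morphism \(v\colon B\to A\) in \(\CAlg(\cat{C})\) with \(u\simeq v\circ(\unit\to B)\).
The map \(v\) again has a right adjoint, obtained from \(u^{\R}\) by remembering the \(T\)-action.
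The unit of \(v\dashv v^{\R}\), checked on the generator \(\unit\to B\), is \(T\to u^{\R}u\), which is the identity.
Hence \(v\) is fully faithful and the monad attached to \(\unit\to B\) is again~\(T\).

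Next, I reduce both descent conditions to the same statement in \(\End_{\cat{C}}(\unit)\), using criterion~\cref{i:hicone} of \cref{xq4y2d}.
For~\(A\), the forgetful morphisms in \(\lvert\Mod_{A^{\otimes\bullet+1}}(\cat{C})\rvert\to\cat{C}\) all admit right adjoints built from~\(u^{\R}\) after tensoring.
As in the proof of \cref{xq4y2d}, one forgets degeneracies so that the colimit is computed as a limit.
The resulting right adjoint is \(\cat{C}\)-linear, so its full faithfulness can be tested on the unit object.
There the counit becomes the map
\begin{equation*}
  \bigl\lvert(u^{\R}u)^{\otimes\bullet+1}\bigr\rvert\to\unit
\end{equation*}
in \(\End_{\cat{C}}(\unit)\), or dually the corresponding totalization statement.
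The same computation for~\(B\) produces literally the same augmented diagram \(T^{\otimes\bullet+1}\), because \(v\) is fully faithful and induces the same monad.
The key identification is \((u^{\otimes k})^{\R}u^{\otimes k}\simeq T^{\otimes k}\), which comes from the symmetric monoidality of~\(u\) together with the projection formula for the lax monoidal~\(u^{\R}\).
Thus both \cref{i:d_ori} and \cref{i:d_low} become the condition that \(T\) satisfies descent in \(\End_{\cat{C}}(\unit)\), i.e., that \(\Mod_{T^{\otimes\bullet+1}}(\End_{\cat{C}}(\unit))\) is a limit diagram.

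As a consistency check, the direction \(\text{\cref{i:d_ori}}\Rightarrow\text{\cref{i:d_low}}\) should also follow formally.
Indeed, if \(A\) satisfies descent, then \(B\) does by \cref{x4xcqq}\cref{i:d_canc} applied to \(v\colon B\to A\), and then \(T\) does by \cref{xaf8qg}.

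The main obstacle is the reduction step: justifying that the right adjoint of \(\lvert\Mod_{A^{\otimes\bullet+1}}(\cat{C})\rvert\to\cat{C}\) can be computed levelwise and tested on~\(\unit\).
This requires the transitions to have right adjoints satisfying base change.
In the dualizable case of \cref{xq4y2d} that came for free; here it must instead come from \(u^{\R}\) together with the fact that \(A\otimes u^{\R}\) is a right adjoint of \(A\otimes u\).
I would first try to verify the Beck–Chevalley condition for the coface squares directly from the projection formula, which holds because \(u^{\R}\) is a right adjoint of a map of \(\unit\)-modules.
If that fails, I would fall back to proving \(\text{\cref{i:d_low}}\Rightarrow\text{\cref{i:d_ori}}\) by showing that \(v\colon B\to A\) itself satisfies descent and applying \cref{x4xcqq}\cref{i:d_comp}.
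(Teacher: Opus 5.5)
Your direction (i)\(\Rightarrow\)(ii) is correct and matches the paper: apply \cref{x4xcqq} (cancellation) to \(v\colon B\to A\), then \cref{xaf8qg}. Your factorization \(\unit\to B=\Mod_{T}(\unit)\xrightarrow{v}A\), with \(\id_{B}\simeq v^{\R}v\), is also exactly the paper's.

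The gap is in your main argument for (ii)\(\Rightarrow\)(i).

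\textbf{The levelwise right adjoints need not exist.} The forgetful transitions \(\Mod_{A^{\otimes k+1}}(\cat{C})\to\Mod_{A^{\otimes k}}(\cat{C})\) admit \(\cat{C}\)-linear right adjoints in \(\Mod_{\cat{C}}((n+1)\Cat{Pr})\) only when \(A\) is dualizable (\cref{xahxu2}). Dualizability of \(A\) is not a hypothesis here. The adjoint \(u^{\R}\) lives one categorical level lower: it is an adjoint of a \(1\)-morphism of \(\cat{C}\), not of the forgetful morphism \(\Mod_{A}(\cat{C})\to\cat{C}\), and it does not produce those adjoints. So you cannot compute the colimit as a limit inside \(\Mod_{\cat{C}}\), and you cannot test the counit on the unit.

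\textbf{Even in the dualizable case, you land at the wrong level.} Evaluating the counit at \(\unit\) (as in \cref{xq4y2d}) gives the object-level condition \(\lvert(A^{\vee})^{\otimes\bullet+1}\rvert\simeq\unit\) in \(\cat{C}\). This is not a statement about \(T\) in \(\End_{\cat{C}}(\unit)\). Moreover, the augmented diagram \(\lvert T^{\otimes\bullet+1}\rvert\to\unit\) does not exist, since \(T\) is an algebra with a unit \(\unit\to T\) and no augmentation \(T\to\unit\). The Amitsur totalization of \(T^{\otimes\bullet+1}\) is also only an object-level condition. Descent for \(T\) is a condition on the module categories \(\Mod_{T^{\otimes\bullet+1}}(\End_{\cat{C}}(\unit))\), one level above any statement about the objects \(T^{\otimes k}\). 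So the claim that both conditions reduce to ``the same augmented diagram'' is not justified.

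\textbf{The fix is your fallback, which is precisely the paper's proof.} You have already proved the missing input. The unit \(\id_{B}\to v^{\R}v\) is an equivalence, and \(v^{\R}\) is \(B\)-linear, since every \(1\)-morphism of \(\cat{C}\) commutes with the \(\End_{\cat{C}}(\unit)\)-action and hence with the \(T\)-action. Therefore \(B\to A\), viewed in \(\Mod_{B}(\cat{C})\), admits a retraction. This is a \(1\)-retraction, hence an \((n+1)\)-retraction, so \(B\to A\) satisfies descent by \cref{x059gc}. Now \cref{xaf8qg} turns (ii) into descent for \(\unit\to B\), and \cref{i:d_comp} of \cref{x4xcqq} gives descent for \(A\). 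Make this the argument and drop the levelwise computation.
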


\begin{proof}
  We decompose~\(u\) as
  \(\unit\to\Mod_{u^{\R}u}(\unit)\to A\).
  By \cref{xaf8qg},
  the condition~\cref{i:d_low}
  is equivalent to the condition that
  the first map satisfies descent.
  Hence,
  by \cref{x4xcqq},
  it suffices to observe that
  \(\Mod_{u^{\R}u}(\unit)\to A\) satisfies descent,
  which follows from \cref{x059gc}.
\end{proof}

\begin{example}\label{xyu2uy}
  Let \(f\colon Y\to X\) be a morphism
  of quasicompact quasiseparated (spectral) schemes.
  Then \(\D(X)\to\D(Y)\) satisfies descent in \(\Cat{Pr}_{\st}\)
  if and only if \(f_{*}(\unit_{Y})\) satisfies descent
  in~\(\D(X)\).
\end{example}

\subsection{Descendability}\label{ss:m_des}

As we see in \cref{xpbfpq},
in a nice situation,
we can reduce the descent condition
to something lower categorical.
The following notion of Mathew~\cite{Mathew16}
(see also~\cite{Balmer16})
is useful in such a situation:

\begin{definition}\label{xwlisj}
  Let \(\cat{C}\) be a stable presentably symmetric monoidal category
  and \(A\) a commutative algebra object.
  We say that \(A\) is \emph{descendable}
  if the \(\Pro\)-object
  associated to the Adams tower
  is equivalent to the constant \(\Pro\)-object~\(\unit\).
  By~\cite[Lemma~11.20]{BhattScholze17},
  it is equivalent to
  the condition that
  the tautological map
  \(I^{\otimes d}\to\unit\) is zero
  for some \(d\geq0\),
  where \(I=\fib(\unit\to A)\).
  In this case,
  we say that \(A\)
  is descendable of index \(\leq d\).
\end{definition}

First,
we observe the following (cf.~\cite[Corollary~3.42]{Mathew16}):

\begin{proposition}\label{x670j7}
  In the situation of \cref{xwlisj},
  if \(A\) is descendable,
  it satisfies descent.
\end{proposition}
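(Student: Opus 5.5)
Descent here is the 1-categorical condition of \cref{xdsxpl}: the augmented semisimplicial object \(A^{\otimes\bullet+1}\to\unit\) must be a colimit diagram in~\(\cat{C}\). The plan is to use descendability to see that \(\unit\) lies in the thick tensor ideal generated by~\(A\), to check the claim for such objects, and then to pass to the whole category by a tensoring argument. Throughout I use the totalization of the cosimplicial object \(A^{\otimes\bullet+1}\) together with its Adams tower.

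The first step is the identification of the partial totalizations. Set \(I=\fib(\unit\to A)\). The standard identification for the cosimplicial object \(A^{\otimes\bullet+1}\) coaugmented by~\(\unit\) gives
\[
\fib\bigl(\unit\to\Tot_{m}(A^{\otimes\bullet+1})\bigr)\simeq I^{\otimes m+1}.
\]
I would prove this by induction on~\(m\), using \(\fib(\unit\to A)\otimes I^{\otimes m}\). So \(\{\Tot_{m}\}_{m}\) is the cofiber of the Adams tower \(\{I^{\otimes m+1}\}\to\unit\). Descendability says exactly that this tower is pro-zero, since the transition \(I^{\otimes d}\to\unit\) is zero. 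Hence the map \(\unit\to\{\Tot_{m}\}\) is a pro-equivalence, and in particular \(\unit\to\Tot(A^{\otimes\bullet+1})\) is an equivalence.

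The second step upgrades this statement about~\(\unit\) to one about all objects and all modules. By \cref{xq4y2d,x6ruic}, it suffices that for every \(M\in\cat{C}\) the map \(M\to\Tot(A^{\otimes\bullet+1}\otimes M)\) is an equivalence, and that comodules descend, i.e., \(\cat{C}\simeq\Tot\Mod_{A^{\otimes\bullet+1}}(\cat{C})\). The first part follows because a pro-equivalence of towers is preserved by \(\X\otimes M\), and pro-constant towers have limit equal to the constant value. This is the point where descendability beats plain convergence: the limit commutes with tensoring because the tower is pro-constant.

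For the categorical statement I would use the Lurie--Barr--Beck comonadicity for \(\cat{C}\to\Mod_{A}(\cat{C})\), following \cite[Proposition~3.22]{Mathew16}. Conservativity holds because \(A\otimes M=0\) implies \(I^{\otimes d}\otimes M\to M\) is both zero and an equivalence, hence \(M=0\). The \(A\)-split totalizations are preserved because totalizations of \(A\)-split cosimplicial objects are preserved by \(\X\otimes A\), and more generally by \(\X\otimes N\) for any \(N\) in the thick ideal generated by~\(A\), which contains~\(\unit\).

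The main obstacle is matching this comonadic statement with the precise formulation of descent used in the paper, namely condition~\cref{i:hico} of \cref{xq4y2d}, which concerns colimits in \(\Mod_{\cat{C}}(\Cat{Pr})\). My approach is to pass to right adjoints: the colimit of \(\Mod_{A^{\otimes\bullet+1}}(\cat{C})\) along forgetful functors in \(\Cat{Pr}\) is the limit along the base-change functors. That limit is exactly the comonadic totalization above, so the equivalence \(\cat{C}\simeq\Tot\Mod_{A^{\otimes\bullet+1}}(\cat{C})\) gives the claim.
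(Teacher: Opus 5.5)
There is a genuine gap, located exactly at the step you flag as the main obstacle. By \cref{x8uduj}, descent means condition~(ii) of \cref{xq4y2d}: the map \(\lvert\Mod_{A^{\otimes\bullet+1}}(\cat{C})\rvert\to\cat{C}\), with forgetful transitions, is an equivalence in \(\Mod_{\cat{C}}(\Cat{Pr})\).

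Your opening reading, that \(A^{\otimes\bullet+1}\to\unit\) is a colimit diagram in \(\cat{C}\), is not this condition. \Cref{xdsxpl} concerns coalgebras, and it enters only through \(A^{\vee}\) when \(A\) is dualizable. What your first step proves, \(\unit\simeq\Tot(A^{\otimes\bullet+1})\), is yet another statement.

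Your bridge is that this colimit in \(\Cat{Pr}\) is the limit along base-change functors, and that is false. Base change \(B\otimes_{B'}\X\) is the \emph{left} adjoint of restriction \(\Mod_{B}(\cat{C})\to\Mod_{B'}(\cat{C})\). Passing to right adjoints therefore gives a limit along the coinduction functors \(\underline{\Hom}_{B'}(B,\X)\), which in general is not the comonadic totalization.

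The correct translation uses duality in \(\Mod_{\cat{C}}(\Cat{Pr})\). There \(\Mod_{B}(\cat{C})\) is self-dual, with restriction dual to base change. Hence the \(\cat{C}\)-linear colimit-preserving functors from \(\lvert\Mod_{A^{\otimes\bullet+1}}(\cat{C})\rvert\) to a module \(\cat{M}\) form \(\Tot\Mod_{A^{\otimes\bullet+1}}(\cat{M})\). By Yoneda, you need \(\cat{M}\simeq\Tot\Mod_{A^{\otimes\bullet+1}}(\cat{M})\) for \emph{every} \(\cat{C}\)-module \(\cat{M}\). That is the content of \cref{x6ruic}. In your second step you misquote it as the case \(\cat{M}=\cat{C}\) only, which is ordinary effective descent and a~priori weaker.

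The repair is cheap, and it is precisely the paper's proof. Fix an arbitrary \(\cat{C}\)-module \(\cat{M}\), which is automatically stable, and apply Lurie's comonadic Barr--Beck theorem to \(\cat{M}\to\Mod_{A}(\cat{M})\).

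\begin{itemize}
\item \emph{Conservativity.} Your argument goes through verbatim for \(M\in\cat{M}\): since \(I^{\otimes d}\to\unit\) is zero, so is \(I^{\otimes d}\otimes M\to M\).
\item \emph{Split totalizations.} State the thick-ideal argument correctly. For an \(A\)-split cosimplicial object \(M^{\bullet}\) in \(\cat{M}\), the class of \(N\in\cat{C}\) for which the Tot tower of \(N\otimes M^{\bullet}\) is pro-constant is thick. It contains every \(A\otimes P\), hence it contains \(\unit\). So the Tot tower of \(M^{\bullet}\) is pro-constant, and its limit commutes with \(A\otimes\X\). The paper makes the same point via \(M^{\bullet}\) being a retract of the cofiber of \(I^{\otimes n}\otimes M^{\bullet}\to M^{\bullet}\) for \(n\gg0\).
\end{itemize}

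With this, your first step becomes unnecessary.
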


\begin{proof}
  We write \(I\) for \(\fib(\unit\to A)\).
  By \cref{x6ruic},
  it suffices to show that
  for any \(\cat{C}\)-module~\(\cat{M}\),
  the morphism
  \begin{equation*}
    \cat{M}
    \to\Tot\bigl(\Mod_{A^{\otimes\bullet+1}}(\cat{M})\bigr),
  \end{equation*}
  where the transitions are base change,
  is an equivalence.
  By~\cite[Corollary~4.7.5.3]{LurieHA},
  it suffices to see that
  \(F\colon\cat{M}\to\Mod_{A}(\cat{M})\)
  is conservative and preserves \(F\)-split totalizations.

  First,
  we prove conservativity.
  Suppose that \(A\otimes M\simeq0\)
  for \(M\in\cat{M}\).
  Then \(I\otimes M\to M\) is an equivalence,
  but then \(I^{\otimes n}\otimes M\to M\)
  is zero and an equivalence for \(n\gg0\)
  and therefore \(M\simeq0\).

  We then consider an \(F\)-split
  cosimplicial object \(M\colon\Cat{\Delta}\to\cat{M}\).
  We claim that
  the \(\Pro\)-object associated to~\(M\) is constant.
  Similarly,
  in this case,
  the cofiber of \(I^{n}\otimes M\to M\)
  satisfies that condition for \(n\geq0\).
  For \(n\gg0\),
  this contains~\(M\) as a direct summand.
\end{proof}

The converse is false:

\begin{example}\label{xxb52c}
  Let \(R_{n}\to S_{n}\)
  be a morphism of (static) rings
  that is descendable
  but \(\fib(R_{n}\to S_{n})^{\otimes n}\to R_{n}\)
  is nonzero;
  see~\cite{Zelich} or~\cite{n-stone} for a construction.
  We consider nonquasicompact schemes \(X=\coprod_{n}\Spec R_{n}\)
  and \(Y=\coprod_{n}\Spec S_{n}\)
  and the induced morphism \(f\colon Y\to X\).
  Then
  by \cref{x670j7}, \(f_{*}(\unit_{Y_{n})}\in\D(X_{n})\) satisfies
  descent for each~\(n\),
  and hence so does \(f_{*}(\unit_{Y})\in\D(X)\).
  However, it is not descendable by assumption.
\end{example}

\begin{example}\label{xg2rlr}
  Fix a prime~\(p\)
  and consider \(\Fun(BC_{p},\D(\FF_{p}))\).
  Then the regular representation \(\FF_{p}[C_{p}]\)
  is not descendable.
  This is because of a morphism
  \(\FF_{p}[-1]\to\fib(\FF_{p}\to\FF_{p}[C_{p}])\)
  corresponding to the generator
  of the cohomology ring \(H^{*}(BC_{p};\FF_{p})\),
  whose power does not vanish.
  However,
  by \cref{xapqpz},
  it still satisfies descent.
\end{example}

The units in both examples above
are not compact.
In fact, this is the only obstruction:

\begin{proposition}\label{x765rh}
  In the situation of \cref{xwlisj},
  suppose that \(\unit\in\cat{C}\) is compact\footnote{Or more generally,
    it is sufficient to assume that
    it is sequentially compact,
    i.e.,
    \(\Map(\unit,\X)\) commutes with sequential colimits.
  }.
  Then if \(A\) satisfies descent, it is descendable.
\end{proposition}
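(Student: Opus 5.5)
Write \(I=\fib(\unit\to A)\). Descent means that \(\Mod_{A}(\cat{C})\) is faithful in \(\Mod_{\cat{C}}(\Cat{Pr})\). The idea is to evaluate this on a well-chosen \(\cat{C}\)-module and then use compactness of \(\unit\) to get a finite stage.

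Consider the \(\cat{C}\)-module \(\cat{M}=\Fun(\NN^{\op},\cat{C})\), or equivalently \(\Ind\) of the sequential pro-category. By \cref{x6ruic}, the functor
\[
\cat{M}\to\Tot\bigl(\Mod_{A^{\otimes\bullet+1}}(\cat{M})\bigr)
\]
is an equivalence. By the Lurie–Barr–Beck argument used in \cref{x670j7}, this gives two things. First, tensoring with \(A\) is conservative on \(\cat{M}\). Second, every object \(M\) of \(\cat{M}\) is the totalization of its Amitsur complex \(A^{\otimes\bullet+1}\otimes M\).

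Now apply this with \(\cat{M}=\Fun(\NN^{\op},\cat{C})\) and \(M\) the constant tower on \(\unit\). The limit of a diagram in this functor category is computed levelwise, so this only recovers the levelwise statement, which is not enough. The better choice is to work in \(\cat{M}=\Pro(\cat{C})\), or more precisely in the presentable model \(\Ind(\cat{C}^{\op})^{\op}\), which is not presentable. So the plan instead is a dual-module trick.

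Take the \(\cat{C}\)-module \(\cat{M}=\Fun(\NN,\cat{C})\) with \(\cat{C}\) acting pointwise, and pass to the localization \(\cat{M}'\) that inverts maps inducing equivalences on \(\injlim\). This recovers \(\cat{C}\), so it is useless. Use instead the quotient of \(\Fun(\NN,\cat{C})\) by the full subcategory of \emph{ind-zero} sequences, meaning those in which every transition eventually becomes zero after composing. This quotient \(\cat{M}\) is a presentable stable \(\cat{C}\)-module. Let \(X\) be the sequence \(I^{\otimes n}\) with transitions \(I^{\otimes n+1}\to I^{\otimes n}\); reversing the indexing to an ind-system, this encodes the Adams tower.

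Then \(A\otimes X\) is ind-zero, since each map \(A\otimes I\to A\) is null because \(A\otimes I\to A\otimes\unit\) is split. By conservativity in \(\cat{M}\), the image of \(X\) vanishes.

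The remaining step is to extract a finite \(d\) with \(I^{\otimes d}\to\unit\) zero. This is where compactness of \(\unit\) enters: vanishing in the quotient means the tower is pro-zero in a suitably uniform sense, and the condition that is actually needed concerns the single map \(I^{\otimes d}\to\unit\). Sequential compactness makes \(\Map(\unit,\injlim\X)\) commute, so the element \(\id_{\unit}\) in an appropriate colimit \(\injlim_{n}\Map(\unit,\cof(I^{\otimes n}\to\unit))\) comes from a finite stage. The claim to prove is that descent forces \(\unit\to\injlim_{n}\cof(I^{\otimes n}\to\unit)\) to be split, or equivalently that \(\injlim_{n}I^{\otimes n}\) with these transitions is zero in the correct ind-sense. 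Once that holds, compactness gives an \(n\) such that \(\unit\to\cof(I^{\otimes n}\to\unit)\) admits a retraction, so \(I^{\otimes n}\to\unit\) is zero.

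\textbf{The main obstacle} is making the ind-category argument precise, namely choosing the module \(\cat{M}\) so that the descent equivalence of \cref{x6ruic} forces the colimit statement rather than only a levelwise one. The first thing to try is to compute the geometric realization of \((A^{\vee})\)-type cobar data directly. By \cref{xq4y2d}, descent in the module sense gives that the augmented cosimplicial Amitsur complex, after tensoring with arbitrary \(\cat{C}\)-modules, is a limit. Take the module to be \(\Ind\)-completion-type modules, and use that its \(n\)-th skeleton has fiber \(I^{\otimes n+1}\), to conclude that \(\injlim\) of the fibers vanishes. Compactness of \(\unit\) then converts this into the finite-stage nullity of \(I^{\otimes d}\to\unit\).
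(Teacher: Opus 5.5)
Your overall plan is the right shape: apply descent to an auxiliary \(\cat{C}\)-module, then use compactness of \(\unit\) to reach a finite stage. But the proposal never produces a module that works, and the step you flag as ``the main obstacle'' is where all the content lies.

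The module you settle on, \(\Fun(\NN,\cat{C})\) modulo ind-zero sequences, is not something descent applies to. Ind-zero sequences are not closed under infinite direct sums: sum sequences that become zero only after longer and longer stretches. So the quotient functor cannot preserve colimits, and the Verdier quotient is not an object of \(\Mod_{\cat{C}}(\Cat{Pr})\). If you instead take the localizing subcategory they generate, it contains every sequence concentrated in one degree. Its right orthogonal is then the constant sequences, so the presentable quotient is just \(\injlim\colon\Fun(\NN,\cat{C})\to\cat{C}\), which you already dismissed.

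Independently, the Adams data are towers, not sequences. The maps \(I^{\otimes n+1}\to I^{\otimes n}\) and \(\operatorname{cof}(I^{\otimes n+1}\to\unit)\to\operatorname{cof}(I^{\otimes n}\to\unit)\) point downward. So ``reversing the indexing'' is not possible, \(\injlim_{n}\operatorname{cof}(I^{\otimes n}\to\unit)\) is not defined, and compactness of \(\unit\) has nothing to act on. Also, \(A\) is not assumed dualizable, so \(A^{\vee}\) is unavailable.

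The missing idea is to dualize the module rather than the tower. You were close when you considered \(\Ind(\cat{C}^{\op})^{\op}\); the fix is to use \(\cat{C}^{\op}\) itself, made presentable, with \(\cat{C}\) acting by internal hom \((C,D)\mapsto[C,D]\). This action is covariant and colimit-preserving in~\(C\). The paper takes \(\cat{D}=\Ind_{\kappa}((\cat{C}_{\lambda})^{\op})\) for suitable cardinals \(\kappa\leq\lambda\).

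By \cref{x6ruic} and Barr--Beck, descent gives \(D\simeq\Tot(A^{\otimes\bullet+1}\otimes D)\) in \(\cat{D}\). Unwinding this at objects \(C\in(\cat{C}_{\lambda})^{\op}\), the totalization in \(\cat{D}\) becomes the colimit statement \(\lvert[A^{\otimes\bullet+1},C]\rvert\simeq C\) in \(\cat{C}\). This is your ``\(A^{\vee}\)-type cobar data,'' with \([A^{\otimes\bullet+1},C]\) replacing \((A^{\vee})^{\otimes\bullet+1}\otimes C\).

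For the Adams tower \((T^{n})\), the objects \([T^{n},C]\) are the partial geometric realizations of this simplicial object. So the statement reads \(\injlim_{n}[T^{n},C]\simeq C\). Compactness of \(\unit\) then turns it into \(\injlim_{n}\Map(T^{n},C)\simeq\Map(\unit,C)\), which is pro-constancy. Already \(C=\unit\) yields a retraction of \(\unit\to T^{n}\) for some \(n\), hence \(I^{\otimes d}\to\unit\) is zero for some \(d\).

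So the colimit to which compactness applies is \(\injlim_{n}[T^{n},\unit]\), the colimit of the duals of the Adams tower, not anything built directly from the cofibers.
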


\begin{proof}
We write~\((T^{n})_{n}\) for the Adams tower of~\(A\),
  and we wish to prove its constancy.
  We write \([\X,\X]\)
  for the internal mapping object in~\(\cat{C}\).
  For \(C\in\cat{C}\),
  we have
  \begin{equation*}
    \injlim_{n}\Map_{\cat{C}}(T^{n},C)
    \simeq
    \injlim_{n}\Map_{\cat{C}}(\unit,[T^{n},C])
    \simeq
    \Map_{\cat{C}}(\unit,\injlim_{n}[T^{n},C])
    \simeq
    \Map_{\cat{C}}(\unit,\lvert[A^{\otimes\bullet+1},C]\rvert)
  \end{equation*}
  by the (sequential) compactness of~\(C\).
  Hence,
  it suffices to show
  that the morphism
  \begin{equation}
    \label{e:90yld}
    \lvert[A^{\otimes\bullet+1},C]\rvert
    \to C
  \end{equation}
  is an equivalence
  for any \(C\in\cat{C}\).

  We take a regular cardinal~\(\kappa\)
  satisfying the following:
  \begin{itemize}
    \item
      The presentably symmetric monoidal category~\(\cat{C}\) 
      belongs to \(\CAlg(\Cat{Pr}^{\kappa})\).
    \item
      The functor \(C\mapsto[A,C]\)
      preserves \(\kappa\)-filtered colimits.
  \end{itemize}
  Under this,
  it suffices to verify that \cref{e:90yld}
  is an equivalence for \(C\in\cat{C}_{\kappa}\).

  We now take a regular cardinal \(\lambda\geq\kappa\)
  satisfying the following:
  \begin{itemize}
    \item
      The full subcategory~\(\cat{C}_{\lambda}\)
      is closed under \(\kappa\)-small limits.
    \item
      When \(C\) is \(\kappa\)-compact
      and \(D\) is \(\lambda\)-compact,
      \([C,D]\) is \(\lambda\)-compact.
  \end{itemize}
  We then consider \(\cat{D}=\Ind_{\kappa}((\cat{C}_{\lambda})^{\op})\)
  as a \(\cat{C}\)-module
  such that the action of \(\cat{C}_{\kappa}\) on~\(\cat{D}\)
  restricts to \((\cat{C}_{\lambda})^{\op}\)
  and is given by \((C,D)\mapsto[C,D]\).
  Since \(\cat{D}\to\Tot(\Mod_{A^{\otimes\bullet+1}}(\cat{D}))\)
  is an equivalence by assumption,
  we see that \(\lvert A^{\otimes\bullet+1}\otimes D\rvert\to D\)
  is an equivalence for \(D\in\cat{D}\).
  Unwinding this for \(D\in\cat{D}_{\kappa}=(\cat{C}_{\lambda})^{\op}\),
  we obtain the desired result.
\end{proof}

We conclude this section
by collecting some facts
about descendability:

\begin{lemma}\label{x50j16}
  Let \(\cat{C}\) be a stable presentably symmetric monoidal category
  and \(A\to B\) a morphism of commutative algebra objects.
  \begin{enumerate}
    \item\label{i:md_e}
      When \(B\) is descendable of index~\(\leq e\),
      then so is~\(A\).
    \item\label{i:md_de}
      When \(A\) is descendable of index~\(\leq d\)
      and \(B\) is descendable of index~\(\leq e\) in \(\Mod_{A}(\cat{C})\),
      then \(B\) is descendable of index~\(\leq de\).
  \end{enumerate}
\end{lemma}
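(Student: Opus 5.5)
Both parts reduce to the characterization of descendability in \cref{xwlisj}: \(A\) is descendable of index \(\leq d\) if and only if the map \(I_{A}^{\otimes d}\to\unit\) is zero, where \(I_{A}=\fib(\unit\to A)\). I will use the same notation \(I_{B}\) for \(B\), and \(J=\fib(A\to B)\) for the relative fiber in \(\Mod_{A}(\cat{C})\).

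For \cref{i:md_e}, the unit \(\unit\to B\) factors as \(\unit\to A\to B\). This factorization gives a map of fibers \(I_{B}\to I_{A}\) over \(\unit\). Taking \(e\)-fold tensor powers, the map \(I_{B}^{\otimes e}\to\unit\) factors as
\[
I_{B}^{\otimes e}\to I_{A}^{\otimes e}\to\unit .
\]
That alone points the wrong way, so instead I use the Pro-object formulation. The Adams tower of \(A\) receives a map from the Adams tower of \(B\) compatibly with the maps to \(\unit\). The cleanest form is this: \(\unit\to A\) admits a "retraction up to the nilpotent ideal". Concretely, the map \(I_{A}\to\unit\) composed after \(I_{A}^{\otimes e}\) should be shown to be zero. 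For this I tensor the cofiber sequence \(I_{B}\to\unit\to B\) with \(I_{A}^{\otimes e}\). The standard argument (as in Mathew) is that \(B\) is an \(A\)-module, so \(B\otimes I_{A}\to B\) is null, since the unit map \(\unit\to A\) splits after tensoring with \(A\) and hence with \(B\). Therefore \(I_{A}\to\unit\) factors through \(I_{B}\to\unit\). Inductively, \(I_{A}^{\otimes e}\to\unit\) factors through \(I_{B}^{\otimes e}\to\unit\), which is zero.

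For \cref{i:md_de}, relative descendability of \(B\) gives that \(J^{\otimes_{A}e}\to A\) is zero. The step I expect to be the main obstacle is lifting this along the descendability of \(A\). The plan is:
\begin{enumerate}
\item Show that \(I_{B}\) is built from \(I_{A}\) and \(J\). The octahedral axiom gives a fiber sequence \(I_{A}\to I_{B}\to J\).
\item Show that \(I_{B}^{\otimes e}\to\unit\) factors through an object killed after further tensoring with \(I_{A}^{\otimes d}\)-type powers. I would first try filtering \(I_{B}^{\otimes de}\) by grouping the factors into \(d\) blocks of \(e\).
\item For each block, note that the composite \(I_{B}^{\otimes e}\to\unit\to A\) equals \(A\otimes I_{B}^{\otimes e}\to A\). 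This map is identified with \((A\otimes I_{B})^{\otimes_{A}e}\to A\), where \(A\otimes I_{B}\simeq J\), and it vanishes by assumption.
\item Conclude that \(I_{B}^{\otimes e}\to\unit\) factors through \(I_{A}\to\unit\). Hence \(I_{B}^{\otimes de}\to\unit\) factors through \(I_{A}^{\otimes d}\to\unit\), which is zero.
\end{enumerate}

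Step (iii) relies on two facts: that \(A\otimes I_{B}\simeq\fib(A\to A\otimes B)\), and that \(A\otimes B\) is \(B\)-descendable-equivalent in the relevant sense. Since \(B\) is an \(A\)-algebra, one uses the retraction \(A\otimes B\to B\). I would check this identification carefully; the rest is formal.
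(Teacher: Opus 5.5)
Your strategy for (2) is the paper's: use the fiber sequence \(I_{A}\to I_{B}\to J\), show that \(I_{B}^{\otimes e}\to\unit\to A\) vanishes so that \(I_{B}^{\otimes e}\to\unit\) lifts to \(I_{A}=\fib(\unit\to A)\), then tensor this factorization \(d\) times. The block filtration in (ii) is unnecessary, since (iv) alone gives \(I_{B}^{\otimes de}\to I_{A}^{\otimes d}\to\unit\).

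In (1), however, the map of fibers induced by \(\unit\to A\to B\) goes \(I_{A}\to I_{B}\) over \(\unit\), not \(I_{B}\to I_{A}\). For \(A=\unit\), \(B=0\), the latter would force \(\id_{\unit}\simeq0\). With the correct direction, (1) is immediate: \(I_{A}^{\otimes e}\to\unit\) factors through \(I_{B}^{\otimes e}\to\unit\), which is zero. Your Pro-object detour and the remark that \(B\otimes I_{A}\to B\) is null do end at this same factorization, so (1) survives. But the underlying reason is simply that \(I_{A}\to\unit\to B\) factors through \(I_{A}\to\unit\to A\), which is null.

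The genuine gap is step (iii). There \(A\otimes I_{B}\) is \(\fib(A\to A\otimes B)\), not \(J=\fib(A\to B)\). So the vanishing of \((A\otimes I_{B})^{\otimes_{A}e}\to A\) does not follow ``by assumption,'' since the hypothesis only concerns \(J^{\otimes_{A}e}\to A\). You flag this at the end, but what is needed is neither a retraction nor a vague ``descendable-equivalence.'' Instead, apply (1) inside \(\Mod_{A}(\cat{C})\) to the \(A\)-algebra map \(A\otimes B\to B\) given by the action:
\begin{itemize}
\item Since \(A\to A\otimes B\to B\) is the structure map, there is a map \(\fib(A\to A\otimes B)\to J\) over \(A\).
\item Hence \((A\otimes I_{B})^{\otimes_{A}e}\to A\) factors through \(J^{\otimes_{A}e}\to A\), which is zero.
\item By adjunction, \(I_{B}^{\otimes e}\to\unit\to A\) is then zero.
\end{itemize}
The fact that \(A\otimes B\to B\) splits \(B\to A\otimes B\) plays no role. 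With this patch your argument is exactly the paper's.
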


\begin{proof}
  We write \(I\), \(J\), and~\(K\)
  for the fibers
  of \(\unit\to A\), \(\unit\to B\),
  and \(A\to B\), respectively.
  We have a cofiber sequence \(I\to J\to K\).
  Since \(I\to\unit\) factors through \(J\to\unit\),
  we obtain~\cref{i:md_e}.
Under the assumption of~\cref{i:md_de},
  we see that \(J^{\otimes e}\to\unit\to A\) is zero,
  which implies \(J^{\otimes e}\to\unit\) factors through~\(I\).
  Therefore, we see the desired claim.
\end{proof}

The following is from~\cite[Lemma~11.22]{BhattScholze17}:

\begin{lemma}\label{md_seq}
  Let \(\cat{C}\) be a stable presentably symmetric monoidal category
  and \(A_{0}\to\dotsb\)
  a sequence of commutative algebra objects.
  If \(A_{n}\) is descendable of index~\(\leq d\)
  for any~\(n\),
  then \(\injlim_{n}A_{n}\) is descendable of index~\(\leq 2d\).
\end{lemma}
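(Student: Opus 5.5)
We work with the criterion recalled in \cref{xwlisj}. Write \(I_{n}=\fib(\unit\to A_{n})\) and \(I=\fib(\unit\to A)\), where \(A=\injlim_{n}A_{n}\). Since fibers commute with filtered colimits in a stable category, \(I\simeq\injlim_{n}I_{n}\). By hypothesis each composite \(I_{n}^{\otimes d}\to\unit\) is zero. The goal is to show that \(I^{\otimes 2d}\to\unit\) is zero.

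The difficulty is that these nullhomotopies cannot simply be passed to the colimit. Although \(I^{\otimes d}\simeq\injlim_{n}I_{n}^{\otimes d}\), a map out of a sequential colimit that vanishes on every stage need not vanish. The obstruction sits in a \(\projlim^{1}\) term. Concretely, from the Milnor sequence for \(\Map(\injlim_{n}I_{n}^{\otimes d},\unit)\), the map \(I^{\otimes d}\to\unit\) factors as
\[
  I^{\otimes d}=\injlim_{n}I_{n}^{\otimes d}\to\bigoplus_{n}I_{n}^{\otimes d}[1]\to\unit.
\]
More precisely, the colimit is the cofiber of \(\mathrm{id}-\mathrm{shift}\colon\bigoplus_{n}I_{n}^{\otimes d}\to\bigoplus_{n}I_{n}^{\otimes d}\). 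A map from the cofiber whose restriction to the second \(\bigoplus\) is zero therefore factors through the connecting map to \(\bigl(\bigoplus_{n}I_{n}^{\otimes d}\bigr)[1]\).

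Step~1 is to fix this factorization: the map \(I^{\otimes d}\to\unit\) factors through some \(\phi\colon I^{\otimes d}\to\bigl(\bigoplus_{n}I_{n}^{\otimes d}\bigr)[1]\), followed by \(\psi\colon\bigl(\bigoplus_{n}I_{n}^{\otimes d}\bigr)[1]\to\unit\).

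Step~2 is to tensor with a second copy of \(I^{\otimes d}\). The composite \(I^{\otimes 2d}\to\unit\) is \(I^{\otimes d}\otimes I^{\otimes d}\to I^{\otimes d}\otimes\unit\to\unit\), where the first map is \(\mathrm{id}\otimes(I^{\otimes d}\to\unit)\). Substituting the factorization from Step~1, it factors through
\[
  I^{\otimes d}\otimes\bigoplus_{n}I_{n}^{\otimes d}[1]\simeq\bigoplus_{n}\bigl(I^{\otimes d}\otimes I_{n}^{\otimes d}\bigr)[1].
\]
Summand by summand, the following map is then used:
\[
  I^{\otimes d}\otimes I_{n}^{\otimes d}\to I^{\otimes d}\otimes\unit\to\unit.
\]
This is the original map applied to the first factor, which is fine but not yet zero. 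To make it zero, use instead the symmetric variant in which the \(I_{n}^{\otimes d}\to\unit\) nullhomotopy kills the second factor. That requires arranging the composite so that \(\psi\) restricted to the \(n\)th summand is the structure map \(I_{n}^{\otimes d}\to\unit\) tensored appropriately.

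The cleanest way to do the bookkeeping is to regard \(\unit\to A\) and all the maps as living in \(\Mod\) over the relevant stage. Pick the factorization so that the second-stage map on \(\bigoplus_{n}I^{\otimes d}\otimes I_{n}^{\otimes d}[1]\) is \((\text{map }I^{\otimes d}\to\unit)\otimes(\text{map }I_{n}^{\otimes d}\to\unit)\) up to the Milnor twist. Each summand is then killed by \(I_{n}^{\otimes d}\to\unit\) being zero, and there is no further \(\projlim^{1}\) problem because the sum is indexed discretely with no transition maps.

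I expect Step~2 to be the main obstacle. One must check that the \(\projlim^{1}\)-class really becomes zero after one more tensoring, rather than merely moving around. If the direct bookkeeping gets messy, I would fall back on the Pro-object formulation: the Adams tower of \(A_{n}\) is pro-zero with uniform index \(d\), and \(\projlim^{1}\) obstructions are killed by composing two such uniformly bounded towers. This gives index at most \(2d\) exactly as in Bhatt–Scholze's argument.
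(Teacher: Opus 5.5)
Your setup and Step~1 are correct. Indeed \(I\simeq\injlim_{n}I_{n}\) and \(I^{\otimes d}\simeq\injlim_{n}I_{n}^{\otimes d}\). Since \(\phi\colon I^{\otimes d}\to\unit\) restricts to zero on each stage, \(\phi=\psi\circ\delta\) with \(\delta\colon I^{\otimes d}\to\bigoplus_{n}I_{n}^{\otimes d}[1]\).

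The gap is Step~2. Write \(\phi\otimes\phi=(\phi\otimes\psi)\circ(\id\otimes\delta)\). The map on the \(n\)th summand \(I^{\otimes d}\otimes I_{n}^{\otimes d}[1]\) is then \(\phi\otimes\psi_{n}\), and neither factor is known to vanish:
\begin{itemize}
\item \(\phi\) lives on the full colimit; it is the very map you are trying to kill.
\item \(\psi_{n}\colon I_{n}^{\otimes d}[1]\to\unit\) is the difference-of-nullhomotopies class, and it carries the \(\projlim^{1}\) obstruction.
\end{itemize}
You cannot ``arrange'' \(\psi_{n}\) to be the structure map \(I_{n}^{\otimes d}\to\unit\). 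That map has the wrong degree. Moreover, if \(\psi\) could be chosen to be zero, then \(\phi=\psi\circ\delta\) would already vanish and you would have proved index \(\le d\). Restricting \(\phi\otimes\psi_{n}\) further to \(I_{m}^{\otimes d}\otimes I_{n}^{\otimes d}[1]\) only yields another map out of a sequential colimit that vanishes stagewise. That is exactly the obstruction ``moving around,'' as you feared. The Pro-object fallback is an assertion, not an argument.

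The missing idea is to put \emph{both} factors at the same stage and use bilinearity.
\begin{enumerate}
\item Use the cofinal diagonal \(I^{\otimes 2d}\simeq\injlim_{n}I_{n}^{\otimes d}\otimes I_{n}^{\otimes d}\).
\item Set \(\phi_{n}=\phi|_{I_{n}^{\otimes d}}\). Then \(-\otimes\phi_{n}\colon\mathrm{map}(I_{n}^{\otimes d},\unit)\to\mathrm{map}(I_{n}^{\otimes 2d},\unit)\) is a map of towers of mapping spectra, because \(\phi\) is a compatible family. On limits it is \(-\otimes\phi\), which sends \(\phi\) to \(\phi\otimes\phi\).
\item It therefore induces a map of Milnor sequences. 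On the \(\projlim^{1}\) terms it is induced by the levelwise maps \(a_{n}\mapsto a_{n}\cdot[\phi_{n}]\) on \(\pi_{1}\). These are zero because \([\phi_{n}]=0\).
\item Since \(\phi\) maps to zero in \(\projlim\pi_{0}\), it comes from the \(\projlim^{1}\) term, so its image \(\phi\otimes\phi\) is zero.
\end{enumerate}
In other words, the \(\projlim^{1}\) part is annihilated by any class vanishing in \(\projlim\pi_{0}\); in particular it ``squares to zero.'' That is the content of the paper's one-line proof.

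Your ``symmetric variant'' points in this direction. However, factoring through \(\id\otimes\delta\) leaves the colimit \(I^{\otimes d}\) in one slot, where the needed vanishing is not available.
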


\begin{proof}
  We write \(I_{n}\) for \(\fib(\unit\to A_{n})\).
  By the Milnor sequence
  \begin{equation*}
    0
    \to\sideset{}{^{1}}\projlim_{n}\pi_{1}\Map(I_{n}^{\otimes d},A)
    \to\pi_{0}\Map(I^{\otimes d},A)
    \to\projlim_{n}\pi_{0}\Map(I_{n}^{\otimes d},A)
    \to0,
  \end{equation*}
  the obstruction lies in \(\projlim_{n}^{1}\pi_{1}\Map(I_{n}^{\otimes d},A)\),
  which squares to~\(0\).
\end{proof}

The following is from~\cite[Lemma~5.4.6]{ABLBRCS}:

\begin{lemma}\label{md_str}
  Let \(\cat{C}\) be a stable presentably symmetric monoidal category
  and \(A\) a commutative algebra object.
  Consider an idempotent coalgebra~\(\unit_{U}\)
  with the complementary idempotent algebra~\(\unit_{Z}\),
  defining \(\cat{C}_{U}\) and \(\cat{C}_{Z}\).
  Suppose that \(A_{U}=A\otimes\unit_{U}\) and \(A_{Z}=A\otimes\unit_{Z}\)
  are descendable of index~\(\leq d_{U}\) and~\(\leq d_{Z}\)
  in~\(\cat{C}_{U}\) and~\(\cat{C}_{Z}\), respectively.
  Then \(A\) is descendable of index~\(\leq d_{U}+d_{Z}\).
\end{lemma}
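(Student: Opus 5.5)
We will show that the composite \(I^{\otimes(d_{U}+d_{Z})}\to\unit\) vanishes, where \(I=\fib(\unit\to A)\). Write \(I_{U}=I\otimes\unit_{U}\) and \(I_{Z}=I\otimes\unit_{Z}\). These are the fibers of \(\unit_{U}\to A_{U}\) and \(\unit_{Z}\to A_{Z}\), the units of \(\cat{C}_{U}\) and \(\cat{C}_{Z}\).

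First, by hypothesis the map \(I_{Z}^{\otimes_{\unit_{Z}}d_{Z}}\to\unit_{Z}\) is zero in \(\cat{C}_{Z}\). Since \(\unit_{Z}\) is an idempotent algebra, \(I_{Z}^{\otimes_{\unit_{Z}}d_{Z}}\simeq I^{\otimes d_{Z}}\otimes\unit_{Z}\). Moreover, maps in \(\cat{C}\) from \(I^{\otimes d_{Z}}\) to the \(\unit_{Z}\)-module \(\unit_{Z}\) correspond to \(\unit_{Z}\)-linear maps out of \(I^{\otimes d_{Z}}\otimes\unit_{Z}\). Hence the composite \(I^{\otimes d_{Z}}\to\unit\to\unit_{Z}\) is zero in \(\cat{C}\). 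Using the cofiber sequence \(\unit_{U}\to\unit\to\unit_{Z}\), the map \(I^{\otimes d_{Z}}\to\unit\) therefore factors through \(\unit_{U}\to\unit\).

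Second, tensor this factorization with \(I^{\otimes d_{U}}\). The map \(I^{\otimes(d_{U}+d_{Z})}\to I^{\otimes d_{U}}\) factors through \(I^{\otimes d_{U}}\otimes\unit_{U}\simeq I_{U}^{\otimes_{\unit_{U}}d_{U}}\), using that \(\unit_{U}\) is an idempotent coalgebra and \(\cat{C}_{U}\) is identified with \(\unit_{U}\otimes\cat{C}\). Consider the map \(I^{\otimes d_{U}}\otimes\unit_{U}\to I^{\otimes d_{U}}\to\unit\). It equals \(\unit_{U}\otimes(I^{\otimes d_{U}}\to\unit)\) followed by the counit \(\unit_{U}\to\unit\). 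This first part is the map \(I_{U}^{\otimes d_{U}}\to\unit_{U}\), which is zero by the descendability of \(A_{U}\) in \(\cat{C}_{U}\). Composing everything, \(I^{\otimes(d_{U}+d_{Z})}\to\unit\) is zero, as desired.

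The main point to handle carefully is the bookkeeping of naturality. We must check that the map \(I^{\otimes(d_{U}+d_{Z})}\to\unit\) really agrees with the composite of the two pieces above. This requires writing it as \((I^{\otimes d_{U}}\to\unit)\otimes(I^{\otimes d_{Z}}\to\unit)\), inserting the factorization through \(\unit_{U}\) in the second factor, and applying the interchange law. The remaining inputs are the identification of \(\unit_{Z}\)-module maps with maps in \(\cat{C}\) and the identification of \(\cat{C}_{U}\) with \(\unit_{U}\otimes\cat{C}\). Both are standard consequences of idempotence.
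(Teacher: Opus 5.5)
Your proof is correct and follows the paper's argument. The paper likewise uses the vanishing of \(I^{\otimes d_{Z}}\to\unit\to\unit_{Z}\) to factor \(I^{\otimes d_{Z}}\to\unit\) through \(\unit_{U}\), and then concludes from the vanishing of \(I_{U}^{\otimes d_{U}}\to\unit_{U}\); you simply spell out the idempotence and interchange-law bookkeeping that the paper leaves implicit.
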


\begin{proof}
  Since the composition
  \(F^{\otimes d_{Z}}\to\unit\to\unit_{Z}\) is zero,
  \(F^{\otimes d_{Z}}\to\unit\)
  factors through
  \(\unit_{U}\).
  Therefore,
  \(F^{\otimes d_{U}+d_{Z}}\to\unit\) is zero,
  since \(F^{\otimes d_{U}}_{U}\to\unit_{U}\) is zero.
\end{proof}

\begin{remark}\label{x7psvi}
  We often do not know
  whether
  the category of interest has compact unit,
  precluding the use of \cref{md_str}
  when we have descent on each component.
  Still,
  we can use \cref{d_sua}
  in some situations,
  and then stratify;
  cf.~the proof of \cref{xfrmql}.
\end{remark}

\subsection{Descending suave and prim maps}\label{ss:d_sp}

We go back to the context of \cref{s:stk};
we fix a presentably symmetric monoidal \(2\)-category~\(\cat{C}\).

\begin{definition}\label{x0wijg}
  We call a morphism of stacks \(Y\to X\) over~\(\cat{C}\) a \emph{cover}
  if \([X]\to[Y]\) satisfies descent.
\end{definition}

The following is an immediate consequence
of descent for (iterated) module categories:

\begin{proposition}\label{x9vxyu}
  Let \(Y\to X\) be a morphism of stacks.
  Let \(X'\to X\) be a cover.
  If the base change \(Y'\to X'\)
  is weakly suave,
  then so is \(Y\to X\).
  The same holds when we replace all “weakly suave” with “suave” or “prim.”
\end{proposition}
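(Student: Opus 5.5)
The plan is to use \cref{x6ruic} to pass from \(\cat{C}\) to the \(2\)-category of \([X]\)-modules, and then descend adjunctions along the resulting totalization. Set \(\cat{M}_{X}=\Mod_{[X]}(\cat{C})\) and \(A=[X']\in\CAlg(\cat{M}_{X})\). Since \(X'\to X\) is a cover, \(A\) satisfies descent in \(\cat{M}_{X}\).

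Write \(X'_{\bullet}\) for the Čech nerve of \(X'\to X\), so that \([X'_{k}]=A^{\otimes k+1}\) (tensor over \([X]\)), and write \(Y'_{\bullet}=Y\times_{X}X'_{\bullet}\), so that \([Y'_{k}]=[Y]\otimes_{[X]}A^{\otimes k+1}\). By \cref{x6ruic} (the last item, applied to the \(\cat{M}_{X}\)-module \(\cat{M}_{X}\) itself), the functor
\begin{equation*}
  \cat{M}_{X}\to\Tot\bigl(\Mod_{A^{\otimes\bullet+1}}(\cat{M}_{X})\bigr)
\end{equation*}
is an equivalence. The underlying objects of these categories are \(\Mod_{A^{\otimes\bullet+1}}(\cat{C})\). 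The mapping categories in the totalization are therefore computed levelwise: for \(M,N\in\cat{M}_{X}\), the category \(\Hom_{[X]}(M,N)\) is the limit of \(\Hom_{A^{\otimes k+1}}(A^{\otimes k+1}\otimes M,A^{\otimes k+1}\otimes N)\).

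The morphism \(u\colon[X]\to[Y]\) in \(\cat{M}_{X}\) then corresponds to the compatible family of base changes \(u_{k}\colon[X'_{k}]\to[Y'_{k}]\). Because each \(X'_{k}\to X'\) is a base change, \(Y'_{k}\to X'_{k}\) is a base change of \(Y'\to X'\), and hence is weakly suave. Concretely, \(u_{k}=A^{\otimes k}\otimes_{A}u_{0}\), and \(u_{0}\) has an \(A\)-linear left adjoint, so \(u_{k}\) has an \(A^{\otimes k+1}\)-linear left adjoint, namely the base change of that of \(u_{0}\).

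The key step is descending adjoints along the totalization. An object of a limit of \(2\)-categories admits a left adjoint provided each component does and the transition functors preserve the adjoints, i.e.\ the Beck–Chevalley maps are equivalences. That the transitions preserve the adjoints is automatic here: transitions are base changes \(A^{\otimes j+1}\otimes_{A^{\otimes k+1}}\X\), which are \(2\)-functors, and the adjoints are themselves obtained by base change, so they are compatible. Hence \(u\) has a left adjoint in \(\Tot\), and thus in \(\cat{M}_{X}\), that is, an \([X]\)-linear left adjoint. Therefore \(Y\to X\) is weakly suave. The prim case is identical with right adjoints in place of left adjoints.

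For the suave case, it remains to show that \([Y]\) is dualizable in \(\cat{M}_{X}\). I will use the same principle: dualizability is detected by the existence of evaluation and coevaluation data satisfying the triangle identities up to equivalence, as in the observation in the proof of \cref{xujpr0}. Each \([Y'_{k}]\) is dualizable over \([X'_{k}]\) by base change, and the duality data are compatible because symmetric monoidal base change preserves duals. So these data glue in the symmetric monoidal limit. Since \(\cat{M}_{X}\simeq\Tot\) is symmetric monoidal, with the tensor over \([X]\) computed levelwise, this gives the dual.

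The main obstacle is justifying that adjunctions and duals in a totalization of \(2\)-categories glue. This needs the limit to be computed in \(2\)-categories with the mapping categories given as limits, and it needs the Beck–Chevalley compatibility stated above. I would handle this as a general lemma: a morphism in a limit of \((\infty,2)\)-categories whose components have adjoints preserved by the transitions admits an adjoint.
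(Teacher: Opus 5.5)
Your proposal is correct and is essentially the paper's argument. The paper treats this as an immediate consequence of descent for module categories (\cref{x6ruic}): identify \(\Mod_{[X]}(\cat{C})\) with the totalization of \(\Mod_{[X'_{\bullet}]}(\cat{C})\) and glue the linear adjoints levelwise, exactly as you do in the weakly suave and prim cases. The only difference is in the suave case, where the paper's word ``iterated'' indicates that it rewrites dualizability of \([Y]\) over \([X]\), via \cref{xisa2l}, as a linear left adjoint of \(\Mod_{[X]}(\cat{C})\to\Mod_{[Y]}(\cat{C})\) and glues that one categorical level up using the descent equivalence for \(\Mod_{\Mod_{[X]}(\cat{C})}(2\Cat{Pr})\); your direct gluing of duality data in the symmetric monoidal totalization works equally well.
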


\begin{proposition}\label{sua_loc}
  Let \(Y\to X\) be a morphism of stacks.
  Let \(Y'\to Y\) be a weakly suave cover.
  If the composite \(Y'\to X\) is weakly suave,
  then \(Y\to X\) is weakly suave.
\end{proposition}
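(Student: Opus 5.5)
Write \(A=[X]\), \(B=[Y]\), \(B'=[Y']\), with morphisms \(f^{*}\colon A\to B\) and \(g^{*}\colon B\to B'\) in \(\CAlg(\cat{C})\). By hypothesis \(g^{*}\) has a \(B\)-linear left adjoint \(g_{\natural}\), \((fg)^{*}\) has an \(A\)-linear left adjoint \((fg)_{\natural}\), and \(g^{*}\) satisfies descent. We want an \(A\)-linear left adjoint of \(f^{*}\). The plan is to build it by descent along \(g\), as a morphism of \(A\)-modules \(B\to A\), and glue the candidate from its restrictions to the Čech nerve of \(Y'\to Y\).

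\textbf{Step 1 (cosimplicial description).} Since \(B\to B'\) satisfies descent, \cref{x6ruic} gives \(\Mod_{B}(\cat{C})\simeq\Tot\bigl(\Mod_{B'^{\otimes_{B}\bullet+1}}(\cat{C})\bigr)\). Consequently \(B\simeq\Tot_{n}B'_{n}\) as a \(B\)-module, and hence as an \(A\)-module, where \(B'_{n}=B'^{\otimes_{B}n+1}=[Y'^{\times_{Y}n+1}]\). The \(A\)-linear maps \(B\to A\) are then computed as a geometric realization in the dual picture. The better formulation is probably to dualize the totalization. Each coface is a base change of \(g^{*}\), so by \cref{xcpiim} and the Beck–Chevalley property it has a linear left adjoint. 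Passing to left adjoints therefore presents \(B\) as the colimit \(\lvert B'_{\bullet}\rvert\) in \(\Mod_{A}(\cat{C})\), with transitions the left adjoints \(g_{\natural}\)-type maps, in the same way as in the proof of \cref{xq4y2d}.

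\textbf{Step 2 (candidate left adjoint).} Each \(Y'_{n}=Y'^{\times_{Y}n+1}\to X\) is a composite of base changes of \(g\), each weakly suave, followed by \(fg\). So each has an \(A\)-linear left adjoint \(h_{n,\natural}\colon B'_{n}\to A\). The Beck–Chevalley equivalences make these compatible with the transition maps. The cocone \(h_{\bullet,\natural}\) therefore induces \(f_{\natural}\colon B\simeq\lvert B'_{\bullet}\rvert\to A\). Concretely we expect \(f_{\natural}g_{\natural}\simeq(fg)_{\natural}\).

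\textbf{Step 3 (adjunction).} We must produce a unit \(\id_{A}\to f^{*}f_{\natural}\) (viewed in \(\Hom_{A}(B,B)\)) and a counit \(f_{\natural}f^{*}\to\id_{A}\), and check the triangle identities. The counit comes from \((fg)_{\natural}(fg)^{*}\to\id\) together with the splitting of the unit of \(g\) furnished by descent. The cleanest route is to check adjunction after restricting along the faithful functor to the Čech nerve. Mapping-category identities \(\Hom_{A}(B,M)\simeq\Hom_{B}(B,f^{*}M)\), namely the adjunction equivalences, can be verified levelwise on \(B'_{n}\), where they hold by the adjunctions \(h_{n,\natural}\dashv h_{n}^{*}\), and then passed to the limit using \(\Hom_{A}(\lvert B'_{\bullet}\rvert,M)\simeq\Tot\Hom_{A}(B'_{\bullet},M)\).

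I expect the main obstacle to be coherence: making the cocone in Step 2 a genuine simplicial diagram of adjoints, rather than a levelwise compatibility. I would handle this by applying \cref{cll}, or the \(2\)-categorical functoriality of passing to left adjoints for left adjointable diagrams, to the augmented Čech nerve.
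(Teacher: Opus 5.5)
Your Steps 1 and 2 are exactly the paper's argument. Descent gives \([Y]\simeq\Tot[Y'^{\times_{Y}\bullet+1}]\). This limit is reread as a colimit along the left adjoints of the cofaces, and \(f_{\natural}\) is glued from the left adjoints \(h_{n,\natural}\) of \(Y'^{\times_{Y}n+1}\to X\). One correction is needed there: the colimit must be the \emph{semisimplicial} one, as the paper stresses. The codegeneracies \(B'_{n+1}\to B'_{n}\) are pullbacks along diagonals of \(Y'\to Y\), which need not be weakly suave. So "\(\lvert B'_{\bullet}\rvert\) with transitions the left adjoints" only makes sense over \(\Cat{\Delta}_{s}^{\op}\).

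The real gap is Step 3, where the adjunction is supposed to be verified.
\begin{itemize}
\item The identity \(\Hom_{A}(B,M)\simeq\Hom_{B}(B,f^{*}M)\) is not the statement \(f_{\natural}\dashv f^{*}\). Here \(f^{*}\) is the single \(1\)-morphism \(A\to B\) of \(\Mod_{A}(\cat{C})\), not a functor on module categories. With \(f^{*}M=B\otimes_{A}M\), that identity compares maps out of \(B\) with maps into \(B\otimes_{A}M\). This is a duality-type statement, closer to suaveness, and it does not follow levelwise from \(h_{n,\natural}\dashv h_{n}^{*}\).
\item Descent gives a resolution of \(\id_{B}\), not a splitting of \(g_{\natural}g^{*}\to\id_{B}\). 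So your first construction of the counit is unavailable.
\end{itemize}

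What the colimit description actually supplies is
\[
\id_{B}\simeq\injlim_{[n]\in\Cat{\Delta}_{s}^{\op}}g_{n,\natural}g_{n}^{*}\quad\text{in }\End_{A}(B),
\]
where \(g_{n}\colon Y'^{\times_{Y}n+1}\to Y\). With this, take \(\psi\colon W\to B\) and \(\chi\colon W\to A\) in \(\Mod_{A}(\cat{C})\). One gets
\begin{equation*}
\Map(f_{\natural}\psi,\chi)\simeq\projlim_{n}\Map(h_{n,\natural}g_{n}^{*}\psi,\chi)\simeq\projlim_{n}\Map(g_{n}^{*}\psi,g_{n}^{*}f^{*}\chi)\simeq\Map(\psi,f^{*}\chi).
\end{equation*}
The three steps use, in order:
\begin{itemize}
\item \(f_{\natural}g_{n,\natural}\simeq h_{n,\natural}\), and that composition preserves colimits in each variable;
\item the adjunctions \(h_{n,\natural}\dashv h_{n}^{*}\) with \(h_{n}^{*}=g_{n}^{*}f^{*}\);
\item the limit description \(\Hom(W,B)\simeq\projlim_{n}\Hom(W,B'_{n})\).
\end{itemize}
Naturality in \(W\) is exactly the adjunction \(f_{\natural}\dashv f^{*}\) in \(\Mod_{A}(\cat{C})\). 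This is what the paper compresses into "this description gives us the desired left adjoint." With Step 3 replaced by this computation, your proof is the paper's.
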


\begin{proof}
In this situation,
  \([Y]\) can be computed as the totalization of~\([Y'^{\times_{Y}\bullet+1}]\),
  which is also the \emph{semisimplicial} colimit along the left adjoint.
  This description gives us the desired left adjoint.
\end{proof}

\section{Examples of descent}\label{s:d_ex}

In this section,
we explain some examples of descent.
In \cref{ss:open,ss:closed},
we treat covers by open and closed subsets.
In \cref{ss:lch},
we consider surjections between compact Hausdorff spaces.
In \cref{ss:clausen},
we explain Clausen’s theorem,
which we use in the proof of \cref{main_4}.

\subsection{Open covers}\label{ss:open}

We note that this descent works even in the unstable case:

\begin{proposition}\label{x8jzva}
  Let \(X\) be a locale
  and \((U_{i})_{i\in I}\) be an open cover.
  Then the morphism
  \(\Shv(X)\to\prod_{i\in I}\Shv(U_{i})\)
  satisfies descent in~\(\Cat{Pr}\).
\end{proposition}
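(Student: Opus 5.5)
We regard the product as a commutative algebra object \(A=\prod_{i\in I}\Shv(U_{i})\) in \(\Cat{Pr}\), with unit \(\Shv(X)\), and use \cref{x6ruic}. It suffices to show that for every \(\Shv(X)\)-module \(\cat{M}\) in \(\Cat{Pr}\), the comparison
\begin{equation*}
  \cat{M}\to\Tot\bigl(\Mod_{A^{\otimes\bullet+1}}(\cat{M})\bigr)
\end{equation*}
is an equivalence. The first step is to identify the terms. Each \(\Shv(U_{i})\) is \(\Mod_{\unit_{U_{i}}}\) for the idempotent algebra \(\unit_{U_{i}}\), i.e.\ the subterminal object \(U_{i}\) with the cartesian structure. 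In \(\Cat{Pr}\) a product is also a coproduct, so tensoring distributes over it. Hence \(A^{\otimes n+1}\simeq\prod_{(i_{0},\dotsc,i_{n})}\Shv(U_{i_{0}}\cap\dotsb\cap U_{i_{n}})\). Writing \(\cat{M}_{V}=\Shv(V)\otimes_{\Shv(X)}\cat{M}\) for an open \(V\), the cosimplicial object becomes the Čech diagram \(\prod\cat{M}_{U_{i_{0}\dotsm i_{n}}}\). Here \(\cat{M}_{V}\) is the localization of \(\cat{M}\) at the idempotent \(\unit_{V}\), i.e.\ the full subcategory on objects \(M\) with \(M\simeq\underline{\Hom}(\unit_{V},M)\), using the cotensoring of \(\cat{M}\) over \(\Shv(X)\).

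By \cref{xq4y2d}, it suffices to prove the weaker condition~\cref{i:hicone}. Concretely, the functor to the totalization should be fully faithful with a right adjoint for which the unit is an equivalence. I would argue this through the descent property of the idempotent algebras. Since \(\{U_{i}\}\) covers \(X\), the Čech nerve of \(\coprod_{i}U_{i}\to\unit\) in the topos \(\Shv(X)\) has geometric realization \(\unit\). This is effectivity of epimorphisms, or simply the sheaf condition on the terminal object. Because the action of \(\Shv(X)\) on \(\cat{M}\) preserves colimits in the first variable, \(\underline{\Hom}(\X,M)\) turns this colimit into a limit. We obtain
\begin{equation*}
  M\simeq\Tot\Bigl(\prod_{i_{0},\dotsc,i_{n}}\underline{\Hom}(\unit_{U_{i_{0}\dotsm i_{n}}},M)\Bigr),
\end{equation*}
which gives full faithfulness of \(\cat{M}\to\Tot\).

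For essential surjectivity, a compatible family \((M_{i})\) with \(M_{i}\in\cat{M}_{U_{i}}\) and gluing data on the overlaps has a candidate glued object, namely the totalization of the restrictions. One then checks that its restriction to each \(U_{j}\) recovers \(M_{j}\). To do this I would base change the whole diagram along \(\Shv(X)\to\Shv(U_{j})\). There the cover acquires the member \(U_{j}=U_{j}\cap X\), the whole space, so the Čech diagram is split and the claim is formal, as in \cref{x059gc}. Restriction to \(U_{j}\) commutes with the totalization because \(\underline{\Hom}(\unit_{U_{j}},\X)\) is a right adjoint; this identification is the one step to double-check.

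The main obstacle is the unstable setting. Infinite products in \(\Cat{Pr}\), infinite coproducts, and tensor products of presentable categories have to interact correctly, and no stable splitting arguments are available. The key input I would lean on is that idempotent algebras in \(\Cat{Pr}\) give localizations that are both left and right adjoint to the inclusions. That lets me avoid the general Lurie–Barr–Beck machinery and reduce everything to the colimit identity \(\lvert\check{C}(\coprod U_{i})\rvert\simeq\unit\) in \(\Shv(X)\), which holds for any locale.
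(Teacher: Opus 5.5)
\paragraph{The reduction at the start is misapplied.} Your cotensor argument is fine as far as it goes. Applying \(\underline{\Hom}(\X,M)\) to the Čech colimit \(\lvert\check{C}(\coprod_iU_i)\rvert\simeq1\) in \(\Shv(X)\) gives full faithfulness. Restricting to \(U_j\), where the Čech object splits, gives essential surjectivity. So you do prove \(\cat{M}\simeq\Tot\bigl(\prod\cat{M}_{U_{i_0\dotsm i_n}}\bigr)\) for every \(\Shv(X)\)-linear presentable \(\cat{M}\).

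The algebra \(B=\prod_i\Shv(U_i)\) lives in the presentably symmetric monoidal \(2\)-category \(\cat{C}=\Mod_{\Shv(X)}(\Cat{Pr})\). Descent asks that \(\Mod_B(\cat{C})\to\cat{C}\) be faithful in \(\Mod_{\cat{C}}(2\Cat{Pr})\). Accordingly, item~(v) of \cref{x6ruic} quantifies over \(\cat{C}\)-modules in \(2\Cat{Pr}\), i.e.\ presentable \(2\)-categories with a \(\Mod_{\Shv(X)}(\Cat{Pr})\)-action. It does not quantify over objects of \(\cat{C}\).

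Your statement, proved for all objects \(\cat{M}\in\cat{C}\), sits one categorical level lower. It amounts to full faithfulness of \(\cat{C}\to\Tot\Mod_{B^{\otimes\bullet+1}}(\cat{C})\), and without further input that does not imply descent. Your paraphrase of \cref{i:hicone} of \cref{xq4y2d} has the same level shift: that condition is about a right adjoint of \(\lvert\Mod_{B^{\otimes\bullet+1}}(\cat{C})\rvert\to\cat{C}\) in \(\Mod_{\cat{C}}(2\Cat{Pr})\).

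\paragraph{The missing input is dualizability, which is what the paper's proof rests on.} Put \(Y=\coprod_iU_i\). Then \(\Shv(Y)\simeq B\) is self-dual over \(\Shv(X)\), with \(\Shv(Y)^{\otimes_{\Shv(X)}(n+1)}\simeq\Shv(Y^{\times_X(n+1)})\). Under this duality the dual of each restriction \(j^{*}\) is its left adjoint \(j_{!}\).

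Hence \cref{i:co} of \cref{xq4y2d} reduces descent to one assertion: \(\Shv(Y^{\times_X\bullet+1})\), with transition maps \(j_{!}\), is a colimit diagram in \(\Cat{Pr}\). A colimit in \(\Cat{Pr}\) along left adjoints is the limit along their right adjoints. So the assertion is \(\Shv(X)\simeq\Tot\Shv(Y^{\times_X\bullet+1})\), which is exactly the case \(\cat{M}=\Shv(X)\) of what you proved. The paper obtains it from \(Y\to X\) being an effective epimorphism in \(\Shv(X)\) together with descent for \(\infty\)-topoi.

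With this step inserted your proof is correct, and your cotensor computation is a self-contained substitute for that citation. The case of general \(\cat{M}\) is then unnecessary; it also follows by tensoring the colimit diagram with \(\cat{M}\).

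\paragraph{A minor slip.} For the cartesian structure, \(U_i\) is an idempotent coalgebra (counit \(U_i\to1\)), not an algebra, so \(\Shv(U_i)\) is not \(\Mod_{\unit_{U_i}}\). Your description of \(\cat{M}_{U_i}\) as \(\{M\mid M\simeq\underline{\Hom}(U_i,M)\}\) is nevertheless correct up to equivalence.
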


Combining this with \cref{xhrdzm},
we obtain a stronger claim:

\begin{corollary}\label{xw7hrq}
  Let \(Y\to X\) be a morphism of locales
  that admits a local section.
  Then \(\Shv(X)\to\Shv(Y)\) satisfies descent in \(\Cat{Pr}\).
\end{corollary}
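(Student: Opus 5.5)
The statement is \cref{xw7hrq}: if \(f\colon Y\to X\) is a morphism of locales admitting a local section, then \(\Shv(X)\to\Shv(Y)\) satisfies descent in \(\Cat{Pr}\). I would reduce it to \cref{x8jzva} using the cancellation and base change properties from \cref{ss:d_basic}. Here "admits a local section" means there is an open cover \((U_{i})_{i\in I}\) of~\(X\) and sections \(s_{i}\colon U_{i}\to Y\) of~\(f\) over~\(U_{i}\).

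First set \(U=\coprod_{i}U_{i}\). Then \(\Shv(U)=\prod_{i}\Shv(U_{i})\), and by \cref{x8jzva} the algebra \(\Shv(X)\to\Shv(U)\) satisfies descent in~\(\Cat{Pr}\). Throughout, I view everything as commutative algebras in \(\Mod_{\Shv(X)}(\Cat{Pr})\), i.e.\ work relative to \(\Shv(X)\). In this setting the pushout of \(\Shv(Y)\) and \(\Shv(U)\) is \(\Shv(Y)\otimes_{\Shv(X)}\Shv(U)\). I would identify it with \(\prod_{i}\Shv(Y\times_{X}U_{i})\), using that \(U_{i}\to X\) is an open immersion, so \(\Shv(U_{i})\) is an idempotent \(\Shv(X)\)-algebra. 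Concretely, \(\Shv(Y)\otimes_{\Shv(X)}\Shv(X)_{/U_{i}}\simeq\Shv(Y)_{/f^{*}U_{i}}\) by the étale description in \cref{et_sua}.

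Next apply \cref{xhrdzm}\cref{i:d_cbc} with \(A=\Shv(Y)\) and \(B=\Shv(U)\). Since \(B\) satisfies descent, it suffices to show that \(\Shv(U)\to\prod_{i}\Shv(Y\times_{X}U_{i})\) satisfies descent. This map is a product over~\(i\) of the maps \(\Shv(U_{i})\to\Shv(Y_{U_{i}})\), and each of these is the unit of a pullback along a morphism \(Y_{U_{i}}\to U_{i}\) that has a section \(s_{i}\). That section gives a retraction \(\Shv(Y_{U_{i}})\to\Shv(U_{i})\) of algebras, so \cref{x059gc} (a \(1\)-retraction suffices, and a genuine retraction is an \(n\)-retraction for every \(n\)) yields descent for each factor.

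The main obstacle is passing from the factors to the infinite product. I would handle it by working in \(\Mod_{\Shv(U)}(\Cat{Pr})\simeq\prod_{i}\Mod_{\Shv(U_{i})}(\Cat{Pr})\). There the algebra in question has a retraction componentwise, hence a retraction globally, since a product of retractions is a retraction. So \cref{x059gc} applies directly in \(\Mod_{\Shv(U)}(\Cat{Pr})\). This argument avoids any separate claim that descent is compatible with products.
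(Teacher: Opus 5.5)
Your proposal is correct and follows the paper's route: it combines \cref{x8jzva} with \cref{xhrdzm} to reduce to \(\Shv(U)\to\Shv(Y\times_{X}U)\), which admits a retraction and so satisfies descent by \cref{x059gc} (this is exactly the argument packaged in \cref{i:d_canc} of \cref{x4xcqq}). The product step is easier than you make it, since the \(s_{i}\) glue to a single section \(U\to Y\times_{X}U\) whose pullback is the retraction; also, \(\Shv(Y)\otimes_{\Shv(X)}\Shv(U)\simeq\Shv(Y\times_{X}U)\) is better cited from the locale fiber-product result used in the proof of \cref{x8jzva} than from \cref{et_sua}.
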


\begin{proof}[Proof of \cref{x8jzva}]
  We write~\(Y\) for \(\coprod_{i\in I}U_{i}\),
  which admits a tautological map to~\(X\).
  Since \(\Shv(Y)\) is self-dual
  over \(\Shv(X)\) and \(\Shv(Y)^{\otimes_{\Shv(X)}2}
  \simeq\Shv(Y^{\times_{X}2})\)
  (see~\cite[Corollary~1.10]{ttg-sm}),
  it suffices to show that
  \(\Shv(Y)^{\otimes_{n}\bullet+1}
  \simeq\Shv(Y^{\times_{X}\bullet+1})\)
  is a colimit diagram in~\(\Cat{Pr}\).
  Since \(\Shv(Y^{\times_{X}n})\simeq\Shv(X)_{/Y^{\times_{X}n}}\),
  this follows from the fact
  that \(Y\to X\) is an effective epimorphism
  in the topos \(\Shv(X)\)
  and~\cite[Proposition~6.3.5.14]{LurieHTT}.
\end{proof}

\subsection{Finite closed covers}\label{ss:closed}

We recall the following:

\begin{definition}\label{xsjci8}
  Let \((Z_{i})_{i\in I}\)
  be a family of subsets of a set~\(X\).
  We say that it is of \emph{order} \(\leq d\),
  if for any subset \(I_{0}\subset I\) with cardinality \(>d\),
  we have \(\bigcap_{i\in I_{0}}Z_{i}=\emptyset\).
\end{definition}

\begin{proposition}\label{md_ci}
  Let \(X\) be a locale
  and \((Z_{i})_{i\in I}\) be a closed cover of order~\(\leq d\)
  then the commutative algebra \(\prod_{i\in I}\SS_{Z_{i}}\)
  in \(\Shv(X;\Cat{Sp})\) is descendable of index~\(\leq d\).
\end{proposition}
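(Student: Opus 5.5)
The plan is to exhibit \(\unit=\SS_{X}\) as built from \(A\)-modules in at most \(d\) cofiber steps, and then use the standard fact (Mathew) that this forces \(I^{\otimes d}\to\unit\) to be zero, where \(I=\fib(\unit\to A)\) and \(A=\prod_{i\in I}\SS_{Z_{i}}\). That standard fact goes as follows. Suppose \(\unit\) admits a finite filtration \(0=F_{d}\to\dotsb\to F_{0}=\unit\) whose graded pieces are \(A\)-modules. Every \(A\)-module \(M\) has \(I\otimes M\to M\) zero, because \(M\to A\otimes M\) has a retraction. An induction along the filtration then shows that \(I^{\otimes k}\to\unit\) factors through \(F_{k}\), so \(I^{\otimes d}\to\unit\) is zero. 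By \cref{xwlisj}, this is descendability of index \(\leq d\).

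To build the filtration, fix a total order on the index set and form the alternating Čech complex of the closed cover:
\begin{equation*}
  \unit\to\prod_{i_{0}}\SS_{Z_{i_{0}}}\to\prod_{i_{0}<i_{1}}\SS_{Z_{i_{0}}\cap Z_{i_{1}}}\to\dotsb\to\prod_{i_{0}<\dotsb<i_{d-1}}\SS_{Z_{i_{0}}\cap\dotsb\cap Z_{i_{d-1}}}.
\end{equation*}
It has length \(d\) because all \((d+1)\)-fold intersections are empty by the order hypothesis. More invariantly, this is the totalization of the cosemisimplicial object \(A^{\otimes\bullet+1}\), restricted to nondegenerate strictly increasing tuples. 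Each term \(\prod\SS_{Z_{i_{0}}\cap\dotsb}\) is an \(A\)-module via the \(i_{0}\)-th projection. Hence, once we know that \(\unit\) is the limit of this finite diagram, the brutal truncations provide the filtration required above with \(d\) steps.

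The main step is the exactness claim, namely that \(\unit\to\Tot_{\le d-1}\) of the alternating complex is an equivalence in \(\Shv(X;\Cat{Sp})\). I would first treat a finite index set by induction on \(|I|\), splitting off one closed set \(Z=Z_{i_{0}}\) with open complement \(U\) and using the recollement \(\Shv(U)\gets\Shv(X)\to\Shv(Z)\). By \cref{xwog77} and \cref{xrsyv9}, equivalences can be detected after applying \(j^{*}\) and \(i^{*}\). Pulling back along \(i\), the cover acquires the member \(Z\) itself, so its Čech complex becomes split exact; this is the retraction argument of \cref{x059gc}. Pulling back along \(j\), the index \(i_{0}\) drops out, and we conclude by induction. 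Closed pullbacks commute with the finite products and intersections appearing here, since \(i_{*}\) is fully faithful and the relevant squares are pullbacks of locales.

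For infinite index sets, the difficulty is that products of \(\SS_{Z_{i}}\) and unions of closed sets are not finite operations. I would try to reduce to the finite case using the fact that everything is determined by its restrictions to the closed sets \(Z_{j}\). On \(Z_{j}\) the cover again contains a member equal to the whole space, so the complex is split exact there. The family \(\{Z_{j}\}\) jointly detects equivalences by the descent/recollement formalism applied to the cover. I expect this last detection step, namely that the product of closed restrictions is conservative on \(\Shv(X;\Cat{Sp})\) without any local finiteness, to be the main obstacle. If it fails, I would fall back on a direct construction of the nullhomotopy of \(I^{\otimes d}\to\unit\) by restricting to each \(Z_{j}\) and gluing.
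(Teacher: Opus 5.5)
For finite \(I\) your argument is correct, and it takes a genuinely different route from the paper's. The paper never resolves \(\unit\). Instead it stratifies \(X\) by the number of \(Z_{i}\) containing a point. On the stratum of points lying in exactly \(n\) of the \(Z_{i}\), the traces of the \(Z_{i}\) are relatively clopen, so \(\unit\to A=\prod_{i}\SS_{Z_{i}}\) acquires a retraction there (index \(\leq1\)). Then \cref{md_str} adds these indices over the \(d\) strata. You instead prove closed descent \(\unit\simeq\lim_{\emptyset\neq J\subseteq I,\,\lvert J\rvert\leq d}\SS_{Z_{J}}\), where \(Z_{J}=\bigcap_{j\in J}Z_{j}\), by induction on \(\lvert I\rvert\) via the recollement. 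You then filter this limit by \(\lvert J\rvert\) to get a \(d\)-step tower on \(\unit\) whose layers are \(A\)-modules, which kills \(\fib(\unit\to A)^{\otimes d}\to\unit\). Your ``alternating complex'' should be read \(\infty\)-categorically as exactly this limit over the poset of nonempty subsets, which is what strictly increasing tuples index. Given \cref{md_str}, the paper's proof is shorter and obtains exactness of the \v{C}ech complex a posteriori from descendability. Yours is self-contained, needs no clopen analysis on strata, and exhibits \(\unit\) explicitly as a \(d\)-fold extension of \(A\)-modules.

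Your plan for infinite \(I\) cannot work, and the failure is not where you expected it. The step ``on \(Z_{j}\) the cover contains the whole space, so the complex is split exact there'' is wrong. Restriction to a closed sublocale is a left adjoint and does not commute with infinite products, so it does not carry \(\prod_{i}\SS_{Z_{i}}\) to \(\prod_{i}\SS_{Z_{i}\cap Z_{j}}\).

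In fact the statement is false for infinite covers that are not locally finite. Take \(X=[0,1]\) and \(Z_{x}=\{x\}\), so \(d=1\); index \(\leq1\) means that \(\unit\to A=\prod_{x}x_{*}\SS\) has a retraction. Since \(\Gamma(U;A)=\prod_{x\in U}\SS\), passing to \(\pi_{0}\)-sheaves would exhibit the constant sheaf \(\ZZ_{X}\) as a retract of the flabby sheaf \(\prod_{x}x_{*}\ZZ\) of all \(\ZZ\)-valued functions. Then \(\ZZ_{X}\) would be flabby, which it is not: \(H^{0}([0,1];\ZZ)\to H^{0}([0,1/3)\cup(2/3,1];\ZZ)\) is not surjective. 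Here the restrictions to the \(Z_{x}\) are jointly conservative, since stalks detect equivalences on \([0,1]\). What fails is exactness of the \v{C}ech complex.

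The paper only uses the proposition for finite covers (\cref{x496fa} and the proof of \cref{xx45iz}), and its stratification argument also tacitly needs finiteness. So you should state the finiteness hypothesis and drop your last paragraph. Your argument also extends to locally finite covers: exactness can be checked on opens meeting only finitely many \(Z_{i}\), where everything is a finite product.
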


\begin{proof}
  We consider a filtration of~\(X\)
  by the closed subsets
  \(\emptyset=X_{0}\subset X_{1}\subset\dotsb\subset X_{d}=X\)
  such that \(X_{n}\) is 
  the subspace of points that are covered
  by at most~\(n\) of~\(Z_{i}\).
  Then over \(Z_{n}\setminus Z_{n-1}\),
  for \(1\leq n\leq d\),
  the commutative algebra admits a splitting.
  Therefore, the desired result follows from \cref{md_str}.
\end{proof}

\subsection{Surjections of compact Hausdorff spaces}\label{ss:lch}

We recall here that
any surjection between
finite-dimensional compact Hausdorff spaces
of countable weight satisfies descent.

\begin{proposition}\label{x496fa}
  The pushforward along the dyadic expansion
  \begin{align*}
    \{0,1\}^{\NN}&\to[0,1];&
    (x_{n})_{n}&\mapsto\sum_{n}\frac{x_{n}}{2^{n+1}}
  \end{align*}
  determines a descendable commutative algebra
  of index~\(\leq4\)
  in \(\Shv([0,1];\Cat{Sp})\).
\end{proposition}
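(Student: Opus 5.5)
We would prove this by stratifying \([0,1]\) according to how many dyadic preimages a point has, and then gluing descendability along the strata with \cref{md_str}. Write \(\pi\colon\{0,1\}^{\NN}\to[0,1]\) for the dyadic expansion and \(A=\pi_{*}\SS\in\Shv([0,1];\Cat{Sp})\). Since \(\pi\) is proper, \(A\) is computed fiberwise: its stalk at \(x\) is \(C(\pi^{-1}(x);\SS)\). The fiber \(\pi^{-1}(x)\) is one point if \(x\) is not a dyadic rational in \((0,1)\), and two points if \(x\) is such a dyadic rational. So \(\pi\) is finite of degree \(\leq2\) everywhere. The main difficulty is that the set \(Q\) of interior dyadic rationals is dense, so there is no open–closed decomposition separating the degree-one and degree-two loci. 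We therefore cannot apply \cref{md_str} naively with a single closed stratum.

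First, we reorganize \(\pi\) as a closed cover, which lets us use \cref{md_ci}. For each finite word \(w\in\{0,1\}^{k}\), the cylinder \(\{w\}\times\{0,1\}^{\NN}\) maps homeomorphically onto the closed dyadic interval \(I_{w}\) of length \(2^{-k}\). At level \(k\), the intervals \(I_{w}\) form a closed cover of \([0,1]\) of order \(\leq2\). By \cref{md_ci}, the algebra \(B_{k}=\prod_{|w|=k}\SS_{I_{w}}\) is descendable of index \(\leq2\). Refinement of the intervals gives algebra maps \(B_{k}\to B_{k+1}\), and we will identify \(\injlim_{k}B_{k}\) with \(A\). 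This identification is the key step: at a stalk, \(\injlim_{k}\) of functions on the level-\(k\) intervals containing \(x\) recovers the locally constant functions on \(\pi^{-1}(x)\). At a dyadic point, for example, the two adjacent intervals converge to the two preimages. Since sheaves of spectra on a compact Hausdorff space of finite dimension are hypercomplete and stalks detect equivalences, this check suffices. There is a subtlety here: the stalk of \(\prod_{w}\SS_{I_{w}}\) at \(x\) is \(\SS^{\#\{w:x\in I_{w}\}}\), and the transition maps must be the restriction maps along inclusions of intervals. We will verify this directly.

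Given this identification, \cref{md_seq} applies to the sequence \(B_{0}\to B_{1}\to\dotsb\) with uniform index \(\leq2\), and shows that \(A=\injlim_{k}B_{k}\) is descendable of index \(\leq2\cdot2=4\), which is the claimed bound.

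The main obstacle will be making the colimit identification rigorous as an equivalence of commutative algebras in \(\Shv([0,1];\Cat{Sp})\), rather than only on stalks. Two points need care. First, we must confirm that \(\pi_{*}\) commutes with the relevant filtered colimit, which uses properness of \(\pi\) and compactness of the cylinders. Second, we must confirm that the comparison maps \(B_{k}\to A\), namely restriction to the cylinders pushed forward, are compatible. If the stalkwise argument becomes delicate, the fallback is to write \(\{0,1\}^{\NN}=\projlim_{k}\coprod_{|w|=k}I_{w}\) as locales over \([0,1]\). We would then use that \(\Shv\) of a cofiltered limit of compact Hausdorff spaces is the colimit of the \(\Shv\)'s, and that pushforward of the unit along proper maps turns this limit into the colimit \(\injlim_{k}B_{k}\).
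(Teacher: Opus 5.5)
Your proposal is correct and is essentially the paper's proof: the paper also writes the dyadic expansion as the limit of the level-\(n\) closed dyadic covers \(\coprod_{k}[k/2^{n},(k+1)/2^{n}]\to[0,1]\), applies \cref{md_ci} to each (order \(\leq2\), hence index \(\leq2\)), and concludes with \cref{md_seq} to get index \(\leq4\). Your stalkwise check that \(\varinjlim_{k}B_{k}\simeq\pi_{*}\unit\) only makes explicit an identification the paper leaves implicit, and the stratification-by-number-of-preimages idea in your first sentence plays no role and can be dropped.
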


\begin{proof}
  It is the limit of
  \begin{equation*}
    \coprod_{k=0}^{2^{n}-1}\biggl[\frac{k}{2^{n}},\frac{k+1}{2^{n}}\biggr]\to[0,1].
  \end{equation*}
  Since this is a closed cover of order~\(2\)
  by \cref{md_ci},
  the desired result follows from \cref{md_seq}.
\end{proof}

\begin{corollary}\label{x8izhs}
  Any nonempty compact Hausdorff space
  of countable weight
  of dimension \(\leq d\)
  admits a cover by a Cantor set
  that is of index \(2^{4d+2}\).
\end{corollary}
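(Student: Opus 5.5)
Write \(K\) for the given space, which is nonempty, compact Hausdorff, of countable weight and of dimension \(\leq d\). The plan is to reduce to \cref{x496fa} in two steps. First we produce a surjection onto \(K\) from a closed subset of a cube \([0,1]^{m}\) with \(m=2d+1\). Then we pull back the dyadic cover of each coordinate.

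For the first step, we use the Menger–Nöbeling embedding theorem. A compact metrizable space of dimension \(\leq d\) embeds as a closed subset \(K\subset[0,1]^{2d+1}\). Countable weight together with compact Hausdorff gives metrizability by Urysohn, so this applies.

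Next consider the product map
\begin{equation*}
  \pi\colon(\{0,1\}^{\NN})^{2d+1}\to[0,1]^{2d+1}
\end{equation*}
of dyadic expansions. We treat it coordinatewise. Each factor \(\{0,1\}^{\NN}\to[0,1]\) gives a descendable algebra of index \(\leq4\) by \cref{x496fa}. Its pullback along a projection \([0,1]^{m}\to[0,1]\) stays descendable of index \(\leq4\), since \(\fib(\unit\to A)^{\otimes4}\to\unit\) vanishing is preserved by the symmetric monoidal pullback functor. We use the proper base change identification of the pushed-forward algebras, which holds for compact Hausdorff spaces via \cref{xf141x}.

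Factor \(\pi\) as a composite of \(2d+1\) base changes of one-coordinate maps, and apply \cref{i:md_de} of \cref{x50j16} repeatedly. This shows that \(\pi_{*}\SS\) is descendable of index \(\leq4^{2d+1}=2^{4d+2}\) in \(\Shv([0,1]^{2d+1};\Cat{Sp})\). Here we use that descendability of a base-changed algebra over the intermediate stage is again index \(\leq4\), by the same pullback argument applied in the relative category.

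Finally, restrict to the closed subset \(K\). The restriction functor \(\Shv([0,1]^{m};\Cat{Sp})\to\Shv(K;\Cat{Sp})\) is symmetric monoidal, and by proper base change it sends \(\pi_{*}\SS\) to \(\pi'_{*}\SS\), where \(\pi'\colon\pi^{-1}(K)\to K\). Hence \(\pi'_{*}\SS\) is descendable of index \(\leq2^{4d+2}\). The source \(\pi^{-1}(K)\) is a nonempty closed subset of a Cantor set, so it is compact, metrizable and totally disconnected. If it is perfect, it is a Cantor set. Otherwise we replace it with \(\pi^{-1}(K)\times\{0,1\}^{\NN}\), which is a Cantor set mapping onto it with a section, and so is of index \(\leq1\). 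By \cref{i:md_de} of \cref{x50j16} the index bound is unchanged, since \(1\cdot e=e\).

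The main obstacle is the justification of proper base change for pushforwards of sheaves of spectra along proper maps of compact Hausdorff spaces, compatibly with the commutative algebra structures. I would take this as standard, cf.~\cref{xf141x}. The exponent bookkeeping is straightforward once that is in place.
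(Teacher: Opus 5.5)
Your proposal is correct and follows the paper's proof. The paper's proof is the single line: embed into \([0,1]^{2d+1}\) and apply \cref{x496fa} in each coordinate, which gives the bound \(4^{2d+1}=2^{4d+2}\). You fill in the steps that line leaves implicit: multiplicativity of the index via \cref{x50j16} and base change, restriction to the closed subset via proper base change, and taking a product with \(\{0,1\}^{\NN}\) so that the source is a genuine Cantor set. All of these steps are sound.
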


\begin{proof}
By embedding the space into~\([0,1]^{2d+1}\),
  the result follows from \cref{x496fa}.
\end{proof}

\begin{corollary}\label{x1lqb7}
  Consider a surjection \(Y\to X\)
  from a light compact Hausdorff space
  to a light compact Hausdorff space of dimension \(\leq d\).
  Then it is descendable of index \(\leq 2^{4d+3}\).
\end{corollary}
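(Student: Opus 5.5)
The plan is to factor the surjection \(f\colon Y\to X\) through a Cantor set cover of~\(X\) and reduce to \cref{x50j16}. By \cref{x8izhs}, there is a surjection \(g\colon K\to X\) from a Cantor set~\(K\), and \(g_{*}\SS\) is descendable of index~\(\leq 2^{4d+2}\) in \(\Shv(X;\Cat{Sp})\). Since \(K\) is profinite and \(Y\to X\) is surjective, the pullback \(Y\times_{X}K\to K\) is a surjection of light compact Hausdorff spaces onto a profinite set. Such a surjection admits a continuous section, because light profinite sets are projective among light compact Hausdorff spaces; this is the standard extremal/lifting property in the light setting. Concretely, \(K\) is a sequential limit of finite sets, and we lift along each finite stage using compactness and a choice argument. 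Consequently \(\SS_{K}\to(Y\times_{X}K\to K)_{*}\SS\) admits a retraction, so it is descendable of index~\(\leq1\) in \(\Shv(K;\Cat{Sp})\).

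Next I transport these indices to~\(X\). Pushforward along the proper map~\(g\) is lax symmetric monoidal and \(g_{*}\SS_{K}\)-linear. By proper base change, it identifies \(\Mod_{g_{*}\SS}(\Shv(X;\Cat{Sp}))\) with \(\Shv(K;\Cat{Sp})\), which is the standard fact that proper pushforward of sheaves of spectra on compact Hausdorff spaces is monadic in this way. Hence the algebra \(g_{*}\SS\otimes f_{*}\SS\simeq(Y\times_{X}K\to X)_{*}\SS\) is descendable of index~\(\leq1\) over \(g_{*}\SS\). Applying \cref{x50j16}~\cref{i:md_de}, the algebra \((Y\times_{X}K\to X)_{*}\SS\) is descendable of index~\(\leq 2^{4d+2}\).

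This does not yet give the claim, because \(f_{*}\SS\) is not directly an algebra over~\(g_{*}\SS\); we have a map \(f_{*}\SS\to f_{*}\SS\otimes g_{*}\SS\). The cleaner route is to argue with fibers. Let \(I=\fib(\SS\to f_{*}\SS)\) and \(J=\fib(\SS\to g_{*}\SS)\). The composite \(I\to\SS\to g_{*}\SS\) becomes, after base change to~\(K\), the fiber map for a split algebra, so it is null after tensoring with~\(g_{*}\SS\). Thus \(I\otimes g_{*}\SS\to g_{*}\SS\) is zero, and \(I\to\SS\) factors through~\(J\) after one extra tensor factor. More precisely, \(I^{\otimes 2}\to\SS\) factors through \(I\otimes J\to\SS\), hence through~\(J\), and iterating gives \(I^{\otimes 2e}\to\SS\) factoring through \(J^{\otimes e}\to\SS\), which is zero for \(e=2^{4d+2}\). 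This yields index \(\leq 2\cdot2^{4d+2}=2^{4d+3}\), matching the claimed bound; the factor of two is exactly the cost of this bookkeeping.

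The main obstacle is the lifting step: a surjection of light compact Hausdorff spaces onto a light profinite set must have a section. I would first prove this for finite targets, where it is trivial, and then pass to sequential limits, choosing compatible sections inductively. The inductive step requires that a surjection onto \(\projlim K_{n}\) induce surjections from closed pieces, which holds by compactness. I also expect some care in verifying that the factorization argument really gives index~\(2e\) rather than~\(e+1\), but the stated bound leaves room for the cruder estimate.
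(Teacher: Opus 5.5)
There is a genuine gap in the lifting step: light profinite sets are \emph{not} projective, not even among light profinite sets. Take \(S=\NN\cup\{\infty\}\) and the closed subset
\(T=\{(n,n\bmod 2)\mid n\in\NN\}\cup\{(\infty,0),(\infty,1)\}\) of \(S\times\{0,1\}\). The projection \(T\to S\) is surjective. Any section must send \(n\) to \((n,n\bmod2)\), and these points do not converge as \(n\to\infty\), so no continuous section exists.

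The same happens over the Cantor set. Embed \(S\) as a retract of the Cantor set~\(K\) and pull back along the retraction. A section of \(T\times_{S}K\to K\), restricted to \(S\subset K\), would be a section of \(T\to S\). This is exactly where your inductive choice of compatible lifts at finite stages breaks down: the choices need not converge in the limit.

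The conclusion you draw from the section also fails, so this step cannot be rescued by another argument. Index~\(\leq1\) would mean \(\SS_{S}\to(T\to S)_{*}\SS\) has a retraction. On \(\pi_{0}\), that retraction would be a retraction of \(\ZZ_{S}\to(T\to S)_{*}\ZZ\) equal to the identity on the stalks at \(n\in\NN\). Apply it to the global section that is \(1\) on the clopen set \(T\cap(S\times\{0\})\) and \(0\) on \(T\cap(S\times\{1\})\). The result is a function on~\(S\) alternating between \(1\) and \(0\), which is not continuous at~\(\infty\).

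The missing idea is to replace the section by \cref{md_seq}, as the paper does.

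\begin{itemize}
\item Choose a Cantor set~\(D\) surjecting onto \(K\times_{X}Y\).
\item Then \(D\to K\) is a surjection of light profinite sets. It is a sequential limit of base changes of surjections of finite sets, each of which splits.
\item So \cref{md_seq} makes \(D\to K\) descendable of index~\(\leq2\). The \(\lim^{1}\) term is why one cannot do better in general, as the example above shows.
\item \Cref{i:md_de} of \cref{x50j16} then gives index \(\leq2\cdot2^{4d+2}=2^{4d+3}\) for \(D\to X\). Your identification of \(g_{*}\SS\)-modules with sheaves on~\(K\) is fine and is used implicitly here.
\item Since \(f_{*}\SS\to(D\to X)_{*}\SS\) is a map of commutative algebras, \cref{i:md_e} of \cref{x50j16} transfers the bound to \(Y\to X\).
\end{itemize}

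This also makes your third paragraph unnecessary: \cref{i:md_e} of \cref{x50j16} already passes from the composite back to~\(f\). The factor of two in the statement comes from \cref{md_seq}, not from tensor-power bookkeeping. Done correctly with a genuine section, your fiber argument would give \(I\to\SS\) factoring through~\(J\), hence index \(\leq2^{4d+2}\); the doubling you introduce is not what accounts for the bound.
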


\begin{proof}
  By \cref{x8izhs},
  we obtain a map \(C\to X\) from a Cantor set~\(C\).
  Then we consider a surjection
  \(D\to C\times_{X}Y\) from a Cantor set~\(D\).
  By~\cref{i:md_e} of \cref{x50j16},
  it suffices to bound
  the descendability of \(D\to X\).
  The map \(D\to C\)
  is a sequential limit of surjections between finite sets.
  By \cref{md_seq}, this is descendable of index \(\leq2\).
\end{proof}

\begin{remark}\label{xzu7rw}
  The bound in \cref{x1lqb7} is suboptimal;
  e.g., one can construct
  a surjection from the Cantor set to~\([0,1]^{2}\)
  with descendability index \(\leq 6\).
\end{remark}

\begin{remark}\label{x65y5q}
  In \cref{x1lqb7},
  the finite-dimensionality assumption on~\(X\) cannot be dropped.
  When \(X=[0,1]^{\NN}\),
  we cannot find a cover from a profinite set,
  since even the usual descent fails.
\end{remark}

\begin{remark}\label{xzad7y}
  In \cref{x1lqb7},
  even in the zero-dimensional case,
  the size restriction cannot be dropped;
  in general, a surjection
  between profinite sets
  is not a cover
  over~\(\Cat{Pr}_{\st}\),
  as shown in~\cite{n-stone}.
\end{remark}

\subsection{Condensed animas}\label{ss:clausen}

We review Clausen’s pointwise descendability criterion
in~\cite[Section~6]{Clausen}.
First we need to recall the following definition of
Clausen–Scholze~\cite{AnaSta}:

\begin{definition}\label{clasch}
  We call a profinite set \emph{light}
  if it is of countable weight.
  We write \(\Cat{PFin}_{\lgt}\) for the category
  of light profinite sets.
  We consider the Grothendieck topology on it
  generated by finite jointly surjective families.
  A \emph{(light) condensed anima}
  is a hypersheaf on \(\Cat{PFin}_{\lgt}\).
  We write \(\Cat{ConAni}_{\lgt}\) for this category.
\end{definition}

\begin{definition}\label{x8746d}
  Let \(X\) be a condensed anima.
  We write \(\Shv^{\wedge}(X;\Cat{Sp})\)
  for what is obtained by applying
  descent to \(X\mapsto\Shv(X;\Cat{Sp})\)
  for light profinite sets~\(X\).
  We define \(\Shv^{\wedge}(X;\D(\ZZ))\) similarly.
\end{definition}

\begin{example}\label{xdhy4g}
  By~\cite[Corollary~2.8]{Haine},
  when \(X\) is a light compact Hausdorff space,
  this is the category of Postnikov-complete sheaves,
  hence the notation.
\end{example}

We recall the following:

\begin{definition}\label{x1vxz5}
  Let \(X\) be a condensed anima.
  Its \emph{cohomological dimension}
  \(\dim_{\ZZ}X\) is
  the smallest integer~\(d\) (if it exists)
  such that for any \(M\in\Shv^{\wedge}(X;\D(\ZZ))\)
  concentrated in degree~\(0\),
  the cohomology \(\Gamma(X;M)\) belongs to \(\D(\ZZ)_{\geq-d}\).
  For a profinite group~\(G\),
  we simply call \(\dim_{\ZZ}(BG)\)
  the cohomological dimension of~\(G\).
\end{definition}

\begin{remark}\label{xi27fj}
  What is defined in \cref{x1vxz5}
  is often called the \emph{strict cohomological dimension}
  in the literature on profinite groups,
  where cohomological dimension
  can mean the dimension with respect to torsion sheaves.
  By \cref{xpyglx} below,
  we can relate these notions.
\end{remark}

\begin{lemma}\label{xpyglx}
  When \(\Gamma(X;\X)\colon\Shv^{\wedge}(X;\D(\ZZ))\to\D(\ZZ)\)
  preserves filtered colimits,
  \(\dim_{\ZZ}X\leq\max(\dim_{\QQ}X,\sup_{p}(\dim_{\FF_{p}}X)+1)\)
  holds.
\end{lemma}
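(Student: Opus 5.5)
We must bound \(\dim_{\ZZ}X\) from above. Set \(d=\max(\dim_{\QQ}X,\sup_{p}\dim_{\FF_{p}}X+1)\); we may assume it is finite. Take \(M\in\Shv^{\wedge}(X;\D(\ZZ))\) concentrated in degree~\(0\); we must show \(\Gamma(X;M)\in\D(\ZZ)_{\geq-d}\), i.e.\ \(H^{i}(X;M)=0\) for \(i>d\). Here \(\dim_{\QQ}\) and \(\dim_{\FF_{p}}\) are understood as the analogous bounds for sheaves of \(\QQ\)-vector spaces and of \(\FF_{p}\)-vector spaces in degree~\(0\). The plan is to split \(M\) into a torsion part and a torsion-free part, and to reduce torsion-free sheaves to rational and mod~\(p\) sheaves. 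Each step involves only short exact sequences of abelian sheaves and filtered colimits, and so reduces to the long exact sequence in cohomology.

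First, the torsion part. Write \(M_{\mathrm{tors}}=\injlim_{n}M[n!]\), where \(M[n!]\) is the \(n!\)-torsion subsheaf. Each \(M[n!]\) has a finite filtration with graded pieces that are \(\FF_{p}\)-sheaves: filter by \(p\)-power torsion prime by prime. Hence \(H^{i}(X;M[n!])=0\) for \(i>\sup_{p}\dim_{\FF_{p}}X\), and the same holds for its subquotients. Since \(\Gamma(X;\X)\) preserves filtered colimits, \(H^{i}(X;M_{\mathrm{tors}})=\injlim_{n}H^{i}(X;M[n!])=0\) in that range. Using \(0\to M_{\mathrm{tors}}\to M\to M/M_{\mathrm{tors}}\to0\), it then suffices to treat a torsion-free~\(M\).

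For torsion-free~\(M\), use \(0\to M\to M\otimes\QQ\to M\otimes\QQ/\ZZ\to0\). The middle term is a \(\QQ\)-sheaf, so its cohomology vanishes above \(\dim_{\QQ}X\). The right term is torsion, so by the previous step its cohomology vanishes above \(\sup_{p}\dim_{\FF_{p}}X\). The long exact sequence gives \(H^{i}(X;M)=0\) for \(i>\max(\dim_{\QQ}X,\sup_{p}\dim_{\FF_{p}}X+1)\). This is exactly where the shift by~\(1\) comes from.

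The main obstacle is justifying the passage to colimits and subquotients inside the Postnikov-complete category \(\Shv^{\wedge}(X;\D(\ZZ))\). Concretely, one must check that the abelian sheaves \(M[n!]\), \(M\otimes\QQ\), etc.\ are formed compatibly with the heart of the \(t\)-structure, and that filtered colimits there agree with those in \(\Shv^{\wedge}(X;\D(\ZZ))\). I would handle this by noting that the heart is the abelian category of sheaves on light profinite sets descended along~\(X\), and that filtered colimits in the heart are computed in \(\Shv^{\wedge}\) because they are \(t\)-exact. The hypothesis that \(\Gamma\) preserves filtered colimits then gives the colimit step.
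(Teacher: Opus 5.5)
Your proof is correct, but it is organized differently from the paper's. The paper argues by contradiction on a single cohomology group. For \(i>d\), the sequences \(0\to M[p]\to M\to pM\to0\) and \(0\to pM\to M\to M/p\to0\) make both \(H^{i}(X;M)\to H^{i}(X;pM)\) and \(H^{i}(X;pM)\to H^{i}(X;M)\) injective. Their composite is multiplication by \(p\), so every prime acts injectively on \(H^{i}(X;M)\). Hence \(H^{i}(X;M)\) embeds into \(H^{i}(X;M)\otimes\QQ\cong H^{i}(X;M\otimes\QQ)\), which vanishes because \(i>\dim_{\QQ}X\). In that argument the shift by \(1\) comes from the \(H^{i-1}(X;M/p)\) term, and the filtered-colimit hypothesis is used only once, to commute rationalization with cohomology.

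You instead do a d\'evissage on the sheaf itself. You split \(M\) into its torsion subsheaf, which you treat as a filtered colimit of bounded-torsion sheaves each finitely filtered by \(\FF_{p}\)-sheaves, and a torsion-free quotient, which you treat via \(0\to M\to M\otimes\QQ\to M\otimes\QQ/\ZZ\to0\).

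The paper's route is shorter: it never forms torsion subsheaves or primary decompositions. Yours is more structural. It gives the sharper intermediate fact that torsion sheaves already have cohomology vanishing above \(\sup_{p}\dim_{\FF_{p}}X\), and it pins the \(+1\) on the connecting map of the rationalization sequence.

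The technical point you flag also arises for the paper, which needs \(M\otimes\QQ\) to lie in the heart. That point is that the \(t\)-structure on \(\Shv^{\wedge}(X;\D(\ZZ))\) is compatible with filtered colimits, so the heart is closed under them. Your justification is adequate: both filtered colimits and the \(t\)-structure are computed termwise after pulling back to light profinite sets, where they are pointwise on clopens.
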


\begin{proof}
  We assume that the right-hand side is finite
  and write it as~\(d\).
  We suppose the contrary so that there is \(i>d\)
  and~\(M\) such that \(H^{i}(X;M)\neq0\).
  We take a prime~\(p\),
  and we write \(pM\) for the image
  of \(p\colon M\to M\).
  We then consider short exact sequences
  \(0\to M[p]\to M\to pM\to 0\)
  and \(0\to pM\to M\to M/p\to 0\).
  Now \(M[p]\) and \(M/p\) are \(\FF_{p}\)-vector spaces,
  and hence
  for \(i>d\),
  the morphism \(H^{i}(X;M)\to H^{i}(X;pM)\)
  and \(H^{i}(X;pM)\to H^{i}(X;M)\)
  are injective.
  Therefore, \(p\) acts injectively on \(H^{i}(X;M)\).
  Since this holds for any prime~\(p\),
  we see \(H^{i}(X;M)\otimes\QQ=H^{i}(X;M\otimes\QQ)\neq0\),
  which is a contradiction.
\end{proof}

\begin{example}\label{xhcvbd}
  Let \(G\) be a free profinite group.
  By freeness,
  the torsion cohomology of~\(G\) vanishes in degree \(\geq2\).
  Hence, by \cref{xi27fj},
  we obtain \(\dim_{\ZZ}(BG)\leq2\).
  This inequality is sharp except for the trivial case,
  since \(H^{2}(\hat{\ZZ};\ZZ)=\QQ/\ZZ\neq0\).
\end{example}

\begin{theorem}[Clausen]\label{clausen}
  Let \(X\) be a \(1\)-truncated light profinite anima
  of finite cohomological dimension;
  i.e., the cohomological dimension of~\(G_{x}\) is uniformly bounded.
  A commutative algebra object~\(A\)
  in \(\Shv^{\wedge}(X;\Cat{Sp})\)
  is descendable
  if and only if there is \(d\) such that
  \(x^{*}(A)\) is descendable of index~\(\leq d\) in \(\Cat{Sp}\)
  for any \(x\colon{*}\to X\).
\end{theorem}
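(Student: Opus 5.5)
The "only if" direction is immediate: pullback along \(x\colon{*}\to X\) is symmetric monoidal and exact, so it sends the vanishing map \(I^{\otimes d}\to\unit\) to a vanishing map. The work is the "if" direction. The plan is to reduce to profinite sets, and then to a single point using stalks together with a compactness argument.

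\textbf{Step 1: reduce to a profinite set.} Since \(X\) is a light profinite anima, choose a cover \(S\to X\) by a light profinite set \(S\). Descendability is equivalent to the existence of \(d\) with \(I^{\otimes d}\to\unit\) null. I would pull back along \(S\to X\) and along \(S\times_{X}S\to X\). Using finite cohomological dimension of the stabilizers \(G_{x}\), uniformly bounded by some \(c\), I expect the pushforward of the unit along \(S\to X\) to be descendable of index bounded in terms of \(c\). Locally this is \(BG\gets{*}\); the Adams tower for \(\SS\to\SS[G]\) should vanish after about \(c+1\) steps, via the Postnikov-completeness and finite-dimension argument. Then \cref{x50j16} reduces the claim to proving descendability of \(A|_{S}\) on \(S\), with a uniform index.

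\textbf{Step 2: the profinite set case.} Write \(S=\projlim_{n}S_{n}\) with \(S_{n}\) finite. Set \(I=\fib(\unit\to A)\) and let \(\phi\colon I^{\otimes d}\to\unit\) be the tautological map. By hypothesis each stalk \(\phi_{s}\) is null. Because \(\unit\) and \(\Gamma\) commute with filtered colimits over clopens on a light profinite set, we have \(\Map(I^{\otimes d},\unit)\) at a stalk equal to \(\injlim_{U\ni s}\Map(I^{\otimes d}|_{U},\unit_{U})\). I would like to conclude that each null-homotopy spreads to a clopen neighbourhood, and then glue over a finite clopen partition, which is possible since \(S\) is compact and disjoint clopens split.

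The difficulty is that \(I^{\otimes d}\) is not compact, so the stalkwise null-homotopy need not spread. Here I would follow Clausen: replace the index \(d\) by \(2d\), and show that \(\phi\) becomes null on a neighbourhood. To do this, write \(I^{\otimes d}\) as a filtered colimit of compact objects \(K_{\alpha}\), and use that null-homotopies of the restrictions \(K_{\alpha}\to\unit\) spread. Then control the \(\projlim^{1}\) obstruction, exactly as in \cref{md_seq}, by squaring: the phantom obstruction squares to zero.

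\textbf{Main obstacle.} I expect the hardest part to be making the neighbourhood extension uniform. The obstruction lies in a \(\projlim^{1}\)-type (phantom) group, and the argument has to show its square vanishes on the whole of \(S\), not only near one point. I would first try to realise \(A|_{S}\) as \(\injlim_{n}\) of algebras pulled back from the finite sets \(S_{n}\). On each \(S_{n}\), descendability is a finite product of pointwise statements. Then apply \cref{md_seq}, which yields index \(\le 2d\) on \(S\), and combine with Step 1 to bound the total index.
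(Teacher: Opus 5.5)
\textbf{The gap is Step~1.} Your reduction there is formally fine once its input is available. You do not even need a projection formula: if \(\pi^{*}\phi\) is null, then \(\pi_{*}\unit_{S}\otimes\phi\) is null via the lax structure map \(\pi_{*}\unit_{S}\otimes M\to\pi_{*}\pi^{*}M\), and then \cref{x50j16} applies. The problem is the input itself: descendability of \(\pi_{*}\unit_{S}\) in \(\Shv^{\wedge}(X;\Cat{Sp})\) with bounded index. This is not a preliminary step. The stalks of \(\pi_{*}\unit_{S}\) are (by base change) continuous functions on the nonempty profinite fibres of \(S\to X\), so this algebra is pointwise of index~\(1\). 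Your claim is therefore itself an instance of the theorem, and it is exactly the instance that carries all the content about the stabilizers.

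Your justification does not prove it. There is no sense in which descendability on \(X\) can be checked ``locally'' near points or on stabilizers; that is precisely what the theorem asserts. Approximating \(X\) by finite groupoids \(X_{n}\) also fails. Finite stabilizers have infinite cohomological dimension, and pointwise split algebras in \(\Fun(BG,\Cat{Sp})\) for finite \(G\) are in general not descendable (cf.~\cref{xg2rlr}). So finite cohomological dimension and Postnikov completeness must enter in a non-formal way. Even for \(X=BG\), the claim that the Adams tower vanishes after about \(c+1\) steps needs a real argument. For instance, you would need a horizontal vanishing line, uniform in the source object, for the \(\Cls{C}(G;\SS)\)-based Adams spectral sequence, coming from \(\operatorname{Ext}\)-bounds for continuous \(G\)-modules plus a convergence argument. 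None of this is supplied. The paper does not prove it either: it imports exactly this input by citing Clausen's Theorem~6.21, whose hypotheses are checked in his examples.

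\textbf{Step~2 is easier than you think.} You have misplaced the main obstacle: the profinite-set case needs neither compact approximations nor phantom bookkeeping, and your last idea already works. Take any presentation \(A|_{S}\simeq\injlim_{n}p_{n}^{*}A_{n}\) by algebra maps with \(p_{n}\colon S\to S_{n}\) surjective. For example, \(A_{n}=p_{n,*}(A|_{S})\) works, since stalks detect equivalences on \(S\). The value of \(A_{n}\) at \(t\in S_{n}\) is a section over a clopen, not a stalk. But it maps as a commutative algebra to \(x^{*}A\), for any \(s\in p_{n}^{-1}(t)\) with image \(x\in X\). Hence it is descendable of index \(\leq d\) by \cref{i:md_e} of \cref{x50j16}. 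So \(p_{n}^{*}A_{n}\) has index \(\leq d\), and \cref{md_seq} gives index \(\leq 2d\) on \(S\). You should state this step explicitly.

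Your write-up thus settles the easy part; what remains open is exactly the descendability of \(\pi_{*}\unit_{S}\) in Step~1.
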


\begin{proof}
  This is a special case of~\cite[Theorem~6.21]{Clausen}.
  Indeed,
  its assumptions
  are satisfied by~\cite[Examples~6.14.2 and 6.19.3]{Clausen}.
\end{proof}

\Cref{clausen} is only useful
in the situation where descendability works.
Here we note the following easy example
(cf.~\cref{xg2rlr}):

\begin{proposition}\label{xapqpz}
  Let \(G\) be a finite group.
  Then \(\Fun(BG;\Cat{Sp})=\Shv^{\wedge}(BG;\Cat{Sp})\to\Cat{Sp}\)
  satisfies descent in \(2\Cat{Pr}_{\st}\).
\end{proposition}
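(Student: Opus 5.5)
We prove that the commutative algebra object \(\cat{A}=\Fun(BG;\Cat{Sp})\) of \(2\Cat{Pr}_{\st}\) satisfies descent, where we view it as the stack \(BG\) and the morphism \(\cat{A}\to\Cat{Sp}\) is pullback along \(e\colon{*}\to BG\). Descent is then the statement that the morphism \({*}\to BG\) of stacks over \(2\Cat{Pr}_{\st}\) is a cover in the sense of \cref{x0wijg}; more precisely, we want \(\Cat{Sp}\), regarded as a commutative algebra in \(\Mod_{\cat{A}}(2\Cat{Pr}_{\st})\), to satisfy descent.

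The plan is to apply \cref{xpbfpq} to \(\cat{C}=\Mod_{\cat{A}}(2\Cat{Pr}_{\st})\), which is a presentably symmetric monoidal \(2\)-category with \(\End_{\cat{C}}(\unit)\simeq\cat{A}\) presentable. The unit \(u=e^{*}\colon\cat{A}\to\Cat{Sp}\) has the \(\cat{A}\)-linear right adjoint \(e_{*}\), given by coinduction, which satisfies the projection formula because \(G\) is finite: \(e_{*}\) is also the left adjoint \(e_{!}\), the induction \(\SS[G]\otimes\X\). By \cref{xpbfpq}, descent for \(\Cat{Sp}\) is therefore equivalent to descent, inside the \(1\)-category \(\cat{A}\), of the commutative algebra \(u^{\R}u\unit=e_{*}\SS\). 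This object is the regular representation \(\Map(G,\SS)\simeq\SS[G]\), the coinduced algebra.

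It then remains to show that \(B=\Map(G,\SS)\) satisfies descent in \(\cat{A}\). As \cref{xg2rlr} shows, descendability fails, so we cannot use \cref{x670j7}; we must use \cref{x6ruic} directly. Concretely, we must show that for every \(\cat{A}\)-module \(\cat{M}\) in \(\Cat{Pr}_{\st}\), the functor \(\cat{M}\to\Tot\Mod_{B^{\otimes\bullet+1}}(\cat{M})\) is an equivalence.

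The key observation is that \(\Mod_{B}(\cat{A})\simeq\Fun(EG\times_{G}G,\Cat{Sp})\simeq\Cat{Sp}\), and more generally \(\Mod_{B^{\otimes n+1}}(\cat{A})\simeq\Fun(G^{n},\Cat{Sp})\). Thus \(\Mod_{B^{\otimes\bullet+1}}(\cat{M})\) is the Čech nerve of \(\cat{M}\to\Cat{Sp}\otimes_{\cat{A}}\cat{M}\), and it is identified with \(\Fun(G^{\bullet},\cat{M}')\), where \(\cat{M}'=\Cat{Sp}\otimes_{\cat{A}}\cat{M}\) carries a \(G\)-action. Its totalization is \(\Fun(BG,\cat{M}')=(\cat{M}')^{hG}\). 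So the claim reduces to Galois descent: \(\cat{M}\simeq(\Cat{Sp}\otimes_{\cat{A}}\cat{M})^{hG}\) for every \(\cat{A}\)-module \(\cat{M}\). Equivalently, the adjunction \(\Cat{Sp}\otimes_{\cat{A}}\X\dashv(\X)^{hG}\) between \(\Mod_{\cat{A}}(\Cat{Pr}_{\st})\) and \(\Fun(BG,\Cat{Pr}_{\st})\) is an equivalence; this is the \(1\)-affineness of \(BG\).

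To prove this equivalence, we first note that the right adjoint preserves colimits, because \((\X)^{hG}\) for finite \(G\) in \(\Cat{Pr}_{\st}\) agrees with \((\X)_{hG}\) by ambidexterity of finite groupoids in \(\Cat{Pr}_{\st}\). Next, the unit is an equivalence on the generator \(\cat{A}\) itself, since \(\Cat{Sp}^{hG}=\cat{A}\). Finally, the counit is an equivalence on the generators \(\Fun(G,\Cat{Sp})\), which are free \(G\)-objects. Since both sides are generated under colimits by these objects, the adjunction is an equivalence.

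The main obstacle is the ambidexterity step: verifying that limits over \(BG\) commute with the relevant colimits in \(\Cat{Pr}_{\st}\), i.e.\ the norm equivalence \((\X)_{hG}\simeq(\X)^{hG}\) for \(G\)-actions on presentable stable categories. We would import this from the general theory of ambidexterity of \(\pi\)-finite spaces in \(\Cat{Pr}\), where the \(1\)-truncated case holds outright. An alternative route is to use \cref{d_sua}: \(\cat{A}\) is dualizable (self-dual) and \(e^{*}\) has a left adjoint, so it suffices to check that the \(\E_{0}\)-coalgebra \(e^{*}e_{\natural}\) is faithful in \(\End(\unit)=\Cat{Sp}\). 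This reduces to the statement that \(|\SS[G^{\bullet+1}]|\to\SS\) is a colimit diagram, the bar resolution \(EG\simeq{*}\). That is immediate, and we would present this second route if the identifications in \cref{d_sua} go through cleanly.
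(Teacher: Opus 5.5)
Your argument is correct, but your primary route is genuinely different from the paper's.

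The paper stays on the suave side. It applies \cref{d_sua} in \(\Mod_{\cat{A}}(\Cat{Pr}_{\st})\) with \(A=\Cat{Sp}\). It uses that \(e^{*}\colon\cat{A}\to\Cat{Sp}\) and the multiplication \(\Fun(G,\Cat{Sp})\simeq\Cat{Sp}\otimes_{\cat{A}}\Cat{Sp}\to\Cat{Sp}\) both have linear left adjoints; the latter supplies the dualizability of \(\Cat{Sp}\) over \(\cat{A}\). This reduces everything to faithfulness of the regular representation \(\SS[G]\to\SS\) in \(\Fun(BG,\Cat{Sp})\). That is checked in \(\Cat{Sp}\), where the bar construction splits.

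You instead pass to the prim side via \cref{xpbfpq} and then prove \(1\)-affineness of \(BG\) from ambidexterity in \(\Cat{Pr}\).
\begin{itemize}
\item This proves more, namely \(\Mod_{\cat{A}}(\Cat{Pr}_{\st})\simeq\Fun(BG,\Cat{Pr}_{\st})\).
\item It imports \(\lim_{BG}\simeq\operatorname{colim}_{BG}\) in \(\Cat{Pr}\).
\item Your generation step needs rephrasing: every \(\cat{A}\)-module is a colimit of free modules \(\cat{A}\otimes\cat{N}\) with \(\cat{N}\in\Cat{Pr}_{\st}\), so both functors must be \(\Cat{Pr}_{\st}\)-linear. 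For descent, only the unit of the adjunction matters.
\item It runs opposite to the paper, which deduces the prim-side statement of \cref{xg2rlr} from this proposition.
\end{itemize}
Your detour through \cref{x6ruic} is also unnecessary. The algebra \(B=e_{*}\SS\) is dualizable in \(\cat{A}\), so by \cref{xq4y2d} its descent is equivalent to faithfulness of \(B^{\vee}\simeq\SS[G]\) with its augmentation. That is the same split check, so your primary route collapses to the paper's computation anyway.

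Your fallback route via \cref{d_sua} is the paper's proof, but several identifications are off:
\begin{itemize}
\item The ambient category is \(\Mod_{\cat{A}}(\Cat{Pr}_{\st})\), not \(\Mod_{\cat{A}}(2\Cat{Pr}_{\st})\).
\item The dualizability required is that of \(\Cat{Sp}\) over \(\cat{A}\), not of \(\cat{A}\).
\item Since the unit object is \(\cat{A}\), we have \(\End(\unit)=\cat{A}=\Fun(BG,\Cat{Sp})\), not \(\Cat{Sp}\).
\item The coalgebra is \(e_{\natural}e^{*}\unit=\SS[G]\) with its augmentation, not \(e^{*}e_{\natural}\).
\end{itemize}
Because the forgetful functor \(\Fun(BG,\Cat{Sp})\to\Cat{Sp}\) preserves and detects colimits, your final check \(\lvert\SS[G^{\bullet+1}]\rvert\simeq\SS\) is then exactly what is required. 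This is the paper's ``compute the colimit in \(\Cat{Sp}\), where there is a splitting.''
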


\begin{proof}
We can apply \cref{d_sua},
  since
  both
  \(\Fun(BG,\Cat{Sp})\to\Cat{Sp}\)
  and
  \(\Fun(G,\Cat{Sp})\to\Cat{Sp}\)
  admit left adjoints.
  By unwinding the definition,
  it is reduced to showing that
  the regular representation is faithful in \(\Fun(BG,\Cat{Sp})\).
  We can compute the colimit in~\(\Cat{Sp}\) instead,
  and there we have a splitting.
\end{proof}

\section{Algebraic \texorpdfstring{\(2\)}{2}-motives and ring stacks}\label{s:main_alg}

In this section,
we prove \cref{main_2},
which characterizes \(\SH[2](\ZZ)\)
in terms of ring stacks.
In \cref{ss:aff},
we first translate \cref{main_1}
in a form suitable for our use.
In \cref{ss:sh_six},
we first construct six operations
in the universal recipient.
In \cref{ss:before},
we perform an axiomatic argument
showing that we get a class of smooth morphisms
in the universal target.
In \cref{ss:cc_alg}, we prove \cref{main_1}.
In \cref{ss:cdh},
we prove that cdh~descent holds for algebraic \(2\)-motives.

\subsection{\texorpdfstring{\(2\)}{2}-motives of rings}\label{ss:aff}

To prove \cref{main_2},
we need to have a version of \cref{main_1}
that uses (static) rings in place
of quasiprojective static varieties:

\begin{theorem}\label{sh_ring}
  We write \(\Cat{Ring}^{\heartsuit}\) for the category of static rings.
  The morphism
  \([\X]\colon(\Cat{Ring}^{\heartsuit})_{\aleph_{0}}
  \to\SH[2](\ZZ)\)
  defined as \([\Spec(\X)]\)
  is the universal symmetric monoidal functor
  satisfying the following:
  \begin{conenum}
    \item\label{i:s_sm}
      When \(A\to B\) is smooth,
      \(f^{*}\colon[A]\to[B]\)
      has an \([A]\)-linear left adjoint \(f_{\natural}\)
      satisfying base change.
    \item\label{i:s_exc}
      For \(A\) and an element \(a\in A\),
      the \([A[a^{-1}]]\gets[A]\to[A/a]\)
      is a recollement.
    \item\label{i:s_hi}
      The morphism
      \({\id}\to f^{*}f_{\natural}\)
      is an equivalence for \(f\colon A\to A[T]\)
      for any~\(A\).
    \item\label{i:s_tate}
      For \(f\colon A\to A[T]\)
      and \(s\colon A[T]\to A\) given by \(T\mapsto0\),
      the morphism
      \(f_{\natural}s_{*}\)
      is an equivalence.
  \end{conenum}
\end{theorem}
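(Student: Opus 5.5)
The plan is to deduce \cref{sh_ring} from \cref{main_1} (with \(S=\Spec\ZZ\)) by comparing the two universal targets. Write \(\Mot[2]'\) for the universal recipient of symmetric monoidal functors out of \((\Cat{Ring}^{\heartsuit})_{\aleph_{0}}=\) finitely presented static rings satisfying \cref{i:s_sm,i:s_exc,i:s_hi,i:s_tate}. The functor \([\Spec(\X)]\) satisfies these axioms: \cref{i:s_sm,i:s_hi,i:s_tate} are special cases of \cref{i:a_sm,i:a_hi,i:a_tate} of \cref{main_1} applied to affine schemes, and \cref{i:s_exc} is a special case of \cref{i:a_exc}. This yields a comparison \(F\colon\Mot[2]'\to\SH[2](\ZZ)\).

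For the inverse, observe by \cref{xddj7g} and \cref{xlsbil} that \cref{main_1} holds verbatim with \(\Cat{QProj}_{\ZZ}\) replaced by its full subcategory of affines, i.e., by \(((\Cat{Ring}^{\heartsuit})_{\aleph_{0}})^{\op}\). Indeed, the proof of \cref{main_1} only used \cref{drga}, \cref{ayoub}/\cref{aplus}, and \cref{cll}, and each has an affine version: \cref{xlsbil} says Drew–Gallauer's result holds on affines, while \(\SH\) of an affine agrees with the one built from quasiprojective smooth affines by the basis argument. It therefore suffices to show that a symmetric monoidal functor on finitely presented rings satisfying \cref{i:s_sm}–\cref{i:s_tate} satisfies \cref{i:a_sm}–\cref{i:a_tate} for affine schemes.

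The only nontrivial step is upgrading \cref{i:s_exc} from principal closed subsets \(V(a)\) to arbitrary closed subsets \(V(a_{1},\dotsc,a_{n})\) with (non-affine) complement \(U\). First, I plan to show that for \(A\to A[a^{-1}]\) the left adjoint exists and is fully faithful, that these open immersions compose, and, via \cref{xwog77}, that the corresponding excision squares are cartesian. Then I would induct on \(n\): cut \(V(a_{1},\dotsc,a_{n})\) as \(V(a_{1})\cap V(a_{2},\dotsc,a_{n})\) and reglue recollements using base change along \(A\to A/a_{1}\) and \(A\to A[a_{1}^{-1}]\) to build the recollement. Here \([U]\) is defined by gluing the principal opens, which requires Zariski descent. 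I would obtain this descent from \cref{xwog77} applied iteratively, i.e., the cartesianness of excision squares for \(D(a_{1})\cup D(a_{2})\), together with \cref{xrsyv9}. Since \cref{main_1} as stated uses quasiprojective \(U\), one could also sidestep this by restricting \cref{i:a_exc} to the case \(Z=V(a)\) in the affine variant. This only requires checking that Ayoub's argument (\cref{ayoub}, \cref{aminus}) uses excision only for closed immersions with affine-principal complements locally, which it does after Zariski-localizing, and that Drew–Gallauer's universality goes through.

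I expect this localization step to be the main obstacle: making sure that principal-open recollements generate all the needed excision, including the Zariski descent implicit in \cref{ayoub}'s proof for \(\PP^{n}\). Once it is done, \(G\colon\SH[2](\ZZ)\to\Mot[2]'\) exists, and \(GF\simeq\id\) and \(FG\simeq\id\) follow from the two universal properties exactly as at the end of the proof of \cref{main_1}.
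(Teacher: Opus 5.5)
Your proposal has a genuine gap: the reduction you start from is not available. You take \cref{main_1} to hold ``verbatim'' on the full subcategory of affines, but nothing in the paper gives this. \Cref{xddj7g} is about using a \emph{larger} category of schemes. \Cref{xlsbil} only says that the Drew--Gallauer identification of \(\SH\) survives on affines.

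Neither touches the other half of the proof of \cref{main_1}. That half builds the functor out of \(\Span(\Cat{QProj})\) and the comparison \(\alpha\) via \cref{cll}, with \(J\) the open immersions and \(P\) the projective morphisms. It gets projective base change from \cref{ayoub,aplus,aminus}, whose content is the morphism \(\PP^{n}_{X}\to X\).

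Among affines, proper means finite. The map \(\AA^{1}_{\ZZ}\to\Spec\ZZ\) is not a finite map precomposed with an open immersion of affines, since open subschemes of schemes finite over \(\ZZ\) have dimension \(\leq1\). So \cref{cll} does not produce the span functoriality on affines, and there is no \(\PP^{n}\) for Ayoub's argument to act on.

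Moreover, \cref{i:a_exc} for affine \(X\) involves nonaffine complements \(U\), which are not objects. If you restrict it to principal opens to make it meaningful, the ``affine variant of \cref{main_1}'' you invoke is exactly \cref{sh_ring}, so the argument is circular. Your last paragraph senses this (``the Zariski descent implicit in Ayoub's proof for \(\PP^{n}\)'') but does not resolve it.

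The missing idea is to leave the affine world inside the universal target \(\SH[2]'(\ZZ)\), and then apply \cref{main_1} as proved, on \(\Cat{QProj}\).

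The paper first right Kan extends \([\X]\) from \((\Cat{Ring}^{\heartsuit})_{\aleph_{0}}\) to all presheaves on affines. By \cref{x02mj4}, this extension is a Zariski sheaf. There, principal excision plus \cref{xwog77} gives \([A]\simeq[A[\sfrac1{a_{1}}]]\times_{[A[\sfrac1{a_{1}a_{2}}]]}[A[\sfrac1{a_{2}}]]\), and \cref{d_sua} upgrades this to descent. So every quasiprojective \(X\), including \(\PP^{n}\), gets an object \([X]\).

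It then checks the axioms of \cref{main_1} on all of \(\Cat{QProj}\):
\begin{itemize}
\item Excision is proved first for quasiaffine \(U\) in affine \(X\), by induction on the number of affines covering \(U\); this is where your inductive localization argument belongs.
\item Excision for general \(X\) then follows by Zariski gluing, using smooth base change for open immersions and proper base change for closed immersions.
\item \Cref{i:a_hi,i:a_tate} follow by descent along an affine Zariski cover.
\item \Cref{i:a_sm} follows via the K\"unneth formula \cref{x15q1p} and \cref{sua_loc}.
\end{itemize}
Only then does \cref{main_1} yield \(G\).

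Also, \(FG\simeq\id\) is not purely formal. You need that \(\Cat{QProj}^{\op}\to\SH[2](\ZZ)\) is itself a Zariski sheaf, so that agreement with \([\X]\) on affines propagates to all of \(\Cat{QProj}\).
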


We write \(\SH[2]'(\ZZ)\) for the universal target of \cref{sh_ring}.
By \cref{main_1},
we obtain a morphism \(F\colon{\SH[2]'(\ZZ)}\to{\SH[2](\ZZ)}\).

\begin{lemma}\label{xdp5rb}
  For \(A'\gets A\to B\) in \((\Cat{Ring}^{\heartsuit})_{\aleph_{0}}\),
  the morphism \([A']\otimes_{[A]}[B]\to[A'\otimes_{A}B]\)
  is an equivalence in \(\SH[2]'(\ZZ)\).
\end{lemma}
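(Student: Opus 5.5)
We deduce this from \cref{x15q1p}, applied with \(\cat{G}=((\Cat{Ring}^{\heartsuit})_{\aleph_{0}})^{\op}\) (affine schemes of finite presentation over~\(\ZZ\)), \(\cat{C}=\SH[2]'(\ZZ)\), and \(I\) the class of closed immersions, i.e., surjections \(A\to A/J\) with \(J\) finitely generated. For \(A'\gets A\to B\), the relevant diagonal corresponds to the multiplication \(A\otimes A\to A\). This is surjective with finitely generated kernel, since \(A\) is of finite presentation, so it lies in~\(I\). Clearly \(I\) is stable under base change. It therefore remains to show two things. First, for any surjection \(i\colon A\to A/J\), the morphism \(i^{*}\) admits a fully faithful right adjoint~\(i_{*}\). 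Second, this adjoint satisfies base change along arbitrary maps \(A\to A''\).

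For a principal ideal \(J=(a)\), both claims come from \cref{i:s_exc}. The recollement \([A[a^{-1}]]\gets[A]\to[A/a]\) supplies \(i_{*}\), and the unit \({\id}\to i_{*}i^{*}\) is an equivalence. By definition this unit is an endomorphism of~\([A]\) in the \(2\)-category, so \(i_{*}\) is fully faithful there. For base change, observe that \(A\to A''\) sends \(a\) to its image \(a''\), and \(A''/a''=A''\otimes_{A}A/a\). We then apply \cref{xwog77}, or more directly \cref{xrsyv9}, to the map of recollements induced by \(A\to A''\), together with \cref{i:s_sm} for the open immersion \(A\to A[a^{-1}]\). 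The open part is compatible because smooth base change is built into \cref{i:s_sm}. By \cref{xrsyv9}, the closed part is the kernel of \(j^{*}\), equivalently the cofiber of \(j_{!}j^{*}\to{\id}\). Since \(j_{!}=j_{\natural}\) commutes with base change, \(i_{*}i^{*}\) does as well. From this we read off the Beck–Chevalley equivalence \(i''^{*}\circ\text{(pullback)}\circ i_{*}\simeq i''_{*}\circ\text{(pullback)}\).

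For a general \(J=(a_{1},\dotsc,a_{n})\), we factor \(A\to A/a_{1}\to A/(a_{1},a_{2})\to\dotsb\). Fully faithful right adjoints compose, and Beck–Chevalley squares paste. Each step is a principal quotient of a finitely presented ring, so the principal case applies at every stage. This establishes the hypotheses of \cref{x15q1p}, and the canonical map \([A']\otimes_{[A]}[B]\to[A'\otimes_{A}B]\) is an equivalence.

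The main obstacle is the base change step in the principal case. We must check that the right adjoint furnished by the recollement is compatible with arbitrary pullbacks, not just those along smooth maps. The plan is to avoid a direct computation. Instead we use the characterization of the closed part as the complement of the open part (\cref{xqrpo7}) and transport the known smooth base change for the open immersion \(A\to A[a^{-1}]\) across the cofiber sequence of \cref{xpyo8y}. This sequence is functorial in the base, because pullback functors preserve colimits in the stable setting.
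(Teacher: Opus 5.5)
Your proposal is correct and is essentially the paper's proof. The paper invokes \cref{x15q1p} with \(I\) the closed immersions and notes that every closed immersion is a finite composite of principal ones; for these, \cref{i:s_exc} together with smooth base change for the complementary open supplies the fully faithful right adjoint and its base change, exactly as you spell out. One small correction: what makes \(i_{*}\) fully faithful is that the counit \(i^{*}i_{*}\to\id\) is an equivalence (the unit and counit in \cref{xpyo8y} appear to be swapped), so your sentence about the unit \(\id\to i_{*}i^{*}\) should be read that way.
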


\begin{proof}
  This follows from \cref{x15q1p};
  any closed immersion
  is a finite composition of principal closed immersions.
\end{proof}

\begin{lemma}\label{x02mj4}
  A Zariski cover \(A\to B\) in \((\Cat{Ring}^{\heartsuit})_{\aleph_{0}}\)
  determines a cover of stacks over \(\SH[2]'(\ZZ)\).
\end{lemma}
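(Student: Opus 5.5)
We reduce to principal open covers and then to the two-piece case coming from a recollement. A Zariski cover \(A\to B\) can be refined to \(A\to\prod_{i=1}^{n}A[a_{i}^{-1}]\) with \((a_{1},\dotsc,a_{n})=A\). Refining is harmless by the cancellation statement \cref{i:d_canc} of \cref{x4xcqq}. For this we need \([\prod_{i}A[a_{i}^{-1}]]\simeq\prod_{i}[A[a_{i}^{-1}]]\) in \(\SH[2]'(\ZZ)\); this follows from \cref{i:s_exc} applied to the idempotent element cutting out each factor. So it suffices to show that \([A]\to\prod_{i}[A[a_{i}^{-1}]]\) satisfies descent.

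We argue by induction on \(n\), using the base-change and cancellation properties \cref{xhrdzm,x4xcqq}. Let \(Z=V(a_{n})\) and \(U=D(a_{n})\). By \cref{i:s_exc}, the diagram \([A[a_{n}^{-1}]]\gets[A]\to[A/a_{n}]\) is a recollement. On \(A/a_{n}\) the elements \(a_{1},\dotsc,a_{n-1}\) still generate the unit ideal. By the induction hypothesis, together with the base change of \cref{xdp5rb}, the pullback of the cover to \([A/a_{n}]\) therefore satisfies descent. On \([A[a_{n}^{-1}]]\) the cover acquires a section, hence satisfies descent by \cref{x059gc}.

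The remaining step is a gluing statement: a commutative algebra \(B\) over \([A]\) satisfies descent once its base changes to both parts of a recollement do. I would prove this via \cref{x6ruic}, by showing that for every \([A]\)-module \(\cat{M}\) the map \(\cat{M}\to\Tot(\Mod_{B^{\otimes\bullet+1}}(\cat{M}))\) is an equivalence. By \cref{xrsyv9,xwog77}, the module category \(\cat{M}\) is recovered as a recollement glued from \(\cat{M}_{U}=\Mod_{[A[a_{n}^{-1}]]}(\cat{M})\) and \(\cat{M}_{Z}=\Mod_{[A/a_{n}]}(\cat{M})\). The same holds for each \(\Mod_{B^{\otimes k}}(\cat{M})\), via the excision pullback squares. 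Since limits commute with limits, the totalization then decomposes into the totalizations over \(U\) and over \(Z\), which are equivalences by the two cases above.

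This gluing step is the main obstacle. The difficulty is making the decomposition of \(\cat{M}\) functorial in the cosimplicial direction, and keeping the open part \(j_{!}\) compatible with the transition maps. One route is to present a recollement as a pullback \(\cat{M}\simeq\cat{M}_{U}\times_{\cat{M}_{Z}'}\cat{M}_{Z}\), where \(\cat{M}_{Z}'\) is \(\cat{M}_{Z}\) with the base change of \(i^{*}j_{*}\) applied. The factors \(\cat{M}_{U}\), \(\cat{M}_{Z}\), and \(\cat{M}_{Z}'\) are all base changes along idempotent algebras or coalgebras in \(\Mod_{[A]}(2\Cat{Pr})\), and base change commutes with the relative tensor products \(\Mod_{B^{\otimes k}}\). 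If this becomes messy, the fallback is the suave criterion \cref{d_sua}: the cover is suave (open immersions are suave by \cref{xuw75d}), so descent reduces to faithfulness of an explicit \(\E_{0}\)-coalgebra in \(\End([A])\). That faithfulness can be checked after applying \(j^{*}\) and \(i^{*}\), using the pushout square of \cref{xpyo8y}.
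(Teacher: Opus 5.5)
Your proof works. The \cref{d_sua} fallback is what actually proves the gluing step, so you can drop the module-level route through \cref{x6ruic} and lead with the suave criterion.

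The difference from the paper lies in the decomposition. The paper reduces to binary covers, leaving the reduction from general \(n\) implicit. It then uses \cref{xwog77} with \cref{i:s_exc} to get the Mayer--Vietoris formula \([A]\simeq[A[\sfrac1{a_{1}}]]\times_{[A[\sfrac1{a_{1}a_{2}}]]}[A[\sfrac1{a_{2}}]]\). Finally it checks faithfulness of the coalgebra from \cref{d_sua} on the two opens, where the cover has a section.

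You instead stratify into an open and a closed piece. The recollement for the single element \(a_{n}\) makes \(j^{*}\) and \(i^{*}\) colimit-preserving and jointly conservative on \(\Hom(\unit,[A])\). The open stratum is handled by the section. The closed stratum \(V(a_{n})\) is handled by induction on \(n\): the factor \((A/a_{n})[\sfrac1{a_{n}}]=0\) contributes a zero object, and \(A/a_{n}\) is again finitely presented.

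Your route avoids \cref{xwog77} and treats all \(n\) uniformly. The cost is that you need the Künneth formula \cref{xdp5rb} to identify the base change of the cover along \([A]\to[A/a_{n}]\), and hence of the coalgebra \((f^{*})^{\vee}(f_{\natural})^{\vee}\). In the paper, both pieces are opens where a section exists by functoriality alone, and the base-change input is packaged inside \cref{xwog77}.

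When you write it up, take the \([A]\)-linear left adjoints from \cref{i:s_sm}, since localizations are smooth. Take their full faithfulness from the recollement. Then \cref{xuw75d} gives the dualizability over \([A]\) that \cref{d_sua} requires.
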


\begin{proof}
  We need to consider
  \(A\to\prod_{i=1}^{n}A[\sfrac1{a_{i}}]\)
  with \(a_{i}\) generating the unit ideal.
  Moreover, it is reduced to the cases \(n=0\) and~\(2\).
  The case \(n=0\) is clear,
  so we consider the case \(n=2\).

  By using \cref{xwog77} and \cref{i:s_exc},
  we see that
  the morphism
  \([A]\to[A[\sfrac1{a_{1}}]]\times_{[A[\sfrac1{a_{1}a_{2}}]]}[A[\sfrac1{a_{2}}]]\)
  is an equivalence.

  We then use \cref{d_sua}.
  By the limit formula above,
  it suffices to see that
  it is a cover on \(A[\sfrac1{a_{1}}]\)
  and on \(A[\sfrac1{a_{2}}]\),
  which is clear.
\end{proof}

\begin{proof}[Proof of \cref{sh_ring}]
  We construct \(G\colon\SH[2](\ZZ)\to\SH[2]'(\ZZ)\).
  We right Kan extend
  \(
  (\Cat{Ring}^{\heartsuit})_{\aleph_{0}}
  \to\SH[2]'(\ZZ)\),
  which is possible by~\cite[Theorem~D]{n-pr}\footnote{We can avoid using this,
    since it suffices to extend to~\(\Cat{QProj}\),
    and we can see the existence of limits concretely.
  },
  to obtain
  \(\PShv(((\Cat{Ring}^{\heartsuit})_{\aleph_{0}})^{\op})
  \to\SH[2]'(\ZZ)\).
  By \cref{x02mj4},
  this factors through
  the category of Zariski sheaves.
  We verify that
  its restriction to \(\Cat{QProj}\)
  satisfies the conditions in \cref{main_1}
  when \(S=\Spec\ZZ\).

  We prove that
  in the situation \(U\to X\gets Z\)
  as in \cref{i:a_exc} of \cref{main_1},
  we have an \([X]\)-linear recollement
  and
  smooth base change for \(U\hookrightarrow X\)
  (hence proper base change for \(Z\hookrightarrow X\)).
  By assumption,
  we only know this
  when \(X\) is affine and \(U\) is a principal open.

  We first see this
  when \(X\) is affine
  and \(U\) is quasiaffine inside~\(X\).
  This can be seen by induction
  on the number of affines needed to cover~\(U\).

  We then see this in general.
  This is done by writing~\(X\)
  as a colimit of \(\Spec A\)
  (as a Zariski sheaf)
  and using smooth base change for open immersions
  and proper base change for closed immersions.

  Therefore,
  we obtain \cref{i:a_exc}.
  By running the same argument as in the proof of \cref{x02mj4},
  we see Zariski descent.
  In particular,
  for~\(X\),
  there is \(\Spec A\to X\)
  such that \([X]\to[A]\) satisfies descent.
  This implies
  \cref{i:a_hi,i:a_tate}.
  For~\cref{i:a_sm},
  by the relative Künneth formula,
  which follows from \cref{x15q1p},
  we need to show that
  any smooth morphism
  \(Y\to\Spec A\) induces a weakly suave map.
  By \cref{sua_loc},
  we can reduce to the case when \(Y\) is affine too.

  Now we have constructed~\(G\).
  By construction,
  \(GF\) is homotopic to~\(\id\).
  The other equivalence
  follows from
  the fact that \(\Cat{QProj}^{\op}\to\SH[2](\ZZ)\)
  is a Zariski sheaf,
  which follows from the argument of \cref{x02mj4}.
\end{proof}

\subsection{Six operations}\label{ss:sh_six}

We now come back to proving \cref{main_2}.

\begin{definition}\label{xjxm6b}
  We write \(\Mot[2](\ZZ)\)
  for the universal target.
  By definition,
  it comes with a colimit-preserving functor
  \(\Cat{Ring}\to\CAlg(\Mot[2](\ZZ))\).
  We write~\([\X]\)
  for this.
\end{definition}

\begin{lemma}\label{x2mgqh}
  The functor in \cref{xjxm6b}
  factors through \(\Cat{Ring}^{\heartsuit}\).
\end{lemma}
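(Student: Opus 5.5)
The plan is to use the structure of the universal target. By definition, \(\Mot[2](\ZZ)\) is freely generated by a homologically trivial smooth sutured ring stack~\(R\). The functor \([\X]\colon\Cat{Ring}\to\CAlg(\Mot[2](\ZZ))\) of \cref{xjxm6b} is the unique colimit-preserving (sifted-colimit-preserving and finite-coproduct-preserving) extension of \(R\colon\Cat{Pol}\to\CAlg(\Mot[2](\ZZ))\) along \(\Cat{Pol}\to\Cat{Ring}=\PShv_{\Sigma}(\Cat{Pol})\). The claim is that it inverts the map \(A\to\pi_{0}A\) for every animated ring~\(A\).

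First I would reduce to a generating case. Every animated ring is built from polynomial rings by sifted colimits, and the truncation \(\pi_{0}\) of a sifted colimit is computed by the corresponding reflexive coequalizer on~\(\pi_{0}\). It therefore suffices to show that \([\X]\) carries the following generating situation to an equivalence: the quotient of \(\ZZ[T]\) by \(T\), taken derived versus static. Equivalently, \([\X]\) should carry the pushout \(\ZZ[T]\otimes_{\ZZ[T]}\ZZ\), formed along \(T\mapsto0\) on both sides, to \(\ZZ\). More precisely, \(\pi_{0}\) is the localization of \(\Cat{Ring}\) inverting the maps \(\ZZ\otimes_{\ZZ[T]}\ZZ\to\ZZ\) (killing \(\pi_{1}\) of the derived zero locus), stabilized under pushouts and colimits. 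Since \([\X]\) preserves colimits and pushouts (tensor products of commutative algebras), it is enough to show that \([\ZZ]\otimes_{[\ZZ[T]]}[\ZZ]\to[\ZZ]\) is an equivalence. In stack language, this says that the diagonal of \(0\colon{*}\to R\) is an equivalence, that is, \(0\colon{*}\to R\) is a monomorphism.

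That last statement follows from suturedness. By \cref{x924i9}, \({*}\xrightarrow{0}R\xleftarrow{}R^{\times}\) induces a recollement, so \([R]\to[{*}]\) admits a fully faithful \([R]\)-linear right adjoint. By \cref{xuw75d}, \(0\) is a closed immersion, in particular a monomorphism, so \([{*}]\otimes_{[R]}[{*}]\simeq[{*}]\). This is essentially the argument of \cref{xc95y9}, which also shows the diagonal of~\(R\) is a closed immersion. I could alternatively invoke that lemma directly: "static" there is precisely the statement that \(R\to\Map(S^{1},R)\) is an equivalence, i.e.\ \([R]\otimes_{[R\otimes R]}[R]\simeq[R]\).

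The main obstacle is the reduction step: justifying carefully that \(\pi_{0}\colon\Cat{Ring}\to\Cat{Ring}^{\heartsuit}\) is the universal colimit-preserving localization inverting \(\ZZ\otimes^{\mathbf{L}}_{\ZZ[T]}\ZZ\to\ZZ\). In other words, one must show that a colimit-preserving functor out of \(\Cat{Ring}\) sending this one map to an equivalence factors through~\(\pi_{0}\). I would first argue via cell attachments: killing higher homotopy of \(A\) is an iterated pushout of maps of the form \(\ZZ[S^{n}]\to\ZZ\) (free animated ring on a sphere mapping to~\(\ZZ\)), and these for \(n\geq1\) are obtained from the \(n=0\) case by iterated self-pushouts (loops). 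Once \([\ZZ\{S^{1}\}]\to[\ZZ]\) is an equivalence, the higher cases follow from colimit preservation, and \(A\to\pi_{0}A\) is a transfinite composite of pushouts of such maps.
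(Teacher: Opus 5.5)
Your proposal is correct and follows essentially the paper's proof. Suturedness makes $0\colon{*}\to R$ a closed immersion, hence a monomorphism, which gives $[\ZZ\otimes_{\ZZ[T]}\ZZ]\simeq[\ZZ]$; the higher free generators follow by induction via $\ZZ\otimes_{\ZZ[T_{d-1}]}\ZZ\simeq\ZZ[T_{d}]$, and cell attachments then pass from $A$ to $\pi_{0}(A)$. One slip: your first description of the generating situation, the quotient of $\ZZ[T]$ by $T$ (and the garbled pushout $\ZZ[T]\otimes_{\ZZ[T]}\ZZ$), is not the right one, since that derived quotient is already static; the map you then actually use, $\ZZ\otimes_{\ZZ[T]}\ZZ\to\ZZ$ along $T\mapsto0$ on both sides, is the correct one.
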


\begin{proof}
  Let \(\ZZ[T_{d}]\)
  be the free animated ring
  generated by an element in degree~\(d\geq0\).
  We first prove that the tautological map \(\unit\simeq[\ZZ]\to[\ZZ[T_{d}]]\) 
  is an equivalence for \(d\geq1\).
  We use
  the relation \(\ZZ\otimes_{\ZZ[T_{d-1}]}\ZZ\simeq\ZZ[T_{d}]\)
  and induction to prove this.
  The base case \(d=1\) follows
  from the suturedness.

We go back to the statement.
  We prove that for~\(A\),
  the morphism
  \(A\to\pi_{0}(A)\) induces an equivalence.
  We write~\(A\) as a colimit
  of a sequence
  \begin{equation*}
    \ZZ=A_{-1}\to A_{0}\to A_{1}\to\dotsb,
  \end{equation*}
  where~\(A_{n}\)
  is obtained from~\(A_{n-1}\)
  by attaching \(n\)-cells.
  By the previous paragraph,
  we see that \([A_{1}]\to\dotsb\)
  are equivalences.
  From~\(A_{1}\),
  considering another sequence
  \(A_{1}\to A_{2}'\to\dotsb\)
  killing higher cells,
  we can also obtain~\(\pi_{0}(A)\)
  as a colimit.
  This shows the desired result.
\end{proof}

The key in our proof of \cref{main_2}
is deducing that smooth maps of rings
determine suave morphisms of stacks over \(\Mot[2](\ZZ)\).
It is convenient to use six operations to show that.
We use
the material developed in \cref{ss:six}
to construct such:

\begin{proposition}\label{x9v27f}
  Every map \(A\to B\)
  in \((\Cat{Ring}^{\heartsuit})_{\aleph_{0}}\)
  determines an exceptional morphism.
  Therefore,
  we have a functor
  \(\Span(((\Cat{Ring}^{\heartsuit})_{\aleph_{0}})^{\op})\to\Mot[2](\ZZ)\)
  by \cref{xc8fbc}.
\end{proposition}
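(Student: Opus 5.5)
Every map \(A\to B\) in \((\Cat{Ring}^{\heartsuit})_{\aleph_{0}}\) factors as \(A\to A[T_{1},\dotsc,T_{n}]\to B\), with the second map surjective. By \cref{x5xzh9}, exceptional morphisms are closed under composition, base change, and cancellation. So it suffices to treat two cases: the structure map \(\ZZ\to\ZZ[T]\), where the general polynomial case follows by base change and composition, and a surjection \(A\to A/I\).

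\emph{Surjections.} Since \(I\) is finitely generated, \(A\to A/I\) is a finite composite of maps \(A\to A/a\), and each of these is a base change of \(\ZZ[T]\to\ZZ[T]/T=\ZZ\) along \(T\mapsto a\). Here I use that \(\Mot[2](\ZZ)\) takes tensor products of rings to relative tensor products, which holds because the functor \(\Cat{Ring}\to\CAlg(\Mot[2](\ZZ))\) preserves colimits and factors through static rings by \cref{x2mgqh}; the base change is underived, which is harmless after \cref{x2mgqh}. So it suffices to show that the zero section \(0\colon{*}\to R\) of the ring stack \(R=\GG_{a}\) is exceptional. This zero section is a monomorphism, hence \(0\)-truncated, and its diagonal is an equivalence. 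The remaining condition is that \([\ZZ]\) be dualizable over \([R]\). Suturedness makes \(0\) a closed immersion: \(i^{*}\) has a fully faithful \([R]\)-linear right adjoint, by \cref{xuw75d} and the recollement of \cref{x924i9}. The same argument as in the proof of \cref{xuw75d} then shows that \([\ZZ]\) is self-dual over \([R]\). Indeed, \([\ZZ]\otimes_{[R]}[\ZZ]\simeq[\ZZ]\), and the unit and counit come from \(i^{*}\) and \(i_{*}\).

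\emph{The affine line.} I will show that \(f\colon R\to{*}\) is exceptional. First, \(R\) is static by \cref{xc95y9}, so \(f\) is \(0\)-truncated. Next, smoothness of the ring stack gives suaveness (see \cref{x4vjro}, or \cref{xujpr0} together with \cref{st_sm}), so \([R]\) is dualizable over \(\unit\). Finally, the diagonal \(R\to R\times R\) is a closed immersion by \cref{xc95y9}, so \([R]\) is dualizable over \([R]\otimes[R]\) by the previous paragraph. For higher \(n\), the iterated diagonals are equivalences because \(R\) is static, so those conditions hold trivially. This is exactly the argument at the end of \cref{xujpr0}.

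\emph{Main obstacle.} The delicate point is the reduction step. I have to make sure that \([A/a]\simeq[A]\otimes_{[\ZZ[T]]}[\ZZ]\) holds in \(\Mot[2](\ZZ)\), even though the ring-theoretic pushout is derived when \(a\) is a zero-divisor. I expect \cref{x2mgqh} to handle this, since the functor factors through \(\pi_{0}\) and preserves pushouts. Once every map is exceptional, \cref{xc8fbc} applied to \(\cat{G}\) restricted to the image of \(((\Cat{Ring}^{\heartsuit})_{\aleph_{0}})^{\op}\) gives the span functor, with all morphisms in \(E\).
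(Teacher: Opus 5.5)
Your proposal is correct and follows essentially the same route as the paper. You reduce via the closure properties of \cref{x5xzh9} to the zero section \(s\colon{*}\to R\) and the projection \(f\colon R\to{*}\). You handle \(s\) as a closed immersion, so it is self-dual with trivial higher diagonal conditions, and \(f\) via suaveness plus the closed diagonal, exactly as at the end of \cref{xujpr0}. Your extra care about the underived base change, using \cref{x2mgqh} and colimit preservation, just makes explicit what the paper leaves implicit.
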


\begin{proof}
  We write~\(R\) for the universal ring stack
  over~\(\Mot[2](\ZZ)\).
  By \cref{x5xzh9},
  it suffices to show this
  for the zero section \(s\colon{*}\to R\)
  and the tautological map \(f\colon R\to{*}\).
  For~\(s\),
  it is clear.
  For~\(f\),
  this follows from \cref{xujpr0}.
\end{proof}

\subsection{Smooth implies suave}\label{ss:before}

\begin{theorem}\label{xaivv8}
  Let \(A\to B\) be a smooth morphism of static rings.
  Then the corresponding morphism of stacks
  over \(\Mot[2](\ZZ)\) is suave.
\end{theorem}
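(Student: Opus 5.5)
The plan is to reduce, via the local structure of smooth morphisms, to two basic cases: étale morphisms and the projection \(A\to A[T]\). Suaveness for the latter comes from \cref{xujpr0} and \cref{st_sm} applied to the universal ring stack~\(R\). By the universal property, \([\X]\) sends \(\ZZ[T]\) to \([R]\), which is a stable sutured weakly suave ring stack, hence suave. By base change, \(A\to A[T_{1},\dotsc,T_{n}]\) is suave for every~\(A\): on module categories it is an iterated base change of \(\unit\to[R]\), by the Künneth-type identification \([A]\otimes[\ZZ[T]]\simeq[A[T]]\) available in \(\Mot[2](\ZZ)\) from coproduct preservation. Suave morphisms compose by \cref{xda67d} together with composition of linear left adjoints. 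Smooth morphisms are Zariski locally on the source (and target) of the form "étale over \(\AA^{n}_{A}\)". So once I show that Zariski covers are covers (\cref{x02mj4}, whose proof only uses suturedness and \cref{xwog77}, and hence carries over to \(\Mot[2](\ZZ)\)) and that open immersions are suave, \cref{sua_loc} and \cref{x9vxyu} reduce everything to étale maps \(A\to B\). Dualizability descends along covers as in \cref{x9vxyu}.

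For principal open immersions \(A\to A[a^{-1}]\): the sutured condition gives a recollement \([R^{\times}]\gets[R]\to[*]\), so \([R]\to[R^{\times}]\) has a fully faithful linear left adjoint. Pulling back along \(a\colon\Spec A\to R\), the open immersion is suave by \cref{xuw75d}. For a general étale map, I would use the standard presentation \(B=(A[T]/(g))[h^{-1}]\) with \(g'\) invertible, at least Zariski locally. This writes \(\Spec B\) as an open substack of the closed substack \(V(g)\subset\AA^{1}_{A}\). The closed immersion \(i\colon V(g)\to\AA^{1}_{A}\) is prim, and by \cref{x9v27f} every map is exceptional, so six operations are available. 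Suaveness of \(A\to A[T]/(g)\) (after inverting \(g'\)) is then the claim that \(p_{!}i_{*}\) has a linear right adjoint of the form \(i^{*}p^{*}\) twisted by an invertible object. Concretely, I want \(i^{!}\simeq i^{*}\otimes L\), where \(L\) is the normal "Tate twist" \(s^{!}\unit\) for the zero section of \(R\), pulled back along \(g\). This holds because \(i\) is the base change of \(0\colon *\to R\) along \(g\colon\AA^{1}_{A}\to R\).

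The crux is therefore a purity statement. For \(g\colon\AA^{1}_{A}\to\AA^{1}_{A}\) étale near \(V(g)\) (that is, \(g'\) invertible), the base change \(i^{!}\) of \(s^{!}\) is computed correctly. Equivalently, the composite \(V(g)\to\AA^{1}_{A}\to A\) has dualizing object \(i^{*}f^{!}\unit\otimes s^{!}\unit\), which is invertible. Since \(f^{!}\unit\) and \(s^{!}\unit\) are invertible by stability (\cref{st_sm}), what remains is to show that for \(V(g)\to\Spec A\) finite étale the functor \(f_{!}=f_{*}\) is also left adjoint to \(f^{*}\). This is the main obstacle. My first attempt would use the diagonal trick from \cref{aplus}: the diagonal of an unramified map is clopen, so \(f_{*}\) and \(f_{\natural}\) are both identified with the norm-type construction via \(d_{*}=d_{\natural}\). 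That clopenness itself follows from suturedness applied to the idempotent cutting out the diagonal of \(A[T]/(g)\). This reduces finite étale suaveness to the same shearing and self-duality argument used in \cref{xujpr0}.
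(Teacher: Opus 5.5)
Your outer reductions agree with the paper: Zariski localization on source and target via \cref{xzspgi} and \cref{sua_loc}, factoring through $\mathbb{A}^{n}_{A}$ (suave by \cref{xujpr0}), principal opens via suturedness, and the standard presentation of étale algebras. The gap is the étale case, which is the entire content of \cref{xic097}, and your proposal does not establish it. Three problems:

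\begin{enumerate}
\item $i^{!}\simeq i^{*}\otimes L$ does not follow from $i$ being a base change of the zero section. Exchanging $!$-pullback with $*$-pullback along $g$ requires $g$ to be suave near $V(g)$, which is what you are trying to prove. For $g=0$, where $i=\id$, the formula is simply false.
\item $V(g)\to\Spec A$ is finite but ramified, while its étale locus is étale but not finite. In general it is not even Zariski-locally finite étale: take $g=T^{3}-T^{2}+p$ over $\ZZ$ for an odd prime $p$, near the point $(p,T-1)$. So ``finite étale with $f_{!}=f_{*}$'' is not the case you need.
\item This is the decisive one. Even for a genuinely finite étale $f$, the diagonal trick cannot suffice. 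A closed immersion is also prim with clopen (indeed invertible) diagonal. The identical construction with $d_{*}\simeq d_{\natural}$ yields the same candidate unit $\id\to f^{*}f_{*}$ and the same self-duality. Yet closed immersions, e.g.\ the zero section of $R$, are not weakly suave. So ``proper with open diagonal'' only produces the comparison map $f^{!}\unit\to\unit$ of \cref{xgygp8}; its invertibility is the real content. The shearing of \cref{xujpr0} has nothing to act on, since $V(g)$ carries no group structure.
\end{enumerate}

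The missing input is homological triviality, which your proposal never uses. In the paper, let $g\colon\mathbb{A}^{1}\to\mathbb{A}^{1}$ be given by $P$ and let $U$ be the locus where $P'$ is invertible. \Cref{xgygp8} reduces étaleness of $g|_{U}$ to invertibility of the single map $(g|_{U})^{!}\unit\to\unit$. By \cref{xm90y7} (stability plus suaveness of opens in affine spaces), this is an endomorphism of $\unit$ compatible with base change. The argument then runs as follows:
\begin{itemize}
\item Base changing along $U\to{*}$ provides a section, translated to $T=0$, so $P=T^{n}+\dotsb+a_{1}T+a_{0}$ with $a_{1}$ a unit.
\item The family $Q=a_{0}+a_{1}T+S(a_{2}T^{2}+\dotsb+T^{n})$ over $\mathbb{A}^{1}_{S}$ interpolates between $P$ at $S=1$ and a linear polynomial at $S=0$.
\item $\mathbb{A}^{1}$-invariance lets one test at $S=0$, and there $g$ is an isomorphism.
\end{itemize}

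You also cannot bypass this by descending along the finite étale map itself, after which it acquires a clopen section and one could induct on the degree. Finite étale surjections are not covers over $\Mot[2](\ZZ)$ in general: they are not even covers after mapping to $\SH[2](\ZZ)$, since étale descent fails for $\SH$. This is exactly why \cref{main_3} imposes the Kummer and Artin--Schreier conditions.
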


We first explain the reduction
to the following specific case:

\begin{proposition}\label{xic097}
Let \(A\) be a static ring of finite type
  and \(P\) be a monic polynomial.
  We consider
  \(A\to A[T][(P')^{-1}]/P\).
  Then the corresponding morphism of stacks
  over \(\Mot[2](\ZZ)\)
  is étale (see \cref{xuzdhm};
  we already know that it is static).
\end{proposition}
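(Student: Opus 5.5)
The plan is to prove the two halves of étaleness separately: first that the diagonal of \(f\colon\Spec B\to\Spec A\), with \(B=A[T][(P')^{-1}]/P\), is an open immersion, and then that \(f\) is suave. For the second half I use the six operations from \cref{x9v27f}, since every map in \((\Cat{Ring}^{\heartsuit})_{\aleph_{0}}\) is exceptional. By \cref{x2mgqh} we may work with static rings throughout, and by \cref{xdp5rb} the stack of \(B\otimes_{A}B\) is the fiber product \(\Spec B\times_{\Spec A}\Spec B\). So the diagonal corresponds to the multiplication map \(B\otimes_{A}B\to B\).

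\emph{Step 1: disjoint unions.} For a static ring \(D\) and an idempotent \(e\in D\), I will show \([D]\simeq[D/e]\times[D[e^{-1}]]\) as commutative algebras. Apply \cref{i:s_exc} of \cref{sh_ring} to the element \(e\). Because \(e\) is idempotent, \(D/e\simeq D[(1-e)^{-1}]\), so both \(j^{*}\) and \(i^{*}\) admit linear left and right adjoints. The gluing in the recollement square of \cref{xpyo8y} is then trivial, and the recollement splits as a product. Consequently the projection \([D]\to[D/e]\) has a fully faithful \([D]\)-linear left adjoint, namely extension by zero. By \cref{xuw75d}, \(\Spec(D/e)\to\Spec D\) is an open immersion, and symmetrically it is also a closed immersion.

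\emph{Step 2: unramifiedness.} Since \(P\) is monic and \(P'\) is invertible in \(B\), the algebra \(B\) is standard étale over \(A\). Classically, the kernel of \(B\otimes_{A}B\to B\) is therefore generated by an idempotent, which can be written explicitly from \((P(T_{1})-P(T_{2}))/(T_{1}-T_{2})\) evaluated against \(P'(T_{1})^{-1}\). Step 1 then shows that the diagonal is an open (and closed) immersion. This gives unramifiedness in the sense of \cref{xuzdhm}.

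\emph{Step 3: suaveness.} I would use the six-functor criterion of Heyer–Mann/Scholze: an exceptional \(f\) is suave if and only if \(\omega=f^{!}\unit\) is invertible and the map \(\pr_{2}^{*}\omega\to\pr_{1}^{!}\unit\) on \(\Spec B\times_{\Spec A}\Spec B\) is an equivalence. The alternative formulation is \(f^{!}\simeq\omega\otimes f^{*}\) compatibly with base change. To verify the condition, write \(d\) for the diagonal and \(j\) for the complementary clopen immersion. Checking the map after pulling back along \(d\) and along \(j\) is enough, by the product decomposition of Step 1. Along \(d\), an open immersion satisfies \(d^{!}=d^{*}\), so \(d^{*}\pr_{1}^{!}\unit\simeq d^{!}\pr_{1}^{!}\unit\simeq\unit\); this identifies \(\omega\simeq\unit\) and makes the map an equivalence there. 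Along \(j\), I would use base change together with the symmetric roles of \(\pr_{1}\) and \(\pr_{2}\), iterating the splitting of \(B^{\otimes_{A}3}\).

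The main obstacle is Step 3, specifically the complementary component \(j\): it is again étale over \(\Spec B\), of smaller degree, so the natural tool is induction on \(\deg P\). An alternative route would be to present \(B\) as a clopen piece of the finite free \(A\)-algebra \(C=A[T]/P\). One would show that \(A\to C\) is prim with \([C]\) self-dual over \([A]\) via the trace form, which is nondegenerate on the locus where \(P'\) is invertible. Suaveness would then pass to the principal open \(B=C[(P')^{-1}]\) by composition with the open immersion from \cref{i:s_exc}. I would try the induction first and fall back on the trace-form argument if the induction does not close.
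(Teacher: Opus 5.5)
Steps~1 and~2 are fine; the paper simply takes unramifiedness as known. Step~3, however, has a genuine gap, and it sits exactly where you think the argument is easy.

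Restricting \(\pr_{2}^{*}\omega\to\pr_{1}^{!}\unit\) along the diagonal~\(d\) produces one specific map \(f^{!}\unit\to\unit\) on~\(Y\). The identification \(d^{*}\pr_{1}^{!}\unit\simeq d^{!}\pr_{1}^{!}\unit\simeq\unit\) only computes the target of this map; it says nothing about whether the map is invertible. Invertibility is not formal for unramified maps. The zero section \(s\colon{*}\to R\) has diagonal an equivalence, since \(R\) is static by \cref{xc95y9}. Yet \(s^{!}\unit\simeq\unit(-1)[-2]\) (argue as in \cref{xm90y7}), which is not \(\unit\). So your reasoning along~\(d\) would prove that \(s\) is étale, which is false.

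Conversely, by \cref{xgygp8} (the criterion of \cite[Proposition~6.13]{SixFunctors}, usable because \(f\) is unramified), invertibility of exactly this map \(f^{!}\unit\to\unit\) is \emph{all} that is needed. So the complementary component you single out as the main obstacle plays no role, while the diagonal component is the entire content of the proposition. Neither fallback addresses it. The induction on \(\deg P\) still needs the diagonal statement at every step. The trace form is an algebraic pairing on the \(A\)-module \(A[T]/P\), and nothing in this setting turns it into a duality datum for \([A[T]/P]\) over \([A]\); producing such a datum is essentially what has to be proved.

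The missing idea is the deformation argument of \cite[Section~10]{SixFunctors}, which the paper runs.
\begin{itemize}
\item Since \(R\) is smooth (\cref{st_sm}), one gets \(g^{!}\unit\simeq\unit(n-m)[2n-2m]\) for any map \(g\) between open subschemes of \(\AA^{n}\) and \(\AA^{m}\), compatibly with base change (\cref{xm90y7}).
\item Hence for \(g\colon\AA^{1}\to\AA^{1}\) given by \(P\), with \(U\) its étale locus, the map \((g\rvert_{U})^{!}\unit\to\unit\) is an endomorphism of \(\unit\).
\item After base change along \(U\to{*}\) and a shift, one may assume a section at \(T=0\). Then \(P=T^{n}+\dotsb+a_{1}T+a_{0}\) with \(a_{1}\in A^{\times}\), and it suffices to check after \(s^{*}\).
\item One deforms \(P\) through \(Q=a_{0}+a_{1}T+S(a_{2}T^{2}+\dotsb+T^{n})\) over \(\AA^{1}_{S}\). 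The resulting endomorphism of \(\unit\) is pulled back from the point.
\item By \(\AA^{1}\)-invariance one compares \(S=1\) with \(S=0\). At \(S=0\) the polynomial is linear and \(g\) is an isomorphism.
\end{itemize}
This is where homotopy invariance and the smoothness of \(R\) genuinely enter, and your proposal has no substitute for them.
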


\begin{lemma}\label{xzspgi}
  A Zariski cover \(A\to B\) in \((\Cat{Ring}^{\heartsuit})_{\aleph_{0}}\)
  determines a cover of stacks over \(\Mot[2](\ZZ)\).
\end{lemma}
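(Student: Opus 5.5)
The plan is to mimic the proof of \cref{x02mj4}, now over \(\Mot[2](\ZZ)\) instead of \(\SH[2]'(\ZZ)\). A Zariski cover \(A\to B\) can be refined to one of the form \(A\to\prod_{i=1}^{n}A[\sfrac1{a_{i}}]\) with the \(a_{i}\) generating the unit ideal. By \cref{x4xcqq}, descent for the refinement implies descent for \(A\to B\): \(B\to\prod_i B[\sfrac1{a_i}]\) is itself a Zariski cover, and cancellation applies. By the usual induction on~\(n\), using \cref{xhrdzm} and composition, we reduce to the cases \(n=0\) and \(n=2\). The case \(n=0\) means \(A=0\), and then \([A]=0\) is trivially covered.

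For \(n=2\), the first step is to establish the excision statement in \(\Mot[2](\ZZ)\). For \(a\in A\), the square \([A]\to[A[\sfrac1a]]\), \([A]\to[A/a]\) should be a recollement. This follows from suturedness of the universal ring stack~\(R\). The map \(\ZZ[T]\to A\) sending \(T\mapsto a\) exhibits \(A[\sfrac1a]\) and \(A/a\) as base changes of \(R^{\times}\to R\gets *\), and recollements are stable under base change \(\Mod_{[R]}(\cat{C})\to\Mod_{[A]}(\cat{C})\). Here one uses that the relevant tensor products are computed correctly, that is, \([A]\otimes_{[\ZZ[T]]}[\ZZ[T,T^{-1}]]\simeq[A[\sfrac1a]]\) and similarly for \(A/a\). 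This holds because the functor \(\Cat{Ring}\to\CAlg(\Mot[2](\ZZ))\) preserves colimits, together with \cref{x2mgqh}: the derived tensor product \(A\otimes^{\L}_{\ZZ[T]}\ZZ\) has the same image as its \(\pi_{0}\).

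With this in hand, apply \cref{xwog77} to the diagram whose rows are the recollements for \(a_{1}\) on \([A]\) and on \([A[\sfrac1{a_{2}}]]\). Since \(a_{1},a_{2}\) generate the unit ideal, \(A/a_{1}\simeq A[\sfrac1{a_{2}}]/a_{1}\), so the right column is the identity. The vertical left adjointability needed there comes from the six operations of \cref{x9v27f}: \(A\to A[\sfrac1{a_2}]\) is an open immersion, since its pullback is \(j_{!}\)-type, so it is weakly suave with the base change property. \Cref{xwog77} then gives
\[
[A]\simeq[A[\sfrac1{a_{1}}]]\times_{[A[\sfrac1{a_{1}a_{2}}]]}[A[\sfrac1{a_{2}}]].
\]
Finally, apply \cref{d_sua} with the left adjoint \(f_{\natural}\) for the suave map \(A\to A[\sfrac1{a_1}]\times A[\sfrac1{a_2}]\). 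Faithfulness in \(\Hom(\unit,[A])\)-type terms can be tested after restricting along the limit formula above to each \(A[\sfrac1{a_{i}}]\). On each piece, one factor of the cover becomes an equivalence, so the cover splits and descent holds by \cref{x059gc}.

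The main obstacle I expect is justifying the left adjointability (base change) for open immersions \(A\to A[\sfrac1a]\) over \(\Mot[2](\ZZ)\) with respect to arbitrary ring maps. I would first try to obtain it directly as the base change of \(R^{\times}\to R\), which is an open immersion by the recollement and \cref{xuw75d}. Open immersions are stable under base change because \(\Mod\)-level left adjoints base change.
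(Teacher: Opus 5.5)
Your proposal is correct and is essentially the paper's argument, which just repeats the proof of \cref{x02mj4}: reduce to \(n=2\), obtain \([A]\simeq[A[\sfrac1{a_{1}}]]\times_{[A[\sfrac1{a_{1}a_{2}}]]}[A[\sfrac1{a_{2}}]]\) from \cref{xwog77} using excision, and then conclude via \cref{d_sua} by checking on each \(A[\sfrac1{a_{i}}]\), where the cover acquires a section. Here excision is obtained, as you say, by base changing the suturedness of the universal ring stack, with \cref{x2mgqh} handling the derived quotient.

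The only wobble is your appeal to \cref{x9v27f} for the left adjoint of \([A]\to[A[\sfrac1{a_{2}}]]\). What is actually needed is the \([A]\)-linear \(j_{!}\) from the recollement for \(a_{2}\), whose Beck--Chevalley property under base change is automatic by linearity, exactly as your last paragraph says.
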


\begin{proof}
  This follows
  from the same argument as in \cref{x02mj4}.
\end{proof}

\begin{proof}[Proof of \cref{xaivv8}]
  By \cref{xzspgi},
  suaveness can be checked Zariski locally
  on the source and target by \cref{sua_loc}.
  Therefore,
  we assume that \(Y\to X\)
  factors as \(Y\to\AA^{n}_{X}\to X\)
  where the first morphism is étale.
  The second morphism is suave,
  so it suffices to prove that étale morphisms are suave.

  We can again argue Zariski locally on the source and target,
  and by~\cite[Tag~00UE]{SP},
  it suffices to consider
  \(A\to B=A[T][Q^{-1}]/P\)
  such that \(P\) is monic and \(P'\) is invertible in~\(B\).
  This factors as
  \(A\to A[T][(P')^{-1}]/P\to B\),
  where the second morphism is suave.
  Hence,
  the desired claim follows from \cref{xic097}.
\end{proof}

We move on to the proof of \cref{xic097}.
We write \(f\colon Y\to X\)
for the corresponding morphism of stacks.
Consider the diagram
\begin{equation*}
  \begin{tikzcd}
    Y\ar[r,"d"]&
    Y\times_{X}Y\ar[r,"p"]\ar[d,"q"]&
    Y\ar[d,"f"]\\
    {}&
    Y\ar[r,"f"]&
    X\rlap.
  \end{tikzcd}
\end{equation*}
We know that \(f\) is static and unramified,
so we have a canonical identification
of~\(d_{!}\) as~\(d_{\natural}\).
To prove that~\(f\) is étale,
we need to show that
the morphism
\begin{equation*}
  {\id}_{[Y]}
  \simeq q_{!}d_{!}d^{*}p^{*}
  \simeq q_{!}d_{\natural}d^{*}p^{*}
  \to q_{!}p^{*}
  \simeq f^{*}f_{!}
\end{equation*}
is a unit of an adjunction.
However,
we need to check fewer conditions
by~\cite[Proposition~6.13]{SixFunctors};
see also~\cite[Lemma~4.6.4]{HeyerMann}:

\begin{lemma}\label{xgygp8}
  In the situation above,
  to show that \(f\) is étale,
  it suffices to show that
  \begin{equation*}
    f^{!}\unit_{X}
    \simeq
    d^{*}p^{*}f^{!}\unit_{X}
    \to
    d^{*}q^{!}f^{*}\unit_{X}
    \simeq
    d^{!}q^{!}f^{*}\unit_{X}
    \simeq
    f^{*}\unit_{X}\simeq\unit_{Y}
  \end{equation*}
  in \(\Hom_{\Mot[2](\ZZ)}(\unit,[Y])\)
  is an equivalence.
\end{lemma}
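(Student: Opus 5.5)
The plan is to build the adjunction \(f_{!}\dashv f^{*}\) directly, writing down a candidate counit to go with the given candidate unit. Then, instead of verifying the triangle identities on the nose, it suffices to check that the two triangle composites are equivalences. As in the proof of \cref{xujpr0}, we may then twist one of the two transformations to get a genuine adjunction datum (cf.~\cite[Lemma~5.11]{SixFunctors}). This only needs to be done at the level of kernels, i.e., in the \(2\)-category of \(\Span\)'s mapped into \(\Mot[2](\ZZ)\) via \cref{x9v27f,xc8fbc}. Naturality and \([X]\)-linearity of all transformations then come for free from the projection formula.

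Concretely, the unit \(\eta\colon\id\to f^{*}f_{!}\) is the composite displayed before the lemma. For the counit, use the hypothesis that \(\phi\colon f^{!}\unit_{X}\to\unit_{Y}\) is an equivalence. This lets us define
\begin{equation*}
  \epsilon\colon f_{!}f^{*}(\X)\simeq f_{!}(\unit_{Y}\otimes f^{*}\X)\xrightarrow{\phi^{-1}}f_{!}(f^{!}\unit_{X}\otimes f^{*}\X)\simeq f_{!}f^{!}(\X)\to\id.
\end{equation*}
The middle identification \(f^{!}\unit_{X}\otimes f^{*}\X\simeq f^{!}\X\) is not available a priori. I would instead first define the map \(f^{!}\unit_{X}\otimes f^{*}\X\to f^{!}\X\) as the adjoint of the projection formula \(f_{!}(f^{!}\unit\otimes f^{*}\X)\simeq f_{!}f^{!}\unit\otimes\X\to\X\). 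Then \(\epsilon\) is simply this composite, and no invertibility of the middle map is needed.

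Next, compute the two triangle composites.

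\begin{itemize}
  \item For \(f^{*}\to f^{*}f_{!}f^{*}\to f^{*}\), rewrite \(f^{*}f_{!}\simeq q_{!}p^{*}\) by base change. This expresses the composite through the square \(Y\times_{X}Y\) and the section \(d\). The unit piece comes from \(d_{!}\simeq d_{\natural}\), and the counit piece pulls back along \(p\) to the \(q\)-relative version of \(\epsilon\). Restricting along \(d\) (using \(d^{*}d_{\natural}\simeq\id\), since \(d\) is an open immersion by unramifiedness), the composite becomes tensoring with \(d^{*}\) of the base change of \(\phi\) along \(p\). This is exactly the map in the statement of \cref{xgygp8}, hence an equivalence.
  \item For \(f_{!}\to f_{!}f^{*}f_{!}\to f_{!}\), the argument is the mirror image with the roles of \(p\) and \(q\) exchanged. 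Alternatively, pass to right adjoints and reduce to the first case.
\end{itemize}

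The main obstacle is the bookkeeping in this identification. One has to show that the composite of the base change isomorphisms, \(d_{!}\simeq d_{\natural}\), and the double Beck--Chevalley maps really coincides with \(d^{*}p^{*}f^{!}\unit\to d^{*}q^{!}f^{*}\unit\simeq\unit_{Y}\), up to equivalence. I would handle this with the hexagon of \cref{hexagon} applied to the cube formed by \(Y\to Y\times_{X}Y\to Y\), in the same way as in the proof of \cref{aplus}. Since only invertibility is needed and not equality with the identity, coherence issues up to automorphism are harmless.

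Once \(f_{!}\dashv f^{*}\) holds, \(f\) is weakly suave with \([X]\)-linear left adjoint \(f_{!}\). Suaveness follows because \([Y]\) is already dualizable over \([X]\) by exceptionality (\cref{x9v27f}). Together with unramifiedness, this shows that \(f\) is étale.
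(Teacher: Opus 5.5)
Your argument is correct in substance. It takes a more explicit route than the paper, which gives no proof of its own: it simply invokes the étaleness criterion \cite[Proposition~6.13]{SixFunctors} (see also \cite[Lemma~4.6.4]{HeyerMann}) for the six operations of \cref{x9v27f}. You instead prove the needed special case directly in the \(2\)-category of kernels over \(X\).

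Take \(\eta\) to be the given unit and \(\epsilon\colon f_{!}\unit_{Y}\xrightarrow{\phi^{-1}}f_{!}f^{!}\unit_{X}\to\unit_{X}\). Then the composite \(f^{*}\to f^{*}f_{!}f^{*}\to f^{*}\) is tensoring with \(\phi\circ\phi^{-1}\). This uses only the following facts:
\begin{itemize}
  \item the base change equivalence \(f^{*}f_{!}\simeq q_{!}p^{*}\);
  \item naturality of the counit of \(d_{\natural}\dashv d^{*}\);
  \item that \(p^{*}f^{!}\to q^{!}f^{*}\) is, by definition, the mate of \(q_{!}p^{*}f^{!}\simeq f^{*}f_{!}f^{!}\to f^{*}\);
  \item that \(d^{*}\simeq d^{!}\) is the identification induced by \(d_{!}\simeq d_{\natural}\).
\end{itemize}
The other composite is \(d^{*}\bigl(q^{*}f^{!}\unit_{X}\to p^{!}\unit_{Y}\bigr)\circ\phi^{-1}\). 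This is again \(\phi\circ\phi^{-1}\), because the swap of \(Y\times_{X}Y\) exchanges \(p\) and \(q\) and fixes \(d\); equivalently, \((\eta,\epsilon)\) is invariant under transposition of kernels.

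Your route buys self-containedness and makes transparent why only \(d^{*}\) of the base change map enters. The citation buys brevity. Both need the input you state: \([Y]\) is self-dual over \([X]\), so \(\Hom_{[X]}([Y],[Z])\simeq\Hom(\unit,[Y\times_{X}Z])\), and kernel computations in \(\Hom(\unit,[\X])\) are then computations in \(\Mod_{[X]}(\Mot[2](\ZZ))\).

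Three small corrections:
\begin{itemize}
  \item Your description of the first composite as ``tensoring with \(d^{*}\) of the base change of \(\phi\) along \(p\)'' should read: the lemma's map precomposed with \(\phi^{-1}\).
  \item Your alternative ``pass to right adjoints'' for the second triangle is not available. A priori \(f_{!}\) has no right adjoint in \(\Mod_{[X]}(\Mot[2](\ZZ))\); its existence is exactly what is being proved. Use the swap symmetry instead.
  \item \cref{hexagon} is not the relevant coherence tool. It compares Beck--Chevalley maps along a transformation \(\alpha\colon D\to D'\) between two diagrams, whereas here there is a single formalism. The compatibilities you actually need are the mate-level ones listed above.
\end{itemize}
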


So from now on,
we only consider
the lax symmetric monoidal functor
\begin{equation*}
  D=\Hom_{\Mod_{[A]}(\Mot[2](\ZZ))}([A],\X)
  \colon\Span(\Cat{Aff}_{A})\to\Cat{Pr}_{\st},
\end{equation*}
where \(\Cat{Aff}_{A}\)
is the category of affine schemes of finite type
over~\(A\).
We now conclude
by repeating
the argument in~\cite[Section~10]{SixFunctors}.
We use \(\Spec A\) as the implicit base.

\begin{lemma}\label{xm90y7}
  In the situation above,
  let \(U\) and~\(V\)
  be quasicompact open affine subschemes
  of~\(\AA^{m}\) and~\(\AA^{n}\),
  respectively.
  For any morphism
  \(g\colon V\to U\),
  the object \(g^{!}\unit\) is equivalent to \(\unit(n-m)[2n-2m]\).
  This formation is compatible with base change from the point,
  i.e., for any quasicompact open affine subscheme~\(W\)
  of~\(\AA^{l}\),
  the canonical morphism
  \(p^{*}g^{!}\unit\to({\id}_{W}\times g)^{!}\unit\)
  is an equivalence,
  where \(p\colon W\times V\to V\)
  is the projection.
\end{lemma}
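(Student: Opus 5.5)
The plan is to reduce everything to two basic computations, for \(f\colon\AA^{1}\to{*}\) and for the zero section \(s\colon{*}\to\AA^{1}\), and to use the universal ring stack~\(R\) (so \(\AA^{1}=R\) over \(\Spec A\)). The six operations come from \cref{x9v27f}, so every morphism in sight is exceptional. We \emph{define} \(\unit(1)[2]\) to be \(L=f_{\natural}s_{*}\unit\). By \cref{st_sm} (smoothness of~\(R\)) and its proof, \(f^{!}\unit\simeq f^{*}L\), and \(L\) is invertible.

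First I would treat the case of affine space, \(f_{n}\colon\AA^{n}\to{*}\). Writing \(\AA^{n}=R^{\times n}\) and factoring \(f_{n}\) as a composite of base changes of~\(f\), I would use that \(f\) is suave (\cref{xujpr0}). For a suave map~\(h\), \(h^{!}\simeq h^{*}\otimes h^{!}\unit\), and this identification is compatible with arbitrary base change. Iterating, this gives \(f_{n}^{!}\unit\simeq f_{n}^{*}\unit(n)[2n]\), and similarly \(p^{*}f_{n}^{!}\unit\simeq(\id_{W}\times f_{n})^{!}\unit\) for any~\(W\).

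Next, open immersions. For a quasicompact open \(j\colon U\hookrightarrow\AA^{m}\), the map \(j\) is an open immersion of stacks, by the recollement axiom together with the induction on principal opens as in the proof of \cref{sh_ring}. Hence \(j^{!}\simeq j^{*}\), compatibly with base change. It follows that \(U\to{*}\) has \(!\)-pullback of the unit equal to \(\unit(m)[2m]\), and that this is stable under products with~\(W\).

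Then the zero section. Since \(f_{m}s_{m}=\id\), we get \(\unit\simeq s_{m}^{!}f_{m}^{!}\unit\simeq s_{m}^{!}(f_{m}^{*}\unit(m)[2m])\). Because tensoring with an invertible object pulled back from the base commutes with~\(s_{m}^{!}\) (projection formula for the exceptional pushforward, passed to right adjoints), this yields \(s_{m}^{!}\unit\simeq\unit(-m)[-2m]\). By shearing, as in the proof of \cref{xc95y9}, the diagonal of~\(\AA^{m}\) is a base change of~\(s_{m}\). The diagonal of~\(U\) is in turn the base change of that diagonal along the open immersion \(U\times U\hookrightarrow\AA^{m}\times\AA^{m}\), since \(U\to\AA^{m}\) is a monomorphism. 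Each of these base changes preserves the value of~\((\X)^{!}\unit\): in the first case because \(s_{m}^{!}\unit\) is pulled back from the point, in the second by open base change.

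The general case is then a factorization. Write \(g=\pr_{U}\circ\Gamma_{g}\) with \(\Gamma_{g}\colon V\to V\times U\). The projection \(\pr_{U}\) is a base change of \(V\to{*}\), so \(\pr_{U}^{!}\unit\simeq\unit(n)[2n]\). The graph \(\Gamma_{g}\) is a base change of the diagonal of~\(U\) along \(g\times\id\), so \(\Gamma_{g}^{!}\) applied to objects pulled back from the point gives a twist by \(\unit(-m)[-2m]\). Composing gives \(g^{!}\unit\simeq\unit(n-m)[2n-2m]\). For compatibility with \(p\colon W\times V\to V\), I would note that every step above (suave pullback for~\(f\), open immersions, and base change of the diagonal) commutes with \(\id_{W}\times\X\). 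It then remains to check that the canonical map \(p^{*}g^{!}\unit\to(\id_{W}\times g)^{!}\unit\) agrees with the composite of these identifications. I would do this by writing \(p^{*}\simeq p^{!}\otimes(\text{invertible})\), using that \(p\) is suave since \(W\) is smooth over the point, and then using \(p^{!}g^{!}\simeq(\id_{W}\times g)^{!}p'^{!}\).

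The main obstacle I expect is the last point: making sure the \emph{canonical} Beck–Chevalley-type maps, rather than merely abstract equivalences, are the ones shown to be invertible. In particular the interaction between \(s^{!}\) and twisting by invertible objects, and the suave base change for~\(p\), need care. I would handle both by expressing everything through the suave identity \(h^{!}\simeq h^{*}\otimes h^{!}\unit\) of \cref{xujpr0}.
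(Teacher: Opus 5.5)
Your computations for \(\AA^{n}\to{*}\), for quasicompact opens, and for the zero section are fine. The zero-section step (\(\unit\simeq s_{m}^{!}f_{m}^{!}\unit\), then moving the invertible twist through \(s_{m}^{!}\)) is exactly the mechanism of the paper's proof.

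The gap is in the graph step. You get the value of \(\Gamma_{g}^{!}\) on objects pulled back from the point from that of \(\Delta_{U}^{!}\), on the grounds that \(\Gamma_{g}\) is the base change of \(\Delta_{U}\) along \(g\times\id_{U}\). That needs the base-change map \(g^{*}\Delta_{U}^{!}\to\Gamma_{g}^{!}(g\times\id_{U})^{*}\) to be an equivalence. In this formalism, upper-shriek is only known to commute with \(*\)-pullback along suave maps, such as open immersions or smooth projections. For general~\(g\), the map \(g\times\id_{U}\) is neither. The principle you invoke is that such base changes are harmless because \(\Delta_{U}^{!}\unit\) (resp.\ \(s_{m}^{!}\unit\)) is pulled back from the point. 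That principle is false. Since \(s\colon{*}\to\AA^{1}\) is a closed immersion, hence a monomorphism, its base change along \(s\) itself is \(\id_{*}\). But \(\id_{*}^{!}\unit=\unit\) is not \(s^{!}\unit\simeq\unit(-1)[-2]\). In your diagonal step the base change is legitimate only because, after shearing, it is along the suave projection \(\pr_{2}\colon\AA^{m}\times\AA^{m}\to\AA^{m}\), not for the reason you give. The square for \(\Gamma_{g}\) happens to be transversal, so your conclusion is true. However, transversal base change for upper-shriek is not available in this axiomatic setting without further work.

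The repair uses only what you already have. \(\Gamma_{g}\) is a section of the suave projection \(\pr_{V}\colon V\times U\to V\), whose dualizing object is pulled back from \(\unit(m)[2m]\). Your zero-section argument then gives \(\Gamma_{g}^{!}\pr_{V}^{*}M\simeq M(-m)[-2m]\). Better still, the same argument applies to \(g\) directly, which is what the paper does. With \(f\colon U\to{*}\) and \(h\colon V\to{*}\), the identity \(h^{!}\simeq g^{!}f^{!}\) gives \(\unit(n)[2n]\simeq h^{!}\unit\simeq g^{!}\bigl(f^{*}\unit(m)[2m]\bigr)\simeq g^{!}\unit\otimes\unit(m)[2m]\). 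So the shearing, the diagonal, and the graph factorization can all be dropped.

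Your treatment of the base-change statement is sound: \(p\) is suave with dualizing object \(\omega\) pulled back from the point, so \(p^{*}\simeq p^{!}\otimes\omega^{-1}\), and then \(p^{!}g^{!}\simeq(\id_{W}\times g)^{!}p'^{!}\). It is more explicit about the canonical map than the paper, which only remarks that the compatibility follows from the same proof.
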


\begin{proof}
  We write \(f\colon U\to{*}\)
  and \(h\colon V\to{*}\)
  for the tautological morphisms.
  By \cref{st_sm},
  we see \(f^{!}\unit=\unit(m)[2m]\)
  and \(h^{!}\unit=\unit(n)[2n]\).
  By \(g^{!}f^{!}\simeq h^{!}\),
  \begin{equation*}
    g^{!}\unit
    \simeq
    g^{!}f^{!}\unit(-m)[-2m]
    \simeq
    h^{!}\unit(-m)[-2m]
    \simeq
    \unit(n-m)[2n-2m]
  \end{equation*}
  holds.
  The desired compatibility of the base change
  also follows from this proof.
\end{proof}

\begin{proof}[Proof of \cref{xic097}]
  We consider the diagram
  \begin{equation*}
    \begin{tikzcd}
      X\ar[r]\ar[d]&
      U\ar[r,hookrightarrow]\ar[d,"g\rvert_{U}"']&
      \AA^{1}\ar[dl,"g"]\\
      {*}\ar[r,"0"]&
      \AA^{1}\rlap,&
      {}
    \end{tikzcd}
  \end{equation*}
  where \(g\) is determined by the polynomial~\(P\).
  We need to prove that \(X\) is étale.
  Instead,
  we wish to prove that \(g\rvert_{U}\) is étale.
  For that,
  we need to prove that the morphism
  \((g\rvert_{U})^{!}\unit\to\unit\)
  constructed in \cref{xgygp8}
  is an equivalence.
  By \cref{xm90y7},
  this is an endomorphism of~\(\unit\).

By base changing the entire diagram
  along \(U\to{*}\),
  we are now in the situation
  where there is a section \(s\colon{*}\to U\),
  depicted as
  \begin{equation*}
    \begin{tikzcd}
      {*}\ar[r,"s"]&
      U\ar[r,hookrightarrow]\ar[d,"g\rvert_{U}"']&
      \AA^{1}\ar[dl,"g"]\\
      {}&
      \AA^{1}\rlap.&
      {}
    \end{tikzcd}
  \end{equation*}
  By the second part of \cref{xm90y7},
  it suffices to show that
  \((g\rvert_{U})^{!}\unit\to\unit\)
  is an equivalence after applying~\(s^{*}\).

  By shifting,
  we assume that the composite of the top arrows is zero.
  This means that \(P=T^{n}+\dotsb+a_{1}T+a_{0}\)
  with \(a_{1}\in A^{\times}\).
  Now, we consider
  \(Q=a_{0}+a_{1}T+S(a_{2}T^{2}+\dotsb+T^{n})\in A[S,T]\)
  and the diagram
  \begin{equation*}
    \begin{tikzcd}
      \AA^{1}_{S}\ar[r,"s"]&
      V\ar[r,hookrightarrow]\ar[d,"h\rvert_{V}"']&
      \AA^{1}_{S}\times\AA^{1}\ar[dl,"h"]\\
      {}&
      \AA^{1}_{S}\times\AA^{1}\rlap,&
      {}
    \end{tikzcd}
  \end{equation*}
  where \(h\) is given by the projection to~\(S\)
  and the polynomial~\(Q\),
  and \(V\) is the étale locus.
  If we prove that \((h\rvert_{V})^{!}\unit\to\unit\)
  is an equivalence after~\(s^{*}\),
  we obtain the desired result
  by pulling back along \(S=1\),
  which is possible
  by the second part of \cref{xm90y7}.
  However,
  by the first part of \cref{xm90y7},
  it is a map between objects coming from the point,
  and by \(\AA^{1}\)-invariance,
  it suffices to prove this after
  pulling back along \(S=0\).
  There,
  by the second part of \cref{xm90y7} again,
  we are in the original situation with a section,
  where \(P\) has degree one.
  In this case, \(g\) is an isomorphism,
  and hence the desired result follows.
\end{proof}

\subsection{Algebraic \texorpdfstring{\(2\)}{2}-motives and ring stacks}\label{ss:cc_alg}

We now have all the necessary components to complete the proof:

\begin{proof}[Proof of \cref{main_2}]
  By \cref{sh_ring},
  both are defined
  via the universal property
  characterizing maps from \(\Cat{Ring}\).
  Therefore, it suffices to construct morphisms
  in both directions
  compatible with the universal maps from \(\Cat{Ring}\).

  The morphism \(\Mot[2](\ZZ)\to\SH[2](\ZZ)\)
  can be readily constructed.
  We construct a morphism in the other direction.
  We have a functor
  \(\Cat{Ring}\to\Mot[2](\ZZ)\).
  We prove that
  its restriction
  to \((\Cat{Ring}^{\heartsuit})_{\aleph_{0}}\)
  satisfies the conditions in \cref{sh_ring}.
  We have already seen~\cref{i:s_sm}
  as \cref{xaivv8}.
  The conditions~\cref{i:s_exc,i:s_hi,i:s_tate}
  follow from base changing
  the universal situation.
\end{proof}

\subsection{Cdh~descent for algebraic \texorpdfstring{\(2\)}{2}-motives}\label{ss:cdh}

We observe that
the classical descent result for~\(\SH\)
extends to~\(\SH[2]\):

\begin{theorem}\label{cdh}
  A cdh~cover of static affine schemes of finite type
  determines a cover
  over \(\SH[2](\ZZ)\).
  Therefore,
  \(\Mod_{[\X]}(\SH[2](\ZZ))\colon\Cat{Ring}\to2\Cat{Pr}_{\st}\)
  is a cdh~sheaf.
\end{theorem}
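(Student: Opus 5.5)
We reduce cdh descent to two basic kinds of covers, Nisnevich covers and abstract blowup squares, and treat each by the recollement and excision machinery of \cref{ss:exc} combined with \cref{xpbfpq}.

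\emph{Reduction.} The cdh topology on static affine schemes of finite type is generated by Nisnevich distinguished squares and abstract blowup squares. Covers in the sense of \cref{x0wijg} are stable under base change, composition and cancellation by \cref{xhrdzm,x4xcqq}, so it suffices to show that the two generating families give covers. Here a generating family is a map from a finite disjoint union, whose \([\X]\) is a product by the Künneth formula \cref{x6lx9q}. The sheaf statement for \(\Mod_{[\X]}(\SH[2](\ZZ))\) then follows from \cref{x6ruic}, again using \cref{x6lx9q} to identify \([Y]^{\otimes_{[X]}\bullet+1}\) with \([Y^{\times_{X}\bullet+1}]\).

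\emph{Prim reduction.} Every morphism of quasiprojective schemes is prim over \(\SH[2](\ZZ)\): closed immersions by \cref{i:a_exc}, and projective morphisms by \cref{aminus}. Since \(\SH[2](\ZZ)\) is a presentable \(2\)-category of kernels, \(\Mod_{[X]}\) has \(\End(\unit)=\SH(X)\). Thus \cref{xpbfpq}, applied in \(\Mod_{[X]}(\SH[2](\ZZ))\), shows that \([X]\to[Y]\) is a cover if and only if \(f_{*}\unit_{Y}\) satisfies descent in \(\SH(X)\). By \cref{x670j7}, it is then enough to show that \(f_{*}\unit_{Y}\) is descendable in \(\SH(X)\).

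\emph{Two cases.} For a Nisnevich square \(U\sqcup V\to X\), with \(V\to X\) étale and an isomorphism over the closed complement \(Z\) of \(U\), I would argue as in \cref{x02mj4} and avoid descendability altogether. Excision (\cref{xwog77}) identifies \([X]\) with a pullback, and \cref{d_sua} reduces descent to the restrictions to \(U\) and \(Z\), where the cover acquires a section and \cref{x059gc} applies. Alternatively, stratify with \cref{md_str}: over \(U\) and over \(Z\) the algebra \(f_{*}\unit\) splits off the unit, so it has index \(\leq1\) on each piece and index \(\leq2\) overall.

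For an abstract blowup \(Y\sqcup Z\to X\), with \(p\colon Y\to X\) proper, \(Z\hookrightarrow X\) closed and \(p\) an isomorphism over \(X\setminus Z\), set \(A=p_{*}\unit\times i_{*}\unit\). Apply \cref{md_str} to the recollement of \(X\) along \(Z\) (\cref{xpyo8y}). On \(Z\) the factor \(i_{*}\unit\) restricts to the unit, giving a retraction. On \(X\setminus Z\), proper base change identifies the restriction of \(p_{*}\unit\) with the unit. So \(A\) is descendable of index \(\leq2\), and we conclude by \cref{x670j7}. Induction on the length of a cdh refinement gives the general case.

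\emph{Main obstacle.} The hard step is the legitimacy of \cref{md_str}. It needs the restrictions of \(A\) to \(\SH(U)\) and \(\SH(Z)\) to be computed by \(j^{*}\) and \(i^{*}\) of the idempotents \(\unit_{U}\) and \(\unit_{Z}\), which is the localization property of \(\SH\), together with proper base change for \(p\) from \cref{ayoub}. Getting the module-level identifications right over \([X]\), not just over the point, is where I expect the real care to be needed. A secondary point is that the generators of the cdh topology might only be available for non-affine \(Y\). This I would handle by Zariski descent (\cref{xzspgi}) together with the sheaf property of \(\Cat{QProj}^{\op}\to\SH[2](\ZZ)\) established in the proof of \cref{sh_ring}.
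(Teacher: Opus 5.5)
Your proposal is correct and follows essentially the paper's proof. Nisnevich squares go through \cref{d_sua}, with faithfulness checked after restriction: the paper pulls back along the two members of the cover, you along the open part and its closed complement. Blowup squares go through \cref{xpbfpq}, which reduces them to descendability of \(p_{*}\unit\times i_{*}\unit\) in \(\SH(X)\) via \cref{md_str}.

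One correction: not every morphism of quasiprojective schemes is prim. An open immersion \(j\) is not, since \(j_{*}(A\otimes j^{*}i_{*}C)=0\) while \(j_{*}A\otimes i_{*}C\) need not vanish, so the projection formula fails. The reduction via \cref{xpbfpq} therefore applies only to proper maps such as those in the blowup square. In particular, your \emph{alternative} stratification argument for Nisnevich squares is not justified, and your primary \cref{d_sua} route is the one to use there.
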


\begin{proof}
  We first treat Nisnevich descent.
  We consider the Nisnevich square
  \begin{equation*}
    \begin{tikzcd}
      V\ar[r]\ar[d]&
      Y\ar[d]\\
      U\ar[r]&
      X
    \end{tikzcd}
  \end{equation*}
  of affine schemes of finite type.
  By \cref{d_sua},
  we need to prove
  that \([U]\oplus[Y]\to\unit\)
  in \(\SH(X)\) is faithful.
  This can be verified
  by pulling back to~\(\SH(U)\)
  and~\(\SH(Y)\).

  We then prove cdp~descent.
  For that,
  it is convenient to use the \(2\)-motives
  of quasiprojective schemes.
  With this,
  it suffices to show that
  for a (concrete) blowup square
  \begin{equation*}
    \begin{tikzcd}
      E\ar[r]\ar[d]&
      Y\ar[d]\\
      Z\ar[r]&
      X\rlap,
    \end{tikzcd}
  \end{equation*}
  where \(X\) is an affine scheme
  of finite type,
  the morphism
  \([X]\to[Y]\times[Z]\)
  satisfies descent.
  Now,
  by using \cref{xpbfpq},
  it reduces to checking that
  the pushforward of
  \(\unit_{Y\amalg Z}\)
  in \(\SH(X)\)
  satisfies descent.
  We can verify its descendability
  using \cref{md_str}.
\end{proof}

\section{Étale \texorpdfstring{\(2\)}{2}-motives}\label{s:main_et}

In this section,
we prove \cref{main_3}.
We write \(\SH[2]_{\KAS}(\ZZ)\)
for the universal target of \cref{main_3},
which exists by \cref{d_okay}.
We wish to show
that this is equivalent to \(\SH[2]_{\et}(\ZZ)\).

In \cref{ss:shet}, we review étale motivic spectra.
In \cref{ss:vfc}, we review the theory
of motivic Euler characteristics.
In \cref{ss:gal},
we prove Galois descent for the axiomatic target of \cref{main_3}.
In \cref{ss:cc_et},
we prove \cref{main_3}.
In \cref{ss:h},
we prove that h~descent holds for étale \(2\)-motives.

\subsection{A review of étale motivic spectra}\label{ss:shet}

First,
we recall the definition of
étale motivic spectra:

\begin{definition}\label{xq7xf4}
  For a ring~\(A\),
  we define \(\SH_{\et}(A)\)
  to be the étale sheafified version of \(\SH(A)\);
  i.e.,
  for any étale cover \(Y\to X\)
  of smooth schemes of finite presentation over~\(A\),
  we impose that \([Y^{\times_{X}\bullet+1}]\)
  is a colimit diagram.
  Since Nisnevich descent is already imposed,
  it suffices to do this for finite Galois covers between smooth affines.
\end{definition}

The hypercompleted version
is usually considered
in the literature.
The advantage of \cref{xq7xf4} is the following:

\begin{lemma}\label{x5iwcy}
  The functor
  \begin{equation*}
    {\SH_{\et}}\colon\Cat{Ring}\to\CAlg(\Cat{Pr}_{\st})
  \end{equation*}
  commutes with filtered colimits.
\end{lemma}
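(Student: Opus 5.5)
Since \(\Cat{Ring}\) is compactly generated by finitely presented rings, a filtered colimit of rings \(A=\injlim_{i}A_{i}\) has to be compared with the colimit \(\injlim_{i}\SH_{\et}(A_{i})\) in \(\CAlg(\Cat{Pr}_{\st})\). Colimits in \(\CAlg(\Cat{Pr}_{\st})\) along filtered diagrams are computed in \(\Cat{Pr}_{\st}\), and hence as limits along right adjoints. The plan follows the analogous statement for \(\SH\) noted in \cref{xhx6bu}. We present both \(\SH_{\et}(A_{i})\) and \(\SH_{\et}(A)\) as localizations of a common kind of presheaf category and check that generators and relations match.

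First, for a ring \(B\), write \(\SH_{\et}(B)\) as the localization of \(\PShv(\Cat{Sm}^{\mathrm{aff}}_{B};\Cat{Sp})\) at an explicit set of morphisms, followed by inverting \(\Sigma^{\infty}\PP^{1}\). By \cref{xq7xf4} and the standard basis argument (as in \cref{xlsbil}), one may use smooth affine schemes of finite presentation. The morphisms to invert are of three kinds: those generating Nisnevich descent (elementary distinguished squares), \(\AA^{1}\)-projections, and the Čech nerves \(\lvert Y^{\times_{X}\bullet+1}\rvert\to X\) of finite Galois covers \(Y\to X\) between smooth affines. All three are indexed by finite diagrams of finitely presented objects. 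The important point is that we impose descent only for the Čech nerve \emph{colimit}, not hyperdescent. This is a localization at a small set of maps between objects representable by finitely presented data.

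Second, use the standard limit results from EGA~IV~\S8 and \cite[Tag~01ZM]{SP}. Smooth affine schemes of finite presentation over \(A\) satisfy \(\Cat{Sm}^{\mathrm{aff}}_{A}\simeq\injlim_{i}\Cat{Sm}^{\mathrm{aff}}_{A_{i}}\) as categories. Every Nisnevich square and every finite Galois cover over \(A\) descends to some \(A_{i}\), and hom-sets are filtered colimits. Since \(\PShv(\X;\Cat{Sp})\) sends filtered colimits of small categories to colimits in \(\Cat{Pr}_{\st}\), and localizations at generating sets are compatible with colimits (a localization is a pushout in \(\Cat{Pr}\)), the localized categories also satisfy \(\injlim_{i}\) equals the value at \(A\). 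Inverting \(\Sigma^{\infty}\PP^{1}\) is itself a colimit construction \cite[Section~2.1]{Robalo15} in \(\CAlg(\Cat{Pr}_{\st})\), so it commutes with the colimit as well.

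The main obstacle is the second step's claim that the set of relations over \(A\) is exactly the union of the images of the relations over the \(A_{i}\). It is easy to see that relations descend, as above. The subtler requirement is that the localization over \(A_{i}\), base-changed to \(A\), produces nothing more than the localization over \(A\); in other words, the base change functors send the generating relations over \(A_{i}\) to generating relations over \(A\), which holds since Galois covers, Nisnevich squares, and \(\AA^{1}\)-projections are stable under base change. This is precisely the point where hypercompleteness would fail: the hypercompleted version would require infinitely many conditions that do not descend to a finite stage. That is why \cref{xq7xf4} is phrased with Čech descent only.
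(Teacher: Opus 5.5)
Your proof is correct and is essentially the paper's argument. The paper proves the lemma in one line: \(\SH\) already commutes with filtered colimits (\cref{xhx6bu}), and every finite Galois cover between smooth affines over \(A\) descends to a finitely presented stage, which is your descent-of-relations step. The only difference is that you rederive the \(\SH\) part from the presheaf-plus-localization presentation rather than citing it, which costs nothing.
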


\begin{proof}
  This follows from
  the same observation for~\(\SH\)
  and the fact that
  any finite Galois cover \(Y\to X\)
  between smooth affines over~\(A\)
  arises from the same situation
  over a ring of finite presentation.
\end{proof}

Nevertheless,
it was proven in~\cite[Theorem~6.29]{BachmannBurklundXu}
that these categories coincide under mild assumptions:

\begin{theorem}[Bachmann–Burklund–Xu]\label{xmqcoq}
  Let \(A\) be a finite-dimensional noetherian ring
  such that the virtual cohomological dimension of residue fields
  is uniformly bounded
  (e.g., \(A\) is of finite type over~\(\ZZ\)).
  Then
  \(\SH_{\et}(A)\) coincides with the hypercomplete version.
\end{theorem}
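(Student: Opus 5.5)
The plan is to decompose an étale motivic spectrum by an arithmetic fracture square. It then suffices to show that, in each piece, the étale-local objects already satisfy hyperdescent. Concretely, I would compare the inclusion \(\SH_{\et}(A)^{\hyp}\subset\SH_{\et}(A)\) after rationalization and after \(p\)-completion for each prime~\(p\). Hypercomplete objects are closed under the limits in the fracture square
\begin{equation*}
  E\simeq E_{\QQ}\times_{(\prod_{p}E^{\wedge}_{p})_{\QQ}}\prod_{p}E^{\wedge}_{p}.
\end{equation*}
Hence it suffices to check that \(E_{\QQ}\) and every \(E^{\wedge}_{p}\) are hypercomplete, uniformly in~\(p\).

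\emph{Rational part.} Rationally, étale descent for \(\SH\) reduces to Nisnevich descent on the plus part. The reason is that \(\SH(A)_{\QQ}^{+}\simeq\operatorname{DM}(A;\QQ)\) carries transfers, and finite Galois descent is split by the trace. The minus part is handled by real étale descent. The Nisnevich topos of smooth schemes over a finite-dimensional noetherian base has finite homotopy dimension by Clausen–Mathew (cf.~\cref{xhx6bu}), so it is Postnikov complete. The real étale site also has finite dimension, since its points are controlled by real spectra of bounded Krull dimension. So \(E_{\QQ}\) is hypercomplete.

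\emph{\(p\)-complete part, \(p\) invertible.} Over \(A[\sfrac1p]\), I would invoke étale rigidity (Bachmann). It identifies \(\SH_{\et}(A[\sfrac1p])^{\wedge}_{p}\) with \(\Shv_{\et}(A[\sfrac1p]_{\et};\Cat{Sp})^{\wedge}_{p}\), after inverting~\(\rho\) and passing to the real étale site where \(p=2\) requires it. The small étale site of \(A[\sfrac1p]\) has \(\FF_{p}\)-cohomological dimension bounded by \(\dim A\) plus the uniform bound on residue field vcd. This holds after adjoining \(\sqrt{-1}\), which is exactly why virtual cohomological dimension appears. Finite cohomological dimension with a uniform bound gives Postnikov completeness of \(p\)-complete sheaves, by the Clausen–Mathew criterion. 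The real points contribute a finite-dimensional real spectrum, handled the same way.

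\emph{\(p\)-complete part, characteristic~\(p\).} On the closed part \(A/p\), Bachmann's theorem gives \(\SH_{\et}(A/p)^{\wedge}_{p}\simeq0\), via Artin–Schreier and the Frobenius argument. Localization then shows \(E^{\wedge}_{p}\) is determined by its restriction to \(A[\sfrac1p]\). This uses the recollement \cref{i:a_exc}, which survives étale sheafification.

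Finally, the uniformity in~\(p\) (the vcd bound) lets the product \(\prod_{p}\) preserve hypercompleteness. Since hypercompletion is a left-exact localization commuting with these limits, \(\SH_{\et}(A)\) coincides with its hypercomplete version. I expect the main obstacle to be the rigidity step at \(p=2\) with real points. There the naive étale cohomological dimension is infinite, and one must split off the \(\rho\)-torsion and \(\rho\)-inverted parts and use the real étale comparison.
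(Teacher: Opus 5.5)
The paper does not prove this statement. It quotes it from Bachmann–Burklund–Xu (Theorem~6.29 there), so your sketch has to stand on its own, and it has a genuine gap at the one point the theorem is about. The content of the statement is that the localization \(\SH_{\et}(A)\to\SH_{\et}^{\wedge}(A)\) kills nothing: \(\infty\)-connective étale sheaves of spectra must vanish after \(\mathbb{A}^{1}\)-localization and \(\mathbb{P}^{1}\)-stabilization.

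In your \(p\)-complete step you hand this to ``finite cohomological dimension with a uniform bound gives Postnikov completeness of \(p\)-complete sheaves.'' That step does not follow. Cohomological dimension bounds control sections only through the descent spectral sequence, i.e.\ they compute \(\Gamma(U;\lim_{n}\tau_{\leq n}F)\), not \(\Gamma(U;F)\). They are therefore blind to \(\infty\)-connective sheaves and do not by themselves give finite homotopy dimension. What Clausen–Mathew extract from such bounds is Postnikov completeness of the \emph{hypercomplete} category and, in good cases, that hypercompletion is smashing. This is exactly how the paper uses them in \cref{x7c1hf}; it is not the claim that every \(p\)-complete sheaf is already hypercomplete.

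There is also a symptom that the difficulty has been assumed rather than addressed. If your step were available, the rest of your \(p\)-complete argument would be idle. The levels of a \(p\)-complete \(E\in\SH_{\et}(A)\) are \(p\)-complete sheaves, and their restrictions to each small site \(X_{\et}\), \(X\in\Cat{Sm}_{A}\), are \(p\)-complete. These sites have locally bounded \(p\)-cohomological dimension (at characteristic-\(p\) points even \(\operatorname{cd}_{p}\leq1\)). So you would get hypercompleteness of all \(p\)-complete étale sheaves of spectra with no motivic input, no rigidity, and no Artin–Schreier argument.

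Separately, Bachmann's rigidity theorem is formulated for the hypercomplete category. Using it for \(\SH_{\et}\) as in \cref{xq7xf4} needs its own argument, and even then it only moves the problem to the small étale site, where the same unproven step is required.

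The rational step has the same conflation, but in a repairable form. Postnikov completeness of the Nisnevich topos gives Nisnevich hyperdescent, whereas you need étale hyperdescent for rational objects. That does hold: rational plus-part objects are Beilinson motives, which satisfy étale (indeed h-) descent by Cisinski–Déglise. That is the input to cite.

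The minus part needs no real étale descent; it simply vanishes in \(\SH_{\et}(A)_{\QQ}\). After inverting~\(2\), rational Galois descent along \(A[\sfrac12]\to A[\sfrac12][\zeta_{4}]\), over which \(\langle-1\rangle=1\), kills it. Likewise no real étale site enters at \(p=2\) in the étale-local setting; the \(\sqrt{-1}\)-descent you mention is what handles finite vcd.

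Your closing remark is also off. Hypercomplete objects are closed under all limits, so the product over~\(p\) needs no uniformity. Hypercompletion, on the other hand, need not commute with infinite products. The fracture-square reduction itself is fine without either claim.

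What is missing is an actual argument that \(\infty\)-connective objects vanish in \(\SH_{\et}(A)\).
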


We later use the following variant:

\begin{corollary}\label{bbx_more}
  In \cref{xmqcoq},
  the same holds
  if all local rings of~\(A\)
  at maximal ideals
  satisfy the hypothesis of \cref{xmqcoq}.
\end{corollary}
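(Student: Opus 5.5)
The plan is to reduce to the local statement of \cref{xmqcoq} by Zariski descent, after which only the hypercompleteness question remains. We must show that \(\SH_{\et}(A)\) coincides with its hypercompletion. Both the étale-sheafified and the hypercompleted versions satisfy Zariski (indeed Nisnevich) descent in~\(A\). Hence I would like to check the comparison locally.

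A wrinkle is that \(A\) need not be noetherian of finite dimension globally; only its localizations at maximal ideals satisfy the hypothesis. So the first step is to reduce from a finite Zariski cover to the local rings themselves.

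First, I would use \cref{x5iwcy}. For a maximal ideal \(\idl{m}\), we have \(A_{\idl{m}}=\injlim_{a\notin\idl{m}}A[\sfrac1a]\), and therefore
\begin{equation*}
  \SH_{\et}(A_{\idl{m}})\simeq\injlim_{a\notin\idl{m}}\SH_{\et}(A[\sfrac1a])
\end{equation*}
in \(\CAlg(\Cat{Pr}_{\st})\).

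Second, hypercompleteness can be phrased as a condition on objects: every \(\infty\)-connective object of the underlying étale sheaf category (or the kernel of the hypercompletion localization) vanishes. Since \(\Spec A\) is quasicompact, it is covered by finitely many such \(\Spec A[\sfrac1a]\). By Zariski descent, it then suffices to show that such a kernel object~\(M\) restricts to zero on some neighborhood of each closed point. The key step is to show that \(M\) becomes zero on \(\Spec A[\sfrac1a]\) for some \(a\notin\idl{m}\), given that its image in \(\SH_{\et}(A_{\idl{m}})\) is zero by \cref{xmqcoq}.

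For this I would use compact generation. \(\SH_{\et}\) is generated by Tate twists of the suspension spectra \(\Sigma^{\infty}_{+}X\) for \(X\) smooth affine of finite presentation. The vanishing of \(M\) is detected by maps from these generators. Each mapping spectrum over \(A_{\idl{m}}\) is the filtered colimit of the corresponding ones over \(A[\sfrac1a]\), since filtered colimits in \(\Cat{Pr}\) are computed via compact objects.

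The main obstacle is that \(M\) itself is not compact, so vanishing at the colimit gives vanishing only one generator and one homotopy class at a time. It does not give a uniform neighborhood.

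To overcome this, I would avoid arguing object by object and instead compare the two localizations. The hypercompletion is generated by the hypercovers, i.e.\ by the cofibers of \(\lvert\Sigma^{\infty}_{+}U_{\bullet}\rvert\to\Sigma^{\infty}_{+}X\) for étale hypercovers \(U_{\bullet}\to X\). These are not compact, but they are colimits of compacts defined over finite stages. Hypercompleteness of \(\SH_{\et}(A)\) amounts to each such cofiber~\(C\) being zero in \(\SH_{\et}(A)\), i.e.\ \(\id_{C}=0\). By Zariski descent, this can be tested on the stalks \(\SH_{\et}(A_{\idl{m}})\) via a conservativity argument. Stalks at maximal ideals suffice because an object that is zero on every \(A_{\idl{m}}\) is zero, using Zariski descent and quasicompactness together with the filtered-colimit description of \(\Map(\Sigma^{\infty}_{+}X(n)[k],C)\).

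Since the localizations \(\SH_{\et}(A)\to\SH_{\et}(A_{\idl{m}})\) are jointly conservative, the condition \(C_{\idl{m}}=0\) for all \(\idl{m}\), which holds by \cref{xmqcoq}, gives \(C=0\).
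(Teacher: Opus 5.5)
Your reduction is the same as the paper's. The obstruction to hypercompleteness (an object \(M\) killed by hypercompletion, or a hypercover cofiber \(C\)) vanishes over every \(A_{\idl{m}}\) by \cref{xmqcoq}, so everything hinges on joint conservativity of the functors \(\SH_{\et}(A)\to\SH_{\et}(A_{\idl{m}})\). The paper settles this in one line: it deduces from \cref{x5iwcy} that \(M_{\idl{m}}\simeq0\) forces \(M\) to vanish over some \(A[f^{-1}]\) with \(f\notin\idl{m}\). Your objection that this step needs compactness is well founded. Over \(A=\ZZ\), take \(i_{q}\colon\Spec\FF_{q}\to\Spec\ZZ\); the object \(\bigoplus_{q\neq p}i_{q,*}\unit\) vanishes over \(\ZZ_{(p)}\) but over no \(\ZZ[f^{-1}]\).

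However, your repair is circular. The cofiber \(C\) is no more compact than \(M\). The sentence carrying the weight says an object vanishing over every \(A_{\idl{m}}\) is zero ``using Zariski descent and quasicompactness together with the filtered-colimit description of \(\Map\).'' That is exactly the class-by-class argument you had just rejected. For compact \(G\) it shows only that each class in \(\pi_{*}\Map(G,C)\) dies on some finite Zariski cover, which depends on the class. Zariski descent then merely puts that class in positive filtration of the descent spectral sequence. It can be a boundary of a class on an overlap, which again only dies locally, and so on, so nothing forces it to vanish.

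What is missing is control of Zariski descent, i.e.\ hypercompleteness of the Zariski \(\infty\)-topos of \(\Spec A\). This holds if \(\sup_{\idl{m}}\dim A_{\idl{m}}<\infty\). Then \(\Spec A\) has finite Krull dimension, so its sheaf \(\infty\)-topos has finite homotopy dimension (cf.~\cite{ClausenMathew21}). This bound is available where the corollary is used, in the proof of \cref{x0606p}, by \cref{poi_noe}.

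Granting it, take a compact generator \(G\) (cf.~\cref{x927y8}) and consider the Zariski sheaf of spectra \(U\mapsto\Map(G|_{U},C|_{U})\). By the filtered-colimit description, its stalk at any prime \(\idl{p}\) is a mapping spectrum over \(A_{\idl{p}}\). Since \(A_{\idl{p}}\) is a localization of some \(A_{\idl{m}}\), that stalk is zero. Hence the sheaf vanishes, so \(C\simeq0\). This supplies the joint conservativity that both your argument and the paper's need. Without a uniform dimension bound, which the statement does not explicitly impose, a further argument is required.
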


\begin{proof}
  It suffices to show that
  the functors \(\SH_{\et}(A)\to\SH_{\et}(A_{\idl{m}})\)
  are jointly conservative,
  where \(\idl{m}\) runs over maximal ideals of~\(A\).
  If an object~\(M\) maps to~\(0\) over~\(A_{\idl{m}}\),
  by \cref{x5iwcy},
  we can take \(f\notin\idl{m}\)
  such that it is~\(0\) over~\(A[f^{-1}]\).
  This covers~\(A\) and hence \(M\simeq0\).
\end{proof}

\begin{remark}\label{x927y8}
  In~\cite[Corollary~5.7]{Bachmann21},
  it is shown at least
  under the assumption of \cref{xmqcoq}
  that the category \(\SH_{\et}(A)\) is compactly generated
  by \(\Sigma^{\infty}_{+}X\)
  when \(X\) is a smooth scheme of finite presentation over~\(A\)
  with finite cohomological dimension.
  From this,
  we see that our category \(\SH_{\et}(A)\)
  is always compactly generated
  by using \cref{x5iwcy}.

  A subtlety is that
  \(\unit\in\SH_{\et}(A)\) is not necessarily compact.
  However,
  we can always cover~\(A\)
  by \(A[\sfrac12][\zeta_{4}]\times A[\sfrac13][\zeta_{6}]\)
  to ensure the unit is compact:
  To see this,
  again by \cref{x5iwcy},
  we can consider the case when \(A\) is of finite type.
  Then, the result follows from
  the description
  of a set of compact generators above.
\end{remark}

Ayoub constructed six operations for~\(\SH_{\et}\)
for quasiprojective schemes over~\(\ZZ\);
namely,
we can apply \cref{ayoub} in this situation
(note that~\cref{i:y_exc}
was proven in~\cite[Corollaire~4.5.47]{Ayoub07b}).
We here consider the étale variant
of \(\SH[2](\ZZ)\):

\begin{definition}\label{xihdgv}
  We write \(\SH[2]_{\et}(\ZZ)\)
  for the presentable \(2\)-category of kernels
  (see \cref{ker_p})
  for \({\SH}_{\et}\colon\Span(\Cat{QProj})\to\Cat{Pr}_{\st}\).
\end{definition}

\begin{lemma}\label{xobwqg}
  There is a canonical\footnote{This morphism is necessarily unique
    by~\cite{DauserKuijper}.
  }
  morphism
  \({\SH}\to{\SH_{\et}}\)
  in \(\CAlg(\Fun(\Span(\Cat{QProj}),\Cat{Pr}_{\st}))\)
  extending
  the tautological morphism
  in \(\CAlg(\Fun(\Cat{QProj}^{\op},\Cat{Pr}_{\st}))\).
\end{lemma}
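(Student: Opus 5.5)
The plan is to deduce this from \cref{drga} and then extend the resulting transformation from \(\Cat{QProj}^{\op}\) to spans using \cref{cll}, in the same way as in the proof of \cref{main_1}.

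First, I check that \(\SH_{\et}\), restricted along \((\Cat{QProj})^{\op}\to\Span(\Cat{QProj})\), satisfies the hypotheses of \cref{drga}. This restriction is a lax symmetric monoidal functor to \(\Cat{Pr}_{\st}\), and the hypotheses are \cref{i:y_exc,i:y_hi,i:y_tate,i:y_msm} of \cref{ayoub}.
\begin{itemize}
  \item Smooth base change and the smooth projection formula are inherited from \(\SH\): étale sheafification is compatible with \(f^{*}\) and \(f_{\natural}\) for smooth~\(f\), since smooth pullback preserves étale covers of smooth schemes.
  \item \(\AA^{1}\)-invariance and Tate stability are imposed in the definition.
  \item Localization is Ayoub's result \cite[Corollaire~4.5.47]{Ayoub07b}.
\end{itemize}
\Cref{drga} then gives a morphism \(\alpha\colon\SH\to\SH_{\et}\) in \(\CAlg(\Fun((\Cat{QProj})^{\op},\Cat{Pr}_{\st}))\) that is compatible with \(f_{\natural}\) for smooth~\(f\). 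Because \(\SH\) is initial, \(\alpha\) is necessarily the tautological sheafification map.

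Second, I extend \(\alpha\) along \(\Cat{QProj}^{\op}\to\Span(\Cat{QProj})\). Following the proof of \cref{main_1}, I factor this through \(\Span[2]_{P,J}(\Cat{QProj})\), where \(J\) consists of open immersions and \(P\) of projective morphisms.
\begin{itemize}
  \item Both \(\SH\) and \(\SH_{\et}\) are \((J,P)\)-biadjointable functors satisfying the projection formula. This is \cref{ayoub} applied to each, together with the double Beck--Chevalley condition, which one gets from localization using the complementary closed immersion.
  \item By \cref{cll}, a morphism is then a natural transformation that commutes with \(j_{\natural}\) for \(j\in J\) and with \(p_{*}\) for \(p\in P\).
  \item Compatibility with \(j_{\natural}\) holds by construction, since open immersions are smooth.
  \item Compatibility with \(p_{*}\) is exactly \cref{aplus}, applied to \(\alpha\colon\SH\to\SH_{\et}\).
\end{itemize}
So \(\alpha\) lifts to a morphism of lax symmetric monoidal functors on \(\Span[2]_{P,J}(\Cat{QProj})\). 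Restricting to \(\Span(\Cat{QProj})\), which uses every morphism as \(E\) because each quasiprojective morphism factors as an open immersion followed by a projective one, gives the desired morphism in \(\CAlg(\Fun(\Span(\Cat{QProj}),\Cat{Pr}_{\st}))\).

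I expect the first step to be the main obstacle, specifically checking that the version of \(\SH_{\et}\) in \cref{xq7xf4} satisfies localization over all of \(\Cat{QProj}\). Ayoub works with the hypercomplete version, so I would use \cref{xmqcoq} to identify the two versions for schemes of finite type over \(\ZZ\); this covers all quasiprojective schemes. Everything after that is formal and follows the proof of \cref{main_1}.
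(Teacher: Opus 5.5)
Your argument is correct. It is the route the paper mentions in passing just before its proof (``We can just use \cref{aplus} for this''): get a transformation on \(\Cat{QProj}^{\op}\) compatible with smooth \(f_{\natural}\), upgrade it to compatibility with projective \(p_{*}\) via \cref{aplus}, and extend along \(\Span[2]_{\all;P,J}(\Cat{QProj})\) with \cref{cll}.

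The proof the paper actually writes goes through \cref{main_1} instead:
\begin{enumerate}
  \item It observes that \([\X]\colon\Cat{QProj}^{\op}\to\SH[2]_{\et}(\ZZ)\) satisfies axioms (i)--(iv). This needs the same Ayoub input you use, notably localization for \(\SH_{\et}\).
  \item It obtains a functor \(\SH[2](\ZZ)\to\SH[2]_{\et}(\ZZ)\) from the universal property.
  \item It notes that the composite of this functor with \(\Span[2]_{\all;P,J}(\Cat{QProj})\to\SH[2](\ZZ)\) is the tautological functor.
  \item It restricts to \(\Span(\Cat{QProj})\) and passes to \(\Hom(\unit,[\X])\).
\end{enumerate}

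The proof of \cref{main_1} is itself assembled from \cref{drga}, \cref{aplus}, and \cref{cll}, so the two arguments rest on the same inputs. Yours unpacks the step that constructs \(\alpha\) in that proof and is more self-contained. The paper's is shorter once \cref{main_1} is available, and it produces the \(2\)-categorical comparison \(\SH[2](\ZZ)\to\SH[2]_{\et}(\ZZ)\) along the way.

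Two small remarks. First, your identification of the \cref{drga}-transformation with the tautological one silently uses that the tautological map commutes with smooth \(f_{\natural}\). This is true, since restriction along \(\Cat{Sm}_{Y}\to\Cat{Sm}_{X}\) preserves étale sheaves. Once you check it, you can start from the tautological map and skip \cref{drga} altogether. Second, your concern that Ayoub works with hypercomplete étale sheaves, which you resolve via \cref{xmqcoq}, is a point the paper passes over silently; handling it that way is reasonable.
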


We can just use \cref{aplus} for this,
but we can also use \cref{main_1}
as follows:

\begin{proof}
  The morphism
  \([\X]\colon\Cat{QProj}^{\op}\to\SH[2]_{\et}(\ZZ)\)
  satisfies the hypothesis of \cref{main_1}
  so that
  we obtain a morphism
  \(\SH[2](\ZZ)\to\SH[2]_{\et}(\ZZ)\).
  The composition of this with
  \(\Span[2]_{\all;P,J}(\Cat{QProj})\to\SH[2](\ZZ)\),
  where \(J\) consists of open immersions
  and \(P\) of projective morphisms,
  coincides with the tautological morphism.
  By restricting this to \(\Span(\Cat{QProj})\),
  we obtain the desired morphism.
\end{proof}

\subsection{A review of the Euler characteristic}\label{ss:vfc}

We need the following:

\begin{proposition}\label{tab}
  Let \(A\) be a static ring containing either~\(\zeta_{4}\) or~\(\zeta_{6}\)
  and \(P\) a monic polynomial of degree~\(d\).
  We consider \(f\colon A\to A[T]/P\).
  Then there is a morphism \(\epsilon\colon f_{*}f^{*}\unit\to\unit\)
  and an invertible morphism \(v\colon\unit\to\unit\)
  in \(\SH(A)\)
  such that the composite
  \begin{equation*}
    \unit\to f_{*}f^{*}\unit\xrightarrow{\epsilon}\unit\xrightarrow{v}\unit\xrightarrow{d}\unit
  \end{equation*}
  is homotopic to~\(d^{2}\).
\end{proposition}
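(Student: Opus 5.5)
The plan is to construct \(\epsilon\) as a trace (Gysin/transfer) map for the finite flat morphism \(f\colon\Spec A[T]/P\to\Spec A\). We then identify the composite \(\unit\to f_{*}f^{*}\unit\to\unit\) with a motivic Euler characteristic, that is, an element of \(\End_{\SH(A)}(\unit)\). Finally, we compute this element by an \(\AA^{1}\)-deformation of~\(P\) to~\(T^{d}\).

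\emph{Construction of \(\epsilon\).} The morphism \(f\) is the zero locus of the monic polynomial~\(P\) inside \(\AA^{1}_{A}\), so it is a finite syntomic morphism of relative dimension~\(0\). Write \(i\colon\Spec A[T]/P\hookrightarrow\AA^{1}_{A}\) and \(\pi\colon\AA^{1}_{A}\to\Spec A\). Purity for~\(\pi\) gives \(\pi^{!}\unit\simeq\unit(1)[2]\). The regular immersion~\(i\) has trivialized normal bundle via~\(P\), which gives \(i^{!}\unit(1)[2]\simeq\unit\). Together these give a trivialization \(f^{!}\unit\simeq f^{*}\unit\), canonical up to an invertible endomorphism. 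Since \(f\) is proper, \(f_{*}=f_{!}\), and we define
\[
  \epsilon\colon f_{*}f^{*}\unit\simeq f_{!}f^{!}\unit\to\unit
\]
to be the counit. The unit~\(v\) in the statement absorbs the ambiguity of this trivialization. In particular, we do not try to make the normalization canonical; we only need it to be invertible.

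\emph{Deformation.} Consider \(Q=SP+(1-S)T^{d}\) over \(A[S]\). It is still monic of degree~\(d\), and the whole construction (the trivialization and the counit) is functorial in the base, by Ayoub's projective base change (\cref{ayoub}). Hence we obtain an endomorphism of~\(\unit\) in \(\SH(\AA^{1}_{A})\). By \(\AA^{1}\)-invariance, \(\End(\unit_{\AA^{1}_{A}})=\End(\unit_{A})\), so the specializations at \(S=1\) and \(S=0\) agree up to the units introduced by the trivializations. We are therefore reduced to \(P=T^{d}\). There, \(f^{*}\) is identified with the pullback to the reduced point, because the recollement of \cref{i:y_exc} has empty open part. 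The composite then becomes the quadratic Euler characteristic (the degree in \(\mathrm{GW}(A)\), via Morel's theorem and its relative versions) of the trace form of \(A[T]/T^{d}\). This trace form is \(\tfrac{d}{2}h\) if \(d\) is even and \(\tfrac{d-1}{2}h+\langle u\rangle\) for a unit~\(u\) if \(d\) is odd.

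\emph{Use of \(\zeta_{4}\) or \(\zeta_{6}\).} This is the step I expect to be the main obstacle. I would try to prove, in \(\End(\unit)\), an identity of the form \(d\cdot e=d\cdot\langle u\rangle\cdot d\), where \(e\) is the Euler characteristic computed above. The point is that \(h\cdot\langle u\rangle=h\) always holds, and that \(h\) acts as~\(2\) once \(\langle-1\rangle=1\) (using \(\zeta_{4}\)). In the \(\zeta_{6}\) case one instead uses that \(-3\) is a square, so \(\langle-3\rangle=1\); combined with the relation \(\eta h=0\), this should again force the hyperbolic form to become~\(2\) after multiplying by~\(d\). That extra factor of~\(d\) is exactly why the statement has \(d^{2}\) rather than~\(d\). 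I would then set \(v=\langle u\rangle^{-1}\) times the normalization unit, so that \(d\cdot v\cdot e=d^{2}\).

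The delicate verification is that \(\langle u\rangle\) and the trivialization ambiguity interact correctly with the multiplication by~\(d\). For this I would compute in the Milnor–Witt presentation of \(\End(\unit)\) over the base ring \(\ZZ[\zeta_{4}]\) or \(\ZZ[\zeta_{6}]\), and then base change to~\(A\).
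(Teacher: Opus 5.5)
Your architecture matches the paper's: build a transfer \(\epsilon\) whose composite with the unit is \(d_{\epsilon}=\sum_{i=0}^{d-1}\langle-1\rangle^{i}\), then do arithmetic in \(\pi_{0}\End_{\SH(A)}(\unit)\). The paper simply cites EHKSY, Proposition~B.1.4 for the first step, so your deformation to \(T^{d}\) is not needed. Two corrections there: the relevant form on \(A[T]/T^{d}\) is the Scheja–Storch (residue) form, not the trace form, which is degenerate there; and Morel's identification with \(\mathrm{GW}\) holds only over fields.

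The genuine gap is the \(\zeta_{6}\) case. What you need there is \(2\langle-1\rangle=2\), and your proposed ingredients cannot produce it.
\begin{itemize}
\item If \(3\) is not invertible (e.g.\ \(A=\ZZ[\zeta_{6}]\)), \(-3\) is a square but not a unit, so \(\langle-3\rangle\) is not defined.
\item If \(3\) is invertible, \(\langle-3\rangle=1\) is just an instance of \(\langle a^{2}\rangle=1\), and \(\eta h=0\) only yields \(\langle a\rangle h=h\). Both are multiplicative relations. Over \(A=\ZZ[\zeta_{6},\sfrac13]\), where \(-1\) is not a square, take the character of \(A^{\times}/(A^{\times})^{2}\) with \(-1\mapsto-1\) and \(\sqrt{-3}\mapsto1\). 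The induced ring map from the group ring to \(\ZZ\) sends \(h\mapsto0\), so it respects all these relations. Yet it sends \(2\langle-1\rangle-2\mapsto-4\).
\end{itemize}
The identity is additive in nature: \(-1=\zeta_{3}+\zeta_{3}^{2}\) is a sum of two squares. So you need an additive relation valid over a general base. The paper uses Druzhinin's relations \(\langle u^{2}\rangle=1\) and \(\langle u\rangle+\langle v\rangle=\langle u+v\rangle+\langle(u+v)uv\rangle\) with \(u=\zeta_{3}\), \(v=\zeta_{3}^{2}\). This gives \(2=\langle\zeta_{3}\rangle+\langle\zeta_{3}^{2}\rangle=2\langle-1\rangle\).

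Your choice of \(v\) also has to change. Given \(2\langle-1\rangle=2\):
\begin{itemize}
\item \(d_{\epsilon}=d\) for \(d\equiv0,1\pmod{4}\);
\item \(d\,d_{\epsilon}=d^{2}\) for \(d\equiv2\pmod{4}\);
\item \(d_{\epsilon}=d\langle-1\rangle\) for \(d\equiv3\pmod{4}\), and there you must take \(v=\langle-1\rangle\).
\end{itemize}
With your \(v=\langle u\rangle^{-1}\) (which is \(1\) for the standard normalization), you get \(d\,d_{\epsilon}=d^{2}\langle-1\rangle=d^{2}+(\langle-1\rangle-1)\). This differs from \(d^{2}\) over, e.g., \(\QQ(\zeta_{3})\). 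So the right target is \(2h=4\) together with this case split modulo~\(4\), not that \(h\) becomes \(2\) after multiplying by~\(d\).

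Finally, the purity isomorphism \(f^{!}\unit\simeq f^{*}\unit\) is not available over a general base, because \(\Spec A[T]/P\) need not be smooth over~\(A\). You only need a map, not an isomorphism. The fundamental class \(\Sigma^{L_{f}}f^{*}\to f^{!}\), trivialized via \(P\) and \(dT\), gives \(\epsilon\) and its compatibility with base change along your deformation.
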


For the proof,
we recall some motivic homotopy theory.
Consider
the motivic \(J\)-homomorphism
\(\Omega^{\infty}K(A)\to\SH(A)\),
where \(K(A)\) denotes the algebraic K-theory spectrum of~\(A\).
Since it maps~\(0\) to~\(\unit\),
we obtain a morphism \(\Omega^{\infty+1}K(A)\to\End_{\SH(A)}(\unit)\).
By composing this with \(A^{\times}\to K_{1}(A)\),
we obtain a morphism \(A^{\times}\to\pi_{0}\End_{\SH(A)}(\unit)\),
for which we write \(\langle\X\rangle\).
For \(d\geq0\),
we write \(d_{\epsilon}\)
for \(\langle-1\rangle^{0}+\dotsb+\langle-1\rangle^{d-1}\).

\begin{lemma}\label{x6mvu2}
  Let \(A\) be a ring.
  When \(\zeta_{4}\in A\),
  we have \(\langle-1\rangle=1\) in \(\pi_{0}\End_{\SH(A)}(\unit)\).
  When \(\zeta_{6}\in A\),
  we have \(2\langle-1\rangle=2\) in
  \(\pi_{0}\End_{\SH(A)}(\unit)\).
\end{lemma}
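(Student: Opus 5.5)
The plan is to reduce both claims to two standard relations for the elements \(\langle u\rangle\):
\begin{itemize}
  \item the squaring relation \(\langle u^{2}\rangle=1\);
  \item the additive (Grothendieck–Witt) relation \(\langle a\rangle+\langle b\rangle=\langle a+b\rangle(1+\langle ab\rangle)\) whenever \(a\), \(b\), \(a+b\in A^{\times}\).
\end{itemize}
Both \(\langle u\rangle\) and these relations are natural in~\(A\), since the \(J\)-homomorphism is functorial in the ring. So it suffices to treat the universal cases \(A=\ZZ[t]/(t^{2}+1)\) and \(A=\ZZ[t]/(t^{2}-t+1)\), and then pull back along the classifying map \(\ZZ[\zeta_{4}]\to A\) or \(\ZZ[\zeta_{6}]\to A\).

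First, I will identify \(\langle u\rangle\) concretely. The \(J\)-homomorphism sends a unit \(u\in K_{1}(A)\) to the automorphism of the Thom spectrum of the trivial line bundle induced by scaling by~\(u\). Hence \(\langle u\rangle\) is the \(\PP^{1}\)-desuspension of the pointed self-map \([x:y]\mapsto[ux:y]\) of \((\PP^{1},\infty)\). With this description, \(\langle\X\rangle\colon A^{\times}\to\pi_{0}\End_{\SH(A)}(\unit)^{\times}\) is a group homomorphism, and
\begin{equation*}
  [x:y]\mapsto[u^{2}x:y]=[ux:u^{-1}y]
\end{equation*}
is induced by \(\mathrm{diag}(u,u^{-1})\in\mathrm{SL}_{2}(A)\). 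This matrix is a product of elementary matrices, each of which is naively \(\AA^{1}\)-homotopic to the identity through matrices fixing~\(\infty\). Therefore \(\langle u^{2}\rangle=1\). When \(\zeta_{4}\in A\), this gives \(\langle-1\rangle=\langle\zeta_{4}^{2}\rangle=1\), which is the first claim.

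For \(\zeta_{6}\in A\), put \(\omega=\zeta_{6}^{2}\), a primitive cube root of unity. Then \(\langle\omega\rangle=\langle\zeta_{6}^{2}\rangle=1\). Moreover \(1+\omega=-\omega^{2}=\zeta_{6}\) is a unit. Applying the additive relation with \(a=1\) and \(b=\omega\) gives
\begin{equation*}
  2=\langle1\rangle+\langle\omega\rangle
  =\langle-\omega^{2}\rangle(1+\langle\omega\rangle)
  =2\langle-1\rangle\langle\omega\rangle^{2}
  =2\langle-1\rangle,
\end{equation*}
which is the second claim.

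The main obstacle is justifying the additive relation in \(\pi_{0}\End_{\SH(A)}(\unit)\) over these universal bases, which are not fields. My first attempt is to cite Morel's computation \(\pi_{0}\End(\unit)\simeq\mathrm{GW}\) together with its extension to Dedekind (or, more generally, regular) bases, for instance via Bachmann–Hoyois; \(\ZZ[\zeta_{6}]\) is a Dedekind domain, so this applies there. If the identification \(\pi_{0}\End(\unit)\simeq\mathrm{GW}\) over these bases is not available, the fallback is to prove the relation directly. For that, I would realize both sides as \(\PP^{1}\)-desuspensions of maps of \(\PP^{1}\wedge\PP^{1}\simeq\AA^{2}/(\AA^{2}\setminus0)\) given by diagonal matrices \(\mathrm{diag}(a,b)\) and \(\mathrm{diag}(a+b,ab(a+b))\). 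I would then use the standard \(\AA^{1}\)-homotopy in \(\mathrm{GL}_{2}\) relating them, which is valid over any ring once \(a\), \(b\), and \(a+b\) are units. In either approach, the sum on the left is realized by a pinch map, so the delicate part is controlling the symmetric-monoidal and additive structure on \(\PP^{1}\wedge\PP^{1}\) correctly.
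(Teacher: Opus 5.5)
Your strategy and computation match the paper's. You reduce to \(\ZZ[\zeta_{4}]\) and \(\ZZ[\zeta_{6}]\), then combine \(\langle u^{2}\rangle=1\) with \(\langle u\rangle+\langle v\rangle=\langle u+v\rangle+\langle(u+v)uv\rangle\). Your choice \(a=1\), \(b=\omega\) works as well as the paper's \(u=\zeta_{3}\), \(v=\zeta_{3}^{2}\), where \(u+v=-1\) and \(uv=1\). The gap is exactly the step you flagged: neither of your justifications of the additive relation in \(\pi_{0}\End_{\SH(\ZZ[\zeta_{6}])}(\unit)\) works. Morel's identification \(\pi_{0}\End(\unit)\cong\mathrm{GW}\) is a theorem over fields, and you point to no result extending it to \(\ZZ[\zeta_{6}]\). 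Even an identification of the homotopy sheaf \(\underline{\pi}_{0}(\unit)_{0}\) with \(\underline{\mathrm{GW}}\) over this one-dimensional base would not directly give the relation in global \(\pi_{0}\). The Nisnevich descent spectral sequence also contributes \(H^{1}(\underline{\pi}_{1}(\unit)_{0})\).

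Your fallback fails for a structural reason. The self-map of \(\PP^{1}\wedge\PP^{1}\) induced by \(\mathrm{diag}(a,b)\) represents the \emph{product} \(\langle a\rangle\langle b\rangle=\langle ab\rangle\), not the sum. Likewise \(\mathrm{diag}(a+b,ab(a+b))\) represents \(\langle ab(a+b)^{2}\rangle=\langle ab\rangle\), so comparing the two says nothing about sums. Matrices acting on \((\PP^{1})^{\wedge n}\) only produce elements in the image of \(K_{1}\) under the \(J\)-homomorphism, which maps into the multiplicative group, so they cannot yield an additive relation. These two matrices are not even naively \(\mathbb{A}^{1}\)-homotopic in \(\mathrm{GL}_{2}\): over a reduced ring their determinants would have to agree, but they differ by the factor \((a+b)^{2}\). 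The additive relation needs genuinely additive geometric input of Steinberg type. Over an arbitrary base this is the content of Druzhinin's theorem that the naive Milnor--Witt relations hold in the stable motivic homotopy groups of the sphere, including \(\langle u^{2}\rangle=1\) and the relation above. Citing it, as the paper does, closes the gap.

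A minor point concerns your proof of \(\langle u^{2}\rangle=1\). The lower-triangular elementary factors do not fix \(\infty\), so your homotopy is not pointed as claimed. This is harmless stably: \(\Sigma^{\infty}_{+}\PP^{1}\simeq\unit\oplus\Sigma^{\infty}\PP^{1}\), and \(\Sigma^{\infty}f\) is a block of \(\Sigma^{\infty}_{+}f\) for pointed \(f\), so a free homotopy suffices.
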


\begin{proof}
  Let \(l\) be either~\(2\) or~\(3\).
We assume \(A=\ZZ[\zeta_{2l}]\).
  By~\cite{Druzhinin21},
  we have \(\langle u^{2}\rangle=1\)
  for \(u\in A^{\times}\)
  and \(\langle u\rangle+\langle v\rangle
  =\langle u+v\rangle+\langle(u+v)uv\rangle\)
  for \(u\), \(v\), \(u+v\in A^{\times}\).
  When \(l=2\), the first one implies \(\langle-1\rangle=1\).
  When \(l=3\),
  we have
  \(
  2
  =1+1
  =\langle\zeta_{6}^{2}\rangle+\langle\zeta_{3}^{2}\rangle
  =2\langle\zeta_{6}^{2}+\zeta_{3}^{2}\rangle
  =2\langle-1\rangle\).
\end{proof}

\begin{lemma}\label{xx6bhm}
  Let \(A\) be a ring containing
  either~\(\zeta_{4}\) or~\(\zeta_{6}\).
  For \(d\geq0\),
  the element \(dd_{\epsilon}\in\pi_{0}\End_{\SH(A)}(\unit)\)
  is~\(d^{2}\) up to unit.
\end{lemma}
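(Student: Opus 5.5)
We need to show that \(d\,d_{\epsilon}\) equals \(d^{2}\) times a unit of \(\pi_{0}\End_{\SH(A)}(\unit)\). The plan is to base change from the universal ring \(\ZZ[\zeta_{2l}]\) (for \(l=2\) or \(3\)) and use \cref{x6mvu2}, splitting into two cases according to which root of unity \(A\) contains. Since \(\langle\X\rangle\) is natural in~\(A\) and pullback \(\SH(\ZZ[\zeta_{2l}])\to\SH(A)\) is a ring map on \(\pi_{0}\End(\unit)\) preserving units, it suffices to treat \(A=\ZZ[\zeta_{4}]\) and \(A=\ZZ[\zeta_{6}]\).

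\emph{Case \(\zeta_{4}\in A\).} Here \(\langle-1\rangle=1\) by \cref{x6mvu2}. Hence \(d_{\epsilon}=\sum_{i<d}\langle-1\rangle^{i}=d\) and \(d\,d_{\epsilon}=d^{2}\) on the nose, so the unit is~\(1\).

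\emph{Case \(\zeta_{6}\in A\).} Write \(e=\langle-1\rangle\). This is a unit with \(e^{2}=\langle1\rangle=1\), by multiplicativity of \(\langle\X\rangle\), which comes from \(A^{\times}\to K_{1}(A)\) being a homomorphism. By \cref{x6mvu2} we also have \(2e=2\). Split by the parity of~\(d\).
\begin{itemize}
  \item If \(d=2k\), then \(d_{\epsilon}=k(1+e)\). Hence \(d\,d_{\epsilon}=k\cdot 2(1+e)=k(2+2e)=4k=2d\cdot k\), so \(d\,d_{\epsilon}=d^{2}\) exactly.
  \item If \(d=2k+1\), then \(d_{\epsilon}=k(1+e)+1\). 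Multiplying by \(d\), we compute \(d\,k(1+e)=(2k+1)k(1+e)\). Write \((2k+1)=2k+1\): the part \(2k\cdot k(1+e)=k^{2}(2+2e)=4k^{2}\), while the remaining part is \(k(1+e)\).
\end{itemize}
So in the odd case \(d\,d_{\epsilon}=4k^{2}+k(1+e)+d\). Directly this is messy, so I would instead aim to exhibit a unit \(u\) with \(d_{\epsilon}=d\,u\).

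The natural candidate is \(u=\tfrac{1+e}{2}+\dots\), but that is not integral. Try instead \(u=e^{k}\)-type expressions. Set \(x=1-e\), so that \(2x=0\) and \(x^{2}=2-2e=0\). Then \(e=1-x\) and \(1+e=2-x\), giving
\[
d_{\epsilon}=k(2-x)+1=d-kx.
\]
We want \(d-kx=d(1+y)\) with \(y\) nilpotent. Since \(d\) is odd and \(2x=0\), we have \(dx=x\), so taking \(y=-kx\) gives \(d(1-kx)=d-kx\). The element \(1-kx\) is a unit because \(x^{2}=0\). Therefore \(d\,d_{\epsilon}=d^{2}(1-kx)\), as required.

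The only delicate points are verifying that \(e^{2}=1\), which uses \(\langle u^{2}\rangle=1\) from \cref{x6mvu2}'s source, and checking the naturality of \(\langle\X\rangle\) under base change. Both are routine, so I expect no real obstacle; the main care is the odd-\(d\) bookkeeping in the \(\zeta_{6}\) case.
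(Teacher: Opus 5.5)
Your proposal is correct and follows the paper's route: \cref{x6mvu2} gives \(\langle-1\rangle=1\) (resp.\ \(2\langle-1\rangle=2\)), and the rest is arithmetic in \(\pi_{0}\End_{\SH(A)}(\unit)\). Your square-zero element \(x=1-\langle-1\rangle\) and unit \(1-kx\), which equals \(1\) or \(\langle-1\rangle\) according to the parity of~\(k\), simply repackage the paper's case split modulo~\(4\). One slip to fix: in the even case you dropped a factor of~\(k\) in the middle of the chain, which should read \(d\,d_{\epsilon}=2k\cdot k(1+e)=k^{2}(2+2e)=4k^{2}=d^{2}\); your conclusion there is nonetheless right.
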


\begin{proof}
  If \(A\) contains~\(\zeta_{4}\),
  by \cref{x6mvu2},
  we have \(d_{\epsilon}=d\).

  If \(A\) contains~\(\zeta_{6}\),
  by \cref{x6mvu2},
  we have \(2(\langle-1\rangle-1)=0\).
  We do case-by-case analysis
  modulo~\(4\).
  When \(d\equiv0\) or~\(1\), we have~\(d_{\epsilon}=d\).
  When \(d\equiv3\),
  we have \(d_{\epsilon}=d\langle-1\rangle\).
  So we are left to treat the case \(d\equiv2\),
  but since \(d\) is even in this case,
  \(2d_{\epsilon}=2d\) is sufficient for the conclusion.
\end{proof}

\begin{proof}[Proof of \cref{tab}]
We apply~\cite[Proposition~B.1.4]{EHKSY21}
  (see~\cite[Proposition~2.2.5]{ElmantoKhan20}
  for a more relevant formulation)
  to obtain a morphism \(f_{*}f^{*}\unit\to\unit\)
  such that its composite with \(\unit\to f_{*}f^{*}\unit\)
  is~\(d_{\epsilon}\).
  By \cref{xx6bhm}, we obtain the desired result.
\end{proof}

\subsection{Étale descent for Kummer–Artin–Schreier \texorpdfstring{\(2\)}{2}-motives}\label{ss:gal}

The following is the key in the proof of \cref{main_3}:

\begin{theorem}\label{xx45iz}
  Let \(G\) be a finite group
  and \(A\to B\) a \(G\)-Galois morphism of rings.
  Then it induces a cover of stacks over \(\SH[2]_{\KAS}(\ZZ)\).
\end{theorem}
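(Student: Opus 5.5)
Here is the plan. I reduce to Galois covers of prime degree and split according to whether the prime is invertible (Kummer) or zero (Artin–Schreier). The obstruction that remains, the characteristic-\(0\) part away from \(l\), is handled by the Euler characteristic bound of \cref{tab}.

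\emph{Reductions.} Let \(A\to B\) be \(G\)-Galois. By \cref{x4xcqq,xhrdzm}, being a cover is local on the target for covers and stable under composition. By \cref{xzspgi} together with \cref{cdh}, Zariski and Nisnevich covers are covers over \(\SH[2]_{\KAS}(\ZZ)\), since the latter receives a map from \(\SH[2](\ZZ)\) and \cref{xcpiim} applies. By filtered colimit arguments (\cref{x5iwcy}), we may assume \(A\) is of finite type. The first step is to adjoin roots of unity: we cover \(A\) by \(A[\sfrac12][\zeta_4]\times A[\sfrac13][\zeta_6]\). This map is itself finite étale, so there is a circularity. I would break it by first proving the theorem for \(G\) cyclic of prime order \(l\) in the presence of \(\zeta_l\) when \(l\) is invertible, and for Artin–Schreier extensions in characteristic \(p\). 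A general Galois cover then factors, after base change along a solvable tower of such covers, into a tower of these, because a Galois group is filtered by its Sylow subgroups. More concretely, I would use the trick that a \(G\)-torsor becomes trivial after base change to itself. By \cref{xhrdzm}\cref{i:d_cbc}, it suffices to show that some composite \(A\to A'\to A'\otimes_A B\) is a cover, with \(A\to A'\) a cover over which \(B\) becomes split. A split cover \(A'\to\prod_G A'\) has a retraction, so it is a cover by \cref{x059gc}.

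\emph{Degree-prime pieces.} For \(l\) invertible and \(\zeta_l\in A\), Kummer theory presents a \(\ZZ/l\)-torsor as the base change of \(x\mapsto x^l\) on \(R[\sfrac1l]^\times\) along a map \(\Spec A\to R[\sfrac1l]^\times\), up to a Zariski-local modification. The Kummer axiom together with \cref{xhrdzm}\cref{i:d_bc} then gives descent. In characteristic \(p\), \(\ZZ/p\)-torsors are pulled back from the Artin–Schreier map on \(R/p\) by the same reasoning. Using the recollement \(A[\sfrac1p]\gets A\to A/p\) (suturedness) and the stratification \cref{md_str}, transported to \(2\)-categorical descent via \cref{d_sua} as in \cref{x7psvi}, we glue the \(p\)-torsion and \(p\)-invertible parts.

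\emph{Main obstacle.} The hardest part is the circularity in adjoining roots of unity: the extension \(A\to A[\zeta_l]\) is Galois of order prime to \(l\), not itself Kummer. Here I would use \cref{tab}. Over a base containing \(\zeta_4\) or \(\zeta_6\), a finite étale \(f\) of degree \(d\) has \(\unit\to f_*f^*\unit\) split up to the factor \(d^2\). Then \cref{xpbfpq} reduces descent to descendability of \(f_*f^*\unit\), which holds after inverting \(d\) because of the retraction. To handle \(d\)-torsion, I would stratify once more. At primes \(q\mid d\) we are in a \(q\)-adic or characteristic-\(q\) regime. Here I would use that \(\mu_{l}\)-extensions with \(l\ne q\) prime to \(q\) are themselves Kummer after adjoining \(\zeta_4\) or \(\zeta_6\). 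The other option is to first prove descent for the auxiliary cover \(A\to A[\sfrac12][\zeta_4]\times A[\sfrac13][\zeta_6]\) directly, using Kummer descent for the square roots and cube roots involved: \(\zeta_4=\sqrt{-1}\), and \(\zeta_3\) comes from a square root of \(-3\). The degrees \(2,3\) are then invertible on the respective pieces.
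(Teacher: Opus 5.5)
\textbf{There is a genuine gap: your reduction fails for non-solvable \(G\).}

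You claim that a general Galois cover factors, after a solvable base change, into Kummer and Artin–Schreier pieces “because a Galois group is filtered by its Sylow subgroups.” This is false. A subnormal series with cyclic quotients exists only when \(G\) is solvable.

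The splitting trick cannot repair this. Let \(k\) be a field and \(L/k\) an \(A_{5}\)-Galois field extension. Let \(k'\) be any residue field of a tower of cyclic covers of~\(k\). The Galois closure of~\(k'\) is solvable and \(A_{5}\) is simple, so \(L\cap k'=k\). Hence \(L\otimes_{k}k'\) is never split and is still an \(A_{5}\)-extension of~\(k'\). Zariski and Nisnevich covers do not change this. So for \(G=A_{5}\), none of your Kummer, Artin–Schreier or splitting steps makes any progress. Your treatment of the solvable case is essentially sound once the root-of-unity circularity is broken. For example, you can induct on~\(l\), since \((\ZZ/l)^{\times}\) involves only primes \(<l\).

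\textbf{What is missing is a coefficient cover, one prime at a time, combined with a single Sylow subgroup.} The point is to apply \cref{tab} exactly to the part of degree prime to~\(p\). The paper does this as follows.

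\emph{Coefficient cover.} It covers \(\SS\) by the localizations \(\SS[\sfrac{p_{i}}{p_{1}\dotsb p_{n}}]\) for the finitely many primes \(p_{i}\) up to \(\lvert G\rvert\). This family satisfies descent by \cref{md_ci}.

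\emph{Sylow factorization.} For \(p=p_{i}\), it chooses a \(p\)-Sylow subgroup~\(P\) and factors the cover as \(A\to B^{P}\to B\).

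\emph{The step \(B^{P}\to B\).} This is a tower of cyclic \(p\)-power Galois covers, handled by \cref{xfrmql}. Over \(A[\sfrac1p]\) one uses Kummer. Over \(A/p\), the Artin–Schreier axiom enters through \cref{as_fp}: it makes \(p\) invertible in the \(p\)-local coefficients, so \cref{tab} splits \(\unit\to f_{*}f^{*}\unit\).

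\emph{The step \(A\to B^{P}\).} Its degree is \([G:P]\), which is invertible in these coefficients. It is Nisnevich-locally monogenic, so it is a cover by \cref{x5ybjo}.

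\textbf{Your use of \cref{tab} does not do this job.} You invoke it only for adjoining roots of unity. You then try to remove the resulting \(d\)-torsion by stratifying \(\Spec A\) into \(q\)-adic and characteristic-\(q\) parts. That conflates inverting \(d\) in \([A]\) with inverting \(d\) in~\(A\) (compare the footnote to \cref{x5ybjo}). For a nonzero \(\QQ\)-algebra~\(A\), the integer \(d\) is a unit in~\(A\), yet \(\unit/d\neq0\) in \(\SH_{\et}(A)\). No geometric stratification disposes of this torsion; only the coefficient cover does.

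\textbf{Two smaller errors.}
First, \(A\to A[\zeta_{l}]\) is not Kummer after adjoining \(\zeta_{4}\) or~\(\zeta_{6}\): its Galois group lies in \((\ZZ/l)^{\times}\).
Second, obtaining \(\zeta_{3}\) from \(\sqrt{-3}\) needs \(2\) to be invertible. So your second option misses the characteristic-\(2\) points of \(\Spec A[\sfrac13]\).
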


We first
simplify the Artin–Schreier condition.
We use the following theorem from~\cite{Bachmann22}:

\begin{theorem}[Bachmann]\label{x9qb4b}
  The tautological morphism
  \(\Cat{Sp}\to\SH(\FF_{p})_{(p)}\)
  in \(\CAlg(\Cat{Pr}_{\st})\)
  canonically
  factors through \(\D(\ZZ)\).
\end{theorem}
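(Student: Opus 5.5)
This statement is imported from \cite{Bachmann22}, and in the paper I would simply cite it. If I had to reprove it, I would work with the mapping space of \(\E_{\infty}\)-rings. Let \(R=\End_{\SH(\FF_{p})_{(p)}}(\unit)\), which is a \(p\)-local \(\E_{\infty}\)-ring. Every object of \(\CAlg(\Cat{Pr}_{\st})\) under \(\Cat{Sp}\) receives its \(\Cat{Sp}\)-algebra structure through its unit. So a factorization \(\Cat{Sp}\to\D(\ZZ)\to\SH(\FF_{p})_{(p)}\) amounts to upgrading \(\SH(\FF_{p})_{(p)}\) to a \(\D(\ZZ)\)-linear presentably symmetric monoidal category. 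By the usual adjunction between \(\CAlg\) of a category and modules over an \(\E_{\infty}\)-ring, this is equivalent to a map of \(\E_{\infty}\)-rings \(\ZZ\to R\) (equivalently \(\ZZ_{(p)}\to R\)), compatibly with all the structure. I would make this reduction first. Since everything is \(p\)-local, I would then aim to construct an \(\E_{\infty}\)-map \(\ZZ_{(p)}\to R\).

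The key input is that in characteristic~\(p\) the absolute Frobenius acts on \(\SH(\FF_{p})\), and \(p\)-locally it acts invertibly on motivic spectra. This should follow from perfection invariance of \(\SH_{(p)}\) over \(\FF_{p}\) (Elmanto–Khan style). I would combine this with the normed (Bachmann–Hoyois) structure on the motivic unit. Concretely, the \(\E_{\infty}\)-Frobenius \(\varphi\colon R\to R^{tC_{p}}\) (Nikolaus–Scholze) should be compared with the motivic Tate construction coming from norms along \(C_{p}\)-torsors. On the motivic side, the geometric \(C_{p}\)-fixed points of the norm in characteristic~\(p\) are controlled by Artin–Schreier covers, which are "contractible" \(p\)-locally. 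So I expect the Tate-valued Frobenius of \(R\) to be canonically nullhomotopic, and coherently so for all power operations. An \(\E_{\infty}\)-ring whose Tate-valued Frobenii are coherently trivialized is exactly a strictly commutative one, that is, an algebra over \(\ZZ\). This is the Nikolaus–Scholze / Yuan-type characterization I would invoke to produce \(\ZZ_{(p)}\to R\).

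The main obstacle is the coherence. Showing that \(\varphi\) is nullhomotopic on homotopy groups is easy, for instance because \(\pi_{0}R\) is a \(\ZZ_{(p)}\)-algebra of Grothendieck–Witt type in which \(\langle-1\rangle\) behaves well \(p\)-locally. What is needed, though, is a trivialization as genuine \(\E_{\infty}\) structure, naturally in the base, so that the factorization is canonical and functorial. I would try to obtain it from a universal case. The idea is to show that the motivic sphere over \(\FF_{p}\), \(p\)-locally, receives a normed map from the motivic cohomology spectrum \(H\ZZ_{(p)}\) (Voevodsky's, which is strictly \(\ZZ\)-linear). This would use that in characteristic~\(p\) the \(p\)-local motivic Steenrod algebra collapses: the \(p\)-completed slices of \(\unit\) are concentrated in weight zero, since the Milnor \(K\)-theory mod~\(p\) of fields of characteristic~\(p\) is given by logarithmic de~Rham–Witt forms. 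I would then conclude that \(\unit_{(p)}\to H\ZZ_{(p)}\) is an equivalence after a suitable localization, which makes the \(\ZZ\)-structure evident.
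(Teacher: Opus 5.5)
The paper gives no proof of this statement. It quotes it from \cite{Bachmann22} and uses it as a black box in the proof of \cref{as_fp}, so your first sentence is exactly what the paper does. Your reduction is also correct: a factorization of \(\Cat{Sp}\to\SH(\FF_{p})_{(p)}\) through \(\D(\ZZ)\) in \(\CAlg(\Cat{Pr}_{\st})\) is the same as an \(\E_{\infty}\)-map \(\ZZ_{(p)}\to R=\End(\unit)\). The reproof you sketch, however, fails at its central step, because the characterization you invoke is false. The Tate-valued Frobenius \(\varphi\colon R\to R^{tC_{p}}\) is a map of \(\E_{\infty}\)-rings, so \(\varphi(1)=1\). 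It can therefore be nullhomotopic, even just on \(\pi_{0}\), only if \(R^{tC_{p}}\simeq0\). Already \(\ZZ\) violates your criterion: \(\pi_{0}(\ZZ^{tC_{p}})=\hat{H}^{0}(C_{p};\ZZ)=\FF_{p}\), and \(\varphi\) is reduction mod~\(p\) there. So no such condition singles out \(\E_{\infty}\)-\(\ZZ\)-algebras. For the same reason, your claim that \(\varphi\) is easily seen to be null on homotopy is false for \(R=\End(\unit)\). \(R\) is connective, and the rank \(\pi_{0}R=\mathrm{GW}(\FF_{p})_{(p)}\to\ZZ_{(p)}\) gives an \(\E_{\infty}\)-map \(R\to\ZZ_{(p)}\), hence a ring map \(R^{tC_{p}}\to\ZZ_{(p)}^{tC_{p}}\neq0\). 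Separately, the perfection invariance of Elmanto–Khan concerns \(\SH(X)[\sfrac1p]\), not \(\SH(X)_{(p)}\), so it does not supply the Frobenius input you want.

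Your fallback aims at a different and much stronger statement. The theorem only asks for an \(\E_{\infty}\)-map to \(\unit\) from the constant algebra \(\unit\otimes\ZZ\), the image of \(\ZZ\) under \(\Cat{Sp}\to\SH(\FF_{p})\). By adjunction this is the same as a \(\ZZ\)-algebra structure on the weight-zero endomorphism spectrum \(\End(\unit)\). A normed map from Voevodsky's motivic cohomology spectrum to \(\unit_{(p)}\) would instead exhibit \(\unit_{(p)}\) as a retract of motivic cohomology. Since motivic cohomology vanishes in negative weights, this would kill \(\pi_{a,b}(\unit)_{(p)}\) for all \(b>0\). Nothing in your sketch establishes this. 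The slice and Steenrod-algebra computations you appeal to are not available at the characteristic: the Hopkins–Morel equivalence, on which slice computations of \(\unit\) rest, is known only after inverting~\(p\). Your claim that the \(p\)-completed slices are concentrated in weight zero is likewise unsupported. Finally, an equivalence \(\unit_{(p)}\simeq\mathrm{H}\ZZ_{(p)}\) after some further localization would only make that localization \(\ZZ\)-linear, not \(\SH(\FF_{p})_{(p)}\) itself. So apart from the citation, the proposal does not contain a working argument.
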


\begin{proposition}\label{as_fp}
  We write \(E(\FF_{p})=\Hom_{\SH[2]_{\KAS}(\ZZ)}(\unit,[\FF_{p}])\).
  Then \(p\in\End_{E(\FF_{p})_{(p)}}(\unit)^{\times}\).
\end{proposition}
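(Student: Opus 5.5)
The plan is to exploit the Artin–Schreier cover over \(\FF_{p}\) together with homotopy invariance. Concretely, the cover \(\phi\colon\AA^{1}\to\AA^{1}\), \(x\mapsto x^{p}-x\), is a \(\ZZ/p\)-torsor, and homotopy invariance should force its descent diagram to look like the bar construction of \(B(\ZZ/p)\) with trivial action. Invertibility of \(p\) then comes from the fact that \(\ZZ\otimes\Sigma^{\infty}_{+}B(\ZZ/p)\to\ZZ\) is not an equivalence unless \(p\) is a unit. Bachmann's \cref{x9qb4b} makes the \(p\)-localized situation \(\D(\ZZ)\)-linear, which is what makes such homological reasoning available.

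\emph{Step 1 (set-up over \(\FF_{p}\)).} Base change the universal ring stack \(R\) over \(\SH[2]_{\KAS}(\ZZ)\) along \([\ZZ]\to[\FF_{p}]\). The Artin–Schreier axiom then says that \(\phi\colon R_{\FF_{p}}\to R_{\FF_{p}}\) is a cover. By \cref{xaivv8} (via the map \(\SH[2](\ZZ)\to\SH[2]_{\KAS}(\ZZ)\)), \(\phi\) is étale. It is also a \(G=\ZZ/p\)-torsor, since translation by \(\FF_{p}\subset R_{\FF_{p}}\) gives \(R\times_{\phi,R,\phi}R\simeq\coprod_{G}R\). Consequently the Čech nerve of \(\phi\) is \(R\times G^{\bullet}\). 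Since each face is weakly suave, \cref{sua_loc} computes \([R]\) as the semisimplicial colimit along the left adjoints \(\phi_{\natural}\). Applying \(\Hom(\unit,\X)\) and working in \(E(\AA^{1}_{\FF_{p}})\), descent gives \(\unit\simeq\lvert\phi_{\natural}\phi^{*}\ldots\rvert\), i.e.\ \(\unit\simeq(\phi_{\natural}\unit)_{hG}\).

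\emph{Step 2 (homotopy invariance trivializes the action).} Let \(f\colon\AA^{1}\to{*}\). Homologically triviality gives \(f_{\natural}f^{*}\simeq\id\), and hence \(f^{*}\colon E(\FF_{p})\to E(\AA^{1}_{\FF_{p}})\) is fully faithful. Because \(f\phi=f\), we get \(f_{\natural}\phi_{\natural}\unit\simeq f_{\natural}\unit\simeq\unit\). Applying \(f_{\natural}\) to the descent colimit therefore gives
\begin{equation*}
\unit\simeq\bigl\lvert\Sigma^{\infty}_{+}G^{\bullet}\otimes\unit\bigr\rvert\simeq\Sigma^{\infty}_{+}BG\otimes\unit
\end{equation*}
in \(E(\FF_{p})\). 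The \(G\)-action on \(f_{\natural}\phi_{\natural}\unit\) is trivial, since it is induced by translations of \(\AA^{1}\), which are \(\AA^{1}\)-homotopic to the identity. More carefully, the full group-action diagram of translations is homotopically trivial, because \(G\subset\GG_{a}\) and \(\GG_{a}\) becomes contractible after \(f_{\natural}\). The outcome is that the augmentation \(\Sigma^{\infty}_{+}BG\otimes\unit\to\unit\) is an equivalence.

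\emph{Step 3 (conclude \(p\)-locally).} After \(p\)-localization, \(E(\FF_{p})_{(p)}\) receives the functor \(\SH(\FF_{p})_{(p)}\to E(\FF_{p})_{(p)}\). Combined with \cref{x9qb4b}, this makes \(E(\FF_{p})_{(p)}\) \(\D(\ZZ)\)-linear, so the equivalence from Step 2 becomes \((\ZZ[BG])\otimes\unit\simeq\unit\). Since \(H_{1}(BG;\ZZ)=\ZZ/p\), the cofiber of \(\ZZ[BG]\to\ZZ\) splits off \(\ZZ/p[1]\) after \(p\)-localization, in a way that is detectable by tensoring with \(\ZZ/p\). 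The cleanest route is to tensor with \(\FF_{p}\): because \(H_{*}(BG;\FF_{p})\) is nonzero in every degree while the degree-\(0\) part maps isomorphically, the equivalence forces \(\unit\otimes\FF_{p}[1]\) to be a retract of zero. Hence \(\unit/p\simeq0\), i.e.\ \(p\) is invertible in \(\End(\unit)\).

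I expect the main obstacle to be Step 2. The difficulty is making rigorous that the \(G\)-action on \(f_{\natural}\phi_{\natural}\unit\), as a coherent simplicial diagram and not merely up to homotopy, is trivial. My first attempt would be to factor the Čech nerve of \(\phi\) through the action groupoid of \(\GG_{a}\) acting on itself, and to use homologically triviality of \(\GG_{a}\) and its powers to contract the whole diagram.
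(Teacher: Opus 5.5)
Your proposal is correct. It uses the same two inputs as the paper: the Artin--Schreier cover combined with \(\AA^{1}\)-invariance, followed by Bachmann's \cref{x9qb4b}. The difference is which spectrum gets killed along the way.

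The paper linearizes. It views \(\FF_{p}\to\GG_{a}\xrightarrow{T\mapsto T^{p}-T}\GG_{a}\) as a sequence of abelian group objects in \(\Shv_{\Nis}(\Cat{Sm}_{\FF_{p}};\Cat{Sp})\), which the Artin--Schreier condition makes a cofiber sequence in \(E(\FF_{p})\). Since the Artin--Schreier endomorphism becomes an equivalence in \(\SH(\FF_{p})\), the Eilenberg--MacLane spectrum \(\FF_{p}\) is killed by \(\Cat{Sp}\to E(\FF_{p})\). After Bachmann, it only remains to note that \(\FF_{p}\) is a retract of \(\ZZ\otimes_{\SS}\FF_{p}\) in \(\D(\ZZ)\).

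You instead work with the nonlinear Čech nerve of the \(\ZZ/p\)-torsor. You conclude that \(\Sigma^{\infty}_{+}B(\ZZ/p)\to\SS\) becomes an equivalence in \(E(\FF_{p})\), i.e., that \(\Sigma^{\infty}B(\ZZ/p)\) is killed. This is at least as strong as the paper's intermediate step, since \(\FF_{p}\otimes\Sigma^{\infty}B(\ZZ/p)\) is a nonzero sum of shifts of \(\FF_{p}\). In \(\D(\ZZ)\) you then find \(\ZZ/p[1]\) as a summand of \(\tilde{C}_{*}(BG;\ZZ)\). That is the fiber of the augmentation, not the cofiber as you wrote, but any shift suffices.

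What your route buys is an explicit passage from the \(2\)-categorical cover condition to a statement in \(E(\FF_{p})\). It goes through \(E(\AA^{1}_{\FF_{p}})\simeq\Tot E(\AA^{1}\times G^{\bullet})\) and the left adjoints \(\natural\), and uses only suspension spectra of smooth schemes. The paper's route is shorter and needs no group homology. However, its claim that the Eilenberg--MacLane sequence becomes a cofiber sequence implicitly requires the same kind of translation from descent.

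The step you single out as the main obstacle closes easily, and no \(\GG_{a}\)-action groupoid is needed.
\begin{itemize}
\item The Čech nerve of \(\phi\) is the nerve \(\AA^{1}\times G^{\bullet}\) of the translation action groupoid.
\item The projection \(\AA^{1}\times G^{\bullet}\to G^{\bullet}\) to the nerve of \(G\) is a map of augmented simplicial schemes, lying over \(\AA^{1}\to\Spec\FF_{p}\). It is therefore strictly compatible with all faces.
\item It is levelwise an \(\AA^{1}\)-homotopy equivalence.
\end{itemize}
Its image under \(\Sigma^{\infty}_{+}\colon\Cat{Sm}_{\FF_{p}}\to\SH(\FF_{p})\to E(\FF_{p})\) is thus a levelwise equivalence. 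This gives \(\lvert\Sigma^{\infty}_{+}(\AA^{1}\times G^{\bullet})\rvert\simeq\Sigma^{\infty}_{+}BG\otimes\unit\), compatibly with the augmentations. This strictness is exactly what is needed: knowing only that each translation is \(\AA^{1}\)-homotopic to the identity would not pin down the coherent action.
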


\begin{remark}\label{xbcpm6}
  In \cref{as_fp},
  a~posteriori
  (after \cref{main_3} is proven),
  we see \(p\in\End_{E(\FF_{p})}(\unit)^{\times}\).
\end{remark}

\begin{proof}
  We consider the composite
  \(\Shv_{\Nis}(\Cat{Sm}_{\FF_{p}};\Cat{Sp})
  \to\SH(\FF_{p})\to E(\FF_{p})\).
  By the Artin–Schreier condition,
  \(\FF_{p}\to\GG_{a}\xrightarrow{T\mapsto T^{p}-T}\GG_{a}\)
  (with the tautological nullhomotopy)
  becomes a cofiber sequence in~\(E(\FF_{p})\).
Since the second morphism
  is equivalent to~\({\id}_{\unit}\) in \(\SH(\FF_{p})\),
  this implies that
  \(\FF_{p}\) becomes~\(0\) in \(E(\FF_{p})\).
This amounts to saying that
  \(\Cat{Sp}\to E(\FF_{p})\)
  carries~\(\FF_{p}\) to~\(0\).

Now we use \cref{x9qb4b}
  to obtain \(\D(\ZZ)\to E(\FF_{p})_{(p)}\).
  The above paragraph shows that
  this morphism kills
  \(\ZZ\otimes_{\SS}\FF_{p}\),
  where we consider the \(\ZZ\)-linear structure
  coming from the left factor.
  Since \(\FF_{p}\)
  is a retract of \(\FF_{p}\otimes_{\SS}\FF_{p}\)
  in \(\D(\ZZ)\),
  it means that \(\FF_{p}=\ZZ\otimes_{\SS}\SS/p\)
  is also killed.
  Therefore, the desired result follows.
\end{proof}

We then prove several cases:

\begin{lemma}\label{xl1f4o}
  The morphism of stacks over \(\SH[2]_{\KAS}(\ZZ)\)
  induced by the ring map
  \(\ZZ[\sfrac1n]\to\ZZ[\sfrac1n][\zeta_{2n}]\)
  is a cover
  for \(n\geq1\).
\end{lemma}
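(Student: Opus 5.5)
Write \(A=\ZZ[\sfrac1n]\) and \(B=A[\zeta_{2n}]\). The map \(A\to B\) is finite étale, so \(f\colon\Spec B\to\Spec A\) is proper by \cref{x9fu9c}. The plan is to apply \cref{xpbfpq}. This reduces descent for \([A]\to[B]\) over \(\SH[2]_{\KAS}(\ZZ)\) to descent for the commutative algebra \(f_{*}f^{*}\unit\) in the \(1\)-category \(E(A)=\Hom_{\SH[2]_{\KAS}(\ZZ)}(\unit,[A])\). This makes sense because the unit of \([A]\to[B]\) has the right adjoint \(f_{*}\) by proper base change, transported through \cref{main_1}/\cref{main_2}. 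By \cref{x670j7} it then suffices to show that \(f_{*}f^{*}\unit\) is descendable in \(E(A)\).

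For descendability, first use the Kummer condition. Since the ring stack inverts \(n\) and \(\mu_{2n}\) is the fiber of \(x\mapsto x^{2n}\) over \(1\), the Kummer condition for the primes dividing \(2n\), iterated and base changed along \(1\colon{*}\to R[\sfrac1n]^{\times}\), should show that \(\Spec A[T]/(T^{2n}-1)\to\Spec A\) is a cover. The ring \(A[T]/(T^{2n}-1)\) is a finite product of cyclotomic rings \(A[\zeta_{m}]\) for \(m\mid 2n\). Each factor either maps to \(B\) (via \(\zeta_{m}\mapsto\zeta_{2n}^{2n/m}\)) or is a subring of it. So it is tempting to deduce the claim from \cref{x4xcqq}\cref{i:d_canc}, but the direction is wrong: \(A[\zeta_{m}]\) maps \emph{into} \(B\), and we need a map \emph{from} \(B\) to a cover.

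I would repair this by base change. Consider \(B\otimes_{A}A[T]/(T^{2n}-1)\), which splits as a product of copies of \(B\) because \(B\) contains all \(2n\)-th roots of unity. Combined with \cref{xhrdzm} and the identification of \(A[T]/(T^{2n}-1)\) as a cover, this gives that \(B\to B\otimes_{A}A[T]/(T^{2n}-1)\) has a retraction. The real task is descent for \(A\to A[\zeta_{m}]\) itself.

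I therefore expect the main route to run through \cref{tab} instead. Work first after base change to a ring containing \(\zeta_{4}\) or \(\zeta_{6}\), via the cover \(A[\zeta_{4}]\times A[\zeta_{6}]\) mentioned in \cref{x927y8}; that this is a cover is itself an instance of the same statement for \(n=2,3\). There, writing \(B=A[T]/P\) with \(P\) monic of degree \(d\), \cref{tab} provides a map \(f_{*}f^{*}\unit\to\unit\) whose composite with the unit is \(d^{2}\) up to a unit. Since \(d\mid\varphi(2n)\), which divides a power of \(2n\), the integer \(d\) is invertible in the coefficients over \(\ZZ[\sfrac1n]\) only after Kummer-type arguments identify \(n\) as invertible in \(\End(\unit)\). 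In the \(p\)-part for \(p\mid n\) this needs an Artin–Schreier style argument analogous to \cref{as_fp}. Once \(d^{2}\) is invertible, the unit \(\unit\to f_{*}f^{*}\unit\) splits, so it is descendable of index \(1\). The remaining case \(n\in\{2,3\}\), that is, \(\ZZ[\sfrac12]\to\ZZ[\sfrac12][\zeta_{4}]\) and \(\ZZ[\sfrac13]\to\ZZ[\sfrac13][\zeta_{6}]\), is where \cref{tab} is unavailable. This is the step I expect to be hardest.

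For that step, I would use the Kummer cover \(x\mapsto x^{2}\) on \(\GG_{m}\) directly, to show that \(\Spec A[\sqrt{-1}]\to\Spec A\) is the fiber over \(-1\) of a cover. A cover base changes to a cover by \cref{xhrdzm}, which handles \(n=2\). The case \(n=3\) is analogous with \(x\mapsto x^{3}\) and \(\zeta_{6}=-\zeta_{3}^{2}\), where \(\zeta_{3}\) is a cube root of \(1\) and the fiber over \(1\) is \(A[T]/(T^{3}-1)=A\times A[\zeta_{3}]\). That product case needs the factor-splitting argument: for a product \(A\times A'\), covering \(A\) should force \(A'\) alone to cover the open-closed complement, which is empty.
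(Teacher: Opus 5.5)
\textbf{There are two genuine gaps.}

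\emph{The case \(n=3\).} Your ``factor-splitting argument'' is false. The projection \(A\times A'\to A\) is a retraction of \(A\to A\times A'\), so \([A]\to[A\times A']\) satisfies descent by \cref{x059gc} for \emph{every} \(A'\). For example, \(\ZZ[\sfrac13]\to\ZZ[\sfrac13]\times\FF_{2}\) is a cover, while \(\ZZ[\sfrac13]\to\FF_{2}\) is not. So knowing that \(A[T]/(T^{3}-1)\cong A\times A[\zeta_{3}]\) is a cover tells you nothing about \(A\to A[\zeta_{3}]\). The same objection kills your opening idea: the fiber of \(x\mapsto x^{2n}\) over \(1\) always contains the factor \(T=1\). (For odd \(n\), the \(l=2\) Kummer map is not even available over \(\ZZ[\sfrac1n]\).)

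\emph{The general-\(n\) route through \cref{tab}.} That proposition splits \(\unit\to f_{*}f^{*}\unit\) only once \(d=\varphi(2n)\) is invertible in \(\End(\unit)\). This fails in two ways:
\begin{itemize}
\item \(d\) need not be built from primes dividing \(2n\): \(n=7\) gives \(d=6\).
\item Even primes dividing \(n\) are not invertible in \(\End(\unit)\) of \(\Hom(\unit,[\ZZ[\sfrac1n]])\), although they are invertible in the ring. Map further to \(\SH_{\et}(\ZZ[\sfrac1n])\) and use an \(\ell\)-adic realization for \(\ell\mid n\).
\end{itemize}
Also, \cref{as_fp} concerns characteristic-\(p\) fibers, which are empty for \(p\mid n\). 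The paper uses \cref{tab} only with \(d\) inverted in the coefficients (\cref{x5ybjo}). Removing that inversion is exactly the coefficient-splitting and Sylow argument of \cref{xx45iz}, which itself relies on the present lemma. So this route is circular, and it also rests on the unproven case \(n=3\).

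\textbf{What the paper does.} The paper's proof is a one-liner. Base change the Kummer map \(x\mapsto x^{n}\) on \(R[\sfrac1n]^{\times}\) along \(T\mapsto-1\). This map is a composite of prime Kummer maps, hence a cover by \cref{x4xcqq}. In other words, take the fiber of \(x\mapsto x^{n}\) over \(-1\), not the fiber of \(x\mapsto x^{2n}\) over \(1\). The result is \(\ZZ[\sfrac1n][U]/(U^{n}+1)\). For \(n=2^{a}\) one has \(U^{n}+1=\Phi_{2n}\), so this is exactly \(\ZZ[\sfrac1n][\zeta_{2n}]\) and no descendability estimate is needed. Your \(n=2\) argument is precisely this.

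\textbf{Your worry about odd \(n\) is justified.} It applies to the paper's one-liner as well. For \(n\) not a power of \(2\), \(U^{n}+1\) is a product of several cyclotomic polynomials, and for odd \(n\) one of them is \(U+1\). For instance, \(\ZZ[\sfrac13][U]/(U^{3}+1)\cong\ZZ[\sfrac13]\times\ZZ[\sfrac13][\zeta_{6}]\), because \(U^{2}-U+1\equiv3\) modulo \(U+1\).

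\textbf{A valid argument for \(n=3\).} You must use both \(l=2\) and \(p=2\), and stratify as in \cref{xfrmql}. The map \(f\colon\Spec\ZZ[\sfrac13][\zeta_{3}]\to\Spec\ZZ[\sfrac13]\) is finite \'etale. By \cref{xpbfpq,xq4y2d}, it suffices that \((f_{*}\unit)^{\vee}\) be faithful, and this can be checked on the two strata:
\begin{itemize}
\item Over \(\ZZ[\sfrac16]\): \(\ZZ[\sfrac16][\zeta_{3}]=\ZZ[\sfrac16][\sqrt{-3}]\) is the base change of the \(l=2\) Kummer map along \(T\mapsto-3\).
\item Over \(\FF_{2}\): \(\FF_{4}=\FF_{2}[U]/(U^{2}-U-1)\) is the base change of the Artin--Schreier map along \(T\mapsto1\).
\end{itemize}
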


\begin{proof}
  This is the base change of the Kummer map along
  the morphism
  \(\ZZ[\sfrac1n][T^{\pm}]\to\ZZ[\sfrac1n]\)
  mapping~\(T\) to~\(-1\).
\end{proof}

\begin{lemma}\label{xurrdp}
  The morphism of stacks over \(\SH[2]_{\KAS}(\ZZ)\)
  induced by the ring map
  \(\ZZ\to\ZZ[\sfrac12][\zeta_{4}]\times\ZZ[\sfrac13][\zeta_{6}]\) is a cover.
\end{lemma}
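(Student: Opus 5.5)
The plan is to factor the map through the Zariski cover \(\ZZ\to\ZZ[\sfrac12]\times\ZZ[\sfrac13]\). Then I use the Kummer covers of \cref{xl1f4o} on each factor, and conclude with the composition property~\cref{i:d_comp} of \cref{x4xcqq}. Explicitly, the ring map in question is the composite
\[
  \ZZ\to\ZZ[\sfrac12]\times\ZZ[\sfrac13]\to\ZZ[\sfrac12][\zeta_{4}]\times\ZZ[\sfrac13][\zeta_{6}].
\]
By \cref{x4xcqq}, it suffices to show that both maps induce covers of stacks over \(\SH[2]_{\KAS}(\ZZ)\).

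For the first map, \cref{xzspgi} shows that it is a cover over \(\Mot[2](\ZZ)\), since \(2\) and \(3\) generate the unit ideal. By \cref{main_2}, \(\Mot[2](\ZZ)\simeq\SH[2](\ZZ)\). The universal morphism \(\SH[2](\ZZ)\to\SH[2]_{\KAS}(\ZZ)\) in \(\CAlg(2\Cat{Pr})\) is compatible with the functors from \(\Cat{Ring}\). So \cref{xcpiim} transports the descent property, and the first map is a cover over \(\SH[2]_{\KAS}(\ZZ)\).

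For the second map, I first need that \([\X]\) carries a finite product of rings to the product of algebras, i.e.\ \([A_{1}\times A_{2}]\simeq[A_{1}]\times[A_{2}]\). This is not automatic, since \([\X]\) only preserves colimits of rings. I would get it from suturedness applied to the idempotent \(e=(1,0)\). The principal localization \((A_{1}\times A_{2})[e^{-1}]\) is \(A_{1}\) and the quotient by \(e\) is \(A_{2}\), so the diagram \([A_{1}]\gets[A_{1}\times A_{2}]\to[A_{2}]\) is a recollement. Since \(e(1-e)=0\), the recollement gluing map vanishes and it splits. Granting this, modules over the product split as products of module \(2\)-categories, and the cosimplicial diagrams in \cref{x6ruic} decompose factorwise. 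Hence a product of covers is a cover.

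It therefore remains to show that \(\ZZ[\sfrac12]\to\ZZ[\sfrac12][\zeta_{4}]\) and \(\ZZ[\sfrac13]\to\ZZ[\sfrac13][\zeta_{6}]\) are covers. These are exactly \cref{xl1f4o} with \(n=2\) and \(n=3\), using \(\zeta_{2n}=\zeta_{4}\) and \(\zeta_{6}\) respectively.

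The only step that is not bookkeeping is the product compatibility \([A_{1}\times A_{2}]\simeq[A_{1}]\times[A_{2}]\). I expect it to follow directly from the recollement axiom (the splitting of a recollement when \(j_{!}\) and \(i_{*}\) are orthogonal in both directions). An alternative that avoids it is to view \(\Spec\) of the product as a disjoint union, covered Zariski-locally by its two clopen pieces.
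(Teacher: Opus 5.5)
Your proposal is correct and is essentially the paper's proof. Both use the Zariski cover \(\ZZ\to\ZZ[\sfrac12]\times\ZZ[\sfrac13]\) (the paper cites \cref{cdh}, while you cite \cref{xzspgi} and transport along \(\Mot[2](\ZZ)\to\SH[2]_{\KAS}(\ZZ)\) by \cref{xcpiim}), composed via \cref{x4xcqq} with the covers of \cref{xl1f4o} for \(n=2,3\) on the two factors. The paper leaves the compatibility \([A_{1}\times A_{2}]\simeq[A_{1}]\times[A_{2}]\) implicit; you get it most cleanly from the limit formula in the proof of \cref{x02mj4} with \(a_{1}=e\) and \(a_{2}=1-e\), which gives \([A_{1}\times A_{2}]\simeq[A_{1}]\times_{[0]}[A_{2}]\) with \([0]\simeq0\).
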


\begin{proof}
  First,
  \(\ZZ\to\ZZ[\sfrac12]\times\ZZ[\sfrac13]\) is a Zariski cover
  of rings
  and hence induces a cover by \cref{cdh}.
  We obtain the desired result by combining this with \cref{xl1f4o}.
\end{proof}

\begin{lemma}\label{x5ybjo}
  Let \(A\) be a ring
  and \(P\) be a monic polynomial of degree~\(d\).
  If \(A\to A[T]/P=B\) is étale,
  then the morphism
  \([A][\sfrac1{d}]\to[B][\sfrac1{d}]\)\footnote{Do not confuse this with \([A[\sfrac1{d}]]\to[B[\sfrac1{d}]]\).
  }
  satisfies descent.
\end{lemma}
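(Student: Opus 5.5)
The plan is to reduce, via \cref{xpbfpq}, to a statement about the commutative algebra \(f_{*}f^{*}\unit\) in the \(1\)-category \(\Hom(\unit,[A][\sfrac1d])\). There the Euler characteristic of \cref{tab} produces a retraction of the unit once \(d\) is inverted, and a split unit map satisfies descent. I work throughout over \(\SH[2]_{\KAS}(\ZZ)\), which is the context of this subsection; the argument seems to use only the six operations, so it should work equally over \(\SH[2](\ZZ)\).

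First I reduce to the case where \(A\) contains \(\zeta_{4}\) or \(\zeta_{6}\). Let \(A'=A\otimes(\ZZ[\sfrac12][\zeta_{4}]\times\ZZ[\sfrac13][\zeta_{6}])\). By \cref{xurrdp} and \cref{xcpiim}, the map \([A]\to[A']\) is a cover, and it stays a cover after inverting~\(d\). By \cref{xdp5rb}, base change along it gives \([A'][\sfrac1d]\to[A'\otimes_{A}B][\sfrac1d]\). So \cref{i:d_cbc} of \cref{xhrdzm} reduces the claim to each of the two factors of~\(A'\). Each factor is again of the form \(A\to A[T]/P\), étale with \(P\) monic of degree~\(d\), and now contains~\(\zeta_{4}\) or~\(\zeta_{6}\).

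Next, \(f\colon\Spec B\to\Spec A\) is finite, hence projective, so \([f]\) is prim: \(f^{*}\colon[A]\to[B]\) has an \([A]\)-linear right adjoint~\(f_{*}\). This holds in \(\SH[2](\ZZ)\) by \cref{main_1} and \cref{aminus}, and transfers to \(\SH[2]_{\KAS}(\ZZ)\) and to the localization \([A][\sfrac1d]\) by \cref{xcpiim}-type base change. By \cref{xpbfpq}, descent for \([A][\sfrac1d]\to[B][\sfrac1d]\) is then equivalent to descent of \(f_{*}f^{*}\unit\) in \(E(A)[\sfrac1d]\), where \(E(A)=\Hom(\unit,[A])\).

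To verify that, I use the canonical symmetric monoidal map \(\SH(A)\to E(A)\), obtained from \cref{main_1} together with \cref{sh_ring}. By \cref{aplus} it is compatible with \(f_{*}\), so it carries \(f_{*}f^{*}\unit\) to \(f_{*}f^{*}\unit\). Applying it to \cref{tab} gives a morphism \(\epsilon\) and a unit~\(v\) such that \(d\circ v\circ\epsilon\circ\eta=d^{2}\). After inverting~\(d\), the unit map \(\eta\colon\unit\to f_{*}f^{*}\unit\) therefore has a retraction. Consequently \(\fib(\eta)\to\unit\) is zero, so \(f_{*}f^{*}\unit\) is descendable of index~\(\leq1\), and it satisfies descent by \cref{x670j7} (alternatively by \cref{x059gc}).

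I expect the main obstacle to be the bookkeeping in the step where \(f_{*}\) is transported. One must check that the map \(\SH\to E\) intertwines the pushforwards and the unit maps. One must also check that the invertibility of \(f_{*}\) and of the relevant adjunction data survives passing to \([A][\sfrac1d]\), which is a localization of the algebra in the \(2\)-category rather than of the ring. I would handle this by noting that inverting \(d\) is a smashing localization in \(\End(\unit)\), so it commutes with the \([A]\)-linear adjoints.
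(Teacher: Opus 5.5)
Your argument is correct and is essentially the paper's proof: you base change along the cover of \cref{xurrdp}, then use \cref{tab} to split \(\unit\to f_{*}f^{*}\unit\) once \(d\) is inverted, with \cref{xpbfpq} and \cref{x059gc} (or \cref{x670j7}) supplying the descent steps the paper leaves implicit. You should drop the aside that the argument ``should work equally over \(\SH[2](\ZZ)\)'': your first step uses \cref{xurrdp}, whose proof relies on the Kummer condition via \cref{xl1f4o}, and the statement genuinely fails over \(\SH[2](\ZZ)\)---for instance for \(\ZZ[\sfrac12]\to\ZZ[\sfrac12][\zeta_{4}]\) with \(d=2\), real-étale realization \(\SH(\RR)[\rho^{-1}]\simeq\Cat{Sp}\) sends \(f_{*}\unit\) to \(0\), so it cannot satisfy descent.
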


\begin{proof}
  By base changing along the map in \cref{xurrdp},
  the desired result follows from \cref{tab}.
\end{proof}

\begin{lemma}\label{kummer}
  For \(n\geq1\) invertible in a ring~\(A\),
  any \(C_{n}\)-Galois cover
  \(A\to B\)
  determines a cover.
\end{lemma}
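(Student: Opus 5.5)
We need to prove Lemma \ref{kummer}: for \(n\) invertible in \(A\), a \(C_{n}\)-Galois cover \(A\to B\) gives a cover of stacks over \(\SH[2]_{\KAS}(\ZZ)\). The plan is to reduce to Kummer extensions, which are base changes of the Kummer map in the definition of a Kummer ring stack.

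\emph{Step 1: adjoin roots of unity.} By \cref{xl1f4o}, \(\ZZ[\sfrac1n]\to\ZZ[\sfrac1n][\zeta_{2n}]\) is a cover, hence so is its base change \(A\to A'=A[\zeta_{2n}]\) by \cref{xhrdzm}. By \cref{xhrdzm}\cref{i:d_cbc}, it then suffices to show that \(A'\to A'\otimes_{A}B\) is a cover. This is again \(C_{n}\)-Galois, so we may assume \(\zeta_{n}\in A\).

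\emph{Step 2: reduce to prime order.} Factor \(n\) into primes. A \(C_{n}\)-Galois extension is an iterated composite of Galois extensions of prime cyclic order: for \(C_{l}\subset C_{n}\), the ring \(B^{C_{l}}\) is \(C_{n/l}\)-Galois over \(A\), and \(B\) is \(C_{l}\)-Galois over it. Using composition (\cref{x4xcqq}\cref{i:d_comp}) and induction, we reduce to \(n=l\) prime, with \(\zeta_{l}\in A\) and \(l\in A^{\times}\).

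\emph{Step 3: Kummer theory.} For \(n=l\) with \(\zeta_{l}\in A\), the \(\zeta_{l}\)-eigenspace decomposition \(B=\bigoplus_{i}B_{i}\) exhibits \(B_{1}\) as an invertible \(A\)-module \(L\) with \(L^{\otimes l}\simeq A\). Zariski locally on \(A\), \(L\) is trivial, so \(B\simeq A[T]/(T^{l}-u)\) with \(u\in A^{\times}\). Zariski covers are covers (\cref{cdh}), and by \cref{xhrdzm}\cref{i:d_cbc} again it suffices to treat each Zariski piece. Then \(A\to A[T]/(T^{l}-u)\) is the base change of the \(l\)th power map \(\ZZ[\sfrac1l][T^{\pm}]\to\ZZ[\sfrac1l][S^{\pm}]\), \(T\mapsto S^{l}\), along \(T\mapsto u\). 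That power map is the Kummer cover on the universal ring stack, so the base change is a cover by \cref{xhrdzm}\cref{i:d_bc}.

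The step I expect to be the main obstacle is the group-theoretic reduction in Step 2, together with the cancellation bookkeeping for covers. There we must check that intermediate fixed rings are again Galois, and that the pieces are static rings of the right kind so that \cref{xdp5rb} identifies tensor products of \(2\)-motives with tensor products of rings. If the iterated reduction becomes messy, I would instead base change along \(A\to B\) itself: \(B\to B\otimes_{A}B\simeq\prod_{C_{n}}B\) admits a section, hence is a cover by \cref{x059gc}. That alone does not give descent for \(A\to B\), so the Kummer-presentation route above remains the primary plan.
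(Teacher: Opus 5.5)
Your proposal is correct and follows the paper's proof: adjoin roots of unity via \cref{xl1f4o}, make the Picard group vanish Zariski locally (using \cref{cdh} and \cref{xhrdzm}), and recognize the resulting Kummer extension as a base change of the Kummer map. The only difference is your Step~2, which the paper does not need because it applies Kummer theory to \(C_{n}\) directly; there the \(n\)th power map on \(R[\sfrac1n]^{\times}\) is a composite of base changes of the prime Kummer covers, hence a cover by \cref{x4xcqq}, and your reduction to prime order through intermediate fixed rings is an equally valid alternative.
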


\begin{proof}
By \cref{xl1f4o},
  we assume that \(A\) contains~\(\zeta_{n}\).
  By Kummer theory,
  as long as \(\Pic(A)\) vanishes,
  such a cover is a base change of the Kummer map.
  Since \(\Pic\) vanishes Zariski locally,
  the desired result follows
  from \cref{cdh}.
\end{proof}

\begin{lemma}\label{xfrmql}
  For \(e\geq0\),
  the morphism of stacks over \(\SH[2]_{\KAS}(\ZZ)\)
  induced by
  any \(C_{p^{e}}\)-Galois cover
  \(A\to B\) is a cover.
\end{lemma}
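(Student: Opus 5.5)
The plan is to split into the parts away from \(p\) and at \(p\), and to glue them using a stratification argument in the spirit of \cref{x7psvi}. We may first reduce to rings of finite type by a filtered colimit argument. After base changing along the cover of \cref{xurrdp} (using \cref{xhrdzm} and \cref{x4xcqq}), we may also assume that \(A\) contains \(\zeta_{4}\) or \(\zeta_{6}\).

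Since the morphism is suave (étale), \cref{d_sua} reduces descent for \(A\to B\) to a statement one categorical level down. Namely, we must show that the commutative algebra \(f_{*}f^{*}\unit\simeq f_{\natural}f^{*}\unit\) (finite étale, so the two agree) is faithful in \(E(A)=\Hom_{\SH[2]_{\KAS}(\ZZ)}(\unit,[A])\). We use the recollement \(A[\sfrac1p]\) versus \(A/p\) given by suturedness, applied to the element \(p\).

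\begin{itemize}
  \item \emph{Over \(A[\sfrac1p]\).} A \(C_{p^{e}}\)-Galois cover factors as a tower of \(C_{p}\)-Galois covers (by Galois theory for Galois algebras), so \cref{kummer} together with the composition part of \cref{x4xcqq} gives descent.
  \item \emph{Over \(A/p\).} By \cref{as_fp}, after base change \(p\) is invertible in the \(p\)-localization. Therefore \(E(A/p)\) is \(p\)-local only in a degenerate way: any \(p\)-local object is \(0\), so \(E(A/p)\) is in fact \(\ZZ[\sfrac1p]\)-linear. Indeed, objects there are modules over \(E(\FF_{p})\), in which \(p\) becomes a unit after localizing at \(p\). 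Hence \(\ZZ_{(p)}\)-local information vanishes, and it is enough to work after inverting \(p\). Since \(\deg=p^{e}\), \cref{x5ybjo} (which uses the Euler characteristic of \cref{tab}) gives descent of \([A/p][\sfrac1p]\to[B/p][\sfrac1p]\), hence descent over \(A/p\).
\end{itemize}

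Finally, we glue the two strata. Faithfulness in \(E(A)\) can be checked after restricting to the open and closed parts of a recollement, because a colimit diagram in \(E(A)\) is detected by \(j^{*}\) and \(i^{*}\) (the functors \(j_{!}\) and \(i_{*}\) preserve colimits, and the recollement square expresses the identity as a pushout). The augmented semisimplicial diagram \((f_{*}f^{*}\unit)^{\otimes\bullet+1}\) is compatible with base change to each stratum, so faithfulness follows.

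The main obstacle is the \(p\)-part. One must make precise that \cref{as_fp} forces \(E(A/p)\) to be \(\ZZ[\sfrac1p]\)-linear, rather than merely having \(p\) invertible on the \(p\)-localization. I would argue this via the fracture square for \(p\): the rational part and the prime-to-\(p\) part already have \(p\) invertible, and the \(p\)-complete part is killed by \cref{as_fp}. A second point to verify is that \cref{x5ybjo} applies uniformly to \(A/p\to B/p\), which is étale and defined by a monic polynomial only locally. I would handle this by working Zariski locally, as in the proof of \cref{xaivv8}, and using Zariski descent from \cref{cdh}.
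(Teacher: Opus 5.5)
Your strategy is the paper's. You reduce to faithfulness, in \(E(A)=\Hom(\unit,[A])\), of the \(\E_{0}\)-coalgebra \(f_{\natural}f^{*}\unit\simeq f_{*}\unit\to\unit\). The paper gets there via \cref{xpbfpq} and \cref{xq4y2d}, you via \cref{d_sua}; in both cases the relevant structure is the trace map, not the algebra structure you mention. You then check faithfulness on the two strata of the recollement for~\(p\), using \cref{kummer} over \(A[\sfrac1p]\) and \cref{as_fp} with the Euler characteristic of \cref{tab} over \(A/p\). The genuine problem is your treatment of the closed stratum.

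\Cref{tab} and \cref{x5ybjo} are statements about a finite monogenic extension \(A\to A[T]/P\). Your proposed way of reaching that situation does not work.
\begin{itemize}
\item The local structure theorem used in \cref{xaivv8} also localizes on the source. It produces \((A[T]/P)_{Q}\), which is no longer finite over the base, so the Euler characteristic argument does not apply to it.
\item Localizing only on the base is not enough either, because a \(C_{p^{e}}\)-Galois algebra need not be locally monogenic when residue fields are small. For a finite field~\(k\) of characteristic~\(p\) with \(\lvert k\rvert<p^{e}\), the split cover \(k\to k^{p^{e}}\) is not monogenic, and no Zariski or Nisnevich localization of the base changes the residue field.
\end{itemize}

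The repair is to use the tower of \(C_{p}\)-Galois covers on \(A/p\) as well; you already invoked it over \(A[\sfrac1p]\). Since \(H^{1}_{\et}(\Spec(A/p);\GG_{a})=0\), every \(C_{p}\)-Galois algebra over \(A/p\) has the Artin--Schreier form \((A/p)[T]/(T^{p}-T-a)\). This is globally monogenic of degree~\(p\) and étale, so \cref{x5ybjo} applies with \(d=p\).

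Better still, this algebra is the base change of the Artin--Schreier map \(R/p\to R/p\) along \(a\). So it is a cover directly by the axiom and \cref{xhrdzm}, with no need for \cref{as_fp} or \cref{tab}, and composing along the tower (\cref{x4xcqq}) handles \(C_{p^{e}}\). The paper's one-line appeal to \cref{tab} over \(A/p\) leaves the monogenicity point implicit, so you were right to flag it; it is your proposed fix that fails.

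Separately, ``any \(p\)-local object is \(0\)'' is false, since rational objects are \(p\)-local. What \cref{as_fp} actually gives is that \(\unit\otimes\SS/p\), the cofiber of \(p\) on the unit of \(E(A/p)\), vanishes. Indeed \(\SS/p\) is \(p\)-local, so \(\unit\otimes\SS/p\simeq\unit_{(p)}\otimes\SS/p\simeq0\). Hence \(p\) is invertible in all of \(E(A/p)\), which is the conclusion you need; your fracture-square argument reaches it by a longer route.
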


\begin{proof}
  We consider \(A\to B\).
  Since it is proper,
  by \cref{xpbfpq},
  it suffices to show that \(f_{*}\unit\)
  in \(\Hom_{\SH[2]_{\KAS}(\ZZ)}(\unit,[A])\)
  satisfies descent.
  Moreover,
  since it is finite étale,
  it is dualizable
  with a dualizable diagonal.
  Hence, by \cref{xq4y2d},
  we need to see that
  the (\(E_{0}\)-)coalgebra
  \((f_{*}\unit)^{\vee}\) is faithful.
  This can be checked after
  mapping to
  \(\Hom_{\SH[2]_{\KAS}(\ZZ)}(\unit,[A[\sfrac1p]])\)
  and \(\Hom_{\SH[2]_{\KAS}(\ZZ)}(\unit,[A/p])\).
  The former follows from \cref{kummer},
and the latter follows from \cref{tab,as_fp}.
\end{proof}

\begin{proof}[Proof of \cref{xx45iz}]
  Let \(p_{1}\), \dots,~\(p_{n}\)
  be the primes smaller than the cardinality of~\(G\).
  We cover the coefficients~\(\SS\)
  by~\(\SS[\sfrac{p_{i}}{p_{1}\dotsb p_{n}}]\)
  for \(i=1\), \dots,~\(n\).
  By \cref{md_ci}\footnote{Note that quasicompact opens in the Zariski spectrum
    are closed categorically.
  }, the resulting cover satisfies descent.
We then need to show that
  \([A][\sfrac{p_{i}}{p_{1}\dotsb p_{n}}]
  \to[B][\sfrac{p_{i}}{p_{1}\dotsb p_{n}}]\)
  satisfies descent
  for each~\(i\).
  We write \(p=p_{i}\)
  and choose a \(p\)-Sylow group~\(P\) of~\(G\).
  Since \(P\) is solvable,
  we can write \(A\to B\) as
  a sequence of extensions
  \(A\to B^{P}=B_{0}\to\dotsb\to B_{m}=B\)
  where \(B_{j-1}\to B_{j}\)
  for \(j=1\), \dots, \(m\)
  is a \(C_{p^{e_{j}}}\)-Galois extension
  for some~\(e_{j}\geq1\),
  which, by \cref{xfrmql}, induces a cover.
  Hence,
  it suffices to see that
  \(A\to B_{0}\) induces a cover.
  Since this is Nisnevich locally monogenic
  by the primitive element theorem,
  the desired result follows from \cref{cdh,x5ybjo}.
\end{proof}

\subsection{Étale \texorpdfstring{\(2\)}{2}-motives and ring stacks}\label{ss:cc_et}

Here, we prove \cref{main_3}.

\begin{proposition}\label{x155oy}
  Let \(G\) be a finite group
  and \(A\to B\) a \(G\)-Galois morphism of rings.
  Then it determines a cover
  of stacks over \(\SH[2]_{\et}(\ZZ)\).
\end{proposition}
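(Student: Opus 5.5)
We reduce the claim to a statement about the unit in \(\SH_{\et}(A)\) and then check it directly from the definition of \(\SH_{\et}\). The approach is parallel to the proof of \cref{xfrmql}, but simpler, because étale descent is built into \(\SH_{\et}\).

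First reduce to the case where \(A\) is of finite type over~\(\ZZ\). A \(G\)-Galois map is finite étale and finitely presented, so it is the base change of a \(G\)-Galois map between finite type rings. By \cref{xhrdzm}, descent is stable under base change, so this case suffices. (Alternatively, one can use \cref{x5iwcy} and the fact that \([\X]\) preserves filtered colimits.) Now write \(f\colon\Spec B\to\Spec A\). It is finite, hence projective, so the corresponding morphism of stacks over \(\SH[2]_{\et}(\ZZ)\) is prim. By \cref{xpbfpq}, being a cover is then equivalent to \(f_{*}\unit\) satisfying descent in \(\Hom_{\SH[2]_{\et}(\ZZ)}(\unit,[A])\simeq\SH_{\et}(A)\).

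Next, \(f\) is finite étale, so \(f_{*}\unit\) is dualizable in \(\SH_{\et}(A)\). Since \(f_{*}\simeq f_{!}\), its dual is \(f_{\natural}\unit=\Sigma^{\infty}_{+}\Spec B\). By \cref{xq4y2d}, it suffices to show that the coalgebra \(\Sigma^{\infty}_{+}\Spec B\to\unit\) is faithful. Concretely, we need the augmented semisimplicial object
\begin{equation*}
  (\Sigma^{\infty}_{+}\Spec B)^{\otimes_{A}\bullet+1}
  \simeq\Sigma^{\infty}_{+}\bigl((\Spec B)^{\times_{\Spec A}\bullet+1}\bigr)
\end{equation*}
to be a colimit diagram in \(\SH_{\et}(A)\). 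This is exactly the condition imposed in \cref{xq7xf4} for the étale cover \(\Spec B\to\Spec A\). Strictly, that definition imposes it only for covers between smooth schemes over the base, so we work over the base \(A\) itself: \(\Spec A\) and \(\Spec B\) are smooth (the latter étale) over \(A\), so the cover lies in \(\Cat{Sm}_{A}\) and the condition holds. Passing from the simplicial to the semisimplicial colimit is harmless, since the two geometric realizations agree.

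The main obstacle is checking that the descent condition imposed in \(\SH_{\et}(A)\) really concerns the suspension spectra \(\Sigma^{\infty}_{+}\) in the \(\PP^{1}\)-stabilized, \(\AA^{1}\)-localized category, and not just presheaves. Localization and \(\otimes\)-inversion are left adjoints in \(\CAlg(\Cat{Pr})\) and preserve colimits, so a colimit diagram of representable sheaves stays a colimit diagram after applying \(\Sigma^{\infty}_{+}\). If one insists on the hypercomplete definition, \cref{xmqcoq} identifies the two versions for finite type \(A\). With this in place, \cref{xpbfpq} yields the cover.
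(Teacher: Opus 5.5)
Your argument is correct, but it is not the route the paper takes for this statement. It is instead the template the paper uses for \cref{xfrmql}: prim descent via \cref{xpbfpq}, then \cref{xq4y2d} to pass to the dual coalgebra. Here the final faithfulness condition is literally the Čech condition that \cref{xq7xf4} imposes for the étale cover \(\Spec B\to\Spec A\) in \(\Cat{Sm}_{A}\), so the ``main obstacle'' you flag is not really an obstacle.

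The paper instead uses the Galois structure, via the intermediate extensions \(B^{H}\), to build a symmetric monoidal functor \(\Fun(BG;\Cat{Sp})\to\SH_{\et}(A)\). This gives a pushout square in \(\CAlg(\SH[2]_{\et}(\ZZ))\) exhibiting \([A]\to[B]\) as a base change of \(\Fun(BG;\Cat{Sp})\otimes\unit\to\unit\). The paper then concludes from the universal case \cref{xapqpz}, together with the stability of descent under symmetric monoidal functors and base change.

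The étale-descent input is the same in both proofs. In the paper it is hidden in the symmetric monoidality of that functor, which requires \((\Sigma^{\infty}_{+}\Spec B)_{hG}\simeq\unit\) in \(\SH_{\et}(A)\).

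Your version is more hands-on and stays essentially \(1\)-categorical. The price is the finite-type reduction, needed to identify \(\Hom(\unit,[A])\) with \(\SH_{\et}(A)\), and some six-functor inputs: finite implies prim, and \(f_{*}\unit\simeq f_{\natural}\unit\) is self-dual.

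The paper's version avoids these computations. More importantly, its pushout square generalizes to \(\Shv_{\fet}\) (\cref{x7c1hf}) and is reused in \cref{x0606p}. There \(A'\to A''\) is an infinite ind-finite-étale extension, and descent comes from Clausen's theorem on the \(\Shv^{\wedge}_{\fet}\) side. A direct Čech argument in \(\SH_{\et}\) could not supply this, since \cref{xq7xf4} only imposes finite covers.
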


\begin{proof}
  By considering \(G/H\mapsto A^{H}\),
  we obtain a square
  \begin{equation*}
    \begin{tikzcd}
      \Fun(BG;\Cat{Sp})\ar[r]\ar[d]&
      \Cat{Sp}\ar[d]\\
      \SH_{\et}(A)\ar[r]&
      \SH_{\et}(B)
    \end{tikzcd}
  \end{equation*}
  in \(\CAlg(\Cat{Pr}_{\st})\)
  and correspondingly, we obtain a pushout square
  \begin{equation*}
    \begin{tikzcd}
      \Fun(BG;\Cat{Sp})\otimes\unit\ar[r]\ar[d]&
      \unit\ar[d]\\
      {[A]}\ar[r]&
      {[B]}
    \end{tikzcd}
  \end{equation*}
  in \(\CAlg(\SH[2]_{\et}(\ZZ))\).
  The desired result follows from \cref{xapqpz}.
\end{proof}

\begin{remark}\label{x7c1hf}
  For any finite étale map \(A\to B\),
  what is used in the proof of \cref{x155oy}
  generalizes to the pushout square
  \begin{equation*}
    \begin{tikzcd}
      \Shv_{\fet}(A)\otimes\unit\ar[r]\ar[d]&
      \Shv_{\fet}(B)\otimes\unit\ar[d]\\
      {[A]}\ar[r]&
      {[B]}\rlap,
    \end{tikzcd}
  \end{equation*}
  in \(\SH[2]_{\et}(\ZZ)\).
  Moreover,
  when \(A\) and \(B\) satisfy
  the assumption of \cref{xmqcoq},
  we can replace \(\Shv_{\fet}\)
  with its hypercompleted version,
  as hypercompletion is smashing
  by~\cite[Corollary~4.40]{ClausenMathew21}.
\end{remark}

\begin{proof}[Proof of \cref{main_3}]
  By \cref{x155oy},
  we obtain a morphism
  \(F\colon\SH[2]_{\KAS}(\ZZ)\to\SH[2]_{\et}(\ZZ)\).
  We construct
  a morphism \(G\colon\SH[2]_{\et}(\ZZ)\to\SH[2]_{\KAS}(\ZZ)\)
  as in the proof of \cref{main_1}.
  We already have a symmetric monoidal functor
  \(\Span(\Cat{QProj})\to\SH[2]_{\KAS}(\ZZ)\).
  Therefore,
  we wish to construct
  a morphism
  \({\SH_{\et}}\to\Hom_{\SH[2]_{\KAS}(\ZZ)}(\unit,[\X])\)
  in \(\CAlg(\Fun(\Span(\Cat{QProj}),\Cat{Pr}_{\st}))\).
  By \cref{xx45iz},
  we obtain such a morphism
  when restricted to \(\Cat{QProj}^{\op}\).
  By \cref{cll},
  to promote this to the desired morphism,
  we need to check the compatibility condition
  described in the statement of \cref{cll};
  namely, that the squares
  \begin{align*}
    \begin{tikzcd}[ampersand replacement=\&]
      \SH_{\et}(X)\ar[r]\ar[d]\&
      \SH_{\et}(U)\ar[d]\\
      D(X)\ar[r]\&
      D(U)\rlap,
    \end{tikzcd}
    &&
    \begin{tikzcd}[ampersand replacement=\&]
      \SH_{\et}(X)\ar[r]\ar[d]\&
      \SH_{\et}(Y)\ar[d]\\
      D(X)\ar[r]\&
      D(Y)
    \end{tikzcd}
  \end{align*}
  for any open immersion \(U\to X\)
  and projective morphism \(Y\to X\)
  are left and right adjointable, respectively,
  where \(D=\Hom_{\SH[2]_{\KAS}(\ZZ)}(\unit,[\X])\).
  By \cref{xobwqg}
  and the fact that \(\SH_{\et}(\X)\)
  is a localization of~\(\SH(\X)\),
  it suffices to verify this
  when \(\SH_{\et}\) is replaced with~\(\SH\),
  which follows from the argument
  in \cref{xobwqg}.

  We need to see that this construction
  induces an equivalence.
  By construction,
  \(GF\) is homotopic to~\(\id\).
  For \(FG\),
  we argue as in the proof of \cref{main_1}.
\end{proof}

\subsection{H~descent for étale \texorpdfstring{\(2\)}{2}-motives}\label{ss:h}

We observe that
the h~descent result for~\(\SH_{\et}\)
upgrades to~\(\SH[2]_{\et}\):

\begin{theorem}\label{x81mo0}
  An h~cover of static affine schemes of finite type
  determines a cover
  over \(\SH[2]_{\et}(\ZZ)\).
  Therefore,
  \(\Mod_{[\X]}(\SH[2]_{\et}(\ZZ))
  \colon\Cat{Ring}\to2\Cat{Pr}_{\st}\)
  is an h~sheaf.
\end{theorem}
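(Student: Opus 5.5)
We follow the pattern of the proof of \cref{cdh}, replacing cdh by h~covers and \(\SH\) by \(\SH_{\et}\). The h~topology on finite type affines is generated by étale covers and proper surjective maps, and by Raynaud–Gruson-type flattening (Voevodsky's description) every h~cover is refined by a composite of a Zariski cover, a proper cdh-type cover (blowups and closed covers), and a finite flat (or finite étale after refinement) surjection. By \cref{x4xcqq}, descent is stable under composition and passes from a refinement to the original map. So it suffices to treat the following generators: Nisnevich and cdh covers, which are already covers over \(\SH[2](\ZZ)\) by \cref{cdh} and hence over \(\SH[2]_{\et}(\ZZ)\) by \cref{xcpiim} applied to \(\SH[2](\ZZ)\to\SH[2]_{\et}(\ZZ)\); finite Galois covers, handled by \cref{x155oy}; and finite surjective (proper) morphisms \(Y\to X\).

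For a finite surjective map \(f\colon Y\to X\) of affine schemes of finite type, we plan to use \cref{xpbfpq}. Since \(f\) is proper, \([X]\to[Y]\) has a linear right adjoint, so descent is equivalent to \(f_{*}\unit\) satisfying descent in \(\SH_{\et}(X)\). To check this, we show descendability after a reduction to compact unit. By \cref{xurrdp} (or simply by the cover in \cref{x927y8}, which is étale and hence a cover by \cref{x155oy} combined with Zariski descent), we may base change to \(A[\sfrac12][\zeta_{4}]\times A[\sfrac13][\zeta_{6}]\), where the unit of \(\SH_{\et}\) is compact. By \cref{x765rh}, descent and descendability then agree, so we must prove descendability of \(f_{*}\unit\) there.

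For descendability, we stratify \(X\) using \cref{md_str} and the localization sequences: noetherian induction produces a finite stratification over whose strata \(f\) becomes, after passing to reductions (harmless by \cref{i:a_exc}-type recollements, since \(\SH_{\et}\) is insensitive to nilpotents), a composite of a finite étale map and a universal homeomorphism. Universal homeomorphisms induce equivalences on \(\SH_{\et}\) (this is the known h~descent input for \(\SH_{\et}\), e.g., Elmanto–Khan), and finite étale maps are descendable, since locally they split after a Galois cover by \cref{x155oy}. Alternatively, one can use the trace of \cref{tab} together with a bound from \cref{x50j16}.

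The main obstacle is this last step: obtaining a \emph{uniform} descendability index across strata, and justifying that universal homeomorphisms (in particular purely inseparable maps in characteristic \(p\)) are invertible in \(\SH_{\et}\). For the latter, we would first try to use \cref{as_fp}, which says that \(p\) is invertible on \(p\)-local objects over \(\FF_{p}\), combined with Frobenius being an equivalence after inverting \(p\). Once covers are established, the sheaf statement follows from \cref{x6ruic}.
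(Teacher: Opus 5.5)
Most of your outline is the paper's own argument. The shared steps are:
\begin{itemize}
\item reduce to generators of the h~topology (the paper uses cdp\,+\,fppf and then finite locally free surjections, \cref{x8wf5z}, rather than Voevodsky's refinement);
\item use primness and \cref{xpbfpq} to pass to \(f_{*}\unit\in\SH_{\et}(X)\);
\item base change to \(\ZZ[\sfrac12][\zeta_{4}]\times\ZZ[\sfrac13][\zeta_{6}]\), so that the unit is compact (\cref{x927y8}) and descent agrees with descendability (\cref{x765rh});
\item stratify so that on each stratum the map is a universal homeomorphism after a finite étale map (\cref{xw0mdd}); your passage to reductions is fine, since a nilimmersion gives a recollement with zero open part;
\item treat the finite étale factor by \cref{x155oy} and cancellation, and glue with \cref{md_str}.
\end{itemize}
Your worry about a uniform index is empty: the stratification is finite and \cref{md_str} simply adds the indices.

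The genuine gap is the universal-homeomorphism step, which is exactly where the paper's proof does its work (\cref{xrf8aw}). You take as input that universal homeomorphisms induce equivalences on \(\SH_{\et}\). The result you cite is not a statement about \(\SH_{\et}\) over arbitrary finite type \(\ZZ\)-schemes; as far as I know it concerns \(\SH\) with the characteristic inverted in characteristic~\(p\). Your suggested derivation only addresses Frobenius. Transporting either to the étale-local category and to general universal homeomorphisms is left open. It is also more than you need: only descendability of \(f_{*}\unit\) is required, not invertibility of \(f^{*}\).

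The paper argues as follows. For a finite universal homeomorphism \(A\to B\), \(B\) lies between \(A\) and \(A^{\awn}\), so by \cref{xvpzpa} and cancellation it suffices to treat three explicit maps. Stratifying by \(S\) or by~\(p\), every stratum is an isomorphism up to nilpotents except \(\FF_{p}[S]\to\FF_{p}[U]\), \(S\mapsto U^{p}\). This is a monic extension of degree~\(p\), hence descendable by \cref{tab}, since \(p\) is invertible in \(\SH_{\et}(\FF_{p})\) by \cref{as_fp} and \cref{main_3}. Topological invariance then comes out as a corollary (\cref{xm89d9}), not an input.

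Within your own setup you can close the gap the same way. After reduction, on characteristic-zero strata the universal-homeomorphism factor is generically an isomorphism. On characteristic-\(p\) strata, a power of the absolute Frobenius of the target factors through it, so it is a cover by \cref{x4xcqq} once Frobenius is. Frobenius of a reduced finite type \(\FF_{p}\)-algebra is, modulo a nilpotent ideal, a base change of Frobenius of a polynomial ring. That is a composite of degree-\(p\) monogenic maps, to which \cref{tab} applies.
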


We prove this
by reducing it to the following:

\begin{proposition}\label{xrf8aw}
  Any finite universal homeomorphism
  between static affine schemes of finite type
  determines a cover over \(\SH[2]_{\et}(\ZZ)\).
\end{proposition}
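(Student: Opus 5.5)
Let \(f\colon Y=\Spec B\to X=\Spec A\) be a finite universal homeomorphism of static affine schemes of finite type. Since \(f\) is finite, hence projective, it is prim over \(\SH[2]_{\et}(\ZZ)\) (base change for projective morphisms, cf.\ \cref{aminus}). By \cref{xpbfpq}, descent for \([A]\to[B]\) is equivalent to descent for the commutative algebra \(f_{*}\unit_{Y}\) in \(\SH_{\et}(X)=\Hom_{\SH[2]_{\et}(\ZZ)}(\unit,[A])\). By \cref{x670j7}, it suffices to show that \(f_{*}\unit_{Y}\) is descendable. I will in fact show more: \(\unit_{X}\to f_{*}\unit_{Y}\) is an equivalence. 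That is, \(f^{*}\colon\SH_{\et}(X)\to\SH_{\et}(Y)\) is an equivalence, and then the cover condition is trivial.

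The key input is the topological invariance of étale motivic homotopy theory, due to Elmanto–Khan~\cite{ElmantoKhan20}, and also Bachmann~\cite{Bachmann21}: for a universal homeomorphism \(f\), the functor \(f^{*}\) on \(\SH_{\et}\) is an equivalence, at least for the hypercomplete version under the finiteness hypotheses of \cref{xmqcoq}. By \cref{xmqcoq}, schemes of finite type over~\(\ZZ\) satisfy these hypotheses, so the hypercomplete and nonhypercomplete versions agree and the result transfers to our \(\SH_{\et}\). If one prefers to reduce to more basic statements, I would argue in three steps.
\begin{itemize}
  \item Use the recollement \cref{i:a_exc} together with \cref{md_str} (or directly \cref{xwog77}) to split the problem into \(X[\sfrac1p]\) and \(X/p\) for each relevant prime.
  \item Away from~\(p\), a finite universal homeomorphism is handled by rigidity: étale sheaves with \(p\) invertible are insensitive to nilpotents and to radicial maps.
  \item On \(X/p\), \(p\) is invertible in \(\SH_{\et}\) by \cref{as_fp} and \cref{xbcpm6}. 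Since \(p\) is also zero on the \(p\)-completion, \(\SH_{\et}\) of an \(\FF_{p}\)-scheme is \(\ZZ[\sfrac1p]\)-linear. Here universal homeomorphisms factor through powers of Frobenius up to nilimmersions, and Frobenius acts invertibly after inverting~\(p\) (via the Euler characteristic argument of \cref{tab}, it is multiplication by a power of~\(p\) up to unit on the relevant objects).
\end{itemize}

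The main obstacle is the passage from the known \(1\)-categorical statement to our setting. First, one must match conventions: our \(\SH_{\et}\) is not hypercompleted, and this is handled by \cref{xmqcoq} and \cref{bbx_more}. Second, one must verify that the base-changed map \(\unit\to f_{*}\unit\) is an equivalence, which needs the projection formula and proper base change for \(f\); these are already available from Ayoub's six operations. Once \(f^{*}\) is an equivalence of \(\SH_{\et}(X)\)-modules, \(\Mod_{f_{*}\unit}(\SH_{\et}(X))\simeq\SH_{\et}(X)\) and descent holds trivially.
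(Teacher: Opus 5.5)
Your argument is essentially correct, modulo its external input, but it takes a genuinely different route from the paper.

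\textbf{Your route.} You reduce, via \cref{xpbfpq}, to a statement about $f_{*}\unit$ in $\SH_{\et}(X)$; the paper does the same implicitly. You then black-box the $1$-categorical topological invariance of $\SH_{\et}$, using \cref{xmqcoq} to match the non-hypercomplete convention. This gives the much stronger conclusion $\unit\simeq f_{*}\unit$, so both the proposition and \cref{xm89d9} become immediate.

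\textbf{The paper's route.} The paper proves only what is needed, namely descendability of $f_{*}\unit$, and does so internally.
\begin{itemize}
  \item It base changes along $\ZZ\to\ZZ[\sfrac12][\zeta_{4}]\times\ZZ[\sfrac13][\zeta_{6}]$. There the unit is compact, so descent and descendability agree and \cref{md_str} can be used.
  \item It observes that $B$ lies between $A$ and $A^{\awn}$. By \cref{xvpzpa} it therefore suffices to show that the three elementary universal homeomorphisms are descendable.
  \item Each of these is handled by stratifying along $S$ or $p$. On the open locus the map is an isomorphism. On the closed locus, after passing to reductions, the only nontrivial case is $\FF_{p}[S]\to\FF_{p}[U]$ with $S\mapsto U^{p}$. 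There \cref{tab} splits the unit once $p$ is inverted, and $p$ is indeed invertible in $\SH_{\et}(\FF_{p})$.
\end{itemize}
The paper's route buys self-containedness and never needs an equivalence statement. Yours buys brevity and a stronger conclusion, at the cost of placing all the content in the citation.

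\textbf{Scope of the citation and your fallback sketch.}
\begin{itemize}
  \item The topological-invariance results you have in mind are, in their basic form, about $\SH(X)[\sfrac1p]$ for $\FF_{p}$-schemes $X$. To reach integral $\SH_{\et}$ of finite-type $\ZZ$-schemes you also need invertibility of $p$ in characteristic~$p$. On top of that you need a gluing argument along a stratification of $X$ on which each stratum either has $f_{\mathrm{red}}$ an isomorphism or lies over some $\FF_{p}$.
  \item Your Step~2 does not supply this. Over $X[\sfrac1p]$ there are still other residue characteristics. Moreover, $f$ need not be an isomorphism even in characteristic~$0$; the normalization of the cusp is a counterexample. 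Rigidity only identifies $\ell$-completions for $\ell$ invertible and says nothing about the rational part.
  \item In Step~3, \cref{tab} does not show that Frobenius ``acts invertibly.'' It only exhibits $\unit$ as a retract of $f_{*}f^{*}\unit$ after inverting~$p$, and only after adjoining $\zeta_{4}$ or $\zeta_{6}$, which you do not arrange. That yields descendability, not an equivalence. This weaker conclusion is exactly what the paper uses, and it suffices here.
\end{itemize}
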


The reduction is based on the following standard observations:

\begin{lemma}\label{x8wf5z}
  Let \(S\) be a quasicompact quasiseparated static scheme.
  Then the h~topology
  on the category
  of static schemes of finite presentation over~\(S\)
  is generated by the cdp and Zariski~topologies
  and by finite locally free surjections.
\end{lemma}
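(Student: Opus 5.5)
The plan is to reduce to a structure theorem for h~covers and then treat each elementary type of covering separately. Write \(\tau\) for the topology generated by cdp covers, Zariski covers, and finite locally free surjections. Each of these is an h~cover, so \(\tau\) is coarser than the h~topology. It remains to show that every h~cover \(Y\to X\) of finitely presented \(S\)-schemes is refined by a \(\tau\)-covering sieve. By standard limit arguments (absolute noetherian approximation), we may descend everything to the case where \(S\) is of finite type over~\(\ZZ\), and in particular noetherian. We may also work Zariski locally on~\(X\), hence assume \(X\) is affine.

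The key input is the Voevodsky–Rydh refinement theorem (Voevodsky for the noetherian case, Rydh in general, via Raynaud–Gruson flatification). It says that any h~cover \(Y\to X\) with \(X\) quasicompact is refined by a composite
\begin{equation*}
  W\to X'\to\coprod_{i}U_{i}\to X,
\end{equation*}
where the maps are as follows:
\begin{itemize}
  \item \(\coprod_{i}U_{i}\to X\) is a finite Zariski cover.
  \item \(X'\to\coprod_{i}U_{i}\) is a blowup along a finitely presented center~\(Z\) that is nowhere dense.
  \item \(W\to X'\) is a finite locally free surjection.
\end{itemize}
Given this theorem, the Zariski and finite locally free steps are already generators of~\(\tau\). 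The remaining task is to show that a blowup \(X'\to X\) becomes a \(\tau\)-cover once it is combined with a cover of the center.

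For this we use noetherian induction on~\(X\), or induction on \(\dim X\). The abstract blowup square formed by \(Z\hookrightarrow X\) and \(X'\to X\) is a cdp cover \(X'\amalg Z\to X\). So the sieve generated by \(Y\to X\) contains the \(\tau\)-covering \(\{X'\to X,\ Z\to X\}\), provided that the restriction \(Y\times_{X}Z\to Z\) is itself a \(\tau\)-cover. That restriction is again an h~cover, of the smaller scheme~\(Z\), so the induction hypothesis applies. The base case, \(X\) empty or of dimension~\(0\), is handled directly: after a Zariski refinement a zero-dimensional h~cover is refined by a finite locally free surjection, using that a reduced zero-dimensional noetherian scheme is a finite disjoint union of fields. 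Since the h~topology does not see nilpotents, we may also pass to reductions freely, because \(X_{\mathrm{red}}\to X\) is a cdp cover with empty complement.

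The main obstacle is the refinement theorem itself, specifically the claim that after a blowup the cover becomes refinable by a finite flat map rather than merely a flat quasi-finite one. Raynaud–Gruson gives a blowup after which the strict transform of \(Y\) is flat, hence an fppf cover, and an fppf cover is refined by a quasi-finite flat one. But quasi-finite flat maps are only Zariski-locally finite after an étale (or Nisnevich) localization. That localization is not obviously in~\(\tau\). To handle it, I would first use Zariski's main theorem to factor the quasi-finite flat map as an open immersion into a finite scheme. I would then flatify that finite scheme by a further blowup, which is again absorbed by the cdp-plus-induction argument above. This is exactly Rydh's argument, and I expect to cite it (Rydh, \emph{Submersions and effective descent of étale morphisms}, Theorem~8.4) rather than reprove it.
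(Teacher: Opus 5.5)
Your overall route matches the paper's. The paper's proof consists of two citations: \cite[Theorem~2.9]{BhattScholze17}, that the h~topology is generated by the cdp and fppf topologies, and \cite[Tag~05WN]{SP}, that the fppf topology is generated by Zariski covers and finite locally free surjections. Your blowup-plus-induction-on-the-center argument is the standard way to get the first statement from the Rydh--Voevodsky refinement theorem, and it is fine as written. Note that the paper keeps the blowups separate from the passage from flat to finite flat, and the second step uses no blowups at all.

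What does not hold up is your treatment of the step you flag as the main obstacle. Take \(X=\Spec\ZZ_{(p)}\) with \(p\equiv1\bmod4\), \(\bar U=\Spec\ZZ_{(p)}[i]\) with closed points \(\idl{q}_{1},\idl{q}_{2}\), and \(U=\bar U\setminus\{\idl{q}_{1}\}\).
\begin{itemize}
  \item \(U\to X\) is an étale cover and \(X\) is local.
  \item Every blowup of~\(X\) with nowhere dense center is an isomorphism, and \(\bar U\to X\) is already finite flat.
  \item Yet no finite flat \(\ZZ_{(p)}\)-algebra~\(C\) receives a map from the ring of functions of~\(U\). For any \(\alpha\in C\) with \(\alpha^{2}=-1\), some maximal ideal of~\(C\) pulls back to~\(\idl{q}_{1}\); this follows from the number fields in \(C_{\mathrm{red}}\otimes\QQ\).
\end{itemize}
So the refinement theorem as you state it, with a finite locally free \(W\to X'\) that itself factors through~\(Y\), is false; a Zariski cover of~\(W\) is needed on top. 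That part is harmless for generating~\(\tau\).

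More seriously, your proposed fix cannot work: factoring by Zariski's main theorem and then flattening the finite compactification by a blowup. It produces a finite flat \(\bar U'\) whose points outside the open \(U'\) do not map to~\(U\), and no further blowup changes this.

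The missing idea is that étale localization is itself \(\tau\)-covered, with no blowup. Let \(U=\Spec(A[t]/f)_{g}\to\Spec A\) be a surjective standard étale map with \(f\) monic. The universal splitting algebra~\(C\) of~\(f\) is finite free over~\(A\), and \(\Spec C=\bigcup_{i}D(g(\alpha_{i}))\), where each piece maps to~\(U\) via \(t\mapsto\alpha_{i}\). Combine this with two facts: fppf covers are refined by quasi-finite flat ones, and those become finite locally free after étale localization. Together these refine every fppf cover by composites of Zariski covers and finite locally free surjections, which is exactly \cite[Tag~05WN]{SP}. With that citation in place of your sketch, the argument is complete.
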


\begin{proof}
  By~\cite[Theorem~2.9]{BhattScholze17},
  the h~topology
  is generated by the cdp and fppf topologies.
  By~\cite[Tag~05WN]{SP},
  the fppf topology
  is generated by the Zariski topology
  and finite locally free surjections.
\end{proof}

\begin{lemma}\label{xw0mdd}
  Let \(Y\to X\) be a finite locally free morphism
  between quasicompact quasiseparated static schemes.
  Then it admits a sequence of
  closed subschemes of finite presentation
  \(\emptyset=Z_{0}\subset\dotsb\subset Z_{n}=X\)
  such that over each stratum~\(Z_{i+1}\setminus Z_{i}\),
  it factors as \(Y\to X'\to X\),
  where \(Y\to X'\) is a universal homeomorphism
  and \(X'\to X\) is a finite étale map.
\end{lemma}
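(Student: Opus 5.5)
We will prove the statement by noetherian induction on \(X\), using generic structure results for finite locally free algebras. By a standard limit argument we first reduce to the case where \(X\) is noetherian, say of finite type over~\(\ZZ\). The reduction goes as follows. A finite locally free \(Y\to X\) over a quasicompact quasiseparated base descends to some \(Y_{0}\to X_{0}\) with \(X_{0}\) of finite type over~\(\ZZ\). A stratification of \(X_{0}\) with the required factorizations then pulls back to~\(X\). Base change preserves finite presentation of the strata, universal homeomorphisms, and finite étale maps. We may also work Zariski locally on~\(X\), since a finite affine cover refines any stratification into finitely many locally closed pieces, and these can be reorganized into a filtration by closed subschemes of finite presentation.

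For the induction step, it suffices to find a dense open \(U\subset X\) over which the desired factorization exists. We then take \(Z_{1}\) to be a closed complement of~\(U\) (with reduced structure, which is of finite presentation in the noetherian case) and apply induction to \(Y\times_{X}Z_{1}\to Z_{1}\). Because the statement only concerns strata, we may replace \(X\) by its reduction. We may also shrink to an irreducible component minus the others, so we assume \(X\) is integral with generic point~\(\eta\) and function field~\(K\).

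The core step is the case of a point. The fiber \(Y_{\eta}=\Spec L\) is a finite \(K\)-algebra, with \(L\) a finite product of local Artinian \(K\)-algebras. Let \(L'\) be the product of the separable closures of \(K\) in the residue fields of \(L\). Then \(K\to L'\) is finite étale. There is a canonical map \(L'\to L\): lift each separable residue subfield into the Artinian factor, which is possible uniquely by Hensel's lemma or the formal étaleness of separable extensions. This map \(L'\to L\) is a universal homeomorphism, because it is a bijection on points and each residue extension is purely inseparable. The main obstacle is spreading this out. We choose generators, so that \(L'\) extends to a finite étale \(\mathcal{O}(U)\)-algebra \(A'\) over some dense open~\(U\). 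The map \(L'\to L\) is defined by finitely many elements satisfying finitely many relations, so it spreads out to \(A'\to\mathcal{O}(Y_{U})\) after shrinking~\(U\).

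It then remains to show that \(Y_{U}\to\Spec A'\) is a universal homeomorphism after further shrinking. This map is finite, and finite maps of finite presentation can be tested fiberwise. A finite morphism is a universal homeomorphism if and only if it is surjective, radicial, and integral, and the locus where these conditions hold is constructible by the results in EGA~IV.9 and the Stacks Project on constructibility of radicial and surjective loci. Since these conditions hold over~\(\eta\), they hold over a dense open neighborhood. If necessary, we handle the nilpotent part of \(L\) by noting that the reduction map is a universal homeomorphism, so it does not affect the condition. This completes the induction step and hence the proof.
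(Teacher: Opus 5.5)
Your proposal is correct and follows essentially the paper's argument: approximate to finite type over \(\ZZ\), use noetherian induction to reduce to finding a nonempty open, and treat the generic point by factoring the artinian fiber through its maximal étale subalgebra, obtained by lifting separable residue subfields via henselianity, which the paper isolates as \cref{xocbq9}. You also spell out the spreading-out step (constructibility of the surjective and radicial loci, EGA~IV~9.6.1), which the paper leaves implicit.
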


\begin{proof}
  By approximation,
  we assume \(X\) is of finite type over~\(\ZZ\).
  By noetherian induction,
  it suffices to find a nonempty open subscheme
  over which \(Y\to X\) admits the desired decomposition.
  The desired result follows from \cref{xocbq9} below.
\end{proof}

\begin{lemma}\label{xocbq9}
  Let \(k\) be a field and \(B\) an artinian \(k\)-algebra.
  Then it admits a factorization
  \(k\to A\to B\)
  such that \(k\to A\) is étale
  and \(A\to B\) is a universal homeomorphism.
\end{lemma}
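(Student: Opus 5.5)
The plan is to reduce to the local case, lift the maximal separable subextension of the residue field into $B$ by Hensel's lemma, and check that the remaining map is a universal homeomorphism by passing to the reduction.

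\emph{Reduction to the local case.} Since $B$ is artinian, it decomposes as a finite product $B\simeq\prod_{i}B_{i}$ of local artinian $k$-algebras. A finite product of étale $k$-algebras is étale, and $\Spec$ of a product is the disjoint union. Moreover, a finite disjoint union of universal homeomorphisms $\Spec B_{i}\to\Spec A_{i}$ is again a universal homeomorphism. It therefore suffices to treat a local artinian $B$ with maximal ideal $\idl{m}$, where $\idl{m}^{N}=0$ for some $N$, and residue field $L=B/\idl{m}$. Since $B$ is finite over $k$, $L$ is a finite extension of $k$.

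\emph{Construction of $A$.} Let $L_{s}\subset L$ be the separable closure of $k$ in $L$, so that $L/L_{s}$ is purely inseparable. Set $A=L_{s}$; this is finite separable over $k$, hence étale. The key step is to lift the inclusion $L_{s}\subset L$ to a $k$-algebra map $L_{s}\to B$. By the primitive element theorem, $L_{s}=k[x]/(g)$ with $g$ separable. The image $\bar\alpha\in L$ of $x$ is a simple root of $g$, so $g'(\bar\alpha)\neq0$. Hence for any lift $\alpha_{0}\in B$ of $\bar\alpha$, the element $g'(\alpha_{0})$ is a unit of $B$. Newton iteration $\alpha_{j+1}=\alpha_{j}-g(\alpha_{j})/g'(\alpha_{j})$ improves the order of vanishing of $g(\alpha_{j})$ in the $\idl{m}$-adic filtration at each step. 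It thus stabilizes at a genuine root $\alpha\in B$ after finitely many steps, because $\idl{m}^{N}=0$. Equivalently, formal étaleness of $k\to L_{s}$ lets the map lift uniquely along the nilpotent thickening $B\to L$. Setting $x\mapsto\alpha$ gives the desired factorization $k\to A\to B$.

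\emph{$A\to B$ is a universal homeomorphism.} The map $B\to B_{\mathrm{red}}=L$ is a nilpotent thickening, so $\Spec L\to\Spec B$ is a universal homeomorphism. By two-out-of-three, it suffices to check that $\Spec L\to\Spec L_{s}$ is one. This map is finite, hence integral, and surjective. It is radicial because $L/L_{s}$ is purely inseparable: after any base change $L_{s}\to K$, the ring $L\otimes_{L_{s}}K$ has a single prime with purely inseparable residue extension, by the criterion for radicial morphisms (cf.~\cite[Tag~01S2]{SP}). A morphism that is integral, radicial and surjective is a universal homeomorphism (cf.~\cite[Tag~04DF]{SP}), which finishes the argument.

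The only nonformal step is the Hensel lifting of $L_{s}$ into $B$. It is handled by separability of $g$ together with nilpotence of $\idl{m}$, so I expect no real obstacle there.
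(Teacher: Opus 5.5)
Your proposal is correct and follows the paper's proof. You reduce to local \(B\), take \(A\) to be the separable closure of \(k\) in the residue field embedded in \(B\) by the henselian (here nilpotent Newton) lifting, and conclude because \(B\to l\) and \(A\to l\) are universal homeomorphisms. The paper instead defines \(A\) directly as the subring of elements of \(B\) separable over \(k\), which is exactly the image of your lifted \(L_{s}\); both arguments tacitly use that \(B\) is finite over \(k\), as holds in the application.
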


\begin{proof}
  By treating each connected component,
  we assume \(B\) is local;
  we write~\(l\) for its residue field.
  We let \(A\) be the subring of~\(B\)
  consisting of separable elements over~\(k\).
  It suffices to show that \(A\to B\),
  equivalently, \(A\to B\to l\)
  is a universal homeomorphism.
  To prove this,
  it suffices to show that any separable element of~\(l\) over~\(k\)
  lifts to~\(B\).
  This follows from the henselian property of~\(B\).
\end{proof}

\begin{proof}[Proof of \cref{x81mo0}]
  Note that cdh~descent follows from \cref{cdh}.
  By \cref{x8wf5z},
  it suffices to show descent
  for finite flat maps.

  Then we use \cref{xw0mdd}.
  By using \cref{x927y8},
  we are in the descendability situation.
  Therefore, we can use \cref{md_str}
  to reduce to the case
  when the map is a universal homeomorphism,
  which is \cref{xrf8aw}.
\end{proof}

We now move on to the proof of \cref{xrf8aw}.
We recall the following notion
from~\cite[Appendix~B]{Rydh10}.
The formulation here is taken from~\cite[Tag~0EUR]{SP}:

\begin{definition}\label{xf4da9}
  Let \(A\) be a static ring.
  The \emph{absolute weak normalization} of~\(A\)
  is the final object in the category of 
  \(A\)-algebras whose structure morphisms
  are universal homeomorphisms.
  We denote this map by \(A\to A^{\awn}\).
  For a general ring,
  its absolute weak normalization
  is that of~\(\pi_{0}\).
\end{definition}

\begin{lemma}\label{xvpzpa}
  In the situation of \cref{xf4da9},
  the absolute weak normalization~\(A^{\awn}\)
  can be expressed as a filtered colimit of \(A\)-algebras
  whose transitions
  are composites of finitely many
  base changes of the following morphisms:
  \begin{enumerate}
    \item\label{i:uh_wsn}
      The morphism \(\ZZ[S,T]/(S^{3}-T^{2})\to\ZZ[U]\)
      mapping~\(S\) and~\(T\) to \(U^{2}\) and~\(U^{3}\), respectively.
    \item\label{i:uh_pe}
      The morphism
      \(\ZZ[S,T]/(p^{p}S-T^{p})\to\ZZ[U]\)
      mapping~\(S\) and~\(T\) to \(U^{p}\) and~\(pU\), respectively.
    \item\label{i:uh_pu}
      The morphism \(\ZZ[S,T]/(S^{p}-T^{p},pS-pT)\to\ZZ[U]\)
      mapping both~\(S\) and~\(T\) to~\(U\).
  \end{enumerate}
\end{lemma}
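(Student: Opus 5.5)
The plan is a small-object argument combined with the elementwise characterization of absolutely weakly normal rings in~\cite{SP} (the section around Tag~0EUR). That characterization says that a static ring~\(C\) is absolutely weakly normal if and only if it is reduced and satisfies the following for every prime~\(p\):
\begin{itemize}
  \item For \(x,y\in C\) with \(x^{3}=y^{2}\), there is a unique \(z\) with \(z^{2}=x\) and \(z^{3}=y\).
  \item For \(x,y\in C\) with \(p^{p}x=y^{p}\), there is a unique \(z\) with \(z^{p}=x\) and \(pz=y\).
\end{itemize}
Moreover, any universal homeomorphism out of an absolutely weakly normal ring is an isomorphism. The three morphisms in the statement are the universal instances of these conditions:
\begin{itemize}
  \item \cref{i:uh_wsn} adjoins a solution of the seminormality equation;
  \item \cref{i:uh_pe} adjoins a solution of the \(p\)-th root equation;
  \item \cref{i:uh_pu} identifies two elements \(s,t\) with \(s^{p}=t^{p}\) and \(ps=pt\), which is what enforces uniqueness and kills nilpotents.
\end{itemize}
For the last point: a nilpotent \(x\) with \(x^{p^{k}}=0\) can be killed by iterating \cref{i:uh_pu} with \(t=0\) once \(px\) has been killed, and \(px\) is handled the same way.

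First I will check that each of the three maps is a universal homeomorphism, so that all their base changes and finite composites are too. Each map is integral (finite), surjective on spectra, and radicial; I will verify this on geometric fibers, where each is bijective with purely inseparable residue field extensions.

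Next I build the system. I index by finite lists of \emph{moves}, where a move is the base change of one of the three maps along a ring map from its source into the current ring. Each move is determined by a pair of elements satisfying the relevant equation, so the indexing category is filtered: two lists can be concatenated after base changing. Let \(C\) be the colimit, built in \(\omega\) rounds so that every equation appearing in \(C\) already appears at some finite stage. Then every instance of the existence and uniqueness conditions in~\(C\) is resolved at a later stage, so \(C\) satisfies them, and by the same argument \(C\) is reduced.

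The map \(A\to C\) is a filtered colimit of integral universal homeomorphisms, hence is itself integral, surjective and radicial, hence a universal homeomorphism. Finality of \(A^{\awn}\) gives an \(A\)-map \(C\to A^{\awn}\). This map is again a universal homeomorphism by cancellation, since both \(A\to C\) and \(A\to A^{\awn}\) are. Since \(C\) is absolutely weakly normal, the map \(C\to A^{\awn}\) is an isomorphism, which expresses \(A^{\awn}\) as the desired filtered colimit.

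The main obstacle is making the elementwise characterization match exactly the three generators. In particular, I must check that the uniqueness clauses, together with reducedness, follow from \cref{i:uh_pu} alone, and do not require a separate uniqueness generator for \cref{i:uh_wsn}. I will first try to derive the uniqueness in \cref{i:uh_wsn} from the case \(p=2\) and \(p=3\) of \cref{i:uh_pu} applied to two candidate solutions \(z,z'\): from \(z^{2}=z'^{2}\) and \(z^{3}=z'^{3}\) one gets relations of the form \(2z=2z'\) and \(3z=3z'\) after multiplying through, and hence \(z=z'\). If this fails, I will fall back on Rydh's description of universal homeomorphisms of finite type as finite composites of such elementary extensions~\cite[Appendix~B]{Rydh10}.
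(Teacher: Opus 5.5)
Your overall route is the paper's. The paper's proof just quotes the elementwise characterization of absolutely weakly normal rings from the Stacks project (Tag~0EUL). It matches the seminormality condition with \cref{i:uh_wsn} and the \(p\)-th root condition with \cref{i:uh_pe,i:uh_pu}, and leaves the small-object construction and the comparison with \(A^{\awn}\) implicit. You spell these out, but two of your auxiliary steps would fail as written.

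\textbf{Killing nilpotents.} Iterating \cref{i:uh_pu} with \(t=0\) only kills nilpotents \(s\) with \(ps=0\). Your regress through \(px, p^{2}x, \dotsc\) never terminates for nilpotents that are not \(p\)-power torsion. Concretely, over \(\QQ[\epsilon]/(\epsilon^{2})\) every base change of \cref{i:uh_pe} or \cref{i:uh_pu} is an isomorphism, because \(p\) is invertible, yet \(\epsilon\neq0\).

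\textbf{Uniqueness in the seminormality condition.} The same ring with \(z=\epsilon\) and \(z'=0\) shows that \(z^{2}=z'^{2}\) and \(z^{3}=z'^{3}\) do not give \(2z=2z'\). So your planned derivation of this uniqueness from \cref{i:uh_pu} also fails.

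The missing observation is that the existence half of the seminormality condition already forces reducedness. If \(w^{2}=0\) in the colimit \(C\), apply existence to \((x,y)=(0,w)\) to get \(z\) with \(z^{2}=0\) and \(z^{3}=w\); then \(w=z\cdot z^{2}=0\). In a reduced ring, \(z^{2}=z'^{2}\) and \(z^{3}=z'^{3}\) imply \(z=z'\). To see this, pass to each domain quotient: either both vanish, or \(z=z^{3}/z^{2}=z'^{3}/z'^{2}=z'\). So uniqueness there is automatic, and no appeal to Rydh is needed. This is why the paper requires only existence in the first condition and uses \cref{i:uh_pu} solely for uniqueness of \(p\)-th roots, which you handle correctly.

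Finally, your assertion that any universal homeomorphism out of an absolutely weakly normal ring is an isomorphism is false as stated; consider \(C\to C[\epsilon]/(\epsilon^{2})\). It does hold for reduced targets, which suffices for your comparison map \(C\to A^{\awn}\), since \(A^{\awn}\) is reduced.
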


\begin{proof}
  By~\cite[Tag~0EUL]{SP},
  a static ring is absolutely weakly normal
  if and only if
  the following conditions hold:
  \begin{itemize}
    \item
      When we have~\(x\) and~\(y\)
      satisfying \(x^{3}=y^{2}\),
      there is \(z\) such that \(x=z^{2}\)
      and \(y=z^{3}\).\footnote{This solution is automatically unique
        for weakly seminormal rings.
      }
    \item
      For each prime~\(p\),
      when we have~\(x\) and~\(y\)
      satisfying \(p^{p}x=y^{p}\),
      there is a unique element~\(z\)
      such that \(x=z^{p}\)
      and \(y=pz\).
  \end{itemize}
  The first condition corresponds to~\cref{i:uh_wsn}
  and the second to~\cref{i:uh_pe,i:uh_pu}.
\end{proof}

\begin{proof}[Proof of \cref{xrf8aw}]
  We base change
  along \(\ZZ\to\ZZ[\sfrac12][\zeta_{4}]\times\ZZ[\sfrac13][\zeta_{6}]\)
  to be in the situation where we can use
  descendability,
  and in particular \cref{md_str}.

  Let \(A\to B\) be a finite universal homeomorphism.
  This induces an isomorphism on the absolute weak normalizations;
  in particular,
  \(B\) lies between \(A\) and its absolute weak normalization.
  Therefore,
  by \cref{xvpzpa},
  it suffices to show
  that each map described there is descendable.

  We consider~\cref{i:uh_wsn}.
  We stratify the base by~\(S\).
  Over the open locus,
  it is an isomorphism.
  Over the closed locus,
  both sides reduce to~\(\ZZ\).

  We consider~\cref{i:uh_pe}.
  We stratify the base by~\(p\).
  Over the open locus, it is
  an isomorphism.
  Over the closed locus,
  it becomes
  \(\FF_{p}[S,T]/T^{p}\to\FF_{p}[U]\).
  By considering the reduction,
  it suffices to show that
  \(\FF_{p}[S]\to\FF_{p}[U]\) given by \(S\mapsto U^{p}\)
  is descendable.
  This follows from \cref{tab},
  since \(p\) is invertible in
  \(\SH_{\et}(\FF_{p})\).

  We then treat~\cref{i:uh_pu}.
  We stratify the base by~\(p\).
  Over the open locus,
  it is an isomorphism.
  Over the closed locus,
  it becomes \(\FF_{p}[S,T]/((S-T)^{p})\to\FF_{p}[U]\),
  whose reduction is an isomorphism.
\end{proof}

\begin{corollary}\label{xm89d9}
  The functor
  \(\Cat{Ring}\to\SH[2](\ZZ)\)
  factors through the category of absolute weakly normal rings
  via \((\X)^{\awn}\).
\end{corollary}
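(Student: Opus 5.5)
The plan is to show that for every static ring \(A\), the map \(A\to A^{\awn}\) induces an equivalence \([A]\to[A^{\awn}]\) in \(\CAlg\). The functor \(A\mapsto[A]\) then inverts every morphism that becomes an isomorphism after \((\X)^{\awn}\), which gives the factorization. By \cref{x2mgqh} we may assume \(A\) is static. By \cref{xvpzpa}, \(A\to A^{\awn}\) is a filtered colimit of \(A\)-algebras with transitions that are finite universal homeomorphisms. The functor \(\Cat{Ring}\to\CAlg\) preserves colimits, as in \cref{xjxm6b}, and the ring \(A\) itself is a filtered colimit of rings of finite type. So it suffices to treat a finite universal homeomorphism \(A\to B\) of static rings of finite type, and to show that \([A]\to[B]\) is an equivalence.

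For such a map I will combine two facts.

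\emph{Descent.} The map \([A]\to[B]\) is a cover. This is \cref{xrf8aw}, which is stated over \(\SH[2]_{\et}(\ZZ)\). Note that the proof of \cref{xrf8aw} uses that \(p\) is invertible in \(\SH_{\et}(\FF_{p})\) for the Frobenius-type map \(\FF_{p}[S]\to\FF_{p}[U]\), \(S\mapsto U^{p}\), and that the corollary as stated says \(\SH[2](\ZZ)\).

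\emph{Diagonal.} The diagonal \(B\otimes_{A}B\to B\) is a surjection with nilpotent kernel, since \(A\to B\) is a universal homeomorphism and hence its diagonal is a surjective closed immersion. I claim \([B\otimes_{A}B]\to[B]\) is an equivalence. Write the closed immersion as a finite composite of principal closed immersions \(C\to C/c\) with \(c\) nilpotent. By the recollement \cref{i:s_exc} of \cref{sh_ring}, the open part is \([C[c^{-1}]]=[0]=0\). Hence, by \cref{xrsyv9} or directly from \cref{xpyo8y}, we get \(\id\simeq i_{*}i^{*}\), so \(i^{*}\) is an equivalence. The relative Künneth formula \cref{xdp5rb} identifies \([B]\otimes_{[A]}[B]\) with \([B\otimes_{A}B]\), and the same holds for all higher tensor powers. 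So the Čech cosimplicial object \(\Mod_{[B]^{\otimes_{[A]}\bullet+1}}\) is constant with value \(\Mod_{[B]}\).

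I then conclude with \cref{x6ruic}. Descent gives \(\Mod_{[A]}(\cat{C})\simeq\Tot\Mod_{[B]^{\otimes\bullet+1}}(\cat{C})\), and the totalization of a constant diagram is \(\Mod_{[B]}(\cat{C})\). Hence base change along \([A]\to[B]\) is an equivalence of module \(2\)-categories. Evaluating on units shows \([A]\to[B]\) is an equivalence in \(\CAlg\).

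The main obstacle is the descent step in the generality of \(\SH[2](\ZZ)\) rather than \(\SH[2]_{\et}(\ZZ)\), precisely because of the Frobenius-type map noted above. For the maps of types \cref{i:uh_wsn,i:uh_pu} of \cref{xvpzpa}, the stratification argument of \cref{xrf8aw} goes through verbatim: via \cref{xpbfpq} and \cref{md_str}, or directly by \cref{xwog77}, using that the reduced map is an isomorphism and that nil-immersions are equivalences. For type \cref{i:uh_pe}, I would first try to bypass descent altogether: show directly that \([\FF_{p}[S]]\to[\FF_{p}[U]]\) is an equivalence over \(\SH[2](\ZZ)\), using the absolute Frobenius together with the nil-immersion invariance above. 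If this fails, I would read the statement as being about \(\SH[2]_{\et}(\ZZ)\), where \cref{xrf8aw} applies verbatim.
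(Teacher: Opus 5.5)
Read over \(\SH[2]_{\et}(\ZZ)\), your argument is exactly the paper's proof. You reduce via \cref{xvpzpa} to finite universal homeomorphisms, get descent from \cref{xrf8aw}, and observe that the \v{C}ech conerve is constant because every \(B^{\otimes_{A}n}\to B\) is a nil-thickening. Your recollement-plus-Künneth step just spells out the paper's remark that the reductions of \(Y^{\times_{X}\bullet+1}\) all agree with that of~\(Y\).

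The \(\SH[2](\ZZ)\) in the statement is a slip: the paper's proof explicitly works in \(\SH[2]_{\et}(\ZZ)\). Your proposed first attempt, handling \(\FF_{p}[S]\to\FF_{p}[U]\), \(S\mapsto U^{p}\), directly over \(\SH[2](\ZZ)\), cannot succeed, because the claim is false there. Base changing to \(\GG_{m}\) would make pullback along the \(p\)-th power map an autoequivalence of \(\SH(\GG_{m,\FF_{p}})\). That would make the induced map on \(\Sigma^{\infty}_{+}\GG_{m}\) invertible in \(\SH(\FF_{p})\). But on the summand \(\Sigma^{\infty}\GG_{m}\) this map is \(p_{\epsilon}\), which has rank~\(p\) and so is not a unit of \(\mathrm{GW}(\FF_{p})\). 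So drop that branch and work in \(\SH[2]_{\et}(\ZZ)\) from the start.
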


\begin{proof}
  By \cref{xvpzpa},
  it suffices to show that
  any finite universal homeomorphism
  induces an isomorphism in~\(\SH[2]_{\et}(\ZZ)\).
  Let \(Y\to X\) be such a map.
  By \cref{xrf8aw},
  we see \([X]\simeq\Tot[Y^{\times_{X}\bullet+1}]\).
  But since the reduction of \(Y^{\times_{X}\bullet+1}\)
  coincides with the reduction of~\(Y\),
  the cosimplicial object is constant.
\end{proof}

\section{Some Berkovich geometry}\label{s:ber}

This section reviews some key facts about Berkovich geometry.
We introduce seminormed rings and uniform Banach rings
in \cref{ss:snring}
and consider their Berkovich spectra in \cref{ss:ber}.
In \cref{ss:std},
we introduce strictly totally disconnecteds,
which are important building blocks in Berkovich geometry.
The arc~topology is discussed in \cref{ss:arc}.
In \cref{ss:tfp},
we study uniform Banach rings
of topologically finite presentation.

\subsection{Seminormed rings and uniform Banach rings}\label{ss:snring}

Here,
we consider the derived variant
for completeness,
though it is not crucial,
as the derived version of normed rings is automatically static.

\begin{definition}\label{xm15vh}
  A \emph{seminorm} on a static ring~\(A\)
  is a function~\(\lvert\X\rvert\colon A\to[0,\infty)\)
  such that
  \(\lvert 0\rvert\leq0\),
  \(\lvert a+b\rvert\leq\lvert a\rvert+\lvert b\rvert\),
  \(\lvert 1\rvert\leq1\),
  and
  \(\lvert ab\rvert\leq\lvert a\rvert\lvert b\rvert\)
  hold for any \(a\) and~\(b\).
  We write \(\Cat{SNRing}^{\heartsuit}\)
  for the category of such objects
  where a morphism \(A\to B\)
  is a ring map \(f\colon A\to B\)
  satisfying \(\lvert f(a)\rvert\leq\lvert a\rvert\)
  for any \(a\in A\).
  We then define the category of (animated) \emph{seminormed rings}
  via the pullback square
  \begin{equation*}
    \begin{tikzcd}
      \Cat{SNRing}\ar[r]\ar[d]&
      \Cat{Ring}\ar[d,"\pi_{0}"]\\
      \Cat{SNRing}^{\heartsuit}\ar[r]&
      \Cat{Ring}^{\heartsuit}\rlap.
    \end{tikzcd}
  \end{equation*}
  in \(\Cat{Pr}\).
  Concretely,
  a \emph{seminorm} on an (animated) ring~\(A\)
  is a seminorm on \(\pi_{0}A\).
\end{definition}

\begin{example}\label{x6zduv}
  The initial object in \(\Cat{SNRing}\) is~\(\ZZ\)
  with the usual absolute value \(\lvert n\rvert=\max(n,-n)\)
  for \(n\in\ZZ\).
\end{example}

\begin{example}\label{xrfuab}
  The coproduct of~\(A\) and~\(B\) in \(\Cat{SNRing}\),
  which we write as~\(A\otimes B\),
  has the underlying ring~\(A\otimes B\).
  Its seminorm is given by
  \begin{equation*}
    \lvert c\rvert=\inf\biggl\{\sum_{i=1}^{n}\lvert a_{i}\rvert\lvert b_{i}\rvert
      \biggm|
      c=\sum_{i=1}^{n}a_{i}\otimes b_{i}
    \biggr\}.
  \end{equation*}
  This is commonly called the \emph{projective tensor product}.
\end{example}

\begin{example}\label{x6s7o2}
  For \(r\in[0,\infty)\),
  we have the seminormed ring
  \(\ZZ[T]_{r}\)
  that corepresents the functor
  \(\Cat{SNRing}\to\Cat{Ani}\)
  that maps \(A\) to the anima of elements~\(a\in A\)
  satisfying \(\lvert a\rvert\leq r\).
  Concretely,
  the underlying ring is~\(\ZZ[T]\)
  and the seminorm is given by
  \(\lvert\sum_{i=0}^{n}a_{i}T^{i}\rvert=\sum_{i=0}^{n}\lvert a_{i}\rvert r^{i}\).
  We have \(\injlim_{r'>r}\ZZ[T]_{r'}=\ZZ[T]_{r}\).

  By taking coproducts,
  we also obtain
  \(\ZZ[T_{1},\dotsc,T_{n}]_{r_{1},\dotsc,r_{n}}\)
  for \(r_{i}\in[0,\infty)\).
\end{example}

The presentable category \(\Cat{SNRing}\)
is not compactly generated,
but it is close to that:

\begin{proposition}\label{xywaym}
  Consider the full subcategory \(\Cat{SNPol}\)
  of \(\Cat{SNRing}\)
  spanned by \(\ZZ[T_{1},\dotsc,T_{n}]_{r_{1},\dotsc,r_{n}}\)
  for \(r_{i}\in[0,\infty)\).
  Then the Yoneda functor
  \begin{equation*}
    \Cat{SNRing}\to\Fun(\Cat{SNPol}^{\op},\Cat{Ani})
  \end{equation*}
  is fully faithful
  and the essential image consists
  of functors~\(F\) that
  carry finite coproducts to products
  and the colimits
  \begin{align*}
    \ZZ[T]_{s}\otimes_{\ZZ[T]_{r}}\ZZ[T]_{s}&\simeq
    \ZZ[T]_{s}&
    \injlim_{r'>r}\ZZ[T]_{r'}&\simeq\ZZ[T]_{r},
  \end{align*}
  where \(r\leq s\),
  to the corresponding limits.
  In particular,
  it is \(\aleph_{1}\)-compactly generated.
\end{proposition}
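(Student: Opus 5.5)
Restricted Yoneda along \(\Cat{SNPol}\) identifies \(\Cat{SNRing}\) with a localization of the category \(\PShv_{\Sigma}(\Cat{SNPol})\) of finite-product-preserving presheaves. I would therefore first establish that the objects of \(\Cat{SNPol}\) generate \(\Cat{SNRing}\) under colimits, and then determine the colimit relations among them. Using the pullback definition of \(\Cat{SNRing}\) in \cref{xm15vh}, a seminormed animated ring is an animated ring \(A\) together with a seminorm on \(\pi_{0}A\). The functor \(\Map(\ZZ[T_{1},\dotsc,T_{n}]_{r_{1},\dotsc,r_{n}},\X)\) sends \(A\) to the subanima of \(A^{n}\) consisting of points whose images in \(\pi_{0}A\) have norm \(\leq r_{i}\), and these subanimas are unions of components. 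These functors are jointly conservative: a morphism inducing equivalences on all of them is an equivalence of underlying animated rings, by the case \(r\) large together with \(\injlim_{r\to\infty}\) (every element has some finite norm). It also preserves norms, because \(\lvert a\rvert\leq r\) is detected by \(\ZZ[T]_{r}\). Hence the restricted Yoneda functor \(y\) is conservative. It preserves sifted colimits provided it commutes with filtered colimits, so I would check how filtered colimits in \(\Cat{SNRing}\) behave. Here is the subtlety: the colimit seminorm is the infimum over the stages, so an element of norm \(\leq r\) in the colimit need not have norm \(\leq r\) at any stage. Only the sets \(\{\lvert a\rvert<r\}\) are compatible with colimits, which is exactly why \(\ZZ[T]_{r}\simeq\injlim_{r'>r}\ZZ[T]_{r'}\) has to be imposed. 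So I would not claim that \(y\) preserves filtered colimits. Instead I would show directly that \(y\) has a left adjoint \(L\), given by left Kan extension, that \(L\) is a localization (\(Ly\simeq\id\), by the density formula \(A\simeq\injlim_{\Cat{SNPol}/A}P\), which I would verify concretely on underlying rings and norms), and then identify the local objects.

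The density computation goes as follows. The colimit of \(P\) over \(\Cat{SNPol}_{/A}\), computed in \(\Cat{SNRing}\), has underlying animated ring equal to the colimit of the underlying polynomial rings, which is \(A\) by the compact generation of \(\Cat{Ring}\) (the index category is sifted enough once norms are enlarged). Its norm is the infimum over presentations, which equals \(\lvert a\rvert\) because the map \(\ZZ[T]_{\lvert a\rvert}\to A\) picks out \(a\) with norm exactly \(\lvert a\rvert\). Fully faithfulness of \(y\) follows.

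To describe the essential image, I would show that a presheaf \(F\) preserving finite products and sending the two listed colimit types to limits is of the form \(y(A)\). Set \(A=F(\ZZ[T]_{\infty})\), defined as \(\injlim_{r}F(\ZZ[T]_{r})\) along the transitions. This is an animated ring via the finite-product structure. The condition \(F(\ZZ[T]_{s}\otimes_{\ZZ[T]_{r}}\ZZ[T]_{s})\simeq F(\ZZ[T]_{s})\) says that \(F(\ZZ[T]_{r})\to F(\ZZ[T]_{s})\) is a monomorphism of animas, i.e.\ an inclusion of components. So each \(F(\ZZ[T]_{r})\) is a union of components of \(A\), determined by a subset \(A_{r}\subset\pi_{0}A\). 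Define \(\lvert a\rvert=\inf\{r\mid a\in A_{r}\}\). Continuity, \(F(\ZZ[T]_{r})=\bigcap_{r'>r}F(\ZZ[T]_{r'})\), then gives \(A_{r}=\{\lvert a\rvert\leq r\}\). The seminorm axioms follow from the maps in \(\Cat{SNPol}\) realizing \(T\mapsto T_{1}+T_{2}\) and \(T\mapsto T_{1}T_{2}\) from \(\ZZ[T]_{r+s}\) and \(\ZZ[T]_{rs}\) to \(\ZZ[T_{1},T_{2}]_{r,s}\), together with \(0\) and \(1\). One checks \(y(A)\simeq F\) on generators using finite products. I expect the main obstacle to be precisely this step: verifying that the two relations suffice, with no further relations needed for multivariable generators. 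The multivariable case reduces to one variable by the coproduct description of \cref{xrfuab}, since a map out of \(\ZZ[T_{1},\dotsc,T_{n}]_{r_{1},\dotsc,r_{n}}\) is a tuple of maps out of the \(\ZZ[T]_{r_{i}}\).

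Finally, for \(\aleph_{1}\)-compact generation: the presheaf category is compactly generated, and the imposed conditions consist of a small set of localizing conditions involving only finite limits and countable sequential limits. Writing \(\injlim_{r'>r}\) over a cofinal sequence \(r+1/n\), the local objects are closed under \(\aleph_{1}\)-filtered colimits, since countable limits commute with \(\aleph_{1}\)-filtered colimits in \(\Cat{Ani}\). Hence the accessible localization is \(\aleph_{1}\)-compactly generated by the images of the generators.
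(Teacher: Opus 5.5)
Your proposal is correct and follows essentially the same route as the paper. Full faithfulness comes from the density colimit over \(\Cat{SNPol}_{/A}\), with the underlying ring computed through \(\Cat{Pol}\) and the norm detected by \(\ZZ[T]_{r}\); the essential image comes from \(\injlim_{r}F(\ZZ[T]_{r})\), which is the paper's left Kan extension of \(F\) along \(\Cat{SNPol}\to\Cat{Pol}\) (and is what makes the animated ring structure coherent), with the pushout relation giving monomorphisms, continuity making the infimum norm well behaved, and your explicit \(\aleph_{1}\)-compact generation argument supplying a step the paper leaves unstated.
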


\begin{proof}
  Note that
  the Yoneda functor
  \(\Cat{Ring}\to\Fun(\Cat{Pol}^{\op},\Cat{Ani})\)
  is fully faithful
  and its essential image consists
  of functors
  that carry finite coproducts to products.

  We first prove that it is fully faithful.
  For a seminormed ring~\(A\),
  it suffices to show
  the colimit of the tautological diagram indexed by
  \(\Cat{SNPol}_{/A}\) is equivalent to~\(A\).
  This index category is sifted.
  We first consider
  \(\Cat{SNPol}_{/A}\to\Cat{Pol}_{/A}\)
  and observe that it is cofinal.
  Therefore,
  the colimit computes the correct underlying ring.
  The identification of the seminorm
  is straightforward
  by using~\(\ZZ[T]_{r}\).

  We then check
  the description of the essential image.
  Suppose \(F\) satisfies the conditions.
  We define~\(F'\)
  to be its left Kan extension along \(\Cat{SNPol}\to\Cat{Pol}\).
  Concretely,
  we have \(F'(\ZZ[T_{1},\dotsc,T_{n}])=
  \injlim_{r}F(\ZZ[T_{1},\dotsc,T_{n}]_{r,\dotsc,r})\).
  Hence,
  it carries finite coproducts to finite products,
  and therefore it is representable by a ring~\(A\).
  For \(a\in\pi_{0}(A)\),
  consider a corresponding element
  in \(F'(\ZZ[T])\).
  Since \(F(\ZZ[T]_{r})\to F'(\ZZ[T])\)
  is a monomorphism by the condition,
  \(F(\ZZ[T]_{r})\times_{F'(\ZZ[T])}{*}\)
  is either final or empty.
  We set \(\lvert a\rvert\) to be
  the infimum of~\(r\) such that
  \(F(\ZZ[T]_{r})\times_{F'(\ZZ[T])}{*}\)
  is nonempty,
  which exists by the condition.
  We can check that this defines a seminorm on~\(A\),
  and that \(F\) is represented by~\(A\).
\end{proof}

\begin{corollary}\label{xwsis1}
  The category
  \(\Cat{SNRing}\)
  is a retract of a compactly generated category
  in~\(\Cat{Pr}\).
\end{corollary}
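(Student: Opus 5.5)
The plan is to prove the stronger statement that \(\Cat{SNRing}\) is \emph{compactly assembled}. Concretely, I will show that the colimit functor
\begin{equation*}
  \operatorname{colim}\colon\Ind(\Cat{SNRing}_{\aleph_{1}})\to\Cat{SNRing}
\end{equation*}
admits a left adjoint~\(\hat{y}\). This suffices. By \cref{xywaym}, \(\Cat{SNRing}\) is \(\aleph_{1}\)-compactly generated, so \(\Ind(\Cat{SNRing}_{\aleph_{1}})\) is compactly generated. Moreover, \(\operatorname{colim}\circ\hat{y}\simeq\id\) will hold, and both functors preserve colimits (\(\hat{y}\) as a left adjoint, \(\operatorname{colim}\) by construction). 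This exhibits \(\Cat{SNRing}\) as a retract of a compactly generated category in~\(\Cat{Pr}\).

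To construct~\(\hat{y}\), I would use the presentation in \cref{xywaym}. On the generators I set
\begin{equation*}
  \hat{y}(\ZZ[T_{1},\dotsc,T_{n}]_{r_{1},\dotsc,r_{n}})
  =\text{``}\injlim\text{''}_{r_{i}'>r_{i}}\ZZ[T_{1},\dotsc,T_{n}]_{r_{1}',\dotsc,r_{n}'},
\end{equation*}
which is functorial in \(\Cat{SNPol}\) because any map \(\ZZ[T]_{s}\to\ZZ[T_{1},\dotsc,T_{n}]_{r}\) increases radii continuously. I then extend it along the sifted-colimit completion \(\PShv_{\Sigma}(\Cat{SNPol})\to\Ind(\Cat{SNRing}_{\aleph_{1}})\). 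To descend to \(\Cat{SNRing}\), I must check that this extension inverts the two families of relations of \cref{xywaym}.
\begin{itemize}
  \item For the sequential relation \(\injlim_{r'>r}\ZZ[T]_{r'}\simeq\ZZ[T]_{r}\), cofinality of the formal colimits gives the claim.
  \item For the epimorphism relation \(\ZZ[T]_{s}\otimes_{\ZZ[T]_{r}}\ZZ[T]_{s}\simeq\ZZ[T]_{s}\) with \(r\leq s\), it holds levelwise at radii \(r'\leq s'\), and filtered formal colimits commute with the finite pushout in the Ind-category.
\end{itemize}
Finite coproducts are handled similarly, since the formal radii simply multiply out.

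The main obstacle is to verify the adjunction \(\hat{y}\dashv\operatorname{colim}\). Since both sides send sifted colimits in the first variable to limits, it suffices to treat generators and filtered Ind-objects \(\text{``}\injlim\text{''}_{i}A_{i}\). The claim to prove is then
\begin{equation*}
  \projlim_{r'>r}\injlim_{i}\Map(\ZZ[T]_{r'},A_{i})
  \simeq\Map\bigl(\ZZ[T]_{r},\injlim_{i}A_{i}\bigr),
\end{equation*}
together with its multivariable version. The right-hand side is the anima of elements~\(a\) of \(\injlim_{i}A_{i}\) with \(\lvert a\rvert\leq r\). The seminorm on a filtered colimit is the infimum of the norms of lifts over all stages. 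Hence for each \(r'>r\), such an~\(a\) lifts at some stage to an element of norm \(<r'\), and these lifts are compatible as \(r'\) varies. This gives the equivalence on~\(\pi_{0}\).

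Higher homotopy should be harmless. The seminorm is only a condition on \(\pi_{0}\), so both sides fibre over the corresponding statement for underlying rings, where \(\Map(\ZZ[T],\X)\) commutes with filtered colimits. The fibres are then subsets described by the infimum condition, so the equivalence on fibres reduces to the \(\pi_{0}\) argument above. This interval-type approximation (\(\leq r\) versus \(<r'\)) is the crux. Finally, \(\operatorname{colim}\circ\hat{y}\simeq\id\) holds on generators by the relation \(\injlim_{r'>r}\ZZ[T]_{r'}\simeq\ZZ[T]_{r}\), and hence everywhere.
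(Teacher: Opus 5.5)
Your argument is correct, but it is organized differently from the paper's. You prove compact assembly in Lurie's form by constructing by hand the left adjoint \(\hat{y}\) of \(\operatorname{colim}\colon\Ind(\Cat{SNRing}_{\aleph_{1}})\to\Cat{SNRing}\). This costs three verifications, all of which go through: functoriality of \(\hat{y}\) on \(\Cat{SNPol}\), compatibility with the relations of \cref{xywaym}, and the mapping-space computation via the infimum formula for seminorms on filtered colimits.

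The paper instead takes as ambient category the full subcategory \(\cat{C}\subset\Fun(\Cat{SNPol}^{\op},\Cat{Ani})\) cut out only by the finitary relations, namely finite products and the epimorphism relation. This \(\cat{C}\) is compactly generated, because those relations involve only compact objects. \(\Cat{SNRing}\) is then the reflective subcategory on which the continuity relation \(\injlim_{r'>r}\ZZ[T]_{r'}\simeq\ZZ[T]_{r}\) also holds. The reflector \(L\) is given explicitly by \(\Map(\ZZ[T]_{r},L(F))\simeq\projlim_{r'>r}F(\ZZ[T]_{r'})\). Since this visibly commutes with limits, \(L\) admits a further left adjoint, which is a fully faithful section of \(L\) in \(\Cat{Pr}\).

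Unwinding, that further left adjoint sends \(\ZZ[T]_{r}\) to \(\injlim_{r'>r}\ZZ[T]_{r'}\) formed in \(\cat{C}\). In other words, it is your \(\hat{y}\) with \(\cat{C}\) in place of \(\Ind(\Cat{SNRing}_{\aleph_{1}})\). So both proofs rest on the same observation: \(\lvert a\rvert\leq r\) if and only if \(\lvert a\rvert\leq r'\) for all \(r'>r\). The paper's version is shorter, since nothing has to be built on generators or pushed through relations; the adjoint functor theorem does the work. Yours gives directly the intrinsic characterization of compact assembly mentioned in \cref{xfw8zq}, with the approximating formal colimits made explicit.

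One minor point: the map \(\ZZ[T]_{r}\to\ZZ[T]_{s}\), \(T\mapsto T\), exists only for \(s\leq r\). So the inequalities in your epimorphism relation, which you inherited from the statement of \cref{xywaym}, should be reversed. This does not affect the argument.
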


\begin{proof}
  We consider the full subcategory~\(\cat{C}\)
  of \(\Fun(\Cat{SNPol}^{\op},\Cat{Ani})\)
  spanned by
  functors~\(F\) that carry finite coproducts
  and
  \(F(\ZZ[T]_{s})\to F(\ZZ[T]_{s})\times_{F(\ZZ[T]_{r})}F(\ZZ[T]_{s})\)
  is an equivalence for any~\(r\leq s\).
  This is compactly generated.
  We conclude the proof by
  showing that
  the left adjoint~\(L\) to
  the embedding
  \(\Cat{SNRing}\hookrightarrow\cat{C}\)
  of \cref{xywaym}
  admits a further left adjoint.
  It suffices to show that
  \(L\) preserves limits.
  By \cref{xywaym},
  we see that
  \begin{equation*}
    \Map_{\Cat{SNRing}}(\ZZ[T]_{r},L(F))
    \simeq
    \projlim_{r'>r}F(\ZZ[T]_{r'}),
  \end{equation*}
  which shows the desired result.
\end{proof}

\begin{remark}\label{xfw8zq}
  \Cref{xwsis1} says that
  \(\Cat{SNRing}\) is \emph{continuous}
  in the sense of Johnstone–Joyal~\cite{JohnstoneJoyal82}
  or \emph{compactly assembled}
  in the sense of Lurie~\cite[Section~21.1.2]{LurieSAG}.
  One can also prove a symmetric monoidal refinement of \cref{xwsis1}.
\end{remark}

We then introduce its important full subcategories:

\begin{definition}\label{xhfuha}
  Let \(A\) be a seminormed ring.
  It is called a \emph{normed ring}
  if the anima of elements~\(a\in A\) with \(\lvert a\rvert=0\)
  is trivial.
  This forces the underlying animated ring to be static.
  We furthermore say that
  it is a \emph{Banach ring}
  if every Cauchy sequence converges.
  We say that it is \emph{uniform}\footnote{This is also called \emph{spectral},
    since a Banach ring satisfies this condition
    if and only if 
    \(\lvert a\rvert=\lim_{n\to\infty}\lvert a^{n}\rvert^{\sfrac1n}\) holds
    for any \(a\in A\).
  }
  if \(\lvert a^{2}\rvert=\lvert a\rvert^{2}\) holds
  for any \(a\in A\).
  We write \(\Cat{URing}\) for the full subcategory
  of \(\Cat{SNRing}\) spanned by uniform Banach rings.
\end{definition}

We can characterize these
using lifting properties:

\begin{proposition}\label{xj6u9m}
  Let \(A\) be a seminormed ring.
  \begin{enumerate}
    \item\label{i:li_n}
      It is normed if and only if
      it is local with respect to \(\ZZ[T]_{0}\to\ZZ\).
    \item\label{i:li_b}
      If it is normed,
      then it is Banach
      if and only if it is local with respect to
      \(A\to B\) for \(r>0\),
      where
      \(A\) is \(\ZZ[T_{1},\dotsc]_{r,\dotsc}\)
      with \(\lvert T_{m}-T_{m+k}\rvert\leq\sfrac1m\)
      for \(m\geq1\) and \(k\geq1\)
      and \(B\) is \(A[U]_{r}\)
      with \(\lvert U-T_{m}\rvert\leq\sfrac1m\)
      for \(m\geq1\).
      Formally,
      to construct~\(A\),
      we first add variables \(S_{m,k}\)
      for \(m\geq1\) and \(k\geq1\)
      with radius~\(\sfrac1m\)
      and then take the quotient by \(S_{m,k}-T_{m}-T_{m+k}\).
      Similarly for~\(B\).
    \item\label{i:li_u}
      If it is Banach,
      it is uniform
      if and only if
      it is local with respect to
\begin{equation*}
        \ZZ[T,U]_{r,s}/(T^{2}-U)
        \to
        \ZZ[T]_{\sqrt{s}}
      \end{equation*}
      with \(r^{2}\geq s>0\).
  \end{enumerate}
\end{proposition}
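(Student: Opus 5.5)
The plan is to unwind "local with respect to \(f\colon P\to Q\)" in each case as the statement that \(\Map(Q,A)\to\Map(P,A)\) is an equivalence. Since all the rings involved are static and the seminorm is a condition on \(\pi_{0}\), by \cref{xywaym} both sides are sets (or empty or contractible). Locality then reduces to a concrete lifting statement about elements of~\(A\) with bounded seminorms, and each item becomes an elementary check. In every case the comparison map is a monomorphism: \(\ZZ[T]_{0}\to\ZZ\), \(A\to B\) and \(\ZZ[T,U]_{r,s}/(T^{2}-U)\to\ZZ[T]_{\sqrt{s}}\) are all surjective on underlying rings up to the variable~\(U\). This holds because \(U\) is determined as the limit of the~\(T_{m}\) once \(A\) is normed, since \(\lvert U-U'\rvert\le 2/m\) for all~\(m\). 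So only existence of lifts needs checking.

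For \cref{i:li_n}, a map \(\ZZ[T]_{0}\to A\) is an element \(a\) with \(\lvert a\rvert=0\), and a lift through \(\ZZ\) means \(a=0\). Locality is therefore exactly the statement that the set of such elements is \(\{0\}\), which is the definition of normed.

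For \cref{i:li_b}, a map from the first ring is a sequence \((a_{m})\) with \(\lvert a_{m}\rvert\le r\) and \(\lvert a_{m}-a_{m+k}\rvert\le 1/m\), that is, a bounded Cauchy sequence with a fixed modulus. A lift to \(B\) is an element \(u\) with \(\lvert u\rvert\le r\) and \(\lvert u-a_{m}\rvert\le 1/m\), that is, a limit with the same modulus. Every Cauchy sequence has a subsequence with this modulus and is bounded by some~\(r\), and a limit of the subsequence is a limit of the whole sequence. Hence locality for all \(r>0\) is equivalent to completeness. Conversely, a limit automatically satisfies the closed bounds \(\lvert u\rvert\le r\) and \(\lvert u-a_{m}\rvert\le 1/m\), by continuity of the seminorm and the triangle inequality. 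Uniqueness uses normedness, as noted above.

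For \cref{i:li_u}, a map from \(\ZZ[T,U]_{r,s}/(T^{2}-U)\) is an element \(a\) with \(\lvert a\rvert\le r\) and \(\lvert a^{2}\rvert\le s\). Locality asks that then \(\lvert a\rvert\le\sqrt{s}\). This holds for all \(r^{2}\ge s>0\) if and only if \(\lvert a\rvert^{2}\le\lvert a^{2}\rvert\) for every~\(a\). The condition \(\lvert a^{2}\rvert=0\) is handled by letting \(s\to0\) and using normedness. Combined with submultiplicativity, this gives \(\lvert a^{2}\rvert=\lvert a\rvert^{2}\).

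The only step needing care is making precise that maps out of the formally presented rings in \cref{i:li_b} (with auxiliary variables \(S_{m,k}\) quotiented) are exactly the described sequences. This should follow from the corepresentability in \cref{x6s7o2}, from the fact that quotients of seminormed rings carry the quotient seminorm, and from \cref{xywaym}. I expect this bookkeeping, rather than any analysis, to be the main obstacle.
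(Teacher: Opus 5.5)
Your proposal is correct and is exactly the direct unwinding of the lifting conditions that the paper has in mind; the paper's proof reads only ``This is straightforward''. One small correction: in part (1) the seminormed ring \(A\) may be animated, so \(\Map(\ZZ[T]_{0},A)\) is the anima of elements with \(\lvert a\rvert=0\), not a set, and locality says this anima is contractible, which is verbatim the paper's definition of normed and is also what forces \(A\) to be static; your claims that the mapping objects are sets and that the comparison maps are monomorphisms only hold once \(A\) is static, as it is in parts (2) and (3).
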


\begin{proof}
  This is straightforward.
\end{proof}

\begin{corollary}\label{x2lbc5}
  All the full subcategories in \cref{xhfuha}
  are \(\aleph_{1}\)-compactly generated
  and localizations live in \(\Cat{Pr}^{\aleph_{1}}\).
\end{corollary}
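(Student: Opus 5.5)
We use \cref{xj6u9m}, which exhibits each of the full subcategories of \cref{xhfuha} (normed, Banach, uniform Banach) as the subcategory of objects local with respect to a small set of morphisms. Any such subcategory is a reflective localization, so it suffices to do two things: show that each localizing morphism has $\aleph_{1}$-compact source and target in \(\Cat{SNRing}\), and recall that \(\Cat{SNRing}\) is itself \(\aleph_{1}\)-compactly generated by \cref{xywaym}.

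First, every object of \(\Cat{SNPol}\) is \(\aleph_{1}\)-compact in \(\Cat{SNRing}\). This is because, by the description in \cref{xywaym}, mapping anima out of \(\ZZ[T]_{r}\) are computed by an evaluation of a functor on \(\Cat{SNPol}\), and the relevant limit conditions are countable. Consequently, countable colimits of such objects are also \(\aleph_{1}\)-compact. Now inspect the morphisms in \cref{xj6u9m}.
\begin{itemize}
  \item In \cref{i:li_n}, both \(\ZZ[T]_{0}\) and \(\ZZ\) are in \(\Cat{SNPol}\).
  \item In \cref{i:li_b}, the ring with variables \(T_{m}\) and \(S_{m,k}\) and the relations \(S_{m,k}-T_{m}-T_{m+k}\) is a countable colimit (countable coproduct followed by a countable pushout quotienting out relations) of objects of \(\Cat{SNPol}\), so it is \(\aleph_{1}\)-compact. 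The same holds for the target with the additional variable \(U\). Countably many radii suffice here: \(r\) can be restricted to rationals, since locality for rational \(r\) implies it for all \(r\) by rescaling.
  \item In \cref{i:li_u}, the objects are finite colimits. Again rational \(r,s\) suffice, using the overconvergence \(\injlim_{r'>r}\ZZ[T]_{r'}=\ZZ[T]_{r}\).
\end{itemize}

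Thus each subcategory is the localization of an \(\aleph_{1}\)-compactly generated category at a set of morphisms between \(\aleph_{1}\)-compact objects. A standard argument then applies: the inclusion preserves \(\aleph_{1}\)-filtered colimits, because locality against \(\aleph_{1}\)-compact maps is closed under such colimits. Hence the localization is \(\aleph_{1}\)-accessible, and it is generated by the images of \(\aleph_{1}\)-compact generators. The localization functor therefore preserves \(\aleph_{1}\)-compact objects, and the adjunction lives in \(\Cat{Pr}^{\aleph_{1}}\).

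The argument is nested: normed rings are a localization of \(\Cat{SNRing}\), Banach rings a localization of normed rings, and uniform Banach rings a localization of Banach rings. Since \cref{xj6u9m} states the conditions relative to the previous class, we apply the argument iteratively. At each stage, the objects of the previous stage are tested against the new morphisms after reflecting them. The images under reflection remain \(\aleph_{1}\)-compact by the preceding step.

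The main obstacle is \cref{i:li_b}. Completeness is genuinely a countable-limit phenomenon, so we must check carefully that the relevant test objects are \(\aleph_{1}\)-compact and that \(\aleph_{1}\)-filtered colimits of Banach rings, computed in normed rings, remain Banach. The colimit is a union in which any Cauchy sequence lives at some stage. So the check reduces to verifying that the seminorm on an \(\aleph_{1}\)-filtered colimit is computed stagewise, and we would get this from the explicit formula in \cref{xywaym}.
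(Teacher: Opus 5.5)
Your proposal is correct and is essentially the paper's argument, which just combines the \(\aleph_{1}\)-compact generation of \(\Cat{SNRing}\) from \cref{xywaym} with \cref{xj6u9m}, describing each subcategory as the objects local with respect to a small set of morphisms between \(\aleph_{1}\)-compact objects. Your restriction to rational radii is unnecessary, since the size of the set of localizing morphisms plays no role; in any case ``rescaling'' is not available over \(\ZZ\), and the right justification is to pass to a rational \(r'>r\) and note that the resulting limit automatically has norm \(\leq r\). Likewise, the direct check on \(\aleph_{1}\)-filtered colimits of Banach rings in your last paragraph is already covered by the general localization argument.
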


\begin{proof}
  This is an immediate consequence of \cref{xywaym,xj6u9m}.
\end{proof}

\begin{definition}\label{x625gh}
  We call the left adjoint
  to the inclusion \(\Cat{SNRing}\to\Cat{UBan}\)
  \emph{uniform completion}
  and write it as \(A\mapsto A^{\ucpl}\).
\end{definition}

\begin{example}\label{xhcyqw}
  For a uniform Banach ring~\(A\),
  we write \(A\{T_{1},\dotsc,T_{n}\}_{r_{1},\dotsc,r_{n}}\)
  for the uniform completion of the tensor product
  of~\(A\) with \(\ZZ[T_{1},\dotsc,T_{n}]_{r_{1},\dotsc,r_{n}}\).
\end{example}

We conclude this section
by performing some calculations,
which we use later:

\begin{lemma}\label{ban_mg}
  Let \(A\) be a normed ring
  and \(P=T^{n}+\dotsb+a_{0}\)
  a monic polynomial.
  Then the seminormed ring \(A[T]_{r}/P\)
  is Banach
  provided that
  \begin{equation*}
    r^{n}\geq 2(\lvert a_{n-1}\rvert r^{n-1}+\dotsb+\lvert a_{0}\rvert)
  \end{equation*}
  is satisfied.
  Moreover,
  in this situation,
  if \(A\) is Banach, so is \(A[T]_{r}/P\).
\end{lemma}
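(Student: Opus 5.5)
Every element of \(A[T]/P\) has a unique representative \(c=\sum_{i<n}c_{i}T^{i}\) with \(c_{i}\in A\). The plan is to show that the quotient seminorm on such a representative is comparable to the ``coefficient norm'' \(\lVert c\rVert=\sum_{i<n}\lvert c_{i}\rvert r^{i}\). Concretely, we aim for
\begin{equation*}
  \tfrac12\lVert c\rVert\leq\lvert c\rvert_{A[T]_{r}/P}\leq\lVert c\rVert .
\end{equation*}
Granting this, both claims are immediate. If \(\lvert c\rvert=0\), then all \(c_{i}\) have norm zero, hence vanish since \(A\) is normed, so the quotient is normed. If \(A\) is Banach, then \((A^{n},\lVert\X\rVert)\) is complete. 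The equivalent quotient seminorm then makes \(A[T]_{r}/P\) complete, since Cauchy sequences for one norm are Cauchy for the other and limits agree.

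The upper bound is trivial, because \(c\) itself is a lift of its class. The content is the lower bound. Take any lift \(F=c+PQ\) with \(Q=\sum_{j}q_{j}T^{j}\in A[T]\). We must show \(\lVert F\rVert_{r}\geq\tfrac12\lVert c\rVert\), where \(\lVert\X\rVert_{r}\) is the weighted \(\ell^{1}\)-norm \(\sum\lvert f_{i}\rvert r^{i}\). Write \(P=T^{n}+P_{0}\) with \(\lVert P_{0}\rVert_{r}\leq\tfrac12 r^{n}\); this is exactly the hypothesis. Then
\begin{equation*}
  F=c+T^{n}Q+P_{0}Q .
\end{equation*}
The term \(T^{n}Q\) only involves monomials of degree \(\geq n\), while \(c\) lives in degrees \(<n\). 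Since \(\lVert\X\rVert_{r}\) is additive over disjoint monomial supports and is submultiplicative, we obtain
\begin{equation*}
  \lVert c+T^{n}Q\rVert_{r}=\lVert c\rVert+r^{n}\lVert Q\rVert_{r},
  \qquad
  \lVert P_{0}Q\rVert_{r}\leq\tfrac12 r^{n}\lVert Q\rVert_{r}.
\end{equation*}
The triangle inequality then gives
\begin{equation*}
  \lVert F\rVert_{r}\geq\lVert c\rVert+\tfrac12 r^{n}\lVert Q\rVert_{r}\geq\lVert c\rVert ,
\end{equation*}
which is even better than \(\tfrac12\lVert c\rVert\). The factor \(2\) in the hypothesis is therefore only slack, and it would also absorb the case \(r=0\).

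The one step to check carefully is submultiplicativity of \(\lVert\X\rVert_{r}\) on \(A[T]\) when \(A\) is merely normed. It holds coefficientwise: \(\lvert\sum_{i+j=k}a_{i}b_{j}\rvert\leq\sum_{i+j=k}\lvert a_{i}\rvert\lvert b_{j}\rvert\). We also need to identify the seminorm on \(A[T]_{r}=A\otimes\ZZ[T]_{r}\), given by the projective tensor product of \cref{xrfuab}, with this weighted \(\ell^{1}\)-norm. The bound \(\leq\) holds by writing a polynomial as \(\sum a_{i}\otimes T^{i}\). For \(\geq\), take any expression \(\sum a_{k}\otimes g_{k}\) of the polynomial, expand each \(g_{k}\) into monomials, and apply the triangle inequality in \(A\) coefficient by coefficient.

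I expect this identification of the tensor-product seminorm to be the main (though minor) obstacle. After it, the quotient seminorm is the infimum over lifts \(F\), as computed above. The Banach statement then follows as explained in the first paragraph.
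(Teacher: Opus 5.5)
Your argument is correct, and its skeleton is the paper's. You show \(\lvert c+PQ\rvert\geq\lvert c\rvert\) for \(\deg c<n\), so the quotient seminorm is exactly the weighted coefficient norm on reduced representatives, and then read off normedness and completeness. The first ``Banach'' in the statement has to be read as ``normed'', as both you and the paper's proof do; for \(A=\QQ\), \(P=T\) the literal claim fails.

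The difference is the key estimate.
\begin{itemize}
\item The paper splits \(PQ+R\) by degree. It bounds \(\lvert(PQ)_{<n}\rvert\) and the defect \(\lvert T^{n}Q\rvert-\lvert(PQ)_{\geq n}\rvert\) \emph{each} by the full \(\sum_{i<n}\lvert a_{i}\rvert r^{i}\lvert Q\rvert\). This double counting is exactly why it needs the factor \(2\).
\item You split \(P=T^{n}+P_{0}\) and use additivity over disjoint supports together with \(\lVert P_{0}Q\rVert_{r}\leq\lVert P_{0}\rVert_{r}\lVert Q\rVert_{r}\). This gives \(\lVert c+PQ\rVert_{r}\geq\lVert c\rVert+(r^{n}-\lVert P_{0}\rVert_{r})\lVert Q\rVert_{r}\).
\end{itemize}
Your version is slightly cleaner and proves the lemma under the weaker hypothesis \(r^{n}\geq\sum_{i<n}\lvert a_{i}\rvert r^{i}\). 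You also check that the projective tensor seminorm on \(A[T]_{r}\) is the weighted \(\ell^{1}\)-norm, which the paper takes for granted.

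One correction: your remark that the argument ``absorbs the case \(r=0\)'' is wrong. The inequality \(\lVert F\rVert_{0}\geq\lVert c\rVert\) still holds. But for \(n\geq2\), \(\lVert c\rVert=\lvert c_{0}\rvert\) is not a norm on \(A^{n}\), and your step ``\(\lVert c\rVert=0\) forces every \(c_{i}=0\)'' silently uses \(r>0\). The lemma itself fails at \(r=0\): \(\ZZ[T]_{0}/T^{2}\) satisfies the hypothesis, yet the class of \(T\) has seminorm \(0\). So \(r>0\) must be assumed. The paper does this implicitly, since it only applies the lemma with \(r\gg0\). The same assumption is what gives \(\lvert c_{i}\rvert\leq r^{-i}\lVert c\rVert\) in your completeness step.
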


\begin{proof}
  In this proof,
  for a polynomial~\(S\),
  we write~\(S_{<n}\) and~\(S_{\geq n}\)
  for the components of degree~\(<n\)
  and \(\geq n\), respectively.

To show the claim, it suffices to show that
  for \(R\in A[T]\) of degree \(<n\),
  we have
  \begin{equation*}
    \lvert PQ+R\rvert\geq\lvert R\rvert
  \end{equation*}
  for any \(Q\in A[T]\),
  where \(\lvert\X\rvert\) denotes the norm on \(A[T]_{r}\).
  By
  \begin{equation*}
    \lvert PQ+R\rvert
    =\lvert(PQ+R)_{<n}\rvert+\lvert(PQ+R)_{\geq n}\rvert
    \geq(\lvert R\rvert-\lvert(PQ)_{<n}\rvert)+\lvert(PQ)_{\geq n}\rvert,
  \end{equation*}
  it suffices to show
  \(\lvert(PQ)_{\geq n}\rvert\geq\lvert(PQ)_{<n}\rvert\).
  We have
  \begin{equation*}
    \lvert(PQ)_{<n}\rvert
    =
    \biggl\lvert
    \sum_{i=0}^{n-1}a_{i}(T^{i}Q)_{<n}
    \biggr\rvert
    \leq
    \sum_{i=0}^{n-1}\lvert a_{i}\rvert\lvert T^{i}Q\rvert
    =
    \sum_{i=0}^{n-1}\lvert a_{i}\rvert r^{i}\lvert Q\rvert.
  \end{equation*}
  On the other hand, we have
  \begin{equation*}
    \lvert(PQ)_{\geq n}\rvert
    \geq
    \lvert T^{n}Q\rvert
    -\sum_{i=0}^{n-1}\lvert a_{i}\rvert\lvert(T^{i}Q)_{\geq n}\rvert
    \geq
    \lvert T^{n}Q\rvert
    -\sum_{i=0}^{n-1}\lvert a_{i}\rvert\lvert T^{i}Q\rvert
    =
    \biggl(
    r^{n}
    -\sum_{i=0}^{n-1}\lvert a_{i}\rvert r^{i}
    \biggr)
    \lvert Q\rvert.
  \end{equation*}
  Therefore, the result follows from the assumption.

  In the Banach case,
  completeness can be checked directly
  by using the description of the norm obtained above.
\end{proof}

\begin{remark}\label{xujhcs}
  Unlike \cref{ban_mg},
  when we want to extend a uniform Banach ring structure,
  different considerations are required;
  e.g., the uniform completion of \(\RR[T]_{r}/T^{2}\)
  is~\(\RR\) for any~\(r\).
  We treat a very special case in \cref{xhll2q}.
\end{remark}

\subsection{Berkovich spectrum}\label{ss:ber}

We introduce the notion of fields
and hence the spectrum:

\begin{definition}\label{xu8dcd}
  A seminorm on a ring~\(A\) is \emph{multiplicative}
  if \(\lvert1\rvert=1\) and \(\lvert ab\rvert=\lvert a\rvert\lvert b\rvert\)
  for any \(a\) and~\(b\in A\).
  A \emph{normed field}
  is a normed ring
  whose underlying ring is a field
  and the seminorm is multiplicative.
  We define a \emph{Banach field} similarly;
  note that a Banach field is automatically uniform.
\end{definition}

\begin{example}\label{x795cv}
  Fix a prime~\(p\) and consider~\(\QQ_{p}\)
  with the norm \(\max(\lvert\X\rvert_{p},\lvert\X\rvert_{p}^{2})\).
  This is a uniform Banach ring,
  but not a Banach field.
\end{example}

\begin{definition}[Berkovich]\label{x82bxa}
  Let \(A\) be a seminormed ring.
  Its \emph{Berkovich spectrum}
  is the closed subspace
  \begin{equation*}
    \Sp(A)
    \subset
    \prod_{a\in A}[0,\lvert a\rvert]
  \end{equation*}
  consisting of
  multiplicative seminorms on~\(A\)
  bounded by the original norm on~\(A\).
\end{definition}

\begin{remark}\label{xwpus5}
  In \cref{x82bxa},
  the underlying set of~\(\Sp(A)\)
  can be identified with
  the set of equivalence classes of morphisms
  \(A\to K\)
  to normed fields~\(K\).
  The equivalence relation
  is generated by
  \((A\to K)\sim(A\to K')\)
  if there is an isometric embedding \(K\to K'\)
  making the diagram commute.
\end{remark}

The following
was proven in~\cite[Proposition~3.2]{ScholzeB}:

\begin{proposition}\label{xfdud8}
  The following holds for the Berkovich spectrum
  of a seminormed ring~\(A\):
  \begin{enumerate}
    \item\label{i:sp_fil}
      When \(A\) is a filtered colimit~\(\injlim_{i}A_{i}\),
      the map \(\Sp(A)\to\projlim_{i}\Sp(A_{i})\)
      is a homeomorphism.
    \item\label{i:sp_pb}
      When \(B'=A'\otimes_{A}B\) is a pushout
      of seminormed rings,
      the map
      \(\Sp(B')\to\Sp(A')\times_{\Sp(A)}\Sp(B)\)
      is surjective.
    \item\label{i:sp_ucpl}
      The morphism \(A\to A^{\ucpl}\) induces
      a homeomorphism \(\Sp(A^{\ucpl})\to\Sp(A)\).
  \end{enumerate}
\end{proposition}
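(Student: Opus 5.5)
I will prove the three assertions of \cref{xfdud8} separately. Throughout I use \cref{xwpus5}: points of \(\Sp(A)\) are multiplicative seminorms on \(\pi_{0}A\) bounded by \(\lvert\X\rvert\), equivalently equivalence classes of maps to normed fields. Since \(\Sp(A)\) is a closed subspace of a product of compact intervals, it is compact Hausdorff.

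For \cref{i:sp_fil}, I would first describe the filtered colimit concretely. Its underlying ring is \(\injlim_{i}A_{i}\), because the forgetful functor to \(\Cat{Ring}\) commutes with filtered colimits; this is visible from \cref{xywaym}. Its seminorm is \(\lvert a\rvert=\inf_{j}\lvert a_{j}\rvert\), where the infimum runs over all lifts \(a_{j}\) of \(a\) to later stages. A multiplicative seminorm on the colimit bounded by this infimum seminorm is then the same datum as a compatible family of multiplicative seminorms \(\lvert\X\rvert_{i}\) on the \(A_{i}\), each bounded by the norm of \(A_{i}\). Hence the continuous map \(\Sp(A)\to\projlim_{i}\Sp(A_{i})\) is a bijection. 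Both sides are compact Hausdorff, so it is a homeomorphism.

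For \cref{i:sp_pb}, take points \(x\in\Sp(A')\) and \(y\in\Sp(B)\) with the same image in \(\Sp(A)\). These correspond to maps \(A'\to K'\) and \(B\to L\) into normed fields, whose restrictions to \(A\) agree up to the equivalence relation. I would pass to completions, which is harmless, and use the fact that \(K'\) and \(L\) both receive an isometric map from a common complete field \(k\), namely the completion of the residue field at the common point. I then need a complete field \(\Omega\) with isometric embeddings \(K'\to\Omega\leftarrow L\) over \(k\). Given such an \(\Omega\), the induced map \(B'=A'\otimes_{A}B\to\Omega\) is contractive for the projective tensor seminorm of \cref{xrfuab} and gives the desired point.

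To construct \(\Omega\), I would consider \(\Sp(K'\hat{\otimes}_{k}L)\) with the projective tensor norm. It suffices to show that this spectrum is nonempty, since any of its points yields such an \(\Omega\) whose restrictions to \(K'\) and \(L\) are automatically the given norms, these being multiplicative and bounded. Nonemptiness follows from Berkovich's theorem that a nonzero Banach ring has nonempty spectrum. So the key estimate, and the main obstacle, is that \(K'\hat{\otimes}_{k}L\) is nonzero, that is, that \(1\) does not have norm \(0\).

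In the nonarchimedean case this is the classical nonvanishing of completed tensor products of valued field extensions. In the archimedean case the fields are subfields of \(\CC\) with \(\lvert\X\rvert_{\infty}^{t}\), and there I would check directly that the spectral norm of \(1\) is \(1\). One way to do this is to use \(\Sp(\ZZ)\) and Ostrowski to reduce to these two cases. The archimedean case with exponent \(t\) together with a trivially valued \(k\) needs care, and I expect that mixed case to be where most of the work lies.

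For \cref{i:sp_ucpl}, any bounded multiplicative seminorm \(\lvert\X\rvert_{x}\) satisfies \(\lvert a\rvert_{x}=\lvert a^{n}\rvert_{x}^{1/n}\leq\lvert a^{n}\rvert^{1/n}\), so it is bounded by the spectral seminorm. It also extends uniquely to the separated completion. Since the uniform completion is obtained from \(A\) by replacing the seminorm with the spectral seminorm, killing elements of seminorm \(0\), and completing, both spectra parametrize the same seminorms. The map \(\Sp(A^{\ucpl})\to\Sp(A)\) is therefore a continuous bijection of compact Hausdorff spaces, hence a homeomorphism.

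One subtlety is that \(A^{\ucpl}\) is defined as the left adjoint of \cref{x625gh}, not by this explicit formula. I would identify the two using the lifting characterizations of \cref{xj6u9m}.
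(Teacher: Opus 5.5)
The paper gives no argument of its own here; it cites \cite[Proposition~3.2]{ScholzeB}. Your argument is correct in outline and is the natural one. Parts (1) and (3) are fine as written. For (3) you do not need \cref{xj6u9m}: any contractive map \(f\) into a uniform Banach ring satisfies \(\lvert f(a)\rvert\le\lvert a^{n}\rvert^{1/n}\), so the universal property of the separated completion for the spectral seminorm is immediate. For (2), the reduction is the right one: reduce to \(\lvert 1\rvert\neq0\) in the seminormed pushout \(K'\otimes_{k}L\) of complete residue fields, then apply Berkovich's nonemptiness theorem.

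You have, however, put the difficulty in the wrong place. The ``mixed case'' you worry about cannot occur. Since \(\ZZ\to A'\) and \(\ZZ\to B\) factor through \(A\), the points \(x\), \(y\), \(z\) agree on the image of \(\ZZ\). A valued field is archimedean exactly when \(\lvert n\rvert\) is unbounded on integers. So a trivially valued (indeed any nonarchimedean) \(k\) forces \(K'\) and \(L\) to be nonarchimedean. An archimedean \(k\) forces \(k,K',L\in\{\RR,\CC\}\) with the same \(\lvert\X\rvert_{\infty}^{t}\) by Ostrowski, and then \(\Omega=\CC\) works with no computation.

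The actual content is the nonarchimedean case, which you only cite. Its proof is short. Suppose \(1\otimes1=\sum_{i}a_{i}\otimes b_{i}\) in \(K'\otimes_{k}L\), and fix \(t<1\).
\begin{itemize}
\item Choose a \(t\)-orthogonal basis \((v_{j})\), with coordinate functionals \(\phi_{j}\), of the finite-dimensional subspace \(V=k\cdot1+\sum_{i}kb_{i}\subset L\); such a basis exists over any complete nonarchimedean \(k\).
\item The ultrametric inequality in \(L\) gives some \(j\) with \(\lvert\phi_{j}(1)\rvert\lvert v_{j}\rvert\geq1\).
\item \(t\)-orthogonality gives \(\lvert\phi_{j}(b_{i})\rvert\lvert v_{j}\rvert\le t^{-1}\lvert b_{i}\rvert\).
\item Apply \(\id\otimes\phi_{j}\colon K'\otimes_{k}V\to K'\). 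This is legitimate because \(K'\otimes_{k}V\to K'\otimes_{k}L\) is injective. It yields \(\phi_{j}(1)=\sum_{i}\phi_{j}(b_{i})a_{i}\), hence \(1\le t^{-1}\max_{i}\lvert a_{i}\rvert\lvert b_{i}\rvert\le t^{-1}\sum_{i}\lvert a_{i}\rvert\lvert b_{i}\rvert\).
\end{itemize}
This bounds the \(\sum\)-type seminorm of \cref{xrfuab}, which is the one computing pushouts in \(\Cat{SNRing}\). Letting \(t\to1\) gives \(\lvert1\rvert=1\) there, which is exactly what your reduction needs.
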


We do not consider general rational domains
in this paper;
Weierstraß and Laurent domains suffice for our purpose:

\begin{lemma}\label{weilau}
Let \(A\) be a seminormed ring.
  The Berkovich spectra
  of
  \(A[T]_{r}/(T-a)\)
  and \(A[T]_{\sfrac1r}/(aT-1)\)
  are identified with
  the base changes of \(\Sp(A)\)
  along the inclusions
  \([0,r]\to[0,\lvert a\rvert]\)
  and \([r,\lvert a\rvert]\to[0,\lvert a\rvert]\),
  respectively.
  In particular,
  \(\Sp\) is compatible with base change
  in this case.
\end{lemma}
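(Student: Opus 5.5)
We argue directly from \cref{x82bxa}, treating both cases the same way. A point of \(\Sp(A[T]_{r}/(T-a))\) is a multiplicative seminorm \(\lvert\X\rvert_{x}\) on \(A[T]/(T-a)\cong A\) that is bounded by the quotient seminorm. We first compute this quotient seminorm. In \(A[T]_{r}\) we have \(\lvert T\rvert=r\), and hence any bounded multiplicative seminorm satisfies \(\lvert a\rvert_{x}=\lvert T\rvert_{x}\leq r\). Conversely, the quotient seminorm of an element \(b\in A\) is the infimum of \(\lvert\sum_{i}b_{i}T^{i}\rvert=\sum_{i}\lvert b_{i}\rvert r^{i}\) over all representatives with \(\sum_{i}b_{i}a^{i}=b\).

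We next show that a multiplicative seminorm \(\lvert\X\rvert_{x}\) on \(A\) is bounded by this quotient seminorm if and only if \(\lvert\X\rvert_{x}\leq\lvert\X\rvert\) on \(A\) and \(\lvert a\rvert_{x}\leq r\). The forward direction is immediate. For the converse, take any representative of \(b\). Multiplicativity and the triangle inequality give
\begin{equation*}
  \lvert b\rvert_{x}\leq\sum_{i}\lvert b_{i}\rvert_{x}\lvert a\rvert_{x}^{i}\leq\sum_{i}\lvert b_{i}\rvert r^{i},
\end{equation*}
and taking the infimum over representatives gives the bound. So the underlying set is \(\Sp(A)\times_{[0,\lvert a\rvert]}[0,r]\), where the map \(\Sp(A)\to[0,\lvert a\rvert]\) is evaluation at \(a\). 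The topology matches because both spaces are compact Hausdorff, the comparison map is a continuous bijection, and \(\Sp\) of the quotient embeds into \(\prod_{b}[0,\ldots]\) through the coordinates indexed by \(A\).

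The Laurent case follows the same pattern. Here \(A[T]/(aT-1)=A[a^{-1}]\) with \(\lvert T\rvert=\sfrac1r\). A bounded multiplicative seminorm on \(A[a^{-1}]\) is determined by its restriction to \(A\), because \(\lvert a^{-1}\rvert_{x}=\lvert a\rvert_{x}^{-1}\). The bound \(\lvert T\rvert_{x}\leq\sfrac1r\) translates to \(\lvert a\rvert_{x}\geq r\), which in particular forces \(\lvert a\rvert_{x}>0\). Conversely, given a seminorm on \(A\) with \(\lvert\X\rvert_{x}\leq\lvert\X\rvert\) and \(\lvert a\rvert_{x}\geq r>0\), it extends uniquely and multiplicatively to \(A[a^{-1}]\). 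Its kernel is a prime ideal not containing \(a\), so the extension is well defined. Boundedness by the quotient seminorm of \(\sum_{i}b_{i}T^{i}\) then follows from the same estimate with \(\lvert T\rvert_{x}=\lvert a\rvert_{x}^{-1}\leq\sfrac1r\).

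The main point requiring care is that the quotient seminorm is an infimum over representatives. The argument avoids computing it explicitly by bounding representative by representative. The compatibility with base change is then formal: for \(A\to A'\), the square of spectra is cartesian because both sides are cut out from \(\Sp(A')\) by the same condition on \(\lvert a\rvert_{x}\).
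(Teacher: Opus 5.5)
Your argument is correct and follows the paper's route: both reduce to underlying sets, since the spectra are compact Hausdorff and the comparison map is a continuous bijection, and then identify the points. The only difference is that the paper identifies points via their description as maps to normed fields (\cref{xwpus5}), so the universal properties of \(A[T]_{r}\) and of the quotient take the place of your representative-by-representative estimate on the quotient seminorm.
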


\begin{proof}
  Since Berkovich spectra are compact Hausdorff,
  it suffices to show this on the level of underlying sets.
  This follows from \cref{xwpus5}.
\end{proof}

The following was proven in~\cite[Theorem~2.18]{ScholzeB}:

\begin{theorem}[Scholze]\label{ban_idem}
  Let \(A\) be a Banach ring.
  Then clopen subsets of~\(\Sp(A)\)
  correspond bijectively
  to idempotent elements of~\(A\).
\end{theorem}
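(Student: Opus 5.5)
The plan is to send an idempotent \(e\in A\) to the set \(U_{e}=\{x\in\Sp(A)\mid\lvert e\rvert_{x}=1\}\) and to show that this map is well defined, injective and surjective. It is well defined because multiplicativity gives \(\lvert e\rvert_{x}^{2}=\lvert e\rvert_{x}\), so \(\lvert e\rvert_{x}\in\{0,1\}\) for every \(x\). The same argument applied to \(1-e\), together with \(\lvert e\rvert_{x}+\lvert 1-e\rvert_{x}\geq1\) and \(\lvert e(1-e)\rvert_{x}=0\), shows that \(U_{e}\) and \(U_{1-e}\) are complementary. Both are closed, since \(x\mapsto\lvert e\rvert_{x}\) is continuous by \cref{x82bxa}. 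Hence \(U_{e}\) is clopen.

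Throughout I would use Berkovich's spectral radius formula as the basic tool:
\begin{equation*}
  \max_{x\in\Sp(A)}\lvert a\rvert_{x}=\lim_{n}\lvert a^{n}\rvert^{\sfrac1n}.
\end{equation*}
A consequence is that an element of a Banach ring is invertible if and only if it vanishes at no point of \(\Sp(A)\).

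For injectivity, suppose \(U_{e}=U_{e'}\). Then \(f=e(1-e')\) is an idempotent with \(\lvert f\rvert_{x}=0\) for all~\(x\), so its spectral radius is~\(0\). If \(\lvert f\rvert\neq0\), then \(\lvert f^{n}\rvert^{\sfrac1n}=\lvert f\rvert^{\sfrac1n}\to1\), which is a contradiction. Hence \(f=0\), because \(A\) is normed. Symmetrically \(e'(1-e)=0\), and therefore \(e=ee'=e'\).

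For surjectivity, let \(\Sp(A)=U\amalg V\) with \(U\) and~\(V\) clopen. The key step is to find \(g\in A\) with \(\lvert g\rvert_{x}\leq\epsilon\) on~\(V\) and \(\lvert 1-g\rvert_{x}\leq\epsilon\) on~\(U\), for some small \(\epsilon>0\). Given such a~\(g\), the element \(g(1-g)\) has spectral radius at most \(\epsilon(1+\epsilon)\). I would then run the iteration \(e_{n+1}=3e_{n}^{2}-2e_{n}^{3}\) starting from \(e_{0}=g\). The identity
\begin{equation*}
  e_{n+1}(1-e_{n+1})=e_{n}^{2}(1-e_{n})^{2}(1+2e_{n})(3-2e_{n})
\end{equation*}
shows that the spectral radius of \(e_{n}(1-e_{n})\) decays doubly exponentially, while \(e_{n+1}-e_{n}=e_{n}(1-e_{n})(2e_{n}-1)\). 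To pass from spectral radius to genuine norm estimates, I would apply the formula \(\lvert h^{m}\rvert\leq C_{c}c^{m}\) for any \(c>\rho(h)\) to suitable powers. This should show that \((e_{n})\) is Cauchy, so its limit~\(e\) is an idempotent by completeness. Since \(\lvert e\rvert_{x}\) is close to \(\lvert g\rvert_{x}\) pointwise, we get \(U_{e}=U\).

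I expect the main obstacle to be constructing~\(g\), since over a general Banach ring such as one over~\(\ZZ\) there is no holomorphic functional calculus. I would first use compactness of \(U\) and~\(V\) and the product topology from \cref{x82bxa} to separate them by finitely many functions \(\lvert a_{i}\rvert_{x}\) with gaps \(r_{i}<r_{i}'\), and then try to reduce to separation by a single element. Concretely, suppose \(\lvert a\rvert_{x}\leq r\) on~\(V\) and \(\lvert a\rvert_{x}\geq r'>r\) on~\(U\). Then \(a\) is invertible on the Laurent piece, and I would take \(g=a^{N}b\), where \(b\) is an inverse of a suitable element such as \(a^{N}+c\), chosen by the invertibility criterion so that it vanishes nowhere. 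Choosing \(N\) large would make \(\lvert g\rvert\) small on~\(V\) and \(\lvert1-g\rvert\) small on~\(U\), using \cref{weilau} to control the two pieces. Finally, I would combine the finitely many separating functions by taking products \(g=g_{1}\dotsm g_{k}\) and sums \(g_{1}+g_{2}-g_{1}g_{2}\), which realise intersections and unions of the corresponding pieces up to small error. The first thing to verify is that the invertibility of \(a^{N}+c\) can indeed be arranged uniformly over \(\Sp(A)\).
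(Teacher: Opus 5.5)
Your well-definedness and injectivity arguments are correct. The final correction step also works once the estimates are made precise. With \(u_{n}=e_{n}(1-e_{n})\) one has \(u_{n+1}=u_{n}^{2}(3+4u_{n})\), so \(u_{n}=P_{n}(u_{0})\) for a polynomial \(P_{n}\) with nonnegative integer coefficients. The bound \(\lvert u_{0}^{m}\rvert\leq C_{c}c^{m}\) then gives \(\lvert u_{n}\rvert\leq C_{c}P_{n}(c)\), which decays doubly exponentially for \(c<\sfrac{1}{4}\).

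\textbf{The gap.} The genuine gap is the construction of \(g\), and it is not a detail to be checked later.
\begin{itemize}
\item In the one-element case there is no candidate for \(c\). A Banach ring over \(\ZZ\) need not contain any element whose absolute value lies in \((r^{N},r'^{N})\) uniformly on \(\Sp(A)\). A priori the only such elements are integers, and \(\lvert m\rvert_{x}\) varies with \(x\). At a point with trivially valued residue field, every element has absolute value \(0\) or \(1\). Over a nontrivially valued field a power of a pseudouniformizer would do, but over \(\ZZ\) nothing does.
\item More seriously, the combination step fails in principle. For each \(a_{i}\), the sets \(\{\lvert a_{i}\rvert\leq r_{i}\}\) and \(\{\lvert a_{i}\rvert\geq r_{i}'\}\) do not cover \(\Sp(A)\). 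An element \(g_{i}\) that is \(\epsilon\)-close to \(0\) on the first and to \(1\) on the second then need not exist. In \(\QQ_{p}\langle T\rangle\) with \(a=T\), the Gauss norm is the maximum over the spectrum. So \(\lvert 1-g\rvert\leq\epsilon\) at the Gauss point forces \(\lvert g\rvert_{x}\geq 1-\epsilon\) at \(T=0\).
\end{itemize}
A Urysohn-style Boolean combination of approximate indicators is therefore unavailable. Any correct argument has to cut \(\Sp(A)\) only along genuine gaps, or glue along overlaps.

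\textbf{What does work.} The paper gives no proof here and only cites Scholze, so here is one route that does work.
\begin{itemize}
\item \emph{One element with a gap.} Take a gap \((r,r')\), fix \(r<s<r'\), and let \(L\) be the Banach ring of series \(\sum_{n\in\ZZ}c_{n}T^{n}\) with \(c_{n}\in A\) and \(\sum_{n}\lvert c_{n}\rvert s^{n}<\infty\). Every point of \(\Sp(L)\) has \(\lvert T\rvert=s\), so \(T-a\) vanishes nowhere and is invertible. Writing \((T-a)^{-1}=\sum c_{n}T^{n}\) gives \(c_{-k}=a^{k-1}c_{-1}\) and \(c_{0}=a^{n}c_{n}\). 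Then \(e=c_{-1}\) is an idempotent, because \((T-a)^{-1}e(1-e)\) equals \(1-e\) times the negative part of \((T-a)^{-1}\) and also \(e\) times its nonnegative part, hence vanishes. Moreover \(\lvert e\rvert_{x}=1\) exactly where \(\lvert a\rvert_{x}\leq r\). This bypasses both \(c\) and the iteration.
\item \emph{General case.} The same splitting of \(L\) into nonnegative and negative powers gives approximate gluing. Idempotents on the Weierstrass and Laurent pieces \(\{\lvert f\rvert\leq s\}\) and \(\{\lvert f\rvert\geq s\}\) that agree on \(\{\lvert f\rvert=s\}\) glue, up to arbitrarily small pointwise error, to an element of \(A\). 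Your iteration then corrects that element to an honest idempotent.
\item \emph{Induction.} Subdivide along a fine grid of radii for the separating elements \(f_{1},\dotsc,f_{n}\) until each cell lies in \(U\) or in \(V\). Then glue back one Laurent cover at a time, using your injectivity result on each overlap ring to see that the two local idempotents agree. This gives surjectivity.
\end{itemize}
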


\begin{corollary}\label{sp_spec}
  Let \(A\) be a Banach ring.
  The canonical map
  \(\Sp(A)\to\Spec(A)\)
  that maps \(\lvert\X\rvert\)
  to its kernel
  is continuous
  and induces a bijection on connected components.
\end{corollary}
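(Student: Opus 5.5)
We must show that the map \(\Sp(A)\to\Spec(A)\), \(\lvert\X\rvert\mapsto\ker\lvert\X\rvert\), is continuous and induces a bijection on connected components. The plan is to check continuity directly, then reduce the statement about components to a comparison of idempotents, where \cref{ban_idem} does the essential work.

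Continuity is elementary. A multiplicative seminorm has kernel \(\{a:\lvert a\rvert=0\}\), which is a prime ideal: \(\lvert ab\rvert=\lvert a\rvert\lvert b\rvert\), and \(\lvert 1\rvert=1\) excludes the unit ideal. The basic open \(D(a)\subset\Spec(A)\) pulls back to \(\{x\in\Sp(A):\lvert a\rvert_{x}>0\}\). This set is open, because it is the preimage of \((0,\infty)\) under the coordinate projection \(\Sp(A)\to[0,\lvert a\rvert]\) in \cref{x82bxa}. Hence the map is continuous.

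For the components, use that \(\Sp(A)\) is compact Hausdorff, so its connected components coincide with its quasicomponents. These are the intersections of clopen sets, so \(\pi_{0}\) is the Stone dual of the Boolean algebra of clopen subsets. Likewise, \(\Spec(A)\) is a spectral space, its connected components are its quasicomponents, and \(\pi_{0}\Spec(A)\) is the Stone dual of the Boolean algebra of idempotents of~\(A\) (standard, cf.\ \cite{SP}). The map on \(\pi_{0}\) is therefore Stone dual to the pullback of clopen sets along the continuous map. The clopen \(D(e)=V(1-e)\) attached to an idempotent~\(e\) pulls back to \(\{x:\lvert e\rvert_{x}\neq0\}\), and since \(\lvert e\rvert_{x}\in\{0,1\}\) this equals \(\{x:\lvert e\rvert_{x}=1\}\). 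This is exactly the clopen subset that \cref{ban_idem} associates to~\(e\); I take Scholze's bijection to be the one \(e\mapsto\{\lvert e\rvert=1\}\), which is the only natural choice. So the pullback of clopens is a bijection of Boolean algebras by \cref{ban_idem}, and by Stone duality the induced map on \(\pi_{0}\) is a homeomorphism, in particular a bijection.

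I do not expect a real obstacle. The care needed is twofold. First, connected components must agree with quasicomponents on both sides, which holds for compact Hausdorff spaces and for spectral (quasicompact) spaces. Second, the bijection in \cref{ban_idem} must be compatible with the pullback map just described. If a degenerate case has to be checked, note that \(A=0\) gives both spaces empty, so the statement holds trivially there.
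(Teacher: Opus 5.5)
Your proposal is correct and follows the same route as the paper, whose proof says only that continuity is straightforward and that the statement on connected components comes from \cref{ban_idem}. You make that reduction explicit by identifying both \(\pi_{0}\)'s as the Stone duals of the Boolean algebras of clopens and of idempotents, and your reading of \cref{ban_idem} as the natural map \(e\mapsto\{x:\lvert e\rvert_{x}=1\}\) is the right one.
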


\begin{proof}
  The continuity is straightforward.
  The claim about connected components requires \cref{ban_idem}.
\end{proof}

\begin{corollary}\label{td_sp}
  Let \(A\) be a Banach ring
  such that \(\Sp(A)\) is totally disconnected.
  Then the map \(\Sp(A)\to\Spec(A)\)
  is injective
  and exactly hits the closed points.
  Moreover,
  each connected component of \(\Spec(A)\)
  has a unique closed point.
\end{corollary}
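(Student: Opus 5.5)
The argument will go through \cref{sp_spec}, with \(\Sp(A)\) totally disconnected. Since \(\Sp(A)\) is compact Hausdorff, total disconnectedness makes every connected component a single point. The bijection of \cref{sp_spec} then shows that each connected component of \(\Spec(A)\) contains exactly one point of the image of \(\Sp(A)\to\Spec(A)\), and in particular that this map is injective. It remains to show two things: that the image consists exactly of the closed points, and that each component of \(\Spec(A)\) has a unique closed point. Both follow once we know that every point of the image is closed and that every closed point lies in the image.

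First, every closed point lies in the image. Let \(\idl{m}\subset A\) be a maximal ideal. In a Banach ring \(\idl{m}\) is closed: if \(\lvert 1-a\rvert<1\) then \(a\) is invertible by the geometric series, so the open ball around~\(1\) avoids every proper ideal, and hence so does the closure of~\(\idl{m}\). Equip \(A/\idl{m}\) with the quotient norm. It is nonzero, since \(\lvert 1\rvert_{A/\idl{m}}\geq1\) by the same ball argument, so it is a normed field in the weak sense. Berkovich's nonemptiness theorem gives \(\Sp(A/\idl{m})\neq\emptyset\), and any point there yields a multiplicative seminorm on~\(A\) whose kernel is exactly~\(\idl{m}\). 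So \(\idl{m}\) is in the image.

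Second, every point of the image is closed, and this step needs the most care. Take \(x\in\Sp(A)\) with kernel \(\idl{p}\). Pick a maximal ideal \(\idl{m}\supset\idl{p}\). The point \(\idl{m}\) lies in the closure of \(\idl{p}\), so it lies in the same connected component of \(\Spec(A)\). By the previous step \(\idl{m}\) is the image of some \(y\in\Sp(A)\). Since the image map is injective on each component (one image point per component), \(y=x\) and \(\idl{p}=\idl{m}\). Hence \(x\) maps to a closed point, and every component contains exactly one closed point.

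The only external ingredient is Berkovich's nonemptiness of the spectrum of a nonzero Banach ring, which we invoke in the first step. We also need the quotient of a Banach ring by a closed ideal to be Banach. It is enough to apply \(\Sp\) to the normed ring \(A/\idl{m}\) itself, since by \cref{xfdud8} completion does not change the spectrum. So no real obstacle remains.
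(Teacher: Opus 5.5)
Your proof is correct and follows the paper's route: the paper's entire proof is ``a direct consequence of \cref{sp_spec}'', and you likewise deduce the statement from \cref{sp_spec}. You also make explicit an ingredient the paper's one-line proof leaves implicit, namely that every maximal ideal is closed and is the kernel of a point of \(\Sp(A)\) (via Berkovich's nonemptiness theorem); this is genuinely needed, because a bijection on connected components alone would not prevent an image point from being a nonclosed point of its component.
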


\begin{proof}
  This is a direct consequence of \cref{sp_spec}.
\end{proof}

We bound the size of the Berkovich spectrum:

\begin{lemma}\label{xwafwj}
  Let \(\kappa>\aleph_{0}\) be a regular cardinal
  and \(A\) a \(\kappa\)-compact object in \(\Cat{URing}\).
  Then \(\Sp(A)\) has weight \(<\kappa\).
\end{lemma}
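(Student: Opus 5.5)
The plan is to bound the weight of \(\Sp(A)\) by the cardinality of a dense subset of \(A\) that has fewer than \(\kappa\) elements. Recall that \(\Sp(A)\) embeds into \(\prod_{a\in A}[0,\lvert a\rvert]\). If \(S\subset A\) is a subset with \(\lvert S\rvert<\kappa\) that is dense for the norm, then the projection \(\Sp(A)\to\prod_{a\in S}[0,\lvert a\rvert]\) is injective. Indeed, a multiplicative seminorm bounded by \(\lvert\X\rvert\) satisfies \(\bigl\lvert\lvert a\rvert_{x}-\lvert b\rvert_{x}\bigr\rvert\leq\lvert a-b\rvert\), so its values on \(S\) determine it everywhere. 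Since \(\Sp(A)\) is compact Hausdorff, this continuous injection is an embedding. The target has weight \(\leq\max(\lvert S\rvert,\aleph_{0})<\kappa\), using \(\kappa>\aleph_{0}\), so \(\Sp(A)\) does too. It remains to produce such an \(S\).

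To construct \(S\), use \cref{x2lbc5}: \(\Cat{URing}\) is \(\aleph_{1}\)-compactly generated, and the uniform completion localization lies in \(\Cat{Pr}^{\aleph_{1}}\). By \cref{xywaym}, a set of \(\aleph_{1}\)-compact generators is given by the uniform completions of \(\ZZ[T_{1},\dotsc,T_{n}]_{r_{1},\dotsc,r_{n}}\), together with their quotients by countably many relations. Each of these has a countable dense subset. The polynomials with rational coefficients are dense before completion, and completion as well as passing to countably presented quotients preserve having a countable dense set. Write \(A\) as a \(\kappa\)-small colimit of such generators, which is possible since it is \(\kappa\)-compact and the compact objects form a retract of such colimits. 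Then \(A\) is a retract of a uniform completion of a \(\kappa\)-small colimit, computed in \(\Cat{SNRing}\), of countable-dense objects. A colimit in \(\Cat{SNRing}\) of fewer than \(\kappa\) objects has an underlying ring that is a quotient of a free construction on fewer than \(\kappa\) elements, so it has a dense subset of size \(<\kappa\). Uniform completion keeps this image dense, and a retract again has a dense subset of size \(<\kappa\), namely the image under the retraction.

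Alternatively, one can avoid density altogether. Use \cref{i:sp_fil} of \cref{xfdud8} together with \cref{i:sp_ucpl}, and note that \(\Sp\) turns colimits in the relevant sense into limits, so that \(\Sp\) of a \(\kappa\)-small colimit embeds into a \(\kappa\)-small limit of metrizable compacta. I will follow the density route, since it is the most direct. The main obstacle is the bookkeeping about colimits in \(\Cat{SNRing}\). Seminorms on colimits are given by infima as in \cref{xrfuab}, and one has to check that the image of the generators' dense subsets remains dense after taking the quotient seminorm and the uniform completion. Density passes to quotient seminorms and to completions, and the uniformization can only decrease the norm, so this should be routine.
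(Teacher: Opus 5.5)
Your density argument is correct and is essentially the paper's proof. The paper observes that \(A\) is a quotient of the uniform completion of a free seminormed ring \(\ZZ[T_{i}\mid i\in I]_{r_{i}}\) with \(\lvert I\rvert<\kappa\), so \(\Sp(A)\) is closed in the spectrum of that ring; by \cref{i:sp_ucpl} of \cref{xfdud8}, this is the spectrum of a ring of cardinality \(<\kappa\), hence it embeds in a product of fewer than \(\kappa\) intervals. You make the same reduction through a dense subset of size \(<\kappa\) and the Lipschitz bound, and you justify that subset via \(\aleph_{1}\)-compact generation more explicitly than the paper does.

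Do drop the ``alternative'' in your last paragraph, though. \(\Sp\) does not turn pushouts into limits: \cref{i:sp_pb} of \cref{xfdud8} only gives surjectivity. For example, \(\Sp(\CC\otimes_{\RR}\CC)\) has two points while \(\Sp(\CC)\times_{\Sp(\RR)}\Sp(\CC)\) has one, so that route does not yield an embedding as stated.
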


\begin{proof}
  It is a quotient
  of the uniform completion
  of the seminormed ring
  \(A=\ZZ[T_{i}\mid i\in I]_{r_{i}}\),
  where \(I\) is a set of cardinality~\(<\kappa\)
  and \(r_{i}\geq0\).
  Hence,
  it is a closed subset of \(\Sp(A^{\ucpl})\).
  By~\cref{i:sp_ucpl} of \cref{xfdud8},
  it suffices to observe that
  \(\Sp(A)\) has weight~\(<\kappa\),
  but this follows from the definition
  and that \(A\) is of cardinality \(<\kappa\),
  since \(\prod_{i\in I}[0,r_{i}]\) has weight~\(<\kappa\).
\end{proof}

We only use the following
notation for Banach rings:

\begin{definition}\label{x5hs0a}
  Let \(A\) be a Banach ring
  and \(x\in\Sp(A)\).
  The (complete) \emph{residue field}~\(K(x)\)
  is the completion of the residue field at \(\ker\lvert\X\rvert_{x}\),
  where \(\lvert\X\rvert_{x}\) is the multiplicative seminorm
  corresponding to~\(x\).
\end{definition}

\subsection{(Strictly) totally disconnecteds}\label{ss:std}

We first recall the following:

\begin{lemma}\label{xu2axd}
  Let \(K\) be a nondiscrete Banach field.
  The completion of its separable closure
  is algebraically closed.
\end{lemma}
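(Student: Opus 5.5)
Let \(K\) be a nondiscrete Banach field, \(K^{\mathrm{s}}\) a separable closure with the unique extension of the norm, and \(C\) its completion. The plan is to reduce to the classical statements in the two cases distinguished by Ostrowski's theorem: the norm on \(K\) is either archimedean or nonarchimedean (up to the power-equivalence allowed for Banach fields, which does not change completions or algebraic properties).

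\emph{Archimedean case.} By the Gelfand–Mazur/Ostrowski theorem, \(K\) is isometrically (up to a power of the norm) \(\RR\) or \(\CC\). Then \(K^{\mathrm{s}}\) is \(\CC\) and is already complete, hence \(C=\CC\) is algebraically closed. I expect no difficulty here beyond quoting Ostrowski.

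\emph{Nonarchimedean case.} Now \(K\) is a complete nonarchimedean valued field with nontrivial value group. The norm extends uniquely to \(K^{\mathrm{s}}\) by completeness of \(K\), and hence to \(C\). The key step is Krasner's lemma together with continuity of roots. Take a monic separable polynomial \(P\in C[T]\) of degree \(n\). Approximate its coefficients by those of a monic \(Q\in K^{\mathrm{s}}[T]\). Since \(Q\) splits in \(K^{\mathrm{s}}\) (which is separably closed, and \(Q\) may be taken separable, as separability is an open condition on the discriminant), continuity of roots gives \(\alpha\in K^{\mathrm{s}}\) with \(\lvert P(\alpha)\rvert\) as small as desired. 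Then Krasner's lemma, or directly Newton's method using that \(P\) is separable, yields a root of \(P\) in \(C\) once \(\lvert P(\alpha)\rvert\) is small relative to \(\lvert P'(\alpha)\rvert^{2}\). So \(C\) is separably closed.

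It remains to handle inseparable extensions in characteristic \(p\). I would show that \(C\) is perfect: given \(c\in C\), approximate it by \(a\in K^{\mathrm{s}}\). Using nondiscreteness, pick \(\varpi\) with \(0<\lvert\varpi\rvert<1\). The Artin–Schreier-type polynomial \(T^{p}-\varpi^{N}T-a\) is separable, so it has a root \(b\in K^{\mathrm{s}}\), with \(b^{p}\) close to \(a\) for large \(N\). Taking successive such approximations produces a Cauchy sequence whose limit is a \(p\)-th root of \(c\). A field that is perfect and separably closed is algebraically closed. The main obstacle is this last approximation argument, since one must control the size of \(b\) uniformly in order to make the sequence converge. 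I would first bound \(\lvert b\rvert\) via the Newton polygon, getting \(\lvert b\rvert\le\max(\lvert a\rvert^{1/p},\lvert\varpi\rvert^{N/(p-1)})\), and then iterate on the error \(c-b^{p}\).
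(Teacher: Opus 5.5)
Your proof is correct and is essentially the paper's argument. The paper handles the archimedean case as immediate, as you do, and for the nonarchimedean case simply cites Bosch--G\"untzer--Remmert: completions of algebraically closed fields are algebraically closed, and for a nontrivial valuation the separable closure is dense in the algebraic closure. Those cited results rest on your two ingredients, Krasner/continuity of roots and approximating \(p\)-th roots by roots of separable perturbations such as \(T^{p}-\varpi^{N}T-a\); your ``separably closed plus perfect'' is just a reorganization of them.

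Your worry about convergence at the end is unnecessary. In characteristic~\(p\) one has \(\lvert x^{p}-y^{p}\rvert=\lvert x-y\rvert^{p}\), so any sequence with \(b_{n}^{p}\to c\) is automatically Cauchy.
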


\begin{proof}
  The archimedean case is vacuous.
  The nonarchimedean case
  follows from~\cite[Propositions~3.4.1.3 and~3.4.1.6]{BoschGuntzerRemmert84}.
\end{proof}

\begin{definition}[Scholze]\label{x9lxzs}
  We call a uniform Banach ring
  \emph{totally disconnected}
  when the underlying space is totally disconnected
  and all residue fields are nondiscrete.
  We call it \emph{strictly totally disconnected}
  when moreover residue fields are separably closed
  (hence algebraically closed by \cref{xu2axd}).
  We write \(\Cat{TDis}\)
  and \(\Cat{STDis}\) for the full subcategories
  of \(\Cat{URing}\) spanned by them, respectively.
\end{definition}

We recall the following
from usual algebraic geometry:

\begin{definition}\label{x0h0c8}
  Let \(A\) be a totally disconnected.
  For \(x\in\Sp(A)\subset\Spec(A)\)
  (see \cref{td_sp}),
  we consider the uncompleted residue field~\(\kappa(x)\),
  which is just the Zariski residue field
  at~\(\ker\lvert\X\rvert_{x}\)\footnote{For general~\(A\)
    and \(x\in\Sp(A)\),
    this does \emph{not} recover the definition
    of the uncompleted residue field in Berkovich geometry.
    Our totally disconnectedness assumption
    ensures that it does here.
  }.
  Then we define its local ring~\(A_{x}\)
  to be its Zariski local ring.
\end{definition}

\begin{proposition}\label{xibpba}
  In the situation of \cref{x0h0c8},
  the local ring~\(A_{x}\) is henselian.
\end{proposition}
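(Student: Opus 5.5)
The approach is to identify \(A_{x}\) with a filtered colimit of Banach rings cut out by idempotents, and then to verify the henselian lifting property by running Newton's method uniformly over a small clopen neighbourhood of~\(x\) in \(\Sp(A)\).

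\textbf{Step 1: describe \(A_{x}\) via idempotents.} Let \(U\) range over the clopen neighbourhoods of~\(x\) in \(\Sp(A)\). By \cref{ban_idem} each such \(U\) corresponds to an idempotent \(e_{U}\in A\). The ring \(Ae_{U}\) is a direct factor of~\(A\), hence again a uniform Banach ring, and \(\Sp(Ae_{U})=U\). Set \(A'_{x}=\injlim_{U}Ae_{U}\). Its spectrum is the intersection of the clopen pieces of \(\Spec(A)\) containing~\(x\). By \cref{sp_spec}, whose bijection on components is induced by these same idempotents, this intersection is the connected component of \(\Spec(A)\) containing~\(x\). By \cref{td_sp} that component has a unique closed point, so \(A'_{x}\) has a unique maximal ideal and is local. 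Every element of \(A\setminus\ker\lvert\X\rvert_{x}\) is therefore a unit in \(A'_{x}\), and \(A'_{x}\) is itself a localization of~\(A\). Hence \(A'_{x}\simeq A_{x}\), and the residue field is \(\kappa(x)\).

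\textbf{Step 2: reduce to a lifting problem over a clopen piece.} Let \(P\) be a monic polynomial over~\(A_{x}\) with a simple root \(\bar a\in\kappa(x)\). After shrinking, \(P\) is defined over some \(Ae_{U}\); replacing \(A\) by \(Ae_{U}\), assume \(U=\Sp(A)\). Lift \(\bar a\) to \(a\in A\). Then \(\lvert P(a)\rvert_{x}=0\) and \(\lvert P'(a)\rvert_{x}>0\). The functions \(y\mapsto\lvert P(a)\rvert_{y}\) and \(y\mapsto\lvert P'(a)\rvert_{y}\) are continuous on the compact totally disconnected space \(\Sp(A)\). Hence, for any \(\delta>0\), shrinking to a clopen neighbourhood of~\(x\) yields a constant \(c>0\) with \(\lvert P'(a)\rvert_{y}\geq c\) and \(\lvert P(a)\rvert_{y}\leq\delta\) for all~\(y\).

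Since an element of a Banach ring that is nonvanishing at every point of the Berkovich spectrum is invertible (Gelfand), \(P'(a)\) is a unit in the shrunken ring. Because \(A\) is uniform, its norm equals the supremum norm over \(\Sp(A)\). This gives \(\lVert P'(a)^{-1}\rVert\leq c^{-1}\) and \(\lVert P(a)\rVert\leq\delta\).

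\textbf{Step 3: Newton iteration (main obstacle).} Run \(a_{n+1}=a_{n}-P(a_{n})/P'(a_{n})\). The difficulty is to get a uniform Kantorovich-type estimate that works in both the archimedean and nonarchimedean settings. Expanding \(P\) and \(P'\) by Taylor's formula and using only the triangle inequality and submultiplicativity, one obtains a constant \(M\), depending on \(\lVert a\rVert\) and the coefficient norms of~\(P\), such that the following hold as long as the iterates stay in a fixed ball around~\(a\):
\begin{equation*}
  \lVert P(a_{n+1})\rVert\leq M\,c_{n}^{-2}\lVert P(a_{n})\rVert^{2},
  \qquad
  \lvert P'(a_{n+1})\rvert_{y}\geq c_{n}-M\,c_{n}^{-1}\lVert P(a_{n})\rVert\quad\text{for all }y.
\end{equation*}
Here \(c_{n}\) denotes a lower bound for \(\lvert P'(a_{n})\rvert_{y}\) over all~\(y\). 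The lower bound, together with Gelfand invertibility and uniformity, again controls \(\lVert P'(a_{n+1})^{-1}\rVert\). Choosing \(\delta\) small relative to \(c^{2}/M\) gives quadratic convergence, so \((a_{n})\) is Cauchy. Its limit \(b\) exists by completeness and satisfies \(P(b)=0\). Moreover \(\lvert b-a\rvert_{x}=0\), so \(b\) lifts \(\bar a\) in \(\kappa(x)\). Therefore \(A_{x}\) is henselian.
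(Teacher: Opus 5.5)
Your proof is correct, but it takes a different route from the paper.

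\textbf{The paper's route.} It uses the finite-algebra characterization of henselian rings: it suffices that \(A_{x}[T]/P\) be a finite product of local rings for every monic~\(P\). After spreading \(P\) out to~\(A\), it does three things:
\begin{itemize}
  \item It gives \(A[T]_{r}/P\) a Banach structure for \(r\gg0\) via \cref{ban_mg}.
  \item It notes that \(\Sp(A[T]_{r}/P)\to\Sp(A)\) has finite fibers, so the source is again totally disconnected.
  \item It applies \cref{td_sp} a second time, now to this finite algebra. This splits \(A_{x}[T]/P\) into the local rings at the finitely many points over~\(x\).
\end{itemize}
Your Step~1 is essentially the identification of \(A_{x}\) that the paper uses implicitly through \cref{ban_idem,td_sp}.

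\textbf{Your route.} You use the root-lifting definition instead. You replace the second application of Scholze's idempotent theorem by Newton iteration, driven by two results from Berkovich's spectral theory: his invertibility criterion, and the equality of the norm of a uniform Banach ring with the supremum over \(\Sp(A)\).

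\textbf{Comparison.} The paper's argument is shorter and needs no new estimates. It also never uses uniformity: \(A[T]_{r}/P\) is merely Banach, so the argument works for any Banach ring with totally disconnected spectrum. Yours is more hands-on and gives a quantitative statement: the root already exists over an explicit clopen neighbourhood of~\(x\), within norm distance of order \(\delta/c\) of~\(a\). The cost is importing the Gelfand-type results, which the paper never states.

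\textbf{Two small points to make explicit.}
\begin{itemize}
  \item The constant \(c\) must be fixed before \(\delta\) is chosen, for instance \(c=\lvert P'(a)\rvert_{x}/2\). Your phrasing ``for any \(\delta>0\) \dots{} yields a constant \(c\)'' reverses this order.
  \item The claim \(\lvert b-a\rvert_{x}=0\) needs an induction. From \(\lvert P(a_{n})\rvert_{x}=0\) you get \(\lvert a_{n+1}-a_{n}\rvert_{x}=0\), and hence \(\lvert P(a_{n+1})\rvert_{x}=0\), since \(\lvert\X\rvert_{x}\) factors through \(A/\ker\lvert\X\rvert_{x}\). Then pass to the limit using \(\lvert\X\rvert_{x}\leq\lVert\X\rVert\).
\end{itemize}
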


\begin{proof}
It suffices to show that
  \(A_{x}[T]/P\) for any monic polynomial~\(P\)
  is a finite product of local rings.
  By spreading out,
  we assume that \(P\) is defined over~\(A\).
  We take~\(r\gg0\)
  so that \(A[T]_{r}/P\) is Banach
  by \cref{ban_mg}.
  Since \(\Sp(A[T]_{r}/P)\to\Sp(A)\)
  has finite fibers,
  \(A[T]_{r}/P\) has totally disconnected spectrum.
  Then by \cref{td_sp} again,
  \(A_{x}[T]/P\) becomes
  the product of local rings at points
  of \(\Sp(A[T]_{r}/P)\) lying over~\(x\).
\end{proof}

\begin{lemma}\label{td_pu}
  Let \(A\) be a totally disconnected.
  Then there is a morphism
  \(\ZZ\{T,T^{-1}\}_{s,\sfrac1r}\to A\)
  for some \(0<r\leq s<1\).
\end{lemma}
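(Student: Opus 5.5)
The plan is to exhibit a unit \(a\in A\) with \(r\leq\lvert a\rvert_{y}\leq s\) for every \(y\in\Sp(A)\), for some \(0<r\leq s<1\). We then read off the bounds \(\lvert a\rvert\leq s\) and \(\lvert a^{-1}\rvert\leq\sfrac1r\) from uniformity. Such an \(a\) gives a bounded map \(\ZZ[T,T^{-1}]_{s,\sfrac1r}\to A\) sending \(T\mapsto a\). Boundedness uses submultiplicativity and \(\lvert n\cdot1\rvert\leq\lvert n\rvert\), which follows from \(\lvert1\rvert\leq1\) and the triangle inequality. Since \(A\) is a uniform Banach ring, the map factors through the uniform completion \(\ZZ\{T,T^{-1}\}_{s,\sfrac1r}\).

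\emph{Local step.} Fix \(x\in\Sp(A)\). By \cref{td_sp}, \(\ker\lvert\X\rvert_{x}\) is a maximal ideal \(\idl{m}\), so \(\kappa(x)=A/\idl{m}\) is a quotient of~\(A\). Moreover, \(\kappa(x)\) is dense in~\(K(x)\). Since \(K(x)\) is nondiscrete, its norm is nontrivial. Hence there is \(b\in K(x)\) with \(\lvert b\rvert\notin\{0,1\}\), and after replacing \(b\) by \(b^{-1}\) we may assume \(0<\lvert b\rvert<1\). Approximating \(b\) by an element of \(\kappa(x)\) and lifting to~\(A\), we get \(a_{x}\in A\) with \(0<\lvert a_{x}\rvert_{x}<1\). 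The function \(y\mapsto\lvert a_{x}\rvert_{y}\) is continuous on \(\Sp(A)\). We therefore choose \(0<r_{x}<s_{x}<1\) and an open neighborhood of~\(x\) on which \(\lvert a_{x}\rvert_{y}\in(r_{x},s_{x})\). Since \(\Sp(A)\) is compact Hausdorff and totally disconnected, we can shrink this neighborhood to a clopen one.

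\emph{Gluing.} By compactness, finitely many of these clopens cover \(\Sp(A)\). We refine them to a finite clopen partition \(U_{1},\dotsc,U_{m}\), where \(U_{i}\) carries the element \(a_{i}\) with bounds \((r_{i},s_{i})\). By \cref{ban_idem}, the partition corresponds to orthogonal idempotents \(e_{1},\dotsc,e_{m}\) summing to~\(1\). We put \(a=\sum_{i}e_{i}a_{i}\), \(r=\min_{i}r_{i}\), and \(s=\max_{i}s_{i}\). A multiplicative seminorm takes the value \(1\) on exactly one \(e_{i}\) and \(0\) on the rest, and that \(i\) is the one with \(y\in U_{i}\). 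Hence \(\lvert a\rvert_{y}=\lvert a_{i}\rvert_{y}\in(r,s)\) for \(y\in U_{i}\).

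\emph{Bounds; the main obstacle.} The remaining step, which I expect to need the most care, is passing from pointwise bounds to bounds on the actual norm and to invertibility. I will use two standard Berkovich facts.
\begin{itemize}
  \item For a uniform Banach ring, \(\lvert c\rvert=\sup_{y\in\Sp(A)}\lvert c\rvert_{y}\), because the norm agrees with the spectral radius.
  \item An element of a Banach ring that does not vanish at any point of \(\Sp(A)\) is invertible. Otherwise it lies in a maximal ideal, and every maximal ideal is closed and is the kernel of some point of the spectrum.
\end{itemize}
Since \(\lvert a\rvert_{y}>r>0\) for all~\(y\), the second fact shows \(a\) is invertible. The first fact then gives \(\lvert a\rvert\leq s\) and \(\lvert a^{-1}\rvert=\sup_{y}\lvert a\rvert_{y}^{-1}\leq\sfrac1r\). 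Together with the first paragraph, this finishes the argument.
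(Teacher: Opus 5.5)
Your proof is correct and follows the same route as the paper's. At each point you find an element of \(\kappa(x)\) with norm in \((0,1)\), spread it out to a clopen neighborhood, and glue to a global \(\pi\in A\) whose norm function lands in \((0,1)\); compactness then gives \([r,s]\). The only difference is cosmetic: the paper spreads out \(\pi^{-1}\) algebraically from the local ring, whereas you get invertibility from the Banach-ring fact that pointwise nonvanishing elements are units, and you make explicit the identity \(\lvert c\rvert=\sup_{y}\lvert c\rvert_{y}\) for uniform rings, which the paper uses tacitly.
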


\begin{proof}
  We proceed locally.
  So we have~\(A_{x}\to\kappa(x)\to K(x)\).
  If the completion is nondiscrete,
  then \(\kappa(x)\) is also nondiscrete.
  Therefore, there exists a uniformizer.
  Then we can spread out this and its inverse to
  elements~\(\pi\) and~\(\pi^{-1}\in A\).

  By this,
  we obtain an element \(\pi\in A\)
  such that the map
  whose norm factors through
  \((0,1)\subset[0,\infty]\).
  We choose a subset \([r,s]\) containing its image,
  and the desired result follows.
\end{proof}

We record some facts about
finite étale maps
over totally disconnecteds:

\begin{proposition}\label{xusjrg}
Let \(A\) be a totally disconnected
  and \(x\in A\) a point.
  Consider a finite separable extension~\(L(x)\) over~\(K(x)\).
  Then there is a finite étale map \(A\to B\)
  (cf.~\cref{xhll2q})
  that induces a homeomorphism on spectra
  such that \(L(x)\) is the base change of~\(B\)
  along \(A\to K(x)\).
\end{proposition}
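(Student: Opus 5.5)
Write \(K=K(x)\) and fix a finite separable extension \(L(x)/K\). The plan is to realize \(L(x)\) by a monic separable polynomial, spread that polynomial out to a neighborhood of \(x\), and take a Banach quotient of a radius-bounded polynomial ring there, so that \cref{ban_mg} applies.

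\emph{Step 1: choose the polynomial.} By the primitive element theorem, \(L(x)=K[T]/\bar P\) for a monic separable \(\bar P\). The image of \(\kappa(x)\) is dense in \(K\). Roots of separable polynomials depend continuously on the coefficients (Krasner's lemma in the nonarchimedean case; in the archimedean case \(K\) is \(\RR\) or \(\CC\), so only \(\RR\subset\CC\) occurs). Hence we may perturb \(\bar P\) to a monic polynomial with coefficients in \(\kappa(x)\) defining the same extension. Since \(\kappa(x)\) is the residue field of the henselian local ring \(A_{x}\) (\cref{xibpba}), we lift these coefficients to \(A_{x}\), and then to \(A\) after passing to a neighborhood. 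This gives a monic \(P\in A[T]\) with discriminant invertible at \(x\) and \(P\otimes K=\bar P\).

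\emph{Step 2: localize.} Since \(\Sp(A)\) is totally disconnected, clopen subsets correspond to idempotents by \cref{ban_idem}. So we can pass to a clopen neighborhood \(U\ni x\), i.e.\ a direct factor \(A_{U}\) of \(A\), on which the discriminant of \(P\) is invertible; this uses openness of the nonvanishing locus and \cref{td_sp}. The map on the complementary factor is taken to be the identity.

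\emph{Step 3: build \(B\).} Over \(A_{U}\), set \(B_{U}=A_{U}[T]_{r}/P\) with \(r\) large enough that \cref{ban_mg} applies. Then \(B_{U}\) is Banach, and its underlying ring is the finite étale \(A_{U}\)-algebra \(A_{U}[T]/P\). At \(x\) its fiber is \(K[T]/\bar P=L(x)\); the norm is irrelevant here because finite-dimensional Banach spaces over \(K\) carry equivalent norms. It remains to see that \(B_{U}\) is uniform, or can be replaced by its uniform completion without changing the ring. I would check uniformity pointwise: \(\Sp(B_{U})\to\Sp(A_{U})\) has finite fibers, and over each point the fiber is a finite étale algebra over a nondiscrete field, so the spectral seminorm is equivalent to the given one, uniformly for \(r\) large.

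\emph{Step 4: the homeomorphism requirement.} In general \(\Sp(B_{U})\to\Sp(A_{U})\) is a finite branched cover, not a homeomorphism. So after Step 3 I would shrink \(U\) further to a clopen neighborhood of \(x\) on which the fiber over every point is a single point. The plan is to use total disconnectedness to separate points of the fiber by clopen subsets, combined with the fact that the fiber over \(x\) is a single point because \(L(x)\) is a field. The tool would be \cref{ban_idem} applied to \(B_{U}\): idempotents of \(B_{U}\) cut the fiber into clopen pieces, and near \(x\) only one piece lies over \(U\). Properness then gives a continuous bijection of compact Hausdorff spaces, hence a homeomorphism.

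I expect the main obstacle to be Step 4. Nearby points may split in \(B_{U}\) even though \(x\) does not, so one needs the local structure around \(x\) to make the number of fiber points locally constant. The natural source is henselianity of \(A_{x}\), which lets the factorization of \(P\) be lifted near \(x\), together with the continuity of root counts over the residue fields. Establishing uniformity of \(B_{U}\) in Step 3 is the secondary difficulty.
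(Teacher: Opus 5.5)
There is a genuine gap, and it is where you expected: Step~4 cannot be carried out, because the homeomorphism requirement fails in general. Take the ring from \cref{xpqmfj}, \(A=\{f\in\Cls{C}(\NN\cup\{\infty\};\CC)\mid f(\infty)\in\RR\}\), with the sup norm. One checks directly that \(\Sp(A)=\NN\cup\{\infty\}\), with \(\kappa(n)=K(n)=\CC\) and \(\kappa(\infty)=K(\infty)=\RR\), so \(A\) is a totally disconnected. Let \(x=\infty\) and \(L(x)=\CC\). Suppose \(A\to B\) is finite étale with \(B\otimes_{A}\RR\simeq\CC\). The rank of~\(B\) is locally constant on \(\Spec A\), so it equals~\(2\) on the clopen subset cut out by an idempotent~\(e\) with \(e(\infty)=1\); that is, \(e=1\) on \(\{n\geq N\}\cup\{\infty\}\). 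For \(n\geq N\), let \(e_{n}\) be the indicator of~\(\{n\}\). The direct factor \(Be_{n}=B\otimes_{A}\CC\) is then \(\CC\times\CC\), so \(\Sp(B)\) has two points over~\(n\). Hence \emph{no} \(B\) as in the statement exists. The same happens nonarchimedeanly with \(\CC_{p}\), \(\QQ_{p}\), and \(L(x)=\QQ_{p}(\sqrt{p})\).

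The example also shows why your proposed mechanism cannot work. Henselianity of \(A_{x}=\injlim_{e(x)=1}Ae\) only tells you that \(B\otimes_{A}A_{x}\) is connected. Equivalently, each idempotent of \(Be\) becomes \(0\) or \(1\) after shrinking~\(e\). It does not give one clopen neighborhood that works for all idempotents at once. Here \(A[T]/(T^{2}+1)\) has infinitely many idempotents \(e_{n}(1\mp iT)/2\) (note \(ie_{n}\in A\)), supported at points accumulating at~\(x\). Likewise, root counts are not continuous in~\(y\), since \(\kappa(y)\) at nearby points can be much larger than \(\kappa(x)\).

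The paper's proof follows the same route as your Steps~1--3: it lifts \(L(x)\) to \(\kappa(x)\) by \cref{xbjwxs}, then to \(A_{x}\) via \cref{xibpba}, spreads out along an idempotent, and equips the result with the structure from \cref{xhll2q}. It then simply asserts that the spread-out algebra induces a homeomorphism on spectra, which is exactly what fails above. What your Steps~1--3 do prove is the weaker statement: there is a finite étale \(A\to B\), trivial away from a clopen neighborhood of~\(x\), with \(B\otimes_{A}K(x)\simeq L(x)\). In particular, exactly one point of \(\Sp(B)\) lies over~\(x\). Incidentally, you need not fight for uniformity in Step~3: \cref{xhll2q} already gives any finite étale \(A\)-algebra a canonical totally disconnected structure.
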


We first
confirm
that
finite étale algebras
over totally disconnecteds
admit a unique structure of a totally disconnected:

\begin{lemma}\label{xhll2q}
  Let \(A\) be a totally disconnected.
  The forgetful functor
  \(\Cat{TDis}_{A/}\to\Cat{Ring}_{A/}\)
  is an equivalence of categories
  when we restrict to the full subcategory
  of finite étale algebras on the target.
\end{lemma}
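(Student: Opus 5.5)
We have to show two things. First, every finite étale \(A\)-algebra \(B\) carries a totally disconnected structure. Second, that structure is unique, and ring maps of finite étale \(A\)-algebras are automatically norm-nonincreasing. The underlying statement \(\Cat{TDis}_{A/}\to\Cat{Ring}_{A/}\) means: the forgetful functor restricted to those objects whose underlying \(A\)-algebra is finite étale is fully faithful, with essential image all finite étale algebras.

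\emph{Existence.} Zariski locally on \(\Spec A\) (using \cref{td_sp} and \cref{ban_idem}, so that clopen decompositions of \(\Sp(A)\) correspond to idempotents of \(A\)), \(B\) is a standard étale algebra \(A[T]/P\) with \(P\) monic and \(P'\) invertible. Since \(\Sp(A)\) is compact, finitely many such pieces suffice, and they can be refined to a clopen partition. So we may assume \(B=A[T]/P\).
\begin{itemize}
  \item By \cref{ban_mg}, for \(r\gg0\) the seminormed ring \(A[T]_{r}/P\) is Banach.
  \item Set \(B'=(A[T]_{r}/P)^{\ucpl}\). By \cref{xfdud8}, \(\Sp(B')=\Sp(A[T]_{r}/P)\), which has finite fibers over \(\Sp(A)\) and is therefore totally disconnected.
  \item Its residue fields are finite separable extensions of the nondiscrete fields \(K(x)\), hence nondiscrete.
\end{itemize}
The key point is that \(A[T]_{r}/P\to B'\) is an isomorphism of rings; this is the main obstacle, given \cref{xujhcs}. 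The plan is to exploit separability. The spectral seminorm on \(A[T]/P\) is the supremum over \(x\in\Sp(A)\) of the sup over the points of \(K(x)[T]/P\), and each \(K(x)[T]/P\) is a finite product of fields. On such a product the spectral norm is a norm equivalent to any other, with constants controlled by the discriminant of \(P\). Since \(P'\) is invertible and \(\Sp(A)\) is compact, these constants are uniformly bounded. Hence the spectral seminorm is equivalent to the quotient norm, so the uniform completion does not change the underlying ring. We then take \(B'\) with the spectral norm as the totally disconnected structure; the independence of the choice of \(r\) is clear.

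\emph{Full faithfulness and uniqueness.} For uniform Banach rings the norm is the spectral (sup over \(\Sp\)) norm. So, given totally disconnected \(C\) and \(B\) over \(A\) with \(B\) finite étale, we must show that any \(A\)-algebra map \(B\to C\) is contractive. Every element \(b\in B\) is a root of its characteristic polynomial over \(A\). At each point \(y\in\Sp(C)\), the value \(\lvert b\rvert_{y}\) is the absolute value of a root of that polynomial in \(K(y)\). That root is one of the values \(\lvert b\rvert_{z}\) for \(z\in\Sp(B)\) over the image of \(y\) in \(\Sp(A)\), using uniqueness of extensions of absolute values to finite extensions of complete fields. Hence \(\lvert b\rvert_{C}\le\lvert b\rvert_{B}\). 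Applied to the identity map of \(B\) with two candidate structures, the same argument gives uniqueness of the norm, which completes the equivalence.
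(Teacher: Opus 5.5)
Your proposal is essentially correct. Its second half (uniqueness and full faithfulness) is the paper's argument. Over each \(x\in\Sp(A)\), the fibre \(B(x)=K(x)\otimes_{A}B\) is a finite product of finite separable extensions of the complete field \(K(x)\), on which \(\lvert\X\rvert_{x}\) extends uniquely, and a uniform norm is the supremum over the spectrum.

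The difference is in existence. The paper takes \(b\mapsto\sup_{x}\lvert b\rvert_{x}\) directly as the norm. It justifies finiteness of the supremum by identifying it with \(\lvert\Nm_{B/A}(b)\rvert^{1/d}\), which is literally correct only when the fibres \(B(x)\) are fields (on a nontrivial idempotent the norm form gives \(0\), not \(1\)). It then calls the Banach property clear. You instead reduce, via \cref{td_sp} and \cref{ban_idem}, to monogenic pieces \(A[T]/P\), and obtain the structure as the uniform completion of the Banach ring \(A[T]_{r}/P\) of \cref{ban_mg}. A Vandermonde/discriminant estimate then shows the spectral seminorm is equivalent to the complete quotient norm. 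The estimate is uniform because \(\operatorname{disc}(P)\in A^{\times}\) and the roots are bounded by the coefficients of \(P\). Your route costs a local reduction, but it actually proves that the supremum is finite, is a norm, and is complete. The paper's route is shorter and coordinate-free.

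One step needs an argument in your version, as it tacitly does in the paper's (``the norm on \(B\) must be spectral, so the only possible candidate is\dots''). For uniqueness, and hence for full faithfulness, you apply the contractivity argument with an \emph{arbitrary} totally disconnected structure on \(B\) as the source. The claim ``that root is one of the values \(\lvert b\rvert_{z}\) for \(z\in\Sp(B)\)'' then requires every factor of \(B(x)\) to give a point of \(\Sp(B)\) over \(x\). For your canonical structure this holds once \(r\) exceeds the root bound, but for a general structure it must be proved.

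It is true, and can be shown as follows:
\begin{itemize}
\item The quotient norm on \(A/\ker\lvert\X\rvert_{x}\) equals \(\lvert\X\rvert_{x}\): multiply by the idempotents \(e_{U}\) of small clopen neighbourhoods \(U\ni x\). Hence \(\kappa(x)=K(x)\).
\item Since \(A_{x}\) is henselian (\cref{xibpba}), each factor of \(B(x)\) comes from an idempotent \(\varepsilon\) of \(Be_{U}\) for some clopen \(U\ni x\).
\item Suppose the corresponding point were missing from \(\Sp(B)\). Then the clopen locus \(\{\lvert\varepsilon\rvert=1\}\subset\Sp(B)\) would have compact image in \(\Sp(A)\) not containing \(x\).
\item So for a smaller clopen \(U'\ni x\), the element \(\varepsilon e_{U'}\) would vanish at every point of \(\Sp(B)\), and hence be \(0\) by uniformity. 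But its image in \(B(x)\) is nonzero, a contradiction.
\end{itemize}
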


In the following proof,
we see an explicit formula for the norm:

\begin{proof}
  What we show here
  is that any finite étale algebra~\(B\) over~\(A\)
  admits a unique structure of a totally disconnected
  so that \(A\to B\) is a morphism of totally disconnecteds;
  the desired equivalence
  follows from the proof.

  We first consider a finite separable extension
  of nondiscrete Banach fields \(A\to B\).
  The archimedean case is straightforward.
  The nonarchimedean case
  follows from~\cite[Theorem~3.2.4.2]{BoschGuntzerRemmert84}.
  By uniqueness,
  the norm on~\(B\)
  must be given by \(\lvert\Nm_{B/A}(\X)\rvert^{\sfrac1d}\),
  where \(\Nm_{B/A}\) is the norm map \(B\to A\)
  and \(d\) is the degree.

  We go back to the general statement.
  For \(x\in\Sp(A)\),
  we write \(B(x)\),
  for the (algebraic) tensor product \(K(x)\otimes_{A}B\).
  This is a finite product of finite separable extensions of~\(K(x)\),
  and hence by the argument above,
  admits a unique norm that makes \(K(x)\to B(x)\)
  a morphism of uniform Banach rings.
  The norm on~\(B\) must be spectral,
  so the only possible candidate
  is the map sending \(b\in B\) 
  to~\(\sup_{x\in\Sp(A)}\lvert b\rvert_{x}\),
  where
  we write~\(\lvert\X\rvert_{x}\)
  for the norm on~\(B(x)\).
  The problem is the existence of this supremum,
  but since it coincides
  with \(\lvert\Nm_{B/A}(\X)\rvert^{\sfrac1d}\),
  where \(d\) is the degree (as a locally constant function),
  it is well defined.
  The fact that this function
  gives~\(B\) a structure of a totally disconnected
  is clear from the description via supremum.
\end{proof}

\begin{lemma}\label{xbjwxs}
  Let \(\kappa\to K\)
  be the completion morphism of a normed field~\(\kappa\).
  Then any separable finite extension of~\(K\)
  lifts to a separable finite extension of~\(\kappa\).
\end{lemma}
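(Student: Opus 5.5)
The plan is to use Krasner's lemma together with the density of \(\kappa\) in its completion~\(K\). Let \(L\) be a separable finite extension of~\(K\). By the primitive element theorem, \(L=K(\alpha)\), where \(\alpha\) has a monic separable minimal polynomial \(P\in K[T]\) of degree~\(n\). Since \(\kappa\) is dense in~\(K\), we can choose a monic polynomial \(Q\in\kappa[T]\) whose coefficients are arbitrarily close to those of~\(P\). We want to show that, once \(Q\) is close enough, two things hold. First, \(Q\) is separable and irreducible over~\(K\). Second, \(K[T]/Q\simeq L\). Granting this, \(\kappa'=\kappa[T]/Q\) is a separable finite field extension of~\(\kappa\) (irreducibility over~\(K\) implies irreducibility over~\(\kappa\)), and
\[
\kappa'\otimes_{\kappa}K\simeq K[T]/Q\simeq L,
\]
which is the required lift. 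If the paper's notion of lifting also demands that the completion of \(\kappa'\) be~\(L\), this follows automatically: \(\kappa'\) is dense in \(\kappa'\otimes_{\kappa}K\), because \(\kappa\) is dense in~\(K\) and the tensor product is finite free over~\(K\).

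\emph{Archimedean case.} By Ostrowski, \(K\) is \(\RR\) or~\(\CC\). If \(K=\CC\), or if \(K=\RR\) and \(L=\RR\), there is nothing to prove. In the remaining case \(K=\RR\), \(L=\CC\), take \(\kappa'=\kappa[T]/(T^{2}+1)\). This is a field since \(\kappa\subset\RR\), and \(\kappa'\otimes_{\kappa}\RR\simeq\CC\).

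\emph{Nonarchimedean case.} Here we use continuity of roots. The roots \(\alpha_{1},\dotsc,\alpha_{n}\) of \(P\) in an algebraic closure are distinct. If \(Q\) is close enough to~\(P\), then for each root \(\beta\) of~\(Q\) the value \(\lvert P(\beta)\rvert=\lvert P(\beta)-Q(\beta)\rvert\) is small. Since \(\lvert P(\beta)\rvert=\prod_{i}\lvert\beta-\alpha_{i}\rvert\), the root \(\beta\) is then closer to some \(\alpha_{i}\) than the Krasner radius \(\min_{j\neq i}\lvert\alpha_{i}-\alpha_{j}\rvert\). One must also check that the roots of \(Q\) are distributed bijectively among the \(\alpha_{i}\), which gives separability of~\(Q\). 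Krasner's lemma then yields \(K(\alpha_{i})\subset K(\beta)\). Since \([K(\beta):K]\leq n\), we conclude \(K(\beta)=K(\alpha_{i})\simeq L\) and that \(Q\) is irreducible over~\(K\). This is the standard argument found, for example, in~\cite{BoschGuntzerRemmert84}, and I would cite it there rather than redo the estimates.

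I expect the main obstacle to be this continuity-of-roots estimate: showing that each \(\alpha_{i}\) attracts exactly one root of~\(Q\), and not merely that every root of \(Q\) is near some~\(\alpha_{i}\). For a lighter route, I would avoid the bijection entirely. Fix one root \(\beta\) of~\(Q\) close to \(\alpha=\alpha_{1}\); such a root exists by applying the same product estimate to \(Q(\alpha)=Q(\alpha)-P(\alpha)\). Krasner then gives \(K(\alpha)\subset K(\beta)\), and the degree bound \([K(\beta):K]\leq n\) forces equality, so \(Q\) is irreducible over~\(K\). Separability of \(Q\) follows because its discriminant is close to the nonzero discriminant of~\(P\).
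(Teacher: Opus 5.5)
Your proposal is correct and follows essentially the same route as the paper: the archimedean case reduces via Ostrowski to \(\RR\to\CC\), which \(T^{2}+1\) handles, and the nonarchimedean case is exactly the Krasner/continuity-of-roots result that the paper cites as \cite[Proposition~3.4.2.5]{BoschGuntzerRemmert84}. Your ``lighter route'' is enough on its own: pick one root \(\beta\) of \(Q\) near \(\alpha\), apply Krasner, then use the degree bound; separability of \(Q\) then also follows directly from \(K(\beta)=K(\alpha)\) being separable over \(K\).
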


\begin{proof}
  In the archimedean case,
  we need to consider \(K=\RR\to\CC\) only,
  and the desired result is clear.
  The nonarchimedean case
  follows from~\cite[Proposition~3.4.2.5]{BoschGuntzerRemmert84}.
\end{proof}

\begin{proof}[Proof of \cref{xusjrg}]
  We consider the maps \(A_{x}\to\kappa(x)\to K(x)\).
  By \cref{xibpba,xbjwxs},
  the finite étale algebra
  \(B(x)\) lifts to
  a finite étale algebra over~\(A_{x}\),
  and hence to a finite étale algebra~\(B'\) over~\(A[e^{-1}]\)
  that induces a homeomorphism on spectra
  for some idempotent~\(e\) that becomes invertible in~\(K(x)\).
  Then \(B'\times A/e\) is the desired \(A\)-algebra.
\end{proof}

We introduce
a class of totally disconnecteds:

\begin{definition}\label{xjfa0h}
  We call a (strictly) totally disconnected \emph{light}
  if it is \(\aleph_{1}\)-compact as an object of \(\Cat{URing}\).
  We write \(\Cat{TDis}_{\lgt}\) and \(\Cat{STDis}_{\lgt}\)
  for the corresponding full subcategories.
\end{definition}

The following bound is important later:

\begin{proposition}\label{xxvdc1}
  There are countably many isomorphism classes
  of finite étale algebras
  over a light totally disconnected.
\end{proposition}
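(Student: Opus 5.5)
The plan is to show that every finite étale \(A\)-algebra is built from a countable amount of data: a finite clopen partition of \(\Sp(A)\), and on each piece a monic polynomial whose coefficients lie in a fixed countable dense subring of~\(A\).

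First I would record the countability inputs coming from lightness.
\begin{itemize}
  \item By \cref{xwafwj} with \(\kappa=\aleph_{1}\), the space \(\Sp(A)\) is compact Hausdorff, totally disconnected and of countable weight. Hence it is a light profinite set and has only countably many clopen subsets.
  \item As in the proof of \cref{xwafwj}, \(A\) is a quotient of the uniform completion of some \(\ZZ[T_{i}\mid i\in I]_{r_{i}}\) with \(I\) countable. The image \(D\subset A\) of the polynomial ring is therefore a countable dense subring.
  \item By \cref{ban_idem}, clopen subsets of \(\Sp(A)\) correspond to idempotents of~\(A\), so \(A\) has only countably many idempotents.
\end{itemize}

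Next I would show that any finite étale \(A\to B\) becomes standard étale on a finite clopen partition. Fix \(x\in\Sp(A)\). By \cref{xibpba} the local ring \(A_{x}\) is henselian. Hence \(A_{x}\otimes_{A}B\) is a finite product of finite étale local \(A_{x}\)-algebras. Each factor is monogenic, so it has the form \(A_{x}[T]/P\) with \(P\) monic and \(P'\) a unit. This uses that finite étale local algebras over a henselian local ring are determined by their residue extensions, which are separable and hence have a primitive element.

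Spreading out exactly as in the proof of \cref{xusjrg}, I get an idempotent \(e\) that is invertible at~\(x\) such that \(B[e^{-1}]\) is a finite product of \(A[e^{-1}][T]/P_{j}\) with \(P_{j}'\) invertible. Here \cref{td_sp} is what converts Zariski neighbourhoods of the closed point~\(x\) into clopen ones. Compactness of \(\Sp(A)\) then gives a finite clopen partition of \(\Sp(A)\) on each piece of which \(B\) is a finite product of standard étale algebras.

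Finally I would replace each \(P\) by a polynomial with coefficients in~\(D\), and I expect this perturbation (Krasner-type) step to be the main obstacle. The claim to prove is the following. Let \(P\) be monic over a Banach ring~\(A\) with \(P'\) invertible in \(A[T]/P\). Then every monic \(Q\) of the same degree whose coefficients are sufficiently close to those of~\(P\) satisfies \(A[T]/Q\simeq A[T]/P\) as \(A\)-algebras.

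To prove it I would put a Banach norm on \(B=A[T]_{r}/P\) for \(r\gg0\) using \cref{ban_mg}. I would then run Newton's iteration for \(Q\) in~\(B\), starting at the image of~\(T\); this converges because \(Q(T)=(Q-P)(T)\) is small and \(P'(T)^{-1}\) is bounded. The resulting root gives an \(A\)-algebra map \(A[T]/Q\to B\). Both sides are finite free of the same rank, and the map is an isomorphism, which I would check modulo each point via the residue fields and then conclude by Nakayama and \cref{td_sp}, or directly by the determinant of a matrix close to the identity. By density of~\(D\), every standard étale piece is then isomorphic to one defined by a polynomial over~\(D\).

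Counting, \(B\) is determined up to isomorphism by a finite clopen partition of \(\Sp(A)\) together with finitely many monic polynomials over the countable ring~\(D\). This gives only countably many isomorphism classes.
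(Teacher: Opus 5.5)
Your proposal is correct and follows essentially the paper's route. You reduce, via a finite clopen partition (countably many by \cref{xwafwj}), to monogenic étale algebras \(A[T]/P\), then use Newton's method to show the isomorphism class is locally constant in the coefficients, with countability coming from separability of~\(A\). The differences are cosmetic: you use a countable dense subring where the paper uses the Lindelöf property of the coefficient space, and you spell out the henselian/spreading-out and determinant steps that the paper leaves implicit (the paper also first reduces to the purely algebraic count via \cref{xhll2q}).
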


\begin{proof}
The question is reduced to the algebraic case by \cref{xhll2q}.
  Each finite étale algebra~\(B\)
  determines a morphism
  \(\Sp(B)\to\Sp(A)\)
  and there is a partition
  of~\(A\) into finitely many components
  such that
  \(B\) is a free module of finite rank
  on each component.
  Moreover,
  it is a finite product of monogenic ones.
  Therefore,
  by \cref{xwafwj},
  it suffices to show that
  there are countably many isomorphism classes
  of étale \(A\)-algebras of the form
  \(A[T]/(T^{n}+\dotsb+a_{0})\).

  We view~\(A\) as a topological space
  by considering the metric induced by the norm.
  Since it is \(\aleph_{1}\)-compact
  as a uniform Banach ring,
  \(A\) contains a countable dense subset.
  Thus, it is Lindelöf.

  We consider
  a separable polynomial \(P=T^{n}+\dotsb+a_{0}\).
  We wish to find a neighborhood
  of~\(P\) in the space of monic polynomials
  of degree~\(n\) (which is isomorphic to~\(A^{n}\),
  which is Lindelöf)
  such that \(A[T]/P\simeq A[T]/Q\)
  when \(Q\) is contained in this neighborhood.
  To construct such an isomorphism,
  we need to find
  an element \(\alpha\in A[T]/P\)
  close enough to~\(T\)
  satisfying \(Q(\alpha)=0\);
  the proximity guarantees 
  the map is an isomorphism.
  This is possible using the Newton method.
\end{proof}

\subsection{Arc~topology}\label{ss:arc}

\begin{definition}[Scholze]\label{x0p547}
  The arc~topology on \(\Cat{URing}^{\op}\)
  is the Grothendieck topology
  generated by surjections on Berkovich spectra
  and by finite disjoint unions.
  This is well defined by~\cref{i:sp_pb} of \cref{xfdud8}.
  An \emph{arc~stack}\footnote{This is called “small arc~stack” in~\cite{ScholzeB}.
    Our arc~stacks are always small.
  } is an accessible hypersheaf on \(\Cat{URing}^{\op}\).
\end{definition}

\begin{example}\label{xzxpzi}
  The morphism
  \(\ZZ\to\ZZ\{T,T^{-1}\}_{r,\sfrac1r}\)
  is an arc~cover for \(r>0\).
  This translates to the fact
  that for any Banach field~\(K\),
  there is an extension \(K\to K'\)
  of Banach fields
  such that \(K'\)
  contains an element~\(a\)
  with \(\lvert a\rvert=r\).
\end{example}

\begin{example}\label{x9up2q}
For an element~\(a\)
  of a uniform Banach ring~\(A\)
  and~\(r\geq0\),
  the uniform completions of
  the rings given in \cref{weilau} determine an arc~cover.
\end{example}

Note that strictly totally disconnecteds
constitute a nice basis:

\begin{theorem}[Scholze]\label{xsdudu}
  For any uniform Banach ring~\(A\),
  there is a strictly totally disconnected~\(B\)
  with an arc~cover \(A\to B\).
\end{theorem}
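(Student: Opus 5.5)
Given a uniform Banach ring \(A\), I will build \(B\) in two stages: first pass to a totally disconnected arc cover, then make all residue fields separably closed by adjoining finite étale extensions, and take a (uniform completion of a) filtered colimit. Arc covers are generated by surjections on Berkovich spectra, and \(\Sp\) commutes with filtered colimits and uniform completion by \cref{xfdud8}. So at every stage it suffices to check surjectivity on \(\Sp\).

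\emph{Totally disconnected.} Choose generators \(a_{i}\in A\). For each finite subset, and each dyadic rational \(r\), cover \(\Sp(A)\) using the Weierstraß and Laurent pieces \(A\{T\}_{r}/(T-a)\) and \(A\{T\}_{\sfrac1r}/(aT-1)\) of \cref{x9up2q}, and take their product. By \cref{weilau} the spectrum of this product is the disjoint union of the base changes of \(\Sp(A)\) along \([0,r]\) and \([r,\lvert a\rvert]\) inside \([0,\lvert a\rvert]\). Iterating over all \(a\) and all dyadic \(r\), and passing to the filtered colimit, gives a spectrum that is an inverse limit of spaces mapping surjectively onto \(\Sp(A)\). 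Each function \(\lvert a\rvert\) becomes locally constant up to arbitrarily fine dyadic intervals.

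Two points remain. First, I must adjoin an element of norm in \((0,1)\) via \cref{xzxpzi}, so that residue fields become nondiscrete (cf.~\cref{td_pu}). Second, I must iterate countably often, since newly introduced elements also need separating. After uniform completion, I expect the resulting spectrum to be totally disconnected: points are separated by the values of some \(\lvert a\rvert\), and each such value is now separated by clopens. This convergence check, together with a cardinality-bounded bookkeeping, is the main technical point of this stage.

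\emph{Strict.} Let \(A'\) be totally disconnected. By \cref{xusjrg}, for each point \(x\) and each finite separable extension of \(K(x)\), there is a finite étale \(A'\to B'\) inducing a homeomorphism on spectra and realizing that extension at \(x\). By \cref{xhll2q}, \(B'\) is again totally disconnected, and the map is surjective on \(\Sp\), hence an arc cover.

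I then take the filtered colimit over all finite étale \(A'\)-algebras with surjective spectrum map, for instance the colimit over finite tensor products. Each residue field of the colimit is a filtered colimit of finite separable extensions of \(K(x)\) exhausting a separable closure, so after completion it is separably closed, hence algebraically closed by \cref{xu2axd}. Total disconnectedness is preserved because the limit of totally disconnected compact spaces is totally disconnected, and residue fields stay nondiscrete.

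The step I expect to be the main obstacle is the first one. One must check that the countable iteration really produces a totally disconnected spectrum, and that the uniform completion does not collapse it; for the latter I would lean on \cref{xfdud8}~(\ref{i:sp_ucpl}). If the direct argument is awkward, I would instead first cover by a profinite set mapping onto \(\Sp(A)\), as in \cref{x1lqb7}, realized through the clopen decompositions given by \cref{ban_idem}.
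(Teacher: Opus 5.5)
Your proposal is correct and follows the paper's proof. The paper first makes residue fields nondiscrete via \cref{xzxpzi}, then passes to a totally disconnected arc cover by pulling back dyadic (Cantor-type) closed covers of \([0,\lvert a\rvert]\) through Weierstraß and Laurent domains for every \(a\in A\), and finally takes the colimit of all finite étale covers, made totally disconnected by \cref{xhll2q}; the one difference is that the paper checks separable closedness with the algebraic \cref{clean_cov} instead of your \cref{xusjrg}, and both work. Your worry about iterating is unnecessary if you adjoin the element of norm in \((0,1)\) first and use all \(a\in A\) rather than generators: since \(\Sp(A)\hookrightarrow\prod_{a}[0,\lvert a\rvert]\), the resulting spectrum embeds into a product of Cantor-type profinite sets, so a single pass suffices.
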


\begin{theorem}[Scholze]\label{xhgzg6}
  Strictly totally disconnecteds are (hyper)subcanonical
  with respect to the arc~topology.
\end{theorem}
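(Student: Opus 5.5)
Since normed rings are static, every mapping anima \(\Map_{\Cat{URing}}(B,A)\) is a set. So the presheaf \(h_{B}=\Map_{\Cat{URing}}(B,\X)\) on \(\Cat{URing}^{\op}\) is \(0\)-truncated. I will first reduce the hypersheaf condition to the Čech condition: a \(0\)-truncated presheaf is a hypersheaf as soon as it is a sheaf, because the totalization of a cosimplicial set only sees its first two terms. Using \cref{x0p547}, the topology is generated by finite disjoint unions and by maps \(A\to A'\) that are surjective on Berkovich spectra. Compatibility with finite disjoint unions is immediate, since \(\Cat{URing}\) has finite products, which correspond to disjoint unions of spectra. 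It therefore remains to show the following: for such an \(A\to A'\), with \(A''=(A'\otimes_{A}A')^{\ucpl}\), the sequence
\begin{equation*}
  \Map(B,A)\to\Map(B,A')\rightrightarrows\Map(B,A'')
\end{equation*}
is an equalizer whenever \(B\) is strictly totally disconnected.

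Write \(E\subset A'\) for the set-theoretic equalizer of the two maps \(A'\rightrightarrows A''\), with the norm restricted from~\(A'\). This is a uniform Banach ring and is the equalizer in \(\Cat{URing}\), so the target of the sequence above is \(\Map(B,E)\).

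The first step is to show that \(A\to E\) is an isometric embedding. For a uniform Banach ring the norm is the supremum norm over \(\Sp\). Surjectivity of \(\Sp(A')\to\Sp(A)\) then shows that \(A\to A'\) is isometric, and in particular injective with closed image. Next I will show that \(\Sp(E)\to\Sp(A)\) is a homeomorphism and that, for each \(x\in\Sp(A)\), the residue field \(K(x)\) agrees with that of~\(E\). To do this I would use \cref{i:sp_pb} of \cref{xfdud8} to see that the two pullbacks of a point of \(\Sp(A')\) to \(\Sp(A'')\) agree over the same point of \(\Sp(A)\). Combined with descent for Banach fields (an element equalized in \(K'\hat\otimes_{K}K'\) lies in~\(K\)), this shows that elements of~\(E\) are pointwise in \(K(x)\).

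The main step, and the one I expect to be the real obstacle, is to show that every morphism \(B\to E\) factors through~\(A\). The difficulty is that \(A\to E\) need not be an isomorphism for general~\(A\), so this is exactly where strict total disconnectedness must be used. What I would try first is the following. By \cref{xsdudu}, choose a further arc cover of~\(A'\) by a strictly totally disconnected ring, so that we may assume \(A'\) is strictly totally disconnected. Then, for \(b\in B\), approximate its image in~\(E\) by elements of~\(A\), using that \(A\) is closed and isometrically embedded in~\(E\).

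The approximation should work locally at each point of the profinite set \(\Sp(B)\). There \(B_{x}\) is henselian by \cref{xibpba}, with algebraically closed residue field. Using \cref{xusjrg,xhll2q} together with the fact that idempotents correspond to clopens (\cref{ban_idem}), I expect to reduce to the case where \(b\) satisfies separable monic equations whose coefficients can be moved into~\(A\) up to small error. Compactness of \(\Sp(B)\) should then let me patch these finitely many local approximations with idempotents, giving \(\lvert b-a\rvert<\varepsilon\) for some \(a\in A\). If this works it proves the factorization, and the uniqueness of the factorization comes from injectivity of \(A\to E\).
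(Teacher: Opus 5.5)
The step you flag as the obstacle is the entire content of the statement, and the plan you give for it does not work. Your reduction to the equalizer condition and the isometry \(A\hookrightarrow E\) are fine; the problems start after that.

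First, your supporting claim that \(E\) has the residue fields of \(A\) by descent for Banach fields is false in characteristic~\(p\). Take \(K=\FF_{p}((t))\) and let \(K'\) be its completed perfection. The two maps \(K'\rightrightarrows(K'\otimes_{K}K')^{\ucpl}\) agree, because \(t^{1/p^{n}}\otimes1-1\otimes t^{1/p^{n}}\) is nilpotent. Hence \(E=K'\neq K\).

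Second, the local structure of \(B\) cannot help. Suppose \(\Sp(E)\) is connected, as happens for \(A\) a cusp and \(A'\) its normalization, where \(E=A'\neq A\). Then any \(g\colon B\to E\) maps \(\Sp(E)\) to a single point \(b\in\Sp(B)\). The idempotent of \(B\) attached to any clopen \(U\ni b\) maps to \(1\), so \(\lvert g(\beta)\rvert\leq\lvert\beta\rvert_{b}\), and \(g\) factors through the completed residue field \(K(b)\). So henselian local rings of \(B\), finite étale \(B\)-algebras and patching over \(\Sp(B)\) contribute nothing. What you would actually need is a global rigidity statement: an algebraically closed nondiscrete Banach field embedded in \(E\) must land in \(A\). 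Your sketch never addresses this. Elements of \(B\) satisfy no equations over \(A\), so there is no source for the approximants \(a\).

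The paper takes this result from Scholze without proof, and it uses it in a different form (arc sheaves on \(\Cat{STDis}\), the functor to \(\Cat{Stk}^{\vee}_{\lgt}\), \cref{x11ph0,xavdzj}). In that form, strict total disconnectedness sits on the covered ring: for an arc cover \(A\to A'\) with \(A\) strictly totally disconnected, \(A\to E\) is an isomorphism. Consequently \(B\mapsto\Map(X,B)\) is an arc sheaf on strictly totally disconnecteds for every uniform Banach ring \(X\).

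In that setting your approximation-and-patching idea does work, but it must be run on \(\Sp(A)\), not on \(\Sp(B)\).
\begin{enumerate}
  \item Fix \(f\in E\) and \(x\in\Sp(A)\). For \(y,y'\) over \(x\), a pair of embeddings of \(K(y)\) and \(K(y')\) into a common field over \(K(x)\) is a point of \(\Sp(A'')\).
  \item Hence \(f\) takes a single common value in \(K(x)\) over \(x\), provided descent holds for complete extensions of the algebraically closed field \(K(x)\). Perfectness kills the example above. A transcendental element is moved by a translation automorphism of its point on the Berkovich line over \(K(x)\), and this yields two embeddings of \(K(y)\) into a common field that differ on it.
  \item Since \(A\to K(x)\) has dense image (\cref{td_sp}), pick \(a\in A\) with \(\lvert a-f\rvert_{y}<\varepsilon\) on the fibre over \(x\).
  \item This bound persists over a clopen neighbourhood of \(x\), because \(\Sp(A')\to\Sp(A)\) is closed.
  \item Finitely many such \(a\) glue via idempotents (\cref{ban_idem}), giving \(a\in A\) with \(\lvert a-f\rvert_{A'}<\varepsilon\). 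Since \(A\) is closed in \(A'\), it follows that \(f\in A\).
\end{enumerate}
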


\begin{definition}\label{xavdzj}
  For a uniform Banach ring~\(A\),
  we write \(\Sparc(A)\) for the corresponding arc~stack.
\end{definition}

See~\cite[Theorem~3.13]{ScholzeB} for
the proof of \cref{xhgzg6}.
Since the proof of \cref{xsdudu} is important,
we recall it here.

\begin{remark}\label{xwo641}
  Note that the first proof of \cref{xsdudu}
  given in~\cite{ScholzeB}
  relied on~\cite[Proposition~3.11\,(i)]{ScholzeB},
  which is incorrect as stated:
  Fix a prime~\(p\)
  and consider the product~\(A\)
  of~\(\QQ_{p}\) with norms \(\lvert p\rvert=p^{-n}\)
  for \(n\geq1\).
  Then \(\Sp(A)\) is indeed totally disconnected,
  but \(A\) is not analytic.
  A similar counterexample
  exists for~\cite[Proposition~3.11~(iv)]{ScholzeB} as well.
\end{remark}

We use the following observation:

\begin{lemma}\label{clean_cov}
  Let \(A\) be a ring such
  that every connected component of \(\Spec A\)
  admits a unique closed point
  and the local ring at each closed point is henselian.
  Then there is a filtered colimit~\(B\) of finite étale covers of~\(A\)
  such that
  \(B\) satisfies the same condition as~\(A\)
  and the residue fields at closed points of~\(B\) are separably closed.
\end{lemma}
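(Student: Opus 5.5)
The plan is to build \(B\) as a strict henselization at all closed points simultaneously, by taking the colimit of all finite étale covers that are ``split enough'' to keep the structure of connected components. Concretely, consider the category \(I\) of finite étale \(A\)-algebras \(A\to A'\) that are faithfully flat (covers). Each \(A'\) has the property that every connected component of \(\Spec A'\) has a unique closed point with henselian local ring. To see this, note first that \(A'\) is integral over \(A\), so closed points of \(\Spec A'\) lie over closed points of \(\Spec A\). Next, over a connected component \(\Spec A_{0}\) of \(\Spec A\) with closed point \(x\), the ring \(A'\otimes_{A}A_{0}\) is finite over \(A_{0}\). Since every point of \(\Spec A_{0}\) specializes to \(x\), one expects the henselian property of \(\mathcal{O}_{A,x}\) to force \(A'\otimes_{A}A_{0}\) to decompose as a finite product of pieces, each with a unique closed point over \(x\). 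Idempotents of \(A'_{x}=A'\otimes\mathcal{O}_{A,x}\) lift to \(A'\otimes A_{0}\) because \(\Spec A_{0}\) is the ``closure of'' \(x\): every clopen subset of \(\Spec(A'\otimes A_{0})\) is determined by its trace over \(x\). To make this rigorous, I would use that \(A_{0}\) is a filtered colimit of its Zariski localizations at neighborhoods of \(x\), and that a connected \(\Spec A_{0}\) with unique closed point equals the limit of open neighborhoods of \(x\). This gives \(A_{0}=\mathcal{O}_{A,x}\)-type behaviour on idempotents of finite algebras. The local rings at the closed points of \(A'\) are then finite étale over a henselian local ring, hence henselian.

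The category \(I\) is filtered, since tensor products of covers are covers. Set \(B=\injlim_{I}A'\), but take only a cofinal system. Connected components of \(\Spec B\) are the limits of connected components of the \(\Spec A'\), by compactness of the profinite set \(\pi_{0}\). Each such component has a unique closed point, because a limit of spaces with unique closed points along closed-point-preserving maps keeps a unique closed point. Local rings of \(B\) at closed points are filtered colimits of henselian local rings along local maps, hence henselian.

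For separable closedness of residue fields at closed points, take a closed point \(y\) of \(B\) and a finite separable extension \(L\) of \(\kappa(y)\). The point \(y\) comes from some \(A'\) and a closed point \(y'\) there, with \(\kappa(y)=\injlim\kappa(y_{A''})\). So \(L\) descends to a finite separable extension \(L''\) of \(\kappa(y'')\) at some stage. Since \(\mathcal{O}_{A'',y''}\) is henselian, \(L''\) lifts to a finite étale local algebra over it, and hence over the connected component of \(\Spec A''\) containing \(y''\), by the same idempotent-spreading argument as above. Extend it by the trivial algebra on the other components. This requires the components to be clopen, which needs a local argument: spread out over a clopen neighborhood of that component and take the product with the complement. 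The result is a finite étale cover \(A''\to A'''\) in \(I\), so \(L\) embeds into \(\kappa\) at a point of \(B\) over \(y\), and \(\kappa(y)\) is separably closed.

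The main obstacle is the spreading step: lifting a finite étale algebra from the henselian local ring \(\mathcal{O}_{A,x}\) to a clopen neighborhood of the component of \(x\). I would first write \(\mathcal{O}_{A,x}=\injlim A[f^{-1}]\), spread out to some \(A[f^{-1}]\), and then use that \(\Spec A\setminus V(f)\) together with its complement can be refined to a clopen decomposition. This uses that each component has a unique closed point, so the closed set \(V(f)\) is disjoint from the component and compactness gives a separating idempotent.
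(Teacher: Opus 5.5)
\textbf{There is a genuine gap in the construction of \(B\), and your last step depends on it.} The category \(I\) of finite étale covers is not filtered. Tensor products give upper bounds for pairs of objects, but parallel maps cannot be coequalized by a cover. Already the swap of the factors of the cover \(A\times A\) is coequalized only by the zero ring: if \(e\) is the image of \((1,0)\), it is also the image of \((0,1)\), so \(e=e^{2}=0\) and \(1=2e=0\). Hence \(\varinjlim_{I}A'=0\). ``Taking a cofinal system'' cannot repair this, since a cofinal subdiagram computes the same colimit. The paper instead fixes a \emph{set} \(\{A_{i}\}_{i\in I}\) of representatives of isomorphism classes and takes \(B=\bigotimes_{i\in I}A_{i}=\varinjlim_{I_{0}}\bigotimes_{i\in I_{0}}A_{i}\) over finite \(I_{0}\subset I\). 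That diagram is genuinely filtered.

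With a correct \(B\), your separable-closedness argument no longer goes through. It uses that for a stage \(A''\to B\) and any finite étale cover \(A''\to A'''\), the map \(A''\to B\) factors through \(A'''\) as an \(A''\)-algebra map. That is exactly what the degenerate colimit over all of \(I\) would have provided. In \(\bigotimes_{i}A_{i}\) you only know \(A'''\simeq A_{j}\) as \(A\)-algebras, and the factor map \(A_{j}\to B\) is unrelated to \(A''\to B\). So you cannot conclude \(L''\hookrightarrow\kappa(y)\).

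The paper sidesteps this by working over \(A\) itself. Let \(x\) be the image of \(y\) in \(\Spec A\); then \(\kappa(y)\) is algebraic over \(\kappa(x)\). If \(\kappa(y)\) is not separably closed, some finite separable \(k'/\kappa(x)\) admits no \(\kappa(x)\)-embedding into \(\kappa(y)\). Spreading \(k'\) out (henselian lifting at \(x\) plus a clopen neighbourhood, exactly as in your last paragraph) gives a cover \(A'\simeq A_{i}\) whose fiber over \(x\) is \(k'\). The factor map \(A_{i}\to B\) then yields \(k'=\kappa(x_{\{i\}})\to\kappa(y)\) over \(\kappa(x)\), a contradiction. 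Alternatively, you can keep your own argument by iterating: let \(B_{n+1}\) be the tensor product of representatives of finite étale covers of \(B_{n}\), and set \(B=\varinjlim_{n}B_{n}\).

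The rest of your proposal is fine in substance. Your first step becomes clean once you note that the connected component through a closed point \(x\) is \(\Spec\) of \(\varinjlim_{e}Ae\), over idempotents \(e\notin\mathfrak{m}_{x}\). This ring is local, hence equal to the henselian ring \(A_{\mathfrak{m}_{x}}\); the component is the set of generizations of \(x\), not its closure.
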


\begin{proof}
  Let \(\{A_{i}\mid i\in I\}\)
  be a set of representatives of isomorphism classes
  of finite étale covers of~\(A\).
  We take its tensor product
  \(B=\bigotimes_{i\in I}A_{i}
  \simeq\injlim_{I_{0}}\bigotimes_{i\in I_{0}}A_{i}\),
  where \(I_{0}\) runs through finite subsets of~\(I\).
  We claim that this is the desired \(A\)-algebra.

  It is left to verify that
  the residue fields of~\(B\)
  at maximal ideals
  are separably closed.
  Consider a closed point \(y\in\Spec B\).
  This determines
  a compatible family of closed points \(x_{I_{0}}\in
  \Spec\bigl(\bigotimes_{i\in I_{0}}A_{i}\bigr)\)
  for finite subsets \(I_{0}\subset I\).

  Suppose that \(\kappa(y)\) is not separably closed.
  Then we can find a finite separable extension~\(k'\)
  of~\(\kappa(x_{\emptyset})\)
  such that
  there is no homomorphism \(k'\to\kappa(y)\)
  over~\(\kappa(x_{\emptyset})\).
  We then spread this out to a finite étale cover~\(A'\)
  over~\(A\).
  We consider \(i\in I\)
  with \(A_{i}\simeq A'\).
  Since \(A_{i}\otimes_{A}\kappa(x_{\emptyset})\simeq k'\),
  we have \(k'\simeq\kappa(x_{\{i\}})\),
  which contradicts the choice of~\(k'\).
\end{proof}

\begin{remark}\label{xc00rl}
  The topological part
  of the assumption on~\(A\) in \cref{clean_cov}
  is equivalent
  to the condition that
  any element of~\(A\)
  can be written as a sum of an idempotent and an invertible.
  This condition,
  mainly in the context of associative rings,
  is called \emph{clean}, as first introduced by Nicholson~\cite{Nicholson77}.
  As in the local case,
  \(A\) satisfies the full assumption of \cref{clean_cov}
  if and only if every finite étale algebra over~\(A\) is clean.
\end{remark}

\begin{remark}\label{xpqmfj}
  In \cref{clean_cov},
  unlike the local case,
  we need to add closed points:
  Consider
  the subring of
  \(\Cls{C}(\NN\cup\{\infty\};\CC)\)
  consisting of functions~\(f\) with \(f(\infty)\in\RR\).
\end{remark}

\begin{example}\label{xqom5g}
  Let \(A\) be a totally disconnected.
  Then the underlying ring of~\(A\)
  satisfies the assumption of \cref{clean_cov};
  this follows
  from \cref{td_sp,xibpba}.
\end{example}

\begin{proof}[Proof of \cref{xsdudu}]
  First,
  by base changing along
  \(\ZZ\to\ZZ\{T,T^{-1}\}_{\sfrac12,2}\)
  (see \cref{xzxpzi}),
  we assume that \(A\) has nondiscrete residue fields.

  Then
  by considering the Cantor set cover
  described in \cref{x496fa}
  of \([0,\lvert a\rvert]\)
  for each~\(a\),
  we can replace~\(A\)
  with a totally disconnected.
  It is an arc~cover by \cref{x9up2q}.

  By \cref{xqom5g},
  we can apply \cref{clean_cov} to~\(A\)
  to obtain a filtered family
  of finite étale covers.
  By \cref{xhll2q},
  we regard them as a filtered family
  of totally disconnected
  finite étale over~\(A\).
  We take the colimit in \(\Cat{TDis}_{A/}\)
  to obtain the desired strictly totally disconnected~\(B\).
\end{proof}

We have the following light variant:

\begin{proposition}\label{xryxo9}
  For an \(\aleph_{1}\)-compact uniform Banach ring~\(A\),
  there exists a light strictly totally disconnected~\(B\)
  with an arc~cover \(A\to B\).
\end{proposition}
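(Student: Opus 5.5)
We follow the proof of \cref{xsdudu} step by step and check that each step can be done inside the world of \(\aleph_{1}\)-compact objects of \(\Cat{URing}\). Throughout, we use \cref{x2lbc5}: \(\Cat{URing}\) is \(\aleph_{1}\)-compactly generated, and countable colimits of \(\aleph_{1}\)-compact objects are again \(\aleph_{1}\)-compact.

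First, we base change along \(\ZZ\to\ZZ\{T,T^{-1}\}_{\sfrac12,2}\), which is an arc~cover by \cref{xzxpzi}. The object \(\ZZ\{T,T^{-1}\}_{\sfrac12,2}\) is compact, so \(A\otimes\ZZ\{T,T^{-1}\}_{\sfrac12,2}\) followed by uniform completion is still \(\aleph_{1}\)-compact. After this step all residue fields are nondiscrete.

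Second, we make the spectrum totally disconnected. In the proof of \cref{xsdudu}, one uses the Cantor set cover of \([0,\lvert a\rvert]\) for every \(a\in A\), which gives far too many tensor factors. Instead, since \(A\) is \(\aleph_{1}\)-compact, we choose a countable dense subset \(\{a_{n}\}\subset A\) (as in the proof of \cref{xxvdc1}). For each \(n\) and each dyadic level \(m\), we take the finite product of Weierstraß/Laurent-type quotients from \cref{weilau} that cuts \([0,\lvert a_{n}\rvert]\) into the dyadic closed intervals. These are arc~covers by \cref{x9up2q}. We then take the countable colimit of the uniform completions of all these finite tensor products. The result is \(\aleph_{1}\)-compact, and it is an arc~cover because arc~covers are stable under composition and under countable (filtered) colimits, by \cref{i:sp_fil} of \cref{xfdud8}. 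Each \(\lvert a_{n}\rvert\) now factors through a Cantor set, and therefore so does every \(\lvert a\rvert\), by density and the inequality \(\bigl\lvert\lvert a\rvert_{x}-\lvert a_{n}\rvert_{x}\bigr\rvert\leq\lvert a-a_{n}\rvert\). So the coordinates \(\lvert a_{n}\rvert\) already separate points through profinite maps, and the spectrum embeds into a countable product of Cantor sets. Hence it is totally disconnected, and the result is a light totally disconnected~\(A'\).

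Third, we pass to strict total disconnectedness. In \cref{clean_cov}, we tensor together a set of representatives of all finite étale covers. By \cref{xxvdc1}, over a light totally disconnected there are only countably many isomorphism classes. Each of them is \(\aleph_{1}\)-compact in \(\Cat{TDis}_{A'/}\) by \cref{xhll2q}, being finite over~\(A'\). So \(B\) is a countable filtered colimit of \(\aleph_{1}\)-compact objects, hence \(\aleph_{1}\)-compact. The same argument as in \cref{clean_cov}, using \cref{xqom5g,xusjrg}, shows that the residue fields become separably closed. The completion \(K(x)\) is then algebraically closed by \cref{xu2axd}. One point needs a check here: a separable extension of the completed residue field must be realized by a finite étale cover of~\(A'\) itself. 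This is exactly \cref{xusjrg}, which we apply at each stage. Each finite étale cover is an arc~cover, and the colimit remains an arc~cover as before.

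The main obstacle is the second step: showing that countably many coordinates suffice to make the spectrum totally disconnected, and that the colimit over countably many Cantor refinements is still an arc~cover. The first point is the density estimate above. For the second, we would argue pointwise: surjectivity on Berkovich spectra passes to cofiltered limits of compact Hausdorff spaces by \cref{i:sp_fil} of \cref{xfdud8}.
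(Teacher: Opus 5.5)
Your proposal is correct and is essentially the paper's proof. You rerun the three steps of \cref{xsdudu} and check lightness at each one: in the Cantor-set step you use only countably many elements $a_{n}$, and in the last step you use \cref{xxvdc1} to make the index set of \cref{clean_cov} countable. The one variation is how you get injectivity of $\Sp(A)\to\prod_{n}[0,\lvert a_{n}\rvert]$: you use a countable dense subset of $A$ and the estimate $\bigl\lvert\lvert a\rvert_{x}-\lvert a_{n}\rvert_{x}\bigr\rvert\leq\lvert a-a_{n}\rvert$, where the paper cites the weight bound \cref{xwafwj}.

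One small slip: $\ZZ\{T,T^{-1}\}_{\sfrac12,2}$ is not compact in $\Cat{URing}$, since it is the sequential colimit of the corresponding rings for thicker annuli (just as $\ZZ[T]_{r}=\injlim_{r'>r}\ZZ[T]_{r'}$ in \cref{x6s7o2}). It is $\aleph_{1}$-compact, though, which is all your first step needs.
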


\begin{proof}
  We verify that each step in the proof of \cref{xsdudu}
  can be made to preserve lightness.

  The first step
  is valid,
  since \(\ZZ\{T,T^{-1}\}_{\sfrac12,2}\)
  is light.

  For the second step,
  by \cref{xwafwj},
  we can choose a countable sequence \(a_{0}\), \dots
  such that
  \(\Sp(A)\to\prod_{n=0}^{\infty}[0,\lvert a_{n}\rvert]\)
  is injective.
  We can then do the same process to obtain
  something light.

  For the last step,
  by \cref{xxvdc1},
  the index set~\(I\)
  in the proof of \cref{clean_cov} is countable,
  and hence the resulting ring is light. 
\end{proof}

We observe the following immediate consequence:

\begin{corollary}\label{x11ph0}
  A presheaf \(\Cat{URing}_{\aleph_{1}}\to\Cat{Ani}\)
  is an arc~(hyper)sheaf
  if and only if
  it is right Kan extended
  from~\(\Cat{STDis}_{\aleph_{1}}\to\Cat{Ani}\),
  which is an arc~(hyper)sheaf there.
  In particular,
  we have equivalences
  \begin{align*}
    \Shv((\Cat{STDis}_{\lgt})^{\op})&\simeq
    \Shv((\Cat{URing}_{\aleph_{1}})^{\op}),&
    \Shv^{\hyp}((\Cat{STDis}_{\lgt})^{\op})&\simeq
    \Shv^{\hyp}((\Cat{URing}_{\aleph_{1}})^{\op}),
  \end{align*}
  both given by right Kan extension.
\end{corollary}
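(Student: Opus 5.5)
The plan is to deduce \cref{x11ph0} from the comparison lemma for Grothendieck sites (cf.~\cite[Lemma~C.3]{Hoyois} style statements, or its \(\infty\)-categorical form for hypersheaves), applied to the full subcategory \((\Cat{STDis}_{\lgt})^{\op}\subset(\Cat{URing}_{\aleph_{1}})^{\op}\). Two inputs are needed. First, every object of \(\Cat{URing}_{\aleph_{1}}\) admits an arc~cover by an object of \(\Cat{STDis}_{\lgt}\); this is exactly \cref{xryxo9}. Second, the topology on \(\Cat{STDis}_{\lgt}\) should be the induced one: a family in \(\Cat{STDis}_{\lgt}\) is a cover there if and only if it is an arc~cover in \(\Cat{URing}_{\aleph_{1}}\). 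I will simply define the topology on \(\Cat{STDis}_{\lgt}\) as the restriction (sieves generated by families that are arc~covers in the ambient category), so this is tautological, provided I check that \(\Cat{STDis}_{\lgt}\) is closed enough under the relevant operations: for the comparison lemma I need that for \(B\in\Cat{STDis}_{\lgt}\) and an arc~cover \(A\to B'\) lying over it, one can refine by objects of \(\Cat{STDis}_{\lgt}\). This again follows from \cref{xryxo9} applied to the pushouts \(B'\otimes_{A}C\) (uniformly completed), which stay \(\aleph_{1}\)-compact since \(\aleph_{1}\)-compact objects are closed under finite colimits in \(\Cat{URing}\) by \cref{x2lbc5}.

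Given these, the argument runs as follows. For a presheaf \(F\) on \(\Cat{URing}_{\aleph_{1}}\), if \(F\) is an arc~(hyper)sheaf then its restriction to \(\Cat{STDis}_{\lgt}\) is a (hyper)sheaf for the induced topology, and for each \(A\) we choose a light strictly totally disconnected hypercover \(B_{\bullet}\to A\) built levelwise using \cref{xryxo9} (at each stage covering the relevant matching object, which is \(\aleph_{1}\)-compact). Hyperdescent gives \(F(A)\simeq\Tot F(B_{\bullet})\), and the same formula computes the right Kan extension of \(F|_{\Cat{STDis}_{\lgt}}\) at \(A\), because the hypercover gives a final-type description of \(\Cat{STDis}_{\lgt}\times_{\Cat{URing}}\Cat{URing}_{A/}\) up to the sheaf condition; so \(F\) is right Kan extended. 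Conversely, a right Kan extension of a (hyper)sheaf on \(\Cat{STDis}_{\lgt}\) is a (hyper)sheaf: descent along an arc~cover \(A\to A'\) is checked after mapping to strictly totally disconnecteds over \(A\), where the pulled back cover is refined by light strictly totally disconnected covers, on which the sheaf condition holds by assumption. For the non-hypercomplete statement I use Čech nerves instead of hypercovers, which is the standard form of the comparison lemma.

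The displayed equivalences then follow formally: restriction and right Kan extension are inverse equivalences between the sheaf categories. The main obstacle I expect is the hypersheaf/sheaf distinction in the Kan extension step: the right Kan extension is a limit over the comma category, and identifying it with \(\Tot F(B_{\bullet})\) requires that covers in \(\Cat{STDis}_{\lgt}\) are stable under fiber products inside \(\Cat{URing}\) up to refinement; here I would use \cref{xhgzg6} (subcanonicity) together with \cref{i:sp_pb} of \cref{xfdud8} to know that fiber products of covers remain covers, and then refine back into \(\Cat{STDis}_{\lgt}\) via \cref{xryxo9}.
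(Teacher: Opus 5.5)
Your proposal is correct and follows the route the paper has in mind. The paper states the corollary as an immediate consequence of \cref{xryxo9}, via the comparison lemma for the dense full subcategory \((\Cat{STDis}_{\lgt})^{\op}\subset(\Cat{URing}_{\aleph_{1}})^{\op}\); there, as you note, the induced topology is the arc topology, and closure of \(\aleph_{1}\)-compact objects under pushouts keeps everything inside the site.

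Once you invoke the comparison lemma, the explicit hypercover computation and the appeal to \cref{xhgzg6} are unnecessary. The hypersheaf statement also follows formally from the sheaf statement, because an equivalence of \(\infty\)-topoi preserves hypercomplete objects.
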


\subsection{Topological finite presentation}\label{ss:tfp}

Here we consider
the following:

\begin{definition}\label{xoxb2j}
  Let \(A\) be a uniform Banach ring.
  We call a uniform Banach ring~\(B\) over~\(A\)
  \emph{topologically of finite presentation}
  if it is a quotient of
  \(A\{T_{1},\dotsc,T_{n}\}_{r_{1},\dotsc,r_{n}}\)
  by a finitely generated ideal.
  When \(A=\ZZ\), we simply say that
  \(B\) is topologically of finite presentation.
  We write \(\Cat{URing}_{\tfp}\)
  for the full subcategory of \(\Cat{URing}\).
  Equivalently,
  this is the smallest full subcategory
  containing \(\ZZ\{T\}_{r}\) for \(r>0\)
  and closed under finite colimits.
\end{definition}

We prove a certain finiteness statement
about such rings.
First,
we note the following about
residue fields:

\begin{proposition}\label{xwba2d}
  Let \(O\) be a totally imaginary number ring.
  For any complete residue field~\(K\)
  of \(\ZZ\{T_{1},\dotsc,T_{n}\}_{r_{1},\dotsc,r_{n}}\),
  we have \(\dim_{\ZZ}(\Sparc(K))\leq n+3\).
\end{proposition}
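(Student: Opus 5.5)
The plan is to identify \(\Sparc(K)\) with the classifying condensed anima \(BG_{K}\) of the absolute Galois group \(G_{K}=\operatorname{Gal}(K^{\mathrm{sep}}/K)\), and then bound its cohomological dimension by field-theoretic means. For the first step I would use that the completion \(C\) of \(K^{\mathrm{sep}}\) is algebraically closed by \cref{xu2axd}, hence a strictly totally disconnected. The map \(K\to C\) is an arc~cover, and by \cref{xhgzg6} the Čech nerve of \(\Sparc(C)\to\Sparc(K)\) consists of the representables \(C\hat\otimes_{K}\dotsb\hat\otimes_{K}C\). Using \cref{xhll2q,xbjwxs} (finite separable extensions of \(K\) are the same as finite étale \(K\)-algebras carrying their unique uniform norm), I would identify \(C\hat\otimes_{K}C\) with continuous functions on \(G_{K}\) valued in \(C\). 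This gives \(\Sparc(K)\simeq BG_{K}\) as a light condensed anima, since \(G_{K}\) is light by \cref{xxvdc1}. It also shows that \(\Shv^{\wedge}(\Sparc(K);\D(\ZZ))\) is continuous \(G_{K}\)-modules, so \(\dim_{\ZZ}\) is the strict cohomological dimension of \(G_{K}\).

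Next I would apply \cref{xpyglx}. Rational cohomology of a profinite group vanishes in positive degrees, so \(\dim_{\QQ}=0\). Global sections commute with filtered colimits once the \(p\)-cohomological dimensions are uniformly bounded, which is what we prove. It therefore suffices to show \(\operatorname{cd}_{p}(G_{K})\leq n+2\) for every prime \(p\).

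The case analysis is as follows. The residue field is the completion of a field \(\kappa\) finitely generated of transcendence degree \(\leq n\) over \(\QQ\) or \(\FF_{p}\), with norm restricted from a point of \(\Sp\).
\begin{itemize}
\item If \(K\) is archimedean, then \(K\) is \(\RR\) or \(\CC\). Here the hypothesis on~\(O\) enters: I would work over \(O\{T_{1},\dotsc,T_{n}\}\) (or base change along \(\ZZ\to O\), which is harmless for the application), so that \(K=\CC\) and \(\operatorname{cd}=0\). Over \(\ZZ\) itself \(\RR\) would give infinite \(2\)-cohomological dimension.
\item If \(K\) is nonarchimedean of residue characteristic \(\ell\) and \(p\neq\ell\) (or \(\ell=0\)), I would use the ramification exact sequence \(1\to I\to G_{K}\to G_{k}\to1\). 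The tame inertia contributes at most \(\operatorname{rk}_{\QQ}(\Gamma\otimes\QQ)\) to \(\operatorname{cd}_{p}\), so \(\operatorname{cd}_{p}G_{K}\leq\operatorname{cd}_{p}G_{k}+\operatorname{rk}\Gamma\). Abhyankar's inequality gives \(\operatorname{trdeg}k+\operatorname{rk}\Gamma\leq n+1\), counting the possible transcendence coming from the prime field. Then \(\operatorname{cd}_{p}\) of a field of transcendence degree \(m\) over a finite field is \(\leq m+1\), and over a totally imaginary number field it is \(\leq m+2\). Together these give \(\leq n+2\).
\item If \(p\) equals the characteristic of \(K\), then \(\operatorname{cd}_{p}\leq1\).
\item If \(K\) has mixed characteristic \((0,p)\), I would invoke the Gabber–Orgogozo bound \(\operatorname{cd}_{p}(K)\leq1+\operatorname{trdeg}(k/\FF_{p})+\operatorname{rk}\Gamma\), which again is \(\leq n+2\) by Abhyankar.
\end{itemize}
This yields \(\dim_{\ZZ}\leq n+3\).

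I expect the main obstacle to be the mixed-characteristic \(p\)-cohomological dimension of a complete field that is not discretely valued and whose residue field is imperfect. There I would first try passing to a suitable henselization of \(\kappa\), which has the same Galois group by Krasner (\cref{xbjwxs}), and then citing Gabber–Orgogozo for henselian valued fields of finite transcendence type. A secondary technical point is the identification \(\Sparc(K)\simeq BG_{K}\) at the level of hypersheaves, which I expect to follow from \cref{x11ph0}.
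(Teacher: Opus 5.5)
Your proposal is correct in outline, but it takes a considerably heavier, local route than the paper.

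\textbf{The paper's argument.} The paper never looks at the value group or residue field of \(K\). It observes that \(K\) is the completion of a field \(\kappa\) finitely generated of transcendence degree \(\leq n\) over \(\operatorname{Frac}(O)\) or over \(\FF_{p}\). By \cref{xbjwxs} (Krasner), \(G_{K}\) is the decomposition group of the henselization of \(\kappa\), which is a closed subgroup of \(G_{\kappa}\). Hence \(\operatorname{cd}_{p}(G_{K})\leq\operatorname{cd}_{p}(G_{\kappa})\) for every \(p\). The paper then applies Serre's transcendence-degree inequality (Stacks Project, Tag~0F0S) to \(\kappa\) itself, whereas you apply it to the residue field. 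This reduces everything to \(n=0\): \(\operatorname{cd}_{p}\leq2\) for totally imaginary number fields and \(\leq1\) for finite fields, uniformly in \(p\). Then \cref{xpyglx} gives \(n+3\). The archimedean points are handled automatically, since a real completion would force \(\operatorname{cd}_{2}(\kappa)=\infty\).

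\textbf{How your route compares.} In your write-up, the henselization/Krasner observation appears only as a fallback for the mixed-characteristic \(p\)-part. Once you have it, the closed-subgroup inclusion into \(G_{\kappa}\) settles every case at once, including \(p\) equal to the residue characteristic. Tame inertia, Abhyankar's inequality, and Gabber--Orgogozo then become unnecessary. What your local analysis buys is a pointwise bound in terms of \(\Gamma\) and \(k\). That can be sharper at particular points, but it is not needed here, and it costs a deep input exactly where the global argument is trivial.

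\textbf{On the identification step.} Your explicit identification \(\Sparc(K)\simeq BG_{K}\) goes beyond the paper, which simply reads \(\dim_{\ZZ}(\Sparc(K))\) as the strict cohomological dimension of \(G_{K}\). Note that \cref{xu2axd} requires \(K\) to be nondiscrete. Trivially valued residue fields do occur, for example \(\QQ\) with the trivial norm, which is a point of \(\Sp(\ZZ)\). So that step needs a separate argument for them.
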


\begin{proof}
  We only have to consider
  finite extensions
  of \(\QQ(T_{1},\dotsc,T_{n})\)
  or \(\FF_{p}(T_{1},\dotsc,T_{n})\);
  namely,
  when\(K\) is the completion of~\(\kappa\),
  the absolute Galois group~\(G_{K}\)
  injects to~\(G_{\kappa}\).
  Now we can use the result from étale cohomology
  (but see~\cref{xi27fj}).
  By~\cite[Tag~0F0S]{SP},
  we are reduced to the case when \(n=0\)
  and this is classical.
\end{proof}

We also need the following later;
see, e.g.,~\cite[Théorème~7.3.7]{LemanissierPoineau24} for a proof:

\begin{proposition}\label{xul75l}
  Let \(O\) be a number ring.
  The covering dimension
  of \(O\{T_{1},\dotsc,T_{n}\}_{r_{1},\dotsc,r_{n}}\)
  for any \(r_{1}\), \dots, \(r_{n}>0\)
  is \(2n+1\).
\end{proposition}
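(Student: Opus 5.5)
The plan is to compute the dimension fiberwise over the base \(\Sp(O)\) and combine the fibers with a Hurewicz-type dimension theorem. Write \(A=O\{T_{1},\dotsc,T_{n}\}_{r_{1},\dotsc,r_{n}}\) and \(\pi\colon\Sp(A)\to\Sp(O)\). Both spaces are compact Hausdorff. By \cref{xwafwj}, \(A\) is \(\aleph_{1}\)-compact, so both spaces are also metrizable. For a closed surjection of compact metrizable spaces, Hurewicz's theorem gives
\begin{equation*}
  \dim\Sp(A)\leq\dim\Sp(O)+\sup_{y}\dim\pi^{-1}(y).
\end{equation*}
So the upper bound \(2n+1\) reduces to two inputs: \(\dim\Sp(O)=1\), and every fiber has dimension at most~\(2n\).

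For the base, I will use Ostrowski's theorem for number fields to describe \(\Sp(O)\) explicitly. It consists of:
\begin{itemize}
  \item finitely many archimedean branches, one for each archimedean place, each a segment \(\lvert\X\rvert_{\sigma}^{\epsilon}\) with \(\epsilon\in(0,1]\);
  \item one nonarchimedean branch \([0,\infty]\) for each finite prime;
  \item the trivial norm, which is the common endpoint of all branches.
\end{itemize}
This is a compact metrizable one-dimensional graph, a countable wedge of arcs whose lengths shrink in the topology, so \(\dim\Sp(O)=1\). For the fibers, the fiber over \(y\) is identified with the Berkovich spectrum of the closed polydisk of polyradius \((r_{i})\) over the complete residue field \(K(y)\). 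Here I use \cref{xfdud8} and \cref{xwpus5} to reduce to base change along \(O\to K(y)\), together with the surjectivity statement \cref{i:sp_pb}. There are two cases.
\begin{itemize}
  \item If \(y\) is archimedean, then \(K(y)\) is \(\RR\) or \(\CC\) with a power of the usual norm, and Gelfand–Mazur identifies the fiber with a compact polydisk in \(\CC^{n}\), possibly modulo complex conjugation. This has dimension \(2n\).
  \item If \(y\) is nonarchimedean, including the trivially valued case and the residue characteristic \(p\) points with \(K(y)=\FF_{p}\), the fiber is a Berkovich polydisk over a nonarchimedean field. Berkovich's theorem says its topological dimension is \(n\), which is at most \(2n\).
\end{itemize}
This gives \(\dim\Sp(A)\leq 2n+1\).

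For the lower bound, I will exhibit an embedded open subset of \(\RR^{2n+1}\). Choose an archimedean place \(\sigma\), which is complex since \(O\) is totally imaginary (in any case some archimedean place exists). The subset of \(\Sp(A)\) lying over the open archimedean segment \(\epsilon\in(0,1)\) is homeomorphic to \((0,1)\) times the polydisk. Concretely, \((\epsilon,z)\mapsto\lvert f(z)\rvert^{\epsilon}\) gives this homeomorphism, and continuity of the inverse follows from compactness on closed subsegments. Its interior contains an open ball of \(\RR^{2n+1}\), so \(\dim\Sp(A)\geq 2n+1\).

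The main obstacle is the uniform control of the nonarchimedean fibers: it requires Berkovich's dimension theory for polydisks over arbitrary, possibly trivially valued or non-separable, complete fields. I would first try to bypass it with a cruder bound. Via the projection, the fiber maps onto \(\prod_{i}[0,r_{i}]\), and the fibers of that projection are trees. Alternatively, \(\Sp(A)\) embeds into \(\Sp(O)\times\prod_{i}[0,r_{i}]\) together with finitely many extra coordinates, so one can induct on \(n\) using the map \(\Sp(O\{T_{1},\dotsc,T_{n}\})\to\Sp(O\{T_{1},\dotsc,T_{n-1}\})\). Each fiber of this map is a one-dimensional Berkovich disk over a complete field, and a compact metrizable \(\RR\)-tree has dimension at most \(1\). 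In the archimedean case the fiber is a two-dimensional closed disk. Hurewicz's theorem then adds at most \(2\) in each step, which yields \(2n+1\) by induction without needing the full dimension theory.
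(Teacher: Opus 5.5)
Your proof is correct, and it takes a genuinely different route from the paper. The paper gives no argument of its own: it cites Lemanissier–Poineau [Théorème~7.3.7] from the theory of global Berkovich spaces. You argue directly.

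For the upper bound, you apply Hurewicz's inequality to the closed map \(\pi\colon\Sp(A)\to\Sp(O)\) of compact metrizable spaces. Three inputs go into it:
\begin{itemize}
  \item \(\dim\Sp(O)=1\), since \(\Sp(O)\) is a countable union of closed arcs (Ostrowski plus the countable sum theorem);
  \item the archimedean fibers are polydisks in \(\CC^{n}\), possibly modulo conjugation, so they have dimension \(2n\) (Gelfand–Mazur);
  \item the nonarchimedean fibers have dimension \(n\).
\end{itemize}
For the lower bound, you exhibit an open copy of \((0,1)\times(\text{polydisk})\) over an archimedean branch.

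Your inductive variant is the better version of the upper bound. It only uses that a one-dimensional Berkovich disk over an arbitrary complete nonarchimedean field \(K(x)\), trivially valued ones included, is a tree. Such a tree is a dendrite of dimension \(\leq1\) once metrizable, and metrizability is automatic because the fiber is a closed subset of \(\Sp(A)\). This avoids needing Berkovich's dimension theorem for polydisks over trivially valued fields. What your route buys is transparency: \(2n+1\) is \(1\) from the base plus \(2n\) from the archimedean fibers, and the nonarchimedean part never reaches the maximum. The citation is shorter but a black box.

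A few points to tighten:
\begin{itemize}
  \item \Cref{xfdud8} only gives surjectivity of \(\Sp(K(y)\otimes_{O}A)\to\pi^{-1}(y)\). For injectivity, observe that a multiplicative seminorm extending \(\lvert\X\rvert_{y}\) on \(K(y)\) is determined by its restriction to \(A\). This uses the density of \(\operatorname{Frac}(O/\ker\lvert\X\rvert_{y})\) in \(K(y)\) and the bound by the norm of \(A\).
  \item Over \(\lvert\X\rvert_{\sigma}^{\epsilon}\) the fiber is the polydisk of polyradius \((r_{i}^{1/\epsilon})\). So your product decomposition needs the rescaling \(z_{i}=r_{i}^{1/\epsilon}w_{i}\).
  \item At a real place, restrict to the open part where conjugation acts freely.
  \item The statement does not assume \(O\) totally imaginary, and you do not need it.
  \item Drop the aside about embedding \(\Sp(A)\) into \(\Sp(O)\times\prod_{i}[0,r_{i}]\) with finitely many extra coordinates. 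It is unjustified, and the induction along the projections is all you need.
\end{itemize}
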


\begin{remark}\label{x90oh5}
  When we bound
  the arc~cohomological dimension
  (see~\cite[Remark~4.11]{ScholzeB})
  of a uniform Banach ring of topological finite presentation
  by bounding
  its topological dimension
  and the cohomological dimension of residue fields separately,
  we obtain a suboptimal bound.
  With a careful argument,
  one can show, e.g., that
  \(\Sparc(O\{T_{1},\dotsc,T_{n}\}_{r_{1},\dotsc,r_{n}})\)
  for a totally imaginary number ring~\(O\),
  has cohomological dimension
  \(2n+4\)
  for any \(r_{1}\), \dots, \(r_{n}>0\).
\end{remark}

We later use the following,
which was proven in~\cite[Théorèmes~9.17 et~10.1]{Poineau13}:

\begin{theorem}[Poineau]\label{poi_noe}
  For a point~\(y\)
  of the Berkovich-analytic affine \(n\)-space over~\(\Sp(\ZZ)\),
  consider the local ring~\(\Cls{O}_{y}\).
  It is an excellent noetherian local ring
  of dimension~\(\dim(\Cls{O}_{\ZZ,x})+n\),
  where \(x\) is the image of~\(y\) in~\(\Sp(\ZZ)\).
\end{theorem}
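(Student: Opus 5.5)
The plan is to argue by induction on \(n\), fibering the analytic affine \(n\)-space \(\AA^{n,\mathrm{an}}_{\ZZ}\) over \(\AA^{n-1,\mathrm{an}}_{\ZZ}\) and working with one coordinate at a time through Weierstrass division and preparation. First I would list the points \(x\in\Sp(\ZZ)\): archimedean points \(\lvert\X\rvert_{\infty}^{\varepsilon}\), the trivial norm, \(p\)-adic points \(\lvert\X\rvert_{p}^{\varepsilon}\), and the endpoints \(a_{p}\) where the residue field is \(\FF_{p}\). For each I would compute \(\Cls{O}_{\ZZ,x}\) directly as a colimit over neighbourhoods of rings of convergent power series. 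At points whose residue field has characteristic zero it should be a field. At \(a_{p}\) it should be a discrete valuation ring with residue field \(\FF_{p}\), a henselization-type version of \(\ZZ_{(p)}\). In every case it is an excellent noetherian local ring of dimension \(0\) or \(1\). This settles \(n=0\).

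For the inductive step, let \(\pi\colon\AA^{n}\to\AA^{n-1}\) forget the last coordinate, and put \(z=\pi(y)\). I would prove a Weierstrass division theorem on small neighbourhoods of \(y\). Given a germ \(f\in\Cls{O}_{y}\) whose restriction to the fibre \(\pi^{-1}(z)\) is nonzero of some order \(d\) at \(y\), the claim is that any \(g\) can be written uniquely as \(g=qf+r\), with \(r\) a polynomial of degree \(<d\) in the last coordinate with coefficients in \(\Cls{O}_{z}\). The division would be carried out in Banach algebras \(B\{T\}_{r}\) over closed neighbourhoods of \(z\), controlling norms as in \cref{ban_mg}. The estimates must be uniform over the whole neighbourhood, including the hybrid region near \(a_{p}\) where archimedean and nonarchimedean points mix. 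From division one gets preparation. Then \(\Cls{O}_{y}/(f)\) is finite over \(\Cls{O}_{z}\) and, after a henselian localization, is a quotient of an étale-local ring of \(\Cls{O}_{z}[T]\). Noetherianity then follows by the usual Rückert-basis argument: every ideal contains an element that is Weierstrass in a generic coordinate, reached by a linear change of variables, and one reduces to a finite module over the noetherian ring \(\Cls{O}_{z}\).

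For the dimension, there are two cases. If \(y\) is rigid in its fibre, then \(\Cls{O}_{y}\) is a henselization of a localization of \(\Cls{O}_{z}[T]\) at a maximal-type prime, so \(\dim\Cls{O}_{y}=\dim\Cls{O}_{z}+1\). If \(y\) is not rigid, the fibre contributes no extra dimension, and one has to interpret the formula with the paper's convention, which I would need to fix. By induction this yields \(\dim\Cls{O}_{x}+n\) along the rigid points. For excellence, \(\Cls{O}_{y}\) is ind-étale over a localization of \(\Cls{O}_{z}[T]\) up to the henselian property, and that localization is excellent since excellence is preserved by finite type algebras. It then remains to check that the comparison map is regular, for instance by showing that the completions agree with the completions of the algebraic local rings and using that regularity of formal fibres descends along étale-like maps.

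The main obstacle is the uniform Weierstrass division over neighbourhoods containing the points \(a_{p}\) and the archimedean arc simultaneously, since the norms change nature there. A closely related difficulty is identifying \(\Cls{O}_{y}\) with an algebraic henselization precisely enough to transport excellence. I would attack the division first over the compact pieces \([\varepsilon,\infty]\) of each branch of \(\Sp(\ZZ)\) separately, using the \(\max\)-type norms. I would then glue the results with the finite closed covers of \cref{ss:closed}.
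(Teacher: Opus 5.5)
The paper gives no argument for this statement; it only quotes Poineau's Théorèmes~9.17 and~10.1. So your sketch has to stand on its own, and it has genuine gaps.

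\textbf{Points outside the reach of Weierstrass division.} Your division theorem, with remainder in \(\Cls{O}_{z}[T]\), presupposes a monic polynomial over \(\Cls{O}_{z}\) that vanishes at~\(y\). Such a polynomial exists only when \(y\) is rigid in \(\pi^{-1}(z)\) \emph{and} its last coordinate is algebraic over \(\kappa(z)\). The second condition can fail for rigid~\(y\) once \(n\geq2\). For example, over the Gauss point \(z\) of a \(p\)-adic fibre, \(K(z)\) is not algebraic over the non-complete field \(\kappa(z)\).

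At all other points your Rückert argument does not apply. In particular, at points that are non-rigid in their fibre, every non-unit germ vanishes identically on the fibre, so no coordinate change makes it Weierstrass. You give no argument for noetherianity or excellence at these points.

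\textbf{The dimension formula.} Your suspicion is right, and no convention rescues the equality. Take \(n=1\) and \(x=\lvert\X\rvert_{p}^{\varepsilon}\) with \(0<\varepsilon<\infty\), so \(\Cls{O}_{\ZZ,x}=\QQ_{p}\). Let \(y\) be the Gauss point of the fibre. Nearby fibres are the same \(\QQ_{p}\)-analytic line with the norm raised to a power. Hence \(\Cls{O}_{y}\) is the local ring of \(\AA^{1,\mathrm{an}}_{\QQ_{p}}\) at its Gauss point, which is a field, while the formula predicts dimension~\(1\). What can be proved is \(\dim\Cls{O}_{y}\leq\dim\Cls{O}_{\ZZ,x}+n\). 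This is also all the paper uses: a uniform bound fed into Gabber's affine Lefschetz theorem in the proof of \cref{x0606p}.

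\textbf{Excellence.} This step rests on a false identification. Already for \(n=1\) over \(\lvert\X\rvert_{p}^{\varepsilon}\), at a rigid point \(\Cls{O}_{y}\) is a ring of germs of convergent power series. It contains elements transcendental over \(\QQ_{p}(T)\), so it is neither a henselization of \(\QQ_{p}[T]_{(\mu)}\) nor ind-étale over it.

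Matching completions with an excellent algebraic local ring does not help either. Nagata's non-excellent discrete valuation rings in characteristic~\(p\) sit between \(k[t]_{(t)}\) and \(k[[t]]\) and have completion \(k[[t]]\). The real content is geometric regularity of formal fibres where residue characteristic~\(p\) occurs, i.e.\ over the points~\(a_{p}\). This needs an actual argument; for affinoid algebras over a field it is Kiehl's theorem.

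\textbf{Where the hybrid difficulty sits.} You place it near \(a_{p}\), but neighbourhoods of \(a_{p}\) contain only \(p\)-adic points, and \(\Cls{O}_{\ZZ,a_{p}}\) is \(\ZZ_{p}\) itself. Archimedean and non-archimedean norms mix only at the trivial norm. Every neighbourhood of the trivial norm contains an archimedean segment and all but finitely many \(p\)-adic branches in full. No finite family of single-branch pieces \([\varepsilon,\infty]\) covers such a neighbourhood. So your branchwise division plus gluing leaves out exactly this point and the fibres above it. Note also that the finite closed covers of \cref{ss:closed} concern descendability of sheaves of spectra, not gluing of analytic functions.
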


See also~\cite[Théorème~8.6.10]{LemanissierPoineau24}
for the global noetherianness result,
which we do not need in this paper.

\section{Berkovich \texorpdfstring{\(2\)}{2}-motives}\label{s:main_ber}

In this section,
we prove \cref{main_4},
which characterizes
\(\D[2]_{\mot}(\ZZ)\)
in terms
of ring stacks equipped with an absolute value.
Throughout this section,
we write \(\Mot[2](\ZZ)\)
for the universal target of \cref{main_4}.

In \cref{ss:d_mot},
we review~\(\D_{\mot}\).
In \cref{ss:smot},
we consider the class of a seminormed ring
in \(\Mot[2](\ZZ)\).
In \cref{ss:disk,ss:arc_des},
we prove important descent results.
In \cref{ss:cc_ber},
we complete the proof of \cref{main_4}.

\subsection{A review of Berkovich motivic spectra}\label{ss:d_mot}

In~\cite{ScholzeB},
Scholze considered integral coefficients.
Here, we consider
the same category with spherical coefficients.

\begin{definition}\label{xbdjh9}
Let \(X\) be an accessible presheaf
  on \(\Cat{STDis}^{\op}\) preserving finite products.
  A \emph{finitary arc~sheaf} over~\(X\)
  is a morphism \(Y\to X\) from another such presheaf
such that
  for a cofiltered limit \(T=\projlim_{i}T_{i}\),
  the square
  \begin{equation*}
    \begin{tikzcd}
      \injlim_{i}Y(T_{i})\ar[r]\ar[d]&
      Y(T)\ar[d]\\
      \injlim_{i}X(T_{i})\ar[r]&
      X(T)
    \end{tikzcd}
  \end{equation*}
  is a pullback.
  It is equivalently
  a finitary arc~sheaf on
  \((\Cat{STDis}^{\op})_{/X}\).
\end{definition}

\begin{definition}\label{x0o7d6}
  Let \(X\) be an arc~stack.
  We define
  \begin{equation*}
    \D_{\mot}(X;\SS)
    =\Shv_{\fin}(X;\Cat{Sp})_{\DD^{1}}[(\Sigma^{\infty}\PP^{1})^{\otimes-1}].
  \end{equation*}
  First,
  \(\Shv_{\fin}(X;\Cat{Sp})\)
  is the category of \(\Cat{Sp}\)-valued finitary arc~sheaves
  on \((\Cat{STDis}^{\op})_{/X}\).
  Then we restrict to \(\DD^{1}\)-invariant sheaves.
  This means that
  for any uniform Banach ring~\(A\),
  the morphism \(A\to A\{T\}_{1}\) induces an equivalence.
  By~\cite[Corollary~5.4]{ScholzeB},
  for finitary sheaves,
  it suffices to check this
  for \(C\to C\{T\}_{1}\) where \(C\) is a nondiscrete algebraically closed Banach field
  (but still note that \(C\{T\}_{1}\) is not totally disconnected).
We omit~\(\SS\) and simply write \(\D_{\mot}(X)\)
  when there is no confusion.
\end{definition}

The following for \(\D_{\mot}(\X;\ZZ)\)
was proven in~\cite[Theorem~9.2]{ScholzeB}
(see also~\cite[Corollary~10.2]{ScholzeB}).
The same proof works in the spherical case:

\begin{theorem}[Scholze]\label{x5sz2s}
  Let \(A\to B\) be a map of uniform Banach rings
  such that \(Y=\Sparc(B)\) embeds
  into a finite-dimensional ball over \(X=\Sparc(A)\)
  and \(\sup_{x\in\Sp(A)}\dim_{\ZZ}(Y_{x})\) is finite.
  Then the functor
  \(f^{*}\colon\D_{\mot}(A)\to\D_{\mot}(B)\)
  admits a \(\D_{\mot}(A)\)-linear right adjoint~\(f_{*}\)
  which is compatible with base change.
\end{theorem}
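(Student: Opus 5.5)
Since the statement is attributed to Scholze's~\cite[Theorem~9.2]{ScholzeB}, the plan is to rerun that argument with \(\Cat{Sp}\) in place of \(\D(\ZZ)\) and check each step for hidden uses of integral coefficients. First, I would reduce to standard pieces. Factor \(f\) as a closed immersion \(Y\hookrightarrow\Sparc(A\{T_{1},\dotsc,T_{n}\}_{r_{1},\dotsc,r_{n}})\) followed by the projection to~\(X\). A ball is an iterated one-dimensional disk, so it suffices to treat two cases: closed immersions, and the one-dimensional disk \(A\to A\{T\}_{r}\). Existence and base change of~\(f_{*}\) are both stable under composition. The finiteness hypothesis on \(\sup_{x}\dim_{\ZZ}(Y_{x})\) has to be carried along, since it is exactly what keeps cohomological dimension bounded at each stage.

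Second, I would construct \(f_{*}\) at the level of finitary arc~sheaves, before localizing. On \(\Shv_{\fin}(\X;\Cat{Sp})\), pushforward is computed by right Kan extension / sections over the fiber. The real question is whether it preserves finitary sheaves and colimits. For this I would test on strictly totally disconnecteds \(T\) over~\(X\). There the fiber \(Y\times_{X}T\) is a compact Hausdorff-like arc~stack whose cohomological dimension is uniformly bounded: by the hypothesis on fibers, together with \cref{xul75l} and \cref{xwba2d} for the topological and Galois directions respectively. With finite cohomological dimension, \(\Gamma(Y_{T};\X)\) commutes with filtered colimits and is compatible with cofiltered limits \(T=\projlim T_{i}\). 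Both properties follow from Postnikov convergence, which is valid for \(\Cat{Sp}\)-valued hypersheaves once the dimension is bounded; compare \cref{xdhy4g} and Clausen's criterion \cref{clausen}. Linearity and base change then follow from the projection formula on generators, via the usual \(\Cat{Sp}\)-linear argument.

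Third, I would pass to \(\D_{\mot}\). I need \(f_{*}\) to preserve \(\DD^{1}\)-invariant objects; by \cite[Corollary~5.4]{ScholzeB} this reduces to the fibers over \(C\to C\{T\}_{1}\), where it follows from base change. I also need \(f_{*}\) to commute with \(\PP^{1}\)-stabilization, and the projection formula for the Tate twist gives this. I then define the right adjoint on \(\D_{\mot}\) as the restricted functor. Base change is inherited because the localization and stabilization functors commute with \(f^{*}\).

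The main obstacle is the second step. Scholze's proof uses \(\ZZ\)-coefficients, where cohomological dimension bounds give vanishing of \(\Gamma\) in high degrees. With spherical coefficients, boundedness fails on the nose: the sphere has homotopy in all degrees. The danger is therefore that sections fail to commute with colimits. My first attempt would be to use the \(t\)-structure on \(\Cat{Sp}\)-valued sheaves and argue that finite \(\dim_{\ZZ}\) implies finite cohomological dimension for \(\Cat{Sp}\)-valued Postnikov-complete sheaves; that is the uniform-bound input behind \cref{clausen}. That is enough to make \(\Gamma\) commute with filtered colimits of uniformly bounded-below objects and, via Postnikov towers, in general.
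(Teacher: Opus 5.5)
There is a genuine gap. You never prove that \(f_{*}\) is \(\D_{\mot}(A)\)-linear, and that linearity is most of what the statement asserts.

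In your second step you say linearity follows from ``the projection formula on generators.'' Reducing to generators is fine, but the projection formula on the generators is where all the difficulty lies, and you give no argument for it. Take a generator \(M=g_{\natural}\unit\) with \(g\colon U\to X\) finitary. Combined with base change, the projection formula for \(M\) is exactly the double Beck--Chevalley equivalence \(g_{\natural}f'_{*}\simeq f_{*}g'_{\natural}\). That is a proper base change statement between a smooth map and a proper one. Before \(\DD^{1}\)-localization there is no reason to expect it: for the disk it would roughly say that sections of \(U\) over \(\DD^{1}_{C}\) are constant. After localization it is a theorem, not a formality.

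The Tate-twist projection formula you invoke in the third step does not touch this. Once \(\PP^{1}\) is inverted, the projection formula for an invertible object is automatic, because \(f_{*}\) always commutes with internal mapping objects out of pulled-back objects. Relatedly, it is not automatic that \(f_{*}\) preserves colimits of \(\DD^{1}\)-invariant objects, since those colimits involve \(\DD^{1}\)-localization. You would need something like the \(\DD^{\bullet}\)-singular formula for the localization together with base change.

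This is exactly the step the paper singles out. It reruns Scholze's proof and notes in \cref{xmqjp9} that the subtle point is the projection formula for \(\PP^{1}_{X}\to X\), where \(X=\Sparc(C)\) and \(C\) is a nondiscrete algebraically closed Banach field. It obtains this from \cref{ayoub}, applied to \(\D_{\mot}\) composed with analytification of quasiprojective \(C\)-schemes. That composite satisfies smooth base change with the projection formula, localization, homotopy invariance (deduced from \(\DD^{1}\)-invariance), and stability. Ayoub's second assertion then gives linearity of pushforward along projective morphisms, whatever the coefficients.

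By contrast, the obstacle you single out, boundedness with spherical coefficients, is routine, and your Postnikov argument already handles it.

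One further point: factoring through a disk over a general \(A\) can violate the fiberwise dimension hypothesis, because the residue fields of \(A\) need not have bounded cohomological dimension. So the reduction to disks should be made only after passing to a strictly totally disconnected base by arc descent.
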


\begin{remark}\label{xmqjp9}
  One subtle part in the proof of \cref{x5sz2s}
  is the projection formula
  for \(\PP^{1}_{X}\to X\),
  when \(X=\Sparc(C)\) for
  a nondiscrete algebraically closed Banach field~\(C\).
  This can also be deduced using \cref{ayoub}.
\end{remark}

Hence,
we can apply \cref{cll}
to obtain the following:

\begin{corollary}\label{x7y202}
  The lax symmetric monoidal functor
  \({\D_{\mot}}\colon\Cat{URing}_{\tfp}\to\Cat{Pr}_{\st}\)
  extends (uniquely)
  to
  \(\Span[2]_{\all;\all,\iso}((\Cat{URing}_{\tfp})^{\op})\).
\end{corollary}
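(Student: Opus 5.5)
The plan is to apply \cref{cll} with \(\cat{G}=(\Cat{URing}_{\tfp})^{\op}\), \(E=P=\) all morphisms and \(J=\) equivalences. The factorization condition on~\(E\) is then immediate, and since \(J\cap P\) consists of equivalences the truncatedness hypothesis is vacuous. Both \(J\) and~\(P\) are closed under base change and satisfy cancellation. So it suffices to check that \(\D_{\mot}\colon(\Cat{URing}_{\tfp})\to\Cat{Pr}_{\st}\) is \(P\)-adjointable for every morphism: each \(f^{*}\) should admit a right adjoint \(f_{*}\) satisfying base change and the projection formula. Because \(J\) is trivial there are no \(j_{\natural}\) and no double Beck–Chevalley conditions. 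Uniqueness of the extension is part of the conclusion of \cref{cll}, which says the restriction is a subcategory inclusion.

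To get adjointability I would invoke \cref{x5sz2s} for an arbitrary map \(f\colon A\to B\) in \(\Cat{URing}_{\tfp}\). The first hypothesis to verify is that \(\Sparc(B)\) embeds into a finite-dimensional ball over \(\Sparc(A)\). Since \(B\) is a quotient of \(\ZZ\{T_{1},\dotsc,T_{n}\}_{r_{1},\dotsc,r_{n}}\) by a finitely generated ideal, \(B\) is a quotient of \(A\{T_{1},\dotsc,T_{n}\}_{r_{1},\dotsc,r_{n}}\). A surjection of uniform Banach rings gives a closed immersion of Berkovich spectra, and therefore a monomorphism of arc stacks, so \(\Sparc(B)\) sits inside the polydisk of radii \(r_{i}\) over \(\Sparc(A)\).

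The second hypothesis is the uniform bound on \(\dim_{\ZZ}(Y_{x})\) for \(x\in\Sp(A)\), and I expect this to be the main obstacle. The fibre \(Y_{x}\) is a closed substack of the polydisk over the residue field \(K(x)\), and I would bound its cohomological dimension by the topological dimension plus the Galois cohomological dimension of its residue fields. For the topological part I would use \cref{xul75l}, or rather its analogue for a polydisk over \(K(x)\), where covering dimension is at most \(2n\) plus a constant. For the arithmetic part, every residue field of \(Y_{x}\) is again a complete residue field of some \(\ZZ\{T_{1},\dotsc,T_{m}\}_{r_{1},\dotsc,r_{m}}\) with \(m\) bounded by the number of variables of \(A\) plus~\(n\). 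I would then run the argument of \cref{xwba2d} through \cref{xpyglx}, using \(\dim_{\QQ}\) and \(\dim_{\FF_{p}}\) of these residue fields, to get \(\dim_{\ZZ}\) of each residue field at most \(m+3\) up to a constant. Combining the two bounds through the Leray-type argument implicit in the arc cohomological dimension (cf.~\cref{x90oh5}) gives a bound uniform in~\(x\).

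Once \cref{x5sz2s} applies, it supplies a \(\D_{\mot}(A)\)-linear \(f_{*}\) compatible with base change along arbitrary maps in \(\Cat{URing}_{\tfp}\). This family is closed under pushouts, so every relevant square lies in it. These are exactly the \((J,P)\)-biadjointability conditions of \cref{cll}, and applying \cref{cll} to the lax symmetric monoidal functor \(\D_{\mot}\) produces the asserted unique extension to \(\Span[2]_{\all;\all,\iso}((\Cat{URing}_{\tfp})^{\op})\).
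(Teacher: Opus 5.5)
Your skeleton is the paper's: the corollary follows in one line from \cref{x5sz2s} and \cref{cll}, with \(E=P\) all morphisms and \(J\) the equivalences. Your bookkeeping for \cref{cll} is fine, and the paper simply takes the hypotheses of \cref{x5sz2s} for granted.

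The gap is in the verification you add for the step you flag as the main obstacle, the uniform bound on \(\dim_{\ZZ}(Y_{x})\). You take \(Y_{x}\) to be the fibre over \(\Sparc(K(x))\). You then claim that every complete residue field \(L\) of \(\ZZ\{T_{1},\dotsc,T_{m}\}_{r_{1},\dotsc,r_{m}}\) has \(\dim_{\ZZ}\Sparc(L)\leq m+3\) up to a constant. This is false over~\(\ZZ\):
\begin{itemize}
\item \(\RR\) is the complete residue field of~\(\ZZ\) at an archimedean point. \(\Sparc(\RR)\simeq\Sparc(\CC)/C_{2}\) has the cohomology of \(BC_{2}\), and \(H^{2k}(C_{2};\ZZ)=\ZZ/2\neq0\) for all \(k\geq1\).
\item Nonarchimedean points fail too. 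For example, \(\QQ((T))\) is a complete residue field of \(\ZZ\{T\}_{r}\) over the trivial norm on~\(\QQ\). Any field admitting an ordering has infinite \(2\)-cohomological dimension, so \cref{xpyglx} gives nothing.
\end{itemize}
The argument of \cref{xwba2d} only works over a totally imaginary number ring~\(O\), as its stray hypothesis indicates. Under your reading, the supremum in \cref{x5sz2s} is already infinite for the identity of~\(\ZZ\), so the condition you set out to verify is not true.

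The hypothesis has to be read, and checked, over geometric points. Since \(\D_{\mot}\) satisfies arc~descent in the base, you can pull back along an arc~cover \(\Sparc(A')\to\Sparc(A)\) with \(A'\) strictly totally disconnected (\cref{xsdudu}). Construct \(f_{*}\) there and descend it: linear right adjoints satisfying base change on such a cover glue. Over~\(A'\), all residue fields are algebraically closed and nondiscrete, and each fibre is a closed subspace of an \(n\)-dimensional polydisk over such a field~\(C\). What you need is a bound depending only on~\(n\) for these spaces: covering dimension plus the cohomological dimension of their residue fields, which are now extensions of an algebraically closed field (cf.~\cref{x90oh5}). A count of variables via \cref{xwba2d} is not the right input.

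A minor point: \(\Sparc(B)\to\Sparc(A\{T\}_{r})\) is a monomorphism because \(A\{T\}_{r}\to B\) is an epimorphism in \(\Cat{URing}\). An injection on Berkovich spectra alone does not suffice, as \(\Sparc(\CC)\to\Sparc(\RR)\) shows.
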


\begin{definition}\label{x6bbzd}
  We write \(\D[2]_{\mot}(\ZZ)\)
  for the presentable \(2\)-category of kernels
  for the functor
  \(\Span((\Cat{URing}_{\tfp})^{\op})\to\Cat{Pr}_{\st}\),
  which is a restriction of the functor in \cref{x7y202}.
\end{definition}

\subsection{Disk invariance}\label{ss:disk}

We connect various notions of disk invariance:

\begin{proposition}\label{x85jpj}
  Consider a stable presentably symmetric monoidal \(2\)-category~\(\cat{C}\)
  and a stable weakly suave ring stack~\(R\)
  with an absolute value \(N\colon R\to[0,\infty)\)
  over~\(\cat{C}\).
  Assume \(\AA^{1}=R\) and \(\BB^{1}=N^{-1}([0,1))\)
  are homologically trivial.
  \begin{enumerate}
    \item\label{i:b_br}
      Then \(\BB_{r}^{1}=N^{-1}([0,r))\)
      is suave and
      homologically trivial
      for any \(r>0\).
    \item\label{i:b_dr}
      Then \(\DD_{r}^{1}=N^{-1}([0,r])\)
      is proper and
      cohomologically trivial for any \(r\geq0\).
  \end{enumerate}
\end{proposition}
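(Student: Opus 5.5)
The plan is to treat the geometric assertions (suave, proper) by formal properties of open and closed sublocales, and the (co)homological triviality by a rescaling argument together with the continuity of the idempotent algebras in \cref{xpbyl2}.

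\emph{Geometric part.} By \cref{xpbyl2}, \(\BB^{1}_{r}=N^{-1}([0,r))\) is the base change along \(N\) of the open sublocale \([0,r)\subset[0,\infty)\). The inclusion of an open sublocale is an étale geometric morphism, so by \cref{et_sua} its pullback \(\BB^{1}_{r}\to R\) is weakly suave. It is also a monomorphism, and \(\Hom(\unit,\X)\) of the left adjoint is fully faithful. Hence, by \cref{xuw75d}, it is an open immersion, in particular suave. \(R\) is suave by \cref{st_sm,xujpr0}, since it is stable, sutured (\cref{xljgmw}) and weakly suave. Composing, \(\BB^{1}_{r}\to{*}\) is suave.

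Dually, \(\DD^{1}_{r}=N^{-1}([0,r])\) is the pullback of a closed sublocale. It therefore corresponds to an idempotent algebra, namely the object \(D_{r}\) of \cref{xpbyl2}, and is a closed immersion into \(R\). Since \(R\to{*}\) is prim and separated (\cref{xc95y9}), so is \(\DD^{1}_{r}\to{*}\). For primness I intend to use that \(R\) is exceptional and that closed immersions are prim; if that does not suffice, I will instead show it is exceptional and identify \(f_{*}=f_{!}\). This gives properness.

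\emph{Homological triviality of \(\BB^{1}_{r}\).} For \(r=1\) this is the hypothesis. For general \(r\), I rescale. Let \(U_{r}=N^{-1}(\{r\})\subset R^{\times}\), and base change everything along \(U_{r}\to{*}\). Over \(U_{r}\) there is a tautological unit \(a\) with \(N(a)=r\). By multiplicativity (\cref{xs6zl2}), multiplication by \(a\) is an automorphism of \(R_{U_{r}}\) carrying \(\BB^{1}\) onto \(\BB^{1}_{r}\). Hence \(\BB^{1}_{r}\times U_{r}\to U_{r}\) is homologically trivial. Since \(f_{\natural}\) commutes with base change, it then suffices that \(U_{r}\to{*}\) is a cover in the sense of \cref{x0wijg}. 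This is the step I expect to be the main obstacle: it is the stack-theoretic analogue of \cref{xzxpzi}.

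My first attempt at the cover property is to apply \cref{d_sua}. This reduces to faithfulness of \(g_{\natural}g^{*}\unit\), where \(g\colon U_{r}\to{*}\). I would decompose \(R\) using the recollements \(R^{\times}\subset R\supset\{0\}\) and \([0,r)\cup(r,\infty)\) versus \(\{r\}\). Homological triviality of \(\AA^{1}\) and of \(\BB^{1}\) should compute \(g_{\natural}g^{*}\unit\) up to shifts and Tate twists by \(f_{\natural}s_{*}\), which is invertible by stability. The aim is to exhibit \(\unit\) as a retract or finite extension of that object. If this fails, I would avoid covers altogether. Instead, for \(r\leq1\) I would use the contraction \(\BB^{1}\times\BB^{1}_{r}\to\BB^{1}_{r}\), \((t,x)\mapsto tx\), and for \(r\geq1\) write \(\BB^{1}_{r}\) via \(\BB^{1}_{r}\cup(R\setminus\DD^{1}_{r'})\).

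\emph{Cohomological triviality of \(\DD^{1}_{r}\).} The case \(r=0\) is trivial, since \(\DD^{1}_{0}=\{0\}\) by the absolute value axiom. For \(r>0\), overconvergence (\cref{xraf07}, i.e.\ \cref{i:int_d} of \cref{xpbyl2}) gives \(D_{r}=\injlim_{r'>r}D_{r'}\), and \(\DD^{1}_{r}\subset\BB^{1}_{r'}\) for each \(r'>r\). I would compute \(f_{*}f^{*}\unit\) for \(\DD^{1}_{r}\) as the corresponding limit or colimit of pushforwards. Using the duality of \cref{st_sm}, I would convert homological triviality of the suave \(\BB^{1}_{r'}\) into cohomological triviality of \(\BB^{1}_{r'}\), via \(f_{!}=f_{\natural}(\X\otimes(f^{!}\unit)^{-1})\) and invertibility of \(f^{!}\unit\). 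Then I would pass to the intersection over \(r'\), using properness of \(\DD^{1}_{r}\) to identify \(f_{*}\) with \(f_{!}\) there.
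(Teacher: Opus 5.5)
Your argument that \(\BB^{1}_{r}\) is suave is fine and agrees with the paper. The rest has genuine gaps, and they all come from never using the compactification \(\PP^{1}=R\amalg_{R^{\times}}R\).

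\emph{Properness of \(\DD^{1}_{r}\).} You assert that \(R\to{*}\) is prim, but this does not hold in general; \cref{xc95y9} only gives separatedness. Already over \(\SH[2](\ZZ)\) the affine line is not prim. If it were, \(f_{*}\) for \(\AA^{1}_{X}\to X\) would commute with all base change, and this fails for \(i_{*}\unit\) on the hyperbola \(i\colon\{st=1\}\hookrightarrow\AA^{2}\) over the \(s\)-line at \(s=0\). Exceptionality plus primness of closed immersions does not help either. ``Identifying \(f_{*}=f_{!}\)'' for \(\DD^{1}_{r}\) is exactly the compactness statement that needs proof.

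The paper first applies \cref{main_2}. \(R\) is sutured by \cref{xljgmw}, stable, weakly suave, and \(\AA^{1}\) is homologically trivial, so \(R\) is pulled back from \(\SH[2](\ZZ)\), where \(\PP^{1}\) is proper. Then \(N\) extends to \(\widetilde{N}\colon\PP^{1}\to[0,\infty]\), using \(N^{-1}\) on the other chart. So \(\DD^{1}_{r}=\widetilde{N}^{-1}([0,r])\) is closed in \(\PP^{1}\), hence proper.

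\emph{Cohomology of \(\DD^{1}_{r}\).} Your plan conflates \(f_{!}\) with \(f_{*}\). Duality turns homological triviality of the suave \(\BB^{1}_{r'}\) into \(f_{!}\unit\simeq\unit(-1)[-2]\). That is compactly supported cohomology, not cohomological triviality. Passing to the intersection over \(r'>r\) does not by itself relate the closed disk to the open ones. You need the complementary region near \(\infty\), i.e.\ the other chart of \(\PP^{1}\).

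\emph{Homology of \(\BB^{1}_{r}\).} Rescaling requires \(N^{-1}(\{r\})\to{*}\) to be a cover. In the paper this fact (\cref{xlcq6d}) is \emph{deduced from} the present proposition, via prim descent (\cref{xpbfpq}), properness of \(\PP^{1}\), and part (ii). Your attempt via \cref{d_sua} does not apply: \(N^{-1}(\{r\})\) is a closed substack of \(R^{\times}\), not weakly suave, so there is no \(g_{\natural}\). Your fallback contraction \(\BB^{1}\times\BB^{1}_{r}\to\BB^{1}_{r}\) never reaches the identity, since \(N(1)=1\) gives \(1\notin\BB^{1}\).

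The paper proceeds in the opposite order:
(a) \(\DD^{1}\) is cohomologically trivial by the recollement \(\BB^{1}\subset\PP^{1}\supset\widetilde{N}^{-1}([1,\infty])\cong\DD^{1}\), using the hypothesis on \(\BB^{1}\) and the cohomology of \(\PP^{1}\).
(b) \(\DD^{1}_{r}\) follows from the multiplication \(\DD^{1}\times\DD^{1}_{r}\to\DD^{1}_{r}\), a \(\DD^{1}\)-homotopy between \(0\) and \(\id\). This works because \(1\in\DD^{1}\) and \(\DD^{1}\) is proper and cohomologically trivial.
(c) \(\BB^{1}_{r}\) is the open complement in \(\PP^{1}\) of \(\widetilde{N}^{-1}([r,\infty])\cong\DD^{1}_{1/r}\), so its homology follows from (ii) by the same recollement.
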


\begin{proof}
  By \cref{main_2},
  the specification of the ring stack
  factors through~\(\SH[2](\ZZ)\).
  In particular,
  \(\PP^{1}=R\amalg_{R^{\times}}R\) is proper.

  We first prove~\cref{i:b_dr} for \(r=1\).
  Hence,
  the extended norm map
  \(\widetilde{N}\colon\PP^{1}=R\amalg_{R^{\times}}R\to[0,\infty]\)
  is proper.
  Therefore,
  \(\DD^{1}=N^{-1}([0,1])=\widetilde{N}^{-1}([0,1])\)
  is proper.
  Its cohomology can be computed by the assumption
  and the recollement induced by
  \(\BB^{1}\to\PP^{1}\gets\DD^{1}\),
  which is the base change of
  \([0,1)\to[0,\infty]\gets[1,\infty]\)
  along~\(\widetilde{N}\).

  We now prove~\cref{i:b_dr} for general~\(r\geq0\).
  Its properness follows from the same argument.
  To compute its cohomology,
  we can use the multiplication map
  \(\DD^{1}\times\DD_{r}^{1}\to\DD_{r}^{1}\),
  which gives a \(\DD^{1}\)-homotopy between~\(\id\) 
  on~\(\DD_{r}^{1}\)
  and the constant map~\(0\).

  Finally,
  we prove~\cref{i:b_br}.
  First, \(\BB_{r}^{1}\) is suave,
  since \([0,r)\to[0,\infty)\) is suave.
  To compute its homology,
  as in the previous argument,
  we realize~\(\BB_{r}^{1}\)
  as the complement of \(\DD_{\sfrac1r}^{1}\)
  inside~\(\PP^{1}\).
  Now the result follows from~\cref{i:b_dr}.
\end{proof}

This proves the existence
of a pseudouniformizer up to descent:

\begin{corollary}\label{xlcq6d}
  In the situation of \cref{x85jpj},
  the morphism \(N^{-1}(\{r\})\to{*}\) is 
  a cover for any \(r>0\).
\end{corollary}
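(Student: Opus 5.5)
The plan is to reduce descent along the prim map \(S_{r}=N^{-1}(\{r\})\to{*}\) to a descendability statement in the stable category \(\End_{\cat{C}}(\unit)\).

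First I would check that \(S_{r}\to{*}\) is prim. By \cref{x85jpj}\,\cref{i:b_dr}, \(\DD_{r}^{1}\) is proper. Moreover, \(S_{r}\subset\DD_{r}^{1}\) is the base change of the closed immersion \(\{r\}\to[0,r]\) along \(N\), so it is a closed immersion. Hence \([*]\to[S_{r}]\) has a linear right adjoint, and \cref{xpbfpq} applies. It then suffices to show that \(A=f_{*}\unit_{S_{r}}\) satisfies descent in \(\Hom_{\cat{C}}(\unit,[*])\). Since this category need not have a compact unit, I would aim for the stronger property of descendability and then invoke \cref{x670j7}.

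Next I would compute \(A\) using the recollement \(\BB_{r}^{1}\to\DD_{r}^{1}\gets S_{r}\), obtained by pulling back \([0,r)\to[0,r]\gets\{r\}\) along \(N\). Applying \(g_{*}=g_{!}\) for the proper map \(g\colon\DD_{r}^{1}\to{*}\), this gives a cofiber sequence
\[
  h_{!}\unit_{\BB_{r}^{1}}\to g_{*}\unit_{\DD_{r}^{1}}\to A,
\]
with \(h\colon\BB_{r}^{1}\to{*}\). By \cref{x85jpj}\,\cref{i:b_dr}, the middle term is \(\unit\). By suaveness and homological triviality of \(\BB_{r}^{1}\) from \cref{x85jpj}\,\cref{i:b_br}, together with the computation of the dualizing object in \cref{st_sm}, \(h^{!}\unit\) is pulled back from the point and is invertible. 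Writing \(L=h^{!}\unit\) as there, we get \(h_{!}\unit\simeq h_{\natural}(h^{!}\unit)^{-1}\simeq L^{-1}\). So the fiber \(I=\fib(\unit\to A)\) is identified with \(L^{-1}\), and the map \(I\to\unit\) is a Gysin or Euler-type class \(e\colon L^{-1}\to\unit\).

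The main obstacle is showing that some tensor power \(e^{\otimes d}\colon L^{-d}\to\unit\) vanishes; I would aim for \(d=2\). My first attempt uses the ring structure. The map \(I\otimes I\to\unit\) is the compactly supported class of \(\BB_{r}^{1}\times\BB_{r}^{1}\) pushed into \(\DD_{r}^{1}\times\DD_{r}^{1}\). I would use the additive and multiplicative structure, for instance the \(\DD^{1}\)-homotopy by scaling as in the proof of \cref{x85jpj}, together with the shearing map \((x,y)\mapsto(x,x-y)\), to show that this class factors through the restriction of the unit to a locus containing no point of \(\BB_{r}^{1}\times\BB_{r}^{1}\), and hence vanishes.

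If this fails, my fallback is to first base change along a cover that produces a point of norm \(r\). Over such a cover, \(S_{r}\) acquires a section, so \(\unit\to A\) admits a retraction and descent follows from \cref{x059gc}. The general case would then follow from \cref{xhrdzm}\,\cref{i:d_cbc}. For the auxiliary cover, I would try \(N^{-1}(\{r\})\) for a rational or square-root–related radius, exploiting the multiplicativity of \(N\) from \cref{xs6zl2}. This reduction remains the step I am least sure closes without circularity.
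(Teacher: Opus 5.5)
Your reduction is sound up to the identification \(A\simeq\operatorname{cofib}(e\colon L^{-1}\to\unit)\). That includes primness, the reduction via \cref{xpbfpq}, the recollement \(\BB^{1}_{r}\to\DD^{1}_{r}\gets N^{-1}(\{r\})\), and \(h_{!}\unit\simeq L^{-1}\). The gap is the step you flag yourself: nothing in the proposal shows that any power of \(e\) vanishes. The displacement idea would need a point of \(\DD^{1}_{r}\) (or of \(\DD^{1}_{r}\times\DD^{1}_{r}\)) outside the open disk(s), i.e.\ a point of norm \(r\). That is exactly what is unavailable over the point, and shearing does not even preserve \(\DD^{1}_{r}\times\DD^{1}_{r}\). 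The fallback is circular, as you suspect. Any cover of \({*}\) that supplies a point of norm \(r\), e.g.\ \(N^{-1}(\{r^{1/n}\})\) with the \(n\)th power map, is itself an instance of the claim being proved.

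The missing observation is that \(e\) is already zero, so no descendability bound is needed. Write \(\Gamma(X)\) for the pushforward of \(\unit_{X}\) to \(\End_{\cat{C}}(\unit)\), and embed everything in \(\PP^{1}=R\amalg_{R^{\times}}R\) with the extended norm \(\widetilde{N}\). The open disk \(\BB^{1}_{r}\) is the complement of \(D_{\infty}=\widetilde{N}^{-1}([r,\infty])\), which is cohomologically trivial by \cref{x85jpj}\,\cref{i:b_dr} applied in the other chart. Using this recollement and properness of \(\PP^{1}\), you get \(h_{!}\unit\simeq\fib(\infty^{*}\colon\Gamma(\PP^{1})\to\unit)\). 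After identifying \(\Gamma(\DD^{1}_{r})\simeq\unit\) via \(0^{*}\), your \(e\) is the restriction of \(0^{*}\colon\Gamma(\PP^{1})\to\unit\) to this fiber. But \(0^{*}\simeq1^{*}\simeq\infty^{*}\) on \(\Gamma(\PP^{1})\). Each pair of points lies in one of the two charts \(\AA^{1}\), and there \(\unit\to\Gamma(\AA^{1})\) is an equivalence, since homological triviality makes \(f^{*}\) fully faithful. So every section induces the inverse of that equivalence. Hence \(e\simeq0\) and \(A\simeq\unit\oplus\unit(-1)[-1]\), so \(\unit\to A\) has a retraction and \cref{x059gc} gives descent directly. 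This is the paper's argument, which it phrases as Mayer--Vietoris for the closed cover \(\PP^{1}=N^{-1}([0,r])\cup\widetilde{N}^{-1}([r,\infty])\) with intersection \(N^{-1}(\{r\})\).
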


\begin{proof}
  We know that~\(\PP^{1}\) is proper
  and therefore,
  the extended norm map~\(\widetilde{N}\)
  is also proper.
  This implies that
  \(f\colon N^{-1}(\{r\})\to{*}\) is proper
  and therefore by \cref{xpbfpq},
  it suffices to show that
  \(f_{*}\unit\in\End_{\cat{C}}(\unit)\)
  satisfies descent.
  We compute this
  by regarding it as
  the intersection
  of \(N^{-1}([0,r])\cap\widetilde{N}^{-1}([r,\infty])\)
  inside~\(\PP^{1}\).
  By \cref{i:b_dr},
  we can compute it as
  \(\unit\oplus\unit(-1)[-1]\),
  and therefore it admits a section,
  and by \cref{x059gc},
  it satisfies descent.
\end{proof}

\begin{remark}
  Some weaker variants of \cref{xlcq6d}
  remain true without assuming disk invariance:
  Given
  a stable homologically trivial weakly suave ring stack
  with an absolute value~\(N\),
  one can show that \(N^{-1}((r,s))\to{*}\)
  is a cover for any \(0<r<s\).
  Nevertheless,
  \(\AA^{1}\)-invariance
  is strictly weaker than \(\BB^{1}\)-invariance.
\end{remark}

\begin{remark}\label{xm1m2w}
  At least after proving \cref{main_4},
  one can show with extra work that
  the norm map \(N\colon\AA^{1}\to[0,\infty)\) is a cover
  over \(\Mot[2](\ZZ)\).
\end{remark}

\subsection{\texorpdfstring{\(2\)}{2}-motives of seminormed rings}\label{ss:smot}

By \cref{main_3},
we obtain a morphism \(\SH[2]_{\et}(\ZZ)\to\Mot[2](\ZZ)\).
Therefore, we obtain the class for each ring:

\begin{definition}\label{x0qgj9}
  For a ring~\(A\),
  we write \([A]_{\alg}\in\CAlg(\Mot[2](\ZZ))\)
  for the image
  of~\([A]\in\CAlg(\SH[2]_{\et}(\ZZ))\).
\end{definition}

In our situation,
we can define the motive of each seminormed ring
as follows:

\begin{definition}\label{xucjcx}
  For a seminormed ring~\(A\),
  we consider the pushout
  \begin{equation*}
    \begin{tikzcd}
      \Shv([0,\infty]^{\pi_{0}(A)})\otimes\unit\ar[r]\ar[d]&
      {[A]_{\alg}}\ar[d]\\
      \Shv(\Sp(A))\otimes\unit\ar[r]&
      {[A]}
    \end{tikzcd}
  \end{equation*}
  in \(\CAlg(\Mot[2](\ZZ))\).
  Since \([A]_{\alg}\to[A]\) is an epimorphism,
  by \cref{xis10l} below,
  this assembles into a functor
  \([\X]\colon\Cat{SNRing}\to\CAlg(\Mot[2](\ZZ))\).
\end{definition}

\begin{lemma}\label{xis10l}
  Let \(F\colon\cat{C}\to\cat{D}\) be a functor.
  For \(C\in\cat{C}\),
  we have a monomorphism \(F'_{C}\to F(C)\).
  Assume that for any \(C\to C'\),
  the composite
  \(F'_{C}\to F(C)\to F(C')\)
  factors through~\(F'_{C'}\).
  Then this family uniquely assembles into a functor
  \(F'\colon\cat{C}\to\cat{D}\)
  and a natural transformation \(F'\to F\).
\end{lemma}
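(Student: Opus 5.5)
We construct \(F'\) directly as a subfunctor of \(F\), using that monomorphisms are fully faithful in the relevant \(\infty\)-categorical sense. The key observation is that in any \(\infty\)-category, for a monomorphism \(m\colon F'_{C'}\to F(C')\), the space of factorizations of a map \(g\colon X\to F(C')\) through \(m\) is either empty or contractible. Consequently, factoring through \(F'_{C'}\) is a \emph{property} of \(g\), not additional data. We plan to exploit this to show that all the higher coherences needed for \(F'\) are automatic.

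Concretely, first consider the arrow category \(\Fun([1],\cat{D})\) and its full subcategory \(\cat{M}\) spanned by monomorphisms. The target functor \(t\colon\cat{M}\to\cat{D}\) is a cartesian fibration when restricted appropriately, but more to the point we use a simpler formulation. Let \(\cat{E}\to\cat{C}\) be the pullback of \(\cat{M}\xrightarrow{t}\cat{D}\) along \(F\). An object of \(\cat{E}\) over \(C\) is a monomorphism into \(F(C)\). The given family picks an object \(F'_{C}\) of each fiber. Next, let \(\cat{E}'\subset\cat{E}\) be the full subcategory spanned by the chosen objects \((F'_{C}\to F(C))\), one for each \(C\). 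Since a morphism in \(\cat{E}\) from \((F'_{C}\to F(C))\) to \((F'_{C'}\to F(C'))\) lying over \(C\to C'\) is a factorization of \(F'_{C}\to F(C)\to F(C')\) through \(F'_{C'}\), the mapping spaces of \(\cat{E}'\) over \(\Map_{\cat{C}}(C,C')\) have fibers that are empty or contractible. By the hypothesis, these fibers are nonempty, hence contractible.

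Therefore the projection \(\cat{E}'\to\cat{C}\) is fully faithful. It is also essentially surjective, since each \(C\) has the chosen object over it, and the only subtlety is that we should take exactly one object per fiber up to equivalence (equivalently, define \(\cat{E}'\) as spanned by objects equivalent over \(\id_{C}\) to the chosen one). Hence \(\cat{E}'\to\cat{C}\) is an equivalence. A section of it, composed with \(\cat{E}'\to\cat{M}\subset\Fun([1],\cat{D})\), gives a functor \(\cat{C}\to\Fun([1],\cat{D})\), that is, a natural transformation \(F'\to F\) with target \(F\) by construction. For uniqueness, the space of such data (functors \(F'\) with transformation to \(F\) whose components are the given monomorphisms) is the space of sections of \(\cat{E}'\to\cat{C}\), which is contractible since this map is an equivalence.

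The main obstacle is the precise bookkeeping in the second step: verifying that the fiber of \(\Map_{\cat{E}}\to\Map_{\cat{C}}\) is exactly the space of factorizations, and that mono-ness makes it \((-1)\)-truncated. We would handle this by noting that \(\Fun([1],\cat{D})\times_{\cat{D}}\cat{C}\) has mapping spaces computed as fiber products, and that for a monomorphism \(m\), the map \(\Map(X,F'_{C'})\to\Map(X,F(C'))\) is \((-1)\)-truncated by definition.
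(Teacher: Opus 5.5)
Your argument is correct, but it is organized differently from the paper's proof.

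\textbf{The paper's route.} It first uses Yoneda to reduce to \(\cat{D}=\Cat{Ani}\), replacing \(F\) by \((C,D)\mapsto\Map(D,F(C))\) on \(\cat{C}\times\cat{D}^{\op}\). In anima a monomorphism is an inclusion of a union of path components. So the hypothesis just says that the subsets \(\pi_{0}F'_{C}\subset\pi_{0}F(C)\) form a subfunctor \(F'_{0}\subset\pi_{0}F\) of \(\Cat{Set}\)-valued functors, which requires no coherence at all. Then \(F'=F'_{0}\times_{\pi_{0}F}F\) supplies all the higher structure in one stroke, and uniqueness is left to ``the construction.''

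\textbf{Your route.} You stay in \(\cat{D}\) and use the same underlying mechanism: a space of factorizations through a monomorphism is \((-1)\)-truncated. You apply it at the level of mapping spaces of \(\cat{E}=\cat{C}\times_{\cat{D}}\cat{M}\) and conclude that \(\cat{E}'\to\cat{C}\) is an equivalence.

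\textbf{Comparison.}
\begin{itemize}
\item Your approach avoids both the Yoneda detour and the special description of monomorphisms of anima.
\item It makes the meaning of ``uniquely'' precise, as contractibility of a space of sections.
\item The price is a little model bookkeeping. To get a transformation whose target is \(F\) on the nose, rather than merely equivalent to \(F\), take \(\cat{E}'\) replete, as you indicate. Then \(\cat{E}'\) is a replete full subcategory of a pullback of the isofibration \(\mathrm{ev}_{1}\colon\Fun([1],\cat{D})\to\cat{D}\). Hence \(\cat{E}'\to\cat{C}\) is an isofibration and an equivalence, i.e., a trivial fibration, so it has a strict section and a contractible space of them.
\end{itemize}

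Your passing remark that the target functor on monomorphisms is a cartesian fibration is not true in general: cartesian lifts would be pullbacks of monomorphisms, which \(\cat{D}\) need not have. You never use it, so it does no harm.
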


\begin{proof}
  By Yoneda, we may assume \(\cat{D}=\Cat{Ani}\):
  More precisely,
  we consider \(G\colon\cat{C}\times\cat{D}^{\op}\to\Cat{Ani}\)
  given by \((C,D)\mapsto\Map(D,F(C))\)
  and \(G'_{C,D}=\Map(D,F'_{C})\).
  In this case,
  we can construct \(F'_{0}\colon\cat{C}\to\Cat{Set}\)
  as a subfunctor of \(\pi_{0}F\colon\cat{C}\to\Cat{Set}\).
  The functor \(F'=F'_{0}\times_{\pi_{0}(F)}F\)
  satisfies the desired property.
  The uniqueness also follows from the construction.
\end{proof}

\begin{proposition}\label{x925qh}
  The functor \([\X]\colon\Cat{SNRing}\to\CAlg(\Mot[2](\ZZ))\)
  factors through \(\Cat{SNRing}^{\heartsuit}\)
  and preserves colimits.
\end{proposition}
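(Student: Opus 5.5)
We treat the two assertions separately, working throughout with the pushout formula of \cref{xucjcx}, namely \([A]=[A]_{\alg}\otimes_{\Shv([0,\infty]^{\pi_{0}(A)})\otimes\unit}(\Shv(\Sp(A))\otimes\unit)\).

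\textbf{Factoring through \(\Cat{SNRing}^{\heartsuit}\).} Every ingredient of the pushout depends on \(A\) only through \(\pi_{0}(A)\) with its seminorm: \(\Sp(A)\) and \([0,\infty]^{\pi_{0}(A)}\) are defined from \(\pi_{0}(A)\) by construction, and \([A]_{\alg}\simeq[\pi_{0}A]_{\alg}\) by \cref{x2mgqh} transported along \(\SH[2](\ZZ)\to\SH[2]_{\et}(\ZZ)\to\Mot[2](\ZZ)\). Hence the map \([A]\to[\pi_{0}A]\) is a pushout of equivalences, so it is an equivalence.

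\textbf{Preservation of colimits.} We reduce to sifted colimits and finite coproducts.

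For filtered colimits \(A=\injlim_{i}A_{i}\), the functor \([\X]_{\alg}\) preserves filtered colimits, because \(\SH_{\et}\) does by \cref{x5iwcy} and the kernel construction is compatible with this. The spectrum satisfies \(\Sp(A)\simeq\projlim_{i}\Sp(A_{i})\) by \cref{xfdud8}~\cref{i:sp_fil}. Cofiltered limits of compact Hausdorff spaces, and more generally of locales, pass to filtered colimits of \(\Shv(\X)\) in \(\CAlg(\Cat{Pr})\). This follows from the \(0\)-truncated stack description of \cref{xm6f2s} and \cite{ttg-sm}. The same holds for the cube \([0,\infty]^{\pi_{0}(A)}\), viewed as a limit of finite subproducts; note that its index set also grows along the colimit. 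Since pushouts commute with colimits, the filtered case follows.

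For finite coproducts \(A\otimes B\) (\cref{xrfuab}), we have \([A\otimes B]_{\alg}\simeq[A]_{\alg}\otimes[B]_{\alg}\). We need \(\Shv(\Sp(A\otimes B))\) to agree with the appropriate pushout of \(\Shv(\Sp A)\otimes\Shv(\Sp B)\) over the algebraic part. Here we use the presentation of \cref{xywaym}: write everything via \(\ZZ[T]_{r}\). The statement reduces to two generating cases.
\begin{itemize}
\item For \([\ZZ[T]_{r}]\), the pushout equals \([\ZZ[T]]_{\alg}\) restricted along \(N^{-1}([0,r])\).
\item The closed-substack relations \(\ZZ[T]_{s}\otimes_{\ZZ[T]_{r}}\ZZ[T]_{s}\simeq\ZZ[T]_{s}\) and \(\injlim_{r'>r}\ZZ[T]_{r'}\simeq\ZZ[T]_{r}\) hold on the stack side. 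The first holds because closed immersions are monomorphisms. The second holds by the overconvergence of \([0,\infty)\) noted in \cref{xraf07} and \cref{xpbyl2}~\cref{i:int_d}.
\end{itemize}
Thus \([\X]\) restricted to \(\Cat{SNPol}\) is left Kan extended, and colimit-preservation follows from the description of \(\Cat{SNRing}\) in \cref{xywaym}. Geometric realizations are then handled by the same sifted-colimit argument after the \(\pi_{0}\) reduction.

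\textbf{Main obstacle.} The hard step is comparing \(\Sp\) of a general pushout with the pushout of sheaf categories. Surjectivity alone, \cref{xfdud8}~\cref{i:sp_pb}, is insufficient. I would first attempt to avoid computing \(\Sp(A\otimes B)\) at all, by checking that the correct universal property is satisfied on generators \(\ZZ[T_{1},\dotsc,T_{n}]_{r_{1},\dotsc,r_{n}}\). On these generators \(\Sp\) is explicitly a closed subspace cut out by norm conditions, and tensor products of such generators are again generators.
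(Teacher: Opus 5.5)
You have a genuine gap in the finite-coproduct step (and the remaining non-filtered colimits), which is the heart of the proposition. Your \(\pi_{0}\) reduction and your treatment of filtered colimits are fine and agree with the paper.

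\textbf{The coproduct step is circular.} Checking that the images of the generators \(\ZZ[T_{1},\dotsc,T_{n}]_{r_{1},\dotsc,r_{n}}\) satisfy the relations of \cref{xywaym} only produces a colimit-preserving functor \(\widetilde{F}\colon\Cat{SNRing}\to\CAlg(\Mot[2](\ZZ))\) together with a comparison \(\widetilde{F}\to[\X]\). It does not show that the functor defined by the pushout of \cref{xucjcx} is left Kan extended from \(\Cat{SNPol}\), i.e.\ that \([A]\simeq\injlim_{\Cat{SNPol}_{/A}}[P]\). That identification is exactly the content of the proposition; the paper deduces the agreement of the two definitions in \cref{x0cq9b} \emph{from} \cref{x925qh}. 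So your ``Thus \([\X]\) is left Kan extended'' assumes the conclusion.

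\textbf{The generator-level inputs are unproved.} The inputs you list, e.g.\ that \([\ZZ[T]_{r}]\) is \([\ZZ[T]]_{\alg}\) restricted to \(N^{-1}([0,r])\), or that \([\ZZ[T_{1},T_{2}]_{r_{1},r_{2}}]\simeq[\ZZ[T_{1}]_{r_{1}}]\otimes[\ZZ[T_{2}]_{r_{2}}]\), are asserted without proof. They run into the very obstacle you flag. Even for these polynomial rings, \(\Sp\) of the coproduct maps non-injectively to the product of spectra. Also, the pushout in \cref{xucjcx} involves the coordinates \([0,\infty]^{\pi_{0}(A)}\) for \emph{all} elements, not just the generators. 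Finally, geometric realizations are left unargued; after the \(\pi_{0}\) reduction these become quotients, which need their own argument.

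\textbf{The missing idea is to use the absolute semivalue axioms directly.}
\begin{itemize}
\item \emph{Nullary case.} Multiplicativity, the triangle inequality, \(N(0)\leq0\) and \(N(1)=1\) force \(\Shv([0,\infty]^{C})\otimes\unit\to[C]_{\alg}\) to factor through the closed sublocale of multiplicative seminorms on \(C\). This is how the paper handles \(\Sp(\ZZ)\subset\prod_{n}[0,\infty]\).
\item \emph{Binary coproducts.} On \([A]\otimes[B]\) one has \(N(\sum a_{i}\otimes b_{i})\leq\sum N(a_{i})N(b_{i})\leq\sum\lvert a_{i}\rvert\lvert b_{i}\rvert\). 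Hence the closed conditions cutting out \(\Sp(A\otimes B)\) inside \([0,\infty]^{A\otimes B}\) (the bound by the projective tensor seminorm) are generated by those for \(A\) and \(B\). The non-injectivity of \(\Sp(A\otimes B)\to\Sp(A)\times\Sp(B)\) is then absorbed by the factor \([A\otimes B]_{\alg}\simeq[A]_{\alg}\otimes[B]_{\alg}\).
\item \emph{General pushouts.} Write \(A'\otimes_{A}B=A\otimes_{A\otimes A}(A'\otimes B)\) to reduce to \(A\to A'\) surjective. There \(\Sp(A')\) is the pullback of \(\Sp(A)\) along the diagonal closed embedding \([0,\infty]^{A'}\to[0,\infty]^{A}\). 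So \(\Sp(A'\otimes_{A}B)\simeq\Sp(A')\times_{\Sp(A)}\Sp(B)\), and the pushout formula of \cref{xucjcx} is preserved.
\end{itemize}
Combined with filtered colimits, this covers all colimits, with no need for the presentation of \cref{xywaym}.
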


\begin{proof}
  The first claim follows directly from the definition.
  We prove the second claim about colimits.
  For filtered colimits,
  it follows from the definition and \cref{i:sp_fil}
  of \cref{xfdud8}.

  We first prove that it preserves finite coproducts. 
  For the nullary case,
  we need to show that
  \(\Shv([0,\infty]^{\ZZ})\to\End_{\Mot[2](\ZZ)}(\unit)\)
  factors through \(\Shv(\Sp(\ZZ))\).
  For this,
  we recall that \(\Sp(\ZZ)\subset\prod_{n}[0,\infty]\)
  is cut out by the multiplicative seminorm axioms.
  Therefore,
  this follows from the definition of an absolute semivalue.
Similarly,
  we can see that it preserves binary coproducts.

  We then prove
  that it preserves pushouts
  \(A'\otimes_{A}B\)
  to complete the proof.
  By \(A'\otimes_{A}B=A\otimes_{A\otimes A}(A'\otimes B)\),
  it suffices to treat the case
  when \(A\to A'\) is a surjection.
In this case,
  \(\Sp(A')\to\Sp(A)\)
  is the pullback of
  the diagonal inclusion
  \([0,\infty]^{A'}\to[0,\infty]^{A}\)
  and hence
  \(\Sp(A'\otimes_{A}B)\simeq\Sp(A')\otimes_{\Sp(A)}\Sp(B)\).
  From this,
  we observe the desired result.
\end{proof}

We see some examples:

\begin{example}\label{x099iq}
  We describe the direct relation between this construction
  and the norm \(N\colon R\to[0,\infty)\).
  We claim that \([\ZZ[T]_{r}]\) for \(r\geq0\)
  corresponds to \(N^{-1}([0,r])\).
  To see this,
  we need to see that \([\ZZ[T]_{r}]\)
  inside \([\ZZ[T]]_{\alg}=[R]\)
  is solely cut out from~\(T\).
  This follows from an argument
  similar to the proof of \cref{x925qh};
  all the inequalities are generated by \(\lvert T\rvert\leq r\).
  Similarly,
  \([\ZZ[T,T^{-1}]_{s,\sfrac1r}]\)
  for \(r\leq s\)
  corresponds to \(N^{-1}([r,s])\).
\end{example}

\begin{remark}\label{x0cq9b}
  One can give another definition of
  the \(2\)-motive of a seminormed ring
  via \cref{x099iq}.
  Specifically,
  we define
  \(\Cat{SNPol}\to\Mot[2](\ZZ)\)
  by sending
  \(\ZZ[T_{1},\dotsc,T_{n}]_{r_{1},\dotsc,r_{n}}\)
  to \(N^{-1}([0,r_{1}])\times\dotsb\times N^{-1}([0,r_{n}])\).
  Then we can check
  the relations described in \cref{xywaym}
  are satisfied,
  so that this extends to a functor
  \(\Cat{SNRing}\to\Mot[2](\ZZ)\).
  By \cref{x925qh},
  this definition recovers our definition here.
\end{remark}

\begin{example}\label{x5i50v}
  Let \(l\) be a prime.
  Consider \(\ZZ[\sfrac1l]_{1}\)
  to be the universal one with \(\lvert\sfrac1l\rvert\leq1\).
  The motive of this is
  just the base change of~\(\unit\)
  along the morphism
  \(\Shv(\Sp(\ZZ))\otimes\unit\to\Shv(\Sp(\ZZ[\sfrac1l]_{1}))\otimes\unit\).
\end{example}

\begin{example}\label{x4y9d2}
  Let \(A\to A/I\) be a quotient of (static) seminormed rings.
  In this case,
  \begin{equation*}
    \begin{tikzcd}
      {[A]_{\alg}}\ar[r]\ar[d]&
      {[A/I]_{\alg}}\ar[d]\\
      {[A]}\ar[r]&
      {[A/I]}
    \end{tikzcd}
  \end{equation*}
  is a pushout in \(\CAlg(\Mot[2](\ZZ))\).
  Since both commute with colimits,
  it suffices to show this
  for \(\ZZ[T]_{r}\to\ZZ\)
  for \(r\geq0\),
  which follows from \cref{x099iq}.
\end{example}

\begin{proposition}\label{x2a9p8}
  For any seminormed ring~\(A\),
  the stack corresponding to \([A]\) is static and proper.
\end{proposition}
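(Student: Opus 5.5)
We reduce to generators using that \([\X]\colon\Cat{SNRing}\to\CAlg(\Mot[2](\ZZ))\) preserves colimits and factors through static seminormed rings (\cref{x925qh}). By \cref{xywaym}, every seminormed ring is a sifted colimit of the \(\ZZ[T_{1},\dotsc,T_{n}]_{r_{1},\dotsc,r_{n}}\). So \([A]\) is a colimit in \(\CAlg\) of tensor products of the \([\ZZ[T]_{r}]\), and hence a filtered colimit of finite pushouts of such. The plan is to check staticness and properness on generators and then show both notions survive these operations.

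\textbf{Step 1: generators.} By \cref{x099iq}, \([\ZZ[T]_{r}]\) corresponds to the closed disk \(\DD^{1}_{r}=N^{-1}([0,r])\subset R\). Properness of \(\DD^{1}_{r}\to{*}\) is \cref{i:b_dr} of \cref{x85jpj}. Staticness comes from the diagonal: the diagonal of \(\DD^{1}_{r}\) is the base change of the diagonal of \(R\). Indeed, \(\DD^{1}_{r}\to R\) is a closed immersion, being cut out by an idempotent algebra pulled back from \([0,\infty)\), so it is a monomorphism. The diagonal of \(R\) is a closed immersion by \cref{xc95y9}. Hence the diagonal of \(\DD^{1}_{r}\) is a closed immersion, so \(\DD^{1}_{r}\) is static and separated, hence proper in the sense of \cref{xuzdhm}. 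The case \(r=0\) is the point \(\{0\}\), using \(N^{-1}(\{0\})=\{0\}\).

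\textbf{Step 2: closure under finite limits of stacks.} Prim morphisms and closed immersions are stable under base change and composition: linear right adjoints base change and compose. Therefore finite products and fiber products of proper static stacks over \({*}\) are again proper and static. The diagonal of \(X\times_{Y}Z\) is a base change of the diagonals of \(X\) and \(Z\), which are closed immersions. For quotients \(A\to A/I\), \cref{x4y9d2} together with the recollement structure shows that \([A/I]\) is a base change of a closed immersion, namely the zero section of a product of copies of \(R\) restricted to disks. This covers the finitely presented seminormed rings.

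\textbf{Step 3: filtered colimits.} This is the main obstacle. An arbitrary \(A\) is a filtered colimit of finitely presented ones, and \([A]=\injlim_{i}[A_{i}]\) in \(\CAlg\), so the stack \(X\) is a cofiltered limit of proper static stacks.

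Monomorphisms, i.e.\ idempotence of \([X]\to[X]\otimes[X]\), pass to filtered colimits since \(\otimes\) commutes with them. So staticness follows if closed immersions are preserved. A closed immersion amounts to a fully faithful linear right adjoint, equivalently the target being an idempotent algebra of closed type. A filtered colimit of such idempotent algebras is idempotent, and the relevant right adjoints commute with the colimit by a Beck–Chevalley argument at each stage.

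For primness, the plan is to write \(\unit\to[A]\) as a filtered colimit of maps admitting linear right adjoints satisfying base change, with Beck–Chevalley transitions furnished by \cref{aplus}-style compatibility. Then we verify that in \(\CAlg(\cat{C})\) the colimit map still admits a linear right adjoint, using the compact-assembledness of \(\Cat{SNRing}\) from \cref{xwsis1}. Alternatively, we can use the explicit pushout of \cref{xucjcx}: it exhibits \([A]\) as a base change of \(\Shv(\Sp(A))\otimes\unit\), and \(\Sp(A)\) is compact Hausdorff, so \(\Shv(\Sp(A))\) is prim over \(\Cat{Pr}_{\st}\) by \cref{xf141x}. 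That reduces properness to that of \([A]_{\alg}\) over the cube \([0,\infty]^{A}\), which is handled generator-wise as above. If the direct limit argument stalls, I would try this second route first.
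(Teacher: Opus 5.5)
Your Steps 1 and 2 follow the paper's proof. You pass to static rings by \cref{x925qh}, write a general ring as a filtered colimit of \(\ZZ[T_{1},\dotsc,T_{n}]_{r_{1},\dotsc,r_{n}}/(P_{1},\dotsc,P_{m})\), drop the relations with \cref{x4y9d2}, and identify \([\ZZ[T]_{r}]\) with \(N^{-1}([0,r])\) by \cref{x099iq}. The paper then concludes because \(N^{-1}([0,r])=\widetilde{N}^{-1}([0,r])\) is a closed substack of \(\PP^{1}\), which is already static and proper over \(\SH[2](\ZZ)\). Citing \cref{i:b_dr} of \cref{x85jpj} packages exactly this, and your staticness argument via the diagonal of~\(R\) is fine. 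The paper handles the filtered-colimit reduction in one sentence without comment.

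Your Step~3 is where the proposal goes wrong, although the claim there is true.
\begin{itemize}
  \item The compact-assembledness of \(\Cat{SNRing}\) (\cref{xwsis1}) is a statement about the \(1\)-category of seminormed rings. It produces no right adjoints in \(\Mot[2](\ZZ)\), and \cref{aplus} plays no role either.
  \item What you actually need is first that each transition map \(X_{j}\to X_{i}\) between the proper static stages is prim. It factors as the graph \(X_{j}\to X_{j}\times X_{i}\), which is a base change of the closed diagonal of~\(X_{i}\), followed by a base change of \(X_{j}\to{*}\).
  \item Second, in a presentable \(2\)-category a filtered colimit along morphisms with right adjoints has insertions \(\iota_{i}\) with right adjoints \(p_{i}\), since the colimit is also the limit along the right adjoints. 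Hence \(u=\iota_{i}u_{i}\colon\unit\to[X]\) has right adjoint \(u_{i}^{\R}p_{i}\), and linearity over \(\unit\) is automatic.
  \item Moreover \(p_{i}\) is fully faithful when the transition right adjoints are. Applying the same argument over \([X]\otimes[X]\) then gives that the diagonal is a closed immersion.
\end{itemize}

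Your fallback via \cref{xucjcx} is not just weaker; it is false. It would need \([A]_{\alg}\) to be proper over \([0,\infty]^{\pi_{0}(A)}\). Already for \(A=\ZZ[T]_{r}\), composing with the proper projection to the \(T\)-coordinate would make \(N\), followed by the open inclusion \([0,\infty)\subset[0,\infty]\), prim. The projection formula against the skyscraper at~\(\infty\) then fails:
\begin{itemize}
  \item one side is zero, because the skyscraper restricts to zero on \([0,\infty)\);
  \item the other side has stalk \(i_{\infty}^{*}j_{*}\unit\simeq\unit\oplus\unit(-1)[-1]\) at \(\infty\), where \(j\colon R\hookrightarrow\PP^{1}\), using proper base change along \(\widetilde{N}\).
\end{itemize}
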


\begin{proof}
  We may assume \(A\) is static by \cref{x925qh}.
  We write it
  as a filtered colimit
  of algebras of the form
  \(\ZZ[T_{1},\dotsc,T_{n}]_{r_{1},\dotsc,r_{n}}/(P_{1},\dotsc,P_{m})\)
  to reduce to the case of such rings.
  Moreover, by \cref{x4y9d2},
  we assume \(m=0\).
  Now we are reduced to the case of \(\ZZ[T]_{r}\).
  The corresponding stack
  is \(N^{-1}([0,r])\) by \cref{x099iq}.
  We now consider
  the extended norm map
  \(\widetilde{N}\colon\PP^{1}\to[0,\infty]\),
  which is obtained by gluing \(N\) and~\(N^{-1}\).
  Since \(\PP^{1}\) is static and proper already over \(\SH[2](\ZZ)\),
  the desired result follows.
\end{proof}

Note that uniform completion is automatic in this process:

\begin{theorem}\label{xh9vw0}
  The functor
  \(\Cat{SNRing}\to\Mot[2](\ZZ)\)
  in \cref{xucjcx}
  factors through
  the uniform completion functor \(\Cat{SNRing}\to\Cat{URing}\).
\end{theorem}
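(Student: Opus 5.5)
Uniform completion is the composite of three localizations (\cref{xj6u9m}): normed, then Banach, then uniform. By \cref{x2lbc5} and \cref{x925qh}, the functor \([\X]\) preserves colimits, so it factors through the localization exactly when it inverts the three generating maps of \cref{xj6u9m}. I plan to check each generating map separately, working with the descriptions of \([A]\) as a closed substack cut out from \([A]_{\alg}\) by norm conditions (\cref{x099iq,x4y9d2}).

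For normedness, the map is \(\ZZ[T]_{0}\to\ZZ\). By \cref{x099iq}, \([\ZZ[T]_{0}]\) corresponds to \(N^{-1}([0,0])=N^{-1}(\{0\})\). This equals \(\{0\}\) by the absolute value axiom \(N^{-1}(\{0\})=\{0\}\) of \cref{x2m7c9}, and \(\{0\}\) corresponds to \([\ZZ]\).

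For uniformity, the map is \(\ZZ[T,U]_{r,s}/(T^{2}-U)\to\ZZ[T]_{\sqrt{s}}\) with \(r^{2}\geq s>0\). The source corresponds to \(\{(t,u):N(t)\leq r,\,N(u)\leq s,\,u=t^{2}\}\), which is the locus \(N(t)\leq r\), \(N(t^{2})\leq s\). Multiplicativity of \(N\) (\cref{xs6zl2}) gives \(N(t^{2})=N(t)^{2}\), so the condition becomes \(N(t)\leq\sqrt{s}\). Here I pull back along \([0,\infty)\xrightarrow{x\mapsto x^{2}}[0,\infty)\), which is a homeomorphism of locales, so the squaring condition is a pullback of a closed sublocale. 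Thus both sides are \(N^{-1}([0,\sqrt{s}])\).

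For the Banach condition, the map is \(A\to B=A[U]_{r}\) with \(\lvert U-T_{m}\rvert\leq 1/m\), where \(A\) encodes a Cauchy sequence. I expect this step to be the main obstacle, because it involves a countable colimit/limit rather than a single inequality. The plan is to show that the projection \([B]\)-stack \(\to\)\([A]\)-stack is an isomorphism. It is a monomorphism, since \(U\) is determined: two candidates \(U,U'\) satisfy \(N(U-U')\leq 2/m\) for all \(m\), and \(N^{-1}([0,\epsilon])\) for all \(\epsilon\) intersects to \(N^{-1}(\{0\})=\{0\}\), using overconvergence (\cref{xraf07}) and the triangle inequality. For essential surjectivity I would argue via the extended norm/gluing on \(\PP^{1}\) and properness (\cref{x2a9p8}): the closed substacks \(\{U: N(U-T_{m})\leq 1/m\}\) form a decreasing family of proper nonempty fibers over the \(A\)-stack. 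I would show their limit maps isomorphically by computing that the pushforward of \(\unit\) along each \(D_{m}\to\) base is \(\unit\), using the translated disks \(\DD^{1}_{1/m}\) (\cref{x85jpj}~\cref{i:b_dr}). At the level of the \(\Shv(\Sp(\X))\)-part, \cref{xfdud8}~\cref{i:sp_ucpl} already handles the topological side. If the cohomological route fails, the fallback is to reduce to the universal case via \cref{x925qh}, where the Cauchy condition presents the closed substack as an intersection of disks with radii tending to \(0\), and to invoke \cref{xraf07} directly.
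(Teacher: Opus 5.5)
Your normedness and uniformity steps agree with the paper. The paper cites \cref{x099iq} for the first and says there is nothing to prove for the third; your squaring argument is exactly why. Your monomorphism argument is the paper's ``consider its diagonal'' step.

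For the Banach map you take a genuinely different route, but as written it contains a false step. The substacks \(D_{m}=\{N(U-T_{m})\leq\frac1m\}\) are not decreasing. The triangle inequality of \cref{x2m7c9} is archimedean, so \(N(U-T_{m+1})\leq\frac{1}{m+1}\) and \(N(T_{m+1}-T_{m})\leq\frac1m\) only give \(N(U-T_{m})\leq\frac1m+\frac{1}{m+1}\). Over \(\CC\), take \(T_{1}=0\) and \(T_{m}=1\) for \(m\geq2\); then \(U=\sfrac{3}{2}\) lies in \(D_{2}\) but not in \(D_{1}\).

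Consequently \([B]\) is the colimit of the finite intersections \(D_{1}\cap\dotsb\cap D_{n}\), together with \(N(U)\leq r\). These are not translated disks, and \cref{x85jpj} does not compute their cohomology. Your fallback via \cref{xraf07} does not help either. Overconvergence identifies intersections of concentric disks, which is what your monomorphism argument uses. It says nothing about the existence of the limit point, and that is the actual content of this step.

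The route can be repaired by reindexing, using centers \(T_{2^{j}}\) and radii \(2^{1-j}\).
\begin{itemize}
\item Since \(\lvert T_{2^{i}}-T_{2^{j}}\rvert\leq2^{-i}\) for \(i<j\), the single condition \(\lvert U-T_{2^{j}}\rvert\leq2^{1-j}\) implies all earlier ones in any seminormed ring.
\item So the \(j\)th stage is \(A\otimes\ZZ[V]_{2^{1-j}}\) with \(V=U-T_{2^{j}}\).
\item The colimit over \(j\) is \(B\): the conditions \(\lvert U-T_{m}\rvert\leq\frac1m\) and \(\lvert U\rvert\leq r\) follow because seminorms are real-valued.
\item By \cref{x925qh,x099iq} and \cref{x85jpj}, each stage satisfies \(f_{j,*}\unit\simeq\unit\).
\item The transitions are prim (\cref{x2a9p8}), so \(f_{*}\unit\simeq\varinjlim_{j}f_{j,*}\unit\simeq\unit\). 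This is the same interchange the paper needs in order to apply \cref{md_seq}.
\item Then \cref{xpbfpq} gives descent trivially. Combined with your monomorphism, the Čech nerve is constant and \([A]\simeq[B]\).
\end{itemize}
You should state this last deduction explicitly; ``the limit maps isomorphically'' is not yet an argument.

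The paper avoids disk invariance entirely. It uses exactly the finite intersections \(A_{n}\to B_{n}\) that cause your trouble, and notes that \(U\mapsto T_{n}\) is a section. So each \(f_{n,*}\unit\) is split, hence descendable of index \(\leq1\). Then \cref{md_seq} makes the sequential colimit descendable of index \(\leq2\), and \cref{x670j7,xpbfpq} give descent.

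That argument uses only the triangle inequality and the normedness step. Your repaired route instead needs the heavier \cref{x85jpj}, which rests on \cref{main_2} and the homological triviality of \(\BB^{1}\). In exchange it gives the sharper conclusion \(f_{*}\unit\simeq\unit\).
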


\begin{proof}
  We prove that it factors
  through the categories
  of normed rings, Banach rings, and subsequently uniform Banach rings.
  To prove that,
  it suffices to show
  that each morphism in \cref{xj6u9m}
  is sent to an equivalence.

We consider~\cref{i:li_n};
  i.e., we wish to show that
  \(\ZZ[T]_{0}\to\ZZ\)
  is sent to an equivalence.
  This follows from \cref{x099iq}.

  Then we consider~\cref{i:li_b}.
  We fix \(r\gg0\).
  The situation is that
  \(A\) is \(\ZZ[T_{1},\dotsc]_{r,\dotsc}\)
  with \(\lvert T_{m}-T_{m+k}\rvert\leq\sfrac1m\)
  for \(m\geq1\) and \(k\geq1\)
  and \(B\) is \(A[U]_{r}\)
  with \(\lvert U-T_{m}\rvert\leq\sfrac1m\)
  for \(m\geq1\).
  We wish to show that
  \([A]\to[B]\) is an equivalence.
  By \cref{x2a9p8}
  and considering its diagonal
  (cf.~our proof of \cref{xm89d9}),
  it suffices to show that the map \([A]\to[B]\)
  satisfies descent.
  By \cref{xpbfpq},
  it reduces to the question
  of \(f_{*}\unit\in\Hom_{\Mot[2](\ZZ)}(\unit,[A])\)
  satisfying descent.
  By \cref{x670j7},
  it suffices to show that it is descendable.
  We first write
  \(A\to B\) as
  a sequential colimit of \(f_{n}\colon A_{n}\to B_{n}\)
  for \(n\geq0\)
  by only considering~\(T_{1}\), \dots,~\(T_{n}\).
  But this map \(A_{n}\to B_{n}\) admits a section
  given by \(U\mapsto T_{n}\),
  and hence
  \(f_{n,*}\unit\in\Hom_{\Mot[2](\ZZ)}(\unit,[A_{n}])\)
  is descendable.
  Therefore, by \cref{md_seq},
  we obtain the desired result.

  For~\cref{i:li_u},
  we have nothing to prove.
\end{proof}

\begin{example}\label{xbi0h6}
  Let \(A\to B\) be a finite étale morphism
  of totally disconnecteds.
  Then the square
  \begin{equation*}
    \begin{tikzcd}
      {[A]_{\alg}}\ar[r]\ar[d]&
      {[B]_{\alg}}\ar[d]\\
      {[A]}\ar[r]&
      {[B]}
    \end{tikzcd}
  \end{equation*}
  is a pushout in \(\CAlg(\Mot[2](\ZZ))\).
\end{example}

\subsection{Arc~descent}\label{ss:arc_des}

We prove two descent results in \(\Mot[2](\ZZ)\).

\begin{theorem}\label{xrp8oy}
  Let \(f\colon A\to B\) be an arc~cover in \(\Cat{STDis}_{\lgt}\).
  Then \(f^{*}\colon[A]\to[B]\) satisfies descent in \(\Mot[2](\ZZ)\).
  In other words,
  the functor \(\Cat{STDis}_{\lgt}\to2\Cat{Pr}\)
  given by \(A\mapsto\Mot[2](A)\)
  is an arc~sheaf.
\end{theorem}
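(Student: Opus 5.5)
We will show that \(f^{*}\colon[A]\to[B]\) satisfies descent by reducing, via \cref{x2a9p8,xpbfpq}, to a descendability statement one categorical level down. By \cref{x2a9p8}, \(\Sparc\)-type stacks \([A]\) and \([B]\) are static and proper, so \(f\) is prim. By \cref{xpbfpq}, it therefore suffices to show that \(f_{*}\unit\in\Hom_{\Mot[2](\ZZ)}(\unit,[A])\) satisfies descent. By \cref{x670j7}, it is enough to show that \(f_{*}\unit\) is descendable.

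The plan is to establish this descendability pointwise with a uniform bound, and then globalize with Clausen's criterion (\cref{clausen}). The key steps, in order, are as follows.

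First, analyse \(\Hom(\unit,[A])\) for \(A\) light strictly totally disconnected. The construction in \cref{xucjcx} makes it an algebra over \(\Shv(\Sp(A))\). Since \(\Sp(A)\) is a light profinite set and residue fields are algebraically closed, there is no Galois part. The natural expectation is therefore that \(\Hom(\unit,[A])\) is a module over \(\Shv^{\wedge}(\Sp(A);\Cat{Sp})\), and that descendability can be tested on stalks, i.e.\ after base change to the complete residue fields \(C=K(x)\).

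Second, reduce the arc cover \(A\to B\) to standard pieces. For each \(x\), \(B\otimes_{A}K(x)\) is a nonzero uniform Banach algebra over \(C\). Using \cref{xsdudu,xryxo9}, we may refine \(B\) so that covers are built from three kinds of maps:
\begin{itemize}
\item surjections of profinite sets, which are descendable with index \(\le 2\) by \cref{md_seq} applied to finite set covers;
\item Cantor-set covers of intervals, of index \(\le 4\) by \cref{x496fa};
\item algebraic field extensions \(C\to C'\) of algebraically closed nondiscrete Banach fields.
\end{itemize}
\Cref{x50j16} lets us replace \(B\) by any refinement, and it multiplies indices under composition.

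Third, handle the field case \(C\to C'\), i.e.\ show that it is descendable with a uniformly bounded index. We use \(\DD^{1}\)-invariance (\cref{x85jpj}) together with \cref{xlcq6d} to produce pseudouniformizers up to descent. The idea is to write \(C'\) as a filtered colimit of affinoid \(C\)-algebras \(C\{T_{1},\dotsc,T_{n}\}_{r}/I\) that admit \(C\)-points. Each such algebra is descendable of index \(\le 1\) after using a section, as in the proof of \cref{xh9vw0}. \Cref{md_seq} then gives index \(\le 2\).

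This third step is the main obstacle. Producing \(C\)-points requires the map to be, after passing to a cover, a sequential colimit of maps with sections. Algebraic closedness of \(C\) together with properness (\cref{x2a9p8}) gives rational points on disks, but we still need the transition maps organized sequentially, which is where lightness is used.

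Finally, to globalize we invoke \cref{clausen} on \(\Sp(A)\), viewed as a \(0\)-truncated light profinite anima of cohomological dimension \(0\). This needs an identification of the relevant module category with \(\Shv^{\wedge}\); failing that, we instead stratify using \cref{md_str} together with the finite-index bounds above.

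With \(f_{*}\unit\) descendable, \cref{x670j7,xpbfpq} give descent for \(f^{*}\). The arc-sheaf reformulation then follows from \cref{x6ruic}.
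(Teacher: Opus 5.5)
Your opening reduction matches the paper's: \(f\) is prim by \cref{x2a9p8}, so by \cref{xpbfpq,x670j7} it suffices to prove that \(f_{*}\unit\in\Hom_{\Mot[2](\ZZ)}(\unit,[A])\) is descendable. After that there is a genuine gap, and it sits in your globalization step.

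\Cref{clausen} concerns commutative algebras in \(\Shv^{\wedge}(X;\Cat{Sp})\) itself. The category \(\Hom_{\Mot[2](\ZZ)}(\unit,[A])\) is only known to be linear over sheaves on \(\Sp(A)\), via the pushout in \cref{xucjcx}. It is not that sheaf category, so the criterion does not apply.

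The fallback through \cref{md_str} also fails. That lemma treats a single open/closed decomposition, so iterating it handles only finitely many strata, while \(\Sp(A)\) is an infinite profinite set. To pass from a bound at each \(K(x)\) to clopen neighbourhoods, you would have to spread out the nullhomotopy of \(I^{\otimes d}\to\unit\). That needs a continuity property of \(A\mapsto\Hom_{\Mot[2](\ZZ)}(\unit,[A])\) on mapping spaces along filtered colimits. Such continuity would in turn require compactness of the unit or of \(I^{\otimes d}\), and neither is available.

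Your second step is likewise unsupported. Nothing shows that an arbitrary arc cover in \(\Cat{STDis}_{\lgt}\) refines to a composite of profinite surjections, Cantor covers of intervals, and field extensions. Moreover, the extensions \(C\to C'\) involved are typically transcendental, not algebraic. Finally, rational points on affinoids over \(C\) come from the Nullstellensatz over an algebraically closed field, not from properness as in \cref{x2a9p8}.

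The missing observation is that the splitting argument you sketch for a single field already works over the strictly totally disconnected base \(A\). No pointwise analysis or local-to-global principle is needed.
\begin{enumerate}
\item By \cite[Lemma~4.8]{ScholzeB}, \(B\simeq\injlim_{i}B_{i}\) is a filtered colimit of uniform Banach \(A\)-algebras such that each \(A\to B_{i}\) admits a splitting.
\item Lightness (\(\aleph_{1}\)-compactness) lets you assume, up to retract, that this colimit is sequential.
\item Each unit \(\unit\to f_{i,*}\unit\) then has a retraction, so \(f_{i,*}\unit\) is descendable of index \(\leq1\).
\item \Cref{md_seq} then shows that \(f_{*}\unit\) is descendable of index \(\leq2\).
\end{enumerate}
This is the paper's entire proof. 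The uniform bound comes from the sequential colimit lemma, not from stalkwise estimates.
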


\begin{remark}\label{xi7cdy}
  In \cref{xrp8oy},
  by considering
  uniform Banach rings \(\Cls{C}(X;\CC)\)
  for compact Hausdorff spaces~\(X\),
  we can see from \cref{xzad7y}
  that
  the size assumption cannot be dropped,
  and from \cref{x65y5q}
  that
  we cannot take \(\Cat{UBan}_{\aleph_{1}}\) instead.
  We see in \cref{xif2s7} below
  that descent is still valid
  under the topological finite presentation assumption.
\end{remark}

\begin{proof}
  By \cref{x2a9p8},
  the morphism is prim
  and hence,
  by \cref{xpbfpq},
  it suffices to show that
  \(f_{*}\unit\in\Hom_{\Mot[2](\ZZ)}(\unit,[A])\)
  satisfies descent.
  By \cref{x670j7},
  it suffices to show that it is descendable.

  By~\cite[Lemma~4.8]{ScholzeB},
  we can write~\(B\) as a filtered colimit
  \(\injlim_{i}B_{i}\) of uniform Banach rings over~\(A\)
  such that each \(A\to B_{i}\) admits a splitting.
By the \(\aleph_{1}\)-compactness,
  up to retract,
  we assume that the colimit is sequential.
  Hence,
  the desired result follows from \cref{md_seq}.
\end{proof}

\begin{theorem}\label{x0606p}
  For \(B=\ZZ\{T_{1},\dotsc,T_{n}\}_{r_{1},\dotsc,r_{n}}\),
  there is a map to a light strictly totally disconnected~\(B''\)
  such that \(f^{*}\colon[B]\to[B'']\)
  satisfies descent in \(\Mot[2](\ZZ)\).
\end{theorem}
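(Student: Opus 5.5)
We construct \(B''\) by following the proof of \cref{xsdudu} and \cref{xryxo9} and check descent at each stage. Put \(B'=B\otimes\ZZ\{S,S^{-1}\}_{\sfrac12,2}\). Let \(B_{\mathrm{td}}\) be the totally disconnected obtained from \(B'\) by pulling back, along \(\Sp(B')\to\prod_{k}[0,\lvert a_{k}\rvert]\) for a suitable countable family \((a_{k})_{k}\) given by \cref{xwafwj}, the Cantor set covers of \cref{x496fa}. Let \(B''\) be the colimit of the countably many finite étale covers of \(B_{\mathrm{td}}\) from \cref{clean_cov,xxvdc1}. Since descent is stable under composition (\cref{x4xcqq}), it suffices to show that each of \(B\to B'\), \(B'\to B_{\mathrm{td}}\), and \(B_{\mathrm{td}}\to B''\) induces a morphism satisfying descent in \(\Mot[2](\ZZ)\).

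For \(B\to B'\), we base change the morphism \(\ZZ\to\ZZ\{S,S^{-1}\}_{\sfrac12,2}\). By \cref{x099iq,xh9vw0} its \(2\)-motive is \(N^{-1}([\sfrac12,2])\to{*}\), so \cref{xhrdzm} lets us reduce to this case. Covering \([\sfrac12,2]\) by fibers, the argument of \cref{xlcq6d} applies: the morphism is proper (\cref{x2a9p8}), and \(f_{*}\unit\) can be computed inside \(\PP^{1}\) via \cref{x85jpj}. This computation shows that \(f_{*}\unit\) has a section, so \cref{xpbfpq,x059gc} give descent.

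For \(B'\to B_{\mathrm{td}}\), the cover is a sequential colimit of finite closed covers of order \(\leq2\), pulled back along \(\Sp(B')\to[0,\lvert a_{k}\rvert]\). By \cref{x099iq}, each piece is the preimage under the absolute value of the sheaf-theoretic cover \(\Shv([0,\lvert a\rvert])\to\Shv(\text{pieces})\) tensored with \(\unit\). By \cref{md_ci}, each such finite closed cover gives a descendable algebra of index \(\leq2\). Since \cref{x2a9p8} makes everything prim, \cref{xpbfpq} together with \cref{md_seq} (uniform index bounds survive countable colimits, even after interleaving the countably many \(a_{k}\), using \cref{x50j16} to compose) and \cref{x670j7} give descent.

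The main obstacle is the last step, \(B_{\mathrm{td}}\to B''\). Each finite étale step is the base change of an algebraic finite étale map (\cref{xbi0h6}), hence a cover by \cref{main_3,xx45iz}. However, the infinite colimit need not be descendable without bounds. First, I would apply \cref{xpbfpq} to reduce to descent of \(f_{*}\unit\) in \(\Hom(\unit,[B_{\mathrm{td}}])\). I would then try to make this \(\End\) into a category of sheaves on a \(1\)-truncated profinite anima of the form \(\Sp(B_{\mathrm{td}})\) together with Galois groups of residue fields, and invoke Clausen's criterion \cref{clausen}. Its hypotheses would come from the bound \(\dim_{\ZZ}\leq n+3\) of \cref{xwba2d} (transferred to residue fields via \cref{xusjrg}) and from the pointwise splitting of \(f_{*}\unit\), which has descendability index \(\leq1\) at each point. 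This gives uniform pointwise descendability, hence descendability and so descent by \cref{x670j7}. If making the sheaf-theoretic comparison precise is too hard, the fallback is to split off primes as in the proof of \cref{xx45iz}. One would then handle the prime-to-\(p\) part with Kummer covers and the norm trick of \cref{tab}, and the \(p\)-part with \cref{as_fp}, controlling the colimit by \cref{md_seq}.
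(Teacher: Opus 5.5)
\textbf{Verdict: genuine gap.} Your first step (adjoining \(S^{\pm1}\) via the argument of \cref{xlcq6d}) and the overall three-stage construction match the paper, but the last step is not carried out.

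\textbf{The last step.} \Cref{clausen} is a statement about \emph{hypercomplete} sheaves \(\Shv^{\wedge}(X;\Cat{Sp})\) on a profinite anima. You give no mechanism producing a symmetric monoidal functor from such a category to \(\Hom_{\Mot[2](\ZZ)}(\unit,[B_{\mathrm{td}}])\) that sends the pushforward algebra to \(f_{*}\unit\).
\begin{enumerate}
\item The only available bridge is \cref{x7c1hf} together with \cref{xbi0h6}. A finite étale extension of \([A]_{\alg}\) is the base change of \(\Shv_{\fet}(A)\otimes\unit\to\Shv_{\fet}(A_{1})\otimes\unit\) in \(\SH[2]_{\et}(\ZZ)\).
\item You may replace \(\Shv_{\fet}\) by \(\Shv_{\fet}^{\wedge}\) only when \(\SH_{\et}\) is known to be hypercomplete, i.e., under \cref{bbx_more}. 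That requires noetherian local rings whose residue fields have uniformly bounded virtual cohomological dimension.
\item This is not available for the Banach ring \(B_{\mathrm{td}}\), whose local rings are not noetherian.
\end{enumerate}

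\textbf{The missing idea: carry an overconvergent model along.}
\begin{enumerate}
\item Let \(A\) be the overconvergent algebra whose completion is \(B'\).
\item Choose the separating elements \(a_{k}\) to be polynomials, so that the Cantor step has an overconvergent counterpart \(A\to A'\) of \(B'\to B_{\mathrm{td}}\).
\item The local rings of \(A'\) at maximal points are local rings of Berkovich affine space. They are therefore excellent noetherian by \cref{poi_noe}. The needed cohomological dimension bounds come from Gabber's affine Lefschetz theorem and from Huber and Hansen; this uses the topologically nilpotent unit.
\item Now \cref{x7c1hf} exhibits \([A']\to[A'']\) as a base change of \(\Shv_{\fet}^{\wedge}(A')\to\Shv_{\fet}^{\wedge}(A'')\), where \cref{clausen} applies.
\item \Cref{xbi0h6} then transports this along \([A']_{\alg}\to[B_{\mathrm{td}}]\), with \(B''=B_{\mathrm{td}}\otimes_{A'}A''\). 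Lightness then requires counting finite étale algebras over \(A'\), not over \(B_{\mathrm{td}}\) via \cref{xxvdc1}.
\end{enumerate}

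\textbf{Your fallback does not close the gap.} The tower \(B_{\mathrm{td}}\to B''\) has finite étale pieces of unbounded degree. So the prime splitting of \cref{xx45iz} and the norm trick of \cref{tab} would need all primes at once. Moreover, the Kummer and Artin--Schreier conditions only give descent, not descendability with the uniform index that \cref{md_seq} requires.

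\textbf{The second step also needs a different justification.} Composing the dyadic covers for \(a_{0},\dotsc,a_{m}\) via \cref{x50j16} multiplies the indices. The bound therefore grows exponentially in \(m\), and \cref{md_seq} does not apply. Indeed, an argument that never uses finite-dimensionality would equally apply to \([0,1]^{\NN}\), contradicting \cref{x65y5q}. The paper instead uses that \(\Sp(B')\) is finite-dimensional (\cref{xul75l}). Hence the surjection \(\Sp(B_{\mathrm{td}})\to\Sp(B')\) is descendable of bounded index by \cref{x1lqb7}. This is then transferred along the pushout with \(\Shv(\Sp(B'))\otimes\unit\to\Shv(\Sp(B_{\mathrm{td}}))\otimes\unit\).
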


We need to do a slightly intricate construction for this:

\begin{proof}[Proof of \cref{x0606p}]
  We first replace~\(B\)
  with \(B\{U,U^{-1}\}_{\sfrac12,2}\);
  this substitution is justified by \cref{xlcq6d}.
We then consider
  \(A=\ZZ\{T_{1},\dotsc,T_{n},U,U^{-1}\}^{\dagger}_{r_{1},\dotsc,r_{n},\sfrac12,2}\),
  its overconvergent counterpart.
  We have the completion morphism \(A\to B\).

  We then consider \(B\to B'\) to
  be the map constructed in \cref{xryxo9},
  where we use a sequence of elements in \(\ZZ[T_{1},\dotsc,T_{n},U^{\pm}]\)
  to ensure the existence of an overconvergent counterpart \(A\to A'\).
  With this construction,
  we obtain a pushout square
  \begin{equation*}
    \begin{tikzcd}
      \Shv(\Sp(B))\otimes\unit\ar[r]\ar[d]&
      \Shv(\Sp(B'))\otimes\unit\ar[d]\\
      {[B]}\ar[r]&
      {[B']}
    \end{tikzcd}
  \end{equation*}
  in \(\CAlg(\Mot[2](\ZZ))\),
  and therefore,
  by \cref{xul75l,x1lqb7},
  the morphism \([B]\to[B']\) satisfies descent.

  Note that
  \(A'\) satisfies the assumption of \cref{clean_cov}.
  Moreover,
  \(A'\) also satisfies
  the assumption of \cref{bbx_more}.
  Since its local rings at maximal points
  are local rings of the Berkovich-analytic affine \((n+1)\)-space
  over~\(\ZZ\),
  we need to bound the cohomological dimension of such local rings
  uniformly;
  this is possible,
  but
  our argument here uses the existence
  of a topologically nilpotent unit.
  The noetherianness is from \cref{poi_noe}.
  Now it suffices to bound \(l\)-torsion cohomological dimension
  for prime~\(l\) uniformly.
  When \(l\) is invertible in~\(\Cls{O}_{y}\),
  the affine Lefschetz theorem of Gabber~\cite[Corollaire~XV-1.2.4]{ILO14}
  gives an optimal bound on \(l\)-torsion cohomological dimension.
  So the only case left to bound
  is when the point lies over a closed point of \(\Spec\ZZ\).
  For this,
  we write it as a filtered colimit
  of Banach algebras over~\(\ZZ_{p}\).
Then for each term,
we can use~\cite[Theorem~1.9]{Hansen20},
  which is based on the affinoid comparison theorem of Huber~\cite{Huber96},
  to obtain the desired bound,
  derived from the bound for analytic étale cohomology
  (cf.~\cref{x90oh5});
  here, we use the existence of a topologically nilpotent unit.

  By applying \cref{clean_cov},
  we therefore obtain a morphism \(A'\to A''\).
  By \cref{x7c1hf},
  we obtain a pushout square
  \begin{equation*}
    \begin{tikzcd}
      \Shv_{\fet}^{\wedge}(A')\otimes\unit\ar[r]\ar[d]&
      \Shv_{\fet}^{\wedge}(A'')\otimes\unit\ar[d]\\
      {[A']}\ar[r]&
      {[A'']}
    \end{tikzcd}
  \end{equation*}
  in \(\SH[2]_{\et}(\ZZ)\).
  In this situation,
  by \cref{clausen},
  we see that
  \(\Shv_{\fet}^{\wedge}(A')\to\Shv_{\fet}^{\wedge}(A'')\)
  satisfies descent over \(\Cat{Pr}_{\st}\).
  Therefore,
  the morphism \([A']\to[A'']\)
  satisfies descent over \(\SH[2]_{\et}(\ZZ)\),
  and hence
  the morphism \([A']_{\alg}\to[A'']_{\alg}\)
  satisfies descent.

  We now base change
  \(A'\to A''\) along \(A'\to B'\)
  to obtain a totally disconnected~\(B''\).
  This is possible by \cref{xhll2q}
  and the fact
  that \(A''\) is a filtered colimit of finite étale algebras over~\(A'\).
  We hence obtain a square
  \begin{equation*}
    \begin{tikzcd}
      {[A']_{\alg}}\ar[r]\ar[d]&
      {[A'']_{\alg}}\ar[d]\\
      {[B']}\ar[r]&
      {[B'']}\rlap,
    \end{tikzcd}
  \end{equation*}
  which is a pushout square by \cref{xbi0h6}.
  Hence, the morphism \([B]\to[B'']\) satisfies descent.

  We are now left with proving that
  \(B''\) is a light strictly totally disconnected.
  We first prove that
  it is a light totally disconnected.
  By construction,
  \(B'\) is a light totally disconnected,
  as in the proof of \cref{xryxo9}.
  To prove the same for~\(B''\),
  we need to show that
  \(A''\) is a sequential colimit of
  finite étale algebras over~\(A'\).
  By construction,
  we have to see that
  there are countably many
  finite étale algebras up to isomorphism
  over~\(A'\).
  As in the argument of \cref{xxvdc1},
  we need to show that for any direct summand~\(A'_{0}\) of~\(A'\),
  there are countably many finite étale algebras
  of the form \(A'_{0}[T]/(T^{n}+\dotsb+a_{0})\).
  Since \(A'_{0}\) is a sequential colimit
  of finite products
  of overconvergent algebras 
  on the subspace of
  the Berkovich-analytic affine \((n+1)\)-space
  over~\(\ZZ\) cut out
  by finitely many inequalities,
  it suffices to show the same property for such algebras.
  Each of them is a sequential colimit
  of Banach rings topologically of finite type over~\(\ZZ\).
  For such, the desired property has been observed in
  the proof of \cref{xxvdc1}.
  Therefore, it remains to show that
  all the complete residue fields of~\(B''\) are algebraically closed,
  which we can observe fiberwise on \(\Sp(B')\).
\end{proof}

Combining \cref{xrp8oy,x0606p},
we obtain the following:

\begin{corollary}\label{xif2s7}
  Let \(A\to B\) be an arc~cover in \(\Cat{URing}_{\tfp}\).
  Then \([A]\to[B]\) satisfies descent in \(\Mot[2](\ZZ)\).
\end{corollary}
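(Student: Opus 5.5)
Given the arc cover \(A\to B\) in \(\Cat{URing}_{\tfp}\), the plan is to reduce to the two descent theorems just proved: \cref{x0606p} handles the passage from a tfp ring to a light strictly totally disconnected, and \cref{xrp8oy} handles arc covers among light strictly totally disconnecteds. Throughout, I will use the permanence properties of descent, \cref{x4xcqq} (composition and cancellation) and \cref{xhrdzm}.

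First, I would make the base descend to an STD. Since \(A\) is topologically of finite presentation, write \(A\) as a quotient of \(P=\ZZ\{T_{1},\dotsc,T_{n}\}_{r_{1},\dotsc,r_{n}}\) by a finitely generated ideal. Apply \cref{x0606p} to get \(P\to P''\), with \(P''\) light strictly totally disconnected and \([P]\to[P'']\) satisfying descent. Base change along \([P]\to[A]\) using \cref{xhrdzm}: the map \([A]\to[A]\otimes_{[P]}[P'']\) satisfies descent. By \cref{x925qh,xh9vw0}, the target is \([A'']\), where \(A''\) is the uniform completion of \(A\otimes_{P}P''\). I expect \(A''\) to again be light strictly totally disconnected, since a quotient of an STD by a finitely generated ideal should be an STD on a closed subset of the spectrum. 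If that fails, I would instead use \cref{xryxo9} together with \cref{xrp8oy}; but \cref{xrp8oy} needs an STD source, so the clean route is the quotient argument.

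Next, I would repeat the step on the source. Set \(B_{1}=B\otimes_{A}A''\), uniformly completed; this is an arc cover of \(A''\). Since \(B_{1}\) is \(\aleph_{1}\)-compact, \cref{xryxo9} gives a light STD \(C\) with an arc cover \(B_{1}\to C\). The composite \(A''\to C\) is then an arc cover in \(\Cat{STDis}_{\lgt}\), so \([A'']\to[C]\) satisfies descent by \cref{xrp8oy}. It follows that \([A]\to[A'']\to[C]\) satisfies descent by \cref{x4xcqq}\,(\ref{i:d_comp}). Because \([A]\to[C]\) factors through \([A]\to[B]\), cancellation (\cref{x4xcqq}\,(\ref{i:d_canc}), applied in \(\Mod_{[A]}\)) gives that \([A]\to[B]\) satisfies descent.

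The main obstacle is checking that \(A''\) is a light strictly totally disconnected, so that \cref{xrp8oy} applies. The closed-subset claim on the spectrum follows from \cref{x4y9d2}, and lightness from \aleph_{1}-compactness. If the residue-field condition fails after the quotient, I would avoid the issue by noting that the cancellation step only needs some descent map \([A]\to[C]\) factoring through \([B]\). In that case I would take \(C\) from \cref{xryxo9} applied to \(B\otimes_{P}P''\) over \(P''\), and use that \(P''\to C\) is an arc cover of \(P''\) restricted over the closed part. Justifying this last point would need the descent from a closed substack, which follows from \cref{x059gc} after pushforward.
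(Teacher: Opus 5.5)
Your proof is correct and is essentially the paper's argument. The paper likewise takes the cover of \cref{x0606p} for the Tate algebra \(P\) and passes to its quotient by the defining ideal of \(A\), which yields a light strictly totally disconnected receiving a descent morphism from \([A]\); it then concludes via \cref{xrp8oy} together with base change, composition, and cancellation. Your hedging is unnecessary, and the fallback would not go through, since it leans on descent along a closed immersion: \(\Sp(A'')\) is the closed subset of \(\Sp(P'')\) where the generators vanish, hence totally disconnected, and its complete residue fields agree with those of \(P''\), so \(A''\) is indeed a light strictly totally disconnected.
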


\begin{proof}
  By \cref{xrp8oy},
  it suffices to show that
  any \(A\in\Cat{URing}_{\tfp}\)
  admits a morphism \(A\to A'\)
  such that \(A'\) is a light strictly totally disconnected.
  We can take the cover in \cref{x0606p}
  and take its quotient by the defining ideal.
\end{proof}

\subsection{Berkovich \texorpdfstring{\(2\)}{2}-motives and normed ring stacks}\label{ss:cc_ber}

We prove \cref{main_4}.
We first introduce the following categories of geometric objects:
We write \(\Cat{Stk}_{\lgt}^{\vee}\)
for the category
of arc~sheaves on \((\Cat{STDis}_{\lgt})^{\op}\);
the notation indicates that hyperdescent is not imposed.
We write \(\Cat{Var}\) for
the essential image of \((\Cat{URing}_{\tfp})^{\op}\)
in \(\Cat{Stk}_{\lgt}^{\vee}\).
We note that
\(\D[2]_{\mot}(\ZZ)\)
is also the presentable category of kernels
for \({\D_{\mot}}\colon\Span(\Cat{Var})\to\Cat{Pr}_{\st}\).

In \cref{ss:smot},
we constructed
a symmetric monoidal functor \(\Cat{SNRing}\to\Mot[2](\ZZ)\).
We first restrict this
to \(\Cat{STDis}_{\aleph_{1}}\).
Its right Kan extension,
whose existence follows from~\cite[Theorem~D]{n-pr},
determines
a symmetric monoidal functor
\((\Cat{Stk}_{\lgt}^{\vee})^{\op}\to\Mot[2](\ZZ)\)
by \cref{xrp8oy}.

We begin with the following:

\begin{proposition}\label{xammc1}
  The morphism
  \((\Cat{Stk}_{\lgt}^{\vee})^{\op}\to\Mot[2](\ZZ)\)
  constructed above
  carries a \(\DD^{1}\)-invariant finitary morphism
  to a weakly suave morphism.
\end{proposition}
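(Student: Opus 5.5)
The plan is to prove this first in the universal case and then deduce the general statement by base change and descent. The phrase ``\(\DD^{1}\)-invariant finitary morphism'' should be read in the sense of \cref{xbdjh9,x0o7d6}: a morphism \(Y\to X\) that is a finitary arc~sheaf over~\(X\) and whose associated \(\Cat{Sp}\)-valued sheaf \(\Sigma^{\infty}_{+}\) is \(\DD^{1}\)-invariant. So the task is to produce an \([X]\)-linear left adjoint \(f_{\natural}\) of \(f^{*}\colon[X]\to[Y]\).

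\emph{Step 1 (reduce to strictly totally disconnected bases).} Since the functor \((\Cat{Stk}_{\lgt}^{\vee})^{\op}\to\Mot[2](\ZZ)\) is a right Kan extension from \(\Cat{STDis}_{\lgt}\), and by \cref{xrp8oy} arc~covers there go to covers, \cref{x9vxyu} lets us check weak suaveness after base change along covers \(\Sparc(A)\to X\) with \(A\in\Cat{STDis}_{\lgt}\). Finitary sheaves are stable under this base change. It therefore suffices to treat \(X=\Sparc(A)\) and to produce the left adjoint compatibly with base change in~\(A\). This compatibility is what lets the local adjoints glue through the totalization: I would use the argument of \cref{sua_loc}, namely that limits along left adjointable squares are computed as semisimplicial colimits along left adjoints.

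\emph{Step 2 (generators).} A finitary arc~sheaf over \(\Sparc(A)\) is a colimit of representables. Here ``representable'' means one of two things: (a) \(\Sparc(B)\) for \(A\to B\) a map in \(\Cat{STDis}_{\lgt}\) written as a cofiltered limit of finite ones, or (b) after \(\DD^{1}\)-localization, things built from smooth affinoids such as closed disks \(\DD^{1}_{A}\), open disks, and annuli over~\(A\). The plan is to realize \(f_{\natural}\) as \(\D_{\mot}\)-type homology. For each \(C\in\Mot[2](\ZZ)\)-linear target, define \(f_{\natural}\) on \([Y]\) by the formula \(\injlim_{\Sparc(B)\to Y}(g_{B})_{\natural}\), and reduce to producing \((g_{B})_{\natural}\) for the basic pieces, compatibly.

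For the basic pieces:
\begin{itemize}
\item The open disk \(\BB^{1}_{A}\) is suave and homologically trivial by \cref{x85jpj}.
\item Profinite-type maps \(A\to B\) in \(\Cat{STDis}\) that are split or clopen are handled by \cref{x2a9p8} together with the recollements from \cref{abs_t}: clopen pieces of \(\Sp\) are both open and closed immersions.
\end{itemize}

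\emph{Step 3 (use \(\DD^{1}\)-invariance to glue).} Finitariness says \([Y]\) is determined by its values on \(\aleph_{1}\)-compact \(B\), and \(\DD^{1}\)-invariance says the relevant colimit is insensitive to replacing closed disks by points. Using \cref{x85jpj} (closed disks are cohomologically trivial, open disks homologically trivial), the closed disks entering the presentation become equivalent to the base. The colimit formula then defines an honest left adjoint, with \(A\)-linearity coming from the projection formula for the suave pieces.

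The main obstacle is Step 3. The issue is showing that the colimit over \(\Sparc(B)\to Y\) commutes with the right adjoint structure. Concretely, one must show that the Beck–Chevalley maps are equivalences, so that the pointwise left adjoints assemble into a single \([X]\)-linear adjoint. The natural first attempt is to exploit that finitary sheaves are \(\aleph_{1}\)-filtered colimits along which \([\X]\) turns into limits with left-adjointable transitions. Failing that, I would try comparing directly with \(\D_{\mot}(A)\) via the presentable kernel description.
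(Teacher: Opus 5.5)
Your Step~1 (reducing to \(X=\Sparc(A)\) with \(A\) strictly totally disconnected) is essentially what the paper does. Steps~2 and~3, however, do not give a proof, and the gap is not the Beck--Chevalley bookkeeping you flag.

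The terms of your formula \(\injlim_{\Sparc(B)\to Y}(g_{B})_{\natural}\) do not exist. For \(B\) strictly totally disconnected, \(\Sparc(B)\to\Sparc(A)\) is prim by \cref{x2a9p8}, so \(g_{B}^{*}\) has a right adjoint. But it is not weakly suave in general; take \(A\to\Cls{C}(S;A)\) for an infinite light profinite set~\(S\). Your use of \(\DD^{1}\)-invariance is also off. Cohomological triviality of \(\DD^{1}_{r}\) in \cref{x85jpj} says that \(f^{*}\colon[A]\to[\DD^{1}_{A}]\) is fully faithful, not that \([\DD^{1}_{A}]\simeq[A]\).

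Assembling left adjoints along a colimit presentation is not the bottleneck; the paper simply uses that a geometric realization of stacks weakly suave over \(\Sparc(A)\) is weakly suave. The bottleneck is producing a presentation whose terms are weakly suave. The paper's generators come from \cref{fd_co}: maps out of open substacks \(U\subset\BB^{n}_{r,A}\) detect equivalences of finitary stacks, so it suffices to show that \(L_{\DD^{1}}(U)\) is weakly suave. Such \(U\) are weakly suave, and your remark on open disks is the one correct ingredient here. \(\DD^{1}\)-invariance enters only through \(L_{\DD^{1}}F\simeq\lvert F^{\DD^{\bullet}}\rvert\).

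Even then, the stacks \(G=U^{\DD^{k}}\) in this realization are not open in balls. To show \(G\) is weakly suave, you cover it by \(V=\coprod_{i}V_{i}\) with \(V_{i}\) open in balls (using \cref{xabply,fd_co}). You then need the Čech terms \(V_{i_{0}}\times_{G}\dotsb\times_{G}V_{i_{l}}\) to be weakly suave. These map to \(V_{i_{0}}\times\dotsb\times V_{i_{l}}\) by a base change of the diagonal of~\(G\), so what is needed is a weakly suave diagonal. This is the missing idea, and it fails for \(U\) directly: the diagonal of \(U\) is cut out by the closed condition \(a=b\). As far as I can see, this condition is not finitary, so \cref{xqduiq} does not apply.

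The paper therefore first shrinks to \(U=U'+\BB^{n}_{\epsilon,A}\). It then replaces \(U\) by \(F=\lvert U_{\bullet}\rvert\), the realization of the \(1\)-coskeleton of the open relation \(U_{1}\subset U\times U\) of pairs at distance \(<\epsilon\). This \(F\) has three key properties:
\begin{itemize}
\item Each \(U_{k}\) is open in a ball, so \(F\) is weakly suave.
\item \(F\) is finitary with \(L_{\DD^{1}}F\simeq L_{\DD^{1}}U\).
\item On the nonarchimedean locus, \(F\) is the static quotient \(U/\BB^{n}_{\epsilon,A}\), whose diagonal is the open condition of distance \(<\epsilon\).
\end{itemize}
Consequently each \(F^{\DD^{k}}\) is finitary and static on the nonarchimedean locus. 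Its diagonal is then a colimit of open substacks by \cref{xqduiq}, hence weakly suave. The Čech argument now shows that \(F^{\DD^{k}}\) is weakly suave, and then so is \(L_{\DD^{1}}F=\lvert F^{\DD^{\bullet}}\rvert\).
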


We recall the following,
which is useful in various places:

\begin{lemma}\label{fd_co}
  Let \(F\) be a finitary arc~stack over~\(A\).
  Then for any filtered colimit \(B=\injlim_{i}B_{i}\)
  of uniform Banach rings over a strictly totally disconnected~\(A\)
  with uniformly bounded arc~cohomological dimension,
  the morphism
  \begin{equation*}
    \injlim_{i}\Map(\Sparc(B_{i}),F)
    \to
    \Map(\Sparc(B),F)
  \end{equation*}
  is an equivalence.
\end{lemma}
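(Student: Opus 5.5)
We reduce to a statement about mapping anima out of a cofiltered limit of arc stacks, using the finitary condition together with a hypercompleteness/cohomological dimension argument. Concretely, writing \(X=\Sparc(A)\), a finitary arc stack \(F\to X\) is by \cref{xbdjh9} one whose values on strictly totally disconnecteds commute with filtered colimits of strictly totally disconnecteds over~\(A\). The issue is that the \(B_{i}\) and \(B\) are only uniform Banach rings, not strictly totally disconnected, so \(\Map(\Sparc(B_{i}),F)\) must be computed by arc descent along covers by strictly totally disconnecteds.

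The plan has three steps. First, apply \cref{xsdudu} (or \cref{xryxo9} in the light case) compatibly along the filtered system. We build a cover \(B\to C\) by a strictly totally disconnected and arrange it to be a filtered colimit of covers \(B_{i}\to C_{i}\), so that \(C=\injlim_{i}C_{i}\). To do this, run the construction of \cref{xsdudu} functorially. The Cantor set step uses elements \(a\in B_{i}\), and its output is compatible with colimits by \cref{i:sp_fil} of \cref{xfdud8}. The finite étale closure step takes a colimit of finite étale covers, and every finite étale cover of~\(B\) descends to some \(B_{i}\).

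Second, do the same for the whole Čech nerve, which gives \(C^{\bullet}=\injlim_{i}C_{i}^{\bullet}\), with each \(C^{n}\) again admitting a strictly totally disconnected refinement. Then
\[\Map(\Sparc(B),F)\simeq\Tot\Map(\Sparc(C^{\bullet}),F),\]
and likewise for each~\(B_{i}\). On strictly totally disconnecteds, the finitary property gives \(\injlim_{i}\Map(\Sparc(C_{i}^{n}),F)\simeq\Map(\Sparc(C^{n}),F)\) levelwise. This is strictly true only after further refining \(C^{n}\) by a hypercover; to handle it, I would use an arc hypercover by strictly totally disconnecteds, chosen compatibly in~\(i\).

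Third, commute the filtered colimit past the totalization. This is the main obstacle, since filtered colimits do not commute with infinite totalizations in general. Here the hypothesis of uniformly bounded arc cohomological dimension enters: if it is bounded by~\(d\), then descent for (truncations of) \(F\) is governed by a finite piece \(\Tot_{\le N}\) with \(N\) depending only on~\(d\) and the truncation level. A filtered colimit commutes with this finite limit, so I would argue Postnikov-wise. The comparison holds on each \(n\)-truncation \(\tau_{\le n}\) of the mapping anima, using a descent spectral sequence whose \(E_{2}\)-page vanishes above degree~\(d\) uniformly in~\(i\). Since each stage converges, one concludes using hypercompleteness (which is where \cref{xdhy4g}-style Postnikov completeness and the uniform bound are used). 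I expect getting the uniformity in~\(i\) right, and justifying convergence of the Postnikov tower after passing to the colimit, to be the delicate point.
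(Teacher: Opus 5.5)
Your proposal is correct and follows the same route as the paper. The paper simply cites the truncated case as \cite[Lemma~4.7]{ScholzeB}; your Steps~1--2 reconstruct it via compatible strictly totally disconnected hypercovers and the fact that totalizations of \(n\)-truncated cosimplicial anima are finite limits. The paper then passes to general~\(F\) by a Postnikov argument, in which the uniform bound on arc~cohomological dimension makes \(\pi_{k}\Map(\Sparc(B_{i}),\tau_{\leq n}F)\) stabilize in~\(n\) uniformly in~\(i\). Stating your Step~3 directly as this stabilization of the Postnikov tower of~\(F\), as the paper does, removes the need for a descent spectral sequence and for the convergence bookkeeping you flag as delicate. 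The bound~\(d\) enters only there; the finite \(\Tot\) for \(\tau_{\leq n}F\) does not depend on~\(d\).
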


\begin{proof}
  This is the unstable analog
  of~\cite[Remark~4.11]{ScholzeB}:
  When \(F\) is truncated, the claim is straightforward;
  see~\cite[Lemma~4.7]{ScholzeB}.
  In general,
  we write \(F=\varprojlim_{n}\tau_{\leq n}F\)
  and
  observe that for each~\(k\),
  \begin{equation*}
    \bigl(\pi_{k}\Map(\Sparc(B_{i}),\tau_{\leq n}F)\bigr)_{n}
  \end{equation*}
  stabilizes
  uniformly in~\(i\);
  this follows from our assumption
  on the cohomological dimension.
\end{proof}

\begin{lemma}\label{xabply}
  Let \(D\) be a uniform Banach ring
  topologically of finite presentation
  over a strictly totally disconnected~\(A\).
  Then for any finitary arc~sheaf,
  \(B\mapsto F(B\otimes_{A}D)\)
  determines a finitary arc~sheaf.
\end{lemma}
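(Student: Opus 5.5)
Write \(G(B)=F(B\otimes_{A}D)\), where \(B\otimes_{A}D\) denotes the completed (uniform) tensor product over~\(A\). We must show two things: that \(G\) is an arc~sheaf on strictly totally disconnecteds over~\(A\), and that it is finitary, meaning it carries cofiltered limits \(\Sparc(B)=\projlim_{i}\Sparc(B_{i})\), i.e.\ filtered colimits \(B=\injlim_{i}B_{i}\) of strictly totally disconnecteds over~\(A\), to colimits relative to the base. We first dispose of the sheaf property, which is formal. The functor \(B\mapsto B\otimes_{A}D\) preserves arc~covers: a surjection on Berkovich spectra stays surjective after base change by \cref{i:sp_pb} of \cref{xfdud8}, and finite disjoint unions are preserved. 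It also preserves the fiber products that appear in Čech nerves, since the uniform completion of a tensor product is the pushout in \(\Cat{URing}\). Hence \(G\) is the composite of \(F\) with a morphism of sites. Because \(F\) is an arc~sheaf on all of \(\Cat{URing}^{\op}\), namely the right Kan extension of its restriction to \(\Cat{STDis}\) as in \cref{x11ph0}, \(G\) is an arc~sheaf.

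For finitariness, take \(B=\injlim_{i}B_{i}\) in \(\Cat{STDis}_{A/}\). Then \(B\otimes_{A}D=\injlim_{i}(B_{i}\otimes_{A}D)\) in \(\Cat{URing}\), because the completed tensor product commutes with filtered colimits. However, \(B_{i}\otimes_{A}D\) is no longer strictly totally disconnected; it is a uniform Banach ring topologically of finite presentation over~\(B_{i}\). This is exactly the situation of \cref{fd_co}, with the base \(A\) replaced by \(B_{i}\) or by~\(A\) itself. Applied to \(F\), regarded as a finitary arc~stack over~\(A\), it gives
\[
\injlim_{i}F(B_{i}\otimes_{A}D)\simeq F(B\otimes_{A}D),
\]
provided the arc~cohomological dimension of \(\Sparc(B_{i}\otimes_{A}D)\) is uniformly bounded in~\(i\). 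The relative version needed for finitary sheaves over~\(X\) follows from the same argument applied to the pullback square in \cref{xbdjh9}, using that \(X\) is also finitary in the relevant sense.

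The main obstacle is this uniform cohomological dimension bound. The plan is to bound it in two parts. The first part is the topological dimension of the fibers of \(\Sp(B_{i}\otimes_{A}D)\to\Sp(B_{i})\). Since \(D\) is a quotient of \(A\{T_{1},\dotsc,T_{n}\}_{r_{1},\dotsc,r_{n}}\), the base change embeds in a closed polydisk over~\(B_{i}\), whose fibers over points with algebraically closed residue fields have covering dimension bounded in terms of~\(n\). The second part is the cohomological dimension of the residue fields, which are completions of finitely generated extensions of transcendence degree \(\leq n\) of algebraically closed fields, hence have cohomological dimension \(\leq n\) (compare \cref{xwba2d}). The base \(\Sp(B_{i})\) is profinite with algebraically closed residue fields, so it contributes nothing. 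Combining these, via the Leray spectral sequence along \(\Sparc(B_{i}\otimes_{A}D)\to\Sparc(B_{i})\) and the fact that profinite bases do not increase dimension, gives a bound depending only on~\(n\), independent of~\(i\), as in \cite[Remark~4.11]{ScholzeB}. If this direct estimate turns out to be delicate, the fallback is to first reduce to \(D=A\{T_{1},\dotsc,T_{n}\}_{r_{1},\dotsc,r_{n}}\), since a quotient by a finitely generated ideal is a closed subspace and only lowers the dimension, and then to induct on~\(n\), fibering over one coordinate at a time.
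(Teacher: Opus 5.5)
Your proposal is correct and is essentially the paper's argument: the paper's proof consists only of the appeal to \cref{fd_co} for the filtered colimit \(B\otimes_{A}D=\injlim_{i}(B_{i}\otimes_{A}D)\) of uniform Banach rings over~\(A\), and your treatment of the sheaf property and your sketch of the uniform bound on arc cohomological dimension (fiber dimension over the profinite base plus Galois cohomological dimension of the residue fields, cf.~\cref{x90oh5}) spell out what the paper leaves implicit. Your closing remark that \(X\) must itself be finitary is unnecessary, because the relative condition in \cref{xbdjh9} only concerns systems already lying over~\(\Sparc(A)\), and your main argument already covers exactly that case.
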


\begin{proof}
  This follows from \cref{fd_co}.
\end{proof}

\begin{lemma}\label{x1drqd}
  Any static finitary arc~stack~\(F\)
  over a strictly totally disconnected~\(A\)
  is a colimit of
  open substacks of~\(\BB_{r,A}^{n}\).
\end{lemma}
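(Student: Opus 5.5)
We need to show that any static finitary arc stack \(F\) over a strictly totally disconnected \(A\) is a colimit of open substacks of \(\BB^{n}_{r,A}\). The plan is to cover \(F\) by maps from such open substacks, then express \(F\) as the colimit over its slice category.

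\emph{Step 1: reduce to the slice category.} Since \(F\) is an arc sheaf on \((\Cat{STDis}^{\op})_{/A}\), it is the colimit, in arc sheaves, of the representables \(\Sparc(B)\to F\) with \(B\) strictly totally disconnected over \(A\). It therefore suffices to show two things:
\begin{itemize}
  \item every point \(\Sparc(B)\to F\) factors, arc-locally on \(B\), through some open substack \(U\subset\BB^{n}_{r,A}\) mapping to \(F\);
  \item the full subcategory of \(\Cat{Stk}_{/F}\) spanned by such \(U\to F\) is cofinal enough that the induced colimit still computes \(F\).
\end{itemize}
For the second point we use that \(F\) is static. Then the slice over \(F\) is a poset-like (\(0\)-truncated fibers) category, and the colimit of the \(U\to F\) equals \(F\) as soon as these maps are jointly surjective and their fiber products \(U\times_{F}U'\) are again covered by such opens. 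This again reduces to the first point applied to \(U\times_{F}U'\), which is static and finitary by \cref{xabply}.

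\emph{Step 2: factor points through balls.} Given \(x\colon\Sparc(B)\to F\), write \(B\) as a filtered colimit of uniform Banach rings topologically of finite presentation over \(A\), i.e.\ quotients of \(A\{T_{1},\dotsc,T_{n}\}_{r}\), with uniformly bounded arc cohomological dimension. Such a presentation exists: work componentwise on \(\Sp(B)\) and approximate, using that the residue fields are algebraically closed. By \cref{fd_co}, \(x\) then factors through some \(\Sparc(B_{i})\), and \(\Sparc(B_{i})\) is a closed substack of \(\DD^{n}_{r,A}\subset\BB^{n}_{r',A}\) for \(r'>r\).

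\emph{Step 3: extend from the closed substack to an open one.} Write \(B_{i}=A\{T\}_{r}/I\). We need to extend the map \(\Sparc(B_{i})\to F\) to an open neighbourhood in \(\BB^{n}_{r',A}\). By \cref{xraf07}, the closed substack \(\Sparc(B_{i})\) is the cofiltered intersection of its open neighbourhoods; concretely these are the opens \(\{\lvert f_{j}\rvert<\epsilon\}\cap\BB^{n}_{r'}\) for generators \(f_{j}\) of \(I\). So \(\Sparc(B_{i})\) is the limit of the corresponding overconvergent rings. Applying \cref{fd_co} once more to this colimit of rings, which has bounded cohomological dimension, the point extends to one of these opens.

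The main obstacle is this extension step. It needs that the overconvergent neighbourhoods form a filtered system to which \cref{fd_co} applies with uniformly bounded arc cohomological dimension, and this is where the strict total disconnectedness of \(A\) enters. I would first try to verify it fiberwise over \(\Sp(A)\), using \cref{x90oh5}-type dimension bounds for balls over algebraically closed fields.
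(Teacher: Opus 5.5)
Your Steps 2--3 expand the paper's one line ``by \cref{fd_co} we can cover \(F\) by \(\coprod_{i}U_{i}\)'', and the obstacle you leave open there does not actually arise. Since \(F\) is static, it is truncated. For truncated finitary sheaves, commutation with filtered colimits of uniform Banach rings over \(A\) is the straightforward case in the proof of \cref{fd_co}, and it needs no bound on cohomological dimension. So you need neither the uniformly bounded presentation you assert in Step~2 (which you do not really justify) nor a fiberwise dimension estimate in Step~3. Apply \cref{fd_co} to affinoid tubes in balls and restrict to their interiors; the open neighbourhoods themselves are not \(\Sparc\) of uniform Banach rings, so \cref{fd_co} does not apply to them directly.

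The genuine gap is Step~1, which rests on a false claim. The slice of opens over \(F\) is not poset-like: maps \(U\to U'\) over \(F\) form a set, but typically a large one (already for \(F=\Sparc(A)\) and \(U=U'\) a ball). Discrete mapping sets do not make an \(\infty\)-colimit of static objects static; for instance, the colimit of a point over \(BG\) is \(BG\). So ``jointly surjective, with fiber products covered by opens'' does not give \(\injlim U\simeq F\). Evaluated on \(T\), the presheaf colimit is \(\coprod_{x\in F(T)}N(E_{x})\), where \(E_{x}\) is the category of factorizations of \(x\) through opens. You would need these nerves to be contractible, and a covering condition on fiber products does not provide that. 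Even local vanishing of homotopy sheaves would not suffice in \(\Cat{Stk}^{\vee}_{\lgt}\), where hyperdescent is not imposed.

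What you are missing is the right use of staticness. It makes the diagonal \(F\to F^{\times(m+1)}\) a finitary monomorphism. Hence its base change \(U^{\times_{F}(m+1)}\to U^{\times(m+1)}\) exhibits the fiber product as a disjoint union of opens of a larger ball, not merely as something covered by such opens. The paper then takes the Čech nerve of a cover \(U=\coprod_{i}U_{i}\to F\). Its realization is \(F\) in any \(\infty\)-topos, and every term is a disjoint union of opens.

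The same fact would also repair your slice argument. Closure under fiber products over \(F\) gives each \(E_{x}\) binary products, so \(N(E_{x})\) is contractible when nonempty, and the presheaf colimit maps monomorphically and locally surjectively onto \(F\). Incidentally, \cref{xabply} is not what makes \(U\times_{F}U'\) finitary; that follows directly because finite limits commute with filtered colimits.
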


\begin{proof}
  By \cref{fd_co},
  we can cover~\(F\)
  by \(U=\coprod_{i}U_{i}\),
  where \(U_{i}\)
  is an open substack of~\(\BB^{n}_{r,A}\).
  We then consider the Čech nerve of \(U\to F\).
  For \(m\geq0\),
  since \(U^{\times_{F}(m+1)}\to U^{\times(m+1)}\)
  is the base change
  of \(F\to F^{\times(m+1)}\),
  which is a finitary monomorphism,
  we see that \(U^{\times_{F}(m+1)}\)
  is a disjoint union of
  open substacks of~\(\BB^{n}_{r,A}\).
\end{proof}

\begin{lemma}\label{xqduiq}
  Let \(F\) be a finitary arc~stack
  over a strictly totally disconnected~\(A\).
  If \(F\) is \((-1)\)-truncated
  over the nonarchimedean locus \(\lvert2\rvert\leq1\),
  it is a colimit
  of open substacks of~\(\Sparc(A)\).
\end{lemma}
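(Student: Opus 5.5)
Let \(A\) be strictly totally disconnected and \(F\to\Sparc(A)\) a finitary arc~stack that is \((-1)\)-truncated over the nonarchimedean locus. The plan is to mimic the argument of \cref{x1drqd}, but using monomorphisms in place of balls.

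First, split off the archimedean part. The condition \(\lvert2\rvert\leq1\) cuts out a closed subset of \(\Sp(A)\). On a totally disconnected space we can exhaust its complement \(\{\lvert2\rvert>1\}\) by clopens. On the archimedean locus every residue field is \(\CC\), since it is algebraically closed and nondiscrete archimedean. There the arc topology over \(A\) reduces to the topology of the profinite set \(\Sp(A)\). Concretely, a finitary arc~sheaf on that locus should be a sheaf of animas on the locally profinite space that is finitary, that is, determined by stalks as a colimit over clopen neighborhoods. Any such sheaf is a colimit of representables \(U\to\Sparc(A)\) for quasicompact opens~\(U\). We do this by writing its étale space as a colimit of its sections over clopens, and then resolving by the Čech nerve, exactly as in \cref{x1drqd}. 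Because \(\Sp(A)\) is profinite, every open is a filtered union of clopens, and clopens correspond to idempotents by \cref{ban_idem}. This part is routine.

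Over the nonarchimedean locus, \(F\to\Sparc(A)\) is a monomorphism. For each point \(x\in\Sp(A)\) with \(F(K(x))\) nonempty, we use finitariness in the form of \cref{fd_co}. The residue field \(K(x)\) is a filtered colimit of \(A_{e}=A[e^{-1}]\) over idempotents~\(e\) with \(x\in\Sp(A_{e})\); this is where strict total disconnectedness and \cref{td_sp} are used. So the section at~\(x\) spreads to a clopen neighborhood~\(V_{x}\), giving \(\Sparc(A_{e})\to F\).

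The union of the images of these sections is an open \(U\subset\Sp(A)\) (intersected with the nonarchimedean locus). Since \(F\) is a subsheaf there, the map \(\coprod_{x}V_{x}\to F\) is an effective epimorphism. Its Čech nerve consists of intersections of the \(V_{x}\), because the fiber products over a monomorphism are computed in \(\Sparc(A)\). Hence \(F\) on this locus is the open substack \(U\), and in particular a colimit of opens.

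Finally, glue the two loci. Let \(Z\) be the closed nonarchimedean locus and \(W\) its open complement. Write \(F\) as the pushout of \(F|_{W}\), which is a colimit of opens by the archimedean step, together with the restriction near~\(Z\). By finitariness, a section over \(Z\) extends to a clopen neighborhood of each point of \(Z\). Each such neighborhood can be chosen to avoid the archimedean part only up to a further clopen refinement, which we then handle by the first step.

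The main obstacle is this gluing. A point of \(Z\) lies in the closure of archimedean points, and a sheaf that is a monomorphism on \(Z\) need not be one nearby. I would address it by covering \(F\) by the disjoint union of opens produced on both pieces, and checking via \cref{fd_co} that the Čech nerve terms are again disjoint unions of opens of \(\Sparc(A)\). The check is that fiber products \(V\times_{F}V'\) are computed pointwise on residue fields, which are either \(\CC\) or nonarchimedean algebraically closed, where the description is already known.
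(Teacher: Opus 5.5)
The gluing step, which you rightly identify as the crux, does not go through as proposed. The lemma assumes nothing about \(F\) over the archimedean locus, and in the application in \cref{xammc1} it is genuinely not truncated there. Take open substacks \(V,V'\) with maps to \(F\). At an archimedean point \(x\), the fiber of the Čech term \(V\times_{F}V'\) is the path anima in \(F(K(x))\) between the two given points. So \(V\times_{F}V'\) is not \((-1)\)-truncated over \(V\cap V'\), hence not a disjoint union of open substacks, and one Čech nerve does not resolve \(F\) by opens. The same issue already affects your archimedean step: the étale-space/Čech argument of \cref{x1drqd} is only valid for static sheaves.

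There is a second problem with the gluing. \(F\) is not a pushout of its restrictions to \(W=\{\lvert2\rvert>1\}\) and to a neighborhood of \(Z\). The set \(W\) is open but in general not closed, so no finite family of clopens in \(W\) together with \(Z\) is an arc~cover. The points of \(Z\) in the closure of \(W\) are exactly where you lose control.

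The missing idea is to avoid decomposing the base at all. Let \(F'\) be the colimit of \(U\) over the category of all open substacks \(U\subset\Sparc(A)\) equipped with a map \(U\to F\). This is a colimit of opens by construction, so it suffices to show that \(F'\to F\) is an equivalence. Both sides are finitary, so this can be checked after pulling back to each \(x\in\Sp(A)\). As you observe, \(K(x)\) is the filtered colimit of the \(A_{e}\) over clopen neighborhoods of~\(x\). Hence sections and opens spread out, and the formation of \(F'\) commutes with this pullback.

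This reduces to \(A=C\), an algebraically closed nondiscrete Banach field:
\begin{itemize}
\item If \(C\) is archimedean, \(F\) is an anima \(X\), and \(F'\simeq\injlim_{X}{*}\simeq X\).
\item If \(C\) is nonarchimedean, \(F\) is a finitary subsheaf of \(\Sparc(C)\), hence empty or everything.
\end{itemize}
That last fact says a finitary subsheaf over \(C\) with a point over some extension of \(C\) already has a \(C\)-point. Your claim that \(\coprod_{x}V_{x}\to F\) is an effective epimorphism on~\(Z\) silently uses it as well, so you should state it.
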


\begin{proof}
  We consider the colimit~\(F'\)
  of all open substacks of~\(\Sparc(A)\)
  mapping to~\(F\).
  We wish to show that
  the tautological morphism \(F'\to F\) is an equivalence.
  Since both~\(F\) and~\(F'\) are finitary,
  it suffices to show this after pulling back to
  each~\(x\in\Sp(A)\).
  Since open substacks spread out,
  we assume that \(A=C\)
  is an algebraically closed nondiscrete Banach field.
  When \(A\) is archimedean,
  \(F\) is just an anima.
  When \(A\) is nonarchimedean,
  \(F\) itself is an open substack by assumption.
\end{proof}

\begin{proof}[Proof of \cref{xammc1}]
  By writing the target as a colimit,
  we assume it is \(\Sparc(A)\)
  for a strictly totally disconnected~\(A\).

  For any morphism of finitary arc~stacks \(G\to G'\),
  by \cref{fd_co},
  it is an equivalence
  if and only if \(\Map(U,G)\to\Map(U,G')\)
  is an equivalence
  for any open substack~\(U\) of~\(\BB^{n}_{r,A}\).
  Therefore,
  it suffices to show that
  its \(\DD^{1}\)-homotopification
  \(L_{\DD^{1}}(U)\),
  which is finitary by~\cite[Lemma~5.11]{ScholzeB},
  is weakly suave
  for any such~\(U\).
  Moreover,
  by writing it as a union,
  we assume that
  \(U\) is of the form \(U'+\BB_{\epsilon,A}^{n}\subset\AA^{n}_{A}\)
  for some open substack~\(U'\) of~\(\BB^{n}_{r,A}\)
  and some \(\epsilon>0\).

  We now fix such~\(U\) and \(\epsilon>0\)
  together with the embedding \(U\subset\AA^{n}_{A}\).
  We take a small positive number~\(\epsilon\).
  We consider \(U_{0}=U\)
  and the open substack~\(U_{1}\)
  of~\(U\times U\)
  consisting of pairs whose distance is \(<\epsilon\).
  We take its \(1\)-coskeleton to obtain~\(U_{\bullet}\).
  Now, \(U_{k}\) is an open substack of~\(\BB^{n(k+1)}_{r,A}\),
  and hence weakly suave.
  Therefore, \(F=\lvert U_{\bullet}\rvert\),
  which is finitary by an argument similar to~\cite[Lemma~5.11]{ScholzeB},
  is weakly suave.
  Note that in the nonarchimedean locus \(\lvert2\rvert\leq1\),
  this is equivalent to the action groupoid
  of the translation action of~\(\BB_{\epsilon,A}^{n}\)
  on~\(U\).
  Therefore, it is static there.

  We wish to show that \(L_{\DD^{1}}F\) is weakly suave.
  We write~\(F^{\DD^{k}}\)
  for the stack obtained by \cref{xabply}
  for \(D=A\{T_{1},\dotsc,T_{k}\}\).
  Since \(L_{\DD^{1}}F\) can be computed as
  the geometric realization of \(F^{\DD^{\bullet}}\),
  it suffices to show that
  they are weakly suave.
  As in the proof of \cref{x1drqd},
  we first take a surjection
  from \(V=\coprod_{i}V_{i}\),
  where \(V_{i}\) is an open substack of~\(\BB^{n}_{r,A}\).
  By considering the Čech nerve,
  we are now reduced to showing that
  \(V_{i_{0}}\times_{F^{\DD^{k}}}\dotsb\times_{F^{\DD^{k}}}V_{i_{l}}\)
  is weakly suave.
  It suffices to show that
  the tautological map
  \(V_{i_{0}}\times_{F^{\DD^{k}}}\dotsb\times_{F^{\DD^{k}}}V_{i_{l}}
  \to V_{i_{0}}\times\dotsb\times V_{i_{l}}\),
  since its target is weakly suave.
  This map is a base change of \(F^{\DD^{k}}\to(F^{\DD^{k}})^{l+1}\),
  which is weakly suave by \cref{xqduiq}.
\end{proof}

\begin{proof}[Proof of \cref{main_4}]
We first
  construct a morphism
  \(F\colon\Mot[2](\ZZ)\to\D[2]_{\mot}(\ZZ)\).
  To do so,
  we need to construct a ring stack with an absolute value
  over \(\D[2]_{\mot}(\ZZ)\).
  First,
  by composing \((\X)^{\ucpl}\colon\Cat{SNPol}\to\Cat{URing}_{\tfp}\),
  we obtain a functor
  \(\Cat{SNPol}\to\D[2]_{\mot}(\ZZ)\).
  Then, right Kan extending
  along the forgetful functor \(\Cat{SNPol}\to\Cat{Pol}\)
  (which is possible by~\cite[Theorem~D]{n-pr}),
  we obtain \(\Cat{Pol}\to\D[2]_{\mot}(\ZZ)\),
  which defines a ring stack over \(\D[2]_{\mot}(\ZZ)\).
The norm can then be concretely constructed as
  a symmetric monoidal functor
  \begin{equation*}
    \Shv([0,\infty))
    \to\Hom_{\D[2]_{\mot}(\ZZ)}(\unit,[\AA^{1}])
    \simeq\D_{\mot}(\AA^{1})
    =\projlim_{r}\D_{\mot}(\DD^{1}_{r})
  \end{equation*}
  compatible with the multiplication,
  which is a condition.
  We can use \cref{xyvgm4} for this
  by setting
  \begin{align*}
    D_{r}&=i_{r,*}\unit,&
    E_{r}&=k_{r,*}\unit
  \end{align*}
  where \(i_{r}\colon\DD^{1}_{r}\to\AA^{1}\)
  and \(k_{r}\colon\AA^{1}\setminus\BB^{1}_{r}\to\AA^{1}\),
  where \(\BB^{1}_{r}\) denotes the open disk of radius~\(r\).

  Next, we construct a morphism
  \(G\colon\D[2]_{\mot}(\ZZ)\to\Mot[2](\ZZ)\)
  as in the proof of \cref{main_1}.
  First,
  we have \({\Cat{Stk}_{\lgt}^{\vee}}^{\op}\to\Mot[2](\ZZ)\)
  by \cref{xrp8oy}.
  Since any object in \(\Cat{Var}\)
  determines a proper stack over \(\Mot[2](\ZZ)\)
  by \cref{xif2s7,x2a9p8},
  we apply \cref{cll}
  to obtain
  a symmetric monoidal functor
  \(\Span[2]_{\all;\all,\iso}(\Cat{Var})\to\Mot[2](\ZZ)\).
  To construct~\(G\),
  it remains to construct
  \begin{equation*}
    \D_{\mot}(\X)\to\Hom_{\Mot[2](\ZZ)}(\unit,[\X])
  \end{equation*}
  in \(\CAlg(\Fun(\Span(\Cat{Var}),\Cat{Pr}_{\st}))\).
To do this,
  we apply \cref{cll}
  when \(J\) consists of \(\DD^{1}\)-invariant finitary morphisms
  to obtain a symmetric monoidal functor
  \(\Span[2]_{J;\iso,J}(\Cat{Stk}_{\lgt}^{\vee})\to\Mot[2](\ZZ)\).
  By considering \(\Hom(\unit,\X)\)
  on both sides,
  we obtain functors
  \begin{equation*}
    \Shv_{\fin}(X)_{\DD^{1}}\to\Hom_{\Mot[2](\ZZ)}(\unit,[X])
  \end{equation*}
  natural in \(X\in(\Cat{Stk}_{\lgt}^{\vee})^{\op}\).
  This functor preserves colimits
  by construction.
  Moreover,
  this factors through as
  \begin{equation*}
    \D_{\mot}(X)\to\Hom_{\Mot[2](\ZZ)}(\unit,[X])
  \end{equation*}
  by the axioms.
  To construct~\(G\),
  we still need to extend this natural transformation
  to \(\Span(\Cat{Var})\).
  We obtain this as the restriction of the symmetric monoidal functor
  from \(\Span[2]_{\all;\all,\iso}(\Cat{Var})\),
  which we can construct by \cref{cll};
  we check the condition in \cref{finally} below.

We then prove that \(GF\) is homotopic to~\(\id\).
  We see that
  it preserves the universal ring stack;
  formally,
  this means that it is an equivalence after
  composing with~\(\Cat{Pol}\to\Mot[2](\ZZ)\),
  which specifies the ring stack.
  Then we can match the norms to see that
  \(GF\) is homotopic to~\(\id\).
  The fact that the norm is preserved
  is just a condition,
  and can be checked straightforwardly.

For~\(FG\),
  as in the proof of \cref{main_1},
  it suffices to show that
  this is an equivalence
  after composing with \(\Cat{Var}^{\op}\to\D[2]_{\mot}(\ZZ)\).
  This amounts to showing that
  the composite
  \begin{equation*}
    \Cat{URing}_{\tfp}
    \to
    \Cat{Var}^{\op}
    \hookrightarrow
    (\Cat{Stk}_{\lgt}^{\vee})^{\op}
    \to\Mot[2](\ZZ)
    \xrightarrow{F}\D[2]_{\mot}(\ZZ)
  \end{equation*}
  is equivalent to the structure morphism.
  This is equivalent to the composite
  \begin{equation*}
    \Cat{URing}_{\tfp}
    \hookrightarrow
    \Cat{URing}
    \to
    \Mot[2](\ZZ)
    \xrightarrow{F}\D[2]_{\mot}(\ZZ).
  \end{equation*}
  by \cref{xrp8oy,x0606p}.
  By the construction of~\(F\),
  when we compose this
  with \(\Cat{SNPol}\to\Cat{URing}_{\tfp}\),
  we obtain the structure morphism.
  Since the structure morphism
  \(\Cat{URing}_{\tfp}\to\D[2]_{\mot}(\ZZ)\)
  preserves finite colimits
  by \cref{x15q1p},
  we obtain the desired result.
\end{proof}

In the proof above,
we postponed the following technical step:

\begin{lemma}\label{finally}
  For any \(f\colon Y\to X\) in~\(\Cat{Var}\),
  the square
  \begin{equation}
    \label{e:9rdu6}
    \begin{tikzcd}
      \D_{\mot}(X)\ar[r,"f^{*}"]\ar[d]&
      \D_{\mot}(Y)\ar[d]\\
      \Hom_{\Mot[2](\ZZ)}(\unit,[X])\ar[r,"f^{*}"]&
      \Hom_{\Mot[2](\ZZ)}(\unit,[Y])
    \end{tikzcd}
  \end{equation}
  is right adjointable.
\end{lemma}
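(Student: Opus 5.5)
We need the square \cref{e:9rdu6} to be right adjointable: the Beck–Chevalley map \(\alpha_{X}f_{*}\to f_{*}\alpha_{Y}\) is an equivalence, where \(\alpha\) denotes the vertical functors and \(f_{*}\) on the bottom is the right adjoint supplied by properness (\cref{x2a9p8,xif2s7}). The plan is to reduce to a situation where \(f\) is a base change of a disk or a closed immersion, where the pushforward is controlled by the axioms. Then descent transports the equivalence.

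First, factor \(f\). Since \(Y\) is topologically of finite presentation over~\(X\), we have \(f\) as a closed immersion \(Y\hookrightarrow\DD^{n}_{r,X}\) followed by the projection \(\DD^{n}_{r,X}\to X\). Beck–Chevalley maps compose, so we treat the two cases separately.

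For a closed immersion cut out by finitely many functions, we iterate, and by \cref{x4y9d2,x099iq} reduce to the base change of \(\{0\}=N^{-1}([0,0])\hookrightarrow\AA^{1}\). Both rows then carry a recollement with the complementary open \(N^{-1}((0,\infty))=R^{\times}\), and \(\alpha\) commutes with \(j^{*}\) and with~\(j_{\natural}\), since it is built from \(\DD^{1}\)-invariant finitary (open) morphisms via \cref{cll}. Hence \(\alpha\) commutes with \(i_{*}\), as in the first step of \cref{aplus}.

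For the disk projection \(\DD^{n}_{r,X}\to X\), we use the cover \(\DD^{n}_{r}\subset\PP^{1}\times\dotsb\) of \cref{x85jpj}, writing \(\DD^{1}_{r}\) as the complement of the open \(\widetilde N^{-1}((r,\infty])\) inside~\(\PP^{1}\). This reduces the problem to the projective line, handled as in the second half of \cref{aplus}. Concretely, \(\PP^{1}_{X}\to X\) is smooth in both theories (\(\D_{\mot}\) satisfies the hypotheses of \cref{ayoub} on the algebraic part). The map \(\alpha\) commutes with \(f_{\natural}\) because \(\PP^{1}\) is glued from weakly suave pieces (\cref{xammc1}). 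The hexagon \cref{hexagon} together with the invertibility of \(q_{\natural}d_{*}\) (Tate twist) then converts compatibility with~\(f_{\natural}\) into compatibility with~\(f_{*}\).

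The step I expect to be the main obstacle is making sense of ``algebraic'' constructions like \(\PP^{1}_{X}\) and its recollements over a general \(X\in\Cat{Var}\). On both sides these only exist after passing through the arc stack side. To handle this, I would first use \cref{xif2s7} and \cref{x0606p} to reduce, by arc descent on both rows, to \(X=\Sparc(A)\) with \(A\) light strictly totally disconnected. Right adjointability can be checked after such a cover because pushforward commutes with base change on both sides (\cref{x5sz2s} and proper base change). Over such~\(A\), \(\D_{\mot}\) is a finitary sheaf theory, so the maps are determined by their values on the generators \(\Sigma^{\infty}\) of open substacks of balls (\cref{x1drqd}). There the comparison is exactly the one above.
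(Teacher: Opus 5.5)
Your skeleton matches the paper's. You handle closed immersions through the open complement and the recollement, you reduce the disk projection to \(\PP^{1}_{X}\to X\), you replace the base by \(\Sparc(A)\) with \(A\) light strictly totally disconnected using \cref{x0606p}, and then you run the argument of \cref{aplus} (hexagon plus invertibility of \(q_{\natural}d_{*}\)).

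The gap is in the one input that argument needs. You must know that the vertical comparison \(\alpha\) commutes with \(f_{\natural}\) and \(q_{\natural}\) for the analytic \(\PP^{1}\), and more generally with \(\natural\)-pushforward along analytifications of smooth maps. Your justification (``\(\PP^{1}\) is glued from weakly suave pieces, \cref{xammc1}'') does not give this.

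The comparison was built through \cref{cll} with \(J\) the class of \(\DD^{1}\)-invariant finitary morphisms. So it is compatible with \(j_{\natural}\) by construction only for \(j\in J\). The open disks covering \(\PP^{1}\), the analytic line \(\mathbb{A}^{1}_{X}\to X\), and \(\PP^{1}_{X}\to X\) itself are finitary but not \(\DD^{1}\)-invariant; the coordinate gives a map from \(\DD^{1}\) that does not factor through the base.

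\cref{xammc1} together with gluing only shows that \(f_{\natural}\) exists on the \(\Mot[2](\ZZ)\) side. It says nothing about whether the Beck--Chevalley map \(f_{\natural}\alpha_{Y}\to\alpha_{X}f_{\natural}\) is an equivalence. In \(\D_{\mot}\), the object \(h_{\natural}\unit\) for such \(h\) only sees the \(\DD^{1}\)-homotopification of the source. What you need is therefore a homotopy-invariance statement inside \(\Mot[2](\ZZ)\), and that is not formal. Your final paragraph (``on generators the comparison is exactly the one above'') is circular for this reason, since ``the one above'' is exactly this unproven compatibility.

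This is where the paper's proof does its real work.
\begin{itemize}
\item It uses compatibility for \(\DD^{1}\)-invariant finitary maps, together with compactification and descent, to reduce left adjointability to \(\mathbb{A}^{1}_{X}\to X\).
\item Since \(\D_{\mot}(Y)\) is generated by the classes \([V]\) of opens \(V\) in affine space over \(Y\), it reduces further to checking at \(\unit\) for \(f\colon Y\to X\) with \(Y\) open in \(\mathbb{A}^{n}_{X}\).
\item It then splits into archimedean and nonarchimedean parts by recollement.
\item Archimedean part: it shows \([Y]\) is the base change of \([X]\) along \(\Shv(\Sp(X))\otimes\unit\to\Shv(\Sp(Y))\otimes\unit\), by compactifying to \(\PP^{n}_{X}\) and using descent, so the question becomes purely topological.
\item Nonarchimedean part: it takes \(r\) small so that \(\BB^{n}_{r,X}\) acts on \(Y\) by translation. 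The quotient \(g\colon Y/\BB^{n}_{r,X}\to X\) is finitary and static, \(f_{\natural}\unit\to g_{\natural}\unit\) is an equivalence on both sides, and left adjointability for \(g\) finishes.
\end{itemize}
Only after this does the \cref{aplus} argument for \(\PP^{1}_{X}\to X\) have its required input. Your proposal needs an argument of this kind in place of the appeal to \cref{xammc1}.
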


\begin{proof}
  By construction,
  the square~\cref{e:9rdu6} is left adjointable
  when \(f\) is a \(\DD^{1}\)-invariant finitary morphism
  in \(\Cat{Stk}_{\lgt}^{\vee}\).

  By considering the complement,
  we see that
  \cref{e:9rdu6} is right adjointable
  for any closed immersion~\(f\)
  in \(\Cat{Var}\).
  Therefore,
  we are reduced to the case
  \(f\colon\PP_{X}^{1}\to X\) for \(X\in\Cat{Var}\).
  By \cref{x0606p},
  we can replace \(X\)
  with \(\Sparc(A)\)
  where \(A\) is a light strictly totally disconnected.

  We check this using \cref{aplus}.
  We first
  consider
  the analytification functor
  \(\Cat{QProj}\to\Cat{Stk}_{\lgt}^{\vee}\),
  which maps~\(X\) to~\(X_{A}\).
  By composing this with
  \(\D_{\mot}\)
  and \(\Hom_{\Mot[2](\ZZ)}(\unit,[\X])\),
  we are in a situation
  where \cref{ayoub} applies,
  as we can observe by applying \cref{main_2}.
  The only remaining condition for applying \cref{aplus}
  is that
  \cref{e:9rdu6}
  is left adjointable
  when \(Y\to X\)
  is the \(A\)-analytification
  of a smooth morphism in \(\Cat{QProj}\).

  Using the left adjointability
  of \cref{e:9rdu6}
  for \(\DD^{1}\)-invariant finitary morphisms,
  we are reduced to showing this
  when \(Y=\AA^{1}_{X}\).
  By compactifying~\(X\)
  and using descent again,
  we are reduced to showing that
  \cref{e:9rdu6}
  is left adjointable
  for \(Y=\AA^{1}_{X}\to X\).
  It suffices to check this left adjointability
  at \([V]\in\D_{\mot}(Y)\),
  where \(V\) is an open subspace of \(\AA^{n}_{Y}\)
  for some~\(n\),
  since they generate \(\D_{\mot}(Y)\)
  under colimits and shifts by~\cite[Proposition~5.13]{ScholzeB}.

  Now we consider
  \(f\colon Y\to X=\Sparc(A)\) to be an open subspace of \(\AA^{n}_{X}\).
  It suffices to show that
  the square \cref{e:9rdu6} is left adjointable
  at \(\unit\in\D_{\mot}(Y)\).
  By recollement,
  it suffices to treat
  the archimedean and nonarchimedean cases
  for~\(A\) separately.

  For the archimedean case,
  we claim that the square
  \begin{equation*}
    \begin{tikzcd}
      \Shv(\Sp(X))\otimes\unit\ar[r]\ar[d]&
      \Shv(\Sp(Y))\otimes\unit\ar[d]\\
      {[X]}\ar[r]&
      {[Y]}
    \end{tikzcd}
  \end{equation*}
  is a pushout in \(\CAlg(\Mot[2](\ZZ))\),
  which implies the desired result.
  To show this,
  by compactifying~\(Y\),
  we can replace it with \(\PP^{n}_{X}\);
  this case then follows from descent.

  We then treat the nonarchimedean case.
  We take \(r>0\) small enough so that
  \(\BB_{r,X}^{n}\) acts on~\(Y\) by translation.
  Then \(g\colon Y/\BB_{r,X}^{n}\to X\)
  is finitary and static.
  Moreover,
  the canonical morphism
  \(f_{\natural}\unit\to g_{\natural}\unit\)
  is an equivalence in \(\D_{\mot}(X)\)
  and also in \(\Hom_{\Mot[2](\ZZ)}(\unit,[X])\).
  Therefore, the desired result
  follows from the left adjointability of \cref{e:9rdu6}
  for~\(g\).
\end{proof}

\bibliographystyle{plain}
\let\AA\oldAA \let\L\oldL \let\SS\oldSS  \newcommand{\yyyy}[1]{}

\end{document}